\newcommand{\papernumber}{I}
\renewcommand{\thesection}{\papernumber.\arabic{section}}
\crefname{theorem}{Theorem}{Theorems}
\crefname{thm}{Theorem}{Theorems}
\crefname{lemma}{Lemma}{Lemmas}
\crefname{lem}{Lemma}{Lemmas}
\crefname{remark}{Remark}{Remarks}
\crefname{prop}{Proposition}{Propositions}
\crefname{defn}{Definition}{Definitions}
\crefname{corollary}{Corollary}{Corollaries}
\crefname{conjecture}{Conjecture}{Conjectures}
\crefname{question}{Question}{Questions}
\crefname{chapter}{Chapter}{Chapters}
\crefname{section}{Section}{Sections}
\crefname{figure}{Figure}{Figures}
\crefname{example}{Example}{Examples}
\theoremstyle{plain}
\newtheorem{thm}{Theorem}[section]
\newtheorem{lemma}[thm]{Lemma}
\newtheorem{theorem}[thm]{Theorem}
\newtheorem{corollary}[thm]{Corollary}
\newtheorem{prop}[thm]{Proposition}
\theoremstyle{definition}
\newtheorem{defn}[thm]{Definition}
\theoremstyle{remark}
\newtheorem{remark}[thm]{Remark}
\numberwithin{equation}{section}
\renewcommand{\P}{\mathbb P}
\newcommand{\E}{\mathbb E}
\newcommand{\R}{\mathbb R}
\newcommand{\Z}{\mathbb Z}
\newcommand{\N}{\mathbb N}
\newcommand{\eqd} {\overset{d}{=}}
\newcommand{\cE}{\mathcal E}
\newcommand{\cG}{\mathcal G}
\newcommand{\cH}{\mathcal H}
\newcommand{\sC}{\mathscr C}
\newcommand{\bbK}{\mathbb K}
\newcommand{\bbN}{\mathbb N}
\newcommand{\bbT}{\mathbb T}
\def\P{\mathbb{P}}
\newcommand{\1}{{\rm 1\hspace*{-0.4ex}%
\rule{0.1ex}{1.52ex}\hspace*{0.2ex}}}
\DeclareMathSymbol{\leqslant}{\mathalpha}{AMSa}{"36} 
\DeclareMathSymbol{\geqslant}{\mathalpha}{AMSa}{"3E} 
\DeclareMathSymbol{\eset}{\mathalpha}{AMSb}{"3F}     
\newcommand{\hathat}[1]{%
\begingroup%
  \let\macc@kerna\z@%
  \let\macc@kernb\z@%
  \let\macc@nucleus\@empty%
  \hat{\mathchoice%
    {\raisebox{.2ex}{\vphantom{\ensuremath{\displaystyle #1}}}}%
    {\raisebox{.2ex}{\vphantom{\ensuremath{\textstyle #1}}}}%
    {\raisebox{.16ex}{\vphantom{\ensuremath{\scriptstyle #1}}}}%
    {\raisebox{.14ex}{\vphantom{\ensuremath{\scriptscriptstyle #1}}}}%
    \smash{\hat{#1}}}%
\endgroup%
}
\renewcommand{\epsilon}{\varepsilon}
\newcommand{\eps}{\varepsilon}
\tikzset{nomorepostaction/.code=\let\tikz@postactions\pgfutil@empty}
\title{{\bf Critical long-range percolation I: High effective dimension}}
\renewenvironment{abstract}
 {\par\noindent\textbf{\abstractname.}\ \ignorespaces}
 {\par\medskip}
\author{{\bf Tom Hutchcroft}}
\newcommand\mytag[2][]{%
  \def\@currentlabel{#2}%
  (#2)\label{#1} 
}
\begin{document}

\date{\small{\today}}

\maketitle

\begin{abstract}
In long-range Bernoulli bond percolation on the $d$-dimensional lattice $\Z^d$, each pair of points $x$ and $y$ are connected by an edge with probability $1-\exp(-\beta\|x-y\|^{-d-\alpha})$, where $\alpha>0$ is fixed, $\beta \geq 0$ is the parameter that is varied to induce a phase transition, and $\|\cdot\|$ is a norm. As $d$ and $\alpha$ are varied, the model is conjectured to exhibit eight qualitatively different forms of second-order critical behaviour, with a transition between a mean-field regime and a low-dimensional regime satisfying the  hyperscaling relations when $d=\min\{6,3\alpha\}$, a transition between effectively long- and short-range regimes at a crossover value $\alpha=\alpha_c(d)$, and with various logarithmic corrections to these behaviours occurring at the boundaries between these regimes. 

This is the first of a series\footnote{An expository account of the series over 12 hours of lectures is available on the YouTube channel of the Fondation Hadamard, see \url{https://www.youtube.com/playlist?list=PLbq-TeAWSXhPQt8MA9_GAdNgtEbvswsfS}.}
 of three papers developing a rigorous and detailed theory of the model's critical behaviour in five of these eight regimes, including all effectively long-range regimes and all effectively high-dimensional regimes.
In this paper, we introduce our non-perturbative real-space renormalization group method and use it to analyze the high-effective-dimensional regime $d>\min\{6,3\alpha\}$. In particular, we compute the tail of the cluster volume and establish the superprocess scaling limits of the model, which transition between super-L\'evy and super-Brownian behaviour when $\alpha=2$. All our results hold unconditionally for $d> 3\alpha$, without any perturbative assumptions on the model; beyond this regime, when $d> 6$ and $\alpha \geq d/3$, they hold under the assumption that appropriate two-point function estimates hold as provided for spread-out models by the lace expansion. Our results on scaling limits also hold (with possible slowly-varying corrections to scaling) in the critical-dimensional regime with $d=3\alpha<6$ subject to a marginal-triviality condition we call the \emph{hydrodynamic condition}; this condition is verified in the third paper in this series, in which we also compute the precise logarithmic corrections to mean-field scaling when $d=3\alpha<6$.
\end{abstract}

\newpage

\setcounter{tocdepth}{2}

\tableofcontents

\setstretch{1.1}

\newpage

\section{Introduction}
\label{sec:introduction}

\subsection{Critical phenomena in long-range percolation}

A fundamental objective of statistical mechanics  is to understand \emph{critical phenomena}, that is, the rich, fractal-like behaviour exhibited by many large, complex systems at and near their points of phase transition.  Within the context of lattice models,
such critical phenomena can be studied either through the language of \emph{critical exponents}, which govern the power-law growth or decay of various quantities at or near criticality, or through \emph{scaling limits} (a.k.a.\ \emph{continuum limits}), in which one attempts to take a limit of the ``entire system'' as the lattice spacing goes to zero, yielding a more complete picture of large-scale behaviour than is provided by critical exponents alone.

Two of the most fascinating (and still largely conjectural) aspects of critical phenomena are \emph{universality} and \emph{dimension dependence}: the large-scale behaviour of many models at and near criticality are predicted to be independent of various ``small-scale details'' of the model, such as the precise choice of $d$-dimensional lattice, but to depend strongly on the dimension in which the model is defined. In particular, many models are either known or predicted to have an  \emph{upper critical dimension} $d_c$ such that if $d>d_c$ then the model has relatively simple, ``mean-field'' critical behaviour, while if $d<d_c$ then the model has more complex critical behaviour with both quantitative and qualitative distinctions from the mean-field regime. For models with \emph{long-range} (meaning infinite-range) \emph{interactions}, there is predicted to be a further qualitative distinction between \emph{effectively long-range} and \emph{effectively short-range} regimes as the decay rate of the long-range interaction is varied. This leads to (at least) four qualitatively distinct regimes of critical behaviour: short-range high-dimensional, long-range high-dimensional, short-range low-dimensional and long-range low-dimensional. (More accurately the word ``effectively'' should appear before the name of each regime, with long-range models able to be ``effectively high-dimensional'' even for $d=1$ as we discuss below.) Further, subtly distinct behaviours occur on the boundaries between these regimes,
 leading to at least eight qualitatively distinct behaviours in total. (See \cref{fig:cartoon}.)

While the basic features of the theory have been understood heuristically in the theoretical physics literature since Wilson's seminal introduction of his renormalization group method in the 1970s \cite{wilson1975renormalization} (which was first applied to models with long-range interactions by Fisher, Ma, and Nickel \cite{fisher1972critical} and Sak \cite{sak1973recursion}; see also the more recent works \cite{behan2017scaling,brezin2014crossover}),
 the development of a corresponding rigorous theory has proven extremely challenging.
  In the context of finite-range Bernoulli percolation, we now have a very good understanding in high dimensions \cite{MR1043524,MR2748397,MR762034,MR1127713,MR1959796,fitzner2015nearest,MR2551766} (see also \cite{MR2430773,MR3306002,MR4032873,liu2025high} for related results for long-range percolation) and a fairly good understanding in two dimensions  \cite{smirnov2001critical2,smirnov2001critical,lawler2002one,MR879034,duminil2020rotational,camia2024conformal}, although the best two-dimensional results remain limited to \emph{site percolation on the triangular lattice} and universality remains a major challenge. For intermediate dimensions $2<d<6$ and the critical dimension $d=6$ there appears to be a complete lack of tools to adequately address the problem. Moreover, for $2<d<6$ there are no conjectures regarding exact values for critical exponents nor any known reason to expect that closed-form expressions for these exponents should exist. (For $d=6$ we have conjectures \cite{essam1978percolation,ruiz1998logarithmic} but not yet proofs, as we discuss in more detail in \cref{III-sec:introduction}.) Even in the high-dimensional setting, all existing methods for finite-range models (most of which build on the \emph{lace expansion} \cite{MR782962,MR1043524})  are \emph{perturbative} in the sense that they require a ``small parameter'' to work and cannot be used to prove mean-field critical behaviour under the minimal assumption that $d>d_c=6$. (Instead one must either take $d$ very large or replace the standard hypercubic lattice with, say, a ``spread-out'' lattice in which $x\neq y\in \Z^d$ are considered adjacent if they have distance at most some large constant $L$.)


Over the past few years, significant progress has been made understanding critical phenomena in \emph{long-range percolation} \cite{hutchcroft2020power,hutchcroft2022sharp,baumler2022isoperimetric,hutchcroft2024pointwise} beyond the high-dimensional regime. This progress has focused on understanding the critical \emph{two-point function}\footnote{That is, the probability $\tau_{\beta_c}(x,y)$ that two points belong to the same critical cluster.}, which was predicted\footnote{Sak worked with the $O(n)$ model, but his prediction also applies similarly to percolation (see \cite{brezin2014crossover,gori2017one}). Sak's prediction built on the earlier, less precise work of Fisher, Ma, and Nickel \cite{fisher1972critical}; see also \cite{suzuki1972wilson}.} by Sak \cite{sak1973recursion} to have simple scaling exponents throughout the entire effectively long-range regime that is insensitive to the distinction between high and low effective dimension. Following these works, Sak's prediction has been completely verified in dimension $1$, with significant partial progress made in general dimensions. On the other hand, both the methods and results of the papers \cite{hutchcroft2022sharp,hutchcroft2024pointwise,baumler2022isoperimetric,hutchcroft2020power} are blind to the distinction between low and high effective dimensions and seem unsuitable for the study of more delicate properties of critical long-range percolation where this distinction is important.

\begin{figure}
\centering
\includegraphics[scale=0.9]{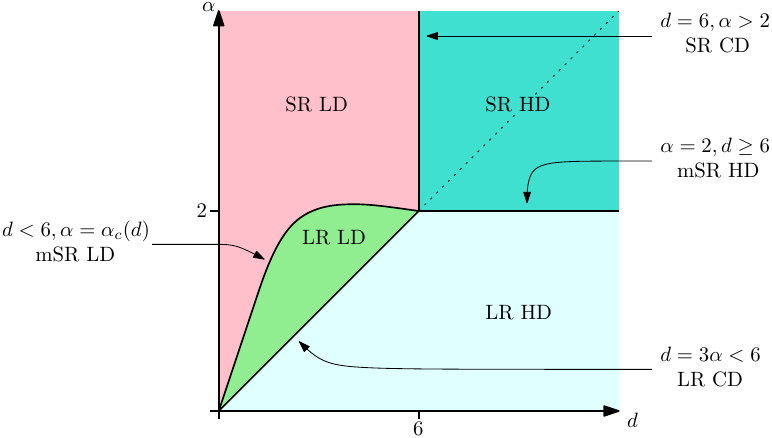}\hspace{1cm}
\caption{Schematic illustration of the different regimes of critical behaviour for long-range percolation. 
 LR, SR, HD, LD, and CD stand for (effectively) ``Long Range'', ``Short Range'', ``High Dimensional'', ``Low Dimensional'', and ``Critical Dimensional'' respectively, while mSR stands for ``marginally Short Range''. Here we ignore the special behaviours occuring when $d=1$ (where there is either no phase transition when $\alpha>1$ or a discontinuous phase transition when $\alpha=1$ \cite{MR868738,duminil2020long}) to avoid clutter. In this paper we study critical behaviour in the effectively high-dimensional regimes LR HD, SR HD, mSR HD, and the effectively long-range critical-dimensional regime LR CD; our results concerning the critical line LR CD hold conditionally on the validity of the \emph{hydrodynamic condition}, which is verified in the third paper of the series. Within the regions SR HD and mSR HD, our results are proven with no further assumptions below the dotted line $d=3\alpha$ but are proven only under the numerical assumptions needed to implement the lace expansion on and above this line; this is not expected to correspond to any true change in behaviour of the model.}
\label{fig:cartoon}
\end{figure}


This is the first in a series of three papers \cite{LRPpaper2,LRPpaper3} in which we develop a rigorous and detailed theory of the transition between low and high-dimensional critical behaviour for long-range percolation on $\Z^d$, going well beyond previously obtained results on two-point functions and treating the (effectively) long-range low-dimensional, long-range high-dimensional, and short-range high-dimensional regimes as well as the boundaries between these regimes. (Equivalently, everything other than the effectively short-range low-dimensional regime and its boundaries.)  This also leads to a more complete resolution of Sak's prediction as discussed in the second paper of this series. 
The theory we develop closely parallels that which we previously developed in the simpler setting of \emph{hierarchical percolation} in our earlier works \cite{hutchcrofthierarchical,hutchcroft2022critical} (but does not assume familiarity with these works). Extending this theory to the Euclidean setting entails both significant technical challenges and genuinely new phenomena that need to be understood, since hierarchical models are always ``effectively long-range'' and do not exhibit the transition to an ``effectively short-range'' regime seen in Euclidean models. Moreover, the results in this series of papers are in some ways significantly stronger than their hierarchical predecessors, including e.g.\ results on scaling limits and $k$-point functions that were not yet established in the hierarchical case.


In this paper, we introduce the core principles of our method, which can be thought of as a kind of \emph{real-space renormalization group} (RG) \emph{method}, and apply this method to analyze both critical exponents and scaling limits in the high effective dimensional regime. The development of a form of RG that can be applied directly to ``geometric'' statistical mechanics models of probability theory such as percolation and self-avoiding walk (rather than to ``spin systems'' like the $\varphi^4$ model that can be written as ``Gaussian field with interactions'' as in more traditional rigorous approaches to RG) is a central contribution of the paper; further discussion of how our methods compare to these traditional approaches is given in \cref{subsec:RG}. 
We also prove our scaling limit results for effectively long-range models \emph{at the critical dimension} subject to a technical ``marginal triviality'' condition we call the \emph{hydrodynamic condition} that is verified in the third paper in the series. 
 The second and third papers concern effectively long-range models in low and critical effective dimension respectively. 


While our results are arguably most interesting in low dimensions and at the critical dimension since the high-dimensional case can also be treated using existing methods such as the lace expansion \cite{MR2430773,MR3306002,MR4032873,liu2025high}, we stress that the high-dimensional applications of our methods developed in this paper still have significant advantages over the lace expansion since (a) they apply \emph{non-perturbatively} in the regime where the model is ``mean-field because the kernel is sufficiently heavy tailed'' (i.e., when $d>3\alpha$ in the notation we introduce below), without needing any extraneous numerical ``small parameter'' assumptions;
 (b) they yield results not just on critical exponents but on the \emph{full scaling limit} of the model, which have not yet been established via other methods; (c) they lead directly to our analysis of the low- and critical-dimensional regimes. 

The remainder of this introduction is organized as follows: In \cref{subsec:definitions} we give precise definitions of the class of models we consider and outline the rudiments of the method we use to study them. We state our main results concerning the high-effective-dimensional regime in \cref{subsec:statement_of_high_dimensional_results} and their (conditional) extensions to the critical dimension in \cref{subsec:scaling_limits_in_the_critical_dimension_under_the_hydrodynamic_condition}. We then give a brief overview of how our methods compare to other approaches to RG in \cref{subsec:RG}.

\medskip\noindent \textbf{Asymptotic notation.}
We write $\asymp$, $\preceq$, and $\succeq$ for equalities and inequalities that hold to within positive multiplicative constants depending on $d\geq 1$, $\alpha>0$, and the kernel $J$
but not on any other parameters. If implicit constants depend on an additional parameter (such as the index of a moment), this will be indicated using a subscript. Landau's asymptotic notation is used similarly, so that if $f$ is a non-negative function then ``$f(n)=O(n)$ for every $n\geq 1$'' and ``$f(n) \preceq n$ for every $n\geq 1$'' both mean that there exists a positive constant $C$ such that $f(n)\leq C n$ for every $n\geq 1$. We also write $f(n)=o(g(n))$ to mean that $f(n)/g(n)\to 0$ as $n\to\infty$ and write $f(n)\sim g(n)$ to mean that $f(n)/g(n)\to 1$ as $n\to\infty$. The quantities we represent implicitly using big-$O$ and little-$o$ notation are always taken to be non-negative, and we denote quantities of uncertain sign using $\pm O$ or $\pm o$ as appropriate.








\subsection{Definition of the model and the real-space RG framework}
\label{subsec:definitions}


In \textbf{long-range percolation} on $\Z^d$, we define a random graph with vertex set $\Z^d$ and where each pair of vertices $x\neq y$ forms an edge $\{x,y\}$ with probability $1-e^{-\beta J(x,y)}$, independently of all other pairs, where $\beta \geq 0$ is a parameter and $J:\Z^d\times \Z^d \to [0,\infty)$ is a kernel that is symmetric and translation-invariant in the sense that $J(x,y)=J(y,x)=J(0,y-x)$. We will be most interested in the case that
\begin{equation}
\label{eq:weak_kernel_assumption}
  J(x,y)\sim \text{const.}\,\|x-y\|^{-d-\alpha}
\end{equation}
as $\|x-y\|\to \infty$ for some $\alpha>0$ and norm $\|\cdot\|$. (In fact we will work under slightly stronger assumptions on our kernel described in \cref{def:model_def} below.) 
 Edges that are included in the configuration are referred to as \textbf{open}, edges not included in the configuration are referred to as \textbf{closed}, and connected components of the resulting random graph are referred to as \textbf{clusters}. We write $\P_\beta$ and $\E_\beta$ for probabilities and expectations taken with respect to this random subgraph, and define the critical point $\beta_c$ to be
\[
  \beta_c := \inf \{\beta\geq 0: \text{an infinite cluster exists $\P_\beta$-a.s.}\},
\]
which satisfies $0<\beta_c<\infty$ whenever $d\geq 2$ and $\alpha>0$ or $d=1$ and $0<\alpha \leq 1$ \cite{newman1986one,schulman1983long,MR436850}. It is a theorem of Berger \cite{MR1896880} (see also \cite{hutchcroft2020power,hutchcroft2022sharp}) that the phase transition is \emph{continuous} in the sense that there are no infinite clusters at $\beta_c$ whenever $0<\alpha <d$. While it is conjectured that this is true for all $\alpha>0$ when $d\geq 2$, the model has a \emph{discontinuous} phase transition (meaning that an infinite cluster exists at $\beta=\beta_c$) when $d=\alpha=1$ \cite{MR868738,duminil2020long}.

\begin{remark}
While our focus in these papers is entirely on the study of \emph{critical phenomena} in long-range percolation, there are many other interesting features of long-range percolation that have been studied in the literature such as the geometry of the infinite supercritical cluster \cite{biskup2021arithmetic,ding2023uniqueness,baumler2023distances}, and we refer the reader to e.g.\ \cite{heydenreich2015progress} for a broader introduction.
\end{remark}

\begin{remark}
As mentioned above, traditional approaches to analyzing high-dimensional percolation via the lace expansion \cite{MR2430773,MR3306002,MR4032873} often require one to work with \emph{spread-out models}. For long-range models, this means working with kernels that are approximately constant up to some large constant scale, such as $J(x,y) = (\max \{L^{-1}\|x-y\|,1\})^{-d-\alpha}$. The precise conditions needed for these perturbative methods to work can be stated more generally in terms of numerical assumptions on $J$ and its Fourier transform, see e.g.\ \cite[Assumption 1.1]{MR3306002}.
\end{remark}

We will work throughout this series of papers with the following slightly restricted class of kernels satisfying \eqref{eq:weak_kernel_assumption} as described in the following definition. The notation and normalization conventions introduced in this definition will be in force throughout the rest of the paper unless specified otherwise.

\begin{defn}[Assumptions on the kernel]
\label{def:model_def}
Let $d\geq 1$, let $\|\cdot\|$ be a norm on $\R^d$ and let $\alpha>0$. We consider translation-invariant kernels in which $J(x,y)$ is a decreasing, differentiable function of $\|x-y\|$ when $x\neq y$ (and vanishing otherwise), so that we can write
\begin{equation}
J(x,y) = \mathbbm{1}(x\neq y) J(\|x-y\|) = \mathbbm{1}(x\neq y) \int_{\|x-y\|}^\infty |J'(s)| \dif s.
\label{eq:Kernel_definition}
\end{equation}
We will assume moreover that
\[
|J'(r)| = C(1+\delta_r) r^{-d-\alpha-1}
\]
for some constant $C>0$ and measurable, logarithmically integrable error function\footnote{This means that $\delta_r\to 0$ as $r\to \infty$, $\delta_r$ is locally integrable on some interval of the form $[r_0,\infty)$, and $\int_{r_0}^\infty |\delta_r|\frac{\dif r}{r}<\infty$. Our terminology regarding error estimates and regularity is discussed in more detail in \cref{subsec:regular_variation_and_logarithmic_integrability}.} $\delta_r$, as is the case when $J(r)=r^{-d-\alpha}$ and $\delta_r\equiv 0$. \textbf{We may assume without loss of generality that the unit ball in $\|\cdot\|$ has unit Lebesgue measure and that $C=1$, so that \begin{equation}
\label{eq:normalization_conventions}
\tag{$*$}
|J'(r)|\sim r^{-d-\alpha-1} \qquad \text{ and } \qquad |B_r|:=|\{x\in \Z^d:\|x\|\leq r\}|\sim r^d,\end{equation}and will do this throughout the paper.} (Any other norm and kernel of the form we consider can be normalized to satisfy \eqref{eq:normalization_conventions}, which changes the value of $\beta_c$ but does not change other features of the model at criticality.)
\end{defn}

\begin{remark}
All of our results remain true if \eqref{eq:Kernel_definition} is only assumed to hold for $\|x-y\|$ sufficiently large, so that $J(r)$ could e.g.\ be non-differentiable for small $r$. (Indeed, the small-scale kernel need not even be a function of the norm.) This allows one to treat kernels such as $J(x,y)=\max\{L^{-1}\|x-y\|,1\}^{-d-\alpha}$ that are popular in the lace expansion literature. We do not pursue such generalizations here as they would increase the notational complexity of our proofs without leading to any additional phenomena at large scales. (If one wants to consider spread-out models within our framework, one can just use a kernel with a smoother cut-off such as $J(x,y)=(1+L^{-1}\|x-y\|)^{-d-\alpha}$.) 
\end{remark}

As alluded to above, one interesting feature of long-range percolation (and other long-range models) is that varying $\alpha$ is in some ways closely analogous to varying the dimension, but can sensibly be done in a continuous fashion. This phenomenon can be described in part via the \textbf{effective dimension}, which is defined to be
\[d_\mathrm{eff} = \max\left\{d, \frac{2d}{\alpha}\right\}.\]
(Equivalently, the effective dimension is the \emph{spectral dimension} of the random walk with jump kernel proportional to $\|x-y\|^{-d-\alpha}$.) While the effective dimension has its uses, we caution the reader not to rely on it too strongly: It gives the correct prediction that the model is mean-field when $d_\mathrm{eff}>d_c=6$ and satisfies the hyperscaling relations when $d_\mathrm{eff}<d_c=6$, but is not predicted to fully determine the critical behaviour of the model. 
(In particular, when $d<6$ it is \emph{not} predicted that the the long-range model begins to have the same critical behaviour as the short-range model at the point $\alpha=2$ where $d_\mathrm{eff}$ becomes equal to $d$: it is instead predicted that this already happens at the slightly smaller value $43/24\approx 1.79\ldots$ when $d=2$ and does not happen until values of $\alpha$ slightly larger than $2$ when $d=3,4,5$; see \cref{fig:cartoon}.)


\begin{defn}[The cut-off kernel]
Given a kernel $J$ as above and a parameter $r>0$, we write $J_r$ for the \textbf{cut-off kernel}
\[
J_r(x,y) = \mathbbm{1}(x\neq y, \|x-y\|\leq r) \int_{\|x-y\|}^r |J'(s)| \dif s.
\]
and write $\P_{\beta,r},\E_{\beta,r}$ for probabilities and expectations taken with respect to the law of long-range percolation on $\Z^d$ with kernel $J_r$ at parameter $\beta$. We will always write $\beta_c$ for the critical parameter associated to the original kernel $J$, so that the cut-off measure $\P_{\beta_c,r}$ is subcritical at $\beta_c$ when $r<\infty$. (Indeed, for every $r<\infty$ there exists $\eps=\eps(r)>0$ such that $J_r(x,y)\leq (1-\eps)J(x,y)$ for every $x\neq y$, so that $\P_{\beta_c,r}$ is stochastically dominated by the subcritical measure $\P_{(1-\eps)\beta_c}$.)
\end{defn}

Our analysis of critical long-range percolation will be based on understanding the asymptotic blow-up of various observables under the measure $\E_{\beta_c,r}$ as $r\to \infty$, with the most important being the cluster volume moments $\E_{\beta_c,r}|K|^p$, the typical size of the largest cluster in the box of radius $r$, denoted $M_{\beta_c,r}$ and defined in detail in \cref{subsec:correlation_inequalities_and_the_universal_tightness_theorem}, and spatial moments such as $\E_{\beta_c,r}\sum_{x\in K}\|x\|_2^p$ and, say, $\E_{\beta_c,r}\sum_{x,y\in K}\|x\|_2^p\|y\|_2^q$, where $K$ always denotes the cluster of the origin. This basic framework of studying the asymptotics of these observables as a function of the cut-off parameter $r$ should be thought of as a \textbf{real-space renormalization group} (RG) \textbf{method}; we give an overview of how this compares to other RG methods in \cref{subsec:RG}. (We do not assume that the reader has any previous familiarity with RG; since our implementation is so different from other approaches this does not have much effect on the exposition anyway.) 
 When $d_\mathrm{eff} > d_c=6$ or $d_\mathrm{eff}=d_c=6$ and $\alpha\leq 2$, we study these quantities by proving that they satisfy a certain system of ODEs asymptotically as $r\to \infty$, which for the cluster volume moments can be written explicitly as
\begin{equation}
\label{eq:volume_moments_ODE_intro}
  \frac{d}{dr}\E_{\beta_c,r}|K|^p \sim \beta_c r^{-\alpha-1} \sum_{\ell=0}^{p-1}\binom{p}{\ell} \E_{\beta_c,r}|K|^{\ell+1}\E_{\beta_c,r}|K|^{p-\ell}.
\end{equation}
(The leading constants appearing here are determined by our normalization assumptions \eqref{eq:normalization_conventions} on the kernel $J$ and the norm $\|\cdot\|$.)
This infinite system of limiting ODEs is triangular in the sense that the derivative of the $p$th moment can be expressed to first order in terms of the moments of order $p$ and smaller, leading to a simple exact solution to the system in the limit. In the case $d=3\alpha<6$ the second-order corrections to these limiting ODEs are only smaller than the leading-order contributions by a polylogarithmic factor, so that they potentially change the first-order asymptotics of the solutions and lead to logarithmic corrections to scaling. 

\medskip

 The transition between the effectively long- and short-range regimes can also be understood  in high effective dimension straightforwardly through the  asymptotic ODEs for the spatially weighted moments: The simplest example of such an asymptotic ODE is
\begin{equation}
\frac{d}{dr} \E_{\beta_c,r} \left[\sum_{x\in K} \|x\|_2^2 \right]
\sim  \frac{2 \alpha}{r} \E_{\beta_c,r} \left[\sum_{x\in K} \|x\|_2^2 \right] + \frac{1}{r}\left(\frac{\alpha^2}{\beta_c} \int_{B} \|y\|_2^2 \dif y\right) r^{2+\alpha},
\label{eq:x12_derivative_asymptotic2_intro}
\end{equation}
where $B=\{x\in \R^d:\|x\|\leq 1\}$.
Solutions to this ODE have different asymptotic forms according to whether $\alpha$ is bigger than, smaller than, or equal to $2$, due to the competition between the coefficient of $2\alpha$ in the self-referential term and the exponent $2+\alpha$ in the driving term: the term of order $r^{2+\alpha-1}$ is smaller than the first term by a power when $\alpha>2$, the same order as the first term when $\alpha<2$, and leads to logarithmic corrections in the marginally short-range case $\alpha=2$ (mSR HD in \cref{fig:cartoon}); see \cref{lem:ODE_with_driving_term,lem:ODE_with_possibly_negligible_driving_term,lem:ODE_with_critical_driving_term} for the relevant general ODE lemmas.

\medskip

It is instructive to compare this situation with that holding for nearest-neighbour models. For spread-out models in high-effective dimensions (which may be long-range or short-range), it is predicted that there exists a constant $A$ such that the $\beta$ derivative of the susceptibility satisfies
\[
  \frac{d}{d\beta} \E_\beta |K| \sim A (\E_\beta |K|)^2
\]
as $\beta \uparrow \beta_c$. The constant $A$ is a complicated, non-universal quantity that encodes the microscopic structure of the model. The lace expansion, when convergent, should give an expression for $A$ as an infinite series (although as far as we can tell this computation has not yet appeared in the literature); see \cite[Eq.\ 5.71]{MR2239599} for an analogous asymptotic estimate for self-avoiding walk. In contrast, our asymptotic ODE for the $r$ derivative of the same quantity when $d>d_\mathrm{eff}$ is just
\begin{equation}
\label{eq:ODE_first_moment_intro}
  \frac{d}{dr} \E_{\beta_c,r} |K| \sim \beta_c r^{-\alpha-1} (\E_{\beta_c,r} |K|)^2,
\end{equation}
with all relevant constants taking simple explicit values. Moreover, it is conjectured that for nearest-neighbour percolation in the critical dimension $d=6$, the asymptotic relationship above must be modified to
\[
  \frac{d}{d\beta} \E_\beta |K| \sim A_\beta (\E_\beta |K|)^2
\]
where $A_\beta$ is a slowly varying function of order $(\log |\beta-\beta_c|)^{-2/7}$ as $\beta \uparrow \beta_c$ \cite{essam1978percolation}. Meanwhile, as we establish in \cref{lem:first_moment_critical_dim} and \cref{III-thm:critical_dim_hydro}, our simple asymptotic ODE \eqref{eq:ODE_first_moment_intro} \emph{continues to hold} when $d_\mathrm{eff}=6$ and $\alpha\leq 2$, with the logarithmic corrections to scaling in this case entering only via the \emph{second order} corrections to the limiting ODEs.
 Intuitively, the simple asymptotic form of the $r$ derivatives owes to the exact asymptotic independence (in high effective dimensions) of the two clusters at the endpoints of a \emph{long} pivotal edge, which can hold even at the critical dimension; for the $\beta$-derivative we must look at \emph{short} pivotal edges where asymptotic independence is too much to ask for and there is at best a ``small constant'' interaction between the two clusters. 

\medskip

Of course, to implement this analysis we must actually \emph{prove} the validity of the asymptotic ODEs such as \eqref{eq:volume_moments_ODE_intro} in high effective dimension. In the case $d>3\alpha$, where the model has mean-field behaviour ``because the kernel is heavy tailed enough'', we give a short and direct proof of this via a bootstrapping argument that does not rely on any previous results on long-range percolation (see \cref{lem:first_moment_high_dimensional}). This argument is simple enough that it can be applied \emph{mutatis mutandis} to a wide range of other (long-range) models including the Ising model, self-avoiding walk, and lattice trees (which will be covered in future work) and is one of the primary contributions of this paper. In the marginal case $d=3\alpha$ these limiting ODEs are much more difficult to justify, and indeed their proof in the third paper of this series constitutes the most technical part of this series of papers. When $d_\mathrm{eff}<d_c=6$ and the model is effectively long-range (i.e., in the regime LR LD of \cref{fig:cartoon}) we show in the second paper in this series that the asymptotic ODE \eqref{eq:volume_moments_ODE_intro} \emph{cannot} hold, which we then argue forces the \emph{hyperscaling relations} to hold, allowing for the computation of various non-mean-field critical exponents in this case.

\subsection{Statement of high-dimensional results}
\label{subsec:statement_of_high_dimensional_results}

We now state our main theorems concerning the model in the high effective dimension. All our results in this regime will be established under one of the three following hypotheses, which we refer to collectively as \mytag[HD]{{\color{blue}HD}}\!\!:
 \begin{enumerate}
 \item $d> 3\alpha$.
 \item $d>6$ and 
  $\P_{\beta_c}(x\leftrightarrow y) \preceq \|x-y\|^{-d+2}$ for all $x\neq y$.
 \item $d=6$, $\alpha=2$, and 
  $\P_{\beta_c}(x\leftrightarrow y) \preceq (\log \|x-y\|)^{-1} \|x-y\|^{-d+2}$ for all $x\neq y$.
\end{enumerate}
The first case ($d>3\alpha$) is arguably the most interesting as our results in this case are entirely non-perturbative (i.e., do not need any small parameter assumptions as required by the lace expansion); this region includes the entire effectively long range high-dimensional regime, the part of the effectively \emph{short range} high-dimensional regime in which $2< \alpha <d/3$, and every point in the \emph{marginally short range}  high-dimensional  curve $\alpha=2$, $d\geq 6$ other than the doubly-marginal point $d=6$, $\alpha=2$. Hypotheses 2 and 3 of \eqref{HD} cover \emph{spread-out} models in the remaining high-dimensional regime and the doubly-marginal point $d=6$, $\alpha=2$, where the relevant two-point estimates have been established under perturbative assumptions using the lace expansion \cite{MR2430773,MR4032873,MR3306002}.

 Note that mean-field critical behaviour was previously established  non-perturbatively (at the level of exponents) under the stronger assumption that $\alpha< \min\{d/3,1\}$ \cite{hutchcroft2024pointwise}, without the precise first-order asymptotics on the volume tail or the scaling limit results proven here.

\medskip

\noindent \textbf{The volume distribution.}
Our first main result on the high-dimensional model establishes first-order asymptotics of the cluster volume moments $\E_{\beta_c,r}|K|^p$ as $r\to \infty$ and applies these to establish the first-order asymptotics of the volume tail $\P_{\beta_c}(|K|\geq n)$ as $n\to \infty$. 
  Note that the constant $\alpha/\beta_c$ appearing in this theorem depends on our normalization conventions \eqref{eq:normalization_conventions} for $J$ and $\|\cdot\|$, while the combinatorial factor $(2p-3)!!$ (the product of all odd integers between $1$ and $2p-3$) does not depend on this choice of normalization.

\begin{theorem}[High-dimensional cluster moments]
\label{thm:hd_moments_main}
 Suppose that at least one of the three hypotheses of \eqref{HD} holds.
 There exists a positive constant $A$ such that
\begin{equation}
\label{eq:hd_moments_main}
\E_{\beta_c,r} |K|^p \sim  (2p-3)!! A^{p-1} \frac{\alpha}{\beta_c} r^{(2p-1)\alpha}
\end{equation}
as $r\to\infty$ for each integer $p\geq 1$
and
\begin{equation}
\label{eq:hd_volume_tail_main}
\P_{\beta_c}(|K|\geq n) \sim \frac{\alpha}{\beta_c} \sqrt{\frac{2}{\pi A}} \cdot \frac{1}{\sqrt{n}}
\end{equation}
as $n\to \infty$.
\end{theorem}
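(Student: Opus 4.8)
The plan is to obtain \eqref{eq:hd_moments_main} by explicitly solving the asymptotic ODE hierarchy \eqref{eq:volume_moments_ODE_intro}, and then to pass from these moment asymptotics to the volume tail \eqref{eq:hd_volume_tail_main} by the method of moments. Throughout I take as given the \emph{quantitative} form of \eqref{eq:volume_moments_ODE_intro} — with a logarithmically integrable error term rather than merely ``$\sim$'' — as provided by (an extension to all $p$ of) \cref{lem:first_moment_high_dimensional} under each of the hypotheses \eqref{HD}, together with the general ODE comparison lemmas \cref{lem:ODE_with_driving_term,lem:ODE_with_possibly_negligible_driving_term,lem:ODE_with_critical_driving_term}. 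Write $f_p(r):=\E_{\beta_c,r}|K|^p$.

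\textbf{Step 1: solving the hierarchy.} For $p=1$ rewrite \eqref{eq:ODE_first_moment_intro} as $-\tfrac{d}{dr}f_1^{-1}\sim\beta_c r^{-\alpha-1}$. Since the phase transition is continuous in all cases of \eqref{HD}, $f_1(r)\uparrow\E_{\beta_c}|K|=\infty$, so $f_1^{-1}(r)\to0$ and integrating from $r$ to $\infty$ gives $f_1(r)\sim\frac{\alpha}{\beta_c}r^\alpha$, which is \eqref{eq:hd_moments_main} for $p=1$ (with $(-1)!!=1$). For $p\ge2$ isolate in \eqref{eq:volume_moments_ODE_intro} the two terms ($\ell=0$ and $\ell=p-1$) containing $f_p$ itself; using $f_1(r)\sim\frac{\alpha}{\beta_c}r^\alpha$ they contribute $(p+1)\alpha\,r^{-1}f_p(r)$, turning \eqref{eq:volume_moments_ODE_intro} into the linear inhomogeneous ODE
\[
\frac{d}{dr}f_p(r)\;\sim\;(p+1)\alpha\,r^{-1}f_p(r)\;+\;\beta_c D_p\,r^{(2p-1)\alpha-1},\qquad D_p:=\sum_{\ell=1}^{p-2}\binom{p}{\ell}c_{\ell+1}c_{p-\ell},
\]
where $c_q$ is the claimed constant in \eqref{eq:hd_moments_main}. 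When $p=2$ the driving term is absent and only the homogeneous solution $f_2(r)\sim c_2 r^{3\alpha}$ survives, with $c_2$ undetermined by the first-order equation alone (this is where the logarithmically integrable error is essential, via \cref{lem:ODE_with_possibly_negligible_driving_term}); we \emph{define} $A:=\frac{\beta_c}{\alpha}\lim_{r\to\infty}r^{-3\alpha}f_2(r)$, so $c_2=\frac{\alpha}{\beta_c}A$ and the content of the case $p=2$ is precisely that this limit exists in $(0,\infty)$. For $p\ge3$ the driving exponent $(2p-1)\alpha$ strictly exceeds the homogeneous exponent $(p+1)\alpha$, so \cref{lem:ODE_with_driving_term} gives $f_p(r)\sim\frac{\beta_c D_p}{(p-2)\alpha}r^{(2p-1)\alpha}$; thus $c_p=\frac{\beta_c}{(p-2)\alpha}D_p$, and an induction on $p$ reduces \eqref{eq:hd_moments_main} to the identity
\[
\sum_{\ell=1}^{p-2}\binom{p}{\ell}(2\ell-1)!!\,\bigl(2(p-\ell)-3\bigr)!!\;=\;(p-2)\,(2p-3)!!\qquad(p\ge3),
\]
which is the binomial-convolution statement $(1-2x)^{-1/2}\cdot\bigl(1-\sqrt{1-2x}\bigr)=(1-2x)^{-1/2}-1$ read off from the exponential generating functions $\sum_n\frac{(2n-1)!!}{n!}x^n=(1-2x)^{-1/2}$ and $\sum_n\frac{(2n-3)!!}{n!}x^n=1-\sqrt{1-2x}$ (with the conventions $(-1)!!=1$, $(-3)!!:=0$).

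\textbf{Step 2: from moments to the tail.} Summing the solved moments against the power series for $e^{-\theta|K|/(Ar^{2\alpha})}$ and using $\sum_{p\ge1}\frac{(-1)^{p-1}(2p-3)!!}{p!}\theta^p=\sqrt{1+2\theta}-1$ gives, for $0<\theta<\tfrac12$,
\[
r^{\alpha}\,\E_{\beta_c,r}\Bigl[1-e^{-\theta|K|/(Ar^{2\alpha})}\Bigr]\;\longrightarrow\;\frac{\alpha}{\beta_c A}\bigl(\sqrt{1+2\theta}-1\bigr)\;=\;\frac{\alpha}{\beta_c A}\int_0^\infty\bigl(1-e^{-\theta x}\bigr)\frac{x^{-3/2}e^{-x/2}}{\sqrt{2\pi}}\,\mathrm{d}x ,
\]
the interchange of limit and summation being justified by dominated convergence using polynomial a priori control on $f_p(r)/r^{(2p-1)\alpha}$. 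By the continuity theorem for Lévy (jump) measures this is equivalent to vague convergence on $(0,\infty)$ of the rescaled laws $\mu_r(\cdot):=r^\alpha\,\P_{\beta_c,r}\bigl(|K|\in Ar^{2\alpha}\cdot\bigr)$ to the measure $\mu$ with density $\frac{\alpha}{\beta_c A\sqrt{2\pi}}x^{-3/2}e^{-x/2}$, which satisfies $\mu((\lambda,\infty))\sim\frac{2\alpha}{\beta_c A\sqrt{2\pi}}\lambda^{-1/2}$ as $\lambda\downarrow0$. Combining this with the monotone coupling $\P_{\beta_c,r}(|K|\ge n)\uparrow\P_{\beta_c}(|K|\ge n)$ and the a priori two-sided volume-tail bounds of \cref{subsec:correlation_inequalities_and_the_universal_tightness_theorem} — which upgrade the vague convergence to a version uniform over the mesoscopic range $1\le n\le r^{2\alpha}$, and which also bound $\P_{\beta_c}(|K|\ge n)-\P_{\beta_c,r}(|K|\ge n)$ (controlled by the event that $K$ uses an open edge longer than $r$, estimated on $\{|K|\le N\}$ and optimized over $N$) — one obtains $\sqrt{n}\,\P_{\beta_c}(|K|\ge n)\to\frac{2\alpha}{\beta_c\sqrt{2\pi A}}=\frac{\alpha}{\beta_c}\sqrt{\tfrac{2}{\pi A}}$, which is \eqref{eq:hd_volume_tail_main}. (Equivalently, one may first identify $\P_{\beta_c}(|K|=m)\sim\frac{\alpha}{\beta_c\sqrt{2\pi A}}m^{-3/2}$ by matching the $r\to\infty$ local limit of $\P_{\beta_c,r}(|K|=m)$ to the $x\downarrow0$ behaviour of the density of $\mu$, and sum.)

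\textbf{Main obstacle.} The serious difficulty lies entirely in the \emph{quantitative} form of the ODE hierarchy \eqref{eq:volume_moments_ODE_intro}: the bare first-order relation is resonant at $p=2$ (a homogeneous ODE leaving the non-universal constant $A$ free) and so cannot pin down any constant, whereas the version with logarithmically integrable error — whose non-perturbative proof in the regime $d>3\alpha$ via the real-space RG bootstrap behind \cref{lem:first_moment_high_dimensional} (and its higher-moment extensions) is one of the paper's central contributions — does. Granting that input, Step 1 is bookkeeping plus one combinatorial identity and Step 2 is classical; the only other point requiring care is making the cutoff-to-full-measure transfer in Step 2 uniform enough to let $n/r^{2\alpha}\to0$, which is exactly what the universal tightness bounds supply.
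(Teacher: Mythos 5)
Your proposal follows the paper's own route almost exactly: Step 1 reproduces the ODE analysis of \cref{sec:analysis_of_moments} (the first moment by integrating $f'\sim\beta_c r^{-\alpha-1}f^2$ as in \cref{lem:f'=f^2}, the resonant second moment pinning down $A$ via logarithmic integrability of the error as in \cref{lem:ODE_self_referential}, and higher moments by induction via \cref{lem:ODE_with_driving_term} together with the double-factorial identity of \cref{lem:double_fun}), while Step 2 is the Tauberian argument of \cref{subsec:tauberian} with the same chi-squared limit law. Two caveats. First, what you take as given --- the asymptotic ODEs with controlled errors under each hypothesis of \eqref{HD} --- is where essentially all of the paper's work lies (the bootstrap of \cref{lem:first_moment_high_dimensional}, the generalized tree-graph inequality \cref{lem:generalized_tree_graph}, and the triangle-condition decorrelation estimates of \cref{lem:disjoint_connections_triangle_second,lem:higher_polygon_diagrams} for Cases 2 and 3); note also that logarithmic integrability is only genuinely needed for $p=1,2$, since for $p\geq 3$ the non-resonant \cref{lem:ODE_with_driving_term} tolerates mere $o(1)$ errors, which is why the paper can also handle $d=3\alpha$ there. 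Second, your cutoff-to-full transfer at the level of tails (``estimated on $\{|K|\leq N\}$ and optimized over $N$'') is more delicate than you suggest: a naive union bound over long pivotal edges produces a term of order $\E_{\beta_c,r}|K|\cdot r^{-\alpha}\cdot\P_{\beta_c}(|K|\geq n/2)\asymp n^{-1/2}$, i.e.\ of the same order as the quantity being estimated. The paper avoids this by performing the transfer at the Laplace-transform level with a ghost field, where the inequality takes the self-improving form $P_\infty\leq P_r+\delta\, P_\infty$ with $\delta=Cr^{-\alpha}\E_{\beta_c,r}[|K|e^{-h|K|}]$ made small by the exponential truncation, and then applies Karamata's theorem (\cref{prop:first_order_volume_tail}); you should adopt that formulation or supply the missing iteration.
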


The deduction of \eqref{eq:hd_volume_tail_main} from \eqref{eq:hd_moments_main} is carried via a Tauberian analysis in \cref{sec:volume_tail}.

\begin{remark}
The combinatorial factor $(2p-3)!!$ appearing in \eqref{eq:hd_moments_main} encodes the fact that the size-biased distribution of $|K|$ under $\P_{\beta_c,r}$, normalized to have mean $1$, converges to a chi-squared distribution with one degree of freedom (i.e., the square of a standard Gaussian) as $r\to \infty$. 
 The same chi-squared limit distribution also appears in various other contexts including slightly subcritical branching processes and slightly subcritical Erd\H{o}s-R\'enyi random graphs as explained in detail in \cite[Remark 1.14]{hutchcroft2022critical}. The factor $(2p-3)!!$ also arises combinatorially  as the number of plane trees rooted at a leaf with $p$ labelled non-root leaves and all other vertices of degree $3$.
\end{remark}

\begin{remark}
The constant $A$ appearing in \cref{thm:hd_moments_main} 
 is non-universal and depends on the small-scale details of the kernel $J$. Indeed, it arises as the exponential of a certain convergent integral of errors 
 which is determined primarily by the values of the error at small scales. 
\end{remark}

\medskip

\noindent \textbf{Scaling limits and the transition between the effectively long- and short-range regimes.}
Our next main result concerns the \emph{superprocess scaling limits} of large clusters in the high-effective-dimensional regime. In contrast to \cref{thm:hd_moments_main}, this theorem reveals a clear distinction between the effectively short-range and long-range high-dimensional regimes.
Here, \textbf{superprocesses} are universal limit objects describing the scaling limits of branching random walks, and are known or conjectured to describe the scaling limits of a large number of statistical mechanics models in high dimensions \cite{hara1998incipient,van2003convergence,cox1999rescaled,borgs1999mean,holmes2020range,cabezas2025random,holmes2008convergence,watanabe1968limit,bramson2001super,van2013survival}; see \cite{slade2002scaling} for a survey. Following Le Gall's \emph{Brownian snake} construction \cite{le1999spatial}, they may be viewed as (variants of) of Aldous's \emph{continuum random tree} \cite{CRT1,aldous1993continuum,aldous1991continuum} that are embedded into space using 
either Brownian motion or L\'evy processes as appropriate in the different regimes we consider.
 We do not assume that the reader has any prior knowledge of L\'evy processes or superprocesses and give further background on these objects as it becomes relevant throughout the course of \cref{sec:superprocesses}; comprehensive introductions can be found in e.g.\ \cite{perkins2002part,dynkin1994introduction,le1999spatial}.


 We phrase our scaling limit result in terms of the \emph{canonical measure}\footnote{In the Brownian snake framework, this is the measure in which the underlying measure on trees is constructed from the It\^o excursion measure on Brownian excursions using the continuum analogue of the bijection between plane trees and Dyck paths. In contrast, Aldous's continuum random tree is obtained by taking the Brownian excursion conditioned to run for time $1$ (and then scaling this excursion by a factor $2$), and should be relevant for scaling limits of clusters conditioned to have a fixed large volume. We warn the reader that there are several different scaling conventions for these objects used in the literature resulting in different factors of $2$ or $\sqrt{2}$ appearing in various formulae, and one must be careful about this when comparing results from different references.} $\mathbb{N}$ of the integrated superprocess excursion, which is a $\sigma$-finite measure on non-zero measures on $\R^d$ satisfying 
 \[\bbN\Bigl(\mu(\R^d)\geq \lambda\Bigr)\propto \lambda^{-1/2} <\infty\] for every $\lambda>0$. The canonical measure describes the limit of \emph{rescaled measures without conditioning}; scaling limit results for e.g.\ the law of the cluster conditioned to have size at least $n$ (where the limit is a probability measure) can easily be extracted from this more general theorem as explained in \cref{remark:scaling_limit_volume_formulation}. The word ``integrated'' appearing here means that we are considering these processes as \emph{random measures} only, without comparing e.g.\ the genealogical structure of the branching processes to the intrinsic metric structure of clusters. 
The transition between the effectively short-range and long-range regimes when $\alpha=2$ is marked both by a transition between super-Brownian and super-L\'evy scaling limits and by a change in the asymptotics of the relevant scaling functions, with logarithmic corrections to super-Brownian scaling when $\alpha=2$. 



We write $\delta_x$ for the Dirac delta measure at $x\in \R^d$.


\begin{theorem}[Superprocess limits]
\label{thm:superprocess_main}
Suppose that at least one of the three hypotheses of \eqref{HD} holds.
  There exist functions $\zeta(R)\to\infty$ and $\eta(R)\to 0$ such that
\[
\frac{1}{\eta(R)}\P_{\beta_c}\left(|K|\geq \lambda \zeta(R),\; \frac{1}{\zeta(R)}\sum_{x \in K} \delta_{x/R}\in \;\cdot\;  \right) \to \mathbb{N}\Bigl(\mu(\R^d)\geq \lambda, \mu\in \cdot\Bigr)
\]
weakly as $R\to \infty$ for each $\lambda>0$, where $\mathbb{N}$ denotes either the canonical measure of the integrated symmetric $\alpha$-stable L\'evy superprocess excursion associated to the L\'evy measure with density $\frac{\alpha}{d+\alpha}\|x\|^{-d-\alpha}$ if $\alpha <2$ or the canonical measure of integrated super-Brownian excursion with some invertible, unit-trace covariance matrix $\Sigma$ if $\alpha \geq 2$. Moreover, the normalization factors $\zeta$ and $\eta$ are given asymptotically by
\[
\zeta(R) \sim \text{const.}\;\begin{cases}
 R^{4} \\
 R^{4} (\log R)^{-2} \\
R^{2\alpha} \\
\end{cases}
\text{ and } \quad
\eta(R) \sim \text{const.}\;\begin{cases}
 R^{-2} &\; \alpha>2, d>6 \qquad\phantom{\alpha} \text{\emph{(SR HD)}}\\
 R^{-2} \log R &\; \alpha = 2, d\geq 6 \qquad \phantom{\alpha} \text{\emph{(mSR HD)}}\\
R^{-\alpha} &\; \alpha < 2, d > 3\alpha \qquad \text{\emph{(LR HD)}}\\
\end{cases}
\]
as $R\to \infty$.
\end{theorem}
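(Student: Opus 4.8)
The plan is to derive the superprocess convergence from the moment asymptotics already available: Theorem~\ref{thm:hd_moments_main} controls the volume moments $\E_{\beta_c,r}|K|^p$, and the companion spatial-moment ODEs sketched around \eqref{eq:x12_derivative_asymptotic2_intro} (together with their higher-order analogues for $\E_{\beta_c,r}\sum_{x_1,\dots,x_k\in K}\prod f(x_i)$) should control all mixed spatial moments. Since the canonical measure $\bbN$ of an integrated superprocess excursion is determined by its moment measures — the $k$-th moment measure of $\bbN$ is a sum over plane trees with $k$ labelled leaves of integrals of the branching mechanism against the motion's semigroup, and these moments grow slowly enough that $\bbN$ is determined by them — it suffices to show that, under the rescaling $\mu_R := \zeta(R)^{-1}\sum_{x\in K}\delta_{x/R}$ on the event $\{|K|\ge \lambda\zeta(R)\}$, the rescaled and $\eta(R)^{-1}$-normalized measures have $k$-point functions converging to those of $\bbN$ for every $k$. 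Concretely: first I would fix the cut-off scale $r = r(R)$ appropriately (matching $r^\alpha$ to the spatial scale so that $\P_{\beta_c,r}$-clusters of the relevant size are comparable to $\P_{\beta_c}$-clusters of that size), then compute
\[
\E_{\beta_c,r}\Biggl[\Bigl(\tfrac{1}{\zeta(R)}\Bigr)^k\sum_{x_1,\dots,x_k\in K}\prod_{i=1}^k g\!\bigl(x_i/R\bigr)\Biggr]
\]
for bounded continuous test functions $g$, show via the spatial-moment ODEs that this converges to the corresponding $\bbN$-moment $\bbN\bigl(\int g\,d\mu)^k\cdot(\text{const.})$, and finally transfer from the cut-off measure back to $\P_{\beta_c}$ and from moments to weak convergence of the normalized measures.

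The structure of the argument splits along the three regimes according to how the spatial driving term competes with the self-referential term in ODEs like \eqref{eq:x12_derivative_asymptotic2_intro}, governed by the general ODE lemmas cited (\cref{lem:ODE_with_driving_term,lem:ODE_with_possibly_negligible_driving_term,lem:ODE_with_critical_driving_term}). When $\alpha<2$ (LR HD), the driving term $r^{2+\alpha}$ is dominated: the second-moment spatial observable $\E_{\beta_c,r}\sum_{x\in K}\|x\|_2^2$ scales like the volume moment times $r^2$ only through the heavy-tailed jumps, and the relevant limit object is the $\alpha$-stable superprocess — here one must show the appropriately rescaled empirical spatial spread of $K$ has an $\alpha$-stable-snake law, with scaling $\zeta(R)\asymp R^{2\alpha}$, $\eta(R)\asymp R^{-\alpha}$. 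When $\alpha>2$ (SR HD) the driving term wins and the spatial moments behave Gaussianly with $\zeta(R)\asymp R^4$, $\eta(R)\asymp R^{-2}$, yielding integrated super-Brownian excursion with a covariance $\Sigma$ that emerges as a (non-universal) limit from the lattice; the marginal case $\alpha=2$ (mSR HD) is the logarithmically-corrected boundary $\zeta(R)\asymp R^4(\log R)^{-2}$, $\eta(R)\asymp R^{-2}\log R$, handled by \cref{lem:ODE_with_critical_driving_term}. In all three cases the volume normalization $\eta(R)$ is tied to $\zeta(R)$ via the tail $\P_{\beta_c}(|K|\ge n)\asymp n^{-1/2}$ from \eqref{eq:hd_volume_tail_main}, i.e.\ $\eta(R)\asymp \zeta(R)^{-1/2}$, which matches the stated asymptotics.

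The main obstacle I expect is twofold. First, justifying convergence of \emph{all} mixed spatial moments — not just $k=2$ — requires setting up and solving a full triangular hierarchy of asymptotic ODEs for observables $\E_{\beta_c,r}\sum_{x_1,\dots,x_k\in K}\prod f_i(x_i)$, showing that the leading-order behaviour is governed entirely by the tree-recursion (each long pivotal edge splitting the cluster into asymptotically independent pieces and contributing one jump of the limiting Lévy/Brownian motion), with all non-tree-like corrections genuinely subleading; this is the place where the asymptotic-independence-of-clusters-at-long-pivotal-edges heuristic must be made fully quantitative uniformly in $k$. Second, the moment method only identifies the limit if one knows $\bbN$ is moment-determinate and one has tightness; tightness on the space of finite measures (with, say, the vague or weak topology after compactification) needs a separate argument — one should use the universal tightness input alluded to in \cref{subsec:correlation_inequalities_and_the_universal_tightness_theorem}, controlling $M_{\beta_c,r}$, together with a tail bound on the spatial extent of $K$ coming from the same spatial-moment estimates, to prevent mass escaping to infinity. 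Passing from the cut-off model $\P_{\beta_c,r}$ to the true critical model $\P_{\beta_c}$ is a comparatively routine coupling/comparison step once the cut-off asymptotics are in hand, analogous to the Tauberian transfer used for \eqref{eq:hd_volume_tail_main}.
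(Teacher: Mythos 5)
Your plan coincides with the paper's actual strategy: the proof proceeds by establishing a triangular hierarchy of asymptotic ODEs for all mixed spatial moments under the size-biased cut-off measure, identifying the limiting constants with the tree-diagram moment formulas for the superprocess (via a scale-invariance recurrence and the chi-squared-mass CRT), invoking Carleman-type moment bounds and a tightness lemma, and then removing the cut-off by a ghost-field sprinkling argument combined with a double limit in which the cut-off kernel's transition density converges to the Green's function. The regime split via the ODE driving-term competition and the relation $\eta(R)\asymp\zeta(R)^{-1/2}$ coming from the $n^{-1/2}$ volume tail are likewise exactly as in the paper.
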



We also establish in \cref{cor:scaling_limit_cut_off} a similar scaling limit theorem for the size-biased cut-off measure $\hat \E_{\beta_c,r}$, which is more relevant to future applications later in the series and which is used in the proof of \cref{thm:superprocess_main}. (Note that the cluster scale $R$ and the cut-off scale $r$ have a non-trivial relationship described by \eqref{eq:radius_of_gyration_intro}.)
A similar result for nearest-neighbour models (which was conjectured in \cite{hara1998incipient,van2003convergence} and was the subject of substantial partial progress in \cite{MR1773141,hara2000scaling}) is proven via completely different methods in forthcoming joint work with Blanc-Renaudie \cite{HutBR_superprocesses}. 

\begin{remark}
\label{remark:renormalized_covariance_matrix_symmetry}
If $\alpha>2$, the renormalized covariance matrix $\Sigma$ is, in general, non-universal and depends on the small-scale details of the kernel $J$ in a complicated way. If, however, the kernel $J$ is invariant under permutation of coordinates then $\Sigma$ must be the identity matrix, so that the scaling limit is standard super-Brownian motion. In the marginally short-range case $\alpha=2$, it follows from \cref{prop:radius_of_gyration} that $\Sigma$ is universal in the sense that it is determined explicitly by the norm $\|\cdot\|$ via
\begin{equation}
  \Sigma(i,j) = \frac{1}{\int_B \|y\|_2^2\dif y} \int_B y_iy_j \dif y
\end{equation}
where $B$ denotes the unit ball of $\|\cdot\|$ and $y_i$ denotes the $i$ coordinate of $y$.
\end{remark}

\begin{remark}[Reformulations of the scaling limit]
\label{remark:scaling_limit_volume_formulation}
If desired, \cref{thm:superprocess_main} can be rephrased equivalently as a convergence statement 
\[
\frac{1}{\eta(R)}\P_{\beta_c}\left( \frac{1}{\zeta(R)}\sum_{x \in K} \delta_{x/R}\in \;\cdot\;  \right) \to \mathbb{N}
\]
for an appropriate topology on measures on non-zero finite measures on $\R^d$ giving finite mass to the set $\{\mu:\mu(\R^d)\geq \eps\}$ for every $\eps>0$, without the need to explicitly introduce the regularization parameter~$\lambda$.
\cref{thm:superprocess_main} also implies that
\begin{equation}
\label{eq:scaling_limit_volume_version_conditional3}
\P_{\beta_c}\left( \frac{1}{\zeta(R)}\sum_{x \in K} \delta_{x/R}\in \cdot \;\Bigg|\; |K| \geq \lambda \zeta(R) \right) \to \mathbb{N}\bigl( \mu \in \cdot \;|\; \mu(\R^d) \geq \lambda \bigr)
\end{equation}
weakly as $R\to \infty$ for each fixed $\lambda>0$; this is a weaker statement than \cref{thm:superprocess_main} since it does not identify the probability of the event being conditioned on. Indeed, as the reader can check, \cref{thm:superprocess_main} implies the volume tail estimate of
\eqref{eq:hd_volume_tail_main} (which is used in its proof), and is equivalent to the conditional limit
law \eqref{eq:scaling_limit_volume_version_conditional3} and the volume-tail estimate \eqref{eq:hd_volume_tail_main} since $\N(\{\mu : \mu(\R^d) \geq \lambda\}) \propto \lambda^{-1/2}$ and $\eta(R) \sim \mathrm{const.}\; \zeta(R)^{-2}$ under the hypotheses of the theorem.
 Rewriting \eqref{eq:scaling_limit_volume_version_conditional3} in terms of  the volume rather than the length-scale, we also have that 
\begin{equation}
\label{eq:scaling_limit_volume_version_conditional2}
\P_{\beta_c}\left( \frac{1}{n}\sum_{x \in K} \delta_{x/\zeta^{-1}(n)}\in \cdot \;\Bigg|\; |K| \geq \lambda n \right) \to \mathbb{N}\bigl( \mu \in \cdot \;|\; \mu(\R^d) \geq \lambda \bigr)
\end{equation}
weakly as $n\to \infty$ for each fixed $\lambda>0$. 
\end{remark}

\begin{remark}
\label{remark:support}
One might wonder whether the cluster $K$ also converges \emph{as a set} to the support of the relevant integrated superprocess excursion. 
 When $\alpha<2$, this is a rather degenerate statement since the relevant integrated super-L\'evy excursion has support equal to the entire space $\R^d$ almost surely \cite{perkins1990polar}. On the other hand, although the integrated super-Brownian excursion has compact support a.s.\ \cite{iscoe1988supports}, it seems likely based on similar phenomena studied in \cite{MR3418547,heydenreich2014high,MR3206998} that the $K$ converges to this support as a set only under the stronger assumption that $\alpha>4$, with the cluster containing enough ``hairs'' (i.e., subgraphs that have negligible volume but macroscopic diameter) to make it converge as a set to all of $\R^d$ when $\alpha < 4$. We do not pursue this issue further in this paper, but refer the reader to \cite{holmes2020range} for a broad discussion of questions of this form including sufficient conditions for the convergence of the range in a large family of high-dimensional models.
\end{remark}

We will prove \cref{thm:superprocess_main} using 
using the
following concrete description of the moment measures of the canonical measure $\N$ in terms of the
diagrammatic expansion:
\begin{multline}
\label{eq:canonical_measure_diagrams}
  \N\left[\idotsint \prod_{i=1}^n \varphi_i(x_i) \dif \mu(x_1) \cdots \dif \mu(x_n) \right]
  \\=
  2^{2n-2}\sum_{T\in \mathbb{T}_n} \idotsint \prod_{\substack{i<j \\ i\sim j}} G(x_i-x_j) \prod_{i=1}^n \varphi_i(x_i)\dif x_1 \cdots \dif x_{2n-1},
\end{multline}
where $\mu$ denotes the ``random'' measure with ``law'' $\N$ (which is not really a probability measure),  $\mathbb{T}_n$ denotes a set of isomorphism class representatives of trees with leaves labelled $0,1,\ldots,n$ and unlabelled non-leaf vertices all of degree $3$, $G$ denotes the Green's function of the relevant spatial motion (defined formally in \cref{subsec:scaling_limits_without_cut_off}), $\varphi_1,\ldots,\varphi_n$ are non-negative compactly supported continuous functions, and we take $x_0=0$.
(For notational purposes we take our trees to have vertex set $\{1,\ldots,2n-1\}$ even though we regard the internal vertices $\{n+1,\ldots,2n-1\}$ as unlabelled; isomorphisms are not required to preserve the labels of the internal vertices.)
For the purposes of this paper the formula \eqref{eq:canonical_measure_diagrams} can essentially be taken as the \emph{definition} of $\N$, which is shown to satisfy this\footnote{In \cite[Chapter IV, Proposition 2 and Theorem 4]{le1999spatial} a slightly different version of this formula is stated where the points are restricted to lie in clockwise order on the contour of the tree starting from the root and the sum is over (unlabelled) rooted plane trees, and the constant coefficient on the right hand side is $2^{n-1}$. Removing the restriction concerning the order multiplies both sides by a factor of $n!$, while changing the type of trees summed over on the right hand side gives a $2^{n-1}/n!$ factor (to cancel the effect of adding the labels and then forgetting the planar structure), leading to the $2^{2n-2}$ prefactor written here.} formula in  \cite[Chapter IV, Proposition 2 and Theorem 4]{le1999spatial}. Indeed, when $d>\min\{4,2\alpha\}$ (as is the case in all situations we consider), Carleman's criterion implies that the canonical measure $\bbN$ is in fact \emph{characterised} by the moment formula \eqref{eq:canonical_measure_diagrams} as we explain in \cref{subsec:scaling_limits_without_cut_off} (see in particular the moment bound of \cref{lem:canonical_measure_moment_upper_bound}). (When $d\leq \min\{2,\alpha\}$ all the moments described by \eqref{eq:canonical_measure_diagrams} are infinite and the formula is no longer useful as written for describing the law of the superprocess.)

\begin{table}[t]
\centering
\renewcommand{\arraystretch}{1.6} 
\begin{tabular}{|c|c|c|}
\hline
 & \# Large Clusters & Typical size \\ 
\hline
LR HD & $R^{d-3\alpha}$ & $R^{2\alpha}$  \\ 
\hline
mSR HD & $R^{d-6}(\log R)^3$ & $R^4 (\log R)^{-2}$ \\ 
\hline
SR HD & $R^{d-6}$ & $R^4$  \\ 
\hline
LR CD & $\log R$ & $R^{2\alpha} (\log R)^{-1/2}$  \\ 
\hline
LR LD & $O(1)$ & $R^{(d+\alpha)/2}$ \\ 
\hline
\end{tabular}
\caption{Summary of results concerning the ``typical size of a large cluster'' and the ``number of typical large clusters'' in a box of radius $R$ for long-range percolation on $\Z^d$. Precise statements are given in \cref{thm:superprocess_main}, \cref{II-thm:hyperscaling} and \cref{III-cor:superprocess_main_CD}. (See also \eqref{eq:number_of_large_clusters} and \eqref{III-eq:number_of_large_clusters}.) The most important qualitative distinction is between the LR LD regime studied in the second paper of this series, where the number of large clusters is $O(1)$, and the other regimes, studied in the first and third papers, in which the number of large clusters on scale $R$ diverges as $R\to\infty$. Be careful to note the non-trivial relationship between the cluster scale $R$ and the cut-off scale $r$ outside of the effectively long-range regime as described in \eqref{eq:radius_of_gyration_intro}.}
\label{table:large_clusters}
\end{table}

\medskip

\textbf{Interpretations of the scaling factors $\zeta$ and $\eta$.}
Roughly speaking, one should interpret $\zeta(R)$ as being the ``typical size of a large cluster on scale $R$'' and $\eta(R)$ as being the probability that the origin belongs to such a ``typical large cluster". As such, the ratio
\begin{equation}
\label{eq:number_of_large_clusters}
  N(R) := \frac{R^d \eta(R)}{\zeta(R)} \sim \text{const.}\; \begin{cases} R^{d-6} & \alpha>2, d>6 \qquad\phantom{\alpha} \text{(SR HD)}\\
R^{d-6} (\log r)^3 & \alpha = 2, d\geq 6 \qquad\phantom{\alpha} \text{(mSR HD)}\\
R^{d-3\alpha} & \alpha<2, d>3\alpha \qquad \text{(LR HD)}\\
   \end{cases}
\end{equation}
can be interpreted heuristically as the \emph{number} of ``typical large clusters'' on scale $R$. See \cite{MR1431856} for similar asymptotic estimates regarding the size and number of macroscopic clusters in a box for nearest-neighbour percolation in high dimensions. The names for these quantities can be justified by considering e.g.\ the sum of $p$th powers of cluster sizes in a ball, which for $p\geq 2$ can be shown to satisfy
\begin{equation}
\label{eq:N_and_zeta_moments}
  \sum_{C \in \mathscr{C}} |C\cap B_R|^p \asymp \E_{\beta_c} \sum_{C \in \mathscr{C}} |C\cap B_R|^p \asymp N(R) \zeta(R)^p
\end{equation}
with high probability under $\P_{\beta_c}$, where $\mathscr{C}$ denotes the collection of all clusters in the percolation configuration; although the \emph{largest} cluster in the ball should be larger than $\zeta(R)$ by a factor of order $\log N(R)$ due to random fluctuations, this largest cluster is not significant to the macroscopic geometry of the configuration and it is instead the ``soup'' of roughly $N(R)$ clusters with volume of order $\zeta(R)$ that are important. Note  that we see only $(\log R)^3$ large clusters on scale $R$ in the doubly-marginal case $d=6$, $\alpha=2$ even though several other aspects of the model have exact mean-field scaling with no logarithmic corrections in this case (one example of this being the relation $\eta(R)\asymp \zeta(R)^{-1/2}$ which encodes the $n^{-1/2}$ volume tail as discussed in \cref{remark:scaling_limit_volume_formulation}). For effectively long-range, \emph{critical-dimensional} models with $d=3\alpha<6$, we show in \cref{III-cor:superprocess_main_CD} that the corresponding functions scale as
\[
\zeta(R) \sim \text{const.}\;R^{2\alpha} (\log R)^{-1/2},
\quad
\eta(R) \sim \text{const.}\; R^{-\alpha} (\log R)^{1/2}, \quad \text{ and } \quad N(R) \sim \text{const.}\; \log R
\]
as $R\to \infty$; see \Cref{table:large_clusters} for a summary.


 An important step in the proof of \cref{thm:superprocess_main} is to establish the asymptotics of the \textbf{radius of gyration} (a.k.a. the correlation length of order $2$)
\begin{equation}
\label{eq:radius_of_gyration_intro}
\xi_{2}(r):= \sqrt{\frac{\E_{\beta_c,r}\left[\sum_{x\in K} \|x\|_2^2 \right]}{\E_{\beta_c,r}|K|}} \sim 
\text{const.}\;\begin{cases}
 r^{\alpha/2} &\qquad \alpha>2, d>6  \qquad \phantom{\alpha}\text{(SR HD)}\\
 r \sqrt{\log r} &\qquad \alpha = 2, d\geq 6 \qquad \phantom{\alpha}\text{(mSR HD)}\\
r &\qquad \alpha < 2, d > 3\alpha \qquad \text{(LR HD)}\\
\end{cases}
\end{equation}
(see \cref{prop:radius_of_gyration}), 
which is a measure of the distance between the origin and a typical point
 in the cluster of the origin when this cluster has its ``typical large size'' under the measure $\P_{\beta_c,r}$ (i.e., is drawn from the size-biased distribution $\hat \P_{\beta_c,r}$); it is convenient to use the Euclidean norm $\|\cdot\|_2$ in these definitions for algebraic reasons even though our model may be defined with respect to a different norm $\|\cdot\|$. This quantity is used to relate the cluster scale $R$ appearing in e.g.\ \cref{thm:superprocess_main} with the cut-off scale $r$ used in our RG analysis.
  Note that the effectively long-range regime is characterised by the radius of gyration $\xi_{2}(r) \sim \text{const.}\,r$ having the same order as the cut-off scale $r$; this will motivate our \emph{definition} of the effectively long-range regime for models with (possibly) low effective dimension in \cref{II-def:CL}.

\begin{remark}
It is interesting to note that we prove all our results about critical exponents, scaling limits, etc.\ in the regime $d>3\alpha$ without ever proving pointwise estimates on the two-point function or that the triangle condition holds. In \cref{II-cor:high_dimensional_two_point_pointwise} we apply the results of this paper to prove that the pointwise two-point function bound
\begin{equation}
\label{eq:two_point_paper_II}
  \P_{\beta_c}(x\leftrightarrow y) \asymp \|x-y\|^{-d+\alpha}
\end{equation}
holds when $d\geq 3\alpha$ and $\alpha<2$ (the $d=3\alpha$ case relying also on the results of the third paper, namely \cref{III-thm:critical_dim_hydro}); see also Propositions \ref{III-prop:three_point_upper} and \ref{III-prop:three_point_lower} and \cref{II-thm:Fisher_relation} for corresponding up-to-constants estimates on the three-point and \emph{slightly subcritical} two-point functions respectively. The estimate \eqref{eq:two_point_paper_II} implies in particular that the triangle condition holds throughout the effectively long-range high-dimensional regime (but we do not use this fact in our proofs since it is not established until the very end of our analysis).
\end{remark}

\subsection{Conditional results in the critical dimension}
\label{subsec:scaling_limits_in_the_critical_dimension_under_the_hydrodynamic_condition}

Although the primary focus of this paper is on the high-effective-dimensional regime, our methods also apply \emph{at the critical dimension} subject to a technical ``marginal triviality'' assumption we call the \emph{hydrodynamic condition} that is verified for $d=3\alpha$ in the third paper in this series. In this section we introduce this condition and state our results on scaling limits and the cluster volume distribution in the critical dimension that hold subject to this condition.
The proofs of these theorems are presented in this paper, rather than deferred to after the hydrodynamic condition is proven in the third paper, since they overlap heavily with the proofs of \cref{thm:hd_moments_main,thm:superprocess_main} and it is convenient to carry out the two proofs in parallel to one another.


We now introduce the hydrodynamic condition, which was previously introduced for the hierarchical model in \cite{hutchcroft2022critical}.
For each $r\geq 1$, we define the quantity
\[
  M_r =\min\left\{n\geq 0: \P_{\beta_c,r}\left(\max_{x\in \Z^d} |K_x \cap B_r| \geq n\right)\leq e^{-1}\right\},
\]
which we think of as the typical size of the largest cluster in the box $B_r$ under the cut-off measure $\P_{\beta_c,r}$. The fact that this is a \emph{useful} quantity to study owes largely to the \emph{universal tightness theorem} of \cite{hutchcroft2020power} as explained in \cref{subsec:correlation_inequalities_and_the_universal_tightness_theorem}.
As we explain in detail in \cref{sub:previous_results_on_long_range_percolation}, it follows from the results of \cite{hutchcroft2022sharp} that $M_r$ always satisfies
 \[M_r=O(r^{(d+\alpha)/2})\] as $r\to \infty$ when $0<\alpha<d$. The geometric significance of this bound is that if $A_1$ and $A_2$ are disjoint subsets of the ball $B_r$ then the expected number of edges between $A_1$ and $A_2$ that are open in the configuration associated to the measure $\P_{\beta_c,2r}$ but not $\P_{\beta_c,r}$ (in the standard monotone coupling) is of order $|A_1||A_2|r^{-d-\alpha}$, so that the probability such an open edge exists is high when $|A_1|,|A_2| \gg r^{(d+\alpha)/2}$, low when $|A_1|,|A_2| \ll r^{(d+\alpha)/2}$, and bounded away from $0$ and $1$ when $|A_1|$ and $|A_2|$ are both of order $r^{(d+\alpha)/2}$. Thus, the bound $M_r =O(r^{(d+\alpha)/2})$ can be interpreted as stating that the largest clusters in two adjacent balls of radius $r$ cannot have a \emph{high} probability of merging via the direct addition of a single edge when we pass from scale $r$ to scale $2r$. As we shall see throughout this series, the qualitative distinction between the effectively low- and high-dimensional long-range regimes can be understood in terms of whether or not this bound is sharp: In low dimensions large clusters have a good probability to directly merge with each other on each scale, which leads to non-tree-like geometry and the validity of the so-called \emph{hyperscaling relations}, while in high-dimensions different clusters interact with each other only very weakly and mean-field approximations such as \eqref{eq:ODE_first_moment_intro} and \eqref{eq:x12_derivative_asymptotic2_intro} are valid. (This is closely related to whether the ``number of typical large clusters'' $N(r)$ considered in \eqref{eq:number_of_large_clusters} diverges or remains bounded as $r\to \infty$.) 



\begin{defn}[Hydro]
\label{def:Hydro}
 We say that the \textbf{hydrodynamic condition} \mytag[Hydro]{Hydro}\!\!\! holds if \[M_r=o(r^{(d+\alpha)/2})\] as $r\to\infty$. 
\end{defn}

This terminology was inspired by the notion of \emph{hydrodynamic limits}, where trajectories of certain Markov chains (e.g.\ interacting particle systems) converge to deterministic dynamical systems described by ODEs or PDEs \cite{demasi2006mathematical}. In our context, the hydrodynamic condition implies roughly that the ensemble of all clusters in a large box has approximately deterministic geometry in various senses; see \cite[Section 4]{hutchcroft2022critical} for further discussion in the hierarchical setting.

\medskip



The fact that the hydrodynamic condition holds when $d>3\alpha$ will be established as a corollary to the main results of this paper (which proceed via a different approach that does not invoke this condition directly); see \cref{cor:HD_hydro}. The fact that it also holds when $d=3\alpha$ is much more difficult to prove and is established in the third paper in this series (\cref{III-thm:critical_dim_hydro}). Finally, the fact that the hydrodynamic condition does \emph{not} hold in the effectively long-range, low-dimensional regime (LR LD in \cref{fig:cartoon}) is established in the second paper of the series (\cref{II-prop:low_dim_not_hydro,II-thm:max_cluster_size_LD}) and plays an important role in our analysis of that regime.

\medskip

\noindent
\textbf{Scaling limits in the critical dimension under the hydrodynamic condition.}
We now state our main results describing critical behaviour at the upper critical dimension subject to the hydrodynamic condition.
%
 In both of the following two theorems, the large scale asymptotics of the model are determined in terms of certain slowly varying 
  corrections to scaling (see in \cref{subsec:regular_variation_and_logarithmic_integrability} for the definition of slow variation) that we do not determine in this paper but are computed for $d=3\alpha<6$ in the third paper in this series (\cref{III-thm:critical_dim_moments_main}).

\begin{theorem}[Cluster volumes in the critical dimension]
\label{thm:critical_dim_moments_main_slowly_varying}
Suppose that $d=3\alpha$ and the hydrodynamic condition holds. There exist bounded, slowly varying functions $A_r$ and $A_n^*$ with $r\mapsto A_r r^{2\alpha}$ and $n\mapsto (A^*_n)^{-1/2\alpha} n^{1/2\alpha}$ mutually inverse such that
\begin{equation}
\label{eq:critical_dim_moments_main}
\E_{\beta_c,r} |K|^p \sim  (2p-3)!! A^{p-1}_r \frac{\alpha}{\beta_c} r^{(2p-1)\alpha}
\end{equation}
as $r\to\infty$ for each integer $p\geq 1$
and
\begin{equation}
\label{eq:critical_dim_volume_tail_main}
\P_{\beta_c}(|K|\geq n) \sim \frac{\alpha}{\beta_c} \sqrt{\frac{2}{\pi A^*_n}} \cdot \frac{1}{\sqrt{n}}
\end{equation}
as $n\to \infty$. 
\end{theorem}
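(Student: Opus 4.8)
\medskip
\noindent\textbf{Proof proposal.}
The plan is to run, in parallel with the proof of \cref{thm:hd_moments_main}, the same real-space renormalization-group analysis, the only difference being that the mean-field constant $A$ is now replaced by a slowly varying function $A_r$ of the cut-off scale. The first ingredient is the first-moment asymptotic ODE
\[
  \frac{d}{dr}\,\E_{\beta_c,r}|K| \sim \beta_c\, r^{-\alpha-1}\bigl(\E_{\beta_c,r}|K|\bigr)^2,
\]
which holds at $d=3\alpha$ under \eqref{Hydro} by \cref{lem:first_moment_critical_dim}. Since the susceptibility diverges at $\beta_c$ we have $\E_{\beta_c,r}|K|\to\infty$ and hence $1/\E_{\beta_c,r}|K|\to 0$; integrating the displayed relation via \cref{lem:ODE_with_driving_term} gives $1/\E_{\beta_c,r}|K|\sim\int_r^\infty\beta_c s^{-\alpha-1}\dif s=(\beta_c/\alpha)\,r^{-\alpha}$, that is, $\E_{\beta_c,r}|K|\sim(\alpha/\beta_c)\,r^{\alpha}$. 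This is \eqref{eq:critical_dim_moments_main} for $p=1$, for which $(2p-3)!!=(-1)!!=1$ and $A_r^{\,p-1}=A_r^0=1$.

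For $p\geq 2$ we argue by induction on $p$. Differentiating $\E_{\beta_c,r}|K|^p$ in the cut-off parameter produces a Russo-type identity: an infinitesimal increase of the scale opens long edges that merge the cluster of the origin with a second cluster, yielding the main term $\beta_c r^{-\alpha-1}\sum_{\ell=0}^{p-1}\binom{p}{\ell}\E_{\beta_c,r}|K|^{\ell+1}\,\E_{\beta_c,r}|K|^{p-\ell}$ of \eqref{eq:volume_moments_ODE_intro} together with an error term accounting for the two merged clusters not being exactly independent and for lower-order diagrams. The crux --- and the step I expect to be the main obstacle --- is to show that under \eqref{Hydro} this error is of smaller order than $r^{-1}\E_{\beta_c,r}|K|^p$, so that the identity is an honest asymptotic ODE. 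In the setting of \cref{thm:hd_moments_main} the analogous gap is a genuine power of $r$ produced by the crude bootstrap of \cref{lem:first_moment_high_dimensional}; at the critical dimension there is no power to spare, and one must instead use the bound $M_r=o(r^{(d+\alpha)/2})$ supplied by \eqref{Hydro}, together with the universal tightness theorem of \cite{hutchcroft2020power} and the a priori diagrammatic estimates, to show that the clusters at the two endpoints of a long pivotal edge decouple asymptotically. Granting this, the identity becomes --- by the inductive hypothesis on the lower moments and the $p=1$ asymptotics --- a linear asymptotic ODE for $\E_{\beta_c,r}|K|^p$ with known driving term, which \cref{lem:ODE_with_driving_term} integrates.

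To organise the slowly varying corrections, set $A_r:=\beta_c\,\E_{\beta_c,r}|K|^2/(\alpha r^{3\alpha})$, so that \eqref{eq:critical_dim_moments_main} holds for $p=2$ by construction. From the $p=2$ instance of the identity above together with $\E_{\beta_c,r}|K|\sim(\alpha/\beta_c)r^{\alpha}$ one reads off that the logarithmic derivative of $A_r$ is $o(1/r)$, so $A_r$ is slowly varying; its boundedness follows from the two-sided bound $\E_{\beta_c,r}|K|^2\asymp r^{3\alpha}$, obtained from the same identity and the a priori diagrammatic upper and lower bounds on the second moment (the upper bound being where \eqref{Hydro} enters once more). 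Substituting $A_r$ back into the induction, one checks --- exactly as in the computation underlying \cref{thm:hd_moments_main} --- that at each step the linear ODE reproduces the combinatorial factor $(2p-3)!!$ and the power $A_r^{\,p-1}$, the only new point being that the slow $r$-dependence of $A_r$ does not disturb the leading-order balance, which is immediate from slow variation. We make no attempt to pin down $A_r$ here; that requires the second-order analysis of \cref{III-thm:critical_dim_moments_main}.

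Finally, \eqref{eq:critical_dim_volume_tail_main} follows from \eqref{eq:critical_dim_moments_main} by the Tauberian argument of \cref{sec:volume_tail}, carrying the slowly varying factor through the computation rather than treating it as a constant as in the deduction of \eqref{eq:hd_volume_tail_main} from \eqref{eq:hd_moments_main}. The mean of the size-biased volume distribution at scale $r$ equals $\E_{\beta_c,r}|K|^2/\E_{\beta_c,r}|K|\sim A_r r^{2\alpha}$, and defining $A_n^*$ so that $r\mapsto A_r r^{2\alpha}$ and $n\mapsto(A_n^*)^{-1/2\alpha}n^{1/2\alpha}$ are mutually inverse amounts precisely to converting the cut-off scale $r$ into the corresponding target volume $n$; the $\chi^2_1$ limit of the normalized size-biased volume then produces the constant $\frac{\alpha}{\beta_c}\sqrt{2/(\pi A^*_n)}$ in front of $n^{-1/2}$, as in \cref{remark:scaling_limit_volume_formulation}.
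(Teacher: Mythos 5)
Your proposal follows essentially the same route as the paper: the first-moment Riccati-type ODE under \eqref{Hydro} (\cref{lem:first_moment_critical_dim} together with \cref{lem:f'=f^2} -- note the integration $1/f(r)=\int_r^\infty f'/f^2$ is the content of \cref{lem:f'=f^2}, not \cref{lem:ODE_with_driving_term}, which handles the \emph{linear} ODEs arising for $p\geq 2$), the definition of $A_r$ through the second moment with slow variation read off from \cref{lem:ODE_self_referential}, an induction on $p$ in which the decorrelation error is controlled via $M_r=o(r^{(d+\alpha)/2})$ and the universal tightness theorem, the double-factorial identity closing the recursion, and finally the Tauberian argument of \cref{prop:first_order_volume_tail} with the chi-squared limit of the size-biased volume. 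Two small corrections: first, the paper's induction for $p\geq 3$ is a \emph{double} induction -- one must first establish the upper bound \eqref{eq:higher_moments_upper} via Gr\"onwall (\cref{lem:ODE_with_driving_term_Gronwall}), because the Cauchy--Schwarz step bounding $\E_r[|K|^{p+1}|K\cap B_r|]$ at $d=3\alpha$ consumes $\E_r|K|^{2p+2}$, which is not yet controlled by the induction hypothesis on lower moments alone. Second, your claimed two-sided bound $\E_{\beta_c,r}|K|^2\asymp r^{3\alpha}$ is false at $d=3\alpha<6$ (the third paper shows $A_r\sim A(\log r)^{-1/2}$, so no a priori lower bound of order $r^{3\alpha}$ exists); fortunately only the upper bound, which is the tree-graph inequality, is needed for the boundedness of $A_r$ asserted in the theorem.
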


We will see that the asymptotic formula \eqref{eq:critical_dim_moments_main} is an analytic consequence of the asymptotic ODE \eqref{eq:volume_moments_ODE_intro}. Indeed, the two estimates \eqref{eq:critical_dim_moments_main} and \eqref{eq:volume_moments_ODE_intro} are justified in parallel over the course of \cref{sec:analysis_of_moments}. As in the high-dimensional case, the volume tail asymptotics of \eqref{eq:critical_dim_volume_tail_main} will be deduced from the moment asymptotics of \eqref{eq:critical_dim_moments_main} with the aid of Karamata's Tauberian theorem in \cref{sec:volume_tail}.

\begin{remark}
When $d=3\alpha \geq 6$ and the model satisfies a two-point function estimate as verified for spread-out models via the lace expansion \cite{MR3306002,MR4032873}, it follows from \cref{thm:hd_moments_main} that $A_r$ and $A^*_n$ converge to positive constants as $r,n\to\infty$. On the other hand, for $d=3\alpha<6$ we show in \cref{III-thm:critical_dim_moments_main} that $A_r \sim A (\log r)^{-1/2}$ 
and $A_n^*\sim (2\alpha)^{1/2} A (\log n)^{1/2}$ 
for some constant $A>0$. 
%
%
\end{remark}


For our next theorem we restrict attention to the case $d=3\alpha<6$ where the model is effectively long-range and critical dimensional. (Note that the case $d=3\alpha\geq 6$ was already treated in \cref{thm:superprocess_main} for spread-out models satisfying the conclusions of the lace expansion \cite{MR2430773,MR3306002,MR4032873}.) We drop the distinction between the cluster scale $R$ and the cut-off scale $r$ from this theorem since the two are equivalent for the effectively long-range models we treat here.

\begin{theorem}[Superprocess limits in the critical dimension]
\label{thm:superprocess_main_regularly_varying}
 Suppose that $d=3\alpha<6$ and the hydrodynamic condition holds.
  If we define 
 \[
  \eta(r) = 4 \cdot \frac{(\E_{\beta_c,r}|K|)^2}{\E_{\beta_c,r}|K|^2}
   \qquad \text{ and } \qquad \zeta(r) = \frac{1}{4} \cdot \frac{\E_{\beta_c,r}|K|^2}{\E_{\beta_c,r}|K|}
\]
then $\zeta(r)$ and $\eta(r)$ are regularly varying of index $2\alpha$ and $-\alpha$ respectively and
\[
\frac{1}{\eta(r)}\P_{\beta_c}\left(|K|\geq \lambda \zeta(r),\; \frac{1}{\zeta(r)}\sum_{x \in K} \delta_{x/r}\in \;\cdot\;\right) \to \mathbb{N}\Bigl(\mu(\R^d)\geq \lambda, \mu \in \;\cdot\;\Bigr)
\]
weakly as $r\to\infty$ for each fixed $\lambda>0$, where $\mathbb{N}$ denotes the canonical measure of the integrated  superprocess whose spatial motion is the symmetric $\alpha$-stable L\'evy jump process whose L\'evy measure has density $\frac{\alpha}{\alpha+d}\|x\|^{-d-\alpha}$. 

\end{theorem}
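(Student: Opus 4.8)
The regular‑variation claim is immediate from \cref{thm:critical_dim_moments_main_slowly_varying}: substituting \eqref{eq:critical_dim_moments_main} gives $\zeta(r)\sim\tfrac{1}{4}A_r r^{2\alpha}$ and $\eta(r)\zeta(r)=\E_{\beta_c,r}|K|\sim\tfrac{\alpha}{\beta_c}r^{\alpha}$, whence $\eta(r)\sim\tfrac{4\alpha}{\beta_c}A_r^{-1}r^{-\alpha}$; since $A_r$ (and hence $A_r^{-1}$) is slowly varying, $\zeta$ is regularly varying of index $2\alpha$ and $\eta$ of index $-\alpha$. For the weak‑convergence statement the plan is to use the method of moments, carried out in parallel with the proof of the high‑dimensional \cref{thm:superprocess_main} and with the analysis of \cref{sec:analysis_of_moments}. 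Write $\mu_r:=\tfrac{1}{\zeta(r)}\sum_{x\in K}\delta_{x/r}$, so that $\mu_r(\R^d)=|K|/\zeta(r)$. Since $d=3\alpha<6$ forces $\alpha<2$, one has $d>2\alpha=\min\{4,2\alpha\}$, so by the moment bound of \cref{lem:canonical_measure_moment_upper_bound} and Carleman's criterion the restriction of $\N$ to $\{\mu(\R^d)\geq\lambda\}$ is characterised by the moments $\N[\prod_{i=1}^n\langle\varphi_i,\mu\rangle]$ read off from \eqref{eq:canonical_measure_diagrams}, where $\varphi_1,\dots,\varphi_n$ range over non‑negative compactly supported continuous functions. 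It therefore suffices to prove: (i) convergence of total masses, $\tfrac{1}{\eta(r)}\P_{\beta_c}(|K|\geq\lambda\zeta(r))\to\N(\mu(\R^d)\geq\lambda)$, which is precisely \eqref{eq:critical_dim_volume_tail_main} evaluated at $n$ of order $\lambda\zeta(r)$, using the mutual‑inverse normalisation relating $A_r$ and $A^*_n$ in \cref{thm:critical_dim_moments_main_slowly_varying} to simplify $A^*_{\lambda\zeta(r)}\sim A_r$; (ii) convergence of mixed moments, $\tfrac{1}{\eta(r)}\E_{\beta_c}[\prod_{i=1}^n\langle\varphi_i,\mu_r\rangle]\to\N[\prod_{i=1}^n\langle\varphi_i,\mu\rangle]$ for every $n\geq 1$ and every admissible $\varphi_1,\dots,\varphi_n$; and (iii) tightness of the measures $\tfrac{1}{\eta(r)}\P_{\beta_c}(|K|\geq\lambda\zeta(r),\,\mu_r\in\cdot)$ in the weak topology on finite measures on $\R^d$.

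The crux is (ii). Since $\prod_{i=1}^n\langle\varphi_i,\mu_r\rangle=\zeta(r)^{-n}\sum_{x_1,\dots,x_n\in K}\prod_{i=1}^n\varphi_i(x_i/r)$, this reduces to the asymptotics of the spatially weighted $n$‑point sums $\E_{\beta_c}[\sum_{x_1,\dots,x_n\in K}\prod_{i=1}^n\varphi_i(x_i/r)]$, which are finite because the $\varphi_i$ are compactly supported. The plan is to extend the renormalization‑group/ODE machinery of \cref{sec:analysis_of_moments} — which establishes the volume‑moment ODE \eqref{eq:volume_moments_ODE_intro} and the spatial ODE \eqref{eq:x12_derivative_asymptotic2_intro} — from the unweighted moments to these spatially weighted quantities, invoking the hydrodynamic condition to supply the error control that the bootstrap argument (when $d>3\alpha$) and the lace‑expansion two‑point estimates (in the remaining cases of \eqref{HD}) provide in the high‑dimensional setting. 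This yields a triangular system of asymptotic ODEs for the cut‑off quantities $\E_{\beta_c,\rho}[\sum_{x_1,\dots,x_n\in K}\prod_{i=1}^n\varphi_i(x_i/r)]$ in the cut‑off parameter $\rho$; integrating these over all scales $\rho\in[r,\infty)$ and using that $\E_{\beta_c,\rho}[\cdots]\uparrow\E_{\beta_c}[\cdots]$ as $\rho\to\infty$ would then give
\begin{multline*}
\E_{\beta_c}\Bigl[\sum_{x_1,\dots,x_n\in K}\prod_{i=1}^n\varphi_i(x_i/r)\Bigr]\\
\sim\frac{\alpha}{\beta_c}\,A_r^{n-1}\,r^{(2n-1)\alpha}\sum_{T\in\mathbb{T}_n}\idotsint\prod_{\substack{i<j\\ i\sim j}}G(x_i-x_j)\prod_{i=1}^n\varphi_i(x_i)\,\dif x_1 \cdots \dif x_{2n-1},
\end{multline*}
where $x_0=0$, the product is over the edges of $T$, and $G$ is the Green's function of the symmetric $\alpha$‑stable L\'evy jump process with L\'evy density $\tfrac{\alpha}{\alpha+d}\|x\|^{-d-\alpha}$; this jump process appears, exactly as in \cref{thm:superprocess_main}, because the spatial displacement $\propto\|x\|^{-d-\alpha}$ across a long pivotal edge lies in the domain of attraction of the $\alpha$‑stable law when $\alpha<2$ (one obtains Brownian motion when $\alpha\geq2$, which is why $d=3\alpha<6$ is the relevant range for the L\'evy limit). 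The whole point of the definitions of $\zeta$ and $\eta$ is that the slowly varying factor $A_r^{n-1}$ cancels exactly against the one in $\eta(r)\zeta(r)^n\sim\tfrac{\alpha}{\beta_c}\,2^{2-2n}A_r^{n-1}r^{(2n-1)\alpha}$; after dividing, the surviving constant $2^{2n-2}$ matches the prefactor in \eqref{eq:canonical_measure_diagrams}, yielding the limit $\N[\prod_{i=1}^n\langle\varphi_i,\mu\rangle]$ (the diagonal contributions with $x_i=x_j$ being of strictly lower order). The case $n=1$ additionally yields the critical‑dimensional radius‑of‑gyration estimate $\xi_2(r)\asymp r$ up to a slowly varying factor — the analogue of \cref{prop:radius_of_gyration} — which confirms that $r$ is the correct spatial scale and is used below.

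For (iii), the convergence of total masses together with the volume tail controls the probability of $\{\mu_r(\R^d)\geq C\}$ for $C$ large, so the only remaining point is that no spatial mass escapes to infinity: one needs $\tfrac{1}{\eta(r)}\P_{\beta_c}(|K|\geq\lambda\zeta(r),\,|K\setminus B_{Mr}|\geq\delta\zeta(r))\to 0$ as $M\to\infty$ uniformly in $r$, which follows by Markov's inequality from the estimate $\E_{\beta_c}[\sum_{x\in K}\|x\|_2^2\,\mathbf{1}(|K|\geq\lambda\zeta(r))]\preceq_\lambda\eta(r)\,r^2\zeta(r)$ — a consequence of the (size‑biased) radius‑of‑gyration bound above, which is the $n=2$ instance of the $n$‑point asymptotics. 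Throughout (i)--(iii), the passage between the cut‑off measures $\P_{\beta_c,\rho}$ on which the ODE analysis is performed and the genuine critical measure $\P_{\beta_c}$ is controlled by the rarity of long open edges being essential to the connectivity structure inside a fixed ball, together with the universal tightness theorem of \cite{hutchcroft2020power} and the hydrodynamic bound $M_r=o(r^{(d+\alpha)/2})$, exactly as in the deduction of \cref{thm:superprocess_main} from \cref{cor:scaling_limit_cut_off}. Combining (i)--(iii) with Carleman's criterion, and using that $\{\mu:\mu(\R^d)\geq\lambda\}$ is a continuity set for $\N$ since $\N(\mu(\R^d)\geq\lambda)\propto\lambda^{-1/2}$ is continuous in $\lambda$, then yields the stated weak convergence.

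The main obstacle is the input used in the second paragraph: proving, under the hydrodynamic condition, the asymptotic ODEs for the spatially weighted $n$‑point functions and verifying that integrating them over all scales produces precisely the $\alpha$‑stable tree‑diagram functional of \eqref{eq:canonical_measure_diagrams} with the correct combinatorial prefactor (so that the logarithmic corrections absorbed into $\zeta$ and $\eta$ cancel). This is why it is natural to carry this proof out in parallel with those of \cref{thm:hd_moments_main,thm:superprocess_main} rather than deferring it until the hydrodynamic condition is established in the third paper; once this $n$‑point analysis is in place, the appeal to Carleman's criterion, the tightness estimates, and the passage to $\P_{\beta_c}$ are all routine adaptations of the high‑dimensional arguments.
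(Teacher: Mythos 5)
Your overall architecture (method of moments for the spatially weighted $n$-point sums, tree-diagram limits, tightness, sprinkling to pass from the cut-off to the genuine critical measure, Carleman) is the same as the paper's, and your bookkeeping of the constants $A_r$, $2^{2n-2}$, and $\eta\zeta^n$ is correct. However, the central step (ii) as you describe it has a genuine gap. You propose to run the renormalization-group ODE machinery directly on $\E_{\beta_c,\rho}\bigl[\sum_{x_1,\ldots,x_n\in K}\prod_i\varphi_i(x_i/r)\bigr]$ for compactly supported continuous $\varphi_i$. This does not close into a triangular system: the derivative formula (Russo's formula plus mass transport) produces, for a point $z$ in the newly attached cluster, the decomposition $z=(z-y)+(y-x)+x$, and the whole triangular ODE structure of \cref{lem:higher_spatial_moments_ODE_unsimplified} rests on expanding a \emph{polynomial} of $z$ trinomially into products of lower-degree monomials of $z-y$, $y-x$, and $x$ (the ``splittings''). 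A compactly supported $\varphi$ admits no such expansion, so the right-hand side of the ODE is not expressible in terms of quantities of the same form and the induction cannot start. The paper's route is to establish the asymptotics only for \emph{polynomial} observables $P_i(x_i/\sigma(r))$ under the size-biased cut-off measure (\cref{thm:scaling_limit_diagrams}), check that the limiting displacement law has moments satisfying Carleman's condition (via the convergent exponential generating function in \cref{prop:displacement_moments} and \cref{lem:mixed_moments_order_estimates}), and only then transfer to general test functions by the moment method (\cref{cor:scaling_limit_cut_off}). If you want to keep your plan you must restrict to polynomials in step (ii) and add this transfer step.

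A second, related problem is your plan to obtain the non-cut-off moments by ``integrating the asymptotic ODEs over all scales $\rho\in[r,\infty)$.'' The first-order asymptotic ODEs at the critical dimension control the derivative at scale $\rho$ only up to $o(\cdot)$ errors relative to the leading term at that scale; integrating them from $r$ to $\infty$ to extract first-order asymptotics of $\E_{\beta_c}[\cdots]$ as $r\to\infty$ requires uniformity in $\rho/r$ that the ODE lemmas do not provide, and in any case the observable $\prod_i\varphi_i(x_i/r)$ is pinned to scale $r$ while the cluster under $\P_{\beta_c,\rho}$ lives at scale $\rho$, so the polynomial machinery is again unavailable. The paper instead separates the cut-off scale from the observation scale: it proves the limit for $\E_{\beta_c,\sigma^{-1}(sR)}$ with $R\to\infty$ first and $s\to\infty$ second (\cref{lem:scaling_limit_double_limit}), identifies the limiting kernel as $G=\lim_s s^{(\alpha\wedge 2)-d}\kappa(\cdot/s)$ (\cref{lem:Greens_function_limit}) --- which is precisely how the exponentially-stopped L\'evy kernel $\kappa$ of the cut-off model becomes the Green's function $G$ in the canonical measure --- and closes the gap between $\P_{\beta_c,\sigma^{-1}(sR)}$ and $\P_{\beta_c}$ by the ghost-field sprinkling estimate \eqref{eq:final_sprinkle}. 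You gesture at the sprinkling argument in your last paragraph, but it is in tension with, not a supplement to, the ``integrate over all scales'' plan. Finally, your appeal to Carleman to assemble (i)--(iii) glosses over the fact that $\N$ restricted to $\{\mu(\R^d)\geq\lambda\}$ has infinite total-mass moments; the paper resolves this by biasing by $(\mu[\varphi])^n$ so as to work with genuine probability measures (\cref{lem:scaling_limit_diagrams_no_cutoff2}), and some such device is needed to make your step from (i)--(iii) to weak convergence rigorous.
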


As in \cref{remark:scaling_limit_volume_formulation}, the canonical measure formulation of the scaling limit presented in \cref{thm:superprocess_main} also implies that
\begin{equation}
\P_{\beta_c}\left( \frac{1}{n}\sum_{x \in K} \delta_{x/\zeta^{-1}(n)}\in \cdot \;\Bigg|\; |K| \geq n \right) \to \mathbb{N}\bigl( \mu \in \cdot \;|\; \mu(\R^d) \geq 1 \bigr)
\end{equation}
weakly as $n\to \infty$. See also \cref{cor:scaling_limit_cut_off} for a related scaling limit theorem for the size-biased cut-off measures $\hat \P_{\beta_c,r}$.

\begin{remark}
It is an interesting feature of our proof strategy that we must establish the full superprocess scaling limit of the model when $d=3\alpha$, with undetermined slowly varying corrections to scaling, \emph{before} computing the asymptotics of these corrections to scaling. (In particular, the full superprocess scaling limit theorems are needed even to compute the corrections to scaling of \cref{thm:critical_dim_moments_main_slowly_varying} that pertain only to the volume distribution.)
Roughly speaking, this strategy is viable because \cref{thm:critical_dim_moments_main_slowly_varying,thm:superprocess_main_regularly_varying} can be deduced from \emph{first-order} asymptotics of derivatives (of the form described in \eqref{eq:volume_moments_ODE_intro} and \eqref{eq:x12_derivative_asymptotic2_intro}), while computing logarithmic corrections to scaling requires analyzing the same derivatives to \emph{second order}. In \cref{III-sec:logarithmic_corrections_at_the_critical_dimension} we compute the asymptotics of these second-order corrections using a method that relies on already knowing the scaling limit of the model. More concretely, we obtain (modulo some additional logarithmically integrable errors; see \cref{III-prop:critical_dim_second_order}) that if $d=3\alpha<6$ then there exists a positive constant $C$, determined by the superprocess scaling limit of the model, such that
\begin{equation}
\label{eq:log_correction_paper1}
  \frac{d}{dr} \log \E_{\beta_c,r}|K|^2 = \frac{3\alpha}{r} - \frac{C}{r} \frac{(\E_{\beta_c,r}|K|^2)^2}{r^{d}(\E_{\beta_c,r}|K|)^3} \pm \cdots
%
\end{equation}
as $r\to \infty$. (As can be seen from this estimate, it is perhaps more accurate to say that we find an asymptotic relationship between the second-order correction and the quantity we are differentiating, rather than ``computing'' this second-order correction directly.) In light of the simple asymptotic formula $\E_{\beta_c,r}|K|\sim \frac{\alpha}{\beta_c}r^\alpha$ for the first moment, the estimate \eqref{eq:log_correction_paper1} implies by elementary analysis that $A_r \sim A (\log r)^{-1/2}$ as $r\to \infty$ for some positive constant $A$ (see \cref{III-lem:ODE_log_correction}). 
\end{remark}

\subsection{Relation to other approaches to the renormalization group}
\label{subsec:RG}

We now give a brief overview of how our methods differ from other rigorous renormalization group (RG) approaches to the critical behaviour of lattice models\footnote{For an informal but informative overview of RG, we highly recommend Roland Bauerschmidt's lecture at the meeting ``100 years of the Ising Model'', available at \url{https://www.youtube.com/watch?v=l1iZbsEiO6c}}. We stress that we do not assume the reader has any previous familiarity with such approaches, and that the rest of this section can be safely ignored without compromising the intelligibility of the rest of the paper.

\begin{remark}[Relation with other approaches to RG]
While the methods of this paper, particularly those used to treat the regime $d_\mathrm{eff}\geq d_c=6$, can be thought of as coming under the umbrella of RG methods, they differ from other rigorous approaches to RG in various important ways. First, these methods are typically applied to spin systems such as the $\varphi^4$ model that can be constructed\footnote{Some other models, such as continuous time weakly self-avoiding walk (WSAW), can be treated within this framework via equivalences with models of this form. In the case of WSAW the equivalence is with a \emph{supersymmetric} variant of the $\varphi^4$ model that corresponds formally the $\varphi^4$ model with $0$-dimensional spins \cite[Chapter 11]{MR3969983} and leads one outside the domain of probability theory. See \cite{swan2021superprobability} for a broader overview of supersymmetric methods in probability theory.
In the physics literature one often studies percolation via (non-rigorous) relationships to a putative ``$0$-component $\varphi^3$ model'' or ``$1$-state Potts model'' \cite{essam1978percolation,ruiz1998logarithmic,de1981critical}, but these have relationships have yet to see precise interpretations in terms of e.g.\ an isomorphism theorem relating percolation to a supersymmetric $\varphi^3$ model.
} by adding an interaction to the law of the Gaussian free field, i.e., the Gaussian field whose covariances are given by the Green's function $G=\Delta^{-1}$, where the Laplacian is defined by $\Delta=D-J$ with $D$ the diagonal matrix $D(x,x)=\sum_y J(x,y)$. One of the most successful rigorous versions of this method is that of Bauerschmidt, Brydges, and Slade \cite{MR3339164,bauerschmidt2015critical,bauerschmidt2017finite} (see also e.g.\ \cite{MR3772040,MR3723429}), which works by replacing the Gaussian free field with a finitely-dependent Gaussian field with covariances $G_r$, where $G_r$ is a well-chosen ``cut-off'' correlation matrix that converges to $G$ as $r\to \infty$, and computing the asymptotics of various observables as $r\to \infty$. Thus, one can think this approach as closely analogous (or even ``dual'') to our method, which applies a cut-off to the Laplacian rather than its inverse. 
This is related in part to the distinction in the physics literature between \textbf{real-space RG} (often attributed to Kadanoff) and \textbf{momentum-space RG} (different flavours of which are attributed to Wilson and Polchinski); let us stress however that our method is distinct from more standard ``block-spin'' versions of real-space RG as have been applied to the rigorous study of critical phenomena in e.g. \cite{bleher2010critical,MR709462,abdesselam2013rigorous,MR1882398,gawedzki1985massless,hara1987rigorousa,hara1987rigorousb}.

There are advantages and disadvantages to both approaches. An obvious major advantage of the inverse-Laplacian-based methods over ours is that it applies also to short-range models. The main advantages of our method are:
\begin{enumerate}
  \item It can be applied to models such as percolation which are not known to have exact representations in terms of a self-interacting Gaussian free field.
  \item The intermediate models we obtain by applying a cut-off to the kernel $J$ are of the same form as the model we start with: in particular, when studying percolation, the cut-off measures $\P_{\beta_c,r}$ all describe Bernoulli bond percolation on an appropriately chosen transitive weighted graph. This is in stark contrast to methods based on applying a cut-off to the \emph{inverse} Laplacian, where if one starts with, say, a $\varphi^4$ model then the intermediate models obtained by applying a cut-off to the Greens function are \emph{not} of the same form\footnote{More precisely, the intermediate models one obtains in this framework can be defined with respect to interaction kernels (obtained by inverting the cut-off Greens function) that are positive definite but may fail to be positive \emph{pointwise}. For $\varphi^4$-type models this means that the intermediate models are not completely ferromagnetic and one loses access to various relevant correlation inequalities; for percolation it is unclear how to interpret an intermediate model of this form at all since some edges would have negative inclusion probability.}. As such, in our approach\footnote{Another approach with this feature is simply to consider $\beta_c-\beta$ as a kind of indirect scale parameter and study the asymptotics of observables as $\beta \uparrow \beta_c$. For the $\varphi^4$ model this is equivalent to introducing a mass to the GFF and is known in theoretical physics as Pauli-Villars regularization \cite{pauli1949invariant}. While this preserves access to correlation inequalities, it has much worse analytic properties than our approach as discussed in \cref{subsec:definitions}.} one keeps access to all the correlation inequalities and other theorems that are available for the original model (percolation in this paper) and can argue in a more probabilistic manner, while in the inverse-Laplacian approach one typically forgoes the use of such features and works in a more analytic way. (On the plus side, this means that the inverse-Laplacian methods can be applied without significant further effort to models such as the spin $O(n)$ model with $n\geq 3$ which are not known to obey any non-trivial correlation inequalities.)
  \item Our approach is able to prove theorems applying non-perturbatively, without any small parameter assumptions, in contrast to the RG framework of \cite{MR3339164,bauerschmidt2015critical,bauerschmidt2017finite}, the traditional approach to high-dimensional percolation via the lace expansion \cite{MR1043524,MR2430773}, and the recently introduced lace expansion alternative of \cite{duminil2024alternative,duminil2024alternative}. The fact that we have access to correlation inequalities throughout our analysis is very important for this. (On the other hand, one can think of the perturbative analysis in the inverse-Laplacian approach as giving stronger \emph{universality} results than our method, as it can be used to establish results concerning models that are small perturbations of e.g.\ the $\varphi^4$ model in a fairly arbitrary sense, see e.g.\ \cite{bauerschmidt2017four}.)
\end{enumerate}
Finally, perhaps the most remarkable feature of our method is that it can be used to analyze the \emph{low}-effective-dimension regime in a non-perturbative way, as explained in the second paper in this series \cite{LRPpaper2}. As far as we are aware these results have no counterpart in the other parts of the RG literature (where low dimensions are typically treated by expanding perturbatively around the critical dimension \cite{wilson1972critical,MR3772040,MR3723429,abdesselam2013rigorous,MR709462,bleher2010critical}), and indeed mark the largest departure of our approach from the standard RG framework. 
\end{remark}

\section{Background}
\label{sec:background}

In this section we briefly overview the relevant background that we will need for the rest of the paper, including precise statements of the results of the papers \cite{hutchcroft2020power,hutchcroft2022sharp,baumler2022isoperimetric,hutchcroft2024pointwise} that will be used in our analysis.

\subsection{Correlation inequalities}
\label{subsec:correlation_inequalities_and_the_universal_tightness_theorem}

We now overview the main correlation inequalities that will be used throughout the series. While much of this section will be familiar to experts, the inequalities discussed here also include some more recent additions to the percolation literature including the universal tightness theorem \cite{hutchcroft2020power} and the Gladkov inequality \cite{gladkov2024percolation}. We will also take the opportunity to write down some simple consequences of the basic correlation inequalities that will be used repeatedly throughout the paper.

\medskip

\noindent \textbf{The Harris-FGK and BK inequalities.} The most basic correlation inequality for percolation is the Harris-FKG inequality \cite[Section 2.2]{grimmett2010percolation}, which states that if $F(\omega)$ and $G(\omega)$ are increasing functions of the percolation configuration then they are non-negatively correlated:
\[
  \E F(\omega)G(\omega) \geq \E F(\omega) \E G(\omega).
\]
In particular, if $A$ and $B$ are two increasing \emph{events} (meaning that their indicator functions are increasing functions of the configuration) then
\[
  \P(A \cap B) \geq \P(A)\P(B).
\]
We next introduce the BK (van den Berg and Kesten) inequality \cite[Section 2.3]{grimmett2010percolation}, which is complementary to Harris-FKG and captures the sense in which Bernoulli bond percolation is \emph{negatively} dependent.
Given an event $A$ and a configuration $\omega \in A$, we say that a set of edges $W$ is a \textbf{witness} for $\omega \in A$ if every configuration $\omega'$ with $\omega'|_W=\omega|_W$ also belongs to $A$. Given two events $A$ and $B$, the \textbf{disjoint occurence} $A\circ B$ of $A$ and $B$ is defined by
\[A\circ B = \{\omega: \omega\in A\cap B \text{ and there exist disjoint witnesses for $\omega\in A$ and $\omega\in B$}\}.\]
The \textbf{BK inequality} states that if $A$ and $B$ are increasing events depending on only finitely many edges then
\[
  \P(A \circ B) \leq \P(A)\P(B).
\]
In fact the same inequality holds \emph{without} the assumption that $A$ and $B$ are increasing, a fact known as \emph{Reimer's inequality} \cite{MR1751301}. (The special case of Reimer's inequality in which each of $A$ and $B$ can be written as the intersection of an increasing set and a decreasing set is significantly easier to prove, and is sometimes known as the van den Berg--Kesten--Fiebig inequality \cite{MR877608}; this special case will suffice for all our applications.) It is usually possible to apply the BK inequality or Reimer's inequality to events depending on infinitely many edges via simple limiting arguments\footnote{In particular, both inequalities are always valid for the set $A\odot B$ of configurations in which there exist \emph{finite} disjoint witnesses for $A$ and $B$ \cite[Section 7]{arratia2015van}, which coincides with the set $A\circ B$ in most use cases. In fact the BK inequality is always valid for any increasing Borel sets $A$ and $B$ as we will prove in our forthcoming textbook on critical percolation.}, and we will do this without further comment when no subtleties arise.

\medskip

We now take the opportunity to make note of 
the following
useful consequence
 of the Harris-FKG and BK inequalities that will be used repeatedly throughout our analysis.



\begin{lemma}[Two-cluster correlation inequalities]
Consider Bernoulli bond percolation on a weighted graph $G=(V,E,J)$ with some $\beta\geq 0$. The inequality
\label{lem:BK_disjoint_clusters_covariance}
\[
\E_\beta\left[F(K_x)G(K_y)\mathbbm{1}(x\nleftrightarrow y)\right] \leq \E_\beta \left[F(K_x)\right] \E_\beta \left[G(K_y)\right]
\]
holds for every $x,y\in V$ and every pair of increasing non-negative functions $F$ and $G$. Moreover, the inequalities 
\[
\E_\beta\left[F(K_x)G(K_y)\mathbbm{1}(x\nleftrightarrow y)\right] \geq \E_\beta \left[F(K_x)\right] \E_\beta \left[G(K_y)\right] - \E_\beta\left[F(K_x)G(K_y)\mathbbm{1}(x\leftrightarrow y)\right]
\]
and
\[
  0\leq \E_\beta [F(K_x)G(K_y)] -  \E_\beta \left[F(K_x)\right] \E_\beta \left[G(K_y)\right] \leq \E_\beta\left[F(K_x)G(K_y)\mathbbm{1}(x\leftrightarrow y)\right]
\]
hold for every $x,y\in V$ and every pair of increasing non-negative functions $F$ and $G$ such that $\E_\beta F(K_x),\E_\beta G(K_y)<\infty$.
\end{lemma}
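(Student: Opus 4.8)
The plan is to prove the three statements in order, deducing each from its predecessor together with the two basic correlation inequalities (Harris--FKG and BK/Reimer).

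\emph{Step 1: the upper bound.} First I would prove
\[
\E_\beta\left[F(K_x)G(K_y)\mathbbm{1}(x\nleftrightarrow y)\right] \le \E_\beta \left[F(K_x)\right] \E_\beta \left[G(K_y)\right]
\]
via the BK--Reimer inequality. The key observation is that on the event $\{x\nleftrightarrow y\}$, the clusters $K_x$ and $K_y$ are vertex-disjoint and moreover are \emph{witnessed disjointly}: the open edges of $K_x$ together with the closed edges on its boundary form a witness for the value of $K_x$, and likewise for $K_y$, and these two witness sets are disjoint because no edge has an endpoint in both clusters (the clusters are not adjacent, else $x\leftrightarrow y$). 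One subtlety is that $F(K_x)$ is not the indicator of an increasing event; I would handle this by writing $F(K_x) = \int_0^\infty \mathbbm{1}(F(K_x)\ge t)\,dt$ and noting that $\{F(K_x)\ge t, x \nleftrightarrow y\}$ is of the form ``there is a cluster at $x$ of a certain increasing type, witnessed disjointly from...''; more cleanly, one observes that the event $\{F(K_x)\ge s\}\circ\{G(K_y)\ge t\}$ contains $\{F(K_x)\ge s, G(K_y)\ge t, x\nleftrightarrow y\}$ (this is where the disjoint-witness structure enters), so BK--Reimer gives $\P(F(K_x)\ge s, G(K_y)\ge t, x\nleftrightarrow y) \le \P(F(K_x)\ge s)\,\P(G(K_y)\ge t)$; integrating over $s,t\ge 0$ via the layer-cake formula yields the claim. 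For infinite graphs one first restricts to finitely many edges and passes to the limit, using monotone convergence (or invokes the $\odot$ version mentioned in the excerpt's footnote).

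\emph{Step 2: the lower bound.} The second inequality is purely algebraic once Step 1 is in hand: since $\mathbbm{1}(x\nleftrightarrow y) = 1 - \mathbbm{1}(x\leftrightarrow y)$,
\[
\E_\beta\left[F(K_x)G(K_y)\mathbbm{1}(x\nleftrightarrow y)\right]
= \E_\beta\left[F(K_x)G(K_y)\right] - \E_\beta\left[F(K_x)G(K_y)\mathbbm{1}(x\leftrightarrow y)\right],
\]
and by Harris--FKG (both $F(K_x)$ and $G(K_y)$ are increasing functions of the configuration) we have $\E_\beta[F(K_x)G(K_y)] \ge \E_\beta[F(K_x)]\,\E_\beta[G(K_y)]$, which gives the stated bound directly. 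Here the finiteness hypotheses $\E_\beta F(K_x), \E_\beta G(K_y)<\infty$ are needed so that the subtraction makes sense and Harris--FKG applies.

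\emph{Step 3: the two-sided bound.} Finally, the chain of inequalities for $\E_\beta[F(K_x)G(K_y)] - \E_\beta[F(K_x)]\E_\beta[G(K_y)]$ follows by combining the two previous displays: the left inequality ($\ge 0$) is again just Harris--FKG, and the right inequality is obtained by rearranging Step 1, namely $\E_\beta[F(K_x)G(K_y)] - \E_\beta[F(K_x)G(K_y)\mathbbm{1}(x\leftrightarrow y)] = \E_\beta[F(K_x)G(K_y)\mathbbm{1}(x\nleftrightarrow y)] \le \E_\beta[F(K_x)]\E_\beta[G(K_y)]$.

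\emph{Main obstacle.} The only genuinely non-routine point is Step 1: making the disjoint-witness argument precise for the \emph{function-valued} observables $F(K_x), G(K_y)$ rather than for plain increasing events, and in particular checking carefully that the closed boundary edges of $K_x$ can be included in the witness for $K_x$ without overlapping the witness for $K_y$ (this uses $\{x\nleftrightarrow y\}$ to ensure the two boundary edge-sets are disjoint). Once the events $\{F(K_x)\ge s\}$ and $\{G(K_y)\ge t\}$ are seen to occur disjointly on $\{x\nleftrightarrow y\}$, everything else is layer-cake integration and a limiting argument to remove the finiteness-of-edge-set restriction.
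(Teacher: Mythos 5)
Your proposal follows the same route as the paper: BK for the first inequality, then $\mathbbm{1}(x\nleftrightarrow y)=1-\mathbbm{1}(x\leftrightarrow y)$ plus Harris--FKG for the second, and the combination of the two for the third. Steps 2 and 3 are exactly right.

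One claim in your Step 1 is false as stated, though the argument survives without it. You assert that the witness for $K_x$ may include the closed boundary edges of $K_x$ and that, on $\{x\nleftrightarrow y\}$, this witness is disjoint from the corresponding witness for $K_y$ ``because no edge has an endpoint in both clusters.'' That is not so: $\{x\nleftrightarrow y\}$ only forbids an \emph{open} edge between the two clusters, and a \emph{closed} edge of $G$ with one endpoint in $K_x$ and the other in $K_y$ lies in both closed boundaries, so the two boundary-inclusive witness sets can overlap. The fix is that you do not need the closed boundary edges at all: since $F$ is increasing, the set of open edges of $K_x$ alone is a witness for $\{F(K_x)\geq s\}$ (any configuration agreeing on these edges has a cluster of $x$ containing $K_x$, hence $F$-value at least $s$), and likewise for $G(K_y)$; these open edge sets are genuinely disjoint on $\{x\nleftrightarrow y\}$. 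With that correction your containment $\{F(K_x)\geq s,\,G(K_y)\geq t,\,x\nleftrightarrow y\}\subseteq\{F(K_x)\geq s\}\circ\{G(K_y)\geq t\}$ holds, and the layer-cake integration and the limiting argument for infinitely many edges go through as you describe.
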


\begin{proof}[Proof of \cref{lem:disjoint_connections}]
The first inequality is an immediate consequence of the BK inequality. The second inequality follows follows by writing
$\mathbbm{1}(x\nleftrightarrow y)=1-\mathbbm{1}(x\leftrightarrow y)$ and using Harris-FKG to bound the resulting term $\E_\beta[F(K_x)G(K_y)] \geq \E_\beta F(K_x) \E_\beta G(K_y)$. The final inequality follows from the first two using again that $1=\mathbbm{1}(x\leftrightarrow y)+\mathbbm{1}(x\nleftrightarrow y)$.
\end{proof}

The following lemma is a special case of \cref{lem:BK_disjoint_clusters_covariance} and generalizes \cite[Lemma 4]{MR1431856}.

\begin{lemma}
\label{lem:disjoint_connections} 
Consider Bernoulli bond percolation on a weighted graph $G=(V,E,J)$ with some $\beta\geq 0$. The estimate
\begin{equation*}
0 \leq\P_\beta(x\leftrightarrow y)\P_\beta(z \leftrightarrow w) - \P_\beta(x\leftrightarrow y \nleftrightarrow z \leftrightarrow w)   \leq \P_\beta(x\leftrightarrow y \leftrightarrow z \leftrightarrow w)
\end{equation*}
holds for every $x,y,z,w\in V$.
\end{lemma}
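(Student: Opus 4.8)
This is a routine consequence of the more general \cref{lem:BK_disjoint_clusters_covariance}, so the plan is simply to specialise that lemma to indicator functions of connection events and then interpret the resulting quantities.

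The plan is as follows. First I would fix four vertices $x,y,z,w\in V$ and choose $F$ and $G$ in \cref{lem:BK_disjoint_clusters_covariance} to be the indicator functions $F(K_x)=\mathbbm{1}(y\in K_x)=\mathbbm{1}(x\leftrightarrow y)$ and $G(K_z)=\mathbbm{1}(w\in K_z)=\mathbbm{1}(z\leftrightarrow w)$; these are manifestly increasing and non-negative functions of the clusters $K_x$ and $K_z$ respectively, and their expectations $\P_\beta(x\leftrightarrow y)$ and $\P_\beta(z\leftrightarrow w)$ are bounded by $1<\infty$, so the hypotheses of \cref{lem:BK_disjoint_clusters_covariance} are satisfied. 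With these choices, $F(K_x)G(K_z)\mathbbm{1}(x\nleftrightarrow z)$ is the indicator of the event $\{x\leftrightarrow y,\ x\nleftrightarrow z,\ z\leftrightarrow w\}$, which is exactly the event written $\{x\leftrightarrow y \nleftrightarrow z \leftrightarrow w\}$, while $F(K_x)G(K_z)\mathbbm{1}(x\leftrightarrow z)$ is the indicator of $\{x\leftrightarrow y,\ x\leftrightarrow z,\ z\leftrightarrow w\}$, which (since $x\leftrightarrow z$ forces all four points into a single cluster) is the event written $\{x\leftrightarrow y \leftrightarrow z \leftrightarrow w\}$.

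Substituting these identifications into the final displayed inequality of \cref{lem:BK_disjoint_clusters_covariance}, namely
\[
  0\leq \E_\beta [F(K_x)G(K_z)] -  \E_\beta \left[F(K_x)\right] \E_\beta \left[G(K_z)\right] \leq \E_\beta\left[F(K_x)G(K_z)\mathbbm{1}(x\leftrightarrow z)\right],
\]
gives exactly
\[
  0 \leq \P_\beta(x\leftrightarrow y,\ z\leftrightarrow w) - \P_\beta(x\leftrightarrow y)\P_\beta(z\leftrightarrow w) \leq \P_\beta(x\leftrightarrow y \leftrightarrow z \leftrightarrow w).
\]
Finally I would use the partition $1=\mathbbm{1}(x\leftrightarrow z)+\mathbbm{1}(x\nleftrightarrow z)$ to write $\P_\beta(x\leftrightarrow y,\ z\leftrightarrow w)=\P_\beta(x\leftrightarrow y \nleftrightarrow z \leftrightarrow w)+\P_\beta(x\leftrightarrow y \leftrightarrow z \leftrightarrow w)$, so that the middle term $\P_\beta(x\leftrightarrow y)\P_\beta(z\leftrightarrow w)-\P_\beta(x\leftrightarrow y \nleftrightarrow z \leftrightarrow w)$ of the claimed inequality equals $\P_\beta(x\leftrightarrow y \leftrightarrow z \leftrightarrow w)-\bigl(\P_\beta(x\leftrightarrow y,\ z\leftrightarrow w)-\P_\beta(x\leftrightarrow y)\P_\beta(z\leftrightarrow w)\bigr)$; the displayed two-sided bound then immediately yields both $\P_\beta(x\leftrightarrow y)\P_\beta(z\leftrightarrow w)-\P_\beta(x\leftrightarrow y \nleftrightarrow z \leftrightarrow w)\geq 0$ and $\P_\beta(x\leftrightarrow y)\P_\beta(z\leftrightarrow w)-\P_\beta(x\leftrightarrow y \nleftrightarrow z \leftrightarrow w)\leq \P_\beta(x\leftrightarrow y \leftrightarrow z \leftrightarrow w)$, which is the desired estimate. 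There is no real obstacle here; the only point requiring any care is the bookkeeping that $F$ and $G$ should be taken as functions of $K_x$ and $K_z$ (not $K_y$ and $K_w$), and the observation that $\{x\leftrightarrow z\}$ on the event $F(K_x)G(K_z)=1$ collapses all four points into one cluster.
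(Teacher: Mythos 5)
Your proof is correct and is exactly the intended argument: the paper states \cref{lem:disjoint_connections} as a special case of \cref{lem:BK_disjoint_clusters_covariance} without writing out the specialization, and your choice $F(K_x)=\mathbbm{1}(y\in K_x)$, $G(K_z)=\mathbbm{1}(w\in K_z)$ together with the partition $1=\mathbbm{1}(x\leftrightarrow z)+\mathbbm{1}(x\nleftrightarrow z)$ is precisely that specialization (one could equally read the two bounds directly off the first two displayed inequalities of that lemma, but this is a cosmetic difference).
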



\noindent \textbf{The tree-graph inequalities.} One of the most important consequences\footnote{The original proof of the tree-graph inequalities did not rely on the BK inequality, which was proven later.} of the BK inequality are the so-called \textbf{tree-graph inequalities} of Aizenman and Newman \cite{MR762034}, which give upper bounds on $k$-point connectivity functions in terms of sums of products of two-point connection probabilities arranged in the structure of a labelled binary tree. The first of these inequalities is
\begin{equation}
\label{eq:tree_graph_3point}
  \P_\beta(x\leftrightarrow y \leftrightarrow z) \leq \sum_{w\in V} \P_\beta(x \leftrightarrow w)\P_\beta(w \leftrightarrow y)\P_\beta(w \leftrightarrow z),
\end{equation}
which holds for Bernoulli bond percolation on any weighted graph and for any choice of $x,y,z \in V$. 
For higher $(k+1)$-point functions, the corresponding bound is
\begin{equation}
\label{tree_graph_general}
\P_{\beta}(x_0,x_1,\ldots,x_k \text{ all connected}) \leq 
\sum_{T\in \bbT_k} \sum_{x_{k+1},\ldots,x_{2k-1}} \prod_{\substack{i<j \\ i\sim j}} \P_\beta(x_i \leftrightarrow x_j),
\end{equation}
 where the sum is taken over (representatives of) isomorphism classes of trees with leaves labelled $0,1,\ldots,k$ and unlabelled non-leaf vertices all of degree $3$ as in \eqref{eq:canonical_measure_diagrams}. 
These inequalities lead to relatively simple upper bounds on cluster moments for transitive weighted graphs, namely
\begin{equation}
\label{eq:tree_graph_pth_moment}
  \E_\beta|K|^p \leq (2p-3)!! (\E_\beta |K|)^{2p-1}
\end{equation}
for each integer $p\geq 1$, where the combinatorial factor $(2p-3)!!$ counts the number of trees with $p+1$ leaves labelled $\{0,\ldots,p\}$ and with all other vertices unlabelled and of degree $3$. (This combinatorial factor is the same appearing in \cref{thm:hd_moments_main,III-thm:critical_dim_moments_main}.)
Theorems \ref{thm:hd_moments_main}, \ref{II-thm:main_low_dim}, and \ref{III-thm:critical_dim_moments_main} show that the tree-graph bounds are of optimal order in high effective dimensions but not in low effective dimensions (including when $d_\mathrm{eff}=d_c=6$ and $d<6$, in which case they are suboptimal by a polylogarithmic factor); see \cref{III-sub:relations_between_the_beta_derivative_and_the_second_moment} for further discussion.

\medskip

\noindent \textbf{The universal tightness theorem.} We now state the universal tightness theorem of \cite[Theorem 2.2]{hutchcroft2020power}. Let $G=(V,E,J)$ be a weighted graph and let $\Lambda \subseteq V$ be finite and non-empty. Given a percolation configuration $\omega$ on $G$, consider the random partition of $\Lambda$ given by $\sC=\sC(\omega,\Lambda)=\{C \cap \Lambda : C$ a cluster of $\omega\}$, where the clusters of $\omega$ are computed with respect to the entire graph $G$ and may involve connections leaving the distinguished set $\Lambda$.
Let $|K_\mathrm{max}(\Lambda)|$ be the random variable defined by
\[
|K_\mathrm{max}(\Lambda)|= \max\{|K_x \cap \Lambda| : x\in V\}=\max\{|K_x \cap \Lambda| : x\in \Lambda\} 
\]
  and for each $\beta \geq 0$ define the \textbf{edian}\footnote{This pun on ``median'' was suggested to me independently by Michael Aizenman and Louigi Addario-Berry.} $M_\beta(\Lambda):=\min \{n \geq 0 : \P_\beta(|K_\mathrm{max}(\Lambda)| \geq n)\leq e^{-1} \}$. The universal tightness theorem, which is also a consequence of the BK inequality, states that $|K_\mathrm{max}(\Lambda)|$ is unlikely to deviate from its edian by a large constant factor, and moreover that the distribution of the intersection of the cluster of a specific vertex with $\Lambda$ has an exponential tail above the edian size of the largest cluster.

\begin{theorem}[Universal tightness of the maximum cluster size] 
\label{thm:universal_tightness}
Let $G=(V,E,J)$ be a weighted graph and let $\Lambda \subseteq V$ be finite and non-empty. Then the inequalities
\begin{align}
\P_\beta\Bigl(|K_\mathrm{max}(\Lambda)| \geq \lambda M_\beta(\Lambda)\Bigr) &\leq \exp\left(-\frac{1}{9}\lambda \right)
\label{eq:BigClusterUnrooted}
\\
\text{and} \qquad \P_\beta\Bigl(|K_\mathrm{max}(\Lambda)| < \eps M_\beta(\Lambda) \Bigr) &\leq 27 \eps \qquad \phantom{\text{and}} \qquad
\label{eq:SmallMaximum}
\end{align}
hold for every $\beta\geq 0$, $\lambda \geq 1$, and $0<\eps \leq 1$. Moreover, the inequality
\begin{equation}
\label{eq:BigClusterRooted}
\P_\beta\Bigl(|K_u \cap \Lambda| \geq \lambda M_\beta(\Lambda)\Bigr) \leq e \cdot \P_\beta\Bigl(|K_u \cap \Lambda| \geq  M_\beta(\Lambda)\Bigr) \exp\left(-\frac{1}{9}\lambda \right)
\end{equation}
holds for every $\beta \geq 0$, $\lambda \geq 1$, and $u \in V$.
\end{theorem}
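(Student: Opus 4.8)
The plan is to derive all three inequalities from a single \emph{gluing} (sub-multiplicativity) estimate for the upper tail of $|K_\mathrm{max}(\Lambda)|$, together with a rooted variant for $|K_u\cap\Lambda|$; the only probabilistic input is the BK inequality (in fact its van den Berg--Kesten--Fiebig special case, since all events involved are increasing). Write $q(n)=\P_\beta(|K_\mathrm{max}(\Lambda)|\ge n)$ and $p_u(n)=\P_\beta(|K_u\cap\Lambda|\ge n)$, and note that by definition of the edian $M:=M_\beta(\Lambda)$ we have $q(M)\le e^{-1}$ while $q(M-1)>e^{-1}$, and moreover $M\ge 2$ because $q(1)=1$ whenever $\Lambda\ne\emptyset$. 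The key claim is that
\[
  q(a+2b)\le q(a)\,q(b)\qquad\text{and}\qquad p_u(a+2b)\le p_u(a)\,q(b)
\]
for all integers $a,b\ge 1$ and all $u\in V$.

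To prove this, suppose the event $\{|K_\mathrm{max}(\Lambda)|\ge a+2b\}$ occurs (or $\{|K_u\cap\Lambda|\ge a+2b\}$ in the rooted case), so that some cluster $C$ of $\omega$ meets $\Lambda$ in at least $a+2b$ vertices; fix a finite spanning tree $\mathcal T$ of $C$, call the vertices of $\Lambda\cap C$ \emph{marked}, and root $\mathcal T$ at an arbitrary vertex (at $u$ in the rooted case, which is legitimate since then $C=K_u\ni u$). Starting from the root, descend repeatedly into a child whose subtree still contains at least $b$ marked vertices, and stop at a vertex $v$ all of whose children have subtrees with fewer than $b$ marked vertices; such a $v$ exists since the whole tree has $a+2b>b$ marked vertices. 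Greedily adjoin the subtrees hanging from $v$ (each with $<b$ marked vertices) to $\{v\}$ one at a time until the resulting connected subgraph $H_B$ has a number of marked vertices lying in $[b,2b)$; the complementary subgraph $H_A:=\mathcal T\setminus E(H_B)$ is then connected, contains the root, shares with $H_B$ only the vertex $v$, and has at least $(a+2b)-2b=a$ marked vertices. Hence $H_A$ and $H_B$ are disjoint finite witnesses for $\{|K_\mathrm{max}(\Lambda)|\ge a\}$ (resp.\ $\{|K_u\cap\Lambda|\ge a\}$) and $\{|K_\mathrm{max}(\Lambda)|\ge b\}$ respectively, so the containment $\{|K_\mathrm{max}(\Lambda)|\ge a+2b\}\subseteq\{|K_\mathrm{max}(\Lambda)|\ge a\}\circ\{|K_\mathrm{max}(\Lambda)|\ge b\}$ (resp.\ its rooted analogue) holds, and the BK inequality gives the claim. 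The delicate points here are that the tree surgery must genuinely produce edge-disjoint pieces with the asserted marked-vertex counts (the correct stopping rule for $v$ and the greedy grouping of its hanging subtrees matter), and that the whole decomposition is packaged \emph{inside} a disjoint-occurrence event, so that no union bound over the random vertex $v$ --- and hence no spurious factor of $|V|$ or $|\Lambda|$ --- is ever taken.

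Granting the gluing estimates, the three inequalities follow by elementary manipulations. Setting $b=M$ gives $q(a+2M)\le e^{-1}q(a)$ and $p_u(a+2M)\le e^{-1}p_u(a)$ for every $a\ge 1$; iterating from $a=M$ and using monotonicity yields $q(\lambda M)\le e^{-(\lambda-1)/2}$ and $p_u(\lambda M)\le e^{3/2}\,p_u(M)\,e^{-\lambda/2}$ for all $\lambda\ge 1$, both of which comfortably imply \eqref{eq:BigClusterUnrooted} and \eqref{eq:BigClusterRooted} (the exponent $1/9$ and the prefactor $e$ are far from sharp, leaving room to absorb any multiplicative constant lost in the gluing step). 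For \eqref{eq:SmallMaximum} one iterates $q(a+2b)\le q(a)q(b)$ with $b$ fixed to obtain $q\bigl((2j+1)b\bigr)\le q(b)^{j+1}$; taking $b=\lceil\eps M\rceil$ and $j$ as large as possible with $(2j+1)b\le M-1$ (so that $j+1$ is of order $1/\eps$), monotonicity and the lower bound on $q(M-1)$ give $e^{-1}<q(M-1)\le q\bigl((2j+1)b\bigr)\le q(\lceil\eps M\rceil)^{j+1}$, hence $q(\lceil\eps M\rceil)>e^{-1/(j+1)}\ge 1-27\eps$ once $\eps$ is small; for $\eps$ bounded away from $0$ the bound $27\eps$ is either larger than $1$ or follows directly from $q(M-1)>e^{-1}$, and assembling the cases gives \eqref{eq:SmallMaximum}.

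The step I expect to be the genuine obstacle is establishing the gluing estimates cleanly --- specifically the combinatorial surgery on the spanning tree and the verification that the two pieces form edge-disjoint witnesses with the stated counts; everything downstream is real-variable bookkeeping. A secondary but necessary point is the passage to infinitely many edges in the BK inequality (handled via the finite witnesses $H_A,H_B$ and standard limiting arguments) together with the various degenerate cases ($b=1$, $|\Lambda|$ small, clusters sitting exactly at a size threshold), all of which are routine given the generous constants in the statement.
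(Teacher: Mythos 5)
This theorem is not proved in the paper: it is quoted verbatim from \cite[Theorem 2.2]{hutchcroft2020power}, so there is no in-paper argument to compare against. Your proof is correct and is essentially a reconstruction of the argument in that reference: the engine there is exactly the gluing estimate $\P_\beta(|K_u\cap\Lambda|\ge n+2m)\le \P_\beta(|K_u\cap\Lambda|\ge n)\,\P_\beta(|K_\mathrm{max}(\Lambda)|\ge m)$ (and its unrooted analogue), obtained by the same spanning-tree surgery — descend to a vertex all of whose pendant subtrees carry fewer than $m$ marked points, greedily assemble a connected piece with between $m$ and $2m$ of them, and feed the two edge-disjoint witnesses into BK — followed by the same iteration and Tauberian-free bookkeeping for the three displayed inequalities. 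Your surgery is sound (the stopping rule guarantees the subtree at $v$ has $\ge b$ marked points while each child subtree has $<b$, so the greedy grouping lands in $[b,2b)$ and leaves a connected complement containing the root with $\ge a$ marked points), and the constants $1/9$ and $27$ are indeed loose enough to absorb the edge cases you flag.
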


\begin{remark}
In \cite[Section 3]{hutchcroft2022critical} we established generalizations of the universal tightneness theorem from the $\ell^\infty$ norm of the partition of $\Lambda$ into clusters to the $\ell^p$ norm. These generalizations will not be used in this series of papers.
\end{remark}

Returning to our primary setting of long-range percolation on $\Z^d$, the most important feature of the universal tightness theorem is that it lets us bound the ratio of moments of $|K\cap B_r|$ in terms of the quantity $M_{\beta,r}$, which is defined to be the edian of $|K_\mathrm{max}(B_r)|$ under the measure $\P_{\beta,r}$.
We record this application in the following general corollary of \cref{thm:universal_tightness}.

\begin{corollary}
\label{cor:universal_tightness_moments}
Let $G=(V,E,J)$ be a weighted graph.
The estimate 
\[\E_{\beta}|K_x \cap W|^{p+q} \preceq_{p,q} M_{\beta}(W)^q\E_{\beta}|K_x \cap W|^p\] holds for every $q, p \geq 0$, $\beta\geq 0$, $x\in V$,  and $W\subseteq V$ finite.
\end{corollary}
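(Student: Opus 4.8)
The plan is to obtain this as a short consequence of the rooted tail estimate \eqref{eq:BigClusterRooted} of \cref{thm:universal_tightness} via a layer-cake computation. Abbreviate $N=|K_x\cap W|$ and $M=M_\beta(W)$. We may assume $W\neq\emptyset$ (otherwise $N\equiv 0$ and, when $q>0$, both sides vanish) and that $\E_\beta N^p<\infty$ (otherwise the right-hand side is infinite and there is nothing to prove); since $W$ is finite we also have $1\le M\le |W|+1<\infty$, and \eqref{eq:BigClusterRooted} applies with $\Lambda=W$ and $u=x$.

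First I would split $\E_\beta N^{p+q}=\E_\beta[N^{p+q}\mathbbm{1}(N\le M)]+\E_\beta[N^{p+q}\mathbbm{1}(N>M)]$. On the event $\{N\le M\}$ we have $N^{p+q}\le M^q N^p$ pointwise, so the first term is at most $M^q\E_\beta N^p$. For the second term I would use the layer-cake identity
\[
\E_\beta\bigl[N^{p+q}\mathbbm{1}(N\ge M)\bigr]=M^{p+q}\P_\beta(N\ge M)+(p+q)\int_M^\infty t^{p+q-1}\P_\beta(N\ge t)\dif t,
\]
substitute $t=\lambda M$, and feed in $\P_\beta(N\ge\lambda M)\le e\,\P_\beta(N\ge M)\,e^{-\lambda/9}$ from \eqref{eq:BigClusterRooted} (valid for $\lambda\ge 1$). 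This bounds the integral by $e\,\P_\beta(N\ge M)\,M^{p+q}\int_1^\infty\lambda^{p+q-1}e^{-\lambda/9}\dif\lambda$; the remaining integral $c_{p,q}:=\int_1^\infty\lambda^{p+q-1}e^{-\lambda/9}\dif\lambda$ is finite for every $p,q\ge 0$, giving $\E_\beta[N^{p+q}\mathbbm{1}(N\ge M)]\le C_{p,q}\,M^{p+q}\P_\beta(N\ge M)$ with $C_{p,q}=1+(p+q)\,e\,c_{p,q}$.

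To close the argument I would use the trivial bound $M^{p+q}\P_\beta(N\ge M)\le M^q\,\E_\beta[N^p\mathbbm{1}(N\ge M)]\le M^q\,\E_\beta N^p$, which holds because $N^p\ge M^p$ on $\{N\ge M\}$ (using $p\ge 0$ and $M\ge 0$). Combining the two contributions yields $\E_\beta N^{p+q}\le (1+C_{p,q})\,M^q\,\E_\beta N^p$, as claimed, with an explicit constant depending only on $p$ and $q$. The real content of the argument lies entirely in the self-improving form of \eqref{eq:BigClusterRooted} — that the tail of $|K_x\cap W|$ above $\lambda M$ decays exponentially in $\lambda$ relative to its value at $M$ — which is precisely what lets the $q$ extra powers of $N$ on the over-threshold event be traded for $q$ powers of $M$. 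I do not anticipate any genuine obstacle; the only points requiring a little care are the edge cases noted above and checking finiteness of $c_{p,q}$ (including at $p=q=0$, where the integrand is $\lambda^{-1}e^{-\lambda/9}$, still integrable on $[1,\infty)$).
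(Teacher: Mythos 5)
Your argument is correct and is essentially the paper's own proof: both split the moment at the threshold $M_\beta(W)$, bound the sub-threshold contribution trivially by $M^q\E_\beta|K_x\cap W|^p$, and control the over-threshold contribution by feeding the rooted tail bound \eqref{eq:BigClusterRooted} into a layer-cake/change-of-variables computation and then trading $M^{p+q}\P(N\ge M)$ for $M^q\E_\beta N^p$. The only differences (pointwise bound on $\{N\le M\}$ versus integrating the tail over $[0,M]$, and the substitution $t=\lambda M$ versus $s=t/9M$) are cosmetic.
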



\begin{proof}[Proof of \cref{cor:universal_tightness_moments}]
Write $M=M_\beta(W)$ and $K=K_x$. We can use integration by parts and the inequality \eqref{eq:BigClusterRooted} to bound
\begin{align*}
\E_{\beta}|K \cap W|^{p+q} &\asymp_{p,q} \int_0^\infty t^{p+q-1} \P(|K\cap W| \geq t) \dif t\\
&\leq  \int_0^M t^{p+q-1} \P(|K\cap W| \geq t) \dif t +  \P(|K\cap W| \geq M) \int_M^\infty t^{p+q-1} \exp\left[1- \frac{t}{9M}\right] \dif t.
\end{align*}
The first term is bounded by $M^q \int_0^\infty t^{p-1} \P(|K\cap W| \geq t) \dif t \asymp M^q \E_{\beta}|K \cap W|^{p}$. For the second term, we can use the change of variables $s=t/9M$ to compute
\[
  \int_M^\infty t^{p+q-1} \exp\left[1- \frac{t}{9M}\right] \dif t = \int_{1/9}^\infty (9Ms)^{p+q-1} \exp\left[1- s\right] 9M \dif s \asymp_{p,q} M^{p+q}
\]
so that
\[
   \P(|K\cap W| \geq M) \int_M^\infty t^{p+q-1} \exp\left[1- \frac{t}{9M}\right] \dif t \preceq_{p,q} \frac{\E_{\beta}|K \cap W|^{p}}{M^p} M^{p+q} \asymp_{p,q} M^q \E_{\beta}|K \cap W|^{p}
\]
as required.
\end{proof}

As in \cref{subsec:scaling_limits_in_the_critical_dimension_under_the_hydrodynamic_condition}, we also write $M_r=M_{\beta_c,r}$ for the quantity used to define the hydrodynamic condition. This quantity will be very important throughout our analysis in the regime $d_\mathrm{eff}\leq 6$.  (It is less important in high effective dimension where one can usually accomplish the same things more easily using the tree-graph inequalities.)
The following simple lower bound on cluster moments in terms of $M_r$ complements the upper bound of \cref{cor:universal_tightness_moments}. 

\begin{lemma}
\label{lem:moments_bounded_below_by_M}
The inequality
\[
\E_{\beta,r}|K|^p \geq \E_{\beta,r}|K \cap B_{2r}|^p \geq (M_{\beta,r}-1)^p \P_{\beta,r} \left(|K\cap B_{2r}|\geq M_{\beta,r}-1\right) \geq \frac{(M_{\beta,r}-1)^{p+1}}{e |B_r|}
\]
holds for every $\beta,r\geq 0$ and $p\geq 1$.
\end{lemma}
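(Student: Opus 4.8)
The plan is to establish the three inequalities from right to left, each following from the previous one together with an elementary observation. Write $M = M_{\beta,r}$ throughout for brevity, and recall that $M$ is defined so that $\P_{\beta,r}(|K_{\mathrm{max}}(B_r)| \geq M) > e^{-1}$ while $\P_{\beta,r}(|K_{\mathrm{max}}(B_r)| \geq M) \leq e^{-1}$ for the next value up --- in particular $\P_{\beta,r}(|K_{\mathrm{max}}(B_r)| \geq M-1) > e^{-1}$ by minimality.

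First I would handle the innermost (rightmost) inequality, which is where the content lies. The event $\{|K_{\mathrm{max}}(B_r)| \geq M-1\}$ has probability exceeding $e^{-1}$, and on this event there is some vertex $z$ whose cluster meets $B_r$ in at least $M-1$ points. Since the model is translation-invariant and we may choose $z$ inside $B_r$ itself (using the second equality in the definition of $|K_{\mathrm{max}}(\Lambda)|$ recalled before \cref{thm:universal_tightness}), a union bound over $z \in B_r$ gives
\[
e^{-1} < \P_{\beta,r}\bigl(|K_{\mathrm{max}}(B_r)| \geq M-1\bigr) \leq \sum_{z\in B_r} \P_{\beta,r}\bigl(|K_z \cap B_r| \geq M-1\bigr) = |B_r|\,\P_{\beta,r}\bigl(|K_0 \cap B_r| \geq M-1\bigr),
\]
where the last step uses translation-invariance of $\P_{\beta,r}$ under the lattice action. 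Hence $\P_{\beta,r}(|K\cap B_r| \geq M-1) \geq (e|B_r|)^{-1}$, and since $B_r \subseteq B_{2r}$ the same bound holds with $B_{2r}$ in place of $B_r$.

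The second inequality (from the right) is then immediate from Markov's inequality in reverse: for any non-negative integer-valued random variable $X$ and any $p\geq 1$,
\[
\E_{\beta,r}|K\cap B_{2r}|^p \geq \E_{\beta,r}\bigl[|K\cap B_{2r}|^p \,\mathbbm{1}(|K\cap B_{2r}|\geq M-1)\bigr] \geq (M-1)^p\,\P_{\beta,r}\bigl(|K\cap B_{2r}|\geq M-1\bigr),
\]
and combining this with the first step gives $\E_{\beta,r}|K\cap B_{2r}|^p \geq (M-1)^{p+1}/(e|B_r|)$. Finally, the leftmost inequality $\E_{\beta,r}|K|^p \geq \E_{\beta,r}|K\cap B_{2r}|^p$ holds simply because $|K| \geq |K\cap B_{2r}|$ pointwise and $t\mapsto t^p$ is increasing. (When $M\leq 1$ all the claimed lower bounds are trivially non-positive or zero, so there is nothing to prove; the interesting case is $M\geq 2$.)

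The only mild subtlety --- and the single step worth double-checking --- is the union bound in the first display: one must use that $|K_{\mathrm{max}}(B_r)|$ can be realized by a cluster rooted at a vertex \emph{of} $B_r$ (so the sum ranges over the finite set $B_r$ rather than all of $\Z^d$), and that $\P_{\beta,r}$ is genuinely translation-invariant so that each summand equals $\P_{\beta,r}(|K_0\cap B_r|\geq M-1)$; both are built into the setup of \cref{thm:universal_tightness} and \cref{def:model_def}. No appeal to the universal tightness theorem's tail estimates themselves is needed here --- only the defining property of the edian --- so there is no real obstacle, just bookkeeping.
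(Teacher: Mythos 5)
There is a genuine gap: your union bound loses a factor of $M-1$ and your final claimed bound does not follow from your intermediate steps. Your first display only establishes $\P_{\beta,r}(|K\cap B_{2r}|\geq M-1)\geq (e|B_r|)^{-1}$, and multiplying by $(M-1)^p$ then gives $\E_{\beta,r}|K\cap B_{2r}|^p \geq (M-1)^p/(e|B_r|)$, which is one power of $M-1$ short of the stated $(M-1)^{p+1}/(e|B_r|)$. The paper avoids this loss by \emph{counting} rather than taking a union bound: on the event $\{|K_\mathrm{max}(B_r)|\geq M-1\}$ there is a cluster $C$ with $|C\cap B_r|\geq M-1$, and \emph{every one} of the at least $M-1$ vertices $x\in C\cap B_r$ satisfies $|K_x\cap B_{2r}(x)|\geq M-1$ (since $B_r\subseteq B_{2r}(x)$ for $x\in B_r$). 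Hence
\[
\E_{\beta,r}\sum_{x\in B_r}\mathbbm{1}\bigl(|K_x\cap B_{2r}(x)|\geq M-1\bigr)\;\geq\;(M-1)\,\P_{\beta,r}\bigl(|K_\mathrm{max}(B_r)|\geq M-1\bigr)\;\geq\;\frac{M-1}{e},
\]
and dividing by $|B_r|$ and using translation invariance gives $\P_{\beta,r}(|K\cap B_{2r}|\geq M-1)\geq (M-1)/(e|B_r|)$, which is exactly the missing factor.

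A secondary, fixable slip: your claimed equality $\P_{\beta,r}(|K_z\cap B_r|\geq M-1)=\P_{\beta,r}(|K_0\cap B_r|\geq M-1)$ is not what translation invariance gives. Translating by $-z$ sends $B_r$ to the ball of radius $r$ centred at $-z$, so the correct statement is $\P_{\beta,r}(|K_z\cap B_r|\geq M-1)=\P_{\beta,r}(|K_0\cap B_r(-z)|\geq M-1)\leq\P_{\beta,r}(|K_0\cap B_{2r}|\geq M-1)$ for $z\in B_r$; this containment is precisely why the ball $B_{2r}$ appears in the statement of the lemma. Your handling of the definition of the edian and the remaining two inequalities (monotonicity and the truncated Markov bound) is fine.
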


\begin{proof}[Proof of \cref{lem:moments_bounded_below_by_M}]
The first two inequalities are trivial; we focus on the third. 
Let $B_r(x)$ denote the set $\{y\in \Z^d:\|x-y\|\leq r\}$ for each $x\in \Z^d$ and $r\geq 0$. If there exists a cluster $C$ having intersection of size at least $M_{\beta,r}-1$ with $B_r$ then every $x\in B_r$ belonging to this cluster has $|K \cap B_{2r}(x)| \geq M_{\beta,r}-1$. Since such a cluster exists with probability at least $1/e$ by definition of $M_{\beta,r}$, it follows by transitivity that
\[
  \P_{\beta,r}(|K \cap B_{2r}| \geq M_{\beta,r}-1) = \frac{1}{|B_r|}\E_{\beta,r} \sum_{x \in B_r} \mathbbm{1}(|K \cap B_{2r}(x)| \geq M_{\beta,r}-1) \geq \frac{M_\beta-1}{e |B_r|}
\]
as claimed.
\end{proof}

\medskip

\noindent \textbf{The Gladkov inequality.}
The last correlation inequality we make note of was established in the recent work of 
\cite{gladkov2024percolation}, and is once again a (non-obvious) consequence of the BK inequality.

\begin{thm}
\label{thm:Gladkov}
Let $G=(V,E,J)$ be a weighted graph and consider Bernoulli bond percolation on $G$. The three-point connectivity function satisfies
\[
  \P_\beta(x \leftrightarrow y \leftrightarrow z)^2 \leq 8 \P_\beta(x \leftrightarrow y)\P_\beta(y \leftrightarrow z)\P_\beta(z\leftrightarrow x)
\]
for every $\beta \geq 0$ and $x,y,z\in V$.
\end{thm}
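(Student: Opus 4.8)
The statement refines the $3$-point tree-graph inequality \eqref{eq:tree_graph_3point}: that inequality gives $\P_\beta(x\leftrightarrow y\leftrightarrow z)\le\sum_u \P_\beta(x\leftrightarrow u)\P_\beta(y\leftrightarrow u)\P_\beta(z\leftrightarrow u)$, and one checks that this right-hand side can exceed $\sqrt{8\,\P_\beta(x\leftrightarrow y)\P_\beta(y\leftrightarrow z)\P_\beta(z\leftrightarrow x)}$ by an arbitrarily large factor — for instance on a star graph with centre $o$ and with $x,y,z$ together with many further leaves all joined to $o$ by edges of the same parameter, every leaf contributing to the sum although no leaf is ever a hub. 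So I would work at the level of the hub structure itself. Recall that \eqref{eq:tree_graph_3point} comes from the fact that on $\{x\leftrightarrow y\leftrightarrow z\}$ there is a hub $u$ with three pairwise edge-disjoint open arms to $x$, $y$ and $z$, so that, writing $E_u=\{x\leftrightarrow u\}\circ\{y\leftrightarrow u\}\circ\{z\leftrightarrow u\}$, we have $\{x\leftrightarrow y\leftrightarrow z\}=\bigcup_u E_u$. The extra observation is that the three arms may be combined two at a time: the $u$--$x$ arm and the $u$--$y$ arm together witness $\{x\leftrightarrow y\}$ disjointly from the $u$--$z$ arm, and cyclically, so that by the BK inequality
\[
\P_\beta(E_u)\le\min\bigl\{\,\P_\beta(x\leftrightarrow u)\P_\beta(y\leftrightarrow u)\P_\beta(z\leftrightarrow u),\ \P_\beta(z\leftrightarrow u)\P_\beta(x\leftrightarrow y),\ \P_\beta(x\leftrightarrow u)\P_\beta(y\leftrightarrow z),\ \P_\beta(y\leftrightarrow u)\P_\beta(z\leftrightarrow x)\,\bigr\}.
\]

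The plan is then to bound $\sum_u\P_\beta(E_u)$ by $\sqrt{8\,\P_\beta(x\leftrightarrow y)\P_\beta(y\leftrightarrow z)\P_\beta(z\leftrightarrow x)}$ by combining these estimates; squaring the target and passing to two independent copies $\omega_1,\omega_2$ reduces this to bounding $\sum_{u_1,u_2}\P\bigl(E_{u_1}^{(1)}\cap E_{u_2}^{(2)}\bigr)$ by $8\,\P_\beta(x\leftrightarrow y)\P_\beta(y\leftrightarrow z)\P_\beta(z\leftrightarrow x)$, the doubling being there precisely to linearise the square root. The constant $8=2^3$ is meant to emerge from a case analysis: for each of the three terminals $x,y,z$ one records, according to how the two hubs and their arms are situated, a binary choice of which copy supplies the arm used to close up that terminal's pairwise connection, and one estimates each of the $2^3$ cases by a product of the bounds above — keeping all three arms of one copy so that the sum over its hub is controlled by the triangle-type quantity $\sum_u \P_\beta(x\leftrightarrow u)\P_\beta(y\leftrightarrow u)\P_\beta(z\leftrightarrow u)$, while using an arm-pairing bound of the other copy to produce two of the three factors $\P_\beta(x\leftrightarrow y)$, $\P_\beta(y\leftrightarrow z)$, $\P_\beta(z\leftrightarrow x)$. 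The squared form of the inequality is itself natural: the trivial bounds $\P_\beta(x\leftrightarrow y\leftrightarrow z)\le\P_\beta(x\leftrightarrow y)$, etc., already give $\P_\beta(x\leftrightarrow y\leftrightarrow z)^3\le\P_\beta(x\leftrightarrow y)\P_\beta(y\leftrightarrow z)\P_\beta(z\leftrightarrow x)$, which in turn yields the desired inequality whenever $\P_\beta(x\leftrightarrow y\leftrightarrow z)\ge\tfrac18$, so that the content lies entirely in the regime where $\P_\beta(x\leftrightarrow y\leftrightarrow z)$ is small.

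The crux is making the recombination close without a divergent loss. Within a single copy the arms of a hub form a claw, from which at most one of the three pairwise connections can be extracted as a disjoint witness; and two independent copies are disconnected from one another, so their arms cannot be spliced — hence the recombination cannot be pure path surgery, and must genuinely interleave the doubling with the arm-pairing estimates above. Identifying the combination that is simultaneously summable over $(u_1,u_2)$ and loses only a bounded factor is the heart of the argument, and is where the constant $8$ (rather than a possibly smaller optimal constant) originates; with such a bound in hand, summing over $u_1,u_2$ and taking square roots yields the stated inequality.
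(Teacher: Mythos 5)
The first thing to say is that the paper does not prove \cref{thm:Gladkov} at all: it is quoted verbatim from \cite{gladkov2024percolation} (with a four-point extension, \cref{lem:four_point_Gladkov}, deferred to the second paper of the series), so there is no internal proof to compare your attempt against. Judged on its own terms, your proposal has a genuine gap, and you acknowledge it yourself: the step you defer as ``the heart of the argument'' \emph{is} the theorem. The setup (the hub events $E_u$, the four BK bounds on $\P_\beta(E_u)$, the observation that the claim is trivial when $\P_\beta(x\leftrightarrow y\leftrightarrow z)\geq 1/8$) is all correct, but it is also all standard; everything non-obvious is left open.

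More seriously, the completion you sketch cannot work. Your plan is to bound $\P_\beta(x\leftrightarrow y\leftrightarrow z)^2$ by the doubled union bound $\sum_{u_1,u_2}\P\bigl(E_{u_1}^{(1)}\cap E_{u_2}^{(2)}\bigr)=\bigl(\sum_u\P_\beta(E_u)\bigr)^2$ and then estimate termwise using the four bounds you list. But each available bound for a single copy is either the pure product $\P_\beta(x\leftrightarrow u)\P_\beta(y\leftrightarrow u)\P_\beta(z\leftrightarrow u)$ --- whose sum over $u$ is exactly the tree-graph quantity that your own star-graph example shows exceeds $\sqrt{8\,\P_\beta(x\leftrightarrow y)\P_\beta(y\leftrightarrow z)\P_\beta(z\leftrightarrow x)}$ by an arbitrarily large factor --- or else yields exactly \emph{one} pairwise factor together with a dangling arm $\P_\beta(w\leftrightarrow u)$, whose sum over $u$ is $\E_\beta|K_w|$ and is infinite at $\beta=\beta_c$ on $\Z^d$, which is precisely where the theorem is used in this paper. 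Since the three arms of one hub form a claw, a single copy can never supply two of the three pairwise factors via disjoint witnesses, so every one of your $2^3$ cases produces at most two of the required factors and at least one uncontrolled hub sum; no combination of the termwise bounds is simultaneously summable over $(u_1,u_2)$ and of the right order. The obstruction is structural: the union bound over hubs already loses an unbounded factor relative to $\P_\beta(x\leftrightarrow y\leftrightarrow z)$, so no termwise refinement of it can recover a sharp inequality with a universal constant valid on arbitrary weighted graphs. If you want a proof, you need an argument that avoids summing over hubs altogether; consult \cite{gladkov2024percolation} rather than pushing this route further.
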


An extension of this inequality to higher $k$-point functions is given in \cref{II-thm:higher_Gladkov}. While the Gladkov inequality is obviously useful for estimating $k$-point functions in low effective dimension, where it is sharp at criticality, it will also be useful for proving that certain error terms are small in the critical case $d=3\alpha$. (See e.g.\ the proof of \cref{lem:gyration_derivative}, Case 2.) When applied in this context, the Gladkov inequality can be used to derive bounds that are similar to, but slightly different than, those obtained from the universal tightness theorem; the following corollary is a protypical example of a bound that can be obtained from Gladkov in this way.

\begin{corollary}
\label{cor:Gladkov_moments}
Let $G=(V,E,J)$ be a transitive weighted graph and consider Bernoulli bond percolation on $G$. The inequality
\[
  \E_\beta\left[|K_x| |K_x \cap W|\right] \leq 2^{3/2} \E_\beta |K| \sqrt{ |W| \E_\beta |K_x \cap W|}
\]
holds for every $\beta \geq 0$, $x\in V$, and $W \subseteq V$ finite.
\end{corollary}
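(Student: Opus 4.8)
The plan is to expand $|K_x|\,|K_x\cap W|$ as a double sum of connection indicators, recognise that the resulting events are three-point connection events, bound each one via the Gladkov inequality (\cref{thm:Gladkov}), and then collapse the sums using the Cauchy--Schwarz inequality twice together with transitivity of $G$. We may assume all quantities appearing are finite and positive, as the inequality is trivial otherwise.

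First I would write $|K_x| = \sum_{y\in V}\mathbbm{1}(x\leftrightarrow y)$ and $|K_x\cap W| = \sum_{z\in W}\mathbbm{1}(x\leftrightarrow z)$, and observe that $\mathbbm{1}(x\leftrightarrow y)\mathbbm{1}(x\leftrightarrow z) = \mathbbm{1}(x\leftrightarrow y\leftrightarrow z)$, since $x$ being connected to both $y$ and $z$ is the same as $x,y,z$ all lying in a common cluster. Taking expectations and using Tonelli gives
\[
\E_\beta\bigl[|K_x|\,|K_x\cap W|\bigr] = \sum_{y\in V}\sum_{z\in W} \P_\beta(x\leftrightarrow y\leftrightarrow z).
\]
Next I would apply \cref{thm:Gladkov} to each term, giving $\P_\beta(x\leftrightarrow y\leftrightarrow z) \le 2\sqrt{2}\,\sqrt{\P_\beta(x\leftrightarrow y)}\,\sqrt{\P_\beta(y\leftrightarrow z)}\,\sqrt{\P_\beta(z\leftrightarrow x)}$. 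For each fixed $z\in W$, Cauchy--Schwarz in the summation over $y$ bounds $\sum_{y}\sqrt{\P_\beta(x\leftrightarrow y)\P_\beta(y\leftrightarrow z)}$ by $\sqrt{\E_\beta|K_x|}\,\sqrt{\E_\beta|K_z|} = \E_\beta|K|$, where the last equality uses transitivity of $G$. This leaves
\[
\E_\beta\bigl[|K_x|\,|K_x\cap W|\bigr] \le 2\sqrt{2}\,\E_\beta|K|\sum_{z\in W}\sqrt{\P_\beta(x\leftrightarrow z)},
\]
and a second application of Cauchy--Schwarz, this time over $z\in W$, bounds $\sum_{z\in W}\sqrt{\P_\beta(x\leftrightarrow z)}$ by $\sqrt{|W|}\,\sqrt{\sum_{z\in W}\P_\beta(x\leftrightarrow z)} = \sqrt{|W|\,\E_\beta|K_x\cap W|}$. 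Combining the two displays yields the claimed bound with constant $2\sqrt{2} = 2^{3/2}$.

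There is no substantial obstacle here: the argument is a direct combination of the Gladkov inequality with two applications of Cauchy--Schwarz and transitivity, and the only point requiring a little care is organising the two Cauchy--Schwarz steps (first over $y\in V$, then over $z\in W$) so that no constant is lost and the factor is exactly $\sqrt{8}=2^{3/2}$ inherited from \cref{thm:Gladkov}.
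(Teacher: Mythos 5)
Your proposal is correct and follows exactly the same route as the paper's proof: expand the product as a double sum of three-point connection probabilities, apply the Gladkov inequality termwise, then use Cauchy--Schwarz first over $y\in V$ (with transitivity giving $\E_\beta|K|$) and then over $z\in W$ to obtain the factor $\sqrt{|W|\,\E_\beta|K_x\cap W|}$. Nothing further is needed.
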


\begin{proof}[Proof of \cref{cor:Gladkov_moments}]
We can use the Gladkov inequality to write
\begin{multline*}
  \E_\beta\left[|K_x| |K_x \cap W|\right] = \sum_{y\in V, z\in W} \P_\beta(x\leftrightarrow y \leftrightarrow z) 
  \\\leq 2^{3/2} \sum_{y\in V, z\in W} \P_\beta(x\leftrightarrow y)^{1/2}\P_\beta(y\leftrightarrow z)^{1/2}\P_\beta(z\leftrightarrow x)^{1/2}.
\end{multline*}
The Cauchy-Schwarz inequality lets us bound the sum over $y$ by
\[
  \sum_{y\in V} \P_\beta(x\leftrightarrow y)^{1/2}\P_\beta(y\leftrightarrow z)^{1/2} \leq \sqrt{\sum_{y\in V} \P_\beta(x\leftrightarrow y) \sum_{y\in V} \P_\beta(y\leftrightarrow z)} = \E_\beta |K|
\]
for each $z\in W$, and
a second application of Cauchy-Schwarz yields that
\begin{equation*}
  \E_\beta\left[|K_x| |K_x \cap W|\right] \leq 2^{3/2} \E_\beta |K| \sum_{z\in W} \P_\beta(z\leftrightarrow x)^{1/2} \leq 2^{3/2} \E_\beta |K| \sqrt{ |W| \E_\beta |K \cap W|}
  \end{equation*}
  as claimed.
\end{proof}

\begin{remark}
Note that the similar bound 
\[
  \E_\beta\left[|K_x \cap W|^2\right] \preceq \E_\beta |K_x \cap W| \sqrt{|W| \sup_y \E|K_y \cap W|}
\]
is an easy consequence of the universal tightness theorem, with the bound 
\[M_\beta(W) = O\left(\sqrt{|W|\sup_y \E_\beta |K_y \cap W|}\right)\] holding by a similar argument to the proof of \cref{lem:moments_bounded_below_by_M}. Compared to the universal tightness theorem, the advantage of the Gladkov inequality is that it lets us show that $|K \cap W|$ is unlikely to exceed $\sqrt{|W| \E|K \cap W|}$ even after size-biasing or conditioning on certain rare events such as $0$ being connected to a distant point or to a low-intensity ghost field. (In particular, this upper bound on $|K\cap W|$ holds with high probability even under the IIC measure when it is well-defined.)
\end{remark}

\subsection{Regular variation, doubling, and logarithmic integrability}
\label{subsec:regular_variation_and_logarithmic_integrability}

We now introduce the definition and basic properties of (measurable) regularly varying functions, referring the reader to \cite{bingham1989regular} and \cite[Chapters VIII and XIII]{fellerII} for further background.
Recall that a function $f:(0,\infty)\to \R$ is said to be \textbf{regularly varying} if is eventually non-vanishing and the limit $\lim_{x\to \infty} f(\lambda x)/f(x)$ exists for every every $\lambda>0$. If this holds and $f$ is measurable then in fact there must exist $\alpha\in \R$, known as the \textbf{index} of $f$, such that $\lim_{x\to \infty} f(\lambda x)/f(x)=\lambda^\alpha$ for every $\lambda>0$, and moreover this convergence must hold uniformly for $\lambda$ in compact subsets of $(0,\infty)$ \cite[Theorem 1.2.1]{bingham1989regular}. Having regular variation of index $\alpha$ is a strictly stronger property than satisfying an asymptotic estimate of the form $f(x)=x^{\alpha \pm o(1)}$ as $x\to \infty$ (for example, the function $(2+\sin(x))x^\alpha$ satisfies an estimate of this form but is not regularly varying). The function $f$ is said to be \textbf{slowly varying} if it is regularly varying of index $0$, or equivalently if it is eventually non-vanishing and $f(\lambda x)\sim f(x)$ as $x\to \infty$ for each fixed $\lambda>0$. Every regularly varying function $f$ of index $\alpha$ can be written $f(x)=x^\alpha \ell(x)$ for some slowly-varying function $\ell$. Regular variation is preserved under first-order asymptotic equivalence in the sense that if $f\sim g$ and $g$ is regularly varying then $f$ is regularly varying of the same index. For the remainder of the paper regularly varying functions will always be taken to be measurable.

\medskip

An important property of regularly varying functions of index other than $-1$ is that it is easy to give first-order asymptotic formulae for their integrals:

\begin{lemma}[{\!\!\cite[Theorem VIII.9.1]{fellerII}}]
\label{lem:regular_variation_integration} If $f:(0,\infty) \to \R$ is regularly varying of index $\alpha>-1$ then 
\[\int_1^r f(s) \dif s \sim \frac{rf(r)}{\alpha+1} \]
as $r\to \infty$.
If $f$ is regularly varying of index $\alpha<-1$ then $\int_r^\infty f(s) \dif s <\infty$ for every $r>0$ and 
\[\int_r^\infty f(s) \dif s \sim \frac{rf(r)}{1-\alpha} \]
as $r\to \infty$.
\end{lemma}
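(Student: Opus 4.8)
The plan is to reduce the statement to Karamata's integration theorem for regularly varying functions and to sketch a proof of the latter using the structure theory of slowly varying functions recalled above; since the lemma is essentially \cite[Theorem VIII.9.1]{fellerII} I would ultimately just cite it, but it is worth recording where the content lies. Using the factorization $f(x)=x^\alpha\ell(x)$ with $\ell$ slowly varying recalled above, the natural first move is the substitution $s=ru$. When $\alpha>-1$ this gives
\[
\int_1^r f(s)\dif s \;=\; r^{\alpha+1}\ell(r)\int_{1/r}^1 u^\alpha\,\frac{\ell(ru)}{\ell(r)}\dif u,
\]
and since $rf(r)=r^{\alpha+1}\ell(r)$ the assertion is equivalent to showing that the integral on the right converges to $\int_0^1 u^\alpha\dif u=\tfrac{1}{\alpha+1}$ as $r\to\infty$. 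When $\alpha<-1$ the analogous identity has integration range $[1,\infty)$, and the finiteness of $\int_r^\infty f$ should be recorded first as a consequence of the bound $f(s)\preceq s^{\alpha+\varepsilon}$ (valid for large $s$, any $\varepsilon>0$) with $\varepsilon$ chosen so that $\alpha+\varepsilon<-1$.

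For the convergence when $\alpha>-1$, I would fix a large constant $X\ge 1$ and split the inner integral at $u=X/r$. The piece over $[1/r,X/r]$, undoing the substitution, equals $(r^{\alpha+1}\ell(r))^{-1}\int_1^X v^\alpha\ell(v)\dif v$, a fixed constant divided by $r^{\alpha+1}\ell(r)\to\infty$, hence tends to $0$. The piece over $[X/r,1]$ I would handle by dominated convergence: for each fixed $u\in(0,1]$ one has $\ell(ru)/\ell(r)\to1$ by slow variation, while once $r$ is large enough that $r$ and $ru$ both exceed $X$ the Potter bound gives $\ell(ru)/\ell(r)\le C u^{-\varepsilon}$, so the integrand is dominated by $Cu^{\alpha-\varepsilon}$, which is integrable on $(0,1]$ once $\varepsilon$ is chosen in $(0,\alpha+1)$. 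Thus the second piece converges to $\int_0^1 u^\alpha\dif u=\tfrac{1}{\alpha+1}$, and multiplying back through by $r^{\alpha+1}\ell(r)$ yields $\int_1^r f(s)\dif s\sim\tfrac{rf(r)}{\alpha+1}$. The case $\alpha<-1$ runs in the same way: the relevant Potter bound on $[1,\infty)$ is $\ell(ru)/\ell(r)\le Cu^{\varepsilon}$, giving a dominating function $Cu^{\alpha+\varepsilon}$ that is integrable on $[1,\infty)$ for small $\varepsilon$, and dominated convergence gives $\int_r^\infty f(s)\dif s\sim rf(r)\int_1^\infty u^\alpha\dif u$. (Evaluating $\int_1^\infty u^\alpha\dif u=\tfrac{1}{-\alpha-1}$, I would take the opportunity to write the second asymptotic formula in the lemma as $\int_r^\infty f(s)\dif s\sim\tfrac{rf(r)}{-\alpha-1}$.)

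The only non-routine ingredient is the control of $\ell(ru)/\ell(r)$ on the part of the range where slow variation alone says nothing — near $u=0$ in the first case and near $u=\infty$ in the second — so the hard part is really just invoking Potter's bound in the correct form. Potter's bound in turn follows directly from the Karamata representation $\ell(x)=c(x)\exp\!\big(\int_A^x \varepsilon(t)\,t^{-1}\dif t\big)$ with $c(x)\to c\in(0,\infty)$ and $\varepsilon(t)\to0$: for $x,y\ge X$ one has $\big|\int_x^y\varepsilon(t)\,t^{-1}\dif t\big|\le(\sup_{t\ge X}|\varepsilon|)\,|\log(y/x)|$, and $\sup_{t\ge X}|\varepsilon|$ can be made as small as one wishes by taking $X$ large. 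Everything else — the substitution, the harmless leftover piece, the dominated convergence step — is bookkeeping, so in the write-up I would simply cite \cite[Theorem VIII.9.1]{fellerII} (with \cite{bingham1989regular} for the uniform convergence and Potter bounds) rather than reproduce the argument.
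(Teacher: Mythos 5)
The paper gives no proof of this lemma at all --- it is quoted verbatim from Feller's Theorem VIII.9.1 --- so there is no argument of the paper's to compare yours against. Your sketch is the standard and correct proof underlying the cited theorem: factor $f(s)=s^{\alpha}\ell(s)$, substitute $s=ru$, kill the $[1/r,X/r]$ piece using $r^{\alpha+1}\ell(r)\to\infty$, and control $\ell(ru)/\ell(r)$ near the singular end of the range via Potter's bounds so that dominated convergence applies; the $\alpha<-1$ case is symmetric. You correctly isolate the only non-routine ingredient (the Potter bound, derived from the Karamata representation), and citing \cite[Theorem VIII.9.1]{fellerII} in the write-up is the right call.

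One substantive point: your parenthetical correction of the second display is right, and worth stating loudly. Testing the statement as printed with $f(s)=s^{-2}$ gives $\int_r^\infty f(s)\dif s = r^{-1} = rf(r)/(-\alpha-1)$, whereas the printed denominator $1-\alpha=3$ would give $rf(r)/3$. The correct constant for $\alpha<-1$ is $-\alpha-1=|\alpha+1|$, exactly as your computation of $\int_1^\infty u^{\alpha}\dif u$ shows; the lemma as stated contains a typo. (This does not propagate into the paper: the places where the exact constant matters, e.g.\ \cref{lem:ODE_with_driving_term}, use only the $\alpha>-1$ case, and the $\alpha<-1$ case is invoked elsewhere only up to constant multiples.)
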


In \cref{sec:volume_tail} we will also make use of \emph{Karamata's Tauberian theorem}, stated here in its probabilistic formulation. This theorem generalizes the Hardy-Littlewood Tauberian theorem and was Karamata's original motivation for the introduction of the definition of regular variation.

\begin{theorem}
[{\!\cite[Theorem XIII.5.2]{fellerII}}]
\label{thm:Karamata}
If $f$ is regularly varying of index $-\alpha$ for some $0\leq \alpha <1$ and $X$ is a non-negative random variable then
\[
\E[1-e^{-h X}] \sim f(1/h) \text{ as $h\downarrow 0$} \qquad \text{ if and only if } \qquad \P(X\geq x) \sim \frac{f(x)}{\Gamma(1-\alpha)} \text{ as $x\to\infty$,}
\]
where $\Gamma(z)=\int_0^\infty t^{z-1} e^{-t}\dif t$ denotes the Gamma function.
\end{theorem}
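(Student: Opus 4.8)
The statement is the classical Karamata Tauberian theorem specialised to the Laplace transform $h\mapsto\E[1-e^{-hX}]$, and the plan is to reduce it to the Laplace transform of the tail, dispatch the easy (Abelian) implication by dominated convergence, and handle the hard (Tauberian) implication by a rescaling argument fed into a continuity theorem for Laplace transforms. Write $\bar F(x)=\P(X>x)$. The starting point is the Fubini identity $\E[1-e^{-hX}]=h\int_0^\infty e^{-hx}\bar F(x)\dif x=:h\,g(h)$, which lets us restate everything in terms of $g(h)=\int_0^\infty e^{-hx}\bar F(x)\dif x$: the left-hand condition becomes $g(h)\sim h^{-1}f(1/h)$ as $h\downarrow0$, and the right-hand condition becomes $\bar F(x)\sim f(x)/\Gamma(1-\alpha)$. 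Passing between $\P(X\ge x)$ and $\P(X>x)$ costs nothing, since a regularly varying tail satisfies $\bar F(x-1)\sim\bar F(x)$ by the uniform convergence theorem for regular variation.

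For the Abelian direction, assume $\bar F(x)\sim f(x)/\Gamma(1-\alpha)$. Substituting $x=y/h$ gives $\E[1-e^{-hX}]=\int_0^\infty e^{-y}\bar F(y/h)\dif y$, and dividing by $f(1/h)$ the integrand tends pointwise to $e^{-y}y^{-\alpha}/\Gamma(1-\alpha)$ by regular variation of index $-\alpha$. I would split the integral at a small $\delta$: on $[\delta,\infty)$ Potter's bounds supply a dominating function and give convergence to $\Gamma(1-\alpha)^{-1}\int_\delta^\infty e^{-y}y^{-\alpha}\dif y$, while the contribution of $[0,\delta]$ is controlled via $\int_0^{T}\bar F\sim T\bar F(T)/(1-\alpha)$ (Karamata's integration lemma \cref{lem:regular_variation_integration}, valid because $\alpha<1$) and is $O(\delta^{1-\alpha})\cdot f(1/h)$ uniformly in $h$. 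Letting $h\downarrow0$ then $\delta\downarrow0$ yields $\E[1-e^{-hX}]\sim f(1/h)\,\Gamma(1-\alpha)^{-1}\int_0^\infty e^{-y}y^{-\alpha}\dif y=f(1/h)$, since $\int_0^\infty e^{-y}y^{-\alpha}\dif y=\Gamma(1-\alpha)$.

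For the Tauberian direction, assume $g(h)\sim h^{-1}f(1/h)$ and, for $t>0$, introduce the measures $\mu_t$ on $(0,\infty)$ with the non-increasing densities $h_t(x)=\bar F(tx)/f(t)$. A change of variables gives $\int_0^\infty e^{-\lambda x}\mu_t(\dif x)=(t f(t))^{-1}g(\lambda/t)$, and since $g(\lambda/t)\sim(t/\lambda)f(t/\lambda)\sim t\,\lambda^{\alpha-1}f(t)$ by regular variation, this converges for every $\lambda>0$ to $\lambda^{\alpha-1}=\Gamma(1-\alpha)^{-1}\int_0^\infty e^{-\lambda x}x^{-\alpha}\dif x$. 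Thus the Laplace transforms of the $\mu_t$ converge to that of the measure $\mu$ with density $x^{-\alpha}/\Gamma(1-\alpha)$; by the extended continuity theorem for Laplace transforms \cite[\S XIII.1]{fellerII}, $\mu_t\to\mu$ vaguely on $(0,\infty)$. Finally I would use monotonicity of $h_t$ to upgrade this to pointwise convergence of densities: for fixed $x_0>0$ and small $\varepsilon>0$, monotonicity sandwiches $h_t(x_0)$ between $\varepsilon^{-1}\mu_t((x_0,x_0+\varepsilon])$ and $\varepsilon^{-1}\mu_t((x_0-\varepsilon,x_0])$; letting $t\to\infty$ (using that $\mu$ is atomless) and then $\varepsilon\downarrow0$ (using continuity of the density) gives $h_t(x_0)\to x_0^{-\alpha}/\Gamma(1-\alpha)$. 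Taking $x_0=1$ gives $\bar F(t)/f(t)\to\Gamma(1-\alpha)^{-1}$, which is the desired tail asymptotic.

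The genuine content — and the step I expect to be the main obstacle — is the Tauberian passage from convergence of Laplace transforms to convergence of the (possibly infinite) measures $\mu_t$; everything else is bookkeeping with regularly varying functions. If one prefers not to quote the continuity theorem, Karamata's original device gives a self-contained substitute: approximate $\mathbbm{1}_{[0,1]}$ uniformly away from its jump by polynomials in $e^{-x}$ via Stone--Weierstrass, feed these into the convergence $\int e^{-\lambda x}\mu_t(\dif x)\to\lambda^{\alpha-1}$, and read off $\mu_t((0,1])\to\mu((0,1])$. Alternatively one can bypass vague convergence entirely by noting the a priori bound $h_t(x)\le e\,x^{-1}\big(\int_0^\infty e^{-x^{-1}u}h_t(u)\dif u\big)$ coming from monotonicity, extracting a pointwise subsequential limit of $h_t$ by Helly's selection theorem, identifying it via uniqueness of Laplace transforms, and observing that all subsequential limits coincide. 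The hypothesis $\alpha<1$ is used in exactly two places: integrability of $x^{-\alpha}$ near the origin and applicability of Karamata's integration lemma to $\bar F$.
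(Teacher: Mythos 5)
The paper does not prove this statement: it is quoted verbatim from Feller \cite[Theorem XIII.5.2]{fellerII} as background, so there is no in-paper argument to compare against. Your proof is a correct, self-contained derivation along the standard lines (essentially Feller's and Bingham--Goldie--Teugels'): the Abelian half by dominated convergence with Potter bounds plus Karamata's integration lemma to control the contribution near the origin (this is where $\alpha<1$ enters), and the Tauberian half by feeding the rescaled tail measures $\mu_t$ into the extended continuity theorem for Laplace transforms and then using monotonicity of $x\mapsto\bar F(tx)/f(t)$ to upgrade vague convergence to pointwise convergence at $x_0=1$. All the individual steps check out, including the sandwich inequalities in the monotone-density step and the identity $\int_0^\infty e^{-\lambda x}x^{-\alpha}\dif x=\lambda^{\alpha-1}\Gamma(1-\alpha)$; the only thing I would make explicit is that the extended continuity theorem is needed in its version for possibly infinite measures, since $\mu_t((0,\infty))=\infty$ whenever $\E X=\infty$, which is the typical case here — but you do cite the right statement of Feller for that.
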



\noindent \textbf{Logarithmic integrability}.
We say that a measurable function $h:(0,\infty)\to \R$ is \textbf{logarithmically integrable} if it is locally integrable on some interval of the form $[r_0,\infty)$ and
\[
\int_{r_0}^\infty |h(r)| \frac{dr}{r} = \int_{\log r_0}^\infty |h(e^t)| \dif t <\infty.
\]
We say that $h$ is a \textbf{logarithmically integrable error function} if it is logarithmically integrable and $h(r)\to 0$ as $r\to \infty$. Note that the logarithmically integrable error functions form an algebra in the sense that they are closed under products and linear combinations.
Regularly varying functions of negative index are always logarithmically integrable error functions and, more generally, regularly varying functions are logarithmically integrable error functions if and only if they are logarithmically integrable.  

\medskip

Regularly varying functions and logarithmically integrable error functions will arise naturally in our analysis via the following fact, which can be thought of as a partial converse to \cref{lem:regular_variation_integration}.

\begin{lemma}[$f'\sim a r^{-1}f$]
\label{lem:ODE_self_referential}
If $f:(0,\infty)\to(0,\infty)$ is differentiable, eventually non-vanishing, and satisfies $f'(r)\sim a r^{-1} f(r)$ as $r\to\infty$ for some $a>0$ then $f$ is regularly varying of index $a$. If moreover $f'(r)= a(1-h(r)) r^{-1} f(r)$ for some logarithmically integrable error function $h:(0,\infty)\to \R$  then there exists a constant $A>0$ such that $f(r)\sim A r^a$ as $r\to \infty$.
\end{lemma}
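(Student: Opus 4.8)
The plan is to pass to logarithmic derivatives and reduce both assertions to an elementary statement about the single function $\phi(r):=\log f(r)-a\log r$, which is well-defined and differentiable for all $r>0$ because $f$ takes values in $(0,\infty)$. We compute $\phi'(r)=\dfrac{f'(r)}{f(r)}-\dfrac{a}{r}$, and the two hypotheses are designed precisely to control this quantity: under the first hypothesis we may write $\phi'(r)=\dfrac{a}{r}\varepsilon(r)$ with $\varepsilon(r)\to 0$ as $r\to\infty$, while under the second hypothesis $\phi'(r)=-\dfrac{a}{r}h(r)$. First I would fix $r_1$ large enough that $|\varepsilon(r)|\le 1$ on $[r_1,\infty)$ (resp.\ that $h$ is locally integrable on $[r_1,\infty)$); since $\phi'$ is the everywhere-defined pointwise derivative of $\phi$ it is Borel measurable, and on $[r_1,\infty)$ it is dominated by the locally integrable function $a/r$ (resp.\ $a|h(r)|/r$), hence is itself locally integrable there. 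The fundamental theorem of calculus for differentiable functions with locally integrable derivative then gives $\phi(b)-\phi(a)=\int_a^b \phi'(s)\dif s$ for all $r_1\le a\le b$.

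For the first claim, note that $f$ is continuous (being differentiable) and therefore measurable, so it remains only to check the defining limit relation $f(\lambda r)/f(r)\to\lambda^a$ for each fixed $\lambda>0$; taking logarithms, this is exactly $\phi(\lambda r)-\phi(r)\to 0$. For $\lambda\ge 1$ and $r\ge r_1$ I would estimate
\[
\bigl|\phi(\lambda r)-\phi(r)\bigr|=\left|\int_r^{\lambda r}\frac{a\,\varepsilon(s)}{s}\dif s\right|\le a\log\lambda\cdot\sup_{s\ge r}|\varepsilon(s)|\;\xrightarrow[\;r\to\infty\;]{}\;0,
\]
and the case $0<\lambda<1$ follows by applying this bound with $\lambda^{-1}$ and $\lambda r$ in place of $\lambda$ and $r$. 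Hence $f$ is regularly varying of index $a$.

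For the second claim we now have $\phi'(s)=-a\,h(s)/s$, and logarithmic integrability of $h$ says exactly that $\int_{r_1}^\infty |\phi'(s)|\dif s=a\int_{r_1}^\infty |h(s)|\frac{\dif s}{s}<\infty$. Thus $\int_{r_1}^\infty \phi'(s)\dif s$ converges absolutely, so $\phi(r)=\phi(r_1)+\int_{r_1}^r\phi'(s)\dif s$ converges to a finite limit $L$ as $r\to\infty$; exponentiating, $f(r)/r^a\to e^L$, and the claim holds with $A:=e^L>0$ (positivity being automatic since $A$ is an exponential). This also recovers the first claim in this case, consistently with the first part.

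I do not expect a genuine obstacle here: the only point deserving a little care is the invocation of the fundamental theorem of calculus for $\phi$, whose derivative is known only to be a (Borel measurable) pointwise derivative rather than a continuous function. This is dealt with above by producing, on a terminal interval $[r_1,\infty)$, an explicit locally integrable dominating function for $\phi'$, after which the standard version of the fundamental theorem for absolutely continuous functions applies on every compact subinterval.
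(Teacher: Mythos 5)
Your proof is correct and follows essentially the same route as the paper's: both pass to the logarithmic derivative $(\log f)'$, integrate, and use logarithmic integrability of the error to conclude convergence of $f(r)/r^a$. Your extra care in justifying the fundamental theorem of calculus for the pointwise derivative $\phi'$ (via local integrability on a terminal interval) is a refinement the paper glosses over, but it does not change the argument.
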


We include a brief proof of this standard fact since it will be instructive for the remainder of our analysis. It is the first of many ODE lemmas we will prove throughout this series of papers.

\begin{proof}[Proof of \cref{lem:ODE_self_referential}]
The estimate $f'(r)\sim a r^{-1} f(r)$ can be rewritten
\[
(\log f(r))' = a(1-h(r)) r^{-1} 
\]
where $|h(r)|\to 0$ as $r\to \infty$. Integrating this equation yields that
\[
f(R) = \exp\left[ - a\int_r^R \frac{h(s)}{s} \dif s\right] \left(\frac{R}{r}\right)^a f(r) 
\]
for every $R\geq r \geq 1$. This clearly implies the claim that $f$ is regularly varying of index $a$. Moreover, if $h$ is logarithmically integrable then, taking $r_0$ sufficiently large that $h$ is locally integrable on $[r_0,\infty)$,
\[
f(r) = \exp\left[ - a\int_{r_0}^r \frac{h(s)}{s} \dif s\right] (r/r_0)^a f(r_0)  \sim \exp\left[ - a\int_{r_0}^\infty \frac{h(s)}{s} \dif s\right] (r/r_0)^a f(r_0),
\]
so that the claimed estimate $f(r)\sim A r^a$ holds with $A= r_0^{-a}f(r_0)\exp[ - a\int_{r_0}^\infty \frac{h(s)}{s} \dif s]$.
\end{proof}

Finally, let us mention that an increasing function $f:[1,\infty)\to (0,\infty)$ is said to be \textbf{doubling} if there exists a constant $C$ such that $f(2x) \leq Cf(x)$ for every $x\geq 1$, so that increasing functions of regular variation are always doubling. An increasing function $f:[1,\infty)\to (0,\infty)$ is said to be \textbf{reverse doubling} if its inverse is doubling, or equivalently if there exists a constant $C$ such that $f(Cx)\geq 2f(x)$ for every $x \geq 1$; positive, increasing functions with regular variation of positive index are both doubling and reverse-doubling.


\subsection{Previous results on long-range percolation}
\label{sub:previous_results_on_long_range_percolation}

In this section we give formal statements of previous results for long-range percolation that will be used in our analysis.
The most important such result is the following spatially-averaged upper bound on the two-point function, proven in \cite{hutchcroft2022sharp} following the hierarchical analysis of \cite{hutchcrofthierarchical}. (The theorem holds vacuously when $\alpha \geq d$.)

\begin{thm}
\label{thm:two_point_spatial_average_upper} $\sum_{x\in B_r} \P_{\beta_c}(0 \leftrightarrow x) \preceq r^\alpha$ for every $r\geq 1$.
\end{thm}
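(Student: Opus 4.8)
The plan is to bound the susceptibility at each scale by setting up a self-improving inequality in the cut-off parameter, with the continuity of the phase transition as the crucial external input. The case $\alpha \ge d$ is trivial: each probability is at most $1$ and $|B_r| \asymp r^d \le r^\alpha$ by \eqref{eq:normalization_conventions}, so $\sum_{x\in B_r}\P_{\beta_c}(0\leftrightarrow x)\le |B_r|\preceq r^\alpha$ (this is the ``vacuous'' case noted after the statement). So assume $0<\alpha<d$. I would work with the cut-off susceptibility $\chi_r:=\E_{\beta_c,r}|K|$, which is finite because $\P_{\beta_c,r}$ is subcritical and which satisfies $\chi_r\to\infty$ as $r\to\infty$ (the susceptibility diverges at criticality); up to constants and a sprinkling argument controlling the longest edge used on a connection, the theorem is equivalent to the bound $\chi_r\preceq r^\alpha$, and this is what I would aim to prove.

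The engine is a differential inequality in $r$. Using $\partial_r J_r(x,y)=|J'(r)|\,\mathbbm 1(\|x-y\|<r)$ and Russo's formula, $\tfrac{d}{dr}\chi_r$ equals $\beta_c|J'(r)|$ times a sum over pairs $\{u,v\}$ with $\|u-v\|<r$ of $\E_{\beta_c,r}\!\big[\mathbbm 1(0\leftrightarrow u,\,0\nleftrightarrow v)\,|K_v|\big]$ plus the symmetric term; with $|J'(r)|\sim r^{-d-\alpha-1}$ and $|\{v:\|u-v\|<r\}|\asymp r^d$ this is a Bernoulli-type relation. The easy upper bound $\E_{\beta_c,r}[\mathbbm 1(0\leftrightarrow u,0\nleftrightarrow v)|K_v|]\le \P_{\beta_c,r}(0\leftrightarrow u)\,\chi_r$ from \cref{lem:BK_disjoint_clusters_covariance} gives $\tfrac{d}{dr}\chi_r\preceq r^{-\alpha-1}\chi_r^2$, which upon integrating $\tfrac{d}{dr}\chi_r^{-1}$ and using $\chi_r\to\infty$ yields only the lower bound $\chi_r\succeq r^\alpha$ --- the wrong direction. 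To obtain the upper bound I need the reverse estimate $\tfrac{d}{dr}\chi_r\succeq r^{-\alpha-1}\chi_r^2$ for a positive proportion of scales: integrating $\tfrac{d}{dr}\chi_r^{-1}\preceq -r^{-\alpha-1}$ from a fixed $r_0$ to $\infty$ and using $\chi_r^{-1}\to 0$ then forces $\chi_{r_0}^{-1}\succeq r_0^{-\alpha}$, i.e.\ $\chi_{r_0}\preceq r_0^\alpha$.

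Proving this reverse estimate is the heart of the matter. Writing $\mathbbm 1(0\nleftrightarrow v)=1-\mathbbm 1(0\leftrightarrow v)$, the main term satisfies $\E_{\beta_c,r}[\mathbbm 1(0\leftrightarrow u)|K_v|]\ge \P_{\beta_c,r}(0\leftrightarrow u)\chi_r$ by Harris--FKG, so one must show that the ``collision'' term $\E_{\beta_c,r}[\mathbbm 1(0\leftrightarrow u\leftrightarrow v)|K_v|]$ --- and, after summing over $v$ in a ball of radius $\asymp r$ and over the relevant $u$, the three- and four-point contributions it produces --- is at most a small constant times $\chi_r\sum_u\P_{\beta_c,r}(0\leftrightarrow u)\asymp\chi_r^2$. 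Bounding these collision and long-range contributions via the tree-graph inequality \eqref{tree_graph_general} or the Gladkov inequality (\cref{thm:Gladkov}) is genuinely circular, since those bounds are themselves sums of two-point functions of precisely the type we are trying to control, and the contributions of connections through distant points are only summable once $\chi$ already obeys the target bound. I expect this circularity to be the main obstacle.

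The resolution is a bootstrap: assume $\chi_\rho\le A\rho^\alpha$ for all $\rho\le r$ with $A$ a large constant, feed this into the collision estimates to render them $o(1)\cdot\chi_r^2$ as the scale separation grows, and close the induction. The one ingredient preventing the bootstrap constant from having to diverge --- equivalently, ruling out the scenario in which a too-large $\chi_r$ would, via the differential inequality together with universal tightness (\cref{thm:universal_tightness}) and the lower bound $\chi_r\succeq M_r^2/|B_r|$ of \cref{lem:moments_bounded_below_by_M}, force macroscopic cluster-merging on every scale and hence an infinite cluster at $\beta_c$ --- is Berger's continuity theorem, $\P_{\beta_c}(|K|=\infty)=0$ for $\alpha<d$. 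This is the argument carried out in \cite{hutchcroft2022sharp} following the hierarchical treatment of \cite{hutchcrofthierarchical}; the remaining Tauberian bookkeeping relating $\chi_r$, $M_r$, and $\sum_{x\in B_r}\P_{\beta_c}(0\leftrightarrow x)$ is routine.
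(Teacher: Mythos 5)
First, note that this paper does not prove \cref{thm:two_point_spatial_average_upper} at all: it is imported verbatim from \cite{hutchcroft2022sharp}, and you correctly identify that reference, as well as the three genuine inputs (the universal tightness theorem, a multi-scale cluster-merging argument, and Berger's continuity theorem as the source of the contradiction). To that extent your proposal points at the right proof.

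However, the mechanism you build the sketch around --- a reverse differential inequality $\frac{d}{dr}\chi_r \succeq r^{-\alpha-1}\chi_r^2$ for the cut-off susceptibility, valid on a positive proportion of scales and closed by a bootstrap on the constant in $\chi_\rho \leq A\rho^\alpha$ --- is not how the cited proof works, and I do not believe it can be made to work as described. The theorem holds for \emph{all} $0<\alpha<d$, in particular throughout the effectively low-dimensional regime, and there the error term $\cE_{1,r}$ in \eqref{eq:first_moment_ODE_error_def} is genuinely comparable to $1$: the second paper in this series shows that the mean-field approximation \eqref{eq:volume_moments_ODE_intro} \emph{fails} in the LR LD regime, because $\E_{\beta_c,r}[|K|^2|K\cap B_r|]$ is of the same order as $|B_r|(\E_{\beta_c,r}|K|)^2$ when hyperscaling holds. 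So establishing $1-\cE_{1,s}\geq c$ on a positive density of scales is essentially equivalent to the conclusion $\chi_r\asymp r^\alpha$ you are trying to prove, and no amount of tree-graph, Gladkov, or bootstrap input will extract it: those bounds control the collision term by quantities of exactly the same order as the main term. Your proposal correctly senses this circularity but then defers its resolution to "the argument carried out in \cite{hutchcroft2022sharp}" without supplying the step that actually breaks it.

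That step is different in kind from what you sketch. The cited proof does not differentiate the susceptibility in $r$; it runs the merging-versus-continuity dichotomy directly on the geometry of large clusters in a box. Roughly: if $\E_{\beta_c}|K\cap B_r|\gg \lambda r^\alpha$, then since $\E_{\beta_c}|K\cap B_r|=|B_r|^{-1}\E\sum_C|C\cap B_r|^2$ and universal tightness caps each cluster at $O(M_r)$ with exponential tails, the box must contain many clusters of size comparable to $M_r$; the probability that two such clusters are joined by a single edge when passing to the next scale is governed by $M_r^2 r^{-d-\alpha}$, and one shows that too large a value of $\E_{\beta_c}|K\cap B_r|$ at one scale forces the merged object at the next scale to again violate the bound by a larger margin, so that iterating produces an infinite cluster at $\beta_c$, contradicting \cite{MR1896880}. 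This is an induction on the quantity $\E_{\beta_c}|K\cap B_r|$ (and its interplay with $M_r$), not on $\frac{d}{dr}\E_{\beta_c,r}|K|$. Relatedly, your reduction of the statement to the cut-off susceptibility $\E_{\beta_c,r}|K|$ via "sprinkling and routine Tauberian bookkeeping" glosses over a real issue: neither of $\E_{\beta_c}|K\cap B_r|$ and $\E_{\beta_c,r}|K|$ dominates the other, and controlling connections that use edges longer than $r$ reintroduces the uncut two-point function and is itself entangled with the induction. The cited argument avoids this by working with $\E_{\beta_c}|K\cap B_r|$ directly.
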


Under the stronger assumption that $\alpha<1$, a matching lower bound on the spatially averaged two-point function was proven by B\"aumler and Burger \cite{baumler2022isoperimetric}. These results were then strengthened to \emph{pointwise} up-to-constants estimates on the two-point function when $\alpha<1$ in \cite{hutchcroft2024pointwise}. (We conjecture that the pointwise \emph{upper bound} holds for all $\alpha<d$.)

\begin{thm}
\label{thm:two_point_pointwise} If $\alpha<1$ then
 $\P_{\beta_c}(x \leftrightarrow y) \asymp \|x-y\|^{-d+\alpha}$
for every $x\neq y$ in $\Z^d$.
\end{thm}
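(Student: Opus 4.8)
I would prove the matching upper and lower bounds separately, write $r=\|x-y\|$, and regard the lower bound as a warm-up and the pointwise \emph{upper} bound as the real difficulty. The starting point in both directions is the averaged estimate $\sum_{z\in B_r}\P_{\beta_c}(0\leftrightarrow z)\asymp r^\alpha$, coming from \cref{thm:two_point_spatial_average_upper} together with its B\"aumler--Burger complement; the whole game is to upgrade this \emph{averaged} statement to a \emph{pointwise} one, and it is here that the hypothesis $\alpha<1$ (which forces connections between distant points to be carried by a single long edge rather than by chains of short edges) enters.

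\textbf{Lower bound.} For $\P_{\beta_c}(x\leftrightarrow y)\succeq r^{-d+\alpha}$ I would run a second-moment argument. Let $U=B_{r/4}(x)$, $V=B_{r/4}(y)$, and let $N$ count the open edges $\{u,v\}$ with $u\in U$, $v\in V$, $x\leftrightarrow u$ using only edges with both endpoints in $U$, and $y\leftrightarrow v$ using only edges with both endpoints in $V$; then $\{N\ge 1\}\subseteq\{x\leftrightarrow y\}$, and the three families of edges involved (inside $U$, inside $V$, and between $U$ and $V$) are disjoint and hence independent. Since $1-e^{-\beta_c J(u,v)}\asymp r^{-d-\alpha}$ for $u\in U$, $v\in V$, and $\sum_{u\in U}\P_{\beta_c}(x\leftrightarrow u\text{ in }U)\asymp r^\alpha$ (the upper bound from \cref{thm:two_point_spatial_average_upper} and the lower bound from the within-a-ball version of the B\"aumler--Burger estimate), one gets $\E N\asymp r^{-d+\alpha}$. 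For the variance one computes $\E[N(N-1)]\preceq r^{-2(d+\alpha)}\bigl(\E|K_x\cap U|^2\bigr)\bigl(\E|K_y\cap V|^2\bigr)$, and by \cref{cor:universal_tightness_moments} together with the bound $M_r=O(r^{(d+\alpha)/2})$ one has $\E|K_x\cap U|^2\preceq M_r\,\E|K_x\cap U|\preceq r^{(d+3\alpha)/2}$, which gives $\E[N(N-1)]\preceq r^{-d+\alpha}\asymp\E N$, hence $\E N^2\asymp\E N$. Paley--Zygmund then yields $\P_{\beta_c}(x\leftrightarrow y)\ge\P(N\ge1)\ge(\E N)^2/\E N^2\succeq r^{-d+\alpha}$.

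\textbf{Upper bound (main obstacle).} Peeling the cluster of $x$ inside $W=B_{r/2}(x)$ via the first-exit form of the BK inequality gives
\[
\P_{\beta_c}(x\leftrightarrow y)\;\le\;\sum_{u\in W}\sum_{w\notin W}\P_{\beta_c}(x\leftrightarrow u\text{ in }W)\,\bigl(1-e^{-\beta_c J(u,w)}\bigr)\,\P_{\beta_c}(w\leftrightarrow y).
\]
Using $\sum_{u\in W}\P_{\beta_c}(x\leftrightarrow u)\preceq r^\alpha$ from \cref{thm:two_point_spatial_average_upper} and splitting the inner sum according to whether $w\in B_{r/4}(y)$, the contribution of $w$ near $y$ is $\preceq r^\alpha\cdot r^{-d-\alpha}\cdot r^\alpha=r^{-d+\alpha}$, which is exactly the desired order. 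The problem is the complementary contribution, where one only has $\P_{\beta_c}(w\leftrightarrow y)\le\bar\tau(r/4)$ with $\bar\tau(s):=\sup_{\|z\|\ge s}\P_{\beta_c}(0\leftrightarrow z)$: since $\sum_w(1-e^{-\beta_c J(u,w)})=O(1)$ this piece is only bounded by $r^\alpha\bar\tau(r/4)$, far too weak for a naive bootstrap to close (the recursion coefficient is not below the required threshold $4^{\alpha-d}$). I expect the genuine work to lie here: one must exploit that the ``effective long edge'' realising $\{x\leftrightarrow y\}$ has both endpoints spread over the macroscopic balls $B_r(x)$ and $B_r(y)$, so that single-edge union bounds over-count; replace the crude path-peeling by a cut-based/isoperimetric estimate in the spirit of \cite{baumler2022isoperimetric}; run the estimate simultaneously over all scales $s\le r$ so that an a priori bound $\bar\tau(s)\le K s^{-d+\alpha}$ feeds back into itself with a quantitative gain (here $\alpha<1$ is used both through $\sum_{\|w\|\ge R}J(0,w)\asymp R^{-\alpha}$ and to show that connections avoiding long edges are negligible); and finally remove the constant $K$ by contradiction against the averaged bound via a regularity argument. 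Converting the averaged two-point bound plus a regularity input into the pointwise bound without incurring fatal over-counting is, in my view, the technical heart of the proof.
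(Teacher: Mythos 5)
The first thing to say is that this paper does not prove \cref{thm:two_point_pointwise} at all: it is quoted verbatim from \cite{hutchcroft2024pointwise} in \cref{sub:previous_results_on_long_range_percolation} as a previously established input, so there is no in-paper proof to compare your proposal against. Judged on its own terms, your proposal splits cleanly into a part that works and a part that is an acknowledged hole.

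Your lower bound is essentially correct and is the standard gluing/second-moment argument. The expectation computation, the use of disjoint (hence independent) edge families inside $U$, inside $V$, and between them, and the variance bound via \cref{cor:universal_tightness_moments} together with $M_r=O(r^{(d+\alpha)/2})$ (\cref{cor:M_upper_bound_general}) all fit together, and Paley--Zygmund closes it. The one input you should flag more carefully is the lower bound $\sum_{u\in U}\P_{\beta_c}(x\leftrightarrow u\text{ in }U)\succeq r^\alpha$ for the \emph{restricted} (within-ball) two-point function: this is genuinely what the B\"aumler--Burger isoperimetric argument supplies for $\alpha<1$, but it is not a formal consequence of the unrestricted averaged lower bound, and the restriction to within-ball connections is exactly what makes your independence structure legitimate. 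As stated your sketch silently conflates the two; cite the restricted version explicitly.

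The upper bound, however, is not a proof. You correctly set up the exit decomposition, correctly identify that the far contribution $r^\alpha\bar\tau(r/4)$ is too weak for a naive bootstrap, and then list several plausible-sounding directions (cut-based estimates, multi-scale bootstrapping, a contradiction against the averaged bound) without carrying any of them out. That is precisely the technical heart of \cite{hutchcroft2024pointwise}, and ``I expect the genuine work to lie here'' is an admission that half the theorem --- arguably the harder half, and the half the present paper singles out as conjectural for general $\alpha<d$ --- is missing. So the proposal should be assessed as: lower bound essentially complete, upper bound a genuine gap with only a correct diagnosis of where the difficulty sits.
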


\cref{thm:two_point_spatial_average_upper} and \cref{lem:moments_bounded_below_by_M} have the following important consequence regarding the quantity $M_r=M_{\beta_c,r}$ defined in \cref{subsec:correlation_inequalities_and_the_universal_tightness_theorem}. (As before, the theorem holds vacuously when $\alpha \geq d$.)

\begin{corollary}
\label{cor:M_upper_bound_general}
$M_r \preceq r^{(d+\alpha)/2}$ for every $r\geq 1$. In fact, the same bound holds for the edian of $|K_\mathrm{max}(B_r)|$ under the measure $\P_{\beta_c}$.
\end{corollary}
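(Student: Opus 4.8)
The plan is to prove the \emph{stronger} of the two assertions first, namely the bound $\widetilde M_r \preceq r^{(d+\alpha)/2}$ for the edian $\widetilde M_r$ of $|K_\mathrm{max}(B_r)|$ under $\P_{\beta_c}$, and then to deduce the bound on $M_r=M_{\beta_c,r}$ essentially for free. Indeed, since $J_r\le J$ pointwise the cut-off measure $\P_{\beta_c,r}$ is stochastically dominated by $\P_{\beta_c}$, and $|K_\mathrm{max}(B_r)|$ is an increasing function of the configuration (adding edges only merges clusters), so $\P_{\beta_c,r}(|K_\mathrm{max}(B_r)|\ge n)\le \P_{\beta_c}(|K_\mathrm{max}(B_r)|\ge n)$ for every $n$ and hence $M_r\le \widetilde M_r$ directly from the definition of the edian.

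To bound $\widetilde M_r$ I would rerun the transitivity argument from the proof of \cref{lem:moments_bounded_below_by_M}, observing that this argument never uses the cut-off and so applies verbatim under $\P_{\beta_c}$. On the event that some cluster meets $B_r$ in at least $\widetilde M_r-1$ vertices---an event of $\P_{\beta_c}$-probability at least $e^{-1}$ by definition of the edian---each such vertex $x\in B_r$ satisfies $|K_x\cap B_{2r}(x)|\ge \widetilde M_r-1$ because $B_r\subseteq B_{2r}(x)$. Averaging the indicator of this event over $x\in B_r$ and using translation invariance gives $\P_{\beta_c}(|K_0\cap B_{2r}|\ge \widetilde M_r-1)\ge (\widetilde M_r-1)/(e|B_r|)$, and hence $\E_{\beta_c}|K_0\cap B_{2r}|\ge (\widetilde M_r-1)\,\P_{\beta_c}(|K_0\cap B_{2r}|\ge \widetilde M_r-1)\ge (\widetilde M_r-1)^2/(e|B_r|)$.

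It then remains to bound $\E_{\beta_c}|K_0\cap B_{2r}|$ from above, and this is the one place where input from the literature enters: writing $\E_{\beta_c}|K_0\cap B_{2r}|=\sum_{x\in B_{2r}}\P_{\beta_c}(0\leftrightarrow x)$ and applying \cref{thm:two_point_spatial_average_upper} (with $r$ replaced by $2r$) gives $\E_{\beta_c}|K_0\cap B_{2r}|\preceq (2r)^\alpha\asymp r^\alpha$, while $|B_r|\asymp r^d$ by the normalization \eqref{eq:normalization_conventions}. Combining the two displays yields $(\widetilde M_r-1)^2\preceq r^{d+\alpha}$, and since $\widetilde M_r\ge 1$ while $r^{(d+\alpha)/2}\ge 1$ for $r\ge 1$ this gives $\widetilde M_r\preceq r^{(d+\alpha)/2}$; together with $M_r\le\widetilde M_r$ this proves the corollary. (Alternatively, the bound on $M_r$ alone follows by applying \cref{lem:moments_bounded_below_by_M} with $p=1$, bounding $\E_{\beta_c,r}|K\cap B_{2r}|\le \E_{\beta_c}|K\cap B_{2r}|$ via stochastic domination and then invoking \cref{thm:two_point_spatial_average_upper}; but the $\P_{\beta_c}$ statement does require the variant of the transitivity argument above.) I do not expect any genuine obstacle here: the only points requiring care are the observation that the transitivity/averaging step is insensitive to the cut-off, the correct pairing of scales (comparing $|K_\mathrm{max}(B_r)|$ against cluster sizes inside the larger ball $B_{2r}$), and the recognition that the spatially-averaged two-point bound of \cref{thm:two_point_spatial_average_upper} is exactly the estimate needed to close the argument.
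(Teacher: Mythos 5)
Your proof is correct and is essentially the argument the paper intends: the corollary is stated as a direct consequence of \cref{thm:two_point_spatial_average_upper} and \cref{lem:moments_bounded_below_by_M}, and your rerun of the transitivity argument for $\P_{\beta_c}$ plus the stochastic-domination reduction $M_r\le\widetilde M_r$ is exactly the right way to fill in the details for both assertions.
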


As discussed in \cref{subsec:scaling_limits_in_the_critical_dimension_under_the_hydrodynamic_condition}, we will eventually (over the course of the entire series) prove that this upper bound is of the correct order in the effectively long-range low-dimensional regime but that $M_r=o(r^{(d+\alpha)/2})$ when $d\geq 3\alpha$. 

\section{Consequences of volume moment asymptotics}
\label{sec:volume_tail}

We will spend much time accross this series of papers establishing asymptotic estimates on moments such as $\E_{\beta_c,r}|K|^p$ as a function of the cut-off scale $r$. 
Before we start doing this, we will first spend this section explaining how such estimates can be used to deduce analogous estimates on the volume tail 
$\P_{\beta_c}(|K|\geq n)$. 
 In \cref{subsec:volume_tail_up_to_constants} we give conditions under which the volume tail can be estimated up to constants, while in \cref{subsec:tauberian} we give conditions under which first-order asymptotics of the volume tail can be determined. \cref{thm:hd_moments_main} implies that the conditions needed for the stronger conclusions of \cref{subsec:tauberian} hold when $d> 3\alpha$ (or $d>6$ and appropriate two-point estimates hold), while \cref{thm:critical_dim_moments_main_slowly_varying} (and, later, \cref{III-thm:critical_dim_hydro})  show that this remains true when $d=3\alpha$. The weaker conclusions of
 \cref{subsec:volume_tail_up_to_constants} also apply to the low-effective-dimensional models studied in the second paper in the series as established in \cref{II-thm:main_low_dim,II-prop:negligibility_of_mesoscopic}. (We conjecture that the stronger results of \cref{subsec:tauberian} also apply in this case, but are currently unable to prove this; this is related to establishing the existence of the scaling limit in low dimensions.) We phrase all of the results of this section at a fairly high level of generality so that they can be applied across all the cases we consider.

\subsection{Up-to-constants estimates}
\label{subsec:volume_tail_up_to_constants}

The goal of this subsection is to prove the following proposition, which gives conditions on the law of $|K|$ under the cut-off measure $\P_{\beta_c,r}$ under which we can estimate the  volume tail $\P_{\beta_c}(|K|\geq n)$ (without cut-off) up to constants. Recall that we write $\hat \P_{\beta_c,r}$ for the size-biased measure, whose Radon-Nikodym derivative with respect to $\P_{\beta_c,r}$ is equal to $|K|/\E_{\beta_c,r}|K|$.

\begin{prop}[Up-to-constants volume-tail asymptotics from moment asymptotics]
\label{prop:up-to-constants_volume-tail}
Suppose that $\E_{\beta_c,r}|K| \asymp r^\alpha$ as $r\to \infty$ and let $f:(0,\infty)\to(0,\infty)$ be an increasing function whose inverse $f^{-1}$ is doubling.
\begin{enumerate}
  \item If there exist positive constants $c$ and $C$ such that $\hat \P_{\beta_c,r}(cf(r) \leq |K| \leq Cf(r))\geq c$ for every $r\geq 1$ then
  \[
\P_{\beta_c}(|K|\geq n) \succeq \frac{f^{-1}(n)^\alpha}{n}
  \]
  for every $n\geq 1$.
  \item If for each $\eps>0$ there exists $\delta>0$ such that $\hat \P_{\beta_c,r}(|K| \leq \delta f(r)) \leq \eps$ for every $r\geq 1$ then
  \[
\P_{\beta_c}(|K|\geq n) \preceq  \frac{f^{-1}(n)^\alpha}{n}
\]
for every $n\geq 1$.
\end{enumerate}
\end{prop}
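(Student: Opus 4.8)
The plan is to read everything off the size-biased cut-off measures via the identity $\hat\P_{\beta_c,r}(A)=\E_{\beta_c,r}[|K|\mathbbm{1}_A]/\E_{\beta_c,r}|K|$, Markov's inequality, the standard monotone coupling of the cut-off configurations (in which the $r$-cut-off configuration is contained in the full one and has law $\P_{\beta_c,r}$), and the doubling of $f^{-1}$. Part 1 is routine: the hypothesis gives $\E_{\beta_c,r}[|K|\,;\,cf(r)\le|K|\le Cf(r)]\ge c\,\E_{\beta_c,r}|K|$, and since $|K|\le Cf(r)$ on that event and $\E_{\beta_c,r}|K|\asymp r^\alpha$ this forces $\P_{\beta_c,r}(|K|\ge cf(r))\succeq r^\alpha/f(r)$; the monotone coupling then gives $\P_{\beta_c}(|K|\ge cf(r))\ge\P_{\beta_c,r}(|K|\ge cf(r))\succeq r^\alpha/f(r)$, and choosing $r=r(n)\asymp f^{-1}(n)$ with $f(r)\asymp n$ (using doubling of $f^{-1}$ to absorb the constant $c$) yields $\P_{\beta_c}(|K|\ge n)\succeq f^{-1}(n)^\alpha/n$, with small values of $n$ absorbed into the implicit constant.

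Part 2 is the substantive direction, where the monotone coupling points the wrong way and one must control the effect of long edges. One may assume $f^{-1}(n)^\alpha/n$ is small, since otherwise the bound follows from $\P_{\beta_c}(|K|\ge n)\le1$. Write $K_s$ for the cluster of the origin at cut-off $s$ in the monotone coupling, so $K_s\uparrow K$ with $|K|<\infty$ a.s.\ at $\beta_c$; fix a small $\eps>0$, let $\delta=\delta(\eps)$ be the value from the hypothesis, and fix $r=r(n)\asymp f^{-1}(n)$ with $f(r)\asymp n$. Starting from
\[
\P_{\beta_c}(|K|\ge n)\le \P_{\beta_c,r}(|K|\ge n)+\P_{\beta_c}\bigl(|K_r|<n,\ |K|\ge n\bigr),
\]
the first term is $\le n^{-1}\E_{\beta_c,r}|K|\preceq r^\alpha/n\asymp f^{-1}(n)^\alpha/n$ by Markov. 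For the second term I would run a telescoping argument over a geometric ladder of scales $r=t_0<t_1<t_2<\cdots$ with each ratio $t_i/t_{i-1}$ a fixed constant chosen (by reverse-doubling of $f$) so that $f(t_i)\ge2f(t_{i-1})$, hence $f(t_i)\asymp2^i n$ and $t_i\asymp f^{-1}(2^i n)$, decomposing both over the scale at which $|K_\cdot|$ first reaches $n$ and over the final size $|K|\asymp2^j n$. The two ingredients are: a conditional first-moment bound coming from the BK/tree-graph inequalities — conditionally on the cut-off-$t_{i-1}$ configuration, the cluster can only grow by attaching through edges of length in $(t_{i-1},t_i]$, and summing the resulting diagram shows this conditional expectation is $\preceq(t_i/t_{i-1})^\alpha|K_{t_{i-1}}|\asymp|K_{t_{i-1}}|$ — and the size-bias hypothesis rewritten as $\E_{\beta_c,s}[|K|\,;\,|K|\le\delta f(s)]=\E_{\beta_c,s}|K|\cdot\hat\P_{\beta_c,s}(|K|\le\delta f(s))\le\eps\,\E_{\beta_c,s}|K|\asymp\eps s^\alpha$. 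Combining them bounds the contribution of each block by $O_\eps\bigl((2^j n)^{-1}t_{i-1}^\alpha\bigr)$.

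The hard part will be summability. These per-block bounds are not summable in $i$ on their own, because $t_i^\alpha\to\infty$: the estimate has not used that a cluster of final size $\asymp2^j n$ cannot keep growing far past the scale $f^{-1}(2^j n)$. To collapse the double sum to $O_\eps\bigl(f^{-1}(n)^\alpha/n\bigr)$ one must rule out these ``late-crossing'' configurations, and I expect this to be the main obstacle; it calls for an extra ingredient beyond the scale-by-scale growth estimate — concretely the spatially-averaged two-point bound $\sum_{x\in B_R}\P_{\beta_c}(0\leftrightarrow x)\preceq R^\alpha$ of \cref{thm:two_point_spatial_average_upper}, applied (if needed together with the universal tightness theorem \cref{thm:universal_tightness}) to localize a large cluster at the scale dictated by its size, so that only $O_\eps(1)$ blocks contribute at each dyadic size. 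Everything else is bookkeeping with the doubling of $f^{-1}$ and the convention for the generalized inverse.
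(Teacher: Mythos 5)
Part 1 is correct and is essentially the paper's argument (the paper phrases it via $\hat\E_{\beta_c,r}[\mathbbm{1}(cf(r)\le|K|\le Cf(r))/|K|]$, but the computation is the same).

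Part 2 has a genuine gap, which you yourself flag: the telescoping over the geometric ladder of scales produces per-block bounds of order $t_{i-1}^\alpha$ that are not summable in $i$, and the "extra ingredient" you invoke to kill the late-crossing configurations (localizing a cluster of size $\asymp 2^jn$ at scale $f^{-1}(2^jn)$ via \cref{thm:two_point_spatial_average_upper} and universal tightness) is not something the stated hypotheses give you — the proposition only assumes control of the size-biased law of $|K|$ under $\hat\P_{\beta_c,r}$, and no spatial localization of large clusters follows from that. So the proposal does not close. The paper avoids the whole multiscale bookkeeping by working with the Laplace transform via a ghost field $\cG$ of intensity $h$, so that $\P_{\beta_c,r,h}(0\leftrightarrow\cG)=\E_{\beta_c,r}[1-e^{-h|K|}]$, and by writing a \emph{single self-referential} sprinkling inequality at one scale: if $0\leftrightarrow\cG$ without cut-off but not at cut-off $r$, some newly opened boundary edge of $K_r$ must lead to $\cG$ off $K_r$, whence
\[
\P_{\beta_c,\infty,h}(0\leftrightarrow\cG)\;\le\;\P_{\beta_c,r,h}(0\leftrightarrow\cG)+Cr^{-\alpha}\,\E_{\beta_c,r}\bigl[|K|e^{-h|K|}\bigr]\,\P_{\beta_c,\infty,h}(0\leftrightarrow\cG).
\]
The crucial point is that the "remaining growth" after sprinkling is majorized by the full-model connection probability itself, so there is nothing to iterate or sum: the hypothesis of item 2 says precisely that $\hat\E_{\beta_c,r}[e^{-h|K|}]$ is small once $hf(r)$ is large, so taking $r=f^{-1}(1/\delta h)$ makes the coefficient $Cr^{-\alpha}\E_{\beta_c,r}|K|\cdot\hat\E_{\beta_c,r}[e^{-h|K|}]\le 1/2$, and one solves the inequality to get $\P_{\beta_c,\infty,h}(0\leftrightarrow\cG)\le 2\P_{\beta_c,r,h}(0\leftrightarrow\cG)\preceq hf^{-1}(1/h)^\alpha$ (doubling of $f^{-1}$ absorbs the $\delta$). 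Setting $h=1/n$ and using $\P_{\beta_c}(|K|\ge n)\le\P_{\beta_c,\infty,1/n}(K\cap\cG\ne\emptyset)/(1-e^{-1})$ finishes. If you want to repair your route, replace the scale ladder by this one-step self-consistent bound; the ghost field is what converts your "conditional growth through edges of length $>r$" estimate into a contraction rather than a divergent sum.
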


The assumption that $f^{-1}$ is doubling holds in particular if $f$ is regularly varying of positive index. A sufficient condition for the hypotheses of both items to hold is that $\hat \E_r |K| \asymp f(r)$ and that the measures $\{Q_r: r\geq 1\}$ are tight in $(0,\infty)$, where $Q_r$ is defined to be the law of $|K|/\hat \E_{\beta_c,r}|K|$ under the size-biased measure $\hat \P_{\beta_c,r}$; this is the condition we will verify with $f(r)=r^{(d+\alpha)/2}$ in the effectively long-range, low-dimensional regime (see \cref{II-prop:negligibility_of_mesoscopic}). For the cases treated by \cref{thm:hd_moments_main,thm:critical_dim_moments_main_slowly_varying} we are also able to verify this condition for an appropriate choice of $f$, but in these cases we are also able to prove much more precise estimates as explained in the next subsection.

\begin{proof}[Proof of \cref{prop:up-to-constants_volume-tail}] We begin with the volume tail lower bound of item 1. We have by the definitions that
\[
\P_{\beta_c}(|K|\geq n) \geq \P_{\beta_c,r}(|K|\geq n) = \E_{\beta_c,r}|K| \cdot \hat \E_{\beta_c,r}\frac{\mathbbm{1}(|K|\geq n)}{|K|}.
\]
Thus, under the assumptions of item 1, we have that
\[
\P_{\beta_c}(|K|\geq cf(r)) \geq \E_{\beta_c,r}|K| \cdot \hat \E_{\beta_c,r}\frac{\mathbbm{1}(cf(r) \leq |K|\leq Cf(r))}{|K|} \asymp \frac{r^\alpha}{f(r)}
\]
for every $r\geq 1$. The claimed lower bound follows by taking $r=f^{-1}(n/c)$, which is of the same order as $f^{-1}(n)$ by assumption. 

\medskip

We now turn to the volume tail upper bound of item 2. Let $\cG$ be an independent \emph{ghost field} of intensity $h$ independent of the percolation configuration, i.e., a random subset of $\Z^d$ in which each vertex is included independently at random with inclusion probability $1-e^{-h}$. Connection probabilities to the ghost field encode the Laplace transform of the cluster volume distribution:
\[
\P_{\beta_c,r,h}(0\leftrightarrow \cG) = \E_{\beta_c,r}[1-e^{-h|K|}]
\]
for every $r,h>0$, where we write $\P_{\beta_c,r,h}$ for the joint law of the cut-off percolation configuration and the ghost field. If the origin is connected to the ghost field in the model without cut-off but not in the model with cut-off then, in the monotone coupling, there must exist an edge in the boundary of the cluster in the cut-off model that becomes open in the non-cut-off model and whose other endpoint is connected to the ghost by a path that is disjoint from the cluster of the origin in the cut-off model. Since exploring the cluster of the origin in the cut-off model reveals only negative information about connectivity to the ghost off of this cluster, we have by these considerations and a union bound that
\begin{equation}
\label{eq:volume_tail_union_bound}
\P_{\beta_c,\infty,h}(0 \leftrightarrow \cG) \leq 
 \P_{\beta_c,r,h}(0 \leftrightarrow \cG) + 
Cr^{-\alpha}  \E_{\beta_c,r}\left[|K|e^{-h|K|}\right]
 \P_{\beta_c,\infty,h}(0 \leftrightarrow \cG),
\end{equation}
where $\P_{\beta_c,\infty,h}$ denotes the joint law of the model without cut-off and the independent ghost field, $\E_{\beta_c,r}|K|e^{-h|K|}$ is the expected size of the cluster on the indicator that it is not connected to $\cG$, and $r^{-\alpha}$ is the order of the expected number of edges of the origin that flip from closed to open when passing from $\P_{\beta_c,r}$ to $\P_{\beta_c}$.
(See \cite[Section 5.2]{hutchcroft2022critical} for a longer-form explanation of how to derive this inequality in the hierarchical model; the details here are very similar.) Our hypotheses imply that there exists $\delta>0$ such that if $r\geq 1$ and $h \geq 1/(\delta f(r))$ then
\[
Cr^{-\alpha}  \E_{\beta_c,r}\left[|K|e^{-h|K|}\right] = C r^{-\alpha} \E_{\beta_c,r}|K| \hat \E_{\beta_c,r}[e^{-h|K|}] \leq \frac{1}{2}.
\]
(To see this, note that $r^{-\alpha}\E_{\beta_c,r}|K|$ is bounded by the assumption that $\E_{\beta_c,r}|K| \asymp r^\alpha$, while $ \hat \E_{\beta_c,r}[e^{-h|K|}]$ is small when $h \gg 1/f(r)$ by the hypothesis of item 2.)
 Thus, letting $h>0$ and taking $r=f^{-1}(1/\delta h)$, we obtain that
\[
\P_{\beta_c,\infty,h}(0 \leftrightarrow \cG) \leq 
2\P_{\beta_c,r,h}(0 \leftrightarrow \cG)
\]
when $r \geq f^{-1}(h/\delta)$. Using the bound $\P_{\beta_c,r,h}(0 \leftrightarrow \cG) =\E_{\beta_c,r}[1-e^{-h|K|}] \leq h \E_{\beta_c,r}|K| \asymp h r^\alpha$ we obtain that
\[
\P_{\beta_c,\infty,h}(0 \leftrightarrow \cG) \preceq f^{-1}(1/\delta h)^\alpha h \asymp f^{-1}(1/h)^\alpha h
\]
and, taking $h=1/n$, it follows that
\[
\P_{\beta_c}(|K|\geq m) = \frac{\P_{\beta_c,\infty,1/n}(|K|\geq n \text{ and } K \cap \cG \neq \emptyset)}{\P_{\beta_c,\infty,1/n}(K \cap \cG \neq \emptyset \mid |K|\geq n)} \leq \frac{\P_{\beta_c,\infty,1/n}(K \cap \cG \neq \emptyset)}{1-e^{-1}} \preceq \frac{f^{-1}(n)^\alpha}{n}
\]
as claimed.
\end{proof}

\subsection{First-order asymptotics via Tauberian theory}
\label{subsec:tauberian}

In this subsection we prove the following proposition, which gives conditions under which the first-order asymptotics of the volume tail $\P_{\beta_c}(|K|\geq n)$ can be computed from the asymptotics of the moments $\E_{\beta_c,r}|K|^p$. We then use this proposition to prove that the volume-tail asymptotics \eqref{eq:hd_volume_tail_main} and \eqref{eq:critical_dim_volume_tail_main} follow from the moment asymptotics \eqref{eq:hd_moments_main} and \eqref{eq:critical_dim_moments_main}. 

\begin{prop}[First-order volume-tail asymptotics from moment asymptotics]
\label{prop:first_order_volume_tail}
Suppose that the following conditions are satisfied:
\begin{enumerate}
  \item $\E_{\beta_c,r}|K| \sim A_1 r^\alpha$ as $r\to\infty$ for some $A_1 \in (0,\infty)$. 
  \item $\hat \E_{\beta_c,r}|K| = \E_{\beta_c,r}|K|^2/\E_{\beta_c,r}|K| \sim A_2 f(r)$ as $r\to\infty$ for some regularly varying function $f$ of index $a \cdot \alpha$ for some $a>0$ and some $A_2\in (0,\infty)$.
  \item The measure $Q_r$, defined to be the law of $|K|/\hat\E_{\beta_c,r}|K|$ under the size-biased measure $\hat \P_{\beta_c,r}$, converges weakly to a measure $Q_\infty$ as $r\to\infty$.
  \item The limit measure $Q_\infty$ does not have an atom at zero.
\end{enumerate}
Then
\begin{align}
\E_{\beta_c}[1-e^{-h|K|}] &\sim A_3 f^{-1}(1/h)^\alpha h && \text{as $h\downarrow 0$, and}\label{eq:Laplace_asymptotics_general}\\
\P_{\beta_c}(|K|\geq n) &\sim A_4 \frac{f^{-1}(n)^\alpha}{n} && \text{as $n\to \infty$,}\label{eq:volume_tail_asymptotics_general}
\end{align}
where the constants $A_3$ and $A_4$ are given by
\[A_3= \frac{A_1}{A_2^{1/a}} \lim_{\lambda\to \infty} \lambda^{-(a-1)/a} \int_0^\infty \frac{1-e^{-\lambda x}}{x} \dif Q_\infty(x) \qquad\text{ and } \qquad A_4=  \frac{A_3}{\Gamma(1/a)}.\]
\end{prop}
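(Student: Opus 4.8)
The plan is to first establish the Laplace-transform estimate \eqref{eq:Laplace_asymptotics_general} and then deduce the volume-tail estimate \eqref{eq:volume_tail_asymptotics_general} from it by a direct application of Karamata's Tauberian theorem (\cref{thm:Karamata}). Throughout I will write $m_r := \hat\E_{\beta_c,r}|K|$, use the ghost-field identity $\P_{\beta_c,r,h}(0\leftrightarrow\cG) = \E_{\beta_c,r}[1-e^{-h|K|}]$ and the monotone-convergence identity $\E_{\beta_c}[1-e^{-h|K|}] = \lim_{r\to\infty}\E_{\beta_c,r}[1-e^{-h|K|}]$, and take $Q_r, Q_\infty$ as in hypotheses (3)--(4), which we regard as nonzero probability measures as holds in all our applications. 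The first step is to compute, for each fixed $\lambda>0$, the asymptotics of the cut-off Laplace transform at the ``matched'' scale $r_\lambda(h) := f^{-1}(\lambda/(A_2 h))$. Starting from the size-bias identity
\[
\E_{\beta_c,r}[1-e^{-h|K|}] = \frac{\E_{\beta_c,r}|K|}{m_r}\int_0^\infty \frac{1-e^{-h m_r x}}{x}\dif Q_r(x),
\]
and using that $h m_{r_\lambda(h)} \to \lambda$, that $\E_{\beta_c,r}|K|/m_r \sim A_1 A_2^{-1/a}\lambda^{-(a-1)/a}\, h\, f^{-1}(1/h)^\alpha$ along $r = r_\lambda(h)$ (by (1), (2), and the regular variation of index $1/(a\alpha)$ of $f^{-1}$), together with the convergence $\int_0^\infty\frac{1-e^{-h m_r x}}{x}\dif Q_r(x) \to \int_0^\infty\frac{1-e^{-\lambda x}}{x}\dif Q_\infty(x)$ coming from (3)--(4) since the integrand is bounded, continuous, and well-controlled near $0$ and $\infty$, one obtains
\[
\E_{\beta_c, r_\lambda(h)}[1-e^{-h|K|}] \sim A_1 A_2^{-1/a}\,\Psi(\lambda)\, h\, f^{-1}(1/h)^\alpha \qquad \text{as } h\downarrow 0,
\]
where $\Psi(\lambda) := \lambda^{-(a-1)/a}\int_0^\infty\frac{1-e^{-\lambda x}}{x}\dif Q_\infty(x)$.

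Next I would sandwich $\E_{\beta_c}[1-e^{-h|K|}]$ between two multiples of $\E_{\beta_c,r_\lambda(h)}[1-e^{-h|K|}]$. The lower bound is immediate from monotonicity: $\E_{\beta_c}[1-e^{-h|K|}] \ge \E_{\beta_c,r_\lambda(h)}[1-e^{-h|K|}]$. For the upper bound I invoke the ghost-field comparison inequality \eqref{eq:volume_tail_union_bound}, which gives $\E_{\beta_c}[1-e^{-h|K|}] \le \E_{\beta_c,r}[1-e^{-h|K|}]/\bigl(1 - Cr^{-\alpha}\E_{\beta_c,r}[|K|e^{-h|K|}]\bigr)$ whenever the denominator is positive. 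Writing $r^{-\alpha}\E_{\beta_c,r}[|K|e^{-h|K|}] = \bigl(r^{-\alpha}\E_{\beta_c,r}|K|\bigr)\,\hat\E_{\beta_c,r}[e^{-h|K|}]$ and evaluating at $r = r_\lambda(h)$, one has $r^{-\alpha}\E_{\beta_c,r}|K|\to A_1$ by (1) and $\hat\E_{\beta_c,r}[e^{-h|K|}] = \int_0^\infty e^{-h m_r x}\dif Q_r(x) \to \int_0^\infty e^{-\lambda x}\dif Q_\infty(x)$ by (3), so the denominator tends to $1 - CA_1\int_0^\infty e^{-\lambda x}\dif Q_\infty(x)$, which is positive for all sufficiently large $\lambda$ since $\int_0^\infty e^{-\lambda x}\dif Q_\infty(x)\to Q_\infty(\{0\}) = 0$ by (4). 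Combining with the first step, for every large $\lambda$,
\[
A_1 A_2^{-1/a}\Psi(\lambda) \le \liminf_{h\downarrow 0}\frac{\E_{\beta_c}[1-e^{-h|K|}]}{h\, f^{-1}(1/h)^\alpha} \le \limsup_{h\downarrow 0}\frac{\E_{\beta_c}[1-e^{-h|K|}]}{h\, f^{-1}(1/h)^\alpha} \le \frac{A_1 A_2^{-1/a}\Psi(\lambda)}{1 - CA_1\int_0^\infty e^{-\lambda x}\dif Q_\infty(x)}.
\]

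Letting $\lambda\to\infty$ in this chain forces $\Psi(\lambda)$ to converge to a finite positive limit and forces the liminf and limsup above to coincide with $A_1 A_2^{-1/a}\lim_{\lambda\to\infty}\Psi(\lambda)$, which is exactly \eqref{eq:Laplace_asymptotics_general} with the stated value of $A_3$. (Finiteness and positivity of the limit also follow from \cref{prop:up-to-constants_volume-tail}, whose hypotheses are implied by (1)--(4) and which yields $\E_{\beta_c}[1-e^{-h|K|}]\asymp h\, f^{-1}(1/h)^\alpha$; the same up-to-constants bound forces the index to satisfy $a\ge 1$.) Finally I would set $\phi(x) := A_3 f^{-1}(x)^\alpha/x$, which is regularly varying of index $1/a - 1 \in (-1,0]$; then \eqref{eq:Laplace_asymptotics_general} reads $\E_{\beta_c}[1-e^{-h|K|}]\sim\phi(1/h)$, and \cref{thm:Karamata} applied with index $(a-1)/a \in [0,1)$ gives $\P_{\beta_c}(|K|\ge n)\sim \phi(n)/\Gamma(1/a)$, which is \eqref{eq:volume_tail_asymptotics_general} with $A_4 = A_3/\Gamma(1/a)$.

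The main obstacle lies in the second and third steps: one must make the iterated limits $h\downarrow 0$ and then $\lambda\to\infty$ interact correctly, in particular deducing the existence of $\lim_{\lambda\to\infty}\Psi(\lambda)$ from the sandwich itself rather than assuming it, and one must check that the weak convergence $Q_r\Rightarrow Q_\infty$ controls the relevant integrals uniformly enough near the endpoints $0$ and $\infty$. This is exactly where hypothesis (4) is essential: it is needed both for the truncation near $0$ of $\int_0^\infty\frac{1-e^{-\lambda x}}{x}\dif Q_\infty(x)$ and its cut-off counterpart, and, crucially, for the ghost-field correction coefficient $\int_0^\infty e^{-\lambda x}\dif Q_\infty(x)$ to vanish as $\lambda\to\infty$, which is what makes the sandwich asymptotically tight.
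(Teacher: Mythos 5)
Your proposal is correct and follows essentially the same route as the paper's proof: the ghost-field sandwich \eqref{eq:Tauberian_union} at the matched scale $r=f^{-1}(\lambda/A_2h)$, the size-biased rewriting of the cut-off Laplace transform to obtain \eqref{eq:ghost_truncated_asymptotics}, the use of hypothesis (4) to make the multiplicative correction factor tend to $1$ as $\lambda\to\infty$, and Karamata's Tauberian theorem to pass to the volume tail. Your explicit extraction of the existence of $\lim_{\lambda\to\infty}\Psi(\lambda)$ from the sandwich itself, and the observation that the up-to-constants bound forces $a\geq 1$ so that Karamata applies, are slightly more careful renditions of points the paper leaves implicit, but they do not change the argument.
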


Here $\Gamma$ denotes the Gamma function, which has $\Gamma(1/2)=\sqrt{\pi}$, so that the two constants $A_3$ and $A_4$ differ by $A_3=\sqrt{\pi} A_4$ when $a=2$. It is an implicit conclusion of the proposition that the $\lambda \to \infty$ limit taken in the definition of $A_3$ is well-defined. Note that if $f$ is regularly varying of positive index then its inverse $f^{-1}$ is also regularly varying \cite[Theorem 1.5.12]{bingham1989regular}.

\begin{remark}
A similar proposition holds with the expectation $\E_{\beta_c,r}$ replaced by $\E_{\beta_c-\eps}$ and with the estimate $\E_{\beta_c,r}|K| \sim A_1 r^\alpha$ replaced by $\E_{\beta_c-\eps}|K| \sim A_1 \eps^{-1}$; this may be relevant for nearest-neighbor models in high dimensions. The argument given here could also be applied in the hierarchical model to improve the volume-tail estimates of \cite{hutchcroft2022critical} from up-to-constants estimates to first-order estimates when $d\geq 3\alpha$.
\end{remark}

\begin{remark}
Taking $n=\hat \E_{\beta_c,r}|K|$, \cref{prop:first_order_volume_tail} and its proof imply in particular that
\begin{equation}
\label{eq:volume_tail_without_inverses}
\P_{\beta_c}(|K|\geq \hat \E_{\beta_c,r}|K|) \asymp \P_{\beta_c,r}(|K|\geq \hat \E_{\beta_c,r}|K|) 
 \asymp \frac{r^\alpha}{\hat \E_{\beta_c,r}|K|} \asymp \frac{\E_{\beta_c,r}|K|}{\hat \E_{\beta_c,r}|K|}
\end{equation}
for every $r\geq 1$ when the hypotheses of \cref{prop:first_order_volume_tail} are satisfied, which holds in particular under the assumptions of either \cref{thm:hd_moments_main} or \cref{thm:critical_dim_moments_main_slowly_varying}.
\end{remark}


\begin{proof}[Proof of \cref{prop:first_order_volume_tail}]
We prove the estimate \eqref{eq:Laplace_asymptotics_general} on the moment generating function of $|K|$, with the estimate \eqref{eq:volume_tail_asymptotics_general} following from this together with Karamata's Tauberian theorem \cite[Theorem XIII.5.2]{fellerII} as stated in \cref{thm:Karamata}.
Let $\cG$ be an independent ghost field of intensity $h$ independent of the percolation configuration, so that
\[
\P_{\beta_c,r,h}(0\leftrightarrow \cG) = \E_{\beta_c,r}[1-e^{-h|K|}]
\]
for every $r,h>0$. We have as in the proof of \cref{prop:up-to-constants_volume-tail} that there exists a constant $C$ such that
\begin{equation}
\label{eq:Tauberian_union}
\P_{\beta_c,r,h}(0 \leftrightarrow \cG) \leq \P_{\beta_c,\infty,h}(0 \leftrightarrow \cG) \leq \P_{\beta_c,r,h}(0 \leftrightarrow \cG)+Cr^{-\alpha}  \E_{\beta_c,r}\left[|K|e^{-h|K|}\right]
 \P_{r,h}(0 \leftrightarrow \cG).
\end{equation}
We will apply this estimate with $r=f^{-1}(\lambda/A_2h)$, where we think of $\lambda$ being fixed and take $h\downarrow 0$.
With this choice of $r$, we have that
\begin{multline*}
\E_{\beta_c,r}\left[|K|e^{-h|K|}\right] = \E_{\beta_c,r}|K| \cdot \hat \E_{\beta_c,r}[e^{-h|K|}] =  \E_{\beta_c,r}|K| \cdot \int_0^\infty e^{-(h \hat \E_{\beta_c,r} |K|)x} \dif Q_r(x)
\\\sim A_1 r^{\alpha} \int_0^\infty e^{-\lambda x} \dif Q_\infty(x)
\end{multline*}
as $h\downarrow 0$ with $r=f^{-1}(\lambda/A_2h)$ for fixed $\lambda>0$. Since $Q_\infty$ does not have an atom at zero, the integral on the right hand side converges to zero as $\lambda \to \infty$, and it follows that for every $\eps>0$ there exists $\lambda_0<\infty$ and $h_0>0$ such that if $\lambda\geq \lambda_0$, $h\leq h_0$, and $r=f^{-1}(\lambda/A_2 h)$ then $Cr^{-\alpha}  \E_{r}[|K|e^{-h|K|}] \leq \eps$. Thus, it follows from \eqref{eq:Tauberian_union} that if $\lambda \geq \lambda_0$ and $h\leq h_0$ then
\begin{equation}
\label{eq:ghost_truncation_stability}
1 \leq \frac{\P_{\beta_c,\infty,h}(0 \leftrightarrow \cG)}{\P_{\beta_c,r,h}(0 \leftrightarrow \cG)} \leq (1-\eps)^{-1}.
\end{equation}
We now compute the asymptotics of $\P_{\beta_c,r,h}(0 \leftrightarrow \cG)$ for this choice of $r$.
Similarly to above, we can write
\begin{align*}
\P_{\beta_c,r,h}(0 \leftrightarrow \cG) &= \E_{\beta_c,r}\left[1-\exp[-h|K|]\right] = \hat\E_{\beta_c,r}\left[\frac{1-\exp[-h|K|]}{|K|}\right] \E_{\beta_c,r} |K|
\\
&=  \frac{\E_{\beta_c,r}|K|}{\hat \E_{\beta_c,r}|K|} \int_0^\infty \frac{1-\exp[-(h \hat \E_{\beta_c,r}|K|) x]}{x} \dif Q_r(x)
\\&\sim \frac{A_1 f^{-1}(\lambda/A_2 h)^\alpha h}{\lambda}\int_0^\infty \frac{1-\exp[-\lambda x]}{x} \dif Q_\infty(x),
\end{align*}
and since $f^{-1}$ is regularly varying of index $1/(a\alpha)$ we obtain that
\begin{align}
\label{eq:ghost_truncated_asymptotics}
\P_{r,h}(0 \leftrightarrow \cG) \sim \left[\frac{A_1}{A_2^{1/a}\lambda^{1-1/a}}\int_0^\infty \frac{1-\exp[-\lambda x]}{x} \dif Q_\infty(x) \right] f^{-1}(1/h)^\alpha h
\end{align}
when $\lambda>0$ is fixed, $r=f^{-1}(\lambda/A_2 h)$, and $h\downarrow 0$. The claim follows immediately from the two esimates \eqref{eq:ghost_truncation_stability} and \eqref{eq:ghost_truncated_asymptotics}. 
\end{proof}

We now explain how to deduce the volume tail estimates of \cref{thm:hd_moments_main,thm:critical_dim_moments_main_slowly_varying} follow from the moment asymptotics stated in the same theorems.

\begin{proof}[Proof of \eqref{eq:hd_volume_tail_main} and \eqref{eq:critical_dim_volume_tail_main} given \eqref{eq:hd_moments_main} and \eqref{eq:critical_dim_moments_main}]
In both cases the limit measure $Q_\infty$ is simply the law of a chi-squared random variable with one degree of freedom, with
\[
\E_r|K| \sim \frac{\alpha}{\beta_c}r^\alpha \qquad \text{ and } \qquad \hat \E_r |K| \sim \begin{cases} A r^{2\alpha} & d>3\alpha\\
A_r r^{2\alpha} & d=3\alpha.
\end{cases}
\]
Indeed, the fact that the measure $Q_r$ converges to the chi-squared random variable with one degree of freedom follows from \eqref{eq:hd_moments_main} or \eqref{eq:critical_dim_moments_main} as appropriate since the moments of $Q_r$ satisfy
\[
\int_0^\infty x^p \dif Q_r = \frac{\hat \E_{\beta_c,r}|K|^{p}}{(\hat \E_{\beta_c,r}|K|)^{p}} = \frac{\E_{\beta_c,r}|K|^{p+1}}{(\E_{\beta_c,r}|K|^2 / \E_{\beta_c,r}|K|)^{p}\E_{\beta_c,r}|K|}  \sim (2p-1)!!
\]
as $r\to \infty$. It follows by dominated convergence that any subsequential limit of the measures $Q_r$ has $p$th moment $(2p-1)!!$ for each integer $p\geq 1$ and hence by Carleman's criterion that $Q_r$ converges to a chi-squared random variable with one degree of freedom as claimed.
We can compute the relevant integral with respect to $Q_\infty$ exactly, obtaining that
\[
\int_0^\infty \frac{1-e^{-\lambda x}}{x} \dif Q_\infty(x) = \frac{1}{\sqrt{2\pi}} \int_0^\infty (1-e^{-\lambda x})e^{-x/2} x^{-3/2} \dif x = \sqrt{2\lambda+1}-1,
\]
and hence that
\[
\lim_{\lambda\to\infty} \frac{1}{\sqrt{\lambda}}\int_0^\infty \frac{1-e^{-\lambda x}}{x} \dif Q_\infty(x)= \sqrt{2}.
\]
Thus, applying \cref{prop:first_order_volume_tail}, we obtain that
%
\[
\E_{\beta_c}[1-e^{-h|K|}] \sim \frac{\alpha}{\beta_c} \sqrt{2} \begin{cases} A^{-1/2}h^{1/2} & d> 3\alpha\\
\tilde A_{1/h}^{-1/2} h^{1/2} & d=3\alpha
\end{cases}
\]
as $h\downarrow 0$
 and
\[
\P_{\beta_c}(|K|\geq n) \sim \frac{\alpha}{\beta_c} \sqrt{\frac{2}{\pi}} \begin{cases} A^{-1/2} n^{-1/2} & d> 3\alpha\\
\tilde A_n^{-1/2} n^{-1/2} & d=3\alpha
\end{cases}
\]
as $n\to\infty$ as claimed, where in the case $d=3\alpha$
we have 
expressed the inverse of $A_r r^{2\alpha}$ as $\tilde A_{s}^{-1/2\alpha} s^{1/2\alpha}$ with $\tilde A_s$ slowly varying.
\end{proof}


\section{Analysis of volume moments}

\label{sec:analysis_of_moments}

In this section, we analyze the asymptotics of the cluster moments $\E_{\beta_c,r}|K|^p$ as $r\to \infty$ under the hypotheses of \cref{thm:hd_moments_main,thm:critical_dim_moments_main_slowly_varying}. 
All our analysis will be based on the following formula for the derivative. Recall that we write $B_r=\{x\in \Z^d:\|x\|\leq r\}$ for the lattice points in the closed $\|\cdot\|$-ball of radius $r$.

\begin{lemma}
\label{lem:moment_derivative}
The equality
\begin{align}
\frac{\partial}{\partial r}\E_{\beta,r}|K|^p &= \beta |J'(r)| \sum_{\ell=0}^{p-1} \binom{p}{\ell}\E_{\beta,r}\left[|K|^{\ell+1} \sum_{y\in B_r} \mathbbm{1}(y\notin K) |K_y|^{p-\ell} \right]
\label{eq:pth_moment_derivative}
\end{align}
holds for every integer $p\geq 1$, every $0\leq \beta \leq \beta_c$ and every $r>0$.
\end{lemma}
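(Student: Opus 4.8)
\emph{Proof proposal.} The plan is to apply the Russo--Margulis differentiation formula to the inclusion probabilities of the cut-off model and then perform a translation-invariance bookkeeping step. Recall that $\P_{\beta,r}$ is the product measure in which each pair $e=\{x,y\}$ is open independently with probability $p_e^{(r)}:=1-e^{-\beta J_r(x,y)}$; since $J_r(x,y)=\mathbbm{1}(\|x-y\|\le r)\int_{\|x-y\|}^{r}|J'(s)|\dif s$, the map $r\mapsto p_e^{(r)}$ is continuous, vanishes for $r<\|x-y\|$, and is $C^1$ on $(\|x-y\|,\infty)$ with $\tfrac{\partial}{\partial r}p_e^{(r)}=\beta|J'(r)|(1-p_e^{(r)})$ there. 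Because $r<\infty$ and $\beta\le\beta_c$, the cut-off model is dominated by the strictly subcritical measure $\P_{(1-\eps)\beta_c}$, so by the tree-graph bound \eqref{eq:tree_graph_pth_moment} every moment $\E_{\beta,r}|K|^{p}$ is finite, and by Cauchy--Schwarz and translation invariance so is every mixed moment that appears below; in particular the right-hand side of the asserted identity is finite.

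First I would pass to finitely supported observables. For $N\le M$ set $f_{M,N}(r)=\E_{\beta,r}\bigl|K^{(M)}\cap B_N\bigr|^{p}$, where $K^{(M)}$ is the cluster of $\mathbf{0}$ in the configuration using only edges with both endpoints in $B_M$. This is a finite sum over configurations, hence locally absolutely continuous in $r$, and the elementary Russo formula gives $f_{M,N}'(r)=\sum_{e\subseteq B_M}\bigl(\tfrac{\partial}{\partial r}p_e^{(r)}\bigr)\E_{\beta,r}[\Delta_e X]$ for a.e.\ $r$, where $X=|K^{(M)}\cap B_N|^{p}$ and $\Delta_e X=X(\omega^{e\to1})-X(\omega^{e\to0})$ is a function of the edges other than $e$. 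Substituting $\tfrac{\partial}{\partial r}p_e^{(r)}=\beta|J'(r)|(1-p_e^{(r)})\mathbbm{1}(\|e\|\le r)$ and using $(1-p_e^{(r)})\E_{\beta,r}[\Delta_e X]=\E_{\beta,r}[\mathbbm{1}(e\text{ closed})\Delta_e X]$ (legitimate since $\mathbbm{1}(e\text{ closed})$ is independent of $\Delta_e X$) absorbs the factor $1-p_e^{(r)}$ into a closed-edge indicator.

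Next I would compute $\Delta_e X$. Fix $e=\{x,y\}$ with $\|x-y\|\le r$; on the event that $e$ is closed we have $\omega=\omega^{e\to0}$, and $\Delta_e X\ne0$ precisely when exactly one of $x,y$ lies in $K=K^{(M)}$. In that case, say $x\in K$ and $y\notin K$, the clusters $K$ and $K_y$ are disjoint and coalesce, so $\Delta_e X=(|K\cap B_N|+|K_y\cap B_N|)^{p}-|K\cap B_N|^{p}=\sum_{\ell=0}^{p-1}\binom{p}{\ell}|K\cap B_N|^{\ell}|K_y\cap B_N|^{p-\ell}$, the case $y\in K$, $x\notin K$ being symmetric. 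Since $\{x\in K,\,y\notin K\}\subseteq\{e\text{ closed}\}$ the closed-edge indicator is automatically present, and symmetrizing in $x\leftrightarrow y$ converts the sum over unordered pairs into one over ordered pairs. Integrating $f_{M,N}(r)=f_{M,N}(r_0)+\int_{r_0}^{r}f_{M,N}'(s)\dif s$ and letting $M,N\to\infty$ by monotone convergence (every term is nonnegative and nondecreasing in $M$ and $N$, with finite limits) shows that $r\mapsto\E_{\beta,r}|K|^{p}$ is locally absolutely continuous with
\[
  \frac{\partial}{\partial r}\E_{\beta,r}|K|^{p}=\beta|J'(r)|\sum_{\ell=0}^{p-1}\binom{p}{\ell}\E_{\beta,r}\!\left[|K|^{\ell}\sum_{x\in K}\;\sum_{y:\,\|x-y\|\le r}\mathbbm{1}(y\notin K)\,|K_y|^{p-\ell}\right]
\]
for a.e.\ $r$ (in fact for every $r\notin\{\|y\|:y\in\Z^d\}$, with one-sided derivatives at the remaining $r$).

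Finally I would rewrite $\sum_{x\in K}\sum_{y:\|x-y\|\le r}$ as $|K|\cdot\sum_{y\in B_r}$ using translation invariance of $\P_{\beta,r}$ (the kernel $J_r$ depends only on $\|x-y\|$). On $\{x\in K\}$ one has $|K|=|K_x|$, so the $x$-indexed summand $|K_x|^{\ell}\mathbbm{1}(x\in K)\sum_{y:\|x-y\|\le r}\mathbbm{1}(y\nleftrightarrow x)|K_y|^{p-\ell}$ is the translate by $-x$ of $|K|^{\ell}\mathbbm{1}(\mathbf{0}\leftrightarrow -x)\sum_{y\in B_r}\mathbbm{1}(y\notin K)|K_y|^{p-\ell}$; taking $\E_{\beta,r}$, discarding the shift by translation invariance, summing over $x$, and using $\sum_{x}\mathbbm{1}(\mathbf{0}\leftrightarrow -x)=|K|$ replaces the bracket by $\E_{\beta,r}[|K|^{\ell+1}\sum_{y\in B_r}\mathbbm{1}(y\notin K)|K_y|^{p-\ell}]$, which is the claimed formula. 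The one genuinely delicate part is the analysis underlying the second and third paragraphs --- justifying differentiation under the expectation, controlling the finite-support truncation uniformly in $r$ on compacts, and handling the mild non-smoothness of $p_e^{(r)}$ at $r=\|e\|$ (and the jump discontinuities of the right-hand side at $r\in\{\|y\|:y\in\Z^d\}$); the combinatorial and translation-invariance steps are short once that framework is in place.
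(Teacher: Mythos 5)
Your argument is correct and follows essentially the same route as the paper: Russo's formula for the $r$-derivative (with the rate $\beta|J'(r)|\mathbbm{1}(\|x-y\|\le r)$), the binomial expansion of $(|K|+|K_y|)^p-|K|^p$, and a translation-invariance (mass-transport) step to replace $\sum_{x\in K}\sum_{y\in B_r(x)}$ by $|K|\sum_{y\in B_r}$; the paper simply asserts that Russo's formula applies in infinite volume as a consequence of sharpness, whereas you carry out the finite-volume truncation explicitly, which is a legitimate (and more self-contained) way to supply that justification. One small correction: in the limit $M\to\infty$ the integrand terms are \emph{not} monotone in $M$, since $|K^{(M)}|$ and $|K_y^{(M)}|$ increase while $\mathbbm{1}(y\notin K^{(M)})$ decreases, so you should invoke dominated convergence there (the domination by $|K|^{\ell+1}\sum_{y\in B_r}|K_y|^{p-\ell}$, integrable uniformly on compacts in $r$ by your subcriticality and tree-graph bounds, is already at hand); monotone convergence is fine for $f_{M,N}(r)$ itself.
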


The proof of this lemma will apply the \textbf{mass-transport principle}
 for $\Z^d$, which states that if $F:\Z^d\times \Z^d \to[0,\infty]$ satisfies $F(x+z,y+z)=F(x,y)$ for every $x,y,z\in \Z^d$ then
 \begin{equation}
 \label{eq:MTP}
   \sum_{x\in \Z^d} F(0,x)=\sum_{x\in \Z^d} F(0,-x)=\sum_{x\in \Z^d} F(x,0),
 \end{equation}
 where the first equality follows since $x\mapsto-x$ is a bijection and the second follows by the assumed translation-invariance of $F$.
 More generally, if $F:(\Z^d)^k \to[0,\infty]$ satisfies $F(x_1+z,\ldots,x_k+z)=F(x_1,\ldots,x_k)$ for every $x_1,\ldots,x_k, z\in \Z^d$ then
 \begin{multline}
 \label{eq:MTP_general}
   \sum_{x_2,\ldots,x_k \in \Z^d} F(0,x_2,\ldots,x_k)=\sum_{x_1,x_3,\ldots,x_k\in \Z^d} F(x_1,0,x_3,\ldots,x_k)\\=\cdots = \sum_{x_1,\ldots,x_{k-1}\in \Z^d} F(x_1,x_2,\ldots,x_{k-1},0),
 \end{multline}
 as follows from repeated application of \eqref{eq:MTP}.

\begin{proof}[Proof of \cref{lem:moment_derivative}]
We have by Russo's formula \cite[Section 2.4]{grimmett2010percolation} that
\begin{align*}
\frac{\partial}{\partial r}\E_{\beta,r}|K|^p &= \beta |J'(r)| \E_{\beta,r}\left[\sum_{x\in K} \sum_{y\in B_r(x)} \mathbbm{1}(y\notin K) [(|K|+|K_y|)^p-|K|^p]\right],
\end{align*}
where we interpret $\beta |J'(r)| \mathbbm{1}(\|x-y\|\leq r)$ as the rate at which $\{x,y\}$ flips from closed to open as $r$ is increased. (The fact that Russo's formula can be applied to these moments directly in infinite volume when $\beta \leq \beta_c$ and $r<\infty$ is a standard consequence of the sharpness of the phase transition.) We can use the mass-transport principle to rewrite this as
\begin{align*}
\frac{\partial}{\partial r}\E_{\beta,r}|K|^p
&=\beta |J'(r)| \E_{\beta,r}\left[|K| \sum_{y\in B_r} \mathbbm{1}(y\notin K) [(|K|+|K_y|)^p-|K|^p]\right]\\
&= \beta |J'(r)| \sum_{\ell=0}^{p-1} \binom{p}{\ell}\E_{\beta,r}\left[|K|^{\ell+1} \sum_{y\in B_r} \mathbbm{1}(y\notin K) |K_y|^{p-\ell} \right],
\end{align*}
where the final equality follows by expanding out the binomial.
\end{proof}

As explained in the introduction, one of our primary goals throughout this series of papers will be to understand when this formula admits the mean-field approximation
\begin{align}
\frac{d}{d r}\E_{\beta_c,r}|K|^p \sim  \beta_c r^{-\alpha-1} \sum_{\ell=0}^{p-1} \binom{p}{\ell}\E_{\beta_c,r}|K|^{\ell+1} \E_{\beta_c,r}|K|^{p-\ell},
\label{eq:pth_moment_derivative_simplified}
\end{align}
which is valid when the clusters at $0$ and a uniform point of $B_r$ are ``asymptotically independent'' in an appropriately strong sense.
Roughly speaking, we will see that this approximation is valid above the critical dimension and is \emph{not} valid below the critical dimension (we prove this in the effectively long-range, low-dimensional regime, but it is presumably also true in the effectively long-range low-dimensional regime.) When $d_\mathrm{eff}=d_c=6$ but $d<6$ (as in \cref{III-thm:critical_dim_moments_main}), this approximation is valid but the errors decay slowly, so that a second-order analysis (carried out in \cref{III-sec:logarithmic_corrections_at_the_critical_dimension}) is necessary to understand the asymptotics of solutions.

\medskip

\subsection{First moment}
\label{subsec:first_moment}

In this section we give conditions slightly more general than those covered by \cref{thm:hd_moments_main,thm:critical_dim_moments_main_slowly_varying} under which the $r\to\infty$ asymptotics of $\E_{\beta_c,r}|K|$ may be computed exactly. Recall that the \textbf{triangle diagram} is defined by $\nabla_\beta=\sum_{x,y\in \Z^d} \P_{\beta}(0\leftrightarrow x)\P_{\beta}(x\leftrightarrow y)\P_{\beta}(y\leftrightarrow 0)$ and that the \textbf{triangle condition} is said to hold if $\nabla_{\beta_c}<\infty$. We also define the \textbf{open triangle} $\nabla_\beta(0,x)=\sum_{y,z\in \Z^d} \P_{\beta}(0\leftrightarrow y)\P_{\beta}(y\leftrightarrow z)\P_{\beta}(z\leftrightarrow x)$ and write $\nabla_r=\sup_{\|x\|\geq r} \nabla_{\beta_c}(0,x)$, which goes to zero as $r\to\infty$ whenever the triangle condition holds \cite{MR2779397,MR1127713}.

\begin{prop}
\label{prop:first_moment}
Suppose that at least one of the following is true:
\begin{enumerate}
  \item $d>3\alpha$.
  \item $d=3\alpha$ and the hydrodynamic condition \eqref{Hydro} holds.
  \item The triangle condition holds.
\end{enumerate}
Then $\E_{\beta_c,r} |K|$ can be expressed asymptotically as
\[
\E_{\beta_c,r} |K| \sim \frac{\alpha}{\beta_c} r^\alpha.
\]
If either $d>3\alpha$ or the triangle condition holds and $\nabla_r$ is logarithmically integrable then the error in this approximation
\[\cH_r:=\frac{\beta_c}{\alpha} r^{-\alpha} \E_{\beta_c,r}|K|-1\]
 is logarithmically integrable. 
\end{prop}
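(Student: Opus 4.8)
The plan is to use the $p=1$ case of the moment-derivative formula \eqref{eq:pth_moment_derivative}, which reads
\[
\frac{\partial}{\partial r}\E_{\beta_c,r}|K| = \beta_c |J'(r)|\, \E_{\beta_c,r}\!\left[|K|\sum_{y\in B_r}\mathbbm{1}(y\notin K)\,|K_y|\right],
\]
and show that the expectation on the right is asymptotically equivalent to $(\E_{\beta_c,r}|K|)^2$. Using the normalization $|J'(r)|\sim r^{-d-\alpha-1}$ and $|B_r|\sim r^d$ from \eqref{eq:normalization_conventions}, this yields the asymptotic ODE $\frac{d}{dr}\E_{\beta_c,r}|K|\sim \beta_c r^{-\alpha-1}(\E_{\beta_c,r}|K|)^2$, and then \cref{lem:ODE_self_referential} (applied after noting that this ODE for $f=\E_{\beta_c,r}|K|$ linearizes: $f'/f^2 \sim \beta_c r^{-\alpha-1}$, so $(-1/f)' \sim \beta_c r^{-\alpha-1}$, giving $1/f \sim \beta_c r^{-\alpha}/\alpha$ after integrating via \cref{lem:regular_variation_integration}) gives $\E_{\beta_c,r}|K|\sim \frac{\alpha}{\beta_c}r^\alpha$. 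To get the refined statement that $\cH_r$ is logarithmically integrable, I would instead track the error: writing $\E_{\beta_c,r}|K| = \frac{\alpha}{\beta_c}r^\alpha(1+\cH_r)$, the ODE becomes a relation of the form $(\log f(r))' = \alpha r^{-1}(1-h(r))$ with $h$ built from (i) the kernel error $\delta_r$ (logarithmically integrable by hypothesis in \cref{def:model_def}), (ii) the discrepancy between $\sum_{y\in B_r}1$ and the volume $|B_r|\sim r^d$ and between $|B_r|$ and $r^d$, and (iii) the key probabilistic error term measuring failure of asymptotic independence; then the second half of \cref{lem:ODE_self_referential} applies provided this $h$ is logarithmically integrable.

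The heart of the matter is controlling the correlation error
\[
\mathcal{E}_r := \E_{\beta_c,r}\!\left[|K|\sum_{y\in B_r}\mathbbm{1}(y\notin K)|K_y|\right] - (\E_{\beta_c,r}|K|)^2.
\]
By translation invariance and the mass-transport manipulations already in the proof of \cref{lem:moment_derivative}, $\sum_{y\in B_r}\E_{\beta_c,r}[|K||K_y|\mathbbm{1}(y\notin K)]$ can be rewritten, and \cref{lem:BK_disjoint_clusters_covariance} (applied with $F=G=|\cdot|$) bounds the difference from $\sum_{y\in B_r}\E_{\beta_c,r}|K|\,\E_{\beta_c,r}|K_y|$ by a two-sided estimate whose error is controlled by $\sum_{y\in B_r}\E_{\beta_c,r}[|K||K_y|\mathbbm{1}(0\leftrightarrow y)]$, i.e.\ (by a tree-graph/BK split of $|K||K_y|\mathbbm{1}(0\leftrightarrow y)$) essentially the open-triangle-type quantity $\sum_{y\in B_r}\nabla_{\beta_c,r}(0,y)$ times lower-order pieces, together with the difference between $\sum_{y\in B_r}\E_{\beta_c,r}|K_y|$ (which is $\asymp |B_r|\E_{\beta_c,r}|K|$ up to boundary/truncation corrections from points near $\partial B_r$ whose clusters reach outside $B_r$) and $|B_r|\cdot\E_{\beta_c,r}|K|$. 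Under the triangle condition with $\nabla_r$ logarithmically integrable, one shows $r^{-d}\sum_{y\in B_r}\nabla_{\beta_c,r}(0,y)\E_{\beta_c,r}|K|$ divided by $(\E_{\beta_c,r}|K|)^2$ is a logarithmically integrable error (using $\E_{\beta_c,r}|K|\asymp r^\alpha$, the a priori estimate that follows once the leading asymptotic is known, or is bootstrapped simultaneously), and in the case $d>3\alpha$ one runs instead the self-contained bootstrap argument of \cref{lem:first_moment_high_dimensional}: assuming $\E_{\beta_c,r}|K|\le Cr^\alpha$ on a dyadic range, the tree-graph inequality \eqref{eq:tree_graph_pth_moment} bounds $\E_{\beta_c,r}|K|^2\preceq (\E_{\beta_c,r}|K|)^3\preceq r^{3\alpha}$, so the correlation correction is $O(r^{-d}\cdot r^{3\alpha})=O(r^{-(d-3\alpha)})$ relative to the main term, which is summable over dyadic scales precisely when $d>3\alpha$ — this is where the hypothesis bites, and it simultaneously closes the bootstrap and gives logarithmic integrability of $\cH_r$. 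For the marginal case $d=3\alpha$ under \eqref{Hydro}, one replaces the tree-graph bound on the second moment by a bound of the form $\E_{\beta_c,r}|K|^2\preceq M_r\,\E_{\beta_c,r}|K|$ from \cref{cor:universal_tightness_moments} (with $M_r=o(r^{(d+\alpha)/2})=o(r^{2\alpha})$), which shows the correlation term is $o(1)$ relative to the main term — enough for $\E_{\beta_c,r}|K|\sim\frac{\alpha}{\beta_c}r^\alpha$ but not, in general, for logarithmic integrability, consistent with the proposition only claiming the refined conclusion in cases 1 and 3.

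The main obstacle I anticipate is making the error analysis of $\mathcal{E}_r$ genuinely rigorous rather than heuristic: one must (a) handle the boundary contributions from $y\in B_r$ whose cluster $K_y$ (computed in the full cut-off graph) extends well outside $B_r$ — these must be shown to contribute a negligible, logarithmically integrable fraction, which again uses \cref{thm:two_point_spatial_average_upper} to control $\sum_{x\notin B_r}\P_{\beta_c,r}(y\leftrightarrow x)$ for $y$ near the boundary; (b) carefully decompose $\E_{\beta_c,r}[|K||K_y|\mathbbm{1}(0\leftrightarrow y)]$ so that it is bounded by the open triangle (and not the full triangle diagram, which need not be finite in the mean-field-because-heavy-tailed regime $d>3\alpha$ unless $\alpha$ is small); and (c) in the $d>3\alpha$ case, arrange the bootstrap so that the upper bound $\E_{\beta_c,r}|K|\preceq r^\alpha$, the ODE asymptotics, and the logarithmic integrability of $\cH_r$ are all extracted from a single Grönwall-type argument — I would structure this exactly as in \cref{lem:first_moment_high_dimensional}, deferring to that lemma for the bootstrap and only adding the observation that the resulting error function is logarithmically integrable because the dyadic sums $\sum_k 2^{-k(d-3\alpha)}$ converge.
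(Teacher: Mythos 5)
Your overall architecture --- the $p=1$ derivative formula, the factorization error controlled by \cref{lem:BK_disjoint_clusters_covariance}, the linearized ODE $(-1/f)'\sim\beta_c r^{-\alpha-1}$ integrated to infinity using $\E_{\beta_c}|K|=\infty$ (the paper packages this as \cref{lem:f'=f^2}, not \cref{lem:ODE_self_referential}), and the bootstrap of \cref{lem:first_moment_high_dimensional} for $d>3\alpha$ --- is exactly the paper's, and your Case 1 is essentially complete, including the correct identification of where $d>3\alpha$ bites and why it yields logarithmic integrability of $\cH_r$. However, there are two genuine gaps in the other cases.

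\textbf{Case 3 (triangle condition).} Your plan is to bound the Harris/BK error $\sum_{y\in B_r}\E_{\beta_c,r}[|K|^2\mathbbm{1}(y\in K)]$ by a tree-graph split and claim it is ``the open triangle times lower-order pieces.'' It is not: the tree with leaves $0,x,y,z$ in which $0$ and $y$ attach to the \emph{same} branch point contributes, after summing over $x,z$, an open-bubble term $(\E_{\beta_c,r}|K|)^3\sum_{y\in B_r}T_r^2(0,y)$, where $T_r^2$ is the two-fold convolution of the two-point function. After dividing by $|B_r|(\E_{\beta_c,r}|K|)^2$ this is $\E_{\beta_c,r}|K|\cdot|B_r|^{-1}\sum_{y\in B_r}T_r^2(0,y)$, and under the bare triangle condition the only generic bound is $\sum_{y}T_r^2(0,y)\leq(\E_{\beta_c,r}|K|)^2$, giving $(\E_{\beta_c,r}|K|)^3/|B_r|\asymp r^{3\alpha-d}$ --- which does not tend to zero precisely in the regime $d\leq 3\alpha$ that Case 3 is supposed to cover beyond Case 1. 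The paper avoids this by proving the strictly stronger correlation inequality of \cref{lem:disjoint_connections_triangle} via a two-independent-configurations coupling: one bounds the \emph{unsummed} difference $\P(0\leftrightarrow x)\P(y\leftrightarrow z)-\P(0\leftrightarrow x\nleftrightarrow y\leftrightarrow z)$ by a diagram in which the path from $0$ to $y$ always has length three, so that only $(\E_{\beta_c,r}|K|)^2\nabla(0,y)$ survives and no bubble term is generated. This extra idea is not recoverable from the ``first drop the indicator, then tree-graph'' route you describe.

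\textbf{Case 2 ($d=3\alpha$ plus Hydro).} The bound $\E_{\beta_c,r}|K|^2\preceq M_r\,\E_{\beta_c,r}|K|$ that you attribute to \cref{cor:universal_tightness_moments} is not what that corollary gives: universal tightness controls moments of $|K\cap B_r|$, not of the full cluster $|K|$, which in the cut-off model can extend far outside $B_r$. The correct implementation (the paper's \cref{lem:first_moment_critical_dim}) is to apply Cauchy--Schwarz to the error, $\E[|K|^2|K\cap B_r|]\leq\sqrt{\E|K|^4\,\E|K\cap B_r|^2}$, bound $\E|K|^4\preceq(\E|K|)^7$ by tree-graph and $\E|K\cap B_r|^2\preceq r^\alpha M_r$ by universal tightness together with \cref{thm:two_point_spatial_average_upper}, and then run the same $\beta<\beta_c$ bootstrap as in Case 1 --- the a priori bound $\E_{\beta,r}|K|\preceq r^\alpha$ is still needed uniformly in $\beta$ and does not come for free at the critical dimension. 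Your conclusion that Hydro yields only $o(1)$ rather than logarithmic integrability is nonetheless the right one.
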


\cref{prop:first_moment} applies under the hypotheses of \eqref{HD} by the following elementary lemma, whose proof is deferred to the end of the section.

\begin{lemma}
\label{lem:log_integrable_triangle}
If Cases 2 or 3 of \eqref{HD} hold then $\nabla_r$ is logarithmically integrable.
\end{lemma}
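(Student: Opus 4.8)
The plan is to bound the open triangle $\nabla_{\beta_c}(0,x)$ pointwise by a threefold convolution of the two-point function bound, and then apply standard convolution estimates for power-law kernels on $\Z^d$.

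First I would record that in Case 2 of \eqref{HD} we have $\P_{\beta_c}(u\leftrightarrow v)\preceq g(u-v)$ with $g(w):=(1\vee\|w\|)^{-(d-2)}$ (using $\P_{\beta_c}\leq 1$ for bounded $\|u-v\|$ to absorb the singularity of $\|w\|^{-(d-2)}$ at the origin), and that in Case 3 the same holds with the logarithmically corrected kernel $g(w):=(1\vee\|w\|)^{-(d-2)}\bigl(\log(2\vee\|w\|)\bigr)^{-1}$. In either case the definition of the open triangle gives directly
\[
  \nabla_{\beta_c}(0,x)=\sum_{y,z\in\Z^d}\P_{\beta_c}(0\leftrightarrow y)\P_{\beta_c}(y\leftrightarrow z)\P_{\beta_c}(z\leftrightarrow x)\;\preceq\;(g*g*g)(x),
\]
where $*$ denotes convolution on $\Z^d$ and $x_0=0$ is one of the endpoints.

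Second, I would invoke the elementary convolution estimate that for $a,b\in(0,d)$ with $a+b>d$ one has $\sum_{z\in\Z^d}(1\vee\|z\|)^{-a}(1\vee\|z-w\|)^{-b}\preceq_{a,b}(1\vee\|w\|)^{-(a+b-d)}$, that in the borderline case $a+b=d$ the same sum is $\preceq_{a,b}\log(2\vee\|w\|)$, and that multiplicative slowly-varying factors $(\log(2\vee\|z\|))^{-\gamma}$ in the summands carry through multiplicatively into the conclusion, with the logarithmic exponent dropping by one in the borderline case; all of this is routine and follows from a dyadic decomposition of the sum according to the relative sizes of $\|z\|$, $\|z-w\|$, and $\|w\|$. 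Applying this twice: in Case 2, since $d-2\in(0,d)$ and $2(d-2)>d$ we get $(g*g)(w)\preceq(1\vee\|w\|)^{-(d-4)}$, and since $(d-4)+(d-2)=2d-6>d$ we then get $(g*g*g)(x)\preceq(1\vee\|x\|)^{-(d-6)}$. In Case 3 we have $d-2=4$ and $4+4=8>6=d$, so $(g*g)(w)\preceq(1\vee\|w\|)^{-2}(\log(2\vee\|w\|))^{-2}$; the next convolution is exactly borderline, $2+4=6=d$, with total logarithmic exponent $2+1=3$, so $(g*g*g)(x)\preceq(\log(2\vee\|x\|))^{-(3-1)}=(\log(2\vee\|x\|))^{-2}$. (Taking $x=0$ in the same estimates shows $\nabla_{\beta_c}=(g*g*g)(0)<\infty$, so the triangle condition holds in either case.)

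Finally, since the bounding functions are non-increasing in $\|x\|$ we conclude $\nabla_r\preceq(1\vee r)^{-(d-6)}$ in Case 2 and $\nabla_r\preceq(\log(2\vee r))^{-2}$ in Case 3; in both cases $\nabla_r\to 0$, and $\nabla_r$ is logarithmically integrable since $\int^\infty r^{-(d-6)}\frac{dr}{r}=\int^\infty r^{-(d-6)-1}\,dr<\infty$ (using $d-6>0$) and $\int^\infty(\log r)^{-2}\frac{dr}{r}=\int^\infty t^{-2}\,dt<\infty$. The one genuinely delicate point is the exactly-borderline convolution in Case 3, where naive power counting yields exponent zero and one must keep careful track of the logarithm: the total logarithmic exponent $3$ exceeds $1$ by precisely the margin needed to make $\nabla_r$ logarithmically integrable (and not merely $o(1)$) at the doubly-marginal point $d=6$, $\alpha=2$.
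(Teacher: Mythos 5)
Your proposal is correct and follows essentially the same route as the paper: bound $\nabla_{\beta_c}(0,x)$ by the threefold convolution of the assumed two-point bound, apply the standard power-law convolution estimate twice (citing it for $d>6$, and proving a logarithmically corrected version for the borderline case $d=6$, $\alpha=2$ via a decomposition according to the relative sizes of the arguments), and observe that the resulting bounds $r^{-(d-6)}$ and $(\log r)^{-2}$ are logarithmically integrable. The delicate point you flag — that the total logarithmic exponent $3$ at the doubly-marginal point drops to $2$ after the exactly-critical convolution, which is just enough for logarithmic integrability — is exactly the computation in the paper's proof.
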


\begin{remark}
It will follow from the computations in \cref{III-sec:logarithmic_corrections_at_the_critical_dimension} that the error term $\cH_r$ is \emph{not} logarithmically integrable when $d=3\alpha<6$.
\end{remark}

To begin working towards the proof of \cref{prop:first_moment}, we first rewrite the $p=1$ case of the equality \eqref{lem:moment_derivative} as
\begin{equation}
\label{eq:first_moment_ODE_error_def}
\frac{d}{dr}\E_{\beta_c,r}|K| = \beta_c |J'(r)| \E_{\beta_c,r}\left[\sum_{y\in B_r}\mathbbm{1}(y\notin K) |K||K_y| \right]
= (1-\cE_{1,r}) \beta_c |J'(r)| |B_r| (\E_{\beta_c,r}|K|)^2
\end{equation}
where the error term $\cE_{1,r}$ is defined by
\begin{multline*}
\cE_{1,r} := 
\frac{|B_r|(\E_{\beta_c,r}|K|)^2 - \E_{\beta_c,r} \left[|K| \sum_{y\in B_r} \mathbbm{1}(y\notin K) |K_y|\right]}{|B_r| (\E_{\beta_c,r}|K|)^2}
\\
=\frac{|B_r|(\E_{\beta_c,r}|K|)^2 - \E_{\beta_c,r} \left[|K| \sum_{y\in B_r} |K_y|\right]+\E_{\beta_c,r} \left[|K|^2 |K\cap B_r|\right] }{|B_r| (\E_{\beta_c,r}|K|)^2}.
\end{multline*}
We also define the errors $\cE_{0,r}$ and $\overline{\cE}_{1,r}$ by
\[
(1-\cE_{0,r}):=\frac{|J'(r)||B_r|}{r^{-\alpha-1}} \text{ and } (1-\overline{\cE}_{1,r}) = (1-\cE_{1,r})\frac{|J'(r)||B_r|}{r^{-\alpha-1}} = (1-\cE_{1,r})(1-\cE_{0,r}),
\]
so that
\begin{equation}
\label{eq:barE11_def_ODE}
\frac{d}{dr}\E_{\beta_c,r}|K| 
= (1-\overline{\cE}_{1,r}) \beta_c r^{-\alpha-1} (\E_{\beta_c,r}|K|)^2.
\end{equation}
Our aim is to show that the error term $\overline{\cE}_{1,r}$ appearing here is small under the hypotheses of \cref{prop:first_moment} and to deduce the first-order asymptotic estimate $\E_{\beta_c,r}|K|\sim \frac{\alpha}{\beta_c}r^\alpha$.

\medskip

Our analysis of this equation will rely on the following elementary ODE lemma.
 (Be careful to note that this lemma includes an implicit assumption that $f$ does not blow up in finite time, which is important for items 1 and 3.)

\begin{lemma}[The asymptotic ODE $f'\sim A r^{-\alpha-1}f^2$]
\label{lem:f'=f^2}
Let $f:(0,\infty)\to (0,\infty)$ be differentiable and let $A$, $\alpha$, and $r_0$ be positive constants.
\begin{enumerate}
  \item If $f'(r) \geq A r^{-\alpha-1} f(r)^2$ for every $r\geq r_0$ then $f(r) \leq A^{-1}\alpha r^\alpha$ for every $r\geq r_0$.
  \item If $f(r)\to\infty$ as $r\to \infty$ and $f'(r) \leq A r^{-\alpha-1} f(r)^2$ for every $r\geq r_0$  then $f(r) \geq A^{-1}\alpha r^\alpha$ for every $r\geq r_0$.
  \item If $f(r)\to\infty$ as $r\to\infty$ and $f'(r)=A(1-h(r))r^{-\alpha-1}f(r)^2$ for some bounded measurable function $h:(0,\infty)\to \R$ then 
  \[
f(r) = \frac{\alpha}{A}r^\alpha\left(1- \alpha r^\alpha \int_r^\infty h(s)s^{-\alpha-1}\dif s\right)^{-1}
  \]
  for every $r>0$.
\end{enumerate}
\end{lemma}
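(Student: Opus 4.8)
\textbf{Proof proposal for \cref{lem:f'=f^2}.}

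The plan is to treat the three items by the standard device of passing from the nonlinear first-order ODE for $f$ to a linear (or exactly solvable) equation for $1/f$. Set $g(r) = 1/f(r)$, which is well-defined and positive since $f>0$; then $g'(r) = -f'(r)/f(r)^2$, so the differential inequality $f'(r) \ge A r^{-\alpha-1} f(r)^2$ becomes $g'(r) \le -A r^{-\alpha-1}$, the reversed inequality becomes $g'(r) \ge -A r^{-\alpha-1}$, and the exact equation $f'(r) = A(1-h(r)) r^{-\alpha-1} f(r)^2$ becomes the exact equation $g'(r) = -A(1-h(r)) r^{-\alpha-1}$. In each case we integrate the (in)equality for $g'$ over an appropriate interval and invert to get back to $f$.

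For item 1, integrate $g'(s) \le -A s^{-\alpha-1}$ from $r_0$ to $r$ to get $g(r) \le g(r_0) - \frac{A}{\alpha}(r_0^{-\alpha} - r^{-\alpha}) \le g(r_0) - \frac{A}{\alpha} r_0^{-\alpha} + \frac{A}{\alpha} r^{-\alpha}$. Here the key observation is that $g(r_0) - \frac{A}{\alpha} r_0^{-\alpha} = \frac{1}{f(r_0)} - \frac{A}{\alpha}r_0^{-\alpha}$; since $f'\ge 0$ wherever the inequality holds, actually we need a cleaner bound. The slick route: for any fixed $R \ge r_0$, integrate $g'(s) \le -As^{-\alpha-1}$ from $r$ to $R$ (with $r_0 \le r \le R$) to obtain $g(R) - g(r) \le -\frac{A}{\alpha}(r^{-\alpha} - R^{-\alpha})$, i.e. $g(r) \ge g(R) + \frac{A}{\alpha} r^{-\alpha} - \frac{A}{\alpha} R^{-\alpha}$. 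Since $g > 0$ always, letting $R \to \infty$ gives $g(r) \ge \frac{A}{\alpha} r^{-\alpha}$, hence $f(r) \le \frac{\alpha}{A} r^\alpha$, as claimed. (This is where the implicit no-blow-up assumption enters: we need $f$, hence $g$, to be defined on all of $[r_0,\infty)$ for the $R\to\infty$ limit to make sense; one should note this.) For item 2, the hypothesis $f(r)\to\infty$ means $g(r)\to 0$ as $r\to\infty$; integrating $g'(s)\ge -As^{-\alpha-1}$ from $r$ to $R$ and letting $R\to\infty$ gives $-g(r) \ge -\frac{A}{\alpha} r^{-\alpha}$, i.e. $g(r) \le \frac{A}{\alpha} r^{-\alpha}$, hence $f(r) \ge \frac{\alpha}{A} r^\alpha$. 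For item 3, we integrate the exact equation $g'(s) = -A(1-h(s))s^{-\alpha-1}$ from $r$ to $R$; since $h$ is bounded, $\int_r^\infty h(s) s^{-\alpha-1}\,\dif s$ converges absolutely, and using $g(R)\to 0$ as $R\to\infty$ we get
\[
-g(r) = -\frac{A}{\alpha}r^{-\alpha} + A\int_r^\infty h(s) s^{-\alpha-1}\,\dif s,
\]
so $g(r) = \frac{A}{\alpha} r^{-\alpha} - A\int_r^\infty h(s)s^{-\alpha-1}\,\dif s = \frac{A}{\alpha}r^{-\alpha}\bigl(1 - \alpha r^{\alpha}\int_r^\infty h(s)s^{-\alpha-1}\,\dif s\bigr)$, and inverting gives exactly the stated formula for $f(r)$. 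One should also remark that the boundedness of $h$ guarantees the parenthetical factor is positive for $r$ large (since $\alpha r^\alpha \int_r^\infty h(s)s^{-\alpha-1}\,\dif s \to 0$), consistent with $f>0$; in fact the formula combined with $f>0$ forces this factor to be positive for all $r$ in the domain.

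The main subtlety — really the only one — is the careful handling of the no-blow-up hypothesis and the $R\to\infty$ limits: one must make sure that the functions $g$ are genuinely defined on the whole half-line so that the monotone/limiting arguments using $g>0$ (items 1, 2) or $g\to 0$ (items 2, 3) are legitimate, rather than $f$ escaping to $+\infty$ at some finite time. In items 2 and 3 this is already encoded in the hypothesis "$f(r)\to\infty$ as $r\to\infty$" (which presumes $f$ is defined for all large $r$); in item 1 it is the implicit standing assumption flagged in the lemma statement. Beyond that, everything is a one-line integration, so I do not expect any real obstacle.
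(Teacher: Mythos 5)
Your proof is correct and is essentially the paper's argument: the paper also writes $\frac{1}{f(r)} = \frac{1}{f(R)} + \int_r^R \frac{f'(s)}{f(s)^2}\,\dif s$ and sends $R\to\infty$, using $\liminf_{R\to\infty} 1/f(R)\geq 0$ for item 1 and $1/f(R)\to 0$ for items 2 and 3, exactly as you do via $g=1/f$. Your remarks on the no-blow-up hypothesis and the positivity of the bracketed factor in item 3 are accurate but not needed beyond what you already say.
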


\begin{proof}[Proof of \cref{lem:f'=f^2}]
All three claims follow from the fact that, by the fundamental theorem of calculus and the chain rule,
\[
\frac{1}{f(r)} = \frac{1}{f(R)} + \int_r^R \frac{f'(s)}{f(s)^2} \dif s
\]
for every $r \leq R < \infty$;
 simply take $R\to\infty$ and use that $\liminf_{R\to \infty} 1/f(R) \geq 0$ (in the first claim) or $\lim_{R\to \infty} 1/f(R) = 0$ (in the second two claims) as appropriate.
\end{proof}

Since $\E_{\beta_c}|K|=\infty$, it follows from \cref{lem:f'=f^2} and \eqref{eq:barE11_def_ODE} that the identity
\begin{equation}\label{eq:first_moment_E1_formula}
\E_{\beta_c,r}|K|=\frac{\alpha}{\beta_c} r^\alpha \left(1-\alpha r^\alpha \int_r^\infty \overline{\mathcal{E}}_{1,s} s^{-\alpha-1} \dif s\right)^{-1}
\end{equation}
holds for every $r>0$. In particular, $\E_{\beta_c,r}|K|\sim \frac{\alpha}{\beta_c}r^\alpha$ if and only if $\overline{\cE}_{1,r}\to 0$ as $r\to \infty$. The following standard lemma together with our standing assumptions \eqref{eq:normalization_conventions} on the kernel $J$ ensure that the error $\cE_{0,r}$ is always logarithmically integrable, so that it suffices to bound the error $\cE_{1,r}$.

\begin{lemma}
\label{lem:ball_regularity}
$|B_r| = r^d \pm O(r^{d-1})$ as $r\to \infty$.
\end{lemma}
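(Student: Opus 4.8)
The plan is to compare the lattice-point count $|B_r|$ directly with the Lebesgue measure of the $\|\cdot\|$-ball $\{y\in\R^d:\|y\|\le r\}$, which equals $r^d$ by our normalization \eqref{eq:normalization_conventions} that the unit ball of $\|\cdot\|$ has unit Lebesgue measure. The comparison goes through the standard unit-cube tiling: to each $x\in\Z^d$ associate the half-open cube $Q_x=x+[-1/2,1/2)^d$, so that $\{Q_x:x\in\Z^d\}$ partitions $\R^d$ into sets of Lebesgue measure $1$. Let $D:=\sup\{\|z\|:z\in[-1/2,1/2]^d\}$, which is finite since $\|\cdot\|$ is continuous and the cube is compact; the constant $D$ depends only on $d$ and the norm.

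First I would record the sandwich
\[
\bigcup_{x\in\Z^d:\,\|x\|\le r-D} Q_x \;\subseteq\; \{y\in\R^d:\|y\|\le r\} \;\subseteq\; \bigcup_{x\in\Z^d:\,\|x\|\le r+D} Q_x,
\]
valid for all $r\ge D$: if $\|x\|\le r-D$ then every $y\in Q_x$ satisfies $\|y\|\le\|x\|+\|y-x\|\le (r-D)+D=r$, giving the left inclusion; and if $y\in Q_x$ has $\|y\|\le r$ then $\|x\|\le\|y\|+\|x-y\|\le r+D$, giving the right inclusion. I would then take Lebesgue measure across this sandwich, using that each $Q_x$ has measure $1$ and that $\mathrm{Leb}\{y\in\R^d:\|y\|\le r\}=r^d$, obtaining $|B_{r-D}|\le r^d\le|B_{r+D}|$ for all $r\ge D$. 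Replacing $r$ by $r+D$ in the left inequality and by $r-D$ in the right one yields $(r-D)^d\le|B_r|\le(r+D)^d$ for all $r\ge 2D$, and the binomial expansion $(r\pm D)^d=r^d\pm O_{d,D}(r^{d-1})$ gives the claim.

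There is no genuine obstacle here: this is a routine Gauss-circle-type estimate, and the only point worth emphasizing is that the normalization \eqref{eq:normalization_conventions} is precisely what forces the leading term to be exactly $r^d$ with no multiplicative constant. One could simply cite it as standard, but since the $O(r^{d-1})$ error feeds into the later logarithmic-integrability bookkeeping for the error term $\cE_{0,r}$ in \eqref{eq:barE11_def_ODE}, it seems cleanest to record this two-line argument.
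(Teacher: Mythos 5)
Your proof is correct and is essentially the same argument as the paper's: both tile $\R^d$ by unit cubes centered at lattice points, use the triangle inequality with the norm-diameter of the cube to sandwich $|B_r|$ between $(r-C)^d$ and $(r+C)^d$, and expand. The only cosmetic difference is that you sandwich the ball between two unions of cubes, whereas the paper sandwiches the single set $B_r+[-1/2,1/2]^d$ between two balls; the content is identical.
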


\begin{proof}[Proof of \cref{lem:ball_regularity}]
 Consider partitioning $\R^d$ into the boxes $x+[-1/2,1/2]^d$ with $x\in \Z^d$, which are disjoint up to sets of measure zero. Letting $C$ be the diameter of the hypercube $[0,1]^d$ under the norm $\|\cdot\|$, we have that every point of $B_r+[-1/2,1/2]^d$ has norm at most $r+C$. Meanwhile, every point with norm at most $r-C$ is contained in one of the cubes $x+[-1/2,1/2]^d$ whose center is a lattice point of norm at most $r$, so that every point of norm at most $r-C$ is contained in the set $B_r+[-1/2,1/2]^d$. Since this set has Lebesgue measure $|B_r|$ while the set of points with norm at most $r$ has Lebesgue measure $r^d$, it follows that
$(r-C)^d \leq B_r \leq (r+C)^d$
for every $r\geq 1$.
\end{proof}

We now explain how the error term $\cE_{1,r}$ can be bounded in the first two cases of \cref{prop:first_moment}. Bounding $\cE_{1,r}$ in terms of the triangle diagram requires a slightly different treatment, which we will return to afterwards.

\begin{lemma}\label{lem:E1_1} The error term $\cE_{1,r}$ satisfies
\[
0\leq \cE_{1,r} \leq  \frac{\E_{\beta_c,r} \left[|K|^2 |K\cap B_r|\right]}{|B_r| (\E_{\beta_c,r}|K|)^2}
\]
for every $r>0$.
\end{lemma}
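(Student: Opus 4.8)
The plan is to prove both bounds on $\cE_{1,r}$ directly from its defining formula using the two-cluster correlation inequalities of \cref{lem:BK_disjoint_clusters_covariance}. Recall that
\[
\cE_{1,r} = \frac{|B_r|(\E_{\beta_c,r}|K|)^2 - \E_{\beta_c,r}\bigl[|K|\sum_{y\in B_r}\mathbbm{1}(y\notin K)|K_y|\bigr]}{|B_r|(\E_{\beta_c,r}|K|)^2},
\]
and that the numerator can be rewritten (by splitting $\mathbbm{1}(y\notin K) = 1 - \mathbbm{1}(y\in K)$ and using translation invariance, i.e.\ $\E_{\beta_c,r}[|K|\sum_{y\in B_r}\mathbbm{1}(y\in K)|K_y|] = \E_{\beta_c,r}[|K|^2|K\cap B_r|]$) as
\[
|B_r|(\E_{\beta_c,r}|K|)^2 - \sum_{y\in B_r}\E_{\beta_c,r}\bigl[|K_0||K_y|\bigr] + \E_{\beta_c,r}\bigl[|K|^2|K\cap B_r|\bigr].
\]

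For the \textbf{lower bound} $\cE_{1,r}\geq 0$: I would apply the BK inequality in the form of the first inequality of \cref{lem:BK_disjoint_clusters_covariance} (with $F(K_x)=|K_x|$, $G(K_y)=|K_y|$), which gives, for each $y\in B_r$,
\[
\E_{\beta_c,r}\bigl[|K_0||K_y|\mathbbm{1}(0\nleftrightarrow y)\bigr] \le (\E_{\beta_c,r}|K|)^2,
\]
using translation invariance so $\E_{\beta_c,r}|K_y| = \E_{\beta_c,r}|K_0| = \E_{\beta_c,r}|K|$. Since $|K_y|\mathbbm{1}(y\notin K_0) = |K_y|\mathbbm{1}(0\nleftrightarrow y)$ (these events coincide as $0\in K_0$), summing over $y\in B_r$ shows $\E_{\beta_c,r}[|K|\sum_{y\in B_r}\mathbbm{1}(y\notin K)|K_y|]\le |B_r|(\E_{\beta_c,r}|K|)^2$, which is exactly $\cE_{1,r}\ge 0$.

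For the \textbf{upper bound}, I would use the second inequality of \cref{lem:BK_disjoint_clusters_covariance}, namely
\[
\E_{\beta_c,r}\bigl[|K_0||K_y|\mathbbm{1}(0\nleftrightarrow y)\bigr] \ge (\E_{\beta_c,r}|K|)^2 - \E_{\beta_c,r}\bigl[|K_0||K_y|\mathbbm{1}(0\leftrightarrow y)\bigr],
\]
valid since $\E_{\beta_c,r}|K|<\infty$ (the cut-off measure is subcritical). Summing over $y\in B_r$ and using $\sum_{y\in B_r}\E_{\beta_c,r}[|K_0||K_y|\mathbbm{1}(0\leftrightarrow y)] = \E_{\beta_c,r}[|K|^2|K\cap B_r|]$ (on the event $0\leftrightarrow y$ we have $K_y=K_0=K$, and summing $\mathbbm{1}(y\in K)$ over $y\in B_r$ gives $|K\cap B_r|$), we obtain
\[
\E_{\beta_c,r}\Bigl[|K|\sum_{y\in B_r}\mathbbm{1}(y\notin K)|K_y|\Bigr] \ge |B_r|(\E_{\beta_c,r}|K|)^2 - \E_{\beta_c,r}\bigl[|K|^2|K\cap B_r|\bigr],
\]
which rearranges to precisely $\cE_{1,r}\le \E_{\beta_c,r}[|K|^2|K\cap B_r|]/(|B_r|(\E_{\beta_c,r}|K|)^2)$. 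The argument is entirely a bookkeeping exercise: there is no real obstacle beyond being careful that the relevant moments are finite (which holds because $\P_{\beta_c,r}$ is strictly subcritical, as noted after the definition of the cut-off kernel, so all polynomial moments of $|K|$ are finite) and that the indicator identities and mass-transport/translation-invariance rearrangements are applied correctly. The only mild subtlety is justifying the interchange of the sum over $y\in B_r$ with the expectation, which is immediate by Tonelli since all summands are non-negative.
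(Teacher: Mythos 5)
Your proof is correct and is essentially the paper's own argument: the paper deduces both bounds in one line from the third inequality of \cref{lem:BK_disjoint_clusters_covariance} applied to the rewritten form of $\cE_{1,r}$, whereas you apply the first two inequalities of that lemma separately to the original form — but since the third inequality is itself obtained by combining the first two via $1=\mathbbm{1}(0\leftrightarrow y)+\mathbbm{1}(0\nleftrightarrow y)$, the two arguments are the same up to bookkeeping. All the supporting identities you invoke (translation invariance, $K_y=K_0$ on $\{0\leftrightarrow y\}$, finiteness of moments under the subcritical cut-off measure) are used correctly.
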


\begin{proof}[Proof of \cref{lem:E1_1}]
This is an immediate consequence of the covariance bound
\[
0 \leq \E_{\beta_c,r} \left[|K| \sum_{y\in B_r} |K_y|\right]-|B_r|(\E_{\beta_c,r}|K|)^2 \leq \E_{\beta_c,r}\left[|K|^2|K\cap B_r|\right],
\]
which is a special case of \cref{lem:BK_disjoint_clusters_covariance}.
\end{proof}

The fact that $\cE_{1,r}$ is non-negative and $\cE_{0,r}\to 0$ ensures that the following lower bound always holds, without any assumptions on $d$ and $\alpha$.

\begin{corollary} 
\label{cor:mean_lower_bound}
$\liminf_{r\to\infty}r^{-\alpha}\E_{\beta_c,r}|K| \geq \frac{\alpha}{\beta_c}$.
\end{corollary}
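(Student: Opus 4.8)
The plan is to combine the exact ODE \eqref{eq:barE11_def_ODE} with the sign information we already have on the error terms and the elementary comparison lemma \cref{lem:f'=f^2}.

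First I would record three facts. (i) Since $r\mapsto\E_{\beta_c,r}|K|$ is non-decreasing, the left-hand side of \eqref{eq:barE11_def_ODE} is non-negative, so $(1-\cE_{1,r})(1-\cE_{0,r})\ge 0$; combined with the non-negativity of $\cE_{1,r}$ from \cref{lem:E1_1} this gives $0\le 1-\cE_{1,r}\le 1$ for all sufficiently large $r$. (ii) By the normalization conventions \eqref{eq:normalization_conventions} together with \cref{lem:ball_regularity} we have $|J'(r)||B_r|\sim r^{-\alpha-1}$, so $\cE_{0,r}\to 0$, and hence for every $\eps>0$ there is $r_0$ with $1-\cE_{0,r}\le 1+\eps$ for $r\ge r_0$. (iii) Since $J_r\uparrow J$ pointwise and $\E_{\beta_c}|K|=\infty$ (as already used just before \eqref{eq:first_moment_E1_formula}), monotone convergence gives $\E_{\beta_c,r}|K|\to\infty$ as $r\to\infty$.

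Next I would feed these into \eqref{eq:barE11_def_ODE}: for $r\ge r_0$,
\[
\frac{d}{dr}\E_{\beta_c,r}|K| = (1-\cE_{1,r})(1-\cE_{0,r})\,\beta_c r^{-\alpha-1}\bigl(\E_{\beta_c,r}|K|\bigr)^2 \le (1+\eps)\beta_c\, r^{-\alpha-1}\bigl(\E_{\beta_c,r}|K|\bigr)^2.
\]
Applying item 2 of \cref{lem:f'=f^2} with $f(r)=\E_{\beta_c,r}|K|$ and $A=(1+\eps)\beta_c$ (the hypothesis $f(r)\to\infty$ being fact (iii)) yields $\E_{\beta_c,r}|K|\ge \frac{\alpha}{(1+\eps)\beta_c}r^\alpha$ for all $r\ge r_0$, whence $\liminf_{r\to\infty}r^{-\alpha}\E_{\beta_c,r}|K|\ge \frac{\alpha}{(1+\eps)\beta_c}$. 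Letting $\eps\downarrow 0$ finishes the proof.

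There is no serious obstacle: the content has already been set up in the derivation of \eqref{eq:barE11_def_ODE} and in the sign of $\cE_{1,r}$. The only point requiring a little care is the justification that $\E_{\beta_c,r}|K|\to\infty$, which is where the criticality of $\beta_c$ (equivalently the divergence of the susceptibility at $\beta_c$) enters and which is needed to invoke the comparison lemma; alternatively one can bypass \cref{lem:f'=f^2} entirely and read the bound directly off the closed-form identity \eqref{eq:first_moment_E1_formula}, but the route above is cleaner.
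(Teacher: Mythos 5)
Your proposal is correct and follows essentially the paper's route: the paper's (one-line) proof reads the bound off the closed-form identity \eqref{eq:first_moment_E1_formula} — itself obtained from \eqref{eq:barE11_def_ODE}, $\E_{\beta_c}|K|=\infty$, and \cref{lem:f'=f^2} — using exactly the two inputs you isolate, namely $\cE_{1,r}\geq 0$ and $\cE_{0,r}\to 0$. Invoking item 2 of \cref{lem:f'=f^2} on the differential inequality rather than item 3 on the exact identity is only a cosmetic difference, as you yourself note.
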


We now consider the case $d>3\alpha$. The main step in this case is to prove an upper bound of the correct order on $\E_{\beta_c,r}|K|$, which will be done via a bootstrapping argument. Interestingly, this argument can be carried out from scratch, and does not rely on any of the previous results on the two-point function established in the earlier works \cite{hutchcroft2022sharp,hutchcroft2020power,baumler2022isoperimetric}. 

\begin{lemma}
\label{lem:first_moment_high_dimensional}
If $d > 3\alpha$ then there exists a constant $r_0<\infty$ such that
\[
\E_{\beta_c,r}|K| \leq \frac{2\alpha}{\beta_c} r^\alpha \qquad \text{ and } \qquad \cE_{1,r} \leq \frac{16\alpha^3}{\beta_c^3} r^{3\alpha-d}
\]
for every $r\geq r_0$. In particular, $\overline{\cE}_{1,r}$ is logarithmically integrable.
\end{lemma}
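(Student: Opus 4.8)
The goal is to prove \cref{lem:first_moment_high_dimensional}: assuming $d>3\alpha$, establish the a-priori linear bound $\E_{\beta_c,r}|K|\le \frac{2\alpha}{\beta_c}r^\alpha$ and the error bound $\cE_{1,r}\le \frac{16\alpha^3}{\beta_c^3}r^{3\alpha-d}$ for large $r$. Since $3\alpha-d<0$, the latter is regularly varying of negative index and hence logarithmically integrable, which gives the final conclusion (using \cref{lem:ball_regularity} to dispose of $\cE_{0,r}$ and the algebra property of logarithmically integrable error functions to handle $\overline{\cE}_{1,r}=1-(1-\cE_{0,r})(1-\cE_{1,r})$).

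The strategy is a bootstrap. First I would set up the ``improvement'' half: by \cref{lem:E1_1}, $0\le\cE_{1,r}\le \E_{\beta_c,r}[|K|^2|K\cap B_r|]/(|B_r|(\E_{\beta_c,r}|K|)^2)$. I would bound the numerator using the tree-graph inequality: for a transitive weighted graph, $\E_{\beta,r}[|K|^2|K\cap B_r|]=\sum_{y,z\in\Z^d,\,w\in B_r}\P_{\beta,r}(0\leftrightarrow y,z\leftrightarrow\cdots)$ expands via \eqref{tree_graph_general} into a sum of diagrams; the key point is that every such diagram is bounded by a product of two-point sums, with at least one factor being a \emph{restricted} sum $\sum_{w\in B_r}\P_{\beta_c,r}(u\leftrightarrow w)$ over $B_r$ rather than all of $\Z^d$, which is at most $\E_{\beta_c,r}|K\cap B_r|\le \E_{\beta_c,r}|K|$. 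A cleaner route is to use \cref{cor:universal_tightness_moments}: $\E_{\beta_c,r}|K|^2|K\cap B_r|\preceq M_{\beta_c,r}\,\E_{\beta_c,r}|K|^2\preceq M_{\beta_c,r}(\E_{\beta_c,r}|K|)^3$ by \eqref{eq:tree_graph_pth_moment}. Combining with the trivial bound $M_{\beta_c,r}\le\E_{\beta_c,r}|K|$ (or, better, with a self-consistent bound on $M_{\beta_c,r}$ derived as in the proof of \cref{lem:moments_bounded_below_by_M}), and using the a-priori hypothesis $\E_{\beta_c,r}|K|\le \frac{2\alpha}{\beta_c}r^\alpha$, gives
\[
\cE_{1,r}\;\preceq\;\frac{M_{\beta_c,r}\,(\E_{\beta_c,r}|K|)^3}{|B_r|\,(\E_{\beta_c,r}|K|)^2}\;\preceq\;\frac{(\E_{\beta_c,r}|K|)^2}{r^d}\;\le\;\frac{4\alpha^2}{\beta_c^2}\,r^{2\alpha-d},
\]
and actually $\cE_{1,r}\preceq r^{3\alpha-d}$ once one keeps an extra factor of $\E_{\beta_c,r}|K|\preceq r^\alpha$ coming from the full third moment. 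I would be careful here to track the numerical constant so that it comes out as $\frac{16\alpha^3}{\beta_c^3}$; this is just bookkeeping in the tree-graph bound $\E_{\beta_c,r}|K|^2\le 3(\E_{\beta_c,r}|K|)^3$ together with $M_{\beta_c,r}\le \E_{\beta_c,r}|K|$ and a factor from \cref{lem:ball_regularity}.

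Now the bootstrap itself. Let $\Phi(r):=\frac{\beta_c}{\alpha}r^{-\alpha}\E_{\beta_c,r}|K|$; \cref{cor:mean_lower_bound} gives $\liminf_{r\to\infty}\Phi(r)\ge 1$. I want to show $\limsup_{r\to\infty}\Phi(r)\le 2$. From \eqref{eq:barE11_def_ODE}, $\frac{d}{dr}\E_{\beta_c,r}|K|=(1-\overline{\cE}_{1,r})\beta_c r^{-\alpha-1}(\E_{\beta_c,r}|K|)^2$, so on any scale where $\Phi\le 2$ we have $\cE_{1,r}\le C r^{3\alpha-d}$ with $C=C(d,\alpha,\beta_c)$ absolute; feeding this back, $\overline{\cE}_{1,r}\le \cE_{0,r}+\cE_{1,r}\to 0$, so for $r\ge r_1$ large enough $\overline{\cE}_{1,r}\le 1/2$ say, forcing $\frac{d}{dr}\E_{\beta_c,r}|K|\le \beta_c r^{-\alpha-1}(\E_{\beta_c,r}|K|)^2$ on that range. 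By \cref{lem:f'=f^2}(2) applied to $f=\E_{\beta_c,\cdot}|K|$ (which $\to\infty$ since $\E_{\beta_c}|K|=\infty$), this differential inequality, \emph{if it held for all large $r$}, would give $\E_{\beta_c,r}|K|\ge\frac{\alpha}{\beta_c}r^\alpha$ — that's the wrong direction. The right move is a continuity/barrier argument on $\Phi$: show that whenever $\Phi(r)$ reaches the level $2$ from below at some large time $r$, its derivative is strictly negative there, so it cannot cross $2$. Concretely, differentiate $\Phi$ directly:
\[
\Phi'(r)=\frac{\beta_c}{\alpha}\Big(r^{-\alpha}\tfrac{d}{dr}\E_{\beta_c,r}|K|-\alpha r^{-\alpha-1}\E_{\beta_c,r}|K|\Big)
=\frac{1}{r}\Big((1-\overline{\cE}_{1,r})\Phi(r)^2-\Phi(r)\Big)=\frac{\Phi(r)}{r}\big((1-\overline{\cE}_{1,r})\Phi(r)-1\big).
\]
So at a putative first crossing time $r^\star>r_1$ with $\Phi(r^\star)=2$, we get $\Phi'(r^\star)=\frac{2}{r^\star}(2(1-\overline{\cE}_{1,r^\star})-1)$; this is positive, which is bad. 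Hence a bare barrier at level $2$ does not work, and one must instead bound $\int\overline{\cE}_{1,s}/s\,ds$ directly: on the (open) set where $\Phi\le 2$, $\overline{\cE}_{1,r}$ is $O(r^{3\alpha-d})$, hence summable, so \eqref{eq:first_moment_E1_formula} (which is valid on all of $(0,\infty)$ since $\E_{\beta_c}|K|=\infty$) gives
\[
\Phi(r)=\Big(1-\alpha r^\alpha\!\int_r^\infty\!\overline{\cE}_{1,s}s^{-\alpha-1}\,ds\Big)^{-1},
\]
and I claim $\alpha r^\alpha\int_r^\infty|\overline{\cE}_{1,s}|s^{-\alpha-1}ds\to 0$ as $r\to\infty$ as long as $|\overline{\cE}_{1,s}|$ is dominated by $C' s^{2\alpha-d}$ on $[r,\infty)$, which it is on any tail interval where $\Phi\le 2$: indeed $\int_r^\infty s^{2\alpha-d-\alpha-1}ds\asymp r^{\alpha-d}$, so $r^\alpha\cdot r^{\alpha-d}=r^{2\alpha-d}\to 0$. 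Therefore $\Phi(r)\to 1$ along the set where $\Phi\le 2$; combined with continuity of $\Phi$ and $\liminf\Phi\ge1$, a standard ``first exit'' argument shows $\Phi(r)<2$ for all $r\ge r_0$ once $r_0$ is large enough that the tail integral is below $1/2$. This closes the bootstrap: $\E_{\beta_c,r}|K|\le\frac{2\alpha}{\beta_c}r^\alpha$ for $r\ge r_0$, and then the improved bound $\cE_{1,r}\le\frac{16\alpha^3}{\beta_c^3}r^{3\alpha-d}$ follows by plugging this linear bound into the tree-graph estimate above.

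**Where the difficulty lies.** The delicate points are (i) getting the exponent $3\alpha-d$ rather than a weaker $2\alpha-d$ or $4\alpha-d$ in the bound on $\cE_{1,r}$ — this requires being slightly clever about which diagram in the expansion of $\E_{\beta_c,r}[|K|^2|K\cap B_r|]$ supplies the ``restriction to $B_r$'' factor and combining the tree-graph bound with universal tightness in the right way, so that exactly one factor of $\E_{\beta_c,r}|K|$ is ``spent'' on the box-restriction and contributes a power savings; and (ii) making the bootstrap rigorous, i.e.\ the first-exit argument, which needs the a-priori knowledge $\liminf\Phi\ge1$ from \cref{cor:mean_lower_bound} and the global validity of the integral identity \eqref{eq:first_moment_E1_formula} (itself a consequence of $\E_{\beta_c}|K|=\infty$ and \cref{lem:f'=f^2}). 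Everything else — $\cE_{0,r}\to0$ from \cref{lem:ball_regularity}, the algebra of logarithmically integrable error functions, and Russo's formula / mass-transport behind \cref{lem:moment_derivative} — is routine and already available in the excerpt. The main obstacle is thus the quantitative tree-graph bookkeeping in step (i); the bootstrap in (ii) is conceptually standard but must be written with care because, as noted above, a naive barrier at the level $2$ has the wrong sign and one genuinely needs the integrated form.
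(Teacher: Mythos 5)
Your reduction of the problem is right: granting the a priori bound $\E_{\beta_c,r}|K|\leq \frac{2\alpha}{\beta_c}r^\alpha$, the tree-graph inequality gives $\cE_{1,r}\leq 3(\E_{\beta_c,r}|K|)^3/|B_r|\preceq r^{3\alpha-d}$, and logarithmic integrability of $\overline{\cE}_{1,r}$ follows. But the bootstrap you propose for the a priori bound is circular and does not close. The identity \eqref{eq:first_moment_E1_formula} expresses $\Phi(r)$ in terms of $\int_r^\infty \overline{\cE}_{1,s}s^{-\alpha-1}\dif s$, an integral over the \emph{entire} tail $[r,\infty)$; your bound $\overline{\cE}_{1,s}\preceq s^{3\alpha-d}$ is only available at scales $s$ where $\Phi(s)\leq 2$ already holds. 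So to conclude $\Phi(r)\leq 2$ you must assume $\Phi(s)\leq 2$ for all $s\geq r$ — precisely the statement being proved. The trivial bound $0\leq \cE_{1,s}\leq 1$ only gives $\alpha r^\alpha\int_r^\infty \overline{\cE}_{1,s}s^{-\alpha-1}\dif s\leq 1+o(1)$, which is consistent with $\Phi(r)=\infty$. Nothing in your argument rules out $\Phi(r)\to\infty$: there is no a priori upper bound of any polynomial order on $\E_{\beta_c,r}|K|$ available at criticality (the exact ODE $f'=\beta_c r^{-\alpha-1}f^2$ blows up in finite time from any initial condition with $\Phi>1$, so the damping from $\cE_{1,r}\geq 0$ is exactly what must be quantified, and you cannot quantify it without the bound you seek). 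Your "first exit" fallback inherits the same problem, as you partially diagnosed when you noticed the barrier at level $2$ has the wrong sign. (A smaller slip: with $\overline{\cE}_{1,r}\leq 1/2$ one should use the \emph{lower} differential inequality $f'\geq\frac{1}{2}\beta_c r^{-\alpha-1}f^2$ and item 1 of \cref{lem:f'=f^2} to get the upper bound $f\leq\frac{2\alpha}{\beta_c}r^\alpha$; you instead invoke item 2 and get a lower bound. Also, your claim $M_{\beta_c,r}\leq\E_{\beta_c,r}|K|$ is false — \cref{lem:moments_bounded_below_by_M} only gives $M_{\beta_c,r}\preceq\sqrt{|B_r|\E_{\beta_c,r}|K|}\asymp r^{(d+\alpha)/2}\gg r^\alpha$ — though this route is anyway not needed.)

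The paper closes the loop with two devices that are absent from your proposal. First, it runs the argument at subcritical $\beta\in[\beta_c/2,\beta_c)$, where sharpness of the phase transition guarantees that $r_0(\beta):=\min\{r_0:\E_{\beta,r}|K|\leq\frac{2\alpha}{\beta}r^\alpha\ \forall r\geq r_0\}$ is \emph{finite}; this provides the non-empty "good tail" that your argument cannot produce at $\beta_c$. Second — and this is the self-improvement step — it uses monotonicity of $\E_{\beta,r}|K|$ in $r$ to bound the error at scales $r<r_0(\beta)$ by $\overline{\cE}_{1,r,\beta}\preceq r^{3\alpha-d}(r_0(\beta)/r)^{3\alpha}=r^{-d}r_0(\beta)^{3\alpha}$, which is already small once $r\gtrsim r_0(\beta)^{3\alpha/d}\ll r_0(\beta)$ (here $d>3\alpha$ enters crucially). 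Applying \cref{lem:f'=f^2}(1) from that much smaller scale onward forces $r_0(\beta)\leq (32C)^{1/d}r_0(\beta)^{3\alpha/d}$, hence $r_0(\beta)\leq(32C)^{1/(d-3\alpha)}$ uniformly in $\beta$, and one concludes by monotone convergence as $\beta\uparrow\beta_c$. Without the subcritical approximation your bootstrap has no starting point, and without the downward propagation via monotonicity even the subcritical bound $r_0(\beta)<\infty$ would degenerate as $\beta\uparrow\beta_c$.
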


 The basic idea behind the following proof is that if $d>3\alpha$ and a bound of the correct order holds at some large scale $R$, then this information is sufficient to analyze the ODE even at much smaller scales $r\ll R$, since the bound is much more than sufficient to ensure that the error $\cE_{1,r}$ is small. This lets us prove that the claimed bounds hold at all scales via a boostrapping argument.

\begin{proof}[Proof of \cref{lem:first_moment_high_dimensional}]
The claimed bound on $\cE_{1,r}$ follows immediately from the claimed bound on $\E_{\beta_c,r}|K|$ together with \cref{lem:E1_1} and the tree-graph inequality, so we focus on bounding $\E_{\beta_c,r}|K|$. 
We first note that the derivation of \eqref{eq:barE11_def} above can be carried out at other values of $\beta \leq \beta_c$ to obtain that
\[
\frac{\partial}{\partial r} \E_{\beta,r}|K| = (1-\bar \cE_{1,r,\beta}) \beta r^{-\alpha-1}(\E_{\beta,r}|K|)^2
\]
for every $0\leq \beta \leq \beta_c$ and $r>0$, where 
\[
(1-\overline{\cE}_{1,r,\beta}) = \frac{|J'(r)||B_r|}{r^{-\alpha-1}} (1-\cE_{1,r,\beta})=(1-\cE_{0,r})(1-\cE_{1,r,\beta})
\]
and
\[
\cE_{1,r,\beta}:=\frac{|B_r|(\E_{\beta,r}|K|)^2 - \E_{\beta,r} \left[|K| \sum_{y\in B_r} |K_y|\right]+\E_{\beta,r} \left[|K|^2 |K\cap B_r|\right] }{|B_r| (\E_{\beta,r}|K|)^2} \leq \frac{\E_{\beta,r} \left[|K|^2 |K\cap B_r|\right] }{|B_r| (\E_{\beta,r}|K|)^2}.
\]
Using the tree-graph inequality, we can bound 
\begin{equation}
\label{eq:tree-graph-cE_1-bound}
\cE_{1,r,\beta} \leq \frac{\E_{\beta,r} \left[|K|^3\right] }{|B_r| (\E_{\beta,r}|K|)^2} \leq \frac{3 (\E_{\beta,r} |K|)^3 }{|B_r|} \preceq r^{3\alpha-d} \left(\frac{\E_{\beta,r} |K|}{\frac{\alpha}{\beta}r^{\alpha}}\right)^3
\end{equation}
for all $\beta_c/2\leq \beta \leq \beta_c$, so that there exists a constant $C$ such that
\begin{equation*}
\overline{\cE}_{1,r,\beta} \leq C r^{3\alpha-d} \left(\frac{\E_{\beta,r} |K|}{\frac{\alpha}{\beta}r^{\alpha}}\right)^3
\end{equation*}
for every $r\geq 1$ and $\beta_c/2\leq \beta \leq \beta_c$. (In the remainder of the proof we will only really use that the prefactor $Cr^{3\alpha-d}$ goes to zero as $r\to \infty$ independently of the choice of $\beta$ and that the second term is a function of the ratio of $\E_{\beta,r}|K|$ to $r^\alpha$.)
%
%
%
%
For each $\beta_c/2 \leq \beta < \beta_c$, let $r_0(\beta)$ be minimal such that 
\[
\E_{\beta,r}|K| \leq \frac{2\alpha}{\beta} r^\alpha
\]
for every $r\geq r_0(\beta)$, noting that $r_0(\beta)$ is finite for every $\beta_c/2\leq \beta<\beta_c$ by sharpness of the phase transition. 
Since $\E_{\beta,r} |K|$ is monotone in $r$, we can bound
\[
\overline{\cE}_{1,r,\beta} \leq C r^{3\alpha-d} \left(\frac{\E_{\beta,\max\{r,r_0\}} |K|}{\frac{\alpha}{\beta}r^{\alpha}}\right)^3 \leq 8C r^{3\alpha-d} \left(\frac{\max\{r,r_0\}}{r}\right)^{3\alpha},
\]
so that if $r\geq (32 C)^{1/d} \max\{r,r_0\}^{3\alpha/d}$ then $\overline{\cE}_{1,r,\beta} \leq 1/4$. Applying \cref{lem:f'=f^2} yields that $\E_{\beta,r}|K| \leq \frac{4\alpha}{3\beta} r^\alpha \leq \frac{2\alpha}{\beta} r^\alpha$ for every $r\geq (32 C)^{1/d} \max\{r,r_0(\beta)\}^{3\alpha/d}$, and hence by definition of $r_0$ that any such $r$ must be at least $r_0(\beta)$. This implies that $r_0(\beta)\geq (32 C)^{1/d} r_0(\beta)^{3\alpha/d}$ and hence that 
\[
r_0(\beta)\leq (32C)^{1/(d-3\alpha)}
\]
for every $\beta_c/2\leq \beta <\beta_c$. Taking $\beta \uparrow \beta_c$, it follows by monotone convergence that $\E_{\beta_c,r}|K| \leq \frac{2\alpha}{\beta} r^\alpha$ for all $r\geq (32C)^{1/(d-3\alpha)}$.
\end{proof}

We next extend this analysis to the critical dimension under the assumption that the hydrodynamic condition holds. 


\begin{lemma}
\label{lem:first_moment_critical_dim}
If $d=3\alpha$ and the hydrodynamic condition holds then $\cE_{1,r}\to 0$.
\end{lemma}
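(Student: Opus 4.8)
The plan is to bound $\cE_{1,r}$ from above via \cref{lem:E1_1}, which reduces the claim to showing that
\[
\E_{\beta_c,r}\!\left[|K|^2\,|K\cap B_r|\right] = o\!\left(|B_r|\,(\E_{\beta_c,r}|K|)^2\right)\qquad\text{as }r\to\infty.
\]
Since $d=3\alpha$ we have $|B_r|\asymp r^{3\alpha}$ (\cref{lem:ball_regularity}) and the exponent $(d+\alpha)/2$ appearing in \eqref{Hydro} equals $2\alpha$, so the hydrodynamic condition says precisely that $m(r):=M_r/r^{2\alpha}\to0$, while \cref{cor:M_upper_bound_general} gives the matching bound $m(r)\preceq1$. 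I will use freely the stochastic domination $\P_{\beta,r}\opleq\P_{\beta'}$ for $\beta\le\beta'$, and the monotonicity of $\beta\mapsto M_{\beta,r}$ and $\beta\mapsto\E_{\beta,r}|K\cap B_r|$.

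\medskip\noindent\emph{A key error estimate.} For every $\beta\le\beta_c$ and $r\ge1$, combining \cref{lem:E1_1} (valid for all $\beta\le\beta_c$, as used in the proof of \cref{lem:first_moment_high_dimensional}), two applications of the Cauchy--Schwarz inequality, the tree-graph bound \eqref{eq:tree_graph_pth_moment}, \cref{cor:universal_tightness_moments} with $W=B_r$, $p=1$, $q=3$, and the spatially-averaged two-point estimate \cref{thm:two_point_spatial_average_upper}, one gets
\begin{align*}
\cE_{1,r,\beta}
&\le\frac{\E_{\beta,r}[|K|^2|K\cap B_r|]}{|B_r|(\E_{\beta,r}|K|)^2}
\le\frac{(\E_{\beta,r}|K|^3)^{1/2}(\E_{\beta,r}|K|^2)^{1/4}(\E_{\beta,r}|K\cap B_r|^4)^{1/4}}{|B_r|(\E_{\beta,r}|K|)^2}\\
&\preceq\frac{(\E_{\beta,r}|K|)^{13/4}\,\bigl(M_r^3\,r^{\alpha}\bigr)^{1/4}}{r^{3\alpha}(\E_{\beta,r}|K|)^2}
= m(r)^{3/4}\left(\frac{\E_{\beta,r}|K|}{r^{\alpha}}\right)^{\!5/4},
\end{align*}
where we used $M_{\beta,r}\le M_r$ and $\E_{\beta,r}|K\cap B_r|\le\sum_{x\in B_r}\P_{\beta_c}(0\leftrightarrow x)\preceq r^{\alpha}$. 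Since $\cE_{0,r}$ is a logarithmically integrable error function under our conventions \eqref{eq:normalization_conventions} (\cref{subsec:first_moment}), this yields $\overline{\cE}_{1,r,\beta}\le C\bigl(m(r)^{3/4}(\E_{\beta,r}|K|/r^{\alpha})^{5/4}+|\cE_{0,r}|\bigr)$ for a constant $C=C(d,\alpha,J)$.

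\medskip\noindent\emph{The a priori bound $\E_{\beta_c,r}|K|\asymp r^{\alpha}$.} The lower bound is \cref{cor:mean_lower_bound}. For the upper bound we rerun the bootstrap of \cref{lem:first_moment_high_dimensional} with the estimate above in place of the tree-graph bound used there; this replacement is necessary since at $d=3\alpha$ the factor $r^{3\alpha-d}$ appearing in that bound no longer decays. For $\beta_c/2\le\beta<\beta_c$ let $r_0(\beta)$ be minimal with $\E_{\beta,r}|K|\le\tfrac{2\alpha}{\beta}r^{\alpha}$ for all $r\ge r_0(\beta)$, finite by sharpness of the phase transition. Monotonicity of $r\mapsto\E_{\beta,r}|K|$ gives $\E_{\beta,r}|K|/r^{\alpha}\le\tfrac{2\alpha}{\beta}\max(1,r_0(\beta)/r)^{\alpha}$, so for $r\ge r_0(\beta)$ the key estimate gives $\overline{\cE}_{1,r,\beta}\le\epsilon(r)$, where $\epsilon(r):=C'\bigl(\sup_{s\ge r}m(s)^{3/4}+\sup_{s\ge r}|\cE_{0,s}|\bigr)$ is decreasing, tends to $0$, and is independent of $\beta$. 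Arguing as in the proof of \cref{lem:f'=f^2} (using $\E_\beta|K|<\infty$, so that $1/\E_{\beta,R}|K|$ stays bounded below as $R\to\infty$) then yields $\E_{\beta,r}|K|\le\frac{\alpha}{\beta(1-\epsilon(r))}r^{\alpha}$ for every $r\ge r_0(\beta)$. Fixing $r_2$ with $\epsilon(r_2)\le\tfrac14$, we get $r_0(\beta)\le r_2$ for all such $\beta$: otherwise, evaluating at $r=r_0(\beta)>r_2$ and using $\E_{\beta,r_0(\beta)}|K|=\tfrac{2\alpha}{\beta}r_0(\beta)^{\alpha}$ (continuity and minimality) gives $\tfrac{2\alpha}{\beta}\le\tfrac{4\alpha}{3\beta}$, a contradiction. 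Letting $\beta\uparrow\beta_c$ and using monotone convergence gives $\E_{\beta_c,r}|K|\le\tfrac{2\alpha}{\beta_c}r^{\alpha}$ for all $r\ge r_2$.

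\medskip\noindent\emph{Conclusion.} Substituting $\beta=\beta_c$ and $\E_{\beta_c,r}|K|\asymp r^{\alpha}$ into the key error estimate gives $\cE_{1,r}\preceq m(r)^{3/4}=(M_r/r^{2\alpha})^{3/4}$, which tends to $0$ by \eqref{Hydro}. I expect the bootstrap of the second step to be the main obstacle: at the critical dimension the tree-graph bound alone is inconclusive, so one genuinely needs the gain $m(r)^{3/4}$ coming from the universal tightness theorem (via \cref{cor:universal_tightness_moments}) together with \eqref{Hydro}, and one must take care that $M_{\beta,r}$ and $\E_{\beta,r}|K\cap B_r|$ are monotone in $\beta$, so that \eqref{Hydro} --- a hypothesis about $\beta_c$ --- transfers uniformly to all $\beta\le\beta_c$.
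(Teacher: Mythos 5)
Your proof is correct and follows essentially the same strategy as the paper's: bound $\cE_{1,r,\beta}$ by splitting $\E_{\beta,r}[|K|^2|K\cap B_r|]$ via Cauchy--Schwarz, control the pure volume moments by tree-graph and the $|K\cap B_r|$ moments by universal tightness together with \cref{thm:two_point_spatial_average_upper} and \eqref{Hydro}, and then run the bootstrap of \cref{lem:first_moment_high_dimensional} uniformly in $\beta<\beta_c$. The only differences are cosmetic — your Hölder splitting yields $m(r)^{3/4}(\E_{\beta,r}|K|/r^\alpha)^{5/4}$ where the paper gets $\eta(r)(\E_{\beta,r}|K|/r^\alpha)^{3/2}$, and your bootstrap derives the contradiction by evaluating at $r=r_0(\beta)$ directly rather than via the paper's fixed-point function $\psi$.
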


\begin{proof}[Proof of \cref{lem:first_moment_critical_dim}]
We have by Cauchy-Schwarz that
\begin{equation}
\E_{\beta_c,r}[|K|^2|K\cap B_r|] \leq \sqrt{\E_{\beta,r}[|K|^4] \E_{\beta_c,r}[|K \cap B_r|^2]} \preceq  \sqrt{(\E_{\beta,r}|K|)^7r^\alpha M_r},
\label{eq:E1_from_M}
\end{equation}
where in the second inequality we used the universal tightness theorem via \cref{cor:universal_tightness_moments} and the two-point upper bound of \cref{thm:two_point_spatial_average_upper} to bound $\E_{\beta_c,r}|K \cap B_r|^2 \preceq r^\alpha M_r$ and the tree-graph inequality to bound $\E_{\beta_c,r}[|K|^4]  \preceq (\E_{\beta_c,r}|K|)^7$. Since the hydrodynamic condition holds, we deduce that there exists a continuous decreasing function $\eta:(0,\infty)\to (0,\infty)$ with $\eta(r)\to0$ as $r\to \infty$ such that
\begin{equation}
\label{eq:critical_dim_E1rbeta}
\overline{\cE}_{1,r,\beta} \leq \frac{\eta(r) r^{\alpha/2} r^{(d+\alpha)/4} (\E_{\beta,r}|K|)^{7/2}}{r^d (\E_{\beta,r}|K|)^{2}} =
 \eta(r) r^{-\frac{3}{2}\alpha} (\E_{\beta,r}|K|)^{3/2}
\end{equation}
for every $0\leq \beta \leq \beta_c$ and $r>0$. (Here we have absorbed all implicit constants into the definition of $\eta(r)$.) We can now proceed in much the same way as in the proof of \cref{lem:first_moment_high_dimensional}: If we define $r_0(\beta)$ to be minimal such that $\E_{\beta,r}|K|\leq \frac{2\alpha}{\beta}r^\alpha$ then it follows by the same ODE analysis carried out in the proof of that lemma that there exists a universal constant $c>0$ such that if $r\leq r_0(\beta)$ then
\[
\Bigl[\eta(r) \left(\frac{r_0(\beta)}{r}\right)^{3\alpha/2} \leq c \Bigr]\Rightarrow \Bigl[\overline{\cE}_{1,s,\beta} \leq 2^{3/2} c \text{ for all $s\geq r$}\Bigr]\Rightarrow \Bigl[r_0(\beta) \leq r\Bigr].
\]
Setting $\psi(r)=(c\eta(r))^{-2/3\alpha} r$, it follows that $r_0(\beta)\leq \psi^{-1}(r_0(\beta))$ for every $\beta<\beta_c$. Since $\eta$ is continuous, decreasing, and converges to zero, $\psi^{-1}(r)/r$ is also continuous, decreasing, and converges to zero, so that $r_0(\beta)$ is bounded by the supremal solution to $\psi^{-1}(r)=r$. Since this quantity does not depend on $\beta$, it follows that $\E_{\beta_c,r}|K|\leq \frac{2\alpha}{\beta_c} r^\alpha$ for all sufficiently large $r$, and hence by \eqref{eq:critical_dim_E1rbeta} that $\overline{\cE}_{1,r}\to 0$ as $r\to \infty$.
\end{proof}

It remains to treat the first moment under the triangle condition.

\begin{lemma}
\label{lem:E1_triangle}
The estimate $\cE_{1,r} \leq |B_r|^{-1} \sum_{x\in B_r} \nabla(0,x)$ holds for every $r>0$. 
\end{lemma}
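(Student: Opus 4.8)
The plan is to realize $\cE_{1,r}$ as an average over $y\in B_r$ of a ``decoupling defect'' and to bound each defect by a triangle diagram using a cluster‑restriction (screening) argument; this sharpens the crude bound of \cref{lem:E1_1}. Writing $\chi=\E_{\beta_c,r}|K|$ and $g_r(x)=\P_{\beta_c,r}(0\leftrightarrow x)$, the definition of $\cE_{1,r}$ gives directly
\[
\cE_{1,r}=\frac{1}{|B_r|\,\chi^2}\sum_{y\in B_r}\delta_y,\qquad \delta_y:=\chi^2-\E_{\beta_c,r}\bigl[|K_0||K_y|\,\mathbbm{1}(0\nleftrightarrow y)\bigr],
\]
where each $\delta_y\ge0$ by \cref{lem:BK_disjoint_clusters_covariance}. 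So it suffices to prove $\delta_y\le\chi^2\,\nabla_{\beta_c}(0,y)$ for every $y\in\Z^d$.

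The crucial input is a restriction estimate obtained by conditioning on $K_0$. If $S$ is a possible value of $K_0$ (so $0\in S$) and $y\notin S$, then conditionally on $\{K_0=S\}$ the configuration restricted to edges inside $S^c$ has the law of Bernoulli percolation on the induced weighted graph $G_r[S^c]$ (all $S$–$S^c$ edges being closed), and $K_y$ uses only such edges, so $\E_{\beta_c,r}[\,|K_y|\mid K_0=S\,]=\E^{G_r[S^c]}_{\beta_c}|K_y|$. Comparing cluster‑volume expectations term by term and applying the BK inequality to the event that a $y$–$z$ connection in $G_r$ is forced through $S$ (decompose such a path at its first vertex $w\in S$ into edge‑disjoint pieces), one gets, with the convention $\P^{G_r[S^c]}_{\beta_c}(y\leftrightarrow z):=0$ for $z\in S$ (for which the bound is the $w=z$ term),
\[
\chi-\E^{G_r[S^c]}_{\beta_c}|K_y|=\sum_{z\in\Z^d}\Bigl(\P^{G_r}_{\beta_c}(y\leftrightarrow z)-\P^{G_r[S^c]}_{\beta_c}(y\leftrightarrow z)\Bigr)\le\chi\sum_{w\in S}g_r(y-w).
\]

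Since $|K_0|\mathbbm{1}(y\notin K_0)\ge0$, substituting this into $\E_{\beta_c,r}[|K_0||K_y|\mathbbm{1}(0\nleftrightarrow y)]=\E_{\beta_c,r}[\,|K_0|\mathbbm{1}(y\notin K_0)\,\E^{G_r[K_0^c]}_{\beta_c}|K_y|\,]$ and using $\E_{\beta_c,r}[|K_0|\mathbbm{1}(y\notin K_0)]=\chi-\E_{\beta_c,r}[|K|\mathbbm{1}(0\leftrightarrow y)]$ yields
\[
\delta_y\le\chi\,\E_{\beta_c,r}\bigl[|K_0|\mathbbm{1}(0\leftrightarrow y)\bigr]+\chi\,\E_{\beta_c,r}\Bigl[|K_0|\mathbbm{1}(y\notin K_0)\sum_{w\in K_0}g_r(y-w)\Bigr].
\]
The observation that keeps the constant equal to $1$ (rather than $2$) is that on $\{0\leftrightarrow y\}$ one has $\sum_{w\in K_0}g_r(y-w)\ge g_r(0)=1$, so the first term is itself at most $\chi\,\E_{\beta_c,r}[|K_0|\mathbbm{1}(0\leftrightarrow y)\sum_{w\in K_0}g_r(y-w)]$; adding the two terms and using $\mathbbm{1}(0\leftrightarrow y)+\mathbbm{1}(y\notin K_0)=1$ collapses everything to $\delta_y\le\chi\,\E_{\beta_c,r}[|K_0|\sum_{w\in K_0}g_r(y-w)]$. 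Finally $\E_{\beta_c,r}[|K_0|\sum_{w\in K_0}g_r(y-w)]=\sum_{w}g_r(y-w)\,\E_{\beta_c,r}[|K|\mathbbm{1}(0\leftrightarrow w)]$, and since $\{0\leftrightarrow u,\,0\leftrightarrow w\}=\{0\leftrightarrow u\leftrightarrow w\}$ the tree‑graph inequality \eqref{eq:tree_graph_3point} gives $\E_{\beta_c,r}[|K|\mathbbm{1}(0\leftrightarrow w)]=\sum_u\P_{\beta_c,r}(0\leftrightarrow u\leftrightarrow w)\le\chi\sum_z g_r(z)g_r(z-w)$; summing over $w$ and invoking the monotone coupling $g_r(x)=\P_{\beta_c,r}(0\leftrightarrow x)\le\P_{\beta_c}(0\leftrightarrow x)$ gives $\delta_y\le\chi^2\sum_{z,w}\P_{\beta_c}(0\leftrightarrow z)\P_{\beta_c}(z\leftrightarrow w)\P_{\beta_c}(w\leftrightarrow y)=\chi^2\nabla_{\beta_c}(0,y)$. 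Dividing by $|B_r|\chi^2$ and summing over $y\in B_r$ then completes the proof. I expect the main obstacle to be the screening estimate in the displayed inequality above — setting up the conditioning on $K_0$ correctly and extracting the clean BK bound on the ``lost'' connections — together with the small but essential absorption step that avoids a loss of constant.
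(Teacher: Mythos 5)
Your proof is correct and is essentially the same argument as the paper's: the paper also reduces to showing $\delta_y\le\chi^2\nabla(0,y)$ via the Aizenman--Newman screening mechanism (off $K_0$ the percolation is fresh, lost $y$--$z$ connections must pass through $K_0$, cut at the first such vertex and apply BK, then the tree-graph inequality \eqref{eq:tree_graph_3point} produces the triangle), packaged as the pointwise \cref{lem:disjoint_connections_triangle} proved with a two-configuration resampling coupling rather than by conditioning on $K_0=S$. Your explicit absorption of the $\{y\in K_0\}$ term via $\sum_{w\in K_0}g_r(y-w)\ge g_r(0)=1$ plays the same role as the paper's handling of that contribution inside the coupling identity, so the two write-ups differ only organizationally.
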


We begin by proving the following lemma, which is very similar to the estimates used to derive differential inequalities on the susceptibility from the triangle condition in \cite{MR762034}.

\begin{lemma}
\label{lem:disjoint_connections_triangle} The estimate
\begin{multline*}
\P_{\beta_c,r}(0\leftrightarrow x)\P_{\beta_c,r}(y \leftrightarrow z) - \P_{\beta_c,r}(0\leftrightarrow x \nleftrightarrow y \leftrightarrow z) 
\\  \leq \sum_{a,b\in \Z^d} \P_{\beta_c,r}(0\leftrightarrow a)\P_{\beta_c,r}(a\leftrightarrow x)\P_{\beta_c,r}(a\leftrightarrow b)\P_{\beta_c,r}(y\leftrightarrow b)\P_{\beta_c,r}(b\leftrightarrow z)
\end{multline*}
holds for every $x,y,z\in \Z^d$ and every $\beta,r\geq 0$.
\end{lemma}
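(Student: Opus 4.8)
The plan is to run a cluster-conditioning argument in the spirit of Aizenman and Newman \cite{MR762034} and then close with the three-point tree-graph inequality \eqref{eq:tree_graph_3point}. To lighten notation I will suppress subscripts, writing $\P$ for $\P_{\beta_c,r}$ and $K=K_0$ for the cluster of the origin, and I let $\Delta$ denote the left-hand side of the claimed inequality. For a finite set $C\subseteq\Z^d$, let $F_C$ be the event that $y$ and $z$ are joined by an open path none of whose vertices lies in $C$; note $F_C\subseteq\{y\leftrightarrow z\}$, that $F_C$ depends only on the edges with no endpoint in $C$, and that $\P(F_C)=0$ whenever $\{y,z\}\cap C\neq\emptyset$. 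First I would decompose $\Delta$ according to the value of $K$. The event $\{K=C\}$ is determined by the edges incident to $C$ (the boundary of $C$ being closed and the interior connecting $C$ up), so it is independent of $F_C$; moreover, on $\{K=C\}$ with $y\notin C$ the closed boundary of $C$ forces every open path issuing from $y$ to avoid $C$, so $\P(y\leftrightarrow z\mid K=C)=\P(F_C)$. Summing over $C$ then gives
\begin{equation*}
\Delta=\sum_{C\ni 0,x}\P(K=C)\Bigl[\P(y\leftrightarrow z)-\P(F_C)\Bigr],
\end{equation*}
where the sum runs over finite $C$ containing both $0$ and $x$ and each summand is manifestly non-negative (consistent with \cref{lem:disjoint_connections}).

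Next I would bound the bracket $g(C):=\P(y\leftrightarrow z)-\P(F_C)$, which is the probability that $y\leftrightarrow z$ holds but no open $y$--$z$ path avoids every vertex of $C$. On that event, picking a simple open path $\gamma$ from $y$ to $z$ and a vertex $c\in C$ lying on $\gamma$, cutting $\gamma$ at $c$ exhibits edge-disjoint witnesses for $\{y\leftrightarrow c\}$ and $\{c\leftrightarrow z\}$; hence the event is contained in $\bigcup_{c\in C}\bigl(\{y\leftrightarrow c\}\circ\{c\leftrightarrow z\}\bigr)$, and the BK inequality together with a union bound yields
\begin{equation*}
g(C)\le\sum_{c\in C}\P(y\leftrightarrow c)\,\P(c\leftrightarrow z),
\end{equation*}
a bound that also holds trivially when $\{y,z\}\cap C\neq\emptyset$ (take $c\in\{y,z\}\cap C$ and use $\P(y\leftrightarrow y)=1$). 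It is worth recording why one cannot simply apply BK five times at the outset: the event $\{0\leftrightarrow x\leftrightarrow y\leftrightarrow z\}$ need not admit five pairwise edge-disjoint connections of the required shape (for instance on a configuration consisting of a single open path visiting $0,y,x,z$ in that order), so peeling off the cluster $K$ first is essential.

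Substituting the bound on $g(C)$, interchanging summations, and re-assembling $\sum_{C\ni 0,x,c}\P(K=C)=\P(0\leftrightarrow x,\,0\leftrightarrow c)=\P(0\leftrightarrow x\leftrightarrow c)$ gives
\begin{equation*}
\Delta\le\sum_{c\in\Z^d}\P(y\leftrightarrow c)\,\P(c\leftrightarrow z)\,\P(0\leftrightarrow x\leftrightarrow c).
\end{equation*}
Finally I would apply the three-point tree-graph inequality \eqref{eq:tree_graph_3point} in the form $\P(0\leftrightarrow x\leftrightarrow c)\le\sum_a\P(0\leftrightarrow a)\P(a\leftrightarrow x)\P(a\leftrightarrow c)$ and rename the dummy vertex $c$ as $b$; this produces exactly $\sum_{a,b}\P(0\leftrightarrow a)\P(a\leftrightarrow x)\P(a\leftrightarrow b)\P(y\leftrightarrow b)\P(b\leftrightarrow z)$, as required.

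As for obstacles: no step here is genuinely hard — indeed the whole argument goes through verbatim for Bernoulli bond percolation on any weighted graph at any $\beta\ge0$, which is precisely the sense in which it resembles the classical derivations of susceptibility differential inequalities from the triangle condition in \cite{MR762034}. The only routine care needed is that the cut-off model is percolation on a locally finite weighted graph, so the cluster decomposition is legitimate and BK applies to the increasing (possibly unbounded-range) connection events via the standard limiting procedure of \cref{subsec:correlation_inequalities_and_the_universal_tightness_theorem}. The real content of the proof is the first two displays: once one sees that the excess probability $\Delta$ is supported on configurations where a $y$--$z$ path is forced to pass through the cluster of the origin, the tree-graph inequality does the rest.
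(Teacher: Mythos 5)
Your proof is correct and follows essentially the same route as the paper's: both isolate the excess probability as the event that a $y$--$z$ connection is forced through the cluster of the origin, split that connection at a vertex of $K_0$ via BK, and finish with the three-point tree-graph inequality. The only difference is cosmetic — you condition directly on $\{K=C\}$ and use independence of $F_C$ from the edges incident to $C$, whereas the paper implements the same cluster-conditioning through an explicit resampling coupling $\tilde\omega$ of two independent configurations.
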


(Note that this inequality is slightly stronger than that obtained by applying the tree-graph inequality to bound the right hand side of \cref{lem:disjoint_connections}.)

\begin{proof}[Proof of \cref{lem:disjoint_connections_triangle}]

Let $\omega$ and $\omega'$ be independent percolation configurations, and let $\tilde \omega$ be defined by setting $\tilde \omega(e) = \omega(e)$ for every edge $e$ touching the cluster $K_0$ of $0$ in $\omega$ and $\tilde \omega(e)=\omega'(e)$ for every other edge $e$, so that $\tilde \omega$ has the same law as $\omega$. If we let $K_y$ and $\tilde K_y$ denote the clusters of $y$ in $\omega$ and $\tilde \omega$ respectively then, on the event that $y\notin K_0$, we have that $K_y \subseteq \tilde K_y$ and that a point $z\in \tilde K_y$ belongs to $K_y$ if and only if there is an open path connecting $y$ to $z$ in $\tilde K_y$ that does not visit $K_0$. Thus, if $z \in \tilde K_y \setminus K_y$ then there must exist $b\in K_0$ such that $b$ is connected to $y$ and $z$ by disjoint paths in $\omega'$. Writing $\P$ for the joint law of the random variables we have just introduced, it follows by a union bound and the BK inequality that
\begin{align*}
\P(x \in K_0, z \in \tilde K_y \setminus K_y) &\leq \sum_{b\in\Z^d} \P(x,b \in K_0 \text{ and $b$ lies on a simple path connecting $0$ to $z$ in $\tilde K_y$}) \\&\leq \sum_{b\in \Z^d} \P_{\beta,r}(0\leftrightarrow x, 0 \leftrightarrow b)\P_{\beta,r}(y \leftrightarrow b) \P_{\beta,r}(b \leftrightarrow z) 
\\&\leq 
\sum_{a,b\in \Z^d} \P_{\beta,r}(0\leftrightarrow a)\P_{\beta,r}(a\leftrightarrow x)\P_{\beta,r}(a\leftrightarrow b)\P_{\beta,r}(y\leftrightarrow b)\P_{\beta,r}(b\leftrightarrow z),
\end{align*}
where the final estimate follows from the tree-graph inequality. The claim follows since
\begin{multline*}
\P_{\beta_c,r}(0\leftrightarrow x)\P_{\beta_c,r}(y \leftrightarrow z) - \P_{\beta_c,r}(0\leftrightarrow x \nleftrightarrow y \leftrightarrow z)
\\=
\P(x\in K_0, z\in \tilde K_y) - \P(x\in K_0, y\notin K_0, z\in K_y) = \P(x \in K_0, z \in \tilde K_y \setminus K_y)
\end{multline*}
by construction of the coupling.
\end{proof}

\begin{proof}[Proof of \cref{lem:E1_triangle}]
We can write
\begin{multline*}
|B_r|(\E_{\beta_c,r}|K|)^2 - \E_{\beta_c,r} \left[|K| \sum_{y\in B_r} \mathbbm{1}(y\notin K) |K_y|\right] 
\\= 
\sum_{y\in B_r} \sum_{x,z\in \Z^d} \left[\P_{\beta_c,r}(0\leftrightarrow x) \P_{\beta_c,r}(y\leftrightarrow z) - \P_{\beta_c,r}(0\leftrightarrow x \nleftrightarrow y\leftrightarrow z)\right],
\end{multline*}
and applying \cref{lem:disjoint_connections_triangle} we obtain that
\[
|B_r|(\E_{\beta_c,r}|K|)^2 - \E_{\beta_c,r} \left[|K| \sum_{y\in B_r} \mathbbm{1}(y\notin K) |K_y|\right]  \leq \sum_{y\in B_r}\nabla(0,y)(\E_{\beta_c,r}|K|)^2,
\]
which implies the claimed bound on $\cE_{1,r}$.
\end{proof}

The proof of \cref{prop:first_moment} will also make use of the following lemma on the preservation of logarithmic integrability under certain integral transformations.

\begin{lemma}
\label{lem:log_integrability_integration}
If $h:(0,\infty)\to \R$ is a logarithmically integrable error function then so are the functions $r^a \int_r^\infty s^{-a-1} h(s) \dif s$ and $r^{-a}\int_1^r s^{a-1} h(s)\dif s$ for every $a> 0$.
\end{lemma}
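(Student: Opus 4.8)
The claim is purely about functions on $(0,\infty)$, so the plan is to substitute $t=\log s$ (and $u=\log r$) to turn both integral transforms into convolutions on the half-line, where logarithmic integrability becomes ordinary $L^1$ integrability. Concretely, write $H(t)=h(e^t)$, so that by hypothesis there is $t_0$ with $H\in L^1([t_0,\infty))$ and $H(t)\to 0$ as $t\to\infty$. For the first transform, set $\Phi_1(u) = e^{au}\int_u^\infty e^{-a t} H(t)\,\mathrm{d}t$, which after writing $v=t-u$ becomes $\Phi_1(u)=\int_0^\infty e^{-av} H(u+v)\,\mathrm{d}v$, i.e.\ the convolution (on the half-line) of $H$ with the $L^1$ kernel $e^{-av}\mathbbm{1}(v\ge 0)$. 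Similarly the second transform $r^{-a}\int_1^r s^{a-1}h(s)\,\mathrm{d}s$ becomes, after the substitution, $\Phi_2(u)=\int_0^{u}e^{-av}H(u-v)\,\mathrm{d}v$, again a convolution of $H$ with an $L^1$ kernel supported on $[0,\infty)$ (one should be slightly careful that the lower limit $1$ in $r^{-a}\int_1^r$ only affects a fixed bounded region, which contributes an $O(r^{-a})$ term that is itself logarithmically integrable and tends to $0$). In both cases the key fact needed is that $L^1(\R)$ is closed under convolution with $L^1$ kernels (Young's inequality), which gives $\Phi_i\in L^1$ near infinity; and that convolution of an $L^1$ function with a bounded function that vanishes at infinity against a kernel of finite mass yields a function vanishing at infinity (split the kernel's mass into a large-$v$ tail, controlled by $\|H\|_\infty$, and a bounded-$v$ part, controlled using $H(u+v)\to 0$ uniformly for $v$ in a compact set as $u\to\infty$).

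So the steps, in order, are: (1) reduce to the half-line via $t=\log s$, reformulating both conclusions as statements about convolutions; (2) prove the $L^1$ bound $\int_{u_0}^\infty |\Phi_i(u)|\,\mathrm{d}u<\infty$ by Fubini/Tonelli, bounding $\int |\Phi_i| \le \big(\int_0^\infty e^{-av}\,\mathrm{d}v\big)\big(\int |H|\big) = a^{-1}\|H\|_{L^1}<\infty$; (3) prove $\Phi_i(u)\to 0$ as $u\to\infty$ by the split described above — given $\varepsilon>0$, choose $V$ with $\int_V^\infty e^{-av}\,\mathrm{d}v \cdot \|H\|_\infty<\varepsilon$, then choose $u$ large enough that $\sup_{0\le v\le V}|H(u\pm v)|<\varepsilon a$; (4) deal with the bounded-region boundary term in the second transform separately, noting $r^{-a}\int_1^{r_0} s^{a-1}h(s)\,\mathrm{d}s$ is $O(r^{-a})$ hence a logarithmically integrable error function, and absorb it.

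I do not expect any genuine obstacle here: this is a routine ``change variables and recognise a convolution'' argument, and the only things to be mildly careful about are the fixed lower limits of integration (which produce harmless $O(r^{-a})$ terms) and making sure the vanishing-at-infinity conclusion uses both $H\to 0$ and the finite mass of the exponential kernel. If one wants to avoid the language of convolutions entirely, one can instead argue directly: write $|\Phi_1(u)| \le \int_u^{u+V}e^{-a(t-u)}|H(t)|\,\mathrm{d}t + e^{-aV}\|H\|_\infty$ and integrate in $u$, swapping the order of integration in the first term to bound it by $\int_{u_0}^\infty |H(t)|\big(\int_{\max(u_0,t-V)}^{t} e^{-a(t-u)}\,\mathrm{d}u\big)\mathrm{d}t \le a^{-1}\|H\|_{L^1}$, and similarly for $\Phi_2$; this is essentially the same computation unrolled. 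The lemma as stated asserts the transforms are logarithmically integrable \emph{error functions}, so the vanishing-at-infinity step (3) is not optional and should be stated explicitly.
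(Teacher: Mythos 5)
Your proposal is correct and is essentially the paper's proof in logarithmic coordinates: the Young/Fubini bound on the convolution with the kernel $e^{-av}\mathbbm{1}(v\geq 0)$ is exactly the Tonelli swap the paper performs directly in the variables $r,s$, yielding the same bound $a^{-1}\int|h(s)|\,\mathrm{d}s/s$. Your explicit treatment of the vanishing-at-infinity step (splitting the kernel's mass and using $h(r)\to 0$, which in particular makes $h$ bounded near infinity) fills in a detail the paper dismisses as easy, but it is the same argument.
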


(Note that if $h$ is slowly varying then both integrals are in fact asymptotic to multiples of $h$ by \cref{lem:regular_variation_integration}.)

\begin{proof}[Proof of \cref{lem:log_integrability_integration}]
If $h$ is logarithmically integrable then we have by Tonelli's theorem that
\[
\int_1^\infty \left[r^{a} \int_r^\infty |h(s)|s^{-a-1} \dif s\right] \frac{\dif r}{r} = \int_1^\infty |h(s)|s^{-a-1} \int_1^s r^{a-1} \dif r \dif s \leq \frac{1}{a} \int_1^\infty \frac{|h(s)|}{s} \dif s < \infty
\]
and similarly that
\[
\int_1^\infty \left[r^{-a} \int_1^r |h(s)|s^{a-1} \dif s\right] \frac{\dif r}{r} = \int_1^\infty |h(s)|s^{-a-1} \int_s^\infty r^{-a-1} \dif r \dif s = \frac{1}{a} \int_1^\infty \frac{|h(s)|}{s} \dif s < \infty
\]
as required. The fact that these functions also tend to zero as $r\to \infty$ follows easily from the assumption that $h(r)\to 0$ as $r\to \infty$.
\end{proof}

\begin{proof}[Proof of \cref{prop:first_moment}]
We have shown that $\cE_{1,r}$ tends to zero in all the claimed cases, with $\cE_{1,r}$ logarithmically integrable when $d>3\alpha$ or when $r^{-d}\sum_{x\in B_r}\nabla(0,x)$ is logarithmically integrable. Moreover, $r^{-d}\sum_{x\in B_r}\nabla(0,x)$ is logarithmically integrable whenever $\nabla_r$ is by a simple variation on \cref{lem:log_integrability_integration}. The analogous claims for $\overline{\cE}_{1,r}$ hold since $\cE_{0,r}$ is logarithmically integrable by our standing assumptions on $J$ and \cref{lem:ball_regularity}. Thus, the claimed first-order asymptotics follow from \eqref{eq:first_moment_E1_formula}, with the resulting error $\cH_r$ being logarithmically integrable whenever $\cE_{1,r}$ is logarithmically integrable by \cref{lem:log_integrability_integration}.
\end{proof}

It remains to prove \cref{lem:log_integrable_triangle}, which shows that \cref{prop:first_moment} applies under \eqref{HD}. 

\begin{proof}[Proof of \cref{lem:log_integrable_triangle}]
We can write $\nabla$ in terms of the function $\tau(x)=\P_{\beta_c}(0\leftrightarrow x)$ as $\nabla(0,x)=\tau*\tau*\tau(x)$. It is a standard fact (see \cite[Proposition 1.7]{MR1959796}) that if $|f(x)|\preceq (1+\|x\|)^{-d+a}$ and $|g(x)| \preceq (1+\|x\|)^{-d+b}$ with $a,b>0$ and $a+b<d$ then $|f*g(x)| \preceq (1+\|x\|)^{-d+a+b}$, and applying this twice yields in Case 2 that
\begin{equation}
  \nabla(0,x) \preceq \|x\|^{-d+6},
\label{eq:triangle_case_2}
\end{equation}
which is sufficient for logarithmic integrability of $\nabla_r$ in this case since $d>6$. For Case 3 we will need an analogous fact allowing for logarithmic corrections:
If $|f(x)|\preceq (1+\|x\|)^{-d+a} (\log (2+\|x\|))^{-\hat a}$ and $|g(x)| \preceq (1+\|x\|)^{-d+b} (\log (2+\|x\|))^{-\hat b}$ with $a,b>0$, $\hat a,\hat b \geq 0$, are such that either $a+b<d$ or $a+b=d$ and $\hat a + \hat b>1$ then
\begin{equation}
\label{eq:convolution_estimate_logs}
  |f*g(x)| \preceq_{a,b,\hat a,\hat b} \begin{cases} (1+\|x\|)^{-d+a+b} (\log (2+\|x\|))^{-\hat a -\hat b} & a+b <d \\
  (\log (2+\|x\|))^{-\hat a -\hat b+1} & a+b=d.
  \end{cases}
\end{equation}
To prove this estimate, we decompose $\Z^d$ into three sets: $A_1=A_1(x)=\{y\in \Z^d : \|y\| \leq \|x-y\| \text{ and } \|y\|\leq 2 \|x\| \}$,  $A_2=A_2(x)=\{y\in \Z^d : \|y-x\| < \|x\| \text{ and } \|y\|\leq 2 \|x\| \}$, and $A_3 =\{y\in \Z^d:\|y\|\geq 2\|x\|\}$. Writing $\langle x\rangle=2+\lceil \|x\|\rceil$ to avoid division by zero and writing the contribution of each of these sets to the convolution as a sum over the boundaries of boxes, we obtain that 
\begin{align*}
  |f*g(x)| &\preceq_{a,b,\hat a,\hat b} \langle x\rangle^{-d+b}(\log \langle x\rangle)^{-\hat b}\sum_{r=2}^{\langle x\rangle} r^{d-1}r^{-d+a} (\log r)^{-\hat a}
  \\&\hspace{1cm}+\langle x\rangle^{-d+a}(\log \langle x\rangle)^{-\hat a}\sum_{r=2}^{\langle x\rangle} r^{d-1}r^{-d+b} (\log r)^{-\hat b}
  +\sum_{r=\langle x\rangle}^{\infty} r^{d-1} r^{-d+a} r^{-d+b} (\log r)^{-\hat a -\hat b}
  \\
  &\asymp_{a,b,\hat a,\hat b} \langle x \rangle^{-d+a+b} (\log \langle x \rangle)^{-\hat a -\hat b} + \sum_{r=\langle x\rangle}^{\infty} r^{-d+a+b-1} (\log r)^{-\hat a -\hat b},
\end{align*}
which is easily seen to imply the claimed inequality \eqref{eq:convolution_estimate_logs}. Under Case 3 of \eqref{HD} we may apply this estimate twice to obtain that
\begin{equation}
\label{eq:triangle_case_3}
  \tau*\tau(x) \preceq \frac{\|x\|^{-d+4}}{(\log(1+\|x\|))^{2}} \qquad \text{ and } \qquad \nabla(0,x) \preceq \frac{1}{(\log(1+ \|x\|)^2}
\end{equation}
for every $x\in \Z^d\setminus \{0\}$.
\end{proof}

Before moving on, let us note that the consequences of \cref{prop:first_moment} for the hydrodynamic condition.

\begin{corollary}[The hydrodynamic condition]
\label{cor:HD_hydro}
If at least one of the three hypotheses of \eqref{HD} holds then \eqref{Hydro} also holds.
\end{corollary}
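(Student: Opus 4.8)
The plan is to establish \eqref{Hydro} by bounding $M_r=M_{\beta_c,r}$ from above in terms of cluster moments via \cref{lem:moments_bounded_below_by_M} and then controlling those moments. That lemma gives, for every $p\ge 1$,
\[
(M_r-1)^{p+1}\;\le\;e\,|B_r|\,\E_{\beta_c,r}[|K\cap B_{2r}|^p]\;\preceq\;r^{d}\,\E_{\beta_c,r}[|K\cap B_{2r}|^p],
\]
using $|B_r|\asymp r^d$ from \cref{lem:ball_regularity}. I would use two different moment bounds according to whether $d>3\alpha$ or $d\le 3\alpha$. In all cases I would first note that \cref{prop:first_moment} applies — directly in Case~1 of \eqref{HD}, and in Cases~2 and~3 via the triangle condition, which holds there because the proof of \cref{lem:log_integrable_triangle} yields the pointwise bounds \eqref{eq:triangle_case_2} and \eqref{eq:triangle_case_3} on $\nabla(0,x)$ and hence $\nabla(0,0)<\infty$ — so that $\E_{\beta_c,r}|K|\sim\frac{\alpha}{\beta_c}r^\alpha$ throughout. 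I would also use freely the stochastic domination $\P_{\beta_c,r}\preceq\P_{\beta_c}$, so that $\E_{\beta_c,r}|K\cap B_{2r}|\le\sum_{x\in B_{2r}}\P_{\beta_c}(0\leftrightarrow x)$.

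When $d>3\alpha$ (in particular Case~1 of \eqref{HD}), I would take $p=2$ and use the tree-graph inequality \eqref{eq:tree_graph_pth_moment} to bound $\E_{\beta_c,r}|K\cap B_{2r}|^2\le\E_{\beta_c,r}|K|^2\le(\E_{\beta_c,r}|K|)^3\preceq r^{3\alpha}$, so that $M_r\preceq r^{(d+3\alpha)/3}$. Since $\frac{d+3\alpha}{3}<\frac{d+\alpha}{2}$ is equivalent to $3\alpha<d$, this gives $M_r=o(r^{(d+\alpha)/2})$. This step needs no information about the two-point function beyond the first-moment asymptotics of \cref{prop:first_moment}.

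When $d\le 3\alpha$ we are not in Case~1, so Case~2 or Case~3 of \eqref{HD} holds; here the tree-graph bound only recovers the trivial estimate $M_r=O(r^{(d+\alpha)/2})$ of \cref{cor:M_upper_bound_general}, and I would instead take $p=1$ and feed in the assumed pointwise two-point estimates. In Case~2, $\P_{\beta_c}(0\leftrightarrow x)\preceq\|x\|^{-d+2}$ gives $\sum_{x\in B_{2r}}\P_{\beta_c}(0\leftrightarrow x)\preceq r^2$ (summing $\|x\|^{-d+2}$ over the ball), hence $M_r\preceq r^{(d+2)/2}$; since $d\le 3\alpha$ together with $d>6$ force $\alpha\ge d/3>2$, this is $o(r^{(d+\alpha)/2})$. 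In Case~3 ($d=6=3\alpha$, $\alpha=2$) the logarithm in $\P_{\beta_c}(0\leftrightarrow x)\preceq(\log\|x\|)^{-1}\|x\|^{-4}$ sharpens the sum to $\sum_{x\in B_{2r}}\P_{\beta_c}(0\leftrightarrow x)\preceq r^2/\log r$, hence $M_r\preceq r^4(\log r)^{-1/2}$; since $(d+\alpha)/2=4$, this too is $o(r^{(d+\alpha)/2})$.

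There is no serious obstacle once these pieces are assembled. The two points that deserve care are: making sure \cref{prop:first_moment} (equivalently the triangle condition) is genuinely available in Cases~2 and~3 even when they overlap the region $d>3\alpha$, which I would handle by invoking the convolution estimates already established in the proof of \cref{lem:log_integrable_triangle} rather than re-deriving them; and the exponent bookkeeping along the marginal slice $d=3\alpha$, where the crude tree-graph bound degenerates to the trivial estimate, so that the finer two-point input — including the logarithmic gain in the doubly-marginal case $d=6$, $\alpha=2$ — is genuinely needed to obtain a strict little-$o$. In the $d>3\alpha$ case one could alternatively invoke \cref{lem:first_moment_high_dimensional} directly in place of \cref{prop:first_moment}.
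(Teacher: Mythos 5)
Your proposal is correct, and the half dealing with Cases 2 and 3 of \eqref{HD} is exactly the paper's argument: \cref{lem:moments_bounded_below_by_M} with $p=1$ gives $M_r^2\preceq r^d\sum_{x\in B_{2r}}\P_{\beta_c}(0\leftrightarrow x)$, and the assumed pointwise two-point bounds (with the logarithmic gain in the doubly-marginal case) then yield $M_r=o(r^{(d+\alpha)/2})$; as you note, no first-moment asymptotics are actually needed there. The only divergence is in the regime $d>3\alpha$. The paper does not use \cref{lem:moments_bounded_below_by_M} at all in that case: it instead invokes the exponential volume-tail bound $\P_{\beta_c,r}(|K|\geq n)\leq \sqrt{2}\,n^{-1}\E_{\beta_c,r}|K|\exp[-n/4(\E_{\beta_c,r}|K|)^2]$ (tree-graph inequality as in Grimmett, Eq.~6.99) together with a union bound over $B_r$ to get $M_r\preceq(\E_{\beta_c,r}|K|)^2\log r\preceq r^{2\alpha}\log r$, and then uses $2\alpha<(d+\alpha)/2$. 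Your route — \cref{lem:moments_bounded_below_by_M} with $p=2$ plus the tree-graph bound $\E_{\beta_c,r}|K|^2\preceq(\E_{\beta_c,r}|K|)^3$ — gives the weaker estimate $M_r\preceq r^{(d+3\alpha)/3}$, but since $(d+3\alpha)/3<(d+\alpha)/2$ is again exactly the condition $d>3\alpha$, it suffices for the corollary and is, if anything, slightly more elementary (no exponential tail bound or union bound needed). What the paper's version buys is the near-optimal order $r^{2\alpha}$ (up to the logarithm) for $M_r$ in high effective dimensions, which is a cleaner quantitative statement even though the corollary does not require it. Your exponent bookkeeping in all three cases, including the deduction $\alpha\geq d/3>2$ when Case~2 holds with $d\leq 3\alpha$, checks out.
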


\begin{proof}[Proof of \cref{cor:HD_hydro}]
It follows from \cref{prop:first_moment} and the tree-graph inequality applied as in \cite[Eq. 6.99]{grimmett2010percolation} that
\[
  \P_{\beta_c,r}(|K|\geq n) \leq \frac{\sqrt{2}\E_{\beta_c,r}|K|}{n} \exp\left[-\frac{n}{4(\E_{\beta_c,r}|K|)^2}\right]
\]
for every $n\geq 1$. This implies by a union bound that
\[
  M_r \preceq (\E_{\beta_c,r}|K|)^2 \log r
\]
for every $r\geq 2$, 
which is sufficient to establish the hydrodynamic condition when $d>3\alpha$ by \cref{prop:first_moment} (since $2\alpha<(d+\alpha)/2$ when $d>3\alpha$).
On the other hand, we have by \cref{lem:moments_bounded_below_by_M} that
\[
M_r^2 \preceq r^{-d} \E_{\beta_c,r}|K \cap B_{2r}| \leq r^{-d} \sum_{x\in B_{2r}} \P_{\beta_c}(0\leftrightarrow x),
\]
which gives that $M_r=o(r^{(d+\alpha)/2})$ when hypotheses 2 or 3 of \eqref{HD} hold.
\end{proof}

\subsection{Second moment}

We now analyze the second moment $\E_{\beta_c,r}|K|^2$ under the same assumptions as \cref{prop:first_moment}. For this quantity, the different cases treated by that proposition can have different first-order asymptotics. Indeed, we will eventually prove in \cref{III-thm:critical_dim_moments_main} that this quantity has a logarithmic correction to $r^{3\alpha}$ scaling when $d<6$ and $\alpha=d/3$. 

\begin{prop}[Second moment asymptotics]
\label{prop:second_moment}
If any of the three conditions from \cref{prop:first_moment} hold, then $r\mapsto \E_{\beta_c,r}|K|^2$ is a regularly varying function of index $3\alpha$. If moreover $d>3\alpha$ or the triangle condition holds and $\nabla_r$ is logarithmically integrable then there exists a constant $A$ such that $\E_{\beta_c,r}|K|^2\sim Ar^{3\alpha}$. 
\end{prop}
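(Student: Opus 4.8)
The plan is to mirror the first-moment analysis of \cref{prop:first_moment}, using \cref{lem:moment_derivative} with $p=2$. Writing out that formula gives
\[
\frac{d}{dr}\E_{\beta_c,r}|K|^2 = \beta_c|J'(r)|\left(\E_{\beta_c,r}\Big[|K|^2\sum_{y\in B_r}\mathbbm 1(y\notin K)|K_y|\Big] + 2\,\E_{\beta_c,r}\Big[|K|\sum_{y\in B_r}\mathbbm 1(y\notin K)|K_y|^2\Big]\right).
\]
Using \cref{lem:BK_disjoint_clusters_covariance} to replace the disjoint-cluster expectations by products $|B_r|\E_{\beta_c,r}|K|^2\,\E_{\beta_c,r}|K|$ up to errors controlled by $\E_{\beta_c,r}[|K|\,|K\cap B_r|\,|K_\star|]$-type terms (where $\star$ denotes the uniform point of $B_r$), together with the normalization $|J'(r)||B_r|\sim r^{-\alpha-1}$ from \cref{lem:ball_regularity} and \eqref{eq:normalization_conventions}, I would obtain a driven ODE of the schematic form
\[
\frac{d}{dr}\E_{\beta_c,r}|K|^2 = (1-\overline{\cE}_{2,r})\,\cdot\, 3\beta_c r^{-\alpha-1}\,\E_{\beta_c,r}|K|^2\,\E_{\beta_c,r}|K|,
\]
where $\overline{\cE}_{2,r}$ is an error term. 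Plugging in the first-moment asymptotic $\E_{\beta_c,r}|K|\sim\frac{\alpha}{\beta_c}r^\alpha$ from \cref{prop:first_moment}, this reads $\frac{d}{dr}\E_{\beta_c,r}|K|^2\sim 3\alpha\, r^{-1}\,\E_{\beta_c,r}|K|^2$, which is exactly the hypothesis of \cref{lem:ODE_self_referential} with $a=3\alpha$: regular variation of index $3\alpha$ follows immediately once $\overline{\cE}_{2,r}\to 0$, and if the first-moment error $\cH_r$ and the new error $\overline{\cE}_{2,r}$ are both logarithmically integrable then the same lemma upgrades this to $\E_{\beta_c,r}|K|^2\sim A r^{3\alpha}$.

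The key step, therefore, is to show $\overline{\cE}_{2,r}\to 0$ under each of the three hypotheses, and logarithmic integrability under the stronger ones; I expect this to be the main obstacle. The extra error terms arising from the BK/FKG covariance bounds are diagrammatic quantities of the form $\E_{\beta_c,r}[|K|^a\,|K\cap B_r|^b]$ with $a+b=4$ (coming from the $|K|^2|K_y|$ term) or higher moments of $|K\cap B_r|$ (from the $|K||K_y|^2$ term). In the case $d>3\alpha$ I would bound these via the tree-graph inequality \eqref{eq:tree_graph_pth_moment}: e.g.\ $\E_{\beta_c,r}[|K|^3\,|K\cap B_r|]\preceq (\E_{\beta_c,r}|K|)^7\preceq r^{7\alpha}$, so the error is at most $O(r^{7\alpha}/(|B_r|(\E_{\beta_c,r}|K|^2)\E_{\beta_c,r}|K|))=O(r^{7\alpha-d}/(\E_{\beta_c,r}|K|^2))$; using the a priori regular-variation-of-index-$3\alpha$ bound (or a bootstrap exactly as in \cref{lem:first_moment_high_dimensional}) this is $O(r^{4\alpha-d})$, which is summable against $dr/r$ since $d>3\alpha$ gives $4\alpha-d<\alpha$... more carefully, one runs the same bootstrapping argument used for the first moment: assume $\E_{\beta_c,r}|K|^2\le C r^{3\alpha}$ at a large scale, deduce the error is small at all smaller scales, and close the loop. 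Under the triangle condition I would instead use \cref{lem:disjoint_connections_triangle}-style bounds (or \cref{cor:universal_tightness_moments} together with \cref{cor:M_upper_bound_general}) to express the errors in terms of $\nabla_r$; the logarithmic-integrability conclusion then follows from \cref{lem:log_integrability_integration} and \cref{lem:log_integrable_triangle} exactly as in the proof of \cref{prop:first_moment}. In the marginal case $d=3\alpha$ under \eqref{Hydro}, I would use Cauchy--Schwarz together with \cref{cor:universal_tightness_moments}, the spatial-average bound \cref{thm:two_point_spatial_average_upper}, and the defining property $M_r=o(r^{(d+\alpha)/2})$ of the hydrodynamic condition to control $\E_{\beta_c,r}[|K\cap B_r|^2]\preceq r^\alpha M_r$ and the remaining higher moments via the tree-graph inequality, mimicking \cref{lem:first_moment_critical_dim} almost verbatim. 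The one point requiring genuine care is that the $|K||K_y|^2$ term involves a \emph{third} power of $|K_y\cap B_r|$-type quantities, but these are again handled by \cref{cor:universal_tightness_moments} (trading powers of $|K\cap B_r|$ for powers of $M_r$) combined with the tree-graph bound on the remaining unrestricted volume moments, so no new idea beyond those already deployed for the first moment is needed.

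Finally, to conclude $\E_{\beta_c,r}|K|^2\sim A r^{3\alpha}$ I apply the second part of \cref{lem:ODE_self_referential}: we have $\frac{d}{dr}\log\E_{\beta_c,r}|K|^2 = 3\alpha(1-\widetilde h(r))r^{-1}$ where $\widetilde h$ collects the contributions of $\overline{\cE}_{2,r}$ and of $\cH_r$ (since $\E_{\beta_c,r}|K| = \frac{\alpha}{\beta_c}r^\alpha(1+\cH_r)$), and $\widetilde h$ is logarithmically integrable under the stated hypotheses by \cref{lem:log_integrability_integration}. Hence there exists $A>0$ with $\E_{\beta_c,r}|K|^2\sim A r^{3\alpha}$, as claimed. (It is worth noting that this value of $A$ is the same constant that appears in \cref{thm:hd_moments_main}; the identification of all higher moments proceeds by the analogous triangular induction on $p$ using \eqref{eq:pth_moment_derivative_simplified}, but that is beyond the present statement.)
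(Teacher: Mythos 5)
Your overall architecture is right and matches the paper's: the $p=2$ derivative formula, an error term $\overline{\cE}_{2,r}$, the first-moment asymptotics to reduce to $f'\sim 3\alpha r^{-1}f$, and \cref{lem:ODE_self_referential} to conclude. The gap is in how you bound the error, and it is not a technicality.

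In Case 1 your bound via the ordinary tree-graph inequality gives $\cE_{2,r}\preceq \E_{\beta_c,r}[|K|^4]/(|B_r|\,\E_{\beta_c,r}|K|\,\E_{\beta_c,r}|K|^2)\preceq r^{6\alpha-d}/\E_{\beta_c,r}|K|^2$, so to close you need the lower bound $\E_{\beta_c,r}|K|^2\succeq r^{3\alpha}$. Invoking "the a priori regular-variation-of-index-$3\alpha$ bound" is circular (that is the conclusion), and the bootstrap of \cref{lem:first_moment_high_dimensional} does not transfer: there the sign of the error ($\cE_{1,r}\geq 0$) meant that an assumed \emph{upper} bound at a large scale propagated downward, whereas here you need a \emph{lower} bound on $\E_{\beta_c,r}|K|^2$, and the ODE with $\cE_{2,r}\geq 0$ only gives upper bounds for free. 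Worse, in Case 2 ($d=3\alpha$, where the paper later shows $\E_{\beta_c,r}|K|^2\sim A_r r^{3\alpha}$ with $A_r\to 0$ like $(\log r)^{-1/2}$ when $d<6$) the lower bound $\E_{\beta_c,r}|K|^2\succeq r^{3\alpha}$ is actually \emph{false}, so no amount of bootstrapping rescues the crude tree-graph estimate there. The paper's resolution is a new ingredient you explicitly claim is unnecessary: the generalized tree-graph inequality (\cref{lem:generalized_tree_graph}), $\E_\beta|K|^p\leq(2p-3)!!(\E_\beta|K|^2)^k(\E_\beta|K|)^{2p-1-3k}$, used with $p=4$, $k=1$ in Case 1 and (after Cauchy--Schwarz) with $p=6$, $k=2$ in Case 2. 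Its whole point is that the powers of $\E_{\beta_c,r}|K|^2$ appearing in the numerator cancel those in the denominator of $\cE_{2,r}$, yielding $\cE_{2,r}\preceq(\E_{\beta_c,r}|K|)^3/|B_r|\preceq r^{3\alpha-d}$ in Case 1 and $\cE_{2,r}=o(1)$ in Case 2 with no lower bound on the second moment required. (Minor additional slip: $r^{4\alpha-d}$ is logarithmically integrable only when $d>4\alpha$, not $d>3\alpha$; your exponent bookkeeping dropped an $\E_{\beta_c,r}|K|$ from the denominator.)

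In Case 3 your sketch is closer to the paper, but note two things it needs that you elide: a higher-moment analogue of \cref{lem:disjoint_connections_triangle} (the paper's \cref{lem:disjoint_connections_triangle_second}) together with control of the resulting higher polygon diagrams $T^\ell(0,0)$ (\cref{lem:higher_polygon_diagrams}), and, once again, the lower bound $\E_{\beta_c,r}|K|^2\succeq(\E_{\beta_c,r}|K|)^3$ — which in this case the paper does prove, but only by importing the volume-tail bound $\P_{\beta_c}(|K|\geq n)\preceq n^{-1/2}$ that follows from the triangle condition. So even where a lower bound on the second moment is available, it requires an input your proposal does not identify.
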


The proof of \cref{prop:second_moment} will rely on \cref{lem:ODE_self_referential}, which we recall stated that functions satisfying $f'\sim a r^{-1} f$ must be regularly varying of index $a$ and moreover that if the relevant (multiplicative) error is logarithmically integrable then there exists a constant $A$ such that $f\sim Ar^a$. The fact that such an estimate might \emph{not} hold when the error is not logarithmically integrable is important as it allows for the logarithmic corrections to scaling at the critical dimension.

\begin{proof}[Proof of \cref{prop:second_moment}, Cases 1 and 2]
To begin, we rewrite the $p=2$ case of the equality \eqref{lem:moment_derivative} as
\begin{align}
\frac{d}{dr}\E_{\beta_c,r}|K|^2 &= \beta_c |J'(r)| \E_{\beta_c,r}\left[\sum_{y\in B_r}\mathbbm{1}(y\notin K) |K||K_y|^2 \right] 
+  2\beta_c |J'(r)| \E_{\beta_c,r}\left[\sum_{y\in B_r}\mathbbm{1}(y\notin K) |K|^2|K_y| \right] 
\nonumber\\ &= 3\beta_c |J'(r)| \E_{\beta_c,r}\left[\sum_{y\in B_r}\mathbbm{1}(y\notin K) |K||K_y|^2 \right] 
\nonumber\\&= 3(1-\cE_{2,r}) \beta_c |J'(r)| |B_r| \E_{\beta_c,r}|K|\E_{\beta_c,r}|K|^2
\label{eq:second_moment_ODE_error_def}
\end{align}
where the error term $\cE_{2,r}$ is defined by
\begin{multline*}
\cE_{2,r} := 
\frac{|B_r|\E_{\beta_c,r}|K|\E_{\beta_c,r}|K|^2 - \E_{\beta_c,r} \left[|K| \sum_{y\in B_r} \mathbbm{1}(y\notin K) |K_y|^2\right]}{|B_r| \E_{\beta_c,r}|K|\E_{\beta_c,r}|K|^2}
\\
=
\frac{|B_r|\E_{\beta_c,r}|K|\E_{\beta_c,r}|K|^2 - \E_{\beta_c,r} \left[|K| \sum_{y\in B_r} |K_y|^2\right]
+\E_{\beta_c,r}[|K|^3|K\cap B_r|]
}{|B_r| \E_{\beta_c,r}|K|\E_{\beta_c,r}|K|^2}.
\end{multline*}
Recalling the definitions of $\cH_r$ and $\cE_{0,r}$ from \cref{subsec:first_moment}, we also define $\overline{\cE}_{2,r}$ via
\[
(1-\overline{\cE}_{2,r}) := (1-\cE_{2,r})\frac{|J'(r)||B_r|}{r^{-\alpha-1}} \frac{\E_{\beta_c,r}|K|}{\frac{\alpha}{\beta_c}r^\alpha} = (1-\cE_{2,r})(1+\cH_r)(1-\cE_{0,r})
\]
so that
\begin{equation}
\label{eq:barE11_def}
\frac{d}{dr}\E_{\beta_c,r}|K|^2
= 3 \alpha (1-\overline{\cE}_{2,r}) r^{-1} \E_{\beta_c,r}|K|^2.
\end{equation}
In order to apply \cref{lem:ODE_self_referential}, we need to prove that $\lim_{r\to \infty}\overline{\cE}_{2,r}=0$ under each of the first two assumptions of \cref{prop:first_moment}, and is logarithmically integrable under the stronger assumption that $d>3\alpha$. (The third assumption, involving the triangle condition, is treated below.) Since the relevant properties have already been established for $\cE_{0,r}$ and $\cH_r$ in \cref{lem:ball_regularity} and \cref{prop:first_moment}, it suffices to bound $\cE_{2,r}$. This is relatively straightforward since we already have good bounds on $\E_{\beta_c,r}|K|$ in each case, and do not need to start from scratch using bootstrapping arguments etc.\ as in the previous section. 
As before, it follows from \cref{lem:BK_disjoint_clusters_covariance} that
\begin{equation}
\label{eq:E2_bound}
0\leq \cE_{2,r} \leq \frac{\E_{\beta_c,r}[|K|^3|K\cap B_r|]
}{|B_r| \E_{\beta_c,r}|K|\E_{\beta_c,r}|K|^2}.
\end{equation}
The right hand side of this inequality will be bounded in a slightly different way in each of the three cases.
Cases 1 and 2 will both apply the following slight generalization of the tree-graph inequality, which we prove after concluding the proof of the proposition. The bound on moments of order $p\geq 3$ provided by this lemma is a significant improvement to the usual tree-graph inequality when $\E_\beta|K|^2$ is much smaller than $(\E_\beta|K|)^3$.

\begin{lemma}[Generalized tree-graph inequality]
\label{lem:generalized_tree_graph}
Consider percolation on any transitive weighted graph. For each integer $p\geq 2$ the inequality
\[
\E_\beta|K|^p \leq (2p-3)!! (\E_\beta|K|^2)^{k} (\E_\beta|K|)^{2p-1-3k}
\]
holds for every $0\leq k \leq \lceil \frac{p-1}{2}\rceil$ and $\beta>0$.
\end{lemma}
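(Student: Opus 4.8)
The plan is to induct on the tree structure that already underlies the tree-graph inequality \eqref{tree_graph_general}, but to be smarter about how we bound the resulting diagram sums. Recall that the tree-graph inequality expresses $\E_\beta|K|^p = \sum_{x_1,\dots,x_p}\P_\beta(0\leftrightarrow x_1\leftrightarrow\dots\leftrightarrow x_p)$ as a sum over trees $T\in\bbT_p$ (leaves labelled $0,1,\dots,p$, internal vertices of degree $3$) of the quantity $\sum_{x_{p+1},\dots,x_{2p-1}}\prod_{i\sim j}\P_\beta(x_i\leftrightarrow x_j)$ with $x_0=0$. When one sums every internal coordinate freely over $\Z^d$, each factor $\P_\beta(x_i\leftrightarrow x_j)$ summed over a free endpoint contributes $\E_\beta|K|$, and counting gives $|\bbT_p|=(2p-3)!!$ trees each contributing $(\E_\beta|K|)^{2p-1}$, which is the usual bound \eqref{eq:tree_graph_pth_moment}. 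The improvement comes from observing that if an internal vertex $v$ of the tree has \emph{two} of its three neighbours being leaves (equivalently, $v$ is a ``cherry''), then summing those two leaf coordinates together produces not $(\E_\beta|K|)^2$ but rather $\sum_{a,b}\P_\beta(v\leftrightarrow a)\P_\beta(v\leftrightarrow b) = (\E_\beta|K|)^2$ --- that is the same. The real gain is instead the following: contract each such cherry, replacing the two leaf-edges plus the edge from $v$ towards the root by a single factor; bounding $\sum_{a}\P_\beta(x\leftrightarrow a)\P_\beta(a\leftrightarrow b)\P_\beta(a\leftrightarrow c)$ — a ``three-point at a fixed internal vertex'' — and then summing the two free leaves gives a factor $\E_\beta|K|^2$ where a factor $(\E_\beta|K|)^3$ would otherwise appear. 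Using a cherry exchanges a $(\E_\beta|K|)^3$ for a $\E_\beta|K|^2$, reducing the total number of free $\E_\beta|K|$ factors by $3$ and introducing one $\E_\beta|K|^2$ factor; doing this $k$ times yields the bound $\le C_{p,k}(\E_\beta|K|^2)^k(\E_\beta|K|)^{2p-1-3k}$.

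\textbf{The steps}, in order. First, fix a tree $T\in\bbT_p$. Any tree with $p\ge 2$ leaves other than the root has at least one cherry (an internal vertex with two leaf-children when the tree is rooted at the leaf $0$); more generally, greedily stripping cherries shows one can find a set of $\lceil(p-1)/2\rceil$ disjoint cherries in a suitable sense, which is exactly the range of $k$ claimed. Second, for a single cherry at internal vertex $v$ with leaf-children $x_i,x_j$ and third neighbour $w$ (towards the root), bound
\[
\sum_{x_i,x_j}\;\sum_{v}\P_\beta(w\leftrightarrow v)\P_\beta(v\leftrightarrow x_i)\P_\beta(v\leftrightarrow x_j)
= \sum_{v}\P_\beta(w\leftrightarrow v)(\E_\beta|K|)^2,
\]
which is still $(\E_\beta|K|)^2$ times the contribution of summing $v$ with $w$ free — no gain. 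So instead one keeps $v$ fixed, notes $\sum_{x_i,x_j}\P_\beta(v\leftrightarrow x_i)\P_\beta(v\leftrightarrow x_j)=(\E_\beta|K|)^2$, and the genuinely useful move is to pick $k$ internal vertices $v_1,\dots,v_k$ that are pairwise non-adjacent and each adjacent to at least two leaves, bound the product of the two leaf-factors at each $v_\ell$ by $(\E_\beta|K|)^2$, then freely sum the remaining internal vertices: but that just reproduces $(\E_\beta|K|)^{2p-1}$. The actually-correct accounting, which I would carry out carefully, is: retain the \emph{three} edges at $v_\ell$ and sum $v_\ell$ \emph{together with its two leaf-children}, giving $\sum_{v_\ell}\P_\beta(u_\ell\leftrightarrow v_\ell)\sum_{x,x'}\P_\beta(v_\ell\leftrightarrow x)\P_\beta(v_\ell\leftrightarrow x')$ where $u_\ell$ is the parent --- this equals $(\E_\beta|K|)^2\cdot\sum_{v_\ell}\P_\beta(u_\ell\leftrightarrow v_\ell)$, and here summing $v_\ell$ against the \emph{parent} direction, which we do last, is where we choose to instead write this block as $\E_\beta[|K_{u_\ell}|\cdot(\text{two leaf sums})] \le \E_\beta|K_{u_\ell}|^? $... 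I will replace this by the clean statement: rooting the diagram at $0$ and summing coordinates from the leaves inward, each cherry contributes a factor bounded by $\E_\beta|K|^2$ (by $\sum_{a,b}\P(v\leftrightarrow a)\P(v\leftrightarrow b)\mathbbm{1}(v\leftrightarrow v)$, i.e.\ the second moment at $v$, carried along the edge to its parent), while each remaining leaf-edge contributes $\E_\beta|K|$; the total count of free $\E_\beta|K|$ factors across a tree with $k$ cherries contracted is $2p-1-3k$, and the number of trees is at most $(2p-3)!!$, giving the claim.

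\textbf{The main obstacle}, and where I would spend the most care, is the \emph{combinatorics of simultaneously contracting $k$ cherries} and verifying that (i) one can always find $k$ of them for any $k\le\lceil(p-1)/2\rceil$, and (ii) the bookkeeping of ``free $\E_\beta|K|$ factors'' genuinely gives the exponent $2p-1-3k$ rather than something weaker. Concretely: a rooted binary tree with $p$ leaves has $p-1$ internal vertices and $2p-2$ edges; each contracted cherry removes $2$ leaves and $1$ internal vertex and replaces $3$ edges by an edge carrying a ``$\E|K|^2$'' weight. After contracting $k$ disjoint cherries we have a rooted tree with $p-2k$ leaves, $p-1-k$ internal vertices, whose edge-factors are $k$ copies of $\E_\beta|K|^2$ and $2p-2-3k$ copies of $\E_\beta|K|$; summing all coordinates inward (each free summation over a leaf or internal vertex eats one $\E_\beta|K|$ factor, using transitivity and $\sum_z\P_\beta(x\leftrightarrow z)=\E_\beta|K|$) leaves exactly the stated product, up to the constant $(2p-3)!!$ counting the trees. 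The ``$\mathbbm{1}(v\leftrightarrow v)$-into-second-moment'' step is what legitimizes the $\E_\beta|K|^2$ factor: $\sum_{a,b}\P_\beta(v\leftrightarrow a)\P_\beta(v\leftrightarrow b)=\E_\beta[|K_v|^2]=\E_\beta|K|^2$ by transitivity, so each cherry literally contributes $\E_\beta|K|^2$ when its two leaf-coordinates are summed out. I expect the induction on $p$ (strip one cherry, apply the bound for $p-2$ with parameter $k-1$) to close cleanly once this single-cherry step is pinned down, with the base cases $p=2$ (giving $\E_\beta|K|^2\le \E_\beta|K|^2$ for $k=1$ and the ordinary tree-graph bound for $k=0$) being immediate.
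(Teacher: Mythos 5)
There is a genuine gap, and it sits exactly at the step you flag as needing care. Your final justification of the ``$\E_\beta|K|^2$ per cherry'' factor rests on the identity $\sum_{a,b}\P_\beta(v\leftrightarrow a)\P_\beta(v\leftrightarrow b)=\E_\beta[|K_v|^2]$, which is false: the left side factors as $(\sum_a\P_\beta(v\leftrightarrow a))^2=(\E_\beta|K|)^2$, whereas $\E_\beta|K|^2=\sum_{a,b}\P_\beta(v\leftrightarrow a,\, v\leftrightarrow b)$, and these differ (by Harris--FKG one only has $(\E_\beta|K|)^2\leq\E_\beta|K|^2$, the useless direction). More fundamentally, no resummation of the \emph{fully decomposed} diagram can work: once BK has been applied edge-by-edge, each tree contributes exactly $(\E_\beta|K|)^{2p-1}$, and since the ordinary tree-graph bound gives $\E_\beta|K|^2\leq(\E_\beta|K|)^3$, the three edges of a star are worth \emph{more} than $\E_\beta|K|^2$, not less. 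Even granting your (valid but unhelpful) replacement of the two leaf-edge factors $(\E_\beta|K|)^2$ by $\E_\beta|K|^2$, the third edge of each star still costs an $\E_\beta|K|$, so your accounting yields at best $(\E_\beta|K|^2)^k(\E_\beta|K|)^{2p-1-2k}$, weaker than the claim by $(\E_\beta|K|)^k$. The fix — which is what the paper does — is to apply BK \emph{more coarsely}: for each distinguished internal vertex, do not decompose its $3$-star into three two-point events, but keep it as the single three-point connectivity event $\{x_a,x_b,x_c \text{ all connected}\}$ among its three neighbours (the hub stays integrated out). Summing two free corners of $\P_\beta(x_c\leftrightarrow x_a,\,x_c\leftrightarrow x_b)$ gives exactly $\E_\beta|K|^2$, with the third corner serving as the attachment point, so all three star-edges are absorbed at once and the exponent $2p-1-3k$ comes out correctly.

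The second gap is the existence claim. A binary tree with $p$ non-root leaves need not contain $\lceil(p-1)/2\rceil$ disjoint cherries: a caterpillar (internal vertices forming a path, one pendant leaf each) has only one cherry, while $\lceil(p-1)/2\rceil$ grows with $p$. ``Greedily stripping cherries'' produces cherries of successively contracted trees, not disjoint cherries of the original one, and a stripped cherry whose ``leaves'' are internal vertices of the original tree does not obviously contribute an $\E_\beta|K|^2$. The paper sidesteps this entirely: the $p-1$ internal vertices span a subtree, hence a bipartite graph, hence contain an independent set of size $\lceil(p-1)/2\rceil$; any $k$ pairwise non-adjacent internal vertices (adjacent to leaves or not) have edge-disjoint $3$-stars, which is all the argument needs. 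Your instinct about non-adjacent internal vertices and the count $2p-1-3k$ is the right one — but the proof has to restructure the BK step rather than post-process the two-point diagram.
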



\noindent \textbf{Case 1: $d>3 \alpha$.} In this case we use the generalized tree-graph inequality of \cref{lem:generalized_tree_graph} with $k=1$ to bound
\[
\E_{\beta_c,r}[|K|^3|K\cap B_r|] \leq \E_{\beta_c,r}[|K|^4] \preceq \E_{\beta_c,r}|K|^2 \left(\E_{\beta_c,r}|K|\right)^4,
\]
so that
\[
0\leq \cE_{2,r} \preceq \frac{\left(\E_{\beta_c,r}|K|\right)^3}{|B_r|} \preceq r^{3\alpha-d}
\]
is logarithmically integrable as required.

\medskip

\noindent \textbf{Case 2: $d=3\alpha$ $+$ \eqref{Hydro}.} In this case we use Cauchy-Schwarz and the generalized tree-graph inequality of \cref{lem:generalized_tree_graph} with $p=6$ and $k=2$ to bound
\begin{equation}
\label{eq:E2_bound_hydro_1}
\E_{\beta_c,r}[|K|^3|K\cap B_r|] \leq \sqrt{\E_{\beta_c,r}|K|^6\E_{\beta_c,r}|K\cap B_r|^2} \preceq \sqrt{(\E_{\beta_c,r}|K|^2)^2 \left(\E_{\beta_c,r}|K|\right)^5 r^{\alpha}M_r},
\end{equation}
where we used the universal tightness theorem via \cref{cor:universal_tightness_moments} and the two-point upper bound of \cref{thm:two_point_spatial_average_upper} to bound $\E_{\beta_c,r}|K\cap B_r|^2 \preceq r^\alpha M_r$. Using this together with the hydrodynamic condition we obtain that
\begin{equation}
\label{eq:E2_bound_hydro_2}
\cE_{2,r} = o\left(r^{2\alpha-d + \frac{d+\alpha}{4}}  \right) = o(1),
\end{equation}
where the final equality holds since $d=3\alpha$.
%
%
\end{proof}

We now owe the reader the proof of the generalized tree-graph inequality \cref{lem:generalized_tree_graph}.

\begin{proof}[Proof of \cref{lem:generalized_tree_graph}]
We write $\E=\E_\beta$ to lighten notation. 
Recall  that the usual tree-graph bound is proven by taking a union bound over possible combinatorial structures of a tree connecting the origin to $p$ labelled vertices, then using the BK inequality to bound the probability of each relevant event as a product over connection probabilities between vertices in the tree. The $(2p-1)!!$ term arises as the number of binary trees rooted at a leaf with $p$ labelled non-root leaves. In the second step, one can instead bound the relevant probability by first partitioning the internal vertices of the binary tree into bipartite classes: since there are $p-1$ internal vertices, the larger of these bipartite classes much have size at least $\lceil (p-1)/2\rceil$. Taking a subset of this class of size $k$, this lets us partition the edges of the tree into the $3$-stars around each vertex in this set and the remaining edges of the tree that are not adjacent to a vertex in this set. 
When we apply the BK inequality and sum over relevant vertices (the vertices in the distinguished set are \emph{not} summed over), we get $k$ copies of the three-point function and $2p-1-3k$ copies of the two-point function, leading to the desired bounnd.
For example, if $x,y,z$ are three points, then there must exist a point $w$ such that at least one of the events $\{0\leftrightarrow x,w\} \circ \{w\leftrightarrow y\}\circ \{w\leftrightarrow z\}$, 
$\{0\leftrightarrow y,w\} \circ \{w\leftrightarrow x\}\circ \{w\leftrightarrow z\}$,
or 
$\{0\leftrightarrow z,w\} \circ \{w\leftrightarrow x\}\circ \{w\leftrightarrow y\}$
holds, and applying a union bound and the BK inequality yields that
\[
\E|K|^3 = \sum_{x,y,z}\P(0\leftrightarrow x,y,z) \leq 3\sum_{x,y,z,w} \P(0\leftrightarrow x,w) \P(w\leftrightarrow y)\P(w\leftrightarrow z) = 3\E|K|^2(\E|K|)^2.
\]
Diagrammatically,
\[
\E|K|^3 \leq 3 \begin{array}{l}\includegraphics{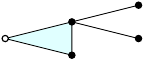}\end{array} = 3\E|K|^2(\E|K|)^2,
\]
where the white vertex denotes the origin, black vertices are summed over, the blue shaded triangle represents a three-point function, and each of the two lines not incident to a triangle represent copies of the two-point function.
Similarly, we have both that 
\[
\E|K|^4 \leq 12 \begin{array}{l}\includegraphics{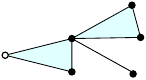}\end{array} + 3
\begin{array}{l}\includegraphics{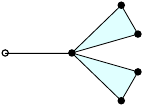}\end{array} = 15 (\E|K|^2)^2 \E|K|
\]
and
\[
\E|K|^4 \leq 12 \begin{array}{l}\includegraphics{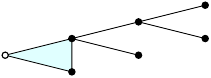}\end{array} + 3
\begin{array}{l}\includegraphics{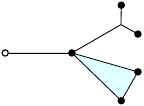}\end{array} = 15 \E|K|^2 (\E|K|)^4.
\]
(We will later see how to get sharper upper bounds on the higher moments in the setting of \cref{prop:first_moment}.)
\end{proof}

It remains to analyze the second moment under the assumption that the triangle condition holds. We will prove the estimates we need for general moments so that we can reuse the same estimates in the next section. We begin by proving the following extension of \cref{lem:disjoint_connections_triangle} to higher moments.

\begin{lemma}
\label{lem:disjoint_connections_triangle_second} 
Consider percolation on a transitive weighted graph at some parameter $\beta>0$. There exist universal integer constants $C_{p,q,\ell}$ such that the estimate
\begin{equation*}
0\leq \E|K|^p \E |K|^q-\E\left[|K_x|^p |K_y|^q \mathbbm{1}(x \nleftrightarrow y)\right] \leq \sum_{\ell=3}^{p+q+1} C_{p,q,\ell} T^\ell(x,y) (\E|K|)^{2p+2q+1-\ell}
\end{equation*}
holds for every pair of vertices $x$ and $y$, where $T^\ell$ denotes the $\ell$-fold power of the two-point matrix $T(x,y):=\P(x\leftrightarrow y)$.
\end{lemma}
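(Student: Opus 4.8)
The lower bound is immediate from the first inequality of \cref{lem:BK_disjoint_clusters_covariance}: taking the increasing non-negative functions $F(S)=|S|^p$ and $G(S)=|S|^q$ and using transitivity to write $\E F(K_x)=\E|K|^p$ and $\E G(K_y)=\E|K|^q$ gives $\E[|K_x|^p|K_y|^q\mathbbm 1(x\nleftrightarrow y)]\le\E|K|^p\,\E|K|^q$. For the upper bound I would expand over tuples of target vertices: writing $x\leftrightarrow\vec z$ for $\bigcap_{i=1}^p\{x\leftrightarrow z_i\}$ and $y\leftrightarrow\vec w$ for $\bigcap_{j=1}^q\{y\leftrightarrow w_j\}$, one has
\[
\E|K|^p\,\E|K|^q-\E\bigl[|K_x|^p|K_y|^q\mathbbm 1(x\nleftrightarrow y)\bigr]=\sum_{\vec z\in V^p,\ \vec w\in V^q}\Bigl(\P(x\leftrightarrow\vec z)\,\P(y\leftrightarrow\vec w)-\P(x\leftrightarrow\vec z\nleftrightarrow y\leftrightarrow\vec w)\Bigr),
\]
so it suffices to bound each summand by a sum of ``bridged tree diagrams'' and then perform the sums.

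For a fixed pair $(\vec z,\vec w)$ I would run the coupling from the proof of \cref{lem:disjoint_connections_triangle} with $0$ replaced by $x$: let $\omega,\omega'$ be independent configurations and let $\tilde\omega$ be the associated resampled configuration, so that $\tilde\omega\eqd\omega$, $\P(x\leftrightarrow\vec z)\,\P(y\leftrightarrow\vec w)=\P(\vec z\subseteq K_x(\omega),\ \vec w\subseteq\tilde K_y)$ with $\tilde K_y:=K_y(\tilde\omega)$, and $K_y(\omega)\subseteq\tilde K_y$ on the event $\{x\nleftrightarrow y\}$. Subtracting, the summand equals
\[
\P(\vec z\subseteq K_x,\ x\leftrightarrow y,\ \vec w\subseteq\tilde K_y)+\P(\vec z\subseteq K_x,\ x\nleftrightarrow y,\ \vec w\subseteq\tilde K_y,\ \vec w\not\subseteq K_y(\omega)).
\]
On the first event $x$, $y$ and all of $\vec z$ lie in a common $\omega$-cluster; on the second, some $w_i$ lies in $\tilde K_y\setminus K_y(\omega)$, which forces, exactly as in the $p=q=1$ case, a ``bridge'' edge $\{a,b\}$ open in $\omega'$ with $b\in K_x(\omega)$ and $a\in K_y(\omega)$, and more generally exhibits $\tilde K_y$ as $K_x(\omega)$ together with a tree of $\omega$-clusters glued to it along $\omega'$-open edges and carrying $y$ and all of $\vec w$. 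In both cases one reads off a family of connection events that occur disjointly within $\omega$ and within $\omega'$ separately; applying Reimer's inequality (the van den Berg--Kesten--Fiebig case suffices) within each configuration, using independence of $\omega$ and $\omega'$, and then the tree-graph inequality \eqref{tree_graph_general} to collapse every multi-point connection, bounds the summand by $\sum_T\sum_{\mathrm{internal}}\prod_{i\sim j}T(x_i,x_j)$, where $T$ runs over the finitely many binary tree diagrams on the leaf set $\{x,z_1,\dots,z_p,y,w_1,\dots,w_q\}$ carrying a distinguished edge whose removal separates $\{x,\vec z\}$ from $\{y,\vec w\}$.

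It remains to carry out the sums. The ``common-cluster'' term sums over $\vec z,\vec w$ to exactly $\E|K|^q\cdot\E[|K_x|^p\mathbbm 1(x\leftrightarrow y)]$ (using the property of the coupling that, conditionally on $K_x(\omega)$, the configuration $\tilde\omega$ is distributed as unconditioned percolation), and bounding $\E|K|^q$ via \eqref{eq:tree_graph_pth_moment} and $\E[|K_x|^p\mathbbm 1(x\leftrightarrow y)]$ via \eqref{tree_graph_general} gives a bound of the stated shape with $3\le\ell\le p+2$. For the bridge term one sums over $\vec z,\vec w$ and over the internal vertices of each diagram using transitivity and $\sum_u T(v,u)=\E|K|$: each of the $p+q$ leaf edges ending at a $z_i$ or $w_j$ contributes a factor $\E|K|$, each subtree hanging off the $x$-to-$y$ path of the diagram contributes further such factors, and the path itself, of nominal length $\ell$, contributes the matrix power $T^\ell(x,y)$. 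Since this path runs from $x$ through the $x$-side, across the bridge, and through the $y$-side to $y$, one always has $3\le\ell\le p+q+1$; an edge count shows that each diagram contributes $T^\ell(x,y)(\E|K|)^{2p+2q+1-\ell}$, and collecting the diagrams with a common $\ell$, and merging in the common-cluster contribution using the monotonicity $T^{\ell-1}(x,y)\le T^\ell(x,y)$, produces the universal integer constants $C_{p,q,\ell}$. The hardest part will be this collapsing step for the bridge term when several of the targets $w_j$ lie ``beyond the bridge'': there $\tilde K_y$ is genuinely a tree of $\omega$-clusters joined by $\omega'$-open edges, and one must organise the union bounds and the applications of Reimer's inequality so that the result collapses to a single tree diagram with exactly the exponent $2p+2q+1-\ell$ and range $3\le\ell\le p+q+1$; the remaining estimates are the same bridge-and-tree-graph bookkeeping already present, for $p=q=1$, in the proof of \cref{lem:disjoint_connections_triangle}.
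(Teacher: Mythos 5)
Your strategy is essentially the paper's: the same resampling coupling $(\omega,\omega',\tilde\omega)$, an expansion over tuples of target vertices, a union bound producing a ``bridge'' vertex in $K_x$, and tree-graph expansion with the same final edge count ($2p+2q+1$ edges, $\ell\in[3,p+q+1]$ of them on the $x$--$y$ path). The lower bound via \cref{lem:BK_disjoint_clusters_covariance} is exactly the paper's. The one structural difference is that the paper does not split into a common-cluster case and a bridge case: it runs a single union bound over the bridge vertex $w$, with the choice $w=y$ covering the event $y\in K_x$, and it factorizes the resulting probability into an $\omega$-probability involving $\{x,w,z_1,\ldots,z_p\}$ and a fresh-configuration probability involving $\{w,y,w_1,\ldots,w_q\}$ \emph{before} applying the tree-graph inequality to each factor.

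That difference is where your proposal has a genuine gap, concentrated in the common-cluster term. It is false that, conditionally on $K_x(\omega)$, the configuration $\tilde\omega$ is distributed as unconditioned percolation: $\tilde\omega$ has the unconditional law only marginally, while given $K_x(\omega)=S$ it has the law of $\omega$ conditioned on $\{K_x=S\}$ (in particular $K_x(\tilde\omega)=K_x(\omega)$, and on the event $y\in K_x(\omega)$ one has $\tilde K_y=K_x(\omega)$). So the common-cluster contribution is $\E[|K_x|^{p+q}\mathbbm{1}(x\leftrightarrow y)]$, not the product $\E|K|^q\cdot\E[|K_x|^p\mathbbm{1}(x\leftrightarrow y)]$. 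This is not merely cosmetic: applying the tree-graph inequality to the full $(p+q+2)$-point function produces diagrams with $2p+2q+1$ edges in which the $x$--$y$ path can have length $\ell=2$, i.e.\ terms $T^2(x,y)(\E|K|)^{2p+2q-1}$, and the monotonicity $T^{\ell-1}\le T^{\ell}$ only converts these into $T^3(x,y)(\E|K|)^{2p+2q-1}$, one power of $\E|K|$ too many compared with the target $\ell=3$ term $T^3(x,y)(\E|K|)^{2p+2q-2}$; the missing inequality $T^2(x,y)\,\E|K|\preceq T^3(x,y)$ is not a general fact. The paper's factorized bound for the $w=y$ contribution, $\P(y,z_1,\ldots,z_p\in K_x)\cdot\P(w_1,\ldots,w_q\in K_y)$, yields glued diagrams with only $2p+2q$ edges, so its $\ell=2$ terms already carry the exponent $2p+2q-2$ and can then be absorbed using $T^2\le T^3$ and $\E|K|\ge1$. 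The same ``factorize first, then expand'' structure is what makes $\ell\ge3$ automatic for the bridge term: $w$ appears as a leaf distinct from $x$ in a tree-graph diagram for $\P(x\leftrightarrow w,z_1,\ldots,z_p)$ with $p\ge1$ further leaves, forcing at least two edges between $x$ and $w$, and at least one more edge is needed to reach $y$ in $\omega'$; your ``distinguished separating edge'' formulation does not by itself exclude $\ell=2$. The fix is local: drop the claimed factorization identity, run the paper's union bound over $w$ (allowing $w=y$), and apply the tree-graph inequality to the two independent factors separately before gluing at $w$.
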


\begin{proof}[Proof of \cref{lem:disjoint_connections_triangle_second}]
The fact that the quantity we wish to estimate is non-negative follows immediately from the BK inequality.
Let $\omega$, $\omega'$, $\tilde \omega$, $K_x$ and $\tilde K_y$ be as in the proof of \cref{lem:disjoint_connections_triangle}. As before, on the event that $y\notin K_x$, we have that $K_y \subseteq \tilde K_y$ and that a point $z\in \tilde K_y$ belongs to $K_y$ if and only if there is an open path connecting $y$ to $z$ in $\tilde K_y$ that does not visit $K_x$. Thus, we can write
\begin{align*}
&\E|K|^p \E |K|^q-\E\left[|K_x|^p |K_y|^q \mathbbm{1}(x \nleftrightarrow y)\right] 
\\
&\hspace{0.5cm}= \sum_{x_1,\ldots,x_p} \sum_{y_1,\ldots,y_q} \P(x_1,\ldots,x_p \in K_x, y_1,\ldots,y_q\in \tilde K_y, \text{ and either $y\in K_x$ or at least one $y_i\notin K_y$})\\
&\hspace{0.5cm} \leq 
\sum_w \sum_{x_1,\ldots,x_p} \sum_{y_1,\ldots,y_q} \P(w,x_1,\ldots,x_p \in K_x, y_1,\ldots,y_q\in \tilde K_y,
\\&\hspace{6cm} \text{$w$ \text{on a simple open path connecting $y$ to $\{y_1,\ldots,y_q\}$ in $\tilde K_y$}}).
\end{align*}
Since the events defining the conditions on $K_x$ and $\tilde K_y$ in the last line are independent, we obtain the bound
\begin{align*}
&\E|K|^p \E |K|^q-\E\left[|K_x|^p |K_y|^q \mathbbm{1}(x \nleftrightarrow y)\right]  \leq 
\sum_w \sum_{x_1,\ldots,x_p} \sum_{y_1,\ldots,y_q} \P(w,x_1,\ldots,x_p \in K_x)\\
&\hspace{2.5cm}\cdot\P(y_1,\ldots,y_q\in \tilde K_y,
\text{$w$ \text{on a simple open path connecting $y$ to $\{y_1,\ldots,y_q\}$ in $\tilde K_y$}}).
\end{align*}
Expanding each of these probabilities out as in the proof of the tree-graph inequality yields the claim: We obtain a bound in terms of tree diagrams with two pinned vertices at $x$ and $y$, with $2p+2q+1$ edges in total, some number $3\leq \ell \leq p+q+1$ of which belong to the path connecting $x$ and $y$.
For example, when $p=2$ and $q=1$ we can express this bound diagrammatically as
\begin{align*}
\E|K|^2\E|K| - \E[|K_x|^2|K_y|\mathbbm{1}(x\nleftrightarrow y)]
&\leq \begin{array}{l}\includegraphics{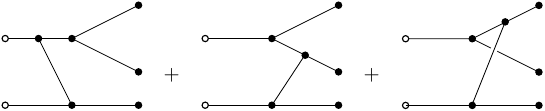}\end{array}\\
&\leq T^3(x,y) (\E|K|)^4 + 2 T^4(x,y) (\E|K|)^3.
\end{align*}
In each case, $C_{p,q,\ell}$ counts the number of diagrams in the bound in which the path connecting $x$ to $y$ has length $\ell$. 
\end{proof}

To make use of this lemma, we will need to have a reasonable bound on how the higher polygon diagrams blow up as $r\to\infty$.

\begin{lemma}
\label{lem:higher_polygon_diagrams} Suppose that $\nabla<\infty$. Then there exist positive constants $c$ and $C$ such that the polygon diagram $T^\ell_{\beta_c,r}$ can be bounded
\[
\sup_x T^\ell_{\beta_c,r}(0,x) = T^\ell_{\beta_c,r}(0,0)\leq C \left(\nabla_{r^{c}} + r^{-c}\right) (\E_{\beta_c,r}|K|)^{\ell-3}
\]
for each integer $\ell\geq 3$.
\end{lemma}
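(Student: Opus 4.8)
The plan is to pass to the convolution picture and peel a single open triangle off the polygon, controlling the remaining chain by the susceptibility. Write $\tau_r(x)=\P_{\beta_c,r}(0\leftrightarrow x)$, so that $T^\ell_{\beta_c,r}(0,x)=(\tau_r^{*\ell})(x)$, and recall that $\tau_r\le\tau:=\P_{\beta_c}(0\leftrightarrow\cdot)$ pointwise by monotonicity in the kernel. Two preliminary facts are needed. First, the matrix $T_{\beta_c,r}$ is positive semidefinite: writing $\mathbbm{1}(x\leftrightarrow y)=\sum_C \mathbbm{1}(x\in C)\mathbbm{1}(y\in C)$ gives $\sum_{x,y}f(x)f(y)T_{\beta_c,r}(x,y)=\E_{\beta_c,r}\bigl[\sum_C(\sum_{x\in C}f(x))^2\bigr]\ge 0$, so every power $T^\ell_{\beta_c,r}$ is positive semidefinite, and Cauchy--Schwarz for the semi-inner-product $(u,v)\mapsto\langle u,T^\ell_{\beta_c,r}v\rangle$ together with translation invariance gives $T^\ell_{\beta_c,r}(0,x)\le\sqrt{T^\ell_{\beta_c,r}(0,0)\,T^\ell_{\beta_c,r}(x,x)}=T^\ell_{\beta_c,r}(0,0)$, which is the asserted identity $\sup_x T^\ell_{\beta_c,r}(0,x)=T^\ell_{\beta_c,r}(0,0)$. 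Second, since convolution of nonnegative functions is monotone and $\sum_x\tau_r(x)=\E_{\beta_c,r}|K|$, an immediate induction gives $\sum_x(\tau_r^{*m})(x)=(\E_{\beta_c,r}|K|)^m$ and $\sup_x(\tau_r^{*m})(x)\le(\E_{\beta_c,r}|K|)^{m-1}$ for every $m\ge 1$.

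For $\ell\ge 4$ put $m=\ell-3\ge 1$ and split the polygon as a three-chain composed with an $m$-chain,
\[
T^\ell_{\beta_c,r}(0,0)=\sum_{x\in\Z^d}(\tau_r^{*3})(x)\,(\tau_r^{*m})(x).
\]
Using $\tau_r\le\tau$ we bound $(\tau_r^{*3})(x)\le(\tau^{*3})(x)=\nabla_{\beta_c}(0,x)$, which is at most $\nabla_{\beta_c}<\infty$ for every $x$ and at most $\nabla_L$ whenever $\|x\|\ge L$. Fixing a small constant $c>0$ (to be chosen below) and taking $L=r^{c}$, the contribution of $\|x\|\ge L$ is
\[
\sum_{\|x\|\ge L}(\tau_r^{*3})(x)(\tau_r^{*m})(x)\le\nabla_{r^{c}}\sum_x(\tau_r^{*m})(x)=\nabla_{r^{c}}(\E_{\beta_c,r}|K|)^{m},
\]
which produces the $\nabla_{r^c}$ term, while the contribution of $\|x\|<L$ is
\[
\sum_{\|x\|<L}(\tau_r^{*3})(x)(\tau_r^{*m})(x)\le\nabla_{\beta_c}\,|B_L|\,\sup_x(\tau_r^{*m})(x)\le\nabla_{\beta_c}\,|B_L|\,(\E_{\beta_c,r}|K|)^{m-1}.
\]
Here I would invoke $|B_L|\asymp L^d=r^{cd}$ from \cref{lem:ball_regularity} and the a priori lower bound $\E_{\beta_c,r}|K|\succeq r^{\alpha}$ from \cref{cor:mean_lower_bound}: choosing $c$ with $cd<\alpha$ --- for instance $c=\alpha/(2(d+1))$, which also gives $cd<\alpha-c$ --- makes $|B_L|\le Cr^{-c}\E_{\beta_c,r}|K|$ for all large $r$, so that this part is at most $Cr^{-c}(\E_{\beta_c,r}|K|)^{m}$. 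Adding the two contributions yields $T^\ell_{\beta_c,r}(0,0)\le C(\nabla_{r^c}+r^{-c})(\E_{\beta_c,r}|K|)^{\ell-3}$ for all large $r$, hence for all $r\ge 1$ after enlarging $C$; the constants $c$ and $C$ can be taken independent of $\ell$, the only $\ell$-dependence in the argument entering through the harmless bound $\sup_x(\tau_r^{*m})(x)\le(\E_{\beta_c,r}|K|)^{m-1}$, whose implicit constant is $1$. The degenerate case $\ell=3$ is not reached by this splitting; there one simply uses $T^3_{\beta_c,r}(0,0)\le(\tau^{*3})(0)=\nabla_{\beta_c}=O(1)$, which is all that is needed in the applications.

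The only step I expect to be more than routine bookkeeping is the near-diagonal contribution $\sum_{\|x\|<L}$: there the triangle factor $(\tau_r^{*3})(x)$ is merely $O(1)$ rather than small, so the required smallness must instead be harvested from the fact that the ball $B_L$ of radius $L=r^{c}$ is negligible relative to the susceptibility scale $r^{\alpha}$. This is exactly why the a priori lower bound $\E_{\beta_c,r}|K|\succeq r^{\alpha}$ of \cref{cor:mean_lower_bound} is used and why the exponent $c$ must be taken strictly below $\alpha/d$; balancing this against the far-field contribution, which forces $L\to\infty$ so that $\nabla_L\to 0$, is what pins down the admissible range of $c$. Everything else --- the convolution identities, the positive-semidefiniteness argument for the supremum identity, and the monotone bound $\sup_x(\tau_r^{*m})(x)\le(\E_{\beta_c,r}|K|)^{m-1}$ --- is elementary and carries through with constants uniform in $\ell$.
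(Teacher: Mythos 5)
Your argument for $\ell\ge 4$ is essentially the paper's proof: peel one triangle off the polygon, split the remaining sum at a mesoscopic scale $L=r^c$, bound the far field by $\nabla_L$ times the full chain weight $(\E_{\beta_c,r}|K|)^{\ell-3}$, and bound the near field by $\nabla\,|B_L|\,(\E_{\beta_c,r}|K|)^{\ell-4}$, which is absorbed using $\E_{\beta_c,r}|K|\succeq r^\alpha$ from \cref{cor:mean_lower_bound}. The paper takes $R=r^{\alpha/2d}$ where you take $r^{\alpha/(2(d+1))}$; this is immaterial, and your explicit positive-semidefiniteness argument for $\sup_xT^\ell(0,x)=T^\ell(0,0)$ and your remark that the constants are uniform in $\ell$ are both correct (the paper asserts the former without proof).

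The one substantive point is the endpoint $\ell=3$, which you are right to single out: the stated inequality is actually \emph{false} there, since $T^3_{\beta_c,r}(0,0)\ge T(0,0)^3=1$ is bounded below while $\nabla_{r^c}+r^{-c}\to 0$, and the paper's own displayed computation silently degenerates at $\ell=3$ (the step $\sum_{\|y\|<R}T^{\ell-3}(y,0)\le|B_R|(\E|K|)^{\ell-4}$ reads $1\le|B_R|/\E|K|\to 0$ there). Your fallback $T^3_{\beta_c,r}(0,0)\le\nabla=O(1)$ is the honest statement, but your claim that $O(1)$ "is all that is needed in the applications" is not quite right: in Case 3 of \cref{prop:second_moment} and of \cref{prop:higher_moments} the $\ell=3$ term of \cref{lem:disjoint_connections_triangle_second} must still contribute $o(1)$ after normalization, and an $O(1)$ pointwise bound only gives $O(1)$ there. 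What saves those applications is that the $\ell=3$ term always appears averaged over $y\in B_r$, and the \emph{spatial average} $|B_r|^{-1}\sum_{y\in B_r}T^3_{\beta_c,r}(0,y)\preceq\nabla_{r^c}+r^{-c}$ does hold by the same near/far splitting (now with the ball $B_r$ itself supplying the volume factor in the denominator). So the correct repair is to state the lemma for $\ell\ge 4$ together with this averaged $\ell=3$ bound, rather than to downgrade $\ell=3$ to $O(1)$.
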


\begin{proof}[Proof of \cref{lem:higher_polygon_diagrams}]
Write $T=T_{\beta_c,r}$ and $\E=\E_{\beta_c,r}$. We have by translation-invariance and positive-definiteness of $T$ that $T^\ell$ is maximized on the diagonal, so that it suffices to bound $T^\ell(0,0)$. 
Our assumptions guarantee that there exists a constant $C$ such that
\begin{align*}
T^\ell(0,0)&= \sum_{y} \nabla(0,y) T^{\ell-3}(y,0)= \sum_{\|y\| \geq R} \nabla(0,y) T^{\ell-3}(y,0)
+
\sum_{\|y\|< R} \nabla(0,y) T^{\ell-3}(y,0)
\\
&\leq \nabla_R (\E|K|)^{\ell-3}
+
\nabla |B_R| (\E|K|)^{\ell-4} \leq \left(\nabla_R + C\frac{R^d}{r^\alpha}\right)(\E|K|)^{\ell-3}
\end{align*}
for every $r>0$, where in the last line we used that $\E|K| \succeq r^\alpha$ and that
\[\sum_{\|y\|< R} T^{\ell-3}(y,0) = \sum_{x_1,\ldots,x_{\ell-4}} T(0,x_1)\prod_{i=1}^{\ell-5} T(x_i,x_{i+1})\sum_{\|y\|<R} T(x_{\ell-4},y) \leq |B_R| (\E|K|)^{\ell-4}.\]
The claim follows by taking, say, $R=r^{\alpha/2d}$, so that $R^d/r^\alpha$ is polynomially small in $r$ as required.
\end{proof}

We will apply this estimate with the aid of the following elementary lemma.

\begin{lemma}
\label{lem:h(r^c)_log_integrable}
If $h(r)$ is logarithmically integrable then so is $h(Cr^c)$ for every $c,C>0$.
\end{lemma}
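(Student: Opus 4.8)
The statement to prove is \cref{lem:h(r^c)_log_integrable}: if $h(r)$ is logarithmically integrable then so is $h(Cr^c)$ for every $c,C>0$.

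\medskip

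The plan is to reduce directly to the definition of logarithmic integrability via the substitution $t = \log r$, under which the condition ``$h$ is logarithmically integrable'' reads ``$h(e^t)$ is integrable (absolutely) on a tail $[t_0,\infty)$''. First I would fix $c,C>0$ and choose $r_0$ so that $h$ is defined and locally integrable on $[r_0,\infty)$ and $\int_{r_0}^\infty |h(r)|\,\frac{dr}{r}<\infty$. Since $r\mapsto Cr^c$ is an increasing bijection of $(0,\infty)$ onto itself, there is some $r_1$ with $Cr^c \geq r_0$ for all $r\geq r_1$, and one checks that $r\mapsto h(Cr^c)$ inherits local integrability on $[r_1,\infty)$ from that of $h$ on $[r_0,\infty)$ (local integrability is preserved by composition with a smooth increasing change of variables with non-vanishing derivative; alternatively just apply the substitution on each compact subinterval).

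The key computation is then the change of variables $s = Cr^c$, so that $\frac{ds}{s} = c\,\frac{dr}{r}$, giving
\[
\int_{r_1}^\infty |h(Cr^c)|\,\frac{dr}{r} = \frac{1}{c}\int_{Cr_1^c}^\infty |h(s)|\,\frac{ds}{s} \le \frac{1}{c}\int_{r_0}^\infty |h(s)|\,\frac{ds}{s} < \infty,
\]
where the middle inequality uses $Cr_1^c \geq r_0$ and non-negativity of $|h|$. This shows $h(Cr^c)$ is logarithmically integrable. If one also wants to note (as is implicitly used later) that $h(Cr^c)$ is a logarithmically integrable \emph{error} function when $h$ is, observe that $Cr^c\to\infty$ as $r\to\infty$, so $h(Cr^c)\to 0$ whenever $h(r)\to 0$.

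\medskip

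There is essentially no obstacle here: the only mild point of care is the local-integrability clause in the definition, which must be checked rather than assumed, but this follows immediately since a monotone $C^1$ change of variables maps compact intervals to compact intervals and preserves integrability of measurable functions against Lebesgue measure. The substitution argument above is the whole content.
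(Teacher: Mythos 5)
Your proof is correct and uses exactly the same change of variables $s=Cr^c$ as the paper's proof, with the additional (harmless but welcome) care about local integrability and the vanishing-at-infinity clause. Your Jacobian $\frac{ds}{s}=c\,\frac{dr}{r}$ is in fact the correct one; the paper's stated factor $\frac{1}{cC^{1-c}}$ contains a typo, though the constant is immaterial to the conclusion.
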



\begin{proof}[Proof of \cref{lem:h(r^c)_log_integrable}]
Using the change of variables $s=Cr^c$, so that $\frac{dr}{r}=\frac{1}{cC^{1-c}}\cdot \frac{ds}{s}$, yields that if $h$ is logarithmically integrable then
\[\int_1^\infty \frac{|h(Cr^c)|}{r} \dif r = \frac{1}{cC^{1-c}}\int_C^\infty \frac{|h(s)|}{s} \dif s<\infty\]
as required.
\end{proof}

We are now ready to prove \cref{prop:second_moment} in the case that the triangle condition holds.

\begin{proof}[Proof of \cref{prop:second_moment}, Case 3]
As before, the desired bounds on $\cE_{1,r}$ and $\cH_r$ have already been established in \cref{prop:first_moment}, so that it suffices to show that $\cE_{2,r}\to 0$ as $r\to \infty$ and that $\cE_{2,r}$ is logarithmically integrable whenever $\nabla_r$ is.
It follows from \cref{lem:disjoint_connections_triangle_second} and \cref{lem:higher_polygon_diagrams} that there exists a positive constant $c$ such that
\[
\cE_{2,r} \preceq \frac{(\nabla_{r^c} + r^{-c}) (\E_{\beta_c,r}|K|)^{4}}{\E_{\beta_c,r}|K|\E_{\beta_c,r}|K|^2}.
\]
If the triangle condition holds then $(\nabla_{r^c} + r^{-c})$ converges to zero as $r\to \infty$, and is logarithmically integrable whenever $\nabla_r$ is by \cref{lem:h(r^c)_log_integrable}. Thus, it suffices to prove that if the triangle condition holds then $\E_{\beta_c,r}|K|^2 \succeq (\E_{\beta_c,r}|K|)^3\succeq r^{3\alpha}$ as $r\to \infty$, the complementary upper-bound on the second moment following from the tree-graph inequality. Since the triangle condition implies the volume-tail upper bound $\P_{\beta_c}(|K|\geq n)\preceq n^{-1/2}$ \cite{MR1127713,HutchcroftTriangle}, we can bound the first moment from above by
\begin{align*}
\E_{\beta_c,r}|K| &= \sum_{n=1}^\infty \P_{\beta_c,r}(|K|\geq n) \leq \sum_{n=1}^N \P_{\beta_c}(|K|\geq n) + \sum_{n=N}^\infty \frac{\E_{\beta_c,r}|K|^2}{n^2} 
\\ &\preceq \sum_1^N n^{-1/2} + N^{-1}\E_{\beta_c,r}|K|^2  \preceq N^{1/2} + N^{-1}\E_{\beta_c,r}|K|^2
\end{align*}
for every $r>0$ and integer $N\geq 1$. Optimizing by taking $N=\lceil (\E_{\beta_c,r}|K|^2)^{2/3} \rceil$ yields the claim.
\end{proof}

\subsection{Higher moments}

We now analyze the higher moments in terms of the first and second moment. We will see that, due to the different form of the relevant asymptotic ODEs, there is no further distinction between the various cases treated by \cref{prop:first_moment,prop:second_moment}: We can always obtain a first-order asymptotic expression for higher moments in terms of the first and second moments whenever one of the hypotheses of \cref{prop:first_moment} holds, whether or not the errors in \cref{prop:first_moment,prop:second_moment} are logarithmically integrable. In other words, under the hypotheses of \cref{prop:first_moment}, any logarithmic corrections that occur in the asymptotics of the cluster volume moments must appear in the second moment, and there is no further generation of ``new'' logarithmic corrections at higher moments.

\begin{prop}[Higher moment asymptotics]
\label{prop:higher_moments}
If any of the three conditions from \cref{prop:first_moment} holds, then for each integer $p\geq 1$ the asymptotic estimate
\[
\E_{\beta_c,r}|K|^p \sim (2p-3)!! \left(\frac{\E_{\beta_c,r}|K|^2}{\E_{\beta_c,r}|K|}\right)^{p-1}\E_{\beta_c,r}|K|
\]
holds as $r\to \infty$. In particular, $\E_{\beta_c,r}|K|^p$ is regularly varying of index $(2p-1)\alpha$.
\end{prop}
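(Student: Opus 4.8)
The plan is to argue by strong induction on $p$, treating the $p$th moment via the derivative formula \eqref{eq:pth_moment_derivative} and an ODE lemma of the form $f' \sim a r^{-1} f$ (\cref{lem:ODE_self_referential}), exactly as in the proof of \cref{prop:second_moment}. The base cases $p=1$ and $p=2$ are \cref{prop:first_moment} and \cref{prop:second_moment}. For the inductive step, fix $p\geq 3$ and assume the claimed asymptotics hold for all smaller indices; in particular $\E_{\beta_c,r}|K|^\ell$ is regularly varying of index $(2\ell-1)\alpha$ for each $\ell<p$, and $\E_{\beta_c,r}|K| \sim \tfrac{\alpha}{\beta_c} r^\alpha$. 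Starting from \eqref{eq:pth_moment_derivative}, the dominant term in the sum is the $\ell = p-1$ term, which equals $\beta_c|J'(r)|\, p\, \E_{\beta_c,r}[|K|^p \sum_{y\in B_r}\mathbbm 1(y\notin K)|K_y|]$; using \cref{lem:BK_disjoint_clusters_covariance} this should be $\sim p\beta_c|J'(r)||B_r| \,\E_{\beta_c,r}|K|^p\,\E_{\beta_c,r}|K|$, which after our normalization conventions \eqref{eq:normalization_conventions} and \cref{lem:ball_regularity} becomes $\sim p\alpha r^{-1}\E_{\beta_c,r}|K|^p$. All the terms with $\ell \leq p-2$ involve products $\E_{\beta_c,r}|K|^{\ell+1}\E_{\beta_c,r}|K|^{p-\ell}$ with $\ell+1\leq p-1$ and $p-\ell\geq 2$, so by the inductive hypothesis they are regularly varying of strictly smaller index and contribute lower-order; the term $\ell=0$ gives $\beta_c|J'(r)||B_r|\E_{\beta_c,r}|K|\E_{\beta_c,r}|K|^p\sim \alpha r^{-1}\E_{\beta_c,r}|K|^p$ as well. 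Collecting, we expect
\[
\frac{d}{dr}\E_{\beta_c,r}|K|^p = (1-\overline{\cE}_{p,r})\,(2p-1)\,\alpha\, r^{-1}\,\E_{\beta_c,r}|K|^p
\]
for an error $\overline{\cE}_{p,r}\to 0$, where the coefficient $(2p-1)$ arises as $\sum_{\ell=0}^{p-1}\binom{p}{\ell}$ restricted to the two ``extreme'' contributions being negligible except $\ell\in\{0,p-1\}$... actually more carefully $1 + p$ from $\ell=0$ and $\ell=p-1$; one must check that indeed only these two survive and that $1+p$ is not the right constant — rather the full combinatorial identity in \eqref{eq:volume_moments_ODE_intro} must be unwound. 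This is where care is needed: the correct statement is that \emph{all} terms $\ell=1,\dots,p-2$ together contribute a lower-order amount, while $\ell=0$ and $\ell=p-1$ together give coefficient $(2p-1)$ only after one also uses the precise first-order asymptotics of the lower moments to identify $\E_{\beta_c,r}|K|^{p}$ inside them; in fact it is cleanest to write the ODE as $\tfrac{d}{dr}\E_{\beta_c,r}|K|^p \sim \beta_c r^{-\alpha-1}\sum_{\ell}\binom p\ell \E_{\beta_c,r}|K|^{\ell+1}\E_{\beta_c,r}|K|^{p-\ell}$ (the mean-field form), substitute the inductive asymptotics $\E_{\beta_c,r}|K|^j\sim(2j-3)!!\,u(r)^{j-1}v(r)$ with $u=\E_{\beta_c,r}|K|^2/\E_{\beta_c,r}|K|$, $v=\E_{\beta_c,r}|K|$ for $j<p$, and verify the algebraic identity $\sum_{\ell=0}^{p-1}\binom p\ell (2\ell-1)!!(2(p-\ell)-3)!! = (2p-2)!!\cdot(2p-3)!!/(\text{something})$... — concretely one checks that the resulting recursion for the sequence $a_p:=\E_{\beta_c,r}|K|^p/(u^{p-1}v)$ is $a_p$ satisfies $\tfrac{d}{dr}(u^{p-1}v\,a_p) = \beta_c r^{-\alpha-1}[\,\text{terms in }a_\ell,\ell<p\,] + (1+p)\alpha r^{-1}u^{p-1}v\,a_p$, whose solution forces $a_p\to(2p-3)!!$ by \cref{lem:ODE_self_referential}.

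The error term $\overline{\cE}_{p,r}$ must be controlled. Writing $\overline{\cE}_{p,r}$ as the sum of the combinatorial-ball error $\cE_{0,r}$ (logarithmically integrable, hence $o(1)$), the correction coming from replacing lower moments by their asymptotics (which is $o(1)$ by the inductive hypothesis, and logarithmically integrable when the errors $\cH_r$ of \cref{prop:first_moment} and the second-moment error of \cref{prop:second_moment} are), and a genuine covariance error bounded via \cref{lem:BK_disjoint_clusters_covariance} by
\[
0 \le \frac{\E_{\beta_c,r}\bigl[|K|^{p}\,|K\cap B_r|\bigr]}{|B_r|\,\E_{\beta_c,r}|K|^p\,\E_{\beta_c,r}|K|},
\]
together with its analogues for the $\ell$-terms. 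The key point is that since we already know $\E_{\beta_c,r}|K|$ has exact $r^\alpha$ order, we do not need a bootstrap; in Case~1 ($d>3\alpha$) one uses the generalized tree-graph inequality \cref{lem:generalized_tree_graph} to bound $\E_{\beta_c,r}[|K|^{p+1}]\preceq \E_{\beta_c,r}|K|^2(\E_{\beta_c,r}|K|)^{2p-2}$ and hence the covariance error by $O(r^{3\alpha-d})$, in Case~2 ($d=3\alpha$ $+$ \eqref{Hydro}) one uses Cauchy--Schwarz plus \cref{cor:universal_tightness_moments} and \cref{thm:two_point_spatial_average_upper} to bound $\E_{\beta_c,r}|K\cap B_r|^2\preceq r^\alpha M_r = o(r^{(d+\alpha)/2})$ as in \eqref{eq:E2_bound_hydro_1}--\eqref{eq:E2_bound_hydro_2}, and in Case~3 (triangle condition) one applies \cref{lem:disjoint_connections_triangle_second} together with \cref{lem:higher_polygon_diagrams} to bound the error by $\nabla_{r^c}+r^{-c}$ times a ratio of moments that is $O(1)$ by the already-established regular variation of the first and second moments. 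In all three cases $\overline{\cE}_{p,r}\to 0$, and it is logarithmically integrable precisely when the $p=1,2$ errors are. Finally, applying \cref{lem:ODE_self_referential} to $f(r)=\E_{\beta_c,r}|K|^p$ gives that it is regularly varying of index $(2p-1)\alpha$ and, combined with the identified constant $(2p-3)!!$ in the limiting recursion, yields the asymptotic formula
\[
\E_{\beta_c,r}|K|^p \sim (2p-3)!!\left(\frac{\E_{\beta_c,r}|K|^2}{\E_{\beta_c,r}|K|}\right)^{p-1}\E_{\beta_c,r}|K|,
\]
completing the induction.

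The main obstacle I anticipate is not the analytic ODE machinery (which is now routine, given \cref{lem:ODE_self_referential,lem:f'=f^2,lem:ball_regularity}) but rather the \emph{bookkeeping of the combinatorial constant}: one must verify that after substituting the inductive asymptotics into the full triangular system \eqref{eq:volume_moments_ODE_intro}, the self-referential coefficient is exactly $(2p-1)\alpha$ and the ``forcing'' coefficient (the terms with $1\le\ell\le p-2$) produces, via \cref{lem:ODE_self_referential}, a limiting value of $a_p$ equal to exactly $(2p-3)!!$ and not merely a constant. The cleanest route is to recognize the recursion $a_p = \sum_{\ell=1}^{p-1}\binom p\ell a_{\ell}a_{p-\ell}/(2p-2)$ (or the appropriate normalization) as the one solved by the double-factorials, which is the continuum analogue of the plane-tree count described in the remark after \cref{thm:hd_moments_main}; alternatively one can cite that $(2p-3)!!$ is the number of binary trees with $p$ labelled leaves and directly match the tree-graph combinatorics. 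A secondary subtlety is ensuring that the lower-order terms' errors are genuinely $o(1)$ \emph{uniformly enough} to be integrated through the ODE, but this follows from regular variation (index strictly less than $(2p-1)\alpha$) of each product $\E_{\beta_c,r}|K|^{\ell+1}\E_{\beta_c,r}|K|^{p-\ell}$ together with \cref{lem:regular_variation_integration}.
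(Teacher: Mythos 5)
There is a genuine gap at the heart of your argument: you misidentify which terms in \eqref{eq:pth_moment_derivative} drive the asymptotics. You assert that the terms with $1\le \ell\le p-2$ are ``regularly varying of strictly smaller index and contribute lower-order''. This is false: every product $\E_{\beta_c,r}|K|^{\ell+1}\E_{\beta_c,r}|K|^{p-\ell}$, $0\le \ell\le p-1$, has the \emph{same} index of regular variation $2p\alpha$. The correct decomposition is not by size but by which terms involve the unknown $p$th moment versus which involve only lower moments known by induction: the terms $\ell=0$ and $\ell=p-1$ give a self-referential contribution $(p+1)\alpha r^{-1}\E_{\beta_c,r}|K|^p$ (coefficient $p+1$, not $2p-1$), while the terms $1\le\ell\le p-2$ form a genuine inhomogeneous driving term that is regularly varying of index $(2p-1)\alpha>(p+1)\alpha$. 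The relevant ODE lemma is therefore \cref{lem:ODE_with_driving_term}, not \cref{lem:ODE_self_referential}: it forces $\E_{\beta_c,r}|K|^p$ to be asymptotic to the driving term divided by $(2p-1)\alpha-(p+1)\alpha=(p-2)\alpha$, and the prefactor $(2p-3)!!$ then drops out of the identity $\sum_{\ell=1}^{p-2}\binom{p}{\ell}(2\ell-1)!!(2p-2\ell-3)!!=(p-2)\cdot(2p-3)!!$ (a consequence of \cref{lem:double_fun}) after the factor $p-2$ cancels. Applying \cref{lem:ODE_self_referential} to the a posteriori correct relation $f'\sim(2p-1)\alpha r^{-1}f$, as you propose, can only yield regular variation of index $(2p-1)\alpha$ together with an \emph{unidentified} constant, so your route cannot produce the combinatorial prefactor; the hedged recursion you write for $a_p$ is also not the one that comes out of the ODE.

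A second gap is your claim that no preliminary bound is needed. In Case 2 ($d=3\alpha$ together with \eqref{Hydro}) the paper first proves the a priori upper bound \eqref{eq:higher_moments_upper} by a separate induction using \cref{lem:ODE_with_driving_term_Gronwall}, and this sharp bound is genuinely needed to control the covariance error $\E_{\beta_c,r}[|K|^{p+1}|K\cap B_r|]$ via Cauchy--Schwarz. The generalized tree-graph inequality alone bounds $\E_{\beta_c,r}|K|^{2p+2}$ by a constant times $(\E_{\beta_c,r}|K|^2)^{p+1}(\E_{\beta_c,r}|K|)^{p}$, which exceeds $(\E_{\beta_c,r}|K|^2/\E_{\beta_c,r}|K|)^{2p+1}(\E_{\beta_c,r}|K|)^{2}$ by a factor $\bigl((\E_{\beta_c,r}|K|)^3/\E_{\beta_c,r}|K|^2\bigr)^{p}$; when $d=3\alpha<6$ this factor is a positive power of $\log r$ that the purely qualitative bound $M_r=o(r^{(d+\alpha)/2})$ cannot absorb for large $p$. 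Your error analysis in Cases 1 and 3 matches the paper's, but the induction must be restructured as a double induction (upper bound first, then asymptotics) and the ODE step replaced as above before the proposition is actually proved.
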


The proof of \cref{prop:higher_moments} will rely on the following analytic lemma about approximate ODEs $f'\sim r^{-1}(a f+h)$ in which the term $h$, which we often refer to as the \textbf{driving term}, is regularly varying of index strictly larger than $a$: Such ODEs have good stability properties, and the function $f$ is forced to scale as a specific constant multiple of $h(r)$. (In particular, solutions to the asymptotic ODE are asymptotic to solutions to the exact ODE.) We will study the cases in which $h$ has index less than or equal to $a$ in \cref{sec:superprocesses} and the case in which $f$ is of uncertain sign in \cref{lem:signed_ODE_analysis}.

\begin{lemma}[$f'\sim r^{-1}(af+h)$ with $h$ non-negligible]
\label{lem:ODE_with_driving_term}
Let $a>0$, let $h:(0,\infty)\to (0,\infty)$ be a measurable, regulary varying function of index $b>0$, and suppose that $f$ is a differentiable function such that
\[
f'(r)\sim \frac{af(r)+h(r)}{r}
\]
as $r\to \infty$. 
If $b>a$ then 
\[f(r)\sim \frac{h(r)}{b-a}\] as $r\to\infty$. In particular, $f$ is regularly varying of index $b$.
\end{lemma}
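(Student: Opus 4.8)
The plan is to reduce the statement to the already-proven self-referential ODE lemma (\cref{lem:ODE_self_referential}) by a change of variables that subtracts off the expected leading behaviour. Since $h$ is regularly varying of index $b>a>0$, \cref{lem:regular_variation_integration} tells us that its integral behaves predictably, and this is the source of the constant $1/(b-a)$: if $f$ were \emph{exactly} $\frac{h(r)}{b-a}$ then, using $h'(r)\sim b r^{-1}h(r)$ (which holds for differentiable regularly varying functions, or can be circumvented by working with integrated forms), we would get $f'(r)\sim \frac{b}{b-a}r^{-1}h(r) = r^{-1}\bigl(\frac{a}{b-a}h(r)+h(r)\bigr) = r^{-1}(af(r)+h(r))$, which is consistent. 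So $\frac{h}{b-a}$ is the ansatz and we must show it is forced.

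First I would set $g(r) = f(r) - \frac{h(r)}{b-a}$ and aim to show $g(r) = o(h(r))$, i.e. $g$ is negligible compared to the ansatz; equivalently I would show directly that $f(r)/h(r)\to \frac{1}{b-a}$. Rewriting the hypothesis as $(\log$-type$)$ estimates is awkward because $f$ and $h$ need not be comparable a priori, so instead I would integrate. From $f'(r) = \frac{af(r)+h(r)}{r}(1+o(1))$ we get, by the integrating-factor method for the linear-looking equation $f' = \frac{a}{r}f + \frac{h}{r}(1+o(1)) + \frac{o(1)}{r}f$, the representation
\[
f(R) = \left(\frac{R}{r}\right)^a f(r) + \int_r^R \left(\frac{R}{s}\right)^a \frac{h(s)+o(h(s)) \pm o(f(s))}{s}\dif s,
\]
valid for $R\ge r\ge r_0$. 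The key computation is $\int_r^R (R/s)^a s^{-1} h(s)\dif s = R^a \int_r^R s^{-a-1}h(s)\dif s$; since $s^{-a-1}h(s)$ is regularly varying of index $b-a-1 > -1$, \cref{lem:regular_variation_integration} gives $\int_1^R s^{-a-1}h(s)\dif s \sim \frac{R^{-a}h(R)}{b-a}$, and the contribution of the lower limit $r$ (fixed relative to $R\to\infty$) is negligible. Thus the driving term alone produces $\sim \frac{h(R)}{b-a}$. The terms $(R/r)^a f(r)$ and $R^a\int_r^R s^{-a-1}\cdot o(f(s))\dif s$ must be shown to be $o(h(R))$.

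The main obstacle is controlling these error terms when I do not yet know $f\asymp h$: a priori $f$ could be much larger than $h$. The standard fix is a \emph{Gronwall / bootstrap} argument. First establish an upper bound: for any $\eps>0$, there is $r_\eps$ with $f'(r)\le \frac{(a+\eps)f(r) + (1+\eps)h(r)}{r}$ for $r\ge r_\eps$; comparing with the solution of the corresponding exact ODE (whose driving term has index $b > a+\eps$ once $\eps$ is small) shows $\limsup f(r)/h(r) < \infty$, hence $f = O(h)$. Once $f = O(h)$ is known, the term $R^a \int_r^R s^{-a-1} o(f(s))\dif s = R^a\int_r^R s^{-a-1}o(h(s))\dif s = o(h(R))$ by the same regular-variation integral estimate, and $(R/r)^a f(r) = O(R^a) = o(R^b) = o(h(R))$ using $h(R) = R^{b+o(1)}$. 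Feeding this back into the representation gives $f(R) = \frac{h(R)}{b-a}(1+o(1))$, and then regular variation of $f$ of index $b$ follows from $f\sim \frac{h}{b-a}$ together with regular variation of $h$ (preservation of regular variation under asymptotic equivalence, as noted in \cref{subsec:regular_variation_and_logarithmic_integrability}). I would also double-check the edge case where $h$ is merely regularly varying (not eventually monotone) so that $h'$ need not be controlled pointwise — this is precisely why the argument is organized around integrating $f'$ rather than differentiating the ansatz.
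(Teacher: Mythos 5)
Your proposal is correct and follows essentially the same route as the paper: rewrite the hypothesis as a first-order linear ODE, solve by variation of constants, and evaluate the driving integral with \cref{lem:regular_variation_integration} (the integrand being regularly varying of index $b-a-1>-1$). The only real difference is that the paper absorbs the $o(1)$ error \emph{multiplicatively} into both coefficients at once, writing $f'-(1-\delta_r)ar^{-1}f=(1-\delta_r)r^{-1}h$ and folding $\delta_r$ exactly into the integrating factor $\exp[\int_{r_0}^r(1-\delta_s)as^{-1}\dif s]$, which is still regularly varying of index $a$; this makes your preliminary Gr\"onwall bootstrap to establish $f=O(h)$ unnecessary (and sidesteps the positivity of $f$ that your bootstrap step implicitly uses).
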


\begin{proof}[Proof of \cref{lem:ODE_with_driving_term}]
Since $f'(r)\sim ar^{-1}f(r)+r^{-1}h(r)$, there exists $r_0<\infty$ and a (not necessarily positive) measurable function $\delta:[r_0,\infty)\to \R$ with $|\delta_r|\to 0$ as $r\to \infty$ such that $f'-(1-\delta_r)ar^{-1}f(r)=(1-\delta_r)r^{-1}h(r)$ for every $r\geq r_0$. Since $h$ is measurable and regularly varying, we may take $r_0$ sufficiently large that $h$ is locally integrable on $[r_0,\infty)$. Recognizing $f'-(1-\delta_r)ar^{-1}f(r)=(1-\delta_r)r^{-1}h(r)$  as a first-order linear ODE, we can write down the explicit solution
\begin{multline*}
f(r) = \exp\left[\int_{r_0}^r (1-\delta_s)as^{-1} \dif s\right] f(r_0) \\+ \exp\left[\int_{r_0}^r (1-\delta_s)as^{-1} \dif s\right]\int_{r_0}^r \frac{h(s)}{s}\exp\left[-\int_{r_0}^s (1-\delta_t)at^{-1} \dif t\right] \dif s.
\end{multline*}
Since $\exp \int_{r_0}^r (1-\delta_s)as^{-1} \dif s$ is regularly varying of index $a$, the integrand $s^{-1}h(s)\exp[-\int_{r_0}^s (1-\delta_t)at^{-1} \dif t]$ is regularly varying of index $b-a-1>-1$. Applying \cref{lem:regular_variation_integration}, this implies that
\[
  \int_{r_0}^r \frac{h(s)}{s}\exp\left[-\int_{r_0}^s (1-\delta_t)at^{-1} \dif t\right] \dif s \sim \frac{1}{b-a} h(r)\exp\left[-\int_{r_0}^r (1-\delta_t)at^{-1} \dif t\right],
\]
and the claim follows easily since the other term is of lower order. \qedhere

\end{proof}

It will also be useful to have the following inequality version of \cref{lem:ODE_with_driving_term}.




\begin{lemma}
\label{lem:ODE_with_driving_term_Gronwall}
Let $a,b>0$, let $h:(0,\infty)\to(0,\infty)$ be a measurable function that is regularly varying of index $b$, and suppose that $f$ is a positive function such that
\[
f'(r)\leq (1+o(1))\frac{af(r)+h(r)}{r}
\]
as $r\to \infty$.
If $b>a$ then 
\[f(r) \leq (1+o(1)) \frac{h(r)}{b-a}\] as $r\to\infty$. 
\end{lemma}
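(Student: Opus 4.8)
The plan is to adapt the proof of \cref{lem:ODE_with_driving_term}, replacing the exact linear first-order ODE by the corresponding differential inequality and running the same integrating-factor (Grönwall-type) argument. First I would use the hypothesis together with $f,h>0$ to produce $r_0<\infty$ and a measurable $\delta:[r_0,\infty)\to[-1/2,1/2]$ with $|\delta_r|\to 0$ such that
\[
f'(r) - (1+\delta_r)\frac{a}{r}f(r) \;\le\; (1+\delta_r)\frac{h(r)}{r} \qquad \text{for all } r\ge r_0,
\]
enlarging $r_0$ if necessary so that $h$ is locally integrable on $[r_0,\infty)$ (possible since $h$ is measurable and regularly varying). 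Setting $\Phi(r):=\exp\!\big(\int_{r_0}^r (1+\delta_s)a s^{-1}\,\dif s\big)$, we have $\Phi'(r)=(1+\delta_r)a r^{-1}\Phi(r)\sim a r^{-1}\Phi(r)$, so \cref{lem:ODE_self_referential} shows $\Phi$ is regularly varying of index $a$.

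Next I would multiply the displayed inequality by the positive integrating factor $\Phi(r)^{-1}$ to get $(\Phi^{-1}f)'(r)\le (1+\delta_r)\Phi(r)^{-1}r^{-1}h(r)$, and integrate from $r_0$ to $r$ to obtain
\[
f(r) \;\le\; \frac{\Phi(r)}{\Phi(r_0)}\,f(r_0) \;+\; \Phi(r)\int_{r_0}^r (1+\delta_s)\,\frac{h(s)}{s\,\Phi(s)}\,\dif s.
\]
The integrand is asymptotic to $h(s)/(s\Phi(s))$, which is regularly varying of index $b-1-a>-1$ (using $b>a$), so \cref{lem:regular_variation_integration} together with the preservation of regular variation under first-order equivalence gives $\int_{r_0}^r (1+\delta_s)h(s)(s\Phi(s))^{-1}\,\dif s \sim \frac{1}{b-a}\,h(r)/\Phi(r)$ as $r\to\infty$; hence the second term on the right equals $(1+o(1))\,h(r)/(b-a)$. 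For the boundary term, $\Phi(r)/h(r)$ is regularly varying of index $a-b<0$ and therefore tends to $0$, so $\Phi(r)\Phi(r_0)^{-1}f(r_0)=o(h(r))$. Combining the two estimates yields $f(r)\le (1+o(1))\,h(r)/(b-a)$, which is the claim.

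The argument is essentially routine, being a differential-inequality version of the exact computation already carried out for \cref{lem:ODE_with_driving_term}. The only points that require a little care — and the closest thing to an obstacle — are verifying that the integrating factor $\Phi$ is genuinely regularly varying of index exactly $a$ (so that the transformed integrand has the regular variation needed to invoke \cref{lem:regular_variation_integration}) and confirming that the boundary term $\Phi(r)\Phi(r_0)^{-1}f(r_0)$ is negligible compared with $h(r)$; both of these use the strict inequality $b>a$ and the standard facts about regularly varying functions recorded in \cref{subsec:regular_variation_and_logarithmic_integrability}.
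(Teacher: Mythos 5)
Your proof is correct and is essentially the paper's argument: the paper simply cites Gr\"onwall's lemma (which is exactly your integrating-factor derivation) and then notes that the rest follows by the same regular-variation calculation as in \cref{lem:ODE_with_driving_term}, which is precisely the computation you carry out. The two points you flag as needing care (that $\Phi$ is regularly varying of index $a$, and that the boundary term is $o(h(r))$ since $a-b<0$) are handled correctly.
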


\begin{proof}[Proof of \cref{lem:ODE_with_driving_term_Gronwall}]
Gr\"onwall's lemma \cite[1.III]{walter2012differential} states that if $f$ is differentiable and satisfies the first-order linear differential inequality
\[
f'(r) \leq g(r)f(r)+h(r)
\]
for every $r\geq r_0$ and
for some locally integrable functions $g$ and $h$ then
\[
f(r) \leq \exp\left[\int_{r_0}^r g(s)\dif s\right] \left(f(r_0)+\int_{r_0}^r  \exp\left[-\int_{r_0}^s g(t)\dif t\right] h(s)\dif s\right)
\]
for every $r\geq r_0$. Using this inequality, the proof of the lemma follows by exactly the same calculation as the proof of \cref{lem:ODE_with_driving_term}. \qedhere

\end{proof}

The proof will also rely on the following identity for double factorials. This identity is closely related to that of \cite[Lemma 4.18]{hutchcroft2022critical}, and is proven using a similar method. See \cite{MR2924154,callan2009combinatorial} for broader discussions of identities of this kind.

\begin{lemma} The identity
\label{lem:double_fun}
$\sum_{k=0}^{n-1}\binom{n}{k}(2k-1)!!(2n-2k-3)!!
= (2n-1)!!$
holds for every $n\geq 2$.
\end{lemma}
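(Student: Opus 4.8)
The identity to prove is
\[
\sum_{k=0}^{n-1}\binom{n}{k}(2k-1)!!\,(2n-2k-3)!! = (2n-1)!!
\]
for every $n\ge 2$, with the convention $(-1)!!=1$ so that the $k=0$ term reads $(2n-3)!!$ and the $k=n-1$ term reads $n(2n-3)!!$ (after noting $(2n-2(n-1)-3)!!=(-1)!!=1$). The natural approach is combinatorial: interpret both sides as counting the same set of trees that already appears throughout the paper, namely plane trees (or abstract trees) with $n+1$ leaves labelled $0,1,\dots,n$ and all internal vertices of degree $3$. The remark after Theorem~\ref{thm:hd_moments_main} already records that $(2p-3)!!$ counts such trees with $p$ labelled non-root leaves; equivalently, $(2n-1)!!$ counts the trees on leaf set $\{0,1,\dots,n,n+1\}$ with all internal vertices of degree $3$, rooted so that one designated leaf plays a special role. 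I would fix the precise model: let $\mathbb{T}_n$ be the set of (abstract, unordered) binary trees with leaves labelled $0,1,\dots,n$, all non-leaf vertices of degree $3$; then $|\mathbb{T}_n| = (2n-3)!!$ for $n\ge 2$, and the right-hand side $(2n-1)!!$ counts $\mathbb{T}_{n+1}$ (trees with an extra leaf). Actually, to match the statement cleanly it is easier to say: $(2n-1)!!$ is the number of ways to build such a tree on leaves $\{0,1,\dots,n\}$ together with one extra marked leaf, or — cleanest — $(2n-1)!! = |\mathbb{T}_{n+1}|$ counting binary trees with $n+2$ leaves labelled $0,\dots,n+1$.

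The key step is a bijective/decomposition argument on the left-hand side. Given a tree $T\in\mathbb{T}_{n+1}$ (leaves $0,1,\dots,n,n+1$), look at the path from leaf $0$ to leaf $n+1$; the first internal vertex $v$ on this path adjacent to leaf $0$ has a third neighbour, which is the root of a subtree $S$ hanging off; the rest of $T$ minus $S$ (with $v$ suppressed) is another subtree. This splits the leaf set $\{1,\dots,n\}$ into the leaves going into $S$ (say a $k$-subset) and the leaves going into the other part. Summing over the size $k$ of the $S$-side and over which $k$-subset of $\{1,\dots,n\}$ it is gives $\binom{n}{k}$, while the $S$-side is a binary tree with $k$ labelled leaves plus the attachment point — contributing $(2k-1)!!$ in the appropriate normalization — and the remaining side is a binary tree with $n-k$ labelled leaves plus two attachment points (leaf $0$ and leaf $n+1$) — contributing $(2n-2k-3)!!$. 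Getting the index bookkeeping exactly right (the shifts by $1$ and $3$ in the double factorials, and why the endpoints $k=0$ and $k=n-1$ behave as they do) is the part that needs care. Alternatively, I would do this purely algebraically: both $\sum_n \frac{(2n-1)!!}{n!}x^n$ and the generating function of $(2k-1)!!$ are related to $(1-2x)^{-1/2}$, and the left-hand side is a binomial (EGF) convolution $\left(\sum \frac{(2k-1)!!}{k!}x^k\right)\cdot\left(\sum \frac{(2m-3)!!}{m!}x^m\right)$; one then checks the functional/ODE identity $(1-2x)^{-1/2}\cdot f(x) = (\text{RHS gen.\ fn.})$ where $f(x)=\sum \frac{(2m-3)!!}{m!}x^m = 1 - \sqrt{1-2x}$ (up to the $m=0,1$ conventions), and the identity $(1-2x)^{-1/2}(1-\sqrt{1-2x})$ compared with $\sum \frac{(2n-1)!!}{n!}x^n - 1 = (1-2x)^{-1/2}-1$ closes it.

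I expect the main obstacle to be the edge-case bookkeeping around the non-standard double-factorial conventions $(-1)!!=1$ and $(-3)!!$ (which does not occur here since the smallest argument is $-1$, good), together with making the tree decomposition genuinely bijective rather than merely an inequality — in particular handling the two ``boundary'' leaves $0$ and $n+1$ symmetrically so that the factor is $(2n-2k-3)!!$ and not something off by a factor. Given that the paper already cites \cite[Lemma 4.18]{hutchcroft2022critical} as a close analogue proven by a similar method, and references \cite{MR2924154,callan2009combinatorial} for this family of identities, the cleanest writeup is probably the generating-function proof: define $F(x)=\sum_{k\ge 0}\frac{(2k-1)!!}{k!}x^k = (1-2x)^{-1/2}$, observe $\sum_{m\ge 0}\frac{(2m-3)!!}{m!}x^m = 1-(1-2x)^{1/2}$ (checking $m=0$ gives $1$ from $(-3)!!$... here one must instead start the series appropriately, so more precisely take $G(x)=x + \sum_{m\ge 2}\frac{(2m-3)!!}{m!}x^m$ matching $1-\sqrt{1-2x}$ after adjusting the constant term), multiply $F\cdot G = (1-2x)^{-1/2} - 1 + (\text{lower order})$, and extract the coefficient of $x^n/n!$ for $n\ge 2$ to read off $(2n-1)!!$. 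The verification that $(1-2x)^{-1/2}\bigl(1-\sqrt{1-2x}\bigr) = (1-2x)^{-1/2}-1$ is immediate, so once the constant/linear terms are reconciled the identity follows for all $n\ge 2$.
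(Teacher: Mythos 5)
Your proposal is correct, and the generating-function argument you settle on at the end is exactly the paper's proof: the paper writes $\sum_{n\geq 0}\frac{(2n-1)!!}{n!}x^n=(1-2x)^{-1/2}$ and $\sum_{n\geq 1}\frac{(2n-3)!!}{n!}x^n=1-\sqrt{1-2x}$ (starting the second series at $n=1$, which disposes of your worry about a constant term), multiplies them via the EGF product rule, and compares coefficients using $(1-2x)^{-1/2}(1-\sqrt{1-2x})=(1-2x)^{-1/2}-1$. The combinatorial tree decomposition you sketch first is a viable alternative but is not the route the paper takes.
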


\begin{proof}[Proof of \cref{lem:double_fun}] It follows from the generalized binomial theorem that the exponential generating function of $(2n-1)!!$ and $(2n-3)!!$ are given by
\[\sum_{n =0}^\infty \frac{(2n-1)!!}{n!}x^n = \frac{1}{\sqrt{1-2x}} \qquad \text{ and } \qquad \sum_{n =1}^\infty \frac{(2n-3)!!}{n!}x^n = 1-\sqrt{1-2x}. \]
 The product formula for exponential generating functions (Rule 3$'$ of \cite{MR2172781}) therefore yields that
\begin{multline*}
\sum_{n=1}^\infty \frac{x^n}{n!} \sum_{k=0}^{n-1} \binom{n}{k} (2k-1)!!(2n-2k-3)!!
=\left(\sum_{n =0}^\infty \frac{(2n-1)!!}{n!}x^n\right)\left(\sum_{n =1}^\infty \frac{(2n-3)!!}{n!}x^n\right) 
\\= \frac{1-\sqrt{1-2x}}{\sqrt{1-2x}} = \sum_{n =0}^\infty \frac{(2n-1)!!}{n!}x^n-1,
\end{multline*}
and the claim follows by comparing coefficients.
\end{proof}

\begin{proof}[Proof of \cref{prop:higher_moments}]
We lighten notation by writing $\E_r=\E_{\beta_c,r}$.
 The cases $p=1$ and $p=2$ (in which the asymptotic formula is vacuous but the claim concerning regular variation is not) follow from \cref{prop:first_moment} and \cref{prop:second_moment} respectively, so it suffices to consider $p\geq 3$. 
We will prove the claim by a double induction on $p$, in which we first prove that
\begin{equation}
\label{eq:higher_moments_upper}
\E_{\beta_c,r}|K|^p \leq (1+o(1))(2p-3)!! \left(\frac{\E_{\beta_c,r}|K|^2}{\E_{\beta_c,r}|K|}\right)^{p-1}\E_{\beta_c,r}|K|
\end{equation}
and then prove the asymptotic formula by inducting on $p$ a second time.
Suppose to this end that $p\geq 3$ and that the claimed upper bound \eqref{eq:higher_moments_upper}  has been established for all smaller $p$. 
We have by \cref{lem:moment_derivative} that
\begin{multline*}
\frac{d}{dr}\E_{r}|K|^p = \beta |J'(r)| \sum_{\ell=0}^{p-1}\binom{p}{\ell}\E_{r}\left[|K|^{\ell+1}\sum_{y\in B_r}\mathbbm{1}(y\notin K)|K_y|^{p-\ell} \right]
\\=
(p+1)\beta |J'(r)| \E_{r}\left[|K|^{p}\sum_{y\in B_r}\mathbbm{1}(y\notin K)|K_y| \right]
+ \beta |J'(r)| \sum_{\ell=1}^{p-2}\binom{p}{\ell}\E_{r}\left[|K|^{\ell+1}\sum_{y\in B_r}\mathbbm{1}(y\notin K)|K_y|^{p-\ell} \right],
\end{multline*}
where in the second line we used that the expectations appearing in the terms indexed by $\ell=0$ and $\ell=p-1$ are equal by the mass-transport principle. Using \cref{lem:BK_disjoint_clusters_covariance} to bound each of the expectations appearing on the right hand side from above, it follows that
\begin{equation*}
\frac{d}{dr}\E_{r}|K|^p \leq 
(p+1)\beta_c |J'(r)| |B_r| \E_r|K| \E_r|K|^{p}
+ \beta_c |J'(r)| |B_r|\sum_{\ell=1}^{p-2}\binom{p}{\ell}\E_r|K|^{\ell+1}\E_r|K|^{p-\ell},
\end{equation*}
and hence by \cref{prop:first_moment} and the induction hypothesis that
\begin{multline*}
\frac{d}{dr}\E_{r}|K|^p \leq (1+o(1)) 
(p+1) \alpha r^{-1} \E_r|K|^{p}
\\+ (1+o(1))\alpha \left[ \sum_{\ell=1}^{p-2}\binom{p}{\ell}(2\ell-1)!!(2p-2\ell-3)!!\right] r^{-1} \left( \frac{\E_r|K|^2}{\E_r|K|}\right)^{p-1} \E_r|K|.
\end{multline*}
Using \cref{lem:double_fun} to simplify the constant on the right hand side yields that
\[
\sum_{\ell=1}^{p-2}\binom{p}{\ell}(2\ell-1)!!(2p-2\ell-3)!! = (2p-1)!! - (2p-3)!! - p (2p-3)!! = (p-2)\cdot(2p-3)!!
\]
and hence that
\begin{equation*}
\frac{d}{dr}\E_{r}|K|^p \leq \frac{1+o(1)}{r} \left[
(p+1) \alpha  \E_r|K|^{p}
+ \alpha (p-2)(2p-3)!!  \left( \frac{\E_r|K|^2}{\E_r|K|}\right)^{p-1} \E_r|K|\right].
\end{equation*}
Since the driving term is regularly varying of index $(2p-1)\alpha>(p+1)\alpha$, we may apply \cref{lem:ODE_with_driving_term_Gronwall} to deduce that
\[
\E_{r}|K|^p \leq (1+o(1)) \frac{\alpha(p-2)(2p-3)!!}{\alpha(p-2)} \left( \frac{\E_r|K|^2}{\E_r|K|}\right)^{p-1} \E_r|K|
\]
as $r\to \infty$. Noting that the two $\alpha(p-2)$ factors cancel completes the induction step for the upper bound \eqref{eq:higher_moments_upper}.

\medskip

We now perform a second induction on $p$ to establish the claimed first-order asymptotic estimate on $\E_{r}|K|^p$. (Note that if $d>3\alpha$ or the triangle condition holds with $\nabla_r$ logarithmically integrable then the following argument is capable of proving the first-order asymptotic estimate directly, without the aid of the upper bound. Indeed, in these cases one can easily get bounds of the correct order on $\E_{r}|K|^p$ using the tree-graph inequality. The case $d=3\alpha$ is more delicate and will make use of the upper bound.)
To proceed, we need to prove  either that
\begin{equation}
\label{eq:higher_moment_decorrelation1}
\E_r\left[|K|^{\ell+1}\sum_{y\in B_r}\mathbbm{1}(y\notin K)|K_y|^{p-\ell} \right] \sim |B_r| \E_r|K|^{\ell+1} \E_r|K|^{p-\ell} 
\end{equation}
or that
\begin{equation}
\label{eq:higher_moment_decorrelation2}
 \E_r|K|^{\ell+1} \E_r|K|^{p-\ell}  - \frac{1}{|B_r|}\E_r\left[|K|^{\ell+1}\sum_{y\in B_r}\mathbbm{1}(y\notin K)|K_y|^{p-\ell} \right] = o\left(\left(\frac{\E_r|K|^2}{\E_r|K|}\right)^{p-1} (\E_r|K|)^2\right)
\end{equation}
for every $0\leq \ell \leq p-1$. Indeed, once this is done it will follow from the induction hypothesis that
\begin{multline}
\label{eq:either_estimate_suffices}
\frac{d}{dr}\E_{r}|K|^p \sim (p+1)\alpha r^{-1} \E_{r}|K|^p \\ + \alpha r^{-1} \left(\frac{\E_{r}|K|^2}{\E_{r}|K|}\right)^{p-1} \E_{r}|K| \sum_{\ell=1}^{p-2} \binom{p}{\ell} (2\ell-1)!! (2p-2\ell-3)!!.
\end{multline}
(Either estimate \eqref{eq:higher_moment_decorrelation1} or \eqref{eq:higher_moment_decorrelation2} is sufficient to prove this estimate since we can absorb the errors in approximating the relevant expectations into either the first or second term on the right hand side of \eqref{eq:either_estimate_suffices}.) Once this is established, we can conclude the induction step as we did with the upper bound but using \cref{lem:ODE_with_driving_term} instead of \cref{lem:ODE_with_driving_term_Gronwall}.

\medskip

It remains to prove one of the two decorrelation estimates \eqref{eq:higher_moment_decorrelation1} or \eqref{eq:higher_moment_decorrelation2} in each relevant case. As usual, we will take a slightly different approach in each case.

\medskip

\noindent
\textbf{Case 1: $d>3\alpha$.} 
First note that, since $\E_{r}|K|\asymp r^\alpha$ and $\E_{r}|K|^2 \asymp r^{3\alpha}$, it follows from the tree-graph inequality and H\"older's inequality that $\E_{r}|K|^p \asymp_p r^{(2p-1)\alpha}$ for every integer $p\geq 1$.
It follows from \cref{lem:BK_disjoint_clusters_covariance} that
\[
 0\leq |B_r| \E_{r}|K|^{\ell+1}\E_{r}|K|^{p-\ell} - \E_{r}\left[|K|^{\ell+1}\sum_{y\in B_r}\mathbbm{1}(y\notin K)|K_y|^{p-\ell} \right] \leq \E_{r} \left[|K|^{p+1} |K \cap B_r| \right].
\]
Using the tree-graph inequality we can bound
\[\E_{r} \left[|K|^{p+1} |K \cap B_r| \right] \leq \E_{r} |K|^{p+2} \preceq_p (\E_{r}|K|)^{2p+3} \asymp_p r^{(2p+3)\alpha},\]
and the claimed asymptotics follow since
\[
|B_r| \E_{r}|K|^{\ell+1}\E_{r}|K|^{p-\ell} \asymp_p r^{d+(2\ell+2-1)\alpha+(2p-2\ell-1)\alpha}=r^{d+2p\alpha}
\]
and $d+2p\alpha > (2p+3)\alpha$ when $d>3\alpha$.

\medskip

\noindent
\textbf{Case 2: $d=3\alpha$ $+$ \eqref{Hydro}.} 
In this case we can use Cauchy-Schwarz to bound  
\[
\E_{r}\left[|K|^{p+1} |K \cap B_r| \right] \leq \sqrt{\E_r|K|^{2p+2} \E_r [|K\cap B_r|^2]}.
\]
Using \eqref{eq:higher_moments_upper} to bound $\E_r|K|^{2p+2}$ and the universal tightness theorem as in \cref{cor:universal_tightness_moments} to bound $\E_r |K\cap B_r|^2 \preceq M_r \E_r |K\cap B_r| \leq M_r \E_r |K|$ yields that
\[
\E_{r}\left[|K|^{p+1} |K \cap B_r| \right] \preceq \left[ \left(\frac{\E_r|K|^2}{\E_r|K|}\right)^{2p+1}(\E_r|K|)^2 M_r \right]^{1/2}=o\left(r^{3\alpha} r^{\frac{d+\alpha}{4}} \left(\frac{\E_r|K|^2}{\E_r|K|}\right)^{p-1} \E_r|K|\right),
\]
where we used the tree-graph bound $\E_r|K|^2/\E_r|K| \preceq (\E_r|K|)^2 \preceq r^{2\alpha}$ when reducing the power of $\E_r|K|^2/\E_r|K|$ from $(2p+1)/2$ to $p-1$ in the second estimate.
Since $d=3\alpha$ it follows that
\[
\E_{r}\left[|K|^{p+1} |K \cap B_r| \right]  = o\left(r^d \left(\frac{\E_r|K|^2}{\E_r|K|}\right)^{p-1} (\E_r|K|)^2\right)
\]
as required.

\medskip

\noindent \textbf{Case 3: The triangle condition.} We have by \cref{lem:disjoint_connections_triangle_second} and \cref{lem:higher_polygon_diagrams} that there exists a constant $c>0$ such that
 \begin{multline*}
 \E_{r}|K|^{\ell+1} \E_{r}|K|^{p-\ell}  - \frac{1}{|B_r|}\E_{r}\left[|K|^{\ell+1}\sum_{y\in B_r}\mathbbm{1}(y\notin K)|K_y|^{p-\ell} \right]
\\\preceq_p (\nabla_{r^c} +r^{-c}) (\E_{r}|K|)^{2p} = o\left(\left(
\frac{\E_r|K|^2}{\E_r|K|}\right)^{p-1} (\E_r|K|)^2
\right)
 \end{multline*}
 as required, where in the final inequality we used that $\E_{r}|K|^2\asymp (\E_{r}|K|)^3$ by \cref{prop:second_moment}. This concludes the proof.
\end{proof}

We are now ready to conclude the proof of \cref{thm:hd_moments_main}.

\begin{proof}[Proof of \cref{thm:hd_moments_main}]
The claimed asymptotics on the moments $\E_{\beta_c,r}|K|^p$ follow immediately from \cref{prop:first_moment,prop:second_moment,prop:higher_moments} (which apply in cases 2 and 3 of \eqref{HD} by \cref{lem:log_integrable_triangle}). 
 Once these moment estimates have been established, the asymptotic estimate on the volume tail follows from \cref{prop:first_order_volume_tail} as explained in \cref{subsec:tauberian}.
\end{proof}


\section{Superprocess limits}
\label{sec:superprocesses}

In this section we prove our results concerning the superprocess scaling limits of the model, \cref{thm:superprocess_main,thm:superprocess_main_regularly_varying}. We begin by studying the radius of gyration, and more generally moments of the form $\E_{\beta_c,r}\sum_{x\in K}\langle x,u\rangle^2$, in \cref{subsec:the_radius_of_gyration}. We then study the moments $\E_{\beta_c,r}\sum_{x\in K}\langle x,u\rangle^{2p}$ for general $p\geq 1$ in \cref{subsec:the_full_displacement_distribution}, and explain how these moments can be interpreted in terms of the superprocess scaling limit. In \cref{subsec:the_full_scaling_limit_of_the_size_biased_model_with_cut_off} we establish the full scaling limit (as a random measure) of the cluster of the origin under the size-biased cut-off measure $\hat \E_{\beta_c,r}$ by establishing the first-order asymptotics of a more general class of moments. Finally, in \cref{subsec:scaling_limits_without_cut_off} we show how the results of \cref{subsec:the_full_scaling_limit_of_the_size_biased_model_with_cut_off} can be used to deduce \cref{thm:superprocess_main,thm:superprocess_main_regularly_varying}, which concern scaling limits for the measure $\P_{\beta_c}$ without cut-off.

\begin{defn} 
The results of this section will be proven under the assumption that at least one of the following four hypotheses hold, which together cover the hypotheses of both \cref{thm:superprocess_main} (as listed in \eqref{HD}) and \cref{thm:superprocess_main_regularly_varying}. We refer to this list of hypotheses as \mytag[HD+]{{\color{blue}HD+}}\!\!.
\begin{enumerate}
  \item $d>3\alpha$.
    \item $d>6$ and  $\P_{\beta_c}(x\leftrightarrow y) \preceq \|x-y\|^{-d+2}$ for all $x\neq y$.
    \item $d=6$, $\alpha=2$, and  $\P_{\beta_c}(x\leftrightarrow y) \preceq (\log \|x-y\|)^{-1}\|x-y\|^{-d+2}$ for all $x\neq y$.
    \item $d=3\alpha<6$ and the hydrodynamic condition holds.
  \end{enumerate}
  We will often refer to these different hypotheses as Cases 1, 2, 3, and 4 of each relevant theorem, lemma, or proposition.
\end{defn}

\subsection{The radius of gyration}
\label{subsec:the_radius_of_gyration}

Recall from the introduction that the \textbf{radius of gyration} (a.k.a. the correlation length of order $2$) is defined to be 
\[
\xi_{2}(r):= \sqrt{\frac{\E_{\beta_c,r}\left[\sum_{x\in K} \|x\|_2^2 \right]}{\E_{\beta_c,r}|K|}}.
\]
The radius of gyration is a measure of the distance between the origin and a typical point in the cluster of the origin when this cluster has its ``typical large size'' (i.e., is drawn from the size-biased distribution). Our first step towards the computation of the scaling limit is to compute the asymptotics of the radius of gyration.

\begin{prop}[The radius of gyration]
\label{prop:radius_of_gyration}
Suppose that at least one of the hypotheses of \eqref{HD+} holds. 
For each $u\in \R^d$, the function $r\mapsto \E_{\beta_c,r}[\sum_{x\in K} \langle x,u\rangle^2]$ is 
regularly varying of index $\max\{2\alpha,2+\alpha\}$.
 Furthermore, under the same assumptions:
\begin{enumerate}
  \item If $\alpha<2$ then 
  \[
\E_{\beta_c,r}\left[\sum_{x\in K} \langle x,u\rangle^2 \right] 
  \sim \left(\frac{\alpha}{2-\alpha} \int_{B} \langle y,u\rangle^2 \dif y \right) r^2 \E_{\beta_c,r}|K|
\]
as $r\to \infty$ for each $u\in \R^d$.
  \item If $\alpha=2$ 
  then
  \[
\E_{\beta_c,r}\left[\sum_{x\in K} \langle x,u\rangle^2 \right] 
\sim  \left(\alpha \int_B \langle y,u\rangle^2 \dif y \right) (r^2 \log r) \E_{\beta_c,r}|K|
\]
as $r\to \infty$ for each $u\in \R^d$.
  \item If $\alpha>2$ then there exists a positive constant $A$ and a non-negative, unit trace, positive-definite matrix $\Sigma$ such that
  \[
\xi_2^2(r) \sim A r^{2\alpha} \qquad \text{ and } \qquad
\E_{\beta_c,r}\left[\sum_{x\in K} \langle x,u\rangle^2 \right] \sim \langle u,\Sigma u\rangle \xi_2^2(r) \E_{\beta_c,r}|K|
\]
as $r\to \infty$ for each $u\in \R^d$.
\end{enumerate}
In particular, the radius of gyration is regularly varying of index 
 $\max\{\alpha/2,1\}$ and is given by
\[
\xi_2(r) \sim \left\{\begin{array}{rll}
\sqrt{\frac{\alpha}{2-\alpha} \int_B \|y\|_2^2 \dif y} & r& \alpha <2 \\
\sqrt{\alpha \int_B \|y\|_2^2 \dif y} & r \sqrt{\log r} & \alpha =2 \\
\sqrt{A} & r^{\alpha/2} & \alpha>2.
\end{array}\right.
\]
\end{prop}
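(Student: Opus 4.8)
The plan is to analyse, for each fixed $u\in\R^d$, the function $S_u(r):=\E_{\beta_c,r}\bigl[\sum_{x\in K}\langle x,u\rangle^2\bigr]$ by deriving the asymptotic ODE \eqref{eq:x12_derivative_asymptotic2_intro} (with $\|\cdot\|_2$ replaced throughout by $\langle\cdot,u\rangle$) and then feeding it into the ODE lemmas \cref{lem:ODE_with_driving_term,lem:ODE_with_possibly_negligible_driving_term,lem:ODE_with_critical_driving_term}. The first step is an exact Russo-type identity in the spirit of \cref{lem:moment_derivative}: opening an edge $\{x,x+w\}$ with $x\in K$ and $x+w\notin K$ increases $\sum_{z\in K}\langle z,u\rangle^2$ by $\sum_{z\in K_{x+w}}\langle z,u\rangle^2$, and together with the mass-transport principle this gives
\[
\frac{d}{dr}S_u(r)=\beta_c|J'(r)|\,\E_{\beta_c,r}\Bigl[\sum_{x\in K}\sum_{\substack{w\in\Z^d\\ 0<\|w\|\le r}}\mathbbm{1}(x+w\notin K)\sum_{z\in K_{x+w}}\langle z,u\rangle^2\Bigr].
\]

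Next I would expand the weight: writing $z=x+w+v$ with $v$ ranging over the re-centred cluster $K_{x+w}-(x+w)$ gives $\langle z,u\rangle^2=(\langle x,u\rangle+\langle w,u\rangle)^2+2(\langle x,u\rangle+\langle w,u\rangle)\langle v,u\rangle+\langle v,u\rangle^2$, so that summing over $z\in K_{x+w}$ splits the bracket into three groups, in each of which the contribution of $K_{x+w}$ is a translation-invariant functional of that cluster (namely $|K_{x+w}|$, $\sum_v\langle v,u\rangle$, or $\sum_v\langle v,u\rangle^2$), while $K_{x+w}$ is disjoint from $K$ on $\{x+w\notin K\}$. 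Applying the two-cluster correlation inequalities of \cref{lem:BK_disjoint_clusters_covariance} to decorrelate $K$ from $K_{x+w}$, using the reflection symmetry $K\eqd-K$ to kill all terms odd in $\langle x,u\rangle$ or in $\langle v,u\rangle$ (in particular $\E_{\beta_c,r}\bigl[\sum_{x\in K}\langle x,u\rangle\bigr]=0$), and replacing $\mathbbm{1}(x+w\notin K)$ by $1$, the main contribution of the bracket becomes $2S_u(r)\,\E_{\beta_c,r}|K|+(\E_{\beta_c,r}|K|)^2\sum_{w\in B_r}\langle w,u\rangle^2$. Combining this with $\beta_c|J'(r)||B_r|\sim\beta_c r^{-\alpha-1}$ (our normalisation conventions and \cref{lem:ball_regularity}), $\E_{\beta_c,r}|K|\sim\frac{\alpha}{\beta_c}r^\alpha$ (\cref{prop:first_moment}), and $\sum_{w\in B_r}\langle w,u\rangle^2\sim r^{d+2}\int_B\langle y,u\rangle^2\,\dif y$ (a lattice-to-continuum estimate of the kind in \cref{lem:ball_regularity}) yields, once the error terms are shown to be negligible,
\[
\frac{d}{dr}S_u(r)\sim\frac{2\alpha}{r}S_u(r)+\frac1r\Bigl(\frac{\alpha^2}{\beta_c}\int_B\langle y,u\rangle^2\,\dif y\Bigr)r^{2+\alpha},
\]
which is exactly \eqref{eq:x12_derivative_asymptotic2_intro}.

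The trichotomy in the statement then comes from comparing the driving exponent $2+\alpha$ with the self-referential exponent $2\alpha$. When $\alpha<2$ the driving term dominates and \cref{lem:ODE_with_driving_term} gives $S_u(r)\sim\frac{1}{2-\alpha}\cdot\frac{\alpha^2}{\beta_c}\bigl(\int_B\langle y,u\rangle^2\,\dif y\bigr)r^{2+\alpha}$, which is item~1 after substituting $r^\alpha\sim\frac{\beta_c}{\alpha}\E_{\beta_c,r}|K|$. When $\alpha=2$ the two exponents coincide and \cref{lem:ODE_with_critical_driving_term} produces the logarithmic correction of item~2. When $\alpha>2$ the driving term is of strictly lower order and \cref{lem:ODE_with_possibly_negligible_driving_term}, together with the positivity of the driving term and the trivial fact that $S_u(r_0)>0$, gives $S_u(r)\sim A_ur^{2\alpha}$ for some constant $A_u>0$; in all three cases $S_u$ is regularly varying of index $\max\{2\alpha,2+\alpha\}$. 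The matrix $\Sigma$ in item~3 is obtained by polarisation: writing $S_u(r)=\sum_{i,j}u_iu_j\,\E_{\beta_c,r}[\sum_{x\in K}x_ix_j]$ and applying the $\alpha>2$ case to $S_{e_i+e_j}$, $S_{e_i}$ and $S_{e_j}$ shows each $\E_{\beta_c,r}[\sum_{x\in K}x_ix_j]\sim\tau_{ij}r^{2\alpha}$ for a symmetric, positive semidefinite matrix $(\tau_{ij})$, which is in fact positive definite since $A_u>0$ for every $u\neq 0$; one sets $\Sigma=(\tau_{ij})/\operatorname{tr}(\tau)$. Finally, taking $u=e_i$ and summing over $i$ gives the asymptotics of $\E_{\beta_c,r}[\sum_{x\in K}\|x\|_2^2]$ in each regime, and dividing by $\E_{\beta_c,r}|K|\sim\frac{\alpha}{\beta_c}r^\alpha$ yields the stated asymptotics and regular variation of $\xi_2$.

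The main obstacle is the error control in passing from the exact derivative identity to the asymptotic ODE, which must be done separately under each hypothesis of \eqref{HD+}: one must show that the relative error, which arises from the covariance defect in \cref{lem:BK_disjoint_clusters_covariance} and from discarding the indicator $\mathbbm{1}(x+w\notin K)$, tends to $0$ (and, when $\alpha\ge 2$, is logarithmically integrable, so that the sharp constants in items~2 and~3 are obtained). This error is governed by spatially weighted moments of the form $\E_{\beta_c,r}\bigl[|K|\,|K\cap B_r|\sum_{z\in K}\langle z,u\rangle^2\bigr]$ and relatives, playing the role that $\E_{\beta_c,r}[|K|^2|K\cap B_r|]$ played for the error $\cE_{1,r}$ in \cref{subsec:first_moment}. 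When $d>3\alpha$ one bounds these using the (generalised) tree-graph inequality and runs a bootstrap on the ODE exactly as in the proof of \cref{lem:first_moment_high_dimensional}, the condition $d>3\alpha$ making the relevant prefactor a negative power of $r$. When $d\ge 6$ and the two-point estimates of Cases~2--3 hold, one bounds the error diagrammatically using those pointwise bounds and the vanishing of the open triangle $\nabla_r$, as in \cref{lem:log_integrable_triangle}. When $d=3\alpha<6$ one uses the universal tightness theorem (\cref{cor:universal_tightness_moments}) and the spatially averaged two-point bound \cref{thm:two_point_spatial_average_upper} to absorb the factor $|K\cap B_r|$, together with the Gladkov inequality (\cref{thm:Gladkov,cor:Gladkov_moments}) to absorb the spatial weight without generating an additional factor of $M_r$; the hydrodynamic condition then renders the resulting bound $o(1)$ because $2\alpha=\tfrac23 d<\tfrac{d+\alpha}2$. (In this last regime $\alpha=d/3<2$, so only the conclusion that the error tends to $0$ is needed and no logarithmic corrections appear, consistently with item~1.)
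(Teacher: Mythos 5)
Your proposal is correct and follows essentially the same route as the paper: the Russo/mass-transport derivative identity, the trinomial expansion of the spatial weight with the odd cross terms controlled via \cref{lem:disjoint_connections}, case-by-case error control (spatially weighted tree-graph bounds for $d>3\alpha$, weighted triangle diagrams under the pointwise two-point hypotheses, and Gladkov plus universal tightness plus the hydrodynamic condition for $d=3\alpha<6$), and then the three ODE lemmas to produce the trichotomy and the matrix $\Sigma$. Two small points worth flagging: when $d=3\alpha$ one has $2\alpha=\tfrac{d+\alpha}{2}$ exactly rather than a strict inequality, which is precisely why the little-$o$ in the hydrodynamic condition (rather than the universal bound $M_r=O(r^{(d+\alpha)/2})$) is what makes the error $o(1)$; and in the two-point-hypothesis cases the diagrammatic error bounds naturally come out in terms of $\E_{\beta_c,r}\sum_{x\in K}\|x\|_2^2$ rather than the directional moment $\E_{\beta_c,r}\sum_{x\in K}\langle x,u\rangle^2$, so one additionally needs an isotropy estimate showing these two quantities are comparable in order to close the asymptotic ODE for each fixed $u$.
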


We will also prove the following weaker claim concerning the case $d=3\alpha\geq 6$ that does not assume a two-point function estimate as in Cases 2 and 3 of \eqref{HD+} and will therefore apply without any perturbative assumptions once \cref{III-thm:critical_dim_hydro} is proven.

\begin{prop}
\label{prop:radius_of_gyration_d=3alpha>6}
If $d=3\alpha \geq 6$ and the hydrodynamic condition holds then $\xi_2(r)$ is regularly varying of index $\alpha/2$ with $r^{-1}\xi_2(r)\to \infty$ as $r\to \infty$.
\end{prop}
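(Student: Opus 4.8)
The plan is to reduce the claim to an asymptotic ODE for the spatially weighted moments $S_u(r):=\E_{\beta_c,r}\bigl[\sum_{x\in K}\langle x,u\rangle^2\bigr]$ (for fixed $u\in\R^d$) of exactly the shape of \eqref{eq:x12_derivative_asymptotic2_intro}, and then to feed this into the ODE lemmas \cref{lem:ODE_self_referential,lem:ODE_with_driving_term,lem:ODE_with_possibly_negligible_driving_term,lem:ODE_with_critical_driving_term}. This is the same strategy used to prove \cref{prop:radius_of_gyration}; the difference is that when $d=3\alpha$ we have no pointwise two-point estimate at our disposal, so the error terms in the ODE must be handled using only the hydrodynamic condition \eqref{Hydro}, the spatially averaged two-point bound \cref{thm:two_point_spatial_average_upper}, the universal tightness theorem in the form \cref{cor:universal_tightness_moments}, the Gladkov-type estimate \cref{cor:Gladkov_moments}, and the volume moment asymptotics \cref{prop:first_moment,prop:second_moment,prop:higher_moments} --- precisely as the error terms in \cref{prop:first_moment,prop:second_moment} are handled in their respective second cases. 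Because the conclusion sought here is only qualitative (regular variation of $\xi_2$ of index $\alpha/2$, together with $r^{-1}\xi_2(r)\to\infty$), it will suffice to know that these error terms are $o(1)$, so we will not need the logarithmic integrability that would be required to identify the constants of \cref{prop:radius_of_gyration}.

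First I would establish, by Russo's formula followed by the mass-transport principle \eqref{eq:MTP_general} exactly as in \cref{lem:moment_derivative}, the identity
\[
\frac{d}{dr}S_u(r) = \beta_c|J'(r)|\,\E_{\beta_c,r}\Biggl[\sum_{x\in K}\sum_{y\in B_r(x)}\mathbbm{1}(y\notin K)\sum_{z\in K_y}\langle z,u\rangle^2\Biggr].
\]
Expanding $\langle z,u\rangle^2=\langle z-y,u\rangle^2+2\langle z-y,u\rangle\langle y,u\rangle+\langle y,u\rangle^2$ splits the right-hand side into three pieces. Using the symmetry $\sum_{w\in B_r}w=0$, the mass-transport identity $\sum_{x\in K}\sum_{w\in B_r}\langle x+w,u\rangle^2=|B_r|\sum_{x\in K}\langle x,u\rangle^2+|K|\sum_{w\in B_r}\langle w,u\rangle^2$, the asymptotics $\E_{\beta_c,r}|K|\sim\frac{\alpha}{\beta_c}r^\alpha$ from \cref{prop:first_moment}, the normalization conventions \eqref{eq:normalization_conventions}, and the two-cluster decorrelation estimates of \cref{lem:BK_disjoint_clusters_covariance} applied to the clusters $K$ and $K_y$ on the event $\{y\notin K\}$, the $\langle z-y,u\rangle^2$-piece and the $\langle y,u\rangle^2$-piece each contribute a self-referential term $\sim\alpha\,r^{-1}S_u(r)$ --- giving $2\alpha\,r^{-1}S_u(r)$ in total --- together with, from the term $|K|\sum_{w\in B_r}\langle w,u\rangle^2$, a driving term $\sim\frac{\alpha^2}{\beta_c}\bigl(\int_B\langle y,u\rangle^2\dif y\bigr)\,r^{\alpha+1}$. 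The mixed term $2\langle z-y,u\rangle\langle y,u\rangle$, which vanishes once $K$ and $K_y$ are decoupled, together with the various decorrelation remainders (the $\mathbbm{1}(0\leftrightarrow y)$-corrections arising from replacing $\mathbbm{1}(y\notin K)$ by $1$ and from decoupling the volume of $K_y$ from observables of $K$), constitute the error terms. Each of these I would bound by an expectation of the form $\E_{\beta_c,r}\bigl[|K|^{a}\,|K\cap B_r|\cdot(\text{spatial weight of }K)\bigr]$ or a two-cluster correlation restricted to $\{0\leftrightarrow y\}$; applying Cauchy--Schwarz to peel off a factor $\E_{\beta_c,r}|K\cap B_r|^2\preceq r^\alpha M_r$ (via \cref{cor:universal_tightness_moments} and \cref{thm:two_point_spatial_average_upper}) and controlling the remaining factor with the (generalized) tree-graph inequality and \cref{prop:higher_moments}, the resulting bounds are $o\bigl(r^{-1}S_u(r)+r^{\alpha+1}\bigr)$ precisely because $M_r=o(r^{(d+\alpha)/2})$ and $d=3\alpha$.

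Summing over $u=e_1,\dots,e_d$ then gives the asymptotic ODE $\frac{d}{dr}S(r)\sim 2\alpha\,r^{-1}S(r)+\frac{\alpha^2}{\beta_c}\bigl(\int_B\|y\|_2^2\dif y\bigr)r^{\alpha+1}$ for $S(r):=\E_{\beta_c,r}\bigl[\sum_{x\in K}\|x\|_2^2\bigr]$, and dividing by $\E_{\beta_c,r}|K|$ and again using \cref{prop:first_moment} yields the equivalent asymptotic ODE $\frac{d}{dr}\xi_2^2(r)\sim r^{-1}\bigl(\alpha\,\xi_2^2(r)+h(r)\bigr)$ with $h(r)\sim\alpha\bigl(\int_B\|y\|_2^2\dif y\bigr)r^2$, where moreover $\xi_2^2(r)\succeq r^2\to\infty$ by a crude truncation of $S(r)$ using \cref{thm:two_point_spatial_average_upper}. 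When $\alpha>2$ (so $d=3\alpha>6$) the driving term $h$ is regularly varying of index $2<\alpha$, so \cref{lem:ODE_with_possibly_negligible_driving_term} shows that $\xi_2^2$ is regularly varying of index $\alpha$; hence $\xi_2$ is regularly varying of index $\alpha/2$, and $r^{-2}\xi_2^2(r)$ is regularly varying of the positive index $\alpha-2$, so $r^{-1}\xi_2(r)\to\infty$. When $\alpha=2$ (so $d=3\alpha=6$) we have index of $h$ equal to $\alpha$, and \cref{lem:ODE_with_critical_driving_term} gives $\xi_2^2(r)\sim\text{const.}\;r^2\log r$, which is regularly varying of index $2$ with $r^{-2}\xi_2^2(r)\to\infty$; again $\xi_2$ is regularly varying of index $\alpha/2=1$ and $r^{-1}\xi_2(r)\to\infty$.

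The main obstacle is the error control at $d=3\alpha$. As in \cref{prop:first_moment,prop:second_moment}, the decorrelation errors here are only marginally small, so one must combine the hydrodynamic condition with Cauchy--Schwarz, universal tightness, and the spatially averaged two-point bound rather than appealing to pointwise diagrammatic estimates; and this must now be carried out in the presence of the spatial weights $\langle\cdot,u\rangle^2$. In particular the mixed term $2\langle z-y,u\rangle\langle y,u\rangle$ has no analogue in the unweighted volume computations of \cref{sec:analysis_of_moments} and requires a separate (though similar) treatment, as do the weighted analogues of the covariance estimates of \cref{lem:BK_disjoint_clusters_covariance}. A secondary delicate point is the marginally short-range case $\alpha=2$, $d=6$, where the $\log r$ factor must be extracted from \cref{lem:ODE_with_critical_driving_term} knowing only that the remaining errors are $o(1)$; this succeeds because the coefficient of the driving term is a fixed positive constant determined by $\int_B\|y\|_2^2\dif y$ and independent of the error estimates.
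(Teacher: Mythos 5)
Your overall route is the paper's route: the proposition is proved by observing that the $d=3\alpha$ + \eqref{Hydro} case of the asymptotic ODE in \cref{lem:gyration_derivative} (derived exactly as you describe, via Russo's formula, mass transport, the trinomial expansion of $\langle z,u\rangle^2$, and error control through \cref{lem:BK_disjoint_clusters_covariance}, Cauchy--Schwarz, universal tightness, the spatially averaged two-point bound, and the Gladkov inequality) extends without modification to $d=3\alpha\geq 6$, and then feeding the resulting ODE into \cref{lem:ODE_with_possibly_negligible_driving_term}: since $\alpha\geq 2$ the driving term $r^{2+\alpha}$ has index at most the coefficient $2\alpha$ of the self-referential term, so $\E_{\beta_c,r}\sum_{x\in K}\|x\|_2^2$ is regularly varying of index $2\alpha$ with $r^{2+\alpha}=o(\E_{\beta_c,r}\sum_{x\in K}\|x\|_2^2)$, and dividing by $\E_{\beta_c,r}|K|\sim\frac{\alpha}{\beta_c}r^\alpha$ gives the claim.

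Two remarks. First, your treatment of the boundary case $\alpha=2$, $d=6$ is not justified as written: \cref{lem:ODE_with_critical_driving_term} requires the error $\delta_{1,r}$ multiplying the self-referential term to be \emph{logarithmically integrable}, and under $d=3\alpha$ + \eqref{Hydro} alone you only know this error is $o(1)$; the fact that the driving term's coefficient is an explicit constant does not rescue this, since a non-logarithmically-integrable $o(1)$ perturbation of the coefficient $a$ genuinely destroys the conclusion $f(r)\sim Ar^a\log r$. Fortunately the precise $\log r$ asymptotics is not needed for the proposition: the case $a=b$ of \cref{lem:ODE_with_possibly_negligible_driving_term} already yields that $f$ is regularly varying of index $a$ with $h(r)=o(f(r))$, which gives both regular variation of $\xi_2$ of index $\alpha/2$ and $r^{-1}\xi_2(r)\to\infty$; this is exactly what the paper uses. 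Second, a smaller point: the Cauchy--Schwarz splitting you describe, which peels off $\E_{\beta_c,r}|K\cap B_r|^2\preceq r^\alpha M_r$ on its own, leaves behind a factor $\E_{\beta_c,r}[|K|^2(\sum_{x\in K}\langle x,u\rangle^2)^2]$ involving fourth spatial moments that are not controlled at this stage of the argument. The paper's splitting instead keeps one copy of the spatial weight on each side of the Cauchy--Schwarz, bounding the error by $\sqrt{\E_{\beta_c,r}[|K|^2\sum_{x\in K}\langle x,u\rangle^2]\cdot\E_{\beta_c,r}[|K\cap B_r|^2\sum_{x\in K}\langle x,u\rangle^2]}$, and then needs the four-point Gladkov inequality (\cref{lem:four_point_Gladkov}) together with \eqref{Hydro} to handle the second factor. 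This is repairable within your framework but is the one place where the error analysis genuinely requires more than the unweighted arguments of \cref{sec:analysis_of_moments}.
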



It is a Corollary of \cref{prop:radius_of_gyration} that \emph{Sak's prediction} \cite{sak1973recursion} (introduced in detail in \cref{II-sec:introduction}) is valid in a spatially averaged sense for the effectively long-range models in high and critical effective dimensions satisfying \eqref{HD+}. (Of course this was already known for models where the lace expansion converges \cite{MR3306002,MR4032873,liu2025high}.) Let us now record this formally:

\begin{corollary}
\label{cor:HD_Sak}
Suppose that at least one of the hypotheses of \eqref{HD+} holds. If $\alpha<2$ then
\[
  \frac{1}{r^2}\sum_{x\in B_r}\P_{\beta_c}(0\leftrightarrow x) \asymp r^{-d+\alpha}
\]
for all $r\geq 1$. In particular, the critical exponent $\eta$ is well-defined  and equal to $2-\alpha$ in the spatially-averaged sense.
\end{corollary}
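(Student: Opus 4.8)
\noindent\textbf{Proof plan for \cref{cor:HD_Sak}.}
The plan is to prove the two-sided spatially-averaged estimate $\sum_{x\in B_r}\P_{\beta_c}(0\leftrightarrow x)\asymp r^\alpha$ for all $r\ge1$; given this, the displayed estimate and the value $\eta=2-\alpha$ of the spatially-averaged exponent follow from the normalization $|B_r|\sim r^d$. The upper bound $\sum_{x\in B_r}\P_{\beta_c}(0\leftrightarrow x)\preceq r^\alpha$ is exactly \cref{thm:two_point_spatial_average_upper}, which is non-vacuous since $\alpha<d$ in every case of \eqref{HD+} (indeed $d>3\alpha>\alpha$ in Case 1 and $d\ge 6$ in Cases 2--4), so the entire task is to establish the matching lower bound $\sum_{x\in B_r}\P_{\beta_c}(0\leftrightarrow x)\succeq r^\alpha$.

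For the lower bound I would pass to the cut-off model and exploit the radius-of-gyration estimate to show that most of the mass of the cut-off cluster sits within a bounded multiple of $B_r$. Since $J_r\le J$ pointwise, $\P_{\beta_c,r}$ is stochastically dominated by $\P_{\beta_c}$, so in the standard monotone coupling the cluster of the origin under $\P_{\beta_c,r}$ is contained in that under $\P_{\beta_c}$, giving $\sum_{x\in B_R}\P_{\beta_c}(0\leftrightarrow x)=\E_{\beta_c}|K\cap B_R|\ge\E_{\beta_c,r}|K\cap B_R|$ for every $r,R>0$. Summing the $\alpha<2$ case of \cref{prop:radius_of_gyration} over a basis of $\R^d$ gives $\E_{\beta_c,r}\bigl[\sum_{x\in K}\|x\|_2^2\bigr]\sim C_0\,r^2\,\E_{\beta_c,r}|K|$ for an explicit $C_0>0$, and then Markov's inequality together with the equivalence of the norms $\|\cdot\|$ and $\|\cdot\|_2$ yields a constant $C_1$ independent of $M$ and $r$ with
\[
\E_{\beta_c,r}\bigl[|K\setminus B_{Mr}|\bigr]\;\le\;\frac{C_1}{M^2}\,\E_{\beta_c,r}|K| \qquad\text{for all sufficiently large }r.
\]
Choosing $M$ with $C_1M^{-2}\le\tfrac12$ then gives $\E_{\beta_c,r}|K\cap B_{Mr}|\ge\tfrac12\E_{\beta_c,r}|K|\succeq r^\alpha$, the last step being the unconditional lower bound $\E_{\beta_c,r}|K|\succeq r^\alpha$ of \cref{cor:mean_lower_bound}. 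Inserting this into the domination inequality with $R=Mr$ gives $\sum_{x\in B_{Mr}}\P_{\beta_c}(0\leftrightarrow x)\succeq r^\alpha$, and substituting $r\mapsto r/M$ (with bounded $r$ handled trivially via $\P_{\beta_c}(0\leftrightarrow 0)=1$) gives $\sum_{x\in B_r}\P_{\beta_c}(0\leftrightarrow x)\succeq r^\alpha$ for all $r\ge1$, completing the lower bound.

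Finally, \cref{prop:radius_of_gyration} applies directly under \eqref{HD+} since that is precisely its hypothesis, while \cref{thm:two_point_spatial_average_upper} and \cref{cor:mean_lower_bound} hold unconditionally, so all the ingredients are in place. I expect no serious obstacle here beyond \cref{prop:radius_of_gyration} itself (which is the substantive input and is assumed); granted it, the deduction is routine. The only points deserving any attention are the norm-equivalence bookkeeping in the Markov step and the observation that the constant $C_1$ --- and hence the choice of $M$ --- can be taken uniformly in $r$ because one is using the up-to-constants form of \cref{prop:radius_of_gyration} rather than a scale-by-scale estimate.
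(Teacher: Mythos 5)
Your proposal is correct and follows essentially the same route as the paper: upper bound from \cref{thm:two_point_spatial_average_upper}, and lower bound by using the $\alpha<2$ case of \cref{prop:radius_of_gyration} (via Markov's inequality on the second spatial moment) to show $\E_{\beta_c,r}|K\cap B_{Mr}|\geq\tfrac12\E_{\beta_c,r}|K|\succeq r^\alpha$, then transferring to $\P_{\beta_c}$ by monotonicity in the cut-off. The paper states the containment step more tersely but the argument is identical.
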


We improve this to a \emph{pointwise} estimate of the same order in \cref{II-thm:CL_Sak}.

\begin{proof}[Proof of \cref{cor:HD_Sak}]
The upper bound always holds (regardless of the value of $d$ and $\alpha>0$) by the main result of \cite{hutchcroft2022sharp} as stated in \cref{thm:two_point_spatial_average_upper}. When \eqref{HD+} holds and $\alpha<2$, it follows from \cref{prop:radius_of_gyration} that 
there exists a constant $C$ such that 
\[
  \E_{\beta_c,r} |K \cap B_{Cr}| \geq \frac{1}{2}\E_{\beta_c,r} |K|
\]
for every $r\geq 1$, and hence by \cref{prop:first_moment} (or \cref{cor:mean_lower_bound}) that
\[
  \sum_{x\in B_{Cr}}\P_{\beta_c}(0\leftrightarrow x) = \E_{\beta_c} |K \cap B_{Cr}| \geq \E_{\beta_c,r} |K \cap B_{Cr}|
  \geq \frac{1}{2}\E_{\beta_c,r} |K| \succeq r^\alpha
\]
for every $r\geq 1$. This is easily seen to imply the claimed lower bound.
\end{proof}

The proof of \cref{prop:radius_of_gyration} will rely on some more ODE lemmas.

\begin{lemma}[$f'\sim b r^{-1}f+h$ with $h$ of general order]
\label{lem:ODE_with_possibly_negligible_driving_term}
Let $a,b>0$, let $h$ be a positive, measurable, regulary varying function of index $b$, and suppose that $f$ is a positive, differentiable function such that
\[
f'(r)\sim \frac{af+h}{r}
\]
as $r\to \infty$. 
Then $f$ is regularly varying of index $\max\{a,b\}$. Moreover, if $a\geq b$ then $h(r)=o(f(r))$ and $f'(r)\sim ar^{-1}f(r)$ as $r\to \infty$.
\end{lemma}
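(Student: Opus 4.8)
The plan is to solve the asymptotic linear ODE explicitly by an integrating factor, exactly as in the proof of \cref{lem:ODE_with_driving_term}, and then read off the three regimes $b>a$, $b=a$, $b<a$ from the index of the resulting integrand. First I would use positivity of $af$ and $h$ to write $f'(r)=(1+\epsilon_r)\frac{af(r)+h(r)}{r}$ for some measurable $\epsilon_r\to 0$, hence $f'(r)-g(r)f(r)=\tilde h(r)$ with $g(r):=a(1+\epsilon_r)/r\sim a/r$ and $\tilde h(r):=(1+\epsilon_r)h(r)/r$, which is positive for large $r$ and regularly varying of index $b-1$ (regular variation being preserved under first-order asymptotic equivalence). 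Fixing $r_0$ large enough that $g$ and $\tilde h$ are locally integrable on $[r_0,\infty)$, I set $E(r):=\exp\int_{r_0}^r g(s)\,ds$, which is regularly varying of index $a$ by \cref{lem:ODE_self_referential} since $(\log E)'=g\sim a/r$, and obtain the variation-of-constants formula $f(r)=E(r)\bigl(f(r_0)+\int_{r_0}^r \tilde h(s)E(s)^{-1}\,ds\bigr)$. The integrand $\tilde h(s)E(s)^{-1}$ is regularly varying of index $b-a-1$, and $r\tilde h(r)\sim h(r)$.

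Next I split on the sign of $b-a-1$. If $b>a$ the statement is exactly \cref{lem:ODE_with_driving_term}, so nothing new is needed (there $f\sim h/(b-a)$ is regularly varying of index $b=\max\{a,b\}$); alternatively one recovers it from the formula via \cref{lem:regular_variation_integration}, which gives $\int_{r_0}^r\tilde h E^{-1}\sim \frac{1}{b-a}\,r\tilde h(r)E(r)^{-1}\sim\frac{h(r)}{(b-a)E(r)}\to\infty$. If $b<a$ then $\tilde h E^{-1}$ has index $<-1$, so $\int_{r_0}^\infty\tilde h(s)E(s)^{-1}\,ds<\infty$ by \cref{lem:regular_variation_integration}, whence $f(r)/E(r)\to C:=f(r_0)+\int_{r_0}^\infty\tilde h(s)E(s)^{-1}\,ds\in(0,\infty)$ and $f\sim CE$ is regularly varying of index $a=\max\{a,b\}$; since $h$ has index $b<a$ we get $h(r)/f(r)\sim h(r)/(CE(r))\to 0$, so $h=o(f)$ and then $f'\sim r^{-1}(af+h)\sim ar^{-1}f$.

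The remaining and only delicate case is $b=a$, where the integrand has index exactly $-1$; here I would invoke the $\alpha=-1$ instance of Karamata's theorem (e.g.\ \cite[Proposition 1.5.9a]{bingham1989regular}): writing $\tilde h(s)E(s)^{-1}=\ell(s)/s$ with $\ell$ slowly varying, the function $L(r):=f(r_0)+\int_{r_0}^r\ell(s)s^{-1}\,ds$ is slowly varying, and either $L(r)\to\infty$ with $\ell(r)=o(L(r))$, or $\int_{r_0}^\infty\ell(s)s^{-1}\,ds<\infty$, in which case $L$ has a positive finite limit and (again by slow variation of $\ell$, since $\ell(r)\log\lambda\sim\int_r^{\lambda r}\ell(s)s^{-1}\,ds\to 0$) one has $\ell(r)\to 0$. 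In both sub-cases $f=EL$ is regularly varying of index $a=\max\{a,b\}$, and $h(r)=E(r)\cdot\ell(r)/(1+\epsilon_r)=o(E(r)L(r))=o(f(r))$, so $f'\sim r^{-1}(af+h)\sim ar^{-1}f$, completing the proof. I expect this borderline $b=a$ integral — showing $\int_{r_0}^r\ell(s)s^{-1}\,ds$ dominates $\ell(r)$, and handling the convergent sub-case — to be the only genuine obstacle; the rest is a routine transcription of the argument already used for \cref{lem:ODE_with_driving_term}.
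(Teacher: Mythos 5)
Your proof is correct, and while it shares the paper's overall framework (write $f'=gf+\tilde h$ with $g\sim a/r$ and solve by an integrating factor), it diverges from the paper's argument in the two cases $b\leq a$, most substantially in the borderline case $a=b$. For $b<a$ you observe that the integrand $\tilde h E^{-1}$ has index $b-a-1<-1$, so the variation-of-constants integral converges and $f\sim CE$ with $C\geq f(r_0)>0$; the paper instead derives the two-scale relation $f(\lambda r)\geq(1-o(1))\lambda^a f(r)$ from positivity of $h$, concludes $f(r)\geq r^{a-o(1)}$, and deduces $h=o(f)$ from there. For $a=b$ the paper runs a bespoke dyadic recursion for the ratio $f(2^k)/h(2^k)$, showing it increases by roughly $2^{-a}\log 2$ per step and hence diverges; you instead factor the integrand as $\ell(s)/s$ with $\ell$ slowly varying and invoke the Karamata/de Haan fact that $\int_{r_0}^r\ell(s)s^{-1}\,ds$ is slowly varying and dominates $\ell$ (supplemented by your own correct tail argument, $\ell(r)\log\lambda\sim\int_r^{\lambda r}\ell(s)s^{-1}\,ds\to0$, in the convergent sub-case — which matters because $f(r_0)>0$ keeps $L$ bounded away from $0$ there). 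Your route is the more standard regular-variation argument and leans on a citable result from \cite{bingham1989regular}, whereas the paper's is self-contained and elementary; both correctly identify $h=o(f)$ as the key claim from which regular variation of index $a$ and $f'\sim ar^{-1}f$ follow.
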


\begin{proof}[Proof of \cref{lem:ODE_with_possibly_negligible_driving_term}]
The case in which $b>a$ has already been treated in \cref{lem:ODE_with_driving_term}, so it suffices to consider the case $b \leq a$. As in the proof of \cref{lem:ODE_with_driving_term}, 
there exists $r_0<\infty$ and a (not necessarily positive) measurable function $\delta:[r_0,\infty)\to \R$ with $|\delta_r|\to 0$ as $r\to \infty$ such that that $h$ is locally integrable on $[r_0,\infty)$ and $f'-(1-\delta_r)ar^{-1}f(r)=(1-\delta_r)r^{-1}h(r)$ for every $r\geq r_0$. Solving this first-order linear ODE explicitly between $r$ and $\lambda r$ yields that 
\begin{multline*}
f(\lambda r) = \exp\left[\int_{r}^{\lambda r} (1-\delta_s)as^{-1} \dif s\right] f(r) \\+ \exp\left[\int_{r}^{\lambda r} (1-\delta_s)as^{-1} \dif s\right]\int_{r}^{\lambda r} \frac{h(s)}{s}\exp\left[-\int_{r}^s (1-\delta_t)at^{-1} \dif t\right] \dif s.
\end{multline*}
when $r\geq r_0$ and $\lambda\geq 1$, and hence 
 that
\begin{equation}
\label{eq:f(lambda r)_small_a}
f(\lambda r) \sim \lambda^a f(r)+\lambda^a \frac{h(r)}{r} \int_r^{\lambda r} \left(\frac{s}{r}\right)^{b-a-1} \dif s = \begin{cases}
\lambda^a f(r) + \frac{\lambda^{a}-\lambda^{b}}{a-b} h(r)  & a+1 < b\\
\lambda^b f(r) + (\log \lambda) h(r) & a+1=b
\end{cases}
\end{equation}
 as $r\to \infty$,  uniformly for bounded values of $\lambda \geq 1$. In particular,  the claim that $h(r)=o(f(r))$ implies the claim that $f$ is regularly varying of index $a$. In the case that $a>b$ we can conclude by noting that, by \eqref{eq:f(lambda r)_small_a} and the assumption that $h$ is positive, $f(\lambda r) \geq (1-o(1)) \lambda^a f(r)$ as $r\to \infty$ so that $f(r)\geq r^{a-o(1)}$ and hence that $h(r)=o(f(r))$ as $r\to \infty$ as desired. 
We now consider the case $a=b$. We have by \eqref{eq:f(lambda r)_small_a} that
\[
\frac{f(2^{k+1})}{h(2^{k+1})} \sim \frac{2^a f(2^k)}{h(2^{k+1})} + \frac{(\log 2) h(2^k)}{ h(2^{k+1})} \sim \frac{f(2^k)}{h(2^{k})} + \frac{\log 2}{2^{a}}
\]
as $k\to \infty$. Letting $(\delta_k)_{k\geq 1}$ be a sequence with $|\delta_k|\to 0$ such that 
\[
\frac{f(2^{k+1})}{h(2^{k+1})} = (1-\delta_k) \frac{f(2^k)}{h(2^{k})} + (1-\delta_k)\frac{\log 2}{2^{a}},
\]
it follows by induction that
\[
\frac{f(2^{k+1})}{h(2^{k+1})} = 
\frac{\log 2}{2^{b}} \sum_{j=k_0}^k \prod_{i=j}^k (1-\delta_i)  + \prod_{i={k_0}}^k (1-\delta_i) \frac{f(2^{k_0})}{h(2^{k_0})} 
\]
for every sufficiently large $k_0$ and every $k\geq k_0$. This is easily seen to imply that $\frac{f(2^{k})}{h(2^{k})} \to \infty$ as $k\to \infty$, and together with \eqref{eq:f(lambda r)_small_a} this yields that $h(r)=o(f(r))$ as claimed.
\end{proof}

We will also need to understand more precisely what happens in the critical case $a=b$ under appropriate assumptions on the order of the error terms.

\begin{lemma}[$f'\sim b r^{-1}f+h$ with critical effects]
\label{lem:ODE_with_critical_driving_term}
Let $a>0$, let $h$ be a positive, measurable, regulary varying function of index $a$, and suppose that $f$ is a  positive, differentiable function such that
\[
f'(r) =\frac{(1-\delta_{1,r}) af(r)+(1-\delta_{2,r})h(r)}{r}
\]
where $\delta_{1,r}$ is a logarithmically integrable error function. If $r^{-a} h(r)$ is \emph{not} logarithmically integrable, then
\[
f(r) \sim r^{a} \int_{r_0}^r \frac{h(s)}{s^{a+1}} \dif s
\]
as $r\to\infty$, where $r_0$ is sufficiently large that $h$ is locally bounded on $[r_0,\infty)$. Otherwise, if $r^{-a}h(r)$ is logarithmically integrable, then there exists a constant $A>0$ such that $f(r)\sim Ar^{a}$.
\end{lemma}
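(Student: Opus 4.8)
The plan is to treat the hypothesis as an exact first-order linear ODE, solve it with an integrating factor, and read off the asymptotics in each regime, closely paralleling the proofs of \cref{lem:ODE_self_referential} and \cref{lem:ODE_with_driving_term}. Fix $r_0$ large enough that $h$ is positive and locally bounded on $[r_0,\infty)$, that $\delta_1$ is measurable and locally integrable against $\dif s/s$ there, and that $|\delta_{2,s}|\le 1/2$ for all $s\ge r_0$. Define the integrating factor
\[
\mu(r) = \exp\left[-a\int_{r_0}^r (1-\delta_{1,s})\,\frac{\dif s}{s}\right] = \left(\frac{r}{r_0}\right)^{-a}\exp\left[a\int_{r_0}^r \delta_{1,s}\,\frac{\dif s}{s}\right].
\]
Since $\delta_1$ is logarithmically integrable, $\int_{r_0}^r \delta_{1,s}\,\dif s/s$ converges to a finite limit as $r\to\infty$, so $\mu(r)\sim L r_0^a r^{-a}$ for some $L>0$ and the factor $\exp[a\int_{r_0}^r \delta_{1,s}\,\dif s/s]$ is bounded above and below by positive constants on $[r_0,\infty)$. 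Multiplying the ODE by $\mu$ and integrating from $r_0$ to $R$ yields the exact identity
\[
f(R) = \frac{1}{\mu(R)}\left[f(r_0) + \int_{r_0}^R \mu(s)(1-\delta_{2,s})\,\frac{h(s)}{s}\,\dif s\right]\qquad (R\ge r_0),
\]
and the two-sided bound on $\mu$ shows that $\mu(s)h(s)/s$ is comparable to $s^{-a-1}h(s)$, a positive function regularly varying of index $-1$.

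I would then split according to whether $s^{-a-1}h(s)$ is integrable on $[r_0,\infty)$, which is exactly logarithmic integrability of $r^{-a}h(r)$. In the logarithmically integrable case $\mu(s)(1-\delta_{2,s})h(s)/s$ is integrable (the factor $1-\delta_{2,s}\in[1/2,3/2]$ being bounded), so $\mu(R)f(R)=f(r_0)+\int_{r_0}^R\mu(s)(1-\delta_{2,s})h(s)s^{-1}\dif s$ increases to a finite limit $c$; since the integrand is positive and $f(r_0)>0$, we have $c>0$. Combined with $\mu(R)\sim Lr_0^aR^{-a}$ this gives $f(R)\sim(c/Lr_0^a)R^a$, i.e.\ $f(r)\sim Ar^a$ with $A=c/(Lr_0^a)>0$, as required.

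In the non-logarithmically-integrable case $\int_{r_0}^\infty \mu(s)h(s)s^{-1}\dif s=\infty$. Using $\delta_{2,s}\to 0$ together with this divergence, $\int_{r_0}^R\mu(s)(1-\delta_{2,s})h(s)s^{-1}\dif s\sim\int_{r_0}^R\mu(s)h(s)s^{-1}\dif s$, and the additive constant $f(r_0)$ is negligible, so $\mu(R)f(R)\sim\int_{r_0}^R\mu(s)h(s)s^{-1}\dif s$. Substituting the formula for $\mu$ and cancelling the $r_0^a$ factors,
\[
f(R)\sim\frac{1}{\mu(R)}\int_{r_0}^R\mu(s)\,\frac{h(s)}{s}\,\dif s = R^a\int_{r_0}^R\exp\left[-a\int_s^R\delta_{1,t}\,\frac{\dif t}{t}\right]s^{-a-1}h(s)\,\dif s.
\]
Since $\bigl|\int_s^R\delta_{1,t}\,\dif t/t\bigr|\le\int_s^\infty|\delta_{1,t}|\,\dif t/t\to 0$ as $s\to\infty$, uniformly in $R\ge s$, the exponential factor tends to $1$; invoking the divergence of $\int_{r_0}^\infty s^{-a-1}h(s)\,\dif s$ once more gives $f(R)\sim R^a\int_{r_0}^R s^{-a-1}h(s)\,\dif s$, the asserted formula (the choice of $r_0$ being immaterial here since the integral diverges).

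The point requiring the most care — which I would isolate once as a preliminary observation and then apply twice — is the elementary fact that if $g\ge 0$ with $\int_{r_0}^\infty g=\infty$ and $\phi_s\to 1$ then $\int_{r_0}^R\phi_s g(s)\,\dif s\sim\int_{r_0}^R g(s)\,\dif s$ (false without the divergence hypothesis); here it is used with $\phi_s=1-\delta_{2,s}$ and with $\phi_s=\exp[-a\int_s^R\delta_{1,t}\,\dif t/t]$, the latter needing the uniform-in-$R$ tail bound on $\int|\delta_{1,t}|\,\dif t/t$ noted above. The only other subtlety is the strict positivity of $A$, obtained from the monotonicity of $R\mapsto\mu(R)f(R)$ rather than from evaluating the limiting constant directly. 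Everything else is routine bookkeeping with regularly varying functions, using \cref{lem:regular_variation_integration} where convenient.
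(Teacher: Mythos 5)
Your proof is correct and follows essentially the same route as the paper's: solve the first-order linear ODE exactly via the integrating factor $\exp[-a\int_{r_0}^r(1-\delta_{1,s})s^{-1}\dif s]$, use logarithmic integrability of $\delta_1$ to show the exponential factors converge, and split according to whether $\int_{r_0}^\infty s^{-a-1}h(s)\dif s$ converges. The only cosmetic difference is that you carry $\delta_2$ through the computation (handling the $R$-dependence of the exponential weight with a uniform tail bound) where the paper reduces to $\delta_{2,r}\equiv 0$ at the outset; both are fine.
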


It follows in particular that if $h(r)\sim A r^a$ and the error $\delta_{1,r}$ is logarithmically integrable then $f(r)\sim A r^{a} \log r$.

\begin{proof}[Proof of \cref{lem:ODE_with_critical_driving_term}]
It suffices without loss of generality to consider the case $\delta_{2,r} \equiv 0$, since $(1-\delta_{2,r})h(r)$ is regularly varying of the same index as $h$ and shares the logarithmic integrability properties as $h$ after multiplication by $r^{-a}$. Making this assumption and writing $\delta_r=\delta_{1,r}$, we have as in the proof of \cref{lem:ODE_with_driving_term} that if $r_0$ is sufficiently large then
 \begin{multline*}
f(r) = \exp\left[\int_{r_0}^r (1-\delta_s)as^{-1} \dif s\right] f(r_0) \\+ \exp\left[\int_{r_0}^r (1-\delta_s)as^{-1} \dif s\right]\int_{r_0}^r \frac{h(s)}{s}\exp\left[-\int_{r_0}^s (1-\delta_t)at^{-1} \dif t\right] \dif s
\end{multline*}
for every $r\geq r_0$ and hence, simplifying, that
 \begin{equation*}
f(r) = \exp\left[-a\int_{r_0}^r \frac{\delta_s}{s}\dif s\right] (r/r_0)^{a} f(r_0) + \exp\left[-a\int_{r_0}^r \frac{\delta_s}{s}\dif s\right] r^{a} \int_{r_0}^r \frac{h(s)}{s^{a+1}} \exp\left[a\int_{r_0}^s\frac{\delta_t}{t} \dif t\right] \dif s
\end{equation*}
for every $r\geq r_0$. Since $\delta_r$ is logarithmically integrable, each of the integrals appearing inside an exponential converges to a constant as $r\to \infty$ or $s\to \infty$ as appropriate. If $r^{-a}h(r)$ is logarithmically divergent, then the integral involving $h$ is dominated by the contribution from large values of $s$, so that if we set $A=\exp[-a\int_{r_0}^\infty \frac{\delta_s}{s}\dif s]$ then
\begin{equation*}
f(r) \sim A (r/r_0)^{a+1} f(r_0) + A r^{a} \int_{r_0}^r \frac{h(s)}{As^{a+1}} \dif s \sim r^{a+1} \int_{r_0}^r \frac{h(s)}{s^{a+1}} \dif s
\end{equation*}
as claimed. On the other hand, if $r^{-a}h(r)$ is logarithmically integrable, then
\[
f(r) \sim \left(A r_0^{-a-1}f(r_0)+A \int_{r_0}^\infty \frac{h(s)}{s^{a+1}} \exp\left[a\int_{r_0}^s\frac{\delta_t}{t} \dif t\right] \dif s \right)r^{a} =: A' r^{a}
\]
as $r\to \infty$ as claimed.
\end{proof}

With these ODE lemmas in hand, the next step is to write down an ODE satisfied by appropriate spatial moments.

\begin{lemma}
\label{lem:gyration_derivative}
Suppose that at least one of the four hypotheses of \eqref{HD+} holds. For each $u\in \R^d$ we have that
\begin{equation*}
 \frac{d}{dr} \E_{\beta_c,r} \left[\sum_{x\in K} \langle x,u\rangle^2 \right]
\sim  \frac{2 \alpha}{r} \E_{\beta_c,r} \left[\sum_{x\in K} \langle x,u\rangle^2 \right] + \frac{\alpha^2}{\beta_c} r^{2+\alpha-1} \int_{B} \langle y,u\rangle^2 \dif y
\end{equation*}
as $r\to \infty$. Moreover, in Cases 1, 2, and 3 all relevant errors are logarithmically integrable.
\end{lemma}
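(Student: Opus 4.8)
The plan is to differentiate $\E_{\beta_c,r}\sum_{x\in K}\langle x,u\rangle^2$ using Russo's formula as in the proof of \cref{lem:moment_derivative}, and then show that the resulting expression splits asymptotically into a ``self-referential'' term of order $r^{-1}\E_{\beta_c,r}\sum_{x\in K}\langle x,u\rangle^2$ and a ``driving'' term of order $r^{2+\alpha-1}$. First, when we increase the cut-off from $r$ to $r+\dif r$, an edge $\{x,y\}$ with $\|x-y\|\in(r,r+\dif r]$ becomes available at rate $\beta_c|J'(r)|$; if $x\in K$ and $y\notin K$ this merges the cluster of $y$ into $K$, changing $\sum_{x\in K}\langle x,u\rangle^2$ by $\sum_{z\in K_y}\langle z,u\rangle^2$. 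Applying Russo's formula and the mass-transport principle exactly as in \cref{lem:moment_derivative} gives
\[
\frac{d}{dr}\E_{\beta_c,r}\Bigl[\sum_{x\in K}\langle x,u\rangle^2\Bigr]
= \beta_c|J'(r)|\,\E_{\beta_c,r}\Bigl[|K|\sum_{y\in B_r}\mathbbm{1}(y\notin K)\sum_{z\in K_y}\langle z,u\rangle^2\Bigr].
\]
Using $|J'(r)|\sim r^{-d-\alpha-1}$ and $|B_r|\sim r^d$ from \eqref{eq:normalization_conventions}, together with \cref{lem:ball_regularity}, the prefactor $\beta_c|J'(r)||B_r|$ is asymptotic to $\beta_c r^{-\alpha-1}$ up to a logarithmically integrable error.

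The main step is to analyze the expectation $\E_{\beta_c,r}[|K|\sum_{y\in B_r}\mathbbm{1}(y\notin K)\sum_{z\in K_y}\langle z,u\rangle^2]$ by a decorrelation argument. Ignoring the disjointness constraint and the positions of $y$, the dominant contribution should be
\[
\Bigl(\E_{\beta_c,r}|K|\Bigr)\cdot\frac{1}{|B_r|}\sum_{y\in B_r}\E_{\beta_c,r}\Bigl[\sum_{z\in K_y}\langle z,u\rangle^2\Bigr]
= \E_{\beta_c,r}|K|\cdot\frac{1}{|B_r|}\sum_{y\in B_r}\E_{\beta_c,r}\Bigl[\sum_{z\in K}\langle z-y,u\rangle^2\Bigr],
\]
using translation-invariance. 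Expanding $\langle z-y,u\rangle^2=\langle z,u\rangle^2-2\langle z,u\rangle\langle y,u\rangle+\langle y,u\rangle^2$ and averaging over $y\in B_r$: the cross term vanishes by the symmetry $y\mapsto -y$ of $B_r$ (up to boundary effects, which are lower order), the first term gives $\E_{\beta_c,r}|K|\cdot\E_{\beta_c,r}\sum_{z\in K}\langle z,u\rangle^2$, and the last term gives $\E_{\beta_c,r}|K|\cdot\E_{\beta_c,r}|K|\cdot\frac{1}{|B_r|}\sum_{y\in B_r}\langle y,u\rangle^2$. The latter sum is a Riemann sum converging to $r^2\int_B\langle y,u\rangle^2\dif y$ (after rescaling $y\mapsto y/r$ and using $|B_r|\sim r^d$). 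Multiplying by $\beta_c r^{-\alpha-1}$ and substituting $\E_{\beta_c,r}|K|\sim\frac{\alpha}{\beta_c}r^\alpha$ from \cref{prop:first_moment}, the first term becomes $\frac{2\alpha}{r}\E_{\beta_c,r}\sum_{x\in K}\langle x,u\rangle^2$ (the factor $2$ arising because $\E_{\beta_c,r}|K|\cdot\beta_c r^{-\alpha-1}\sim\alpha r^{-1}$, but there are in fact two symmetric contributions to track carefully — the term where $0$'s cluster is the ``big'' one and the term where $y$'s is, analogous to the factor $(p+1)$ versus the sum in \cref{lem:moment_derivative}); and the second term becomes $\frac{1}{r}\cdot\frac{\alpha}{\beta_c}r^\alpha\cdot\alpha r^{-\alpha}\cdot\frac{\alpha}{\beta_c}r^\alpha\cdot r^2 \int_B\langle y,u\rangle^2\dif y$, wait — recomputing: $\beta_c r^{-\alpha-1}\cdot(\E_{\beta_c,r}|K|)^2\cdot\frac{1}{|B_r|}\sum_y\langle y,u\rangle^2\sim\beta_c r^{-\alpha-1}\cdot\frac{\alpha^2}{\beta_c^2}r^{2\alpha}\cdot r^{2-\alpha}\int_B\langle y,u\rangle^2\dif y\cdot\frac{1}{r^{\text{?}}}$; the bookkeeping must land on $\frac{\alpha^2}{\beta_c}r^{2+\alpha-1}\int_B\langle y,u\rangle^2\dif y$, so I would verify the powers by noting $\frac{1}{|B_r|}\sum_{y\in B_r}\langle y,u\rangle^2\sim r^{-d}\cdot r^{d+2}\int_B\langle y,u\rangle^2\dif y = r^2\int_B\langle y,u\rangle^2\dif y$, giving $\beta_c r^{-\alpha-1}\cdot\frac{\alpha^2}{\beta_c^2}r^{2\alpha}\cdot r^2 = \frac{\alpha^2}{\beta_c}r^{\alpha+1}$, and then I realize the claimed driving term $r^{2+\alpha-1}=r^{\alpha+1}$ matches — good.

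The hard part will be controlling the error in the decorrelation step: bounding the difference between $\E_{\beta_c,r}[|K|\sum_y\mathbbm{1}(y\notin K)\sum_{z\in K_y}\langle z,u\rangle^2]$ and the factorized quantity. This requires a two-cluster covariance bound analogous to \cref{lem:BK_disjoint_clusters_covariance} but now for the spatially-weighted functional $F(K_y)=\sum_{z\in K_y}\langle z,u\rangle^2$, which is still an increasing non-negative function of the cluster of $y$, so \cref{lem:BK_disjoint_clusters_covariance} applies directly and bounds the error by $\E_{\beta_c,r}[|K|\cdot(\sum_{z\in K}\langle z,u\rangle^2)\cdot|K\cap B_r|]$ and similar ``intersection'' terms. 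These must be shown to be $o(r^{\alpha+1})$, equivalently $o(r^{\max\{2\alpha,2+\alpha\}}\cdot r^{-\alpha-1}\cdot\beta_c^{-1}|B_r|^{-1})$-after-multiplication; in Case 1 ($d>3\alpha$) this follows from tree-graph-type bounds combined with the polynomial gain $r^{3\alpha-d}$ exactly as in the proof of \cref{lem:first_moment_high_dimensional}, giving logarithmically integrable errors; in Cases 2 and 3 one uses the assumed two-point function estimate $\P_{\beta_c}(x\leftrightarrow y)\preceq\|x-y\|^{-d+2}$ (with log correction in Case 3) to bound the relevant diagrams, analogous to \cref{lem:log_integrable_triangle} and \cref{lem:higher_polygon_diagrams}, again yielding logarithmically integrable errors; in Case 4 ($d=3\alpha<6$ with \eqref{Hydro}) one uses the hydrodynamic condition $M_r=o(r^{(d+\alpha)/2})$ together with the universal tightness theorem (\cref{cor:universal_tightness_moments}) and the Gladkov inequality (\cref{cor:Gladkov_moments}) to bound the weighted intersection moments, exactly in the style of \cref{lem:first_moment_critical_dim} and Case 2 of \cref{prop:second_moment}, obtaining $o(1)$ errors but not necessarily logarithmic integrability. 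One also needs the spatially-weighted analogue of \cref{cor:universal_tightness_moments} — that $\E_{\beta_c,r}[|K\cap B_r|^q\sum_{z\in K}\langle z,u\rangle^2]\preceq M_r^q\,\E_{\beta_c,r}\sum_{z\in K}\langle z,u\rangle^2$ — which follows from \cref{thm:universal_tightness} by the same integration-by-parts argument since $\sum_{z\in K}\langle z,u\rangle^2\le r^2|K\cap B_r|$ on the relevant events (or more precisely by splitting $K$ into the part inside and outside $B_r$, handling the outside part via the weaker bound $\langle z,u\rangle^2\asymp\|z\|^2$ and the spatially-averaged two-point bound). Finally, the statement that $r\mapsto\E_{\beta_c,r}\sum_{x\in K}\langle x,u\rangle^2$ is regularly varying of index $\max\{2\alpha,2+\alpha\}$ is then deduced by feeding the established ODE into \cref{lem:ODE_with_possibly_negligible_driving_term} (for $\alpha\ne2$, where the driving term of index $2+\alpha$ is compared with the self-referential index $2\alpha$) and \cref{lem:ODE_with_critical_driving_term} (for $\alpha=2$, where $2\alpha=2+\alpha$ and one gets the $\log$ correction), using that $\E_{\beta_c}\sum_{x\in K}\|x\|_2^2=\infty$ to fix the relevant integration constant; but this deduction belongs to the proof of \cref{prop:radius_of_gyration} rather than to this lemma, so here I would stop at the displayed asymptotic ODE and the logarithmic-integrability claim for Cases 1--3.
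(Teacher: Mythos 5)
There is a genuine gap at the very first step. Your displayed identity
\[
\frac{d}{dr}\E_{\beta_c,r}\Bigl[\sum_{x\in K}\langle x,u\rangle^2\Bigr]
= \beta_c|J'(r)|\,\E_{\beta_c,r}\Bigl[|K|\sum_{y\in B_r}\mathbbm{1}(y\notin K)\sum_{z\in K_y}\langle z,u\rangle^2\Bigr]
\]
is not what Russo plus mass transport gives. The functional $\sum_{z\in K_y}\langle z,u\rangle^2$ is not translation-covariant in the right way: when you transport the root from $0$ to the endpoint $x$ of the pivotal edge, the spatial weight must be re-centered, and the correct identity is
$\beta_c|J'(r)|\,\E_{\beta_c,r}[\sum_{x\in K}\sum_{y\in B_r}\mathbbm{1}(y\notin K)\sum_{z\in K_y}\langle z-x,u\rangle^2]$. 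This is not cosmetic: your version, carried through your own decorrelation computation, produces the self-referential coefficient $\alpha/r$ rather than $2\alpha/r$ (as your own arithmetic shows), and the parenthetical appeal to ``two symmetric contributions'' does not repair a formula that is simply not an identity. The factor $2$ arises only after expanding $\langle z-x,u\rangle^2$ via $z-x=(z-y)+y-x$: the $\langle z-y,u\rangle^2$ and $\langle x,u\rangle^2$ terms each factorize to $|B_r|\,\E_{\beta_c,r}|K|\cdot\E_{\beta_c,r}\sum_{x\in K}\langle x,u\rangle^2$, the $\langle y,u\rangle^2$ term gives the driving term, and you must do this expansion explicitly to see it.

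The second consequence of starting from the wrong identity is that you never confront the signed cross terms that the correct expansion produces, namely expectations such as $\E_{\beta_c,r}[\sum_{x\in K}\langle x,u\rangle\sum_{y\in B_r}\mathbbm{1}(y\notin K)\sum_{z\in K_y}\langle z-y,u\rangle]$ and $\E_{\beta_c,r}[\sum_{x\in K}\langle x,u\rangle\sum_{y\in B_r}\langle y,u\rangle\mathbbm{1}(y\notin K)|K_y|]$. These involve functionals that are neither non-negative nor increasing, so \cref{lem:BK_disjoint_clusters_covariance} does not apply to them; one must instead subtract the factorized quantity (which vanishes by the $z\mapsto -z$ symmetry), bound the difference $|\P(0\leftrightarrow x\nleftrightarrow y\leftrightarrow z)-\P(0\leftrightarrow x)\P(y\leftrightarrow z)|$ by the four-point function via \cref{lem:disjoint_connections}, and then use $|\langle a,u\rangle||\langle b,u\rangle|\le\frac12(\langle a,u\rangle^2+\langle b,u\rangle^2)$ to reduce to the same error diagrams as the square terms. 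Controlling this cancellation of signed quantities is the main new difficulty of this lemma relative to \cref{sec:analysis_of_moments}, and your proposal bypasses it entirely. Your error analysis for Case 1 is otherwise in the right spirit, but for Case 4 the claimed spatially-weighted universal-tightness bound $\E_{\beta_c,r}[|K\cap B_r|^q\sum_{z\in K}\langle z,u\rangle^2]\preceq M_r^q\,\E_{\beta_c,r}\sum_{z\in K}\langle z,u\rangle^2$ does not follow from \cref{thm:universal_tightness} (which controls only $|K\cap\Lambda|$, not unbounded spatial weights over the whole cluster); the actual argument goes through Cauchy--Schwarz and the four-point Gladkov inequality, and for Cases 2 and 3 one additionally needs an isotropy step comparing $\E_{\beta_c,r}\sum_{x\in K}\langle x,u\rangle^2$ with $\E_{\beta_c,r}\sum_{x\in K}\|x\|_2^2$ before the diagrammatic error bounds close up.
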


Compared to the similar estimates we proved in \cref{sec:analysis_of_moments}, the main additional difficulty here is that we are dealing with signed quantities, some of which approximately cancel with each other, and we need to be able to control this cancellation when proving that the error terms in this ODE are negligible. \cref{lem:gyration_derivative} easily implies \cref{prop:radius_of_gyration} in light of our above ODE lemmas, as we now explain in detail.


\begin{proof}[Proof of \cref{prop:radius_of_gyration} given \cref{lem:gyration_derivative}]
We write $\E_r=\E_{\beta_c,r}$.
It follows from \cref{lem:ODE_with_possibly_negligible_driving_term,lem:gyration_derivative} that the function $r\mapsto \E_{r} [\sum_{x\in K} \langle x,u\rangle^2 ]$ is regularly varying of index $\max\{2\alpha,2+\alpha\}$. As we have seen in \cref{lem:ODE_with_driving_term,lem:ODE_with_possibly_negligible_driving_term,lem:ODE_with_critical_driving_term}, the asymptotic ODE of \cref{lem:gyration_derivative} leads to different asymptotic behaviours of $\E_{r} [\sum_{x\in K} \langle x,u\rangle^2 ]$ according to whether $\alpha<2$, $\alpha=2$, or $\alpha>2$. When $\alpha<2$, it follows from \cref{lem:ODE_with_driving_term} that
\[
\E_{r}\left[\sum_{x\in K} \langle x,u\rangle^2 \right] \sim \frac{1}{2-\alpha}\left(\frac{\alpha^2}{\beta_c} \int_{B} \langle y,u\rangle^2 \dif y\right) r^{2+\alpha} \sim \left(\frac{\alpha}{2-\alpha} \int_B \langle y,u\rangle^2 \dif y\right) r^2 \E_{r}|K|
\]
as $r\to \infty$ for each $u\in \R^d$. 
On the other hand, when $\alpha=2$ and the errors are logarithmically integrable, we obtain from \cref{lem:ODE_with_critical_driving_term} that
\[
\E_{r}\left[\sum_{x\in K} \langle x,u\rangle^2 \right] \sim  \left(\frac{\alpha^2}{\beta_c} \int_{B} \langle y,u\rangle^2 \dif y\right) r^{2+\alpha} \log r \sim  \left(\alpha \int_B \langle y,u\rangle^2 \dif y\right) (r^2 \log r) \E_{r}|K|
\]
as $r\to \infty$ for each $u\in \R^d$.
Finally, when $\alpha>2$ and the errors are logarithmically integrable, we obtain from \cref{lem:ODE_with_possibly_negligible_driving_term} that for each $u\in\R^d \setminus \{0\}$ there exists a positive constant $A(u)$ such that
\[
\E_{r}\left[\sum_{x\in K} \langle x,u\rangle^2 \right] 
 \sim A(u) r^{\alpha} \E_{r}|K|
\]
as $r\to \infty$. The relevant bounds on the radius of gyration follow immediately in both cases. Finally, for each $r \geq 0$, the bilinear form $\Sigma_r$ defined by
\[
  \langle u,\Sigma_r v\rangle := \frac{1}{\xi_2^2(r)\E_r|K|}\E_{r}\left[\sum_{x\in K} \langle x,u\rangle \langle x,v\rangle \right] = \frac{1}{\xi_2^2(r)} \hat \E_r [\langle X,u\rangle \langle X,v\rangle] 
\] 
is positive semi-definite since it arises from the covariances of a random variable, where we write $X$ for a uniform random point of the cluster $K$, so that the limiting bilinear form is also positive semi-definite; in fact this limit is positive \emph{definite} since the relevant limiting constants are positive for each $u\in \R^d \setminus \{0\}$ as we saw above. (This is important to ensure that our limiting super-Brownian motion is not supported on a strict linear subspace of $\R^d$.)
\end{proof}

We will first prove \cref{lem:gyration_derivative} in Cases 1 and 2, with Cases 3 and 4 (which involve assumed two-point upper bounds) requiring a somewhat different treatment as usual.
We begin by writing down an estimate for the derivative that holds without any assumptions on $d$ or $\alpha$; we will then need to argue that the error terms appearing in this estimate are small under the assumptions of \cref{prop:radius_of_gyration}. 
Note that this form of the estimate will only be useful for proving \cref{prop:radius_of_gyration} in Cases 1 and 2; as usual, a slightly different approach will be needed when working from two-point function estimates in Cases 3 and 4.

\begin{lemma}
\label{lem:gyration_derivative2}
Let $u\in \R^d$. The estimate
\begin{multline}
\frac{1}{\beta_c |J'(r)|} \frac{d}{dr} \E_{\beta_c,r} \left[\sum_{x\in K} \langle x,u\rangle^2 \right]
=  2 |B_r| \E_{\beta_c,r}|K| \E_{r} \left[\sum_{x\in K} \langle x,u\rangle^2 \right] + (\E_{\beta_c,r}|K|)^2 \sum_{y\in B_r} \langle y,u\rangle^2
 \\
 \pm O\left(\|u\|^2_2 r^2 \E_{\beta_c,r}\left[|K|^2|K\cap B_r|\right]\right) \pm O\left(\E_{\beta_c,r}\left[|K||K\cap B_r| \sum_{x\in K}\langle x,u \rangle^2\right]\right).
\label{eq:x12_derivative_asymptotic}
\end{multline}
holds for all $r\geq 1$.
\end{lemma}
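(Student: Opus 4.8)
The plan is to differentiate via Russo's formula, decouple the two clusters appearing in the resulting expression using the two-cluster correlation inequalities of \cref{lem:BK_disjoint_clusters_covariance}, evaluate the leading term using translation invariance together with the reflection symmetry of the model, and finally bound the decoupling error by the two quantities on the right-hand side.

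First, applying Russo's formula exactly as in the proof of \cref{lem:moment_derivative} (the application in infinite volume being justified, as there, by the subcriticality of $\P_{\beta_c,r}$ and sharpness of the phase transition) and noting that opening a pivotal edge $\{a,b\}$ with $a\in K$ and $b\notin K$ enlarges the cluster of the origin by $K_b$, one obtains
\[
\frac{1}{\beta_c|J'(r)|}\frac{d}{dr}\E_{\beta_c,r}\left[\sum_{x\in K}\langle x,u\rangle^2\right] = \E_{\beta_c,r}\left[\sum_{a\in K}\sum_{b\in B_r(a)}\mathbbm{1}(b\notin K)\sum_{z\in K_b}\langle z,u\rangle^2\right],
\]
where $B_r(a)=\{y:\|a-y\|\le r\}$. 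I would then apply \cref{lem:BK_disjoint_clusters_covariance} to each summand with $F(K_0)=\mathbbm{1}(a\in K_0)$ and $G(K_b)=\sum_{z\in K_b}\langle z,u\rangle^2$ (both increasing in the relevant cluster, with $\E_{\beta_c,r}G(K_b)<\infty$ by subcriticality), which shows that $\E_{\beta_c,r}[\mathbbm{1}(a\in K)\mathbbm{1}(b\notin K)G(K_b)]$ equals $\P_{\beta_c,r}(0\leftrightarrow a)\,\E_{\beta_c,r}[G(K_b)]$ up to an error lying between $0$ and $\E_{\beta_c,r}[\mathbbm{1}(a\in K)\mathbbm{1}(b\in K)\sum_{z\in K}\langle z,u\rangle^2]$, using that $K_b=K$ on $\{0\leftrightarrow b\}$. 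Summing over $a$ and $b\in B_r(a)$, the total decoupling error $\mathcal{E}$ therefore satisfies
\[
0\le \mathcal{E}\le \E_{\beta_c,r}\left[\left(\sum_{z\in K}\langle z,u\rangle^2\right)\sum_{a\in K}|K\cap B_r(a)|\right].
\]

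For the main term I would use translation invariance $K_b\overset{d}{=}b+K$ to write $\E_{\beta_c,r}[G(K_b)]=\E_{\beta_c,r}[\sum_{w\in K}\langle b+w,u\rangle^2]$ and expand the square; the linear term vanishes since $\E_{\beta_c,r}[\sum_{w\in K}\langle w,u\rangle]=\langle u,\E_{\beta_c,r}\sum_{w\in K}w\rangle=0$, the model being invariant under $x\mapsto -x$ because $J$ depends only on the norm. Hence $\E_{\beta_c,r}[G(K_b)]=\langle b,u\rangle^2\,\E_{\beta_c,r}|K|+\E_{\beta_c,r}[\sum_{w\in K}\langle w,u\rangle^2]$. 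Summing over $b\in B_r(a)$ using $\sum_{b\in B_r(a)}\langle b,u\rangle^2=|B_r|\langle a,u\rangle^2+\sum_{c\in B_r}\langle c,u\rangle^2$ (the cross term vanishing by $B_r=-B_r$), then summing over $a$ against $\P_{\beta_c,r}(0\leftrightarrow a)$ and using $\sum_a\P_{\beta_c,r}(0\leftrightarrow a)=\E_{\beta_c,r}|K|$ and $\sum_a\P_{\beta_c,r}(0\leftrightarrow a)\langle a,u\rangle^2=\E_{\beta_c,r}[\sum_{x\in K}\langle x,u\rangle^2]$, the pieces collect into exactly $2|B_r|\E_{\beta_c,r}|K|\,\E_{\beta_c,r}[\sum_{x\in K}\langle x,u\rangle^2]+(\E_{\beta_c,r}|K|)^2\sum_{y\in B_r}\langle y,u\rangle^2$.

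The remaining step, bounding $\mathcal{E}$ by the two stated quantities, is the main obstacle. Writing $\mathcal{E}\le\E_{\beta_c,r}[\sum_{z,a,b\in K,\,\|a-b\|\le r}\langle z,u\rangle^2]$, the idea is to re-center the weight at cluster vertices via the elementary inequality $\langle z,u\rangle^2\le 2\langle z-w,u\rangle^2+2\langle w,u\rangle^2$, to use the norm comparison $|\langle a-b,u\rangle|\le\|a-b\|_2\|u\|_2=O(r\|u\|_2)$ whenever $\|a-b\|\le r$, and to invoke the mass-transport principle to identify the resulting sums: the identity $\E_{\beta_c,r}[\sum_{b\in K}|K\cap B_r(b)|\sum_{z\in K}\langle z-b,u\rangle^2]=\E_{\beta_c,r}[|K|\,|K\cap B_r|\sum_{x\in K}\langle x,u\rangle^2]$ yields the second stated error term, while $\E_{\beta_c,r}[|K|\sum_{a\in K}|K\cap B_r(a)|]=\E_{\beta_c,r}[|K|^2|K\cap B_r|]$ yields the first; the residual pieces are then controlled by iterating these manipulations together with a further application of the two-cluster correlation inequalities of \cref{lem:BK_disjoint_clusters_covariance}. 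The delicate point is to track the approximate cancellations between the signed pieces carefully enough to land precisely on the two error forms in the statement. Once \cref{lem:gyration_derivative2} is established, \cref{lem:gyration_derivative} in Cases~1 and 2 follows by inserting the tree-graph bound on $\E_{\beta_c,r}[|K|^2|K\cap B_r|]$ and $\E_{\beta_c,r}[|K||K\cap B_r|\sum_x\langle x,u\rangle^2]$ together with the normalization conventions \eqref{eq:normalization_conventions} and \cref{lem:ball_regularity}.
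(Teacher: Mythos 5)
Your argument is correct, and it takes a genuinely different route from the paper's. The paper first uses the mass-transport principle to recenter the weight ($\langle z,u\rangle^2\to\langle z-x,u\rangle^2$), then expands the trinomial $(z-y)+y+(-x)$ into six terms; the squared terms factorize approximately via \cref{lem:BK_disjoint_clusters_covariance}, while the signed cross terms only become controllable after observing that their fully factorized versions vanish by symmetry, so that \cref{lem:disjoint_connections} bounds what remains by four-point quantities. You instead decouple \emph{first}, applying \cref{lem:BK_disjoint_clusters_covariance} with $F=\mathbbm{1}(a\in K_0)$ and $G(K_b)=\sum_{z\in K_b}\langle z,u\rangle^2$, and then evaluate the decoupled main term exactly: translation invariance gives $\E_{\beta_c,r}[G(K_b)]=\langle b,u\rangle^2\E_{\beta_c,r}|K|+\E_{\beta_c,r}\sum_{w\in K}\langle w,u\rangle^2$ with the linear term vanishing identically by reflection symmetry, and the sums over $b\in B_r(a)$ and $a$ reassemble into precisely the stated main term. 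This is cleaner: the cancellations are exact at the level of expectations rather than approximate at the level of correlated events, and the error is a single non-negative quantity rather than several signed ones.

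The step you flag as the ``main obstacle'' is in fact immediate and needs none of the iteration you describe. Your error satisfies $0\le\mathcal{E}\le\E_{\beta_c,r}[\sum_{z,a,b\in K,\,\|a-b\|\le r}\langle z,u\rangle^2]$, and a single application of the mass-transport principle rooted at $b$ followed by $\langle z-w,u\rangle^2\le 2\langle z,u\rangle^2+2\langle w,u\rangle^2$ gives
\begin{equation*}
\mathcal{E}\;\le\;\E_{\beta_c,r}\Bigl[|K\cap B_r|\sum_{w,z\in K}\langle z-w,u\rangle^2\Bigr]\;\le\;4\,\E_{\beta_c,r}\Bigl[|K|\,|K\cap B_r|\sum_{x\in K}\langle x,u\rangle^2\Bigr],
\end{equation*}
so the entire error is absorbed into the second term of \eqref{eq:x12_derivative_asymptotic} and the first error term never arises on your route. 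There are no residual signed pieces to track, and no further application of the correlation inequalities is needed.
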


\begin{proof}[Proof of \cref{lem:gyration_derivative2}]
We may assume that $u\neq 0$, the claim holding vacuously otherwise. To lighten notation we will write $\E_r=\E_{\beta_c,r}$.
We begin by writing down the derivative formula
\begin{align*}
 \frac{d}{dr} \E_{r} \left[\sum_{x\in K} \langle x,u\rangle^2\right] &= \beta_c|J'(r)| \cdot \E_{r}\left[\sum_{x\in K} \sum_{y\in B_r(x)} \mathbbm{1}(y\notin K) \sum_{z\in K_y} \langle z,u\rangle^2\right] \\&=  \beta_c|J'(r)| \cdot\E_{r} \left[\sum_{x\in K} \sum_{y\in B_r} \mathbbm{1}(y\notin K) \sum_{z\in K_y} \langle z-x,u \rangle^2\right],
\end{align*}
where the second equality follows by using the mass-transport principle to exchange the roles of $0$ and $x$. 
Writing $z-x=(z-y)+(y-0)+(0-x)$ and expanding out the product, we get that
\begin{align*}
&\frac{1}{\beta_c |J'(r)|} \frac{d}{dr} \E_{r} \left[ \sum_{x\in K} \langle x,u\rangle^2 \right] =  \E_{r}\left[ |K| \sum_{y\in B_r} \mathbbm{1}(y\notin K) |K_y| \langle y,u\rangle^2 \right]
\\
&\hspace{0.5cm}+ \E_{r}\left[ |K| \sum_{y\in B_r} \mathbbm{1}(y\notin K) \sum_{z\in K_y} \langle z-y,u\rangle^2\right] 
+ \E_{r}\left[\sum_{x\in K} \langle x,u\rangle^2 \sum_{y\in B_r} \mathbbm{1}(y\notin K) |K_y|\right]\\
&\hspace{0.5cm}+ 2\E_{r}\left[|K| \sum_{y\in B_r} \langle y,u\rangle \mathbbm{1}(y\notin K) \sum_{z\in K_y} \langle z-y,u\rangle \right] 
 - 2\E_{r}\left[\sum_{x\in K} \langle x,u\rangle \sum_{y\in B_r} \mathbbm{1}(y\notin K) \sum_{z\in K_y} \langle z-y,u\rangle \right] 
 \\
&\hspace{0.5cm}
-2\E_{r}\left[  \sum_{x\in K} \langle x,u\rangle \sum_{y\in B_r} \langle y,u\rangle \mathbbm{1}(y\notin K)|K_y|\right].
\end{align*}
Using the mass-transport principle again to exchange the roles of $0$ and $y$ we see that some of these terms are equal, allowing us to write more simply that
\begin{align}
&\frac{1}{\beta_c |J'(r)|} \frac{d}{dr} \E_{r} \left[\sum_{x\in K} \langle x,u\rangle^2 \right]
=  \E_{r} \left[|K| \sum_{y\in B_r} \mathbbm{1}(y\notin K) |K_y| \langle y,u\rangle^2 \right] \nonumber\\
&\hspace{1.5cm}+2 \E_{r}\left[\sum_{x\in K} \langle x,u\rangle^2 \sum_{y\in B_r} \mathbbm{1}(y\notin K) |K_y|\right]
-2 \E_{r}\left[\sum_{x\in K} \langle x,u \rangle \sum_{y\in B_r} \mathbbm{1}(y\notin K) \sum_{z\in K_y} \langle z-y,u\rangle\right]\nonumber\\
&\hspace{1.5cm}-
4 \E_{r} \left[ \sum_{x\in K} \langle x,u \rangle \sum_{y\in B_r} \langle y,u\rangle \mathbbm{1}(y\notin K)|K_y|\right].
\label{eq:x12_derivative}
\end{align}
 For the first expectation on the right hand side of \eqref{eq:x12_derivative}, we have by \cref{lem:BK_disjoint_clusters_covariance} that
 \begin{align*}
&\left|(\E_{r}|K|)^2 \sum_{y\in B_r} \langle y,u\rangle^2 -
  \E_{r} \left[|K| \sum_{y\in B_r} \mathbbm{1}(y\notin K) |K_y| \langle y,u\rangle^2\right]\right| 
  \\&\hspace{6.5cm}\leq \sum_{y\in B_r} \langle y,u\rangle^2\left((\E_{r}|K|)^2 -
  \E_{r} \left[\mathbbm{1}(y\notin K) |K| |K_y| \right]\right)
  \\
  &\hspace{6.5cm}\leq\sum_{y\in B_r} \langle y,u\rangle^2
  \E_r \left[ |K|^2 \mathbbm{1}(y\in K)\right] \preceq r^{2} \E_r\left[|K|^2|K\cap B_r|\right],
\end{align*}
where we used that $(\E_{r}|K|)^2 -
  \E_{r} \left[\mathbbm{1}(y\notin K) |K| |K_y| \right]$ is non-negative in the first inequality. For the second expectation we have by similar reasoning that
 \begin{equation*}
\left| |B_r| (\E_{r}|K|)(\E_r\sum_{x\in K}\langle x,u\rangle^2)  -
  \E_{r} \left[\sum_{x\in K}\langle x,u\rangle^2 \sum_{y\in B_r} \mathbbm{1}(y\notin K) |K_y| \right] \right| \leq \E_r \left[ |K| |K\cap B_r| \sum_{x\in K}\langle x,u\rangle^2 \right].
\end{equation*}
 Let us now consider the last three terms, which we expect should approximately cancel to zero. For each $y\in \Z^d$ we can write
\begin{multline*}\E_r\left[ \mathbbm{1}(y\notin K) \sum_{x\in K} \langle x,u \rangle \sum_{z\in K_y} \langle z-y,u\rangle\right]
=\sum_{x,z} \P_r(0\leftrightarrow x \nleftrightarrow y \leftrightarrow z) \langle x, u \rangle \langle z-y,u\rangle
\\
=\sum_{x,z} \left[\P_r(0\leftrightarrow x \nleftrightarrow y \leftrightarrow z)-\P_r(0\leftrightarrow x)\P_r(y \leftrightarrow z)\right]\langle x,u\rangle\langle z-y,u\rangle,
\end{multline*}
where the second equality follows since $\sum_{x,z} \P_r(0\leftrightarrow x)\P_r(y \leftrightarrow z)\langle x,u\rangle\langle z_1-y_1,u\rangle=0$ by symmetry. 
Using \cref{lem:disjoint_connections}, we can therefore bound
\begin{align*}
\left|\E_r\sum_{x\in K}  \sum_{y\in B_r} \mathbbm{1}(y\notin K) \sum_{z\in K_y} \langle x,u\rangle\langle z-y,u\rangle\right|&\leq
\sum_{y\in B_r} \sum_{x,z\in \Z^d} \P_{r}(0 \leftrightarrow x \leftrightarrow y \leftrightarrow z) |\langle x,u\rangle||\langle z-y,u\rangle| \\
& \leq  \frac{1}{2}\E_r\left[|K||K\cap B_r| \sum_{x\in K}\langle x,u\rangle^2 \right],
\end{align*}
where in the second inequality we used that $|\langle x,u\rangle||\langle z-y,u\rangle| \leq \frac{1}{2}(\langle x,u\rangle^2+\langle z-y,u\rangle^2)$ and used the mass-transport principle to write both sums in terms of $x$.
We also have similarly that
\begin{align*}\E_r\left[ \mathbbm{1}(y\notin K)  |K_y| \sum_{x\in K} \langle x,u\rangle\langle y,u\rangle  \right]
&=\sum_{x,z} \P_r(0\leftrightarrow x \nleftrightarrow y \leftrightarrow z)\langle x,u\rangle\langle y,u\rangle
\\
&=\sum_{x,z} \left[\P_r(0\leftrightarrow x \nleftrightarrow y \leftrightarrow z)-\P_r(0\leftrightarrow x)\P_r(y \leftrightarrow z)\right]\!\langle x,u\rangle\langle y,u\rangle.
\end{align*}
and can therefore bound
\begin{multline*}
\left|\E_r\sum_{x\in K} \langle x,u\rangle \sum_{y\in B_r} \langle y,u\rangle \mathbbm{1}(y\notin K) |K_y|\right|\leq
\sum_{y\in B_r} \sum_{x,z\in \Z^d} \P_{r}(0 \leftrightarrow x \leftrightarrow y \leftrightarrow z) |\langle x,u\rangle||\langle y,u\rangle| \\
 \leq  \frac{1}{2}\E_r\left[|K||K\cap B_r| \sum_{x\in K}\langle x,u\rangle^2 \right] + \frac{1}{2}\E_r\left[|K|^2 \sum_{y\in K \cap B_r} \langle y,u\rangle^2  \right].
\end{multline*}
Putting all these estimates together yields the claim.
\end{proof}

To bound these errors, we will make use of the following easy consequence of the tree-graph inequalities.

\begin{lemma}[Spatially-weighted tree-graph inequalities]
\label{lem:spatial_tree_graph}
Let $u_1,\ldots,u_n\in \R^d$ and $p_1,\ldots,p_n \geq 0$ and let $p=\sum_i p_i$. 
The inequality
\[
  \E_{\beta,r}\left[ \sum_{x_1,\ldots,x_n\in K} \prod_{i=1}^n|\langle x_i,u_i\rangle|^{p_i} \right] \leq (2n-3)!! n^{\max\{p,1\}}(\E_{\beta,r}|K|)^{2n-2} \max_{1\leq i \leq n} \E_{\beta,r}\left[ |K|^{n-1}\sum_{x\in K}|\langle x,u_i\rangle|^p\right]
\]
holds for every $\beta,r\geq 0$.
\end{lemma}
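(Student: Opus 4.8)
The plan is to expand the left-hand side as a weighted multi-point connection sum and then dispose of the spatial weights by a purely pointwise inequality, so that the tree-graph machinery is needed only (if at all) for the degenerate case $p=0$. First I would use that $\{x_1,\dots,x_n\in K\}$ is the event that $0\leftrightarrow x_i$ for all $i$, together with Tonelli's theorem (all summands are non-negative), to write
\[
  \E_{\beta,r}\!\left[\sum_{x_1,\dots,x_n\in K}\prod_{i=1}^n|\langle x_i,u_i\rangle|^{p_i}\right]
  =\sum_{x_1,\dots,x_n}\P_{\beta,r}(0\leftrightarrow x_1,\dots,0\leftrightarrow x_n)\prod_{i=1}^n|\langle x_i,u_i\rangle|^{p_i}.
\]
If $p=\sum_i p_i=0$ the product is identically $1$ and this is just $\E_{\beta,r}|K|^n$, which is controlled by the ordinary tree-graph bound \eqref{eq:tree_graph_pth_moment}; so assume $p>0$. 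For non-negative reals $b_1,\dots,b_n$ and exponents $p_i\ge0$ with $\sum_i p_i=p$ one has $\prod_i b_i^{p_i}\le(\max_i b_i)^p\le\sum_{i=1}^n b_i^p$, and applying this with $b_i=|\langle x_i,u_i\rangle|$ gives $\prod_i|\langle x_i,u_i\rangle|^{p_i}\le\sum_{i=1}^n|\langle x_i,u_i\rangle|^p$ for all $x_1,\dots,x_n$.

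Substituting this into the display above and exchanging the order of the two (non-negative) sums, for each fixed $i$ the $n-1$ unweighted variables may be summed out freely, each producing a factor $|K|$, so that
\[
  \sum_{x_1,\dots,x_n}\P_{\beta,r}(0\leftrightarrow x_1,\dots,0\leftrightarrow x_n)\,|\langle x_i,u_i\rangle|^p
  =\E_{\beta,r}\!\left[|K|^{n-1}\sum_{x\in K}|\langle x,u_i\rangle|^p\right].
\]
Hence the left-hand side of the lemma is at most $\sum_{i=1}^n\E_{\beta,r}[|K|^{n-1}\sum_{x\in K}|\langle x,u_i\rangle|^p]\le n\max_{1\le i\le n}\E_{\beta,r}[|K|^{n-1}\sum_{x\in K}|\langle x,u_i\rangle|^p]$. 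Since $(2n-3)!!\ge1$, $n^{\max\{p,1\}}\ge n$, and $(\E_{\beta,r}|K|)^{2n-2}\ge1$ (because $\E_{\beta,r}|K|\ge1$), this already yields the asserted inequality; in fact it gives the formally stronger statement with the whole prefactor $(2n-3)!!\,n^{\max\{p,1\}}(\E_{\beta,r}|K|)^{2n-2}$ replaced by $n$.

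There is no genuine obstacle here: the only points requiring mild care are the bookkeeping identity in the second display (matching the multi-point sum with the claimed $\E_{\beta,r}[|K|^{n-1}\sum_{x\in K}|\cdot|^p]$ object) and the conventions when some $p_i=0$ or $\langle x_i,u_i\rangle=0$, both routine. One could alternatively route the argument through the full tree-graph inequality \eqref{tree_graph_general}, summing over trees $T\in\bbT_n$ and bounding each weighted diagram by successively summing out its leaves and degree-two vertices and using positive-definiteness of the two-point matrix to control the resulting convolution powers of $\tau$; this matches the name of the lemma more literally but is strictly more work and produces a weaker (dimension- and $p$-dependent) constant, so I would not take that route.
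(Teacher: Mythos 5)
Your argument is a correct proof of the inequality exactly as displayed, and its first half coincides with the paper's: the opening step of the paper's proof is precisely the weighted AM--GM bound $\prod_i|\langle x_i,u_i\rangle|^{p_i}\le\sum_i\frac{p_i}{p}|\langle x_i,u_i\rangle|^{p}$ followed by summing out the unweighted variables, which already yields $\max_{i}\E_{\beta,r}[|K|^{n-1}\sum_{x\in K}|\langle x,u_i\rangle|^p]$ with constant $1$ (see \eqref{eq:spatial_AM_GM}). But this observation should have alerted you that the displayed statement cannot be the intended one: a lemma whose stated prefactor $(2n-3)!!\,n^{\max\{p,1\}}(\E_{\beta,r}|K|)^{2n-2}$ is entirely superfluous, and whose right-hand side contains (at the maximizing index) the left-hand side itself, is vacuous in every application -- try to use it as stated in Case 1 of the proof of \cref{lem:gyration_derivative} and you get $\E_r[|K|^2\sum_{x\in K}\langle x,u\rangle^2]\preceq(\E_r|K|)^4\,\E_r[|K|^2\sum_{x\in K}\langle x,u\rangle^2]$, which is useless. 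The factor $|K|^{n-1}$ inside the expectation on the right is a typo; the bound the paper's proof actually establishes, and the one invoked in \cref{lem:gyration_derivative} and \cref{lem:displacement_distribution_ODE}, is
\[
  \E_{\beta,r}\Bigl[|K|^{n-1}\sum_{x\in K}|\langle x,u\rangle|^{p}\Bigr]\le(2n-3)!!\,n^{\max\{p,1\}}(\E_{\beta,r}|K|)^{2n-2}\,\E_{\beta,r}\Bigl[\sum_{x\in K}|\langle x,u\rangle|^{p}\Bigr],
\]
with the volume factors decoupled from the spatial weight. Your AM--GM argument cannot produce this decoupling: applied to $\E[|K|^{n-1}\sum_{x\in K}|\langle x,u\rangle|^p]$ it simply returns the same quantity.

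For the decoupled bound the tree-graph step you declined to take is genuinely needed, and it is lighter than you suggest. After the AM--GM reduction to a single weight $|\langle x_{i_0},u_{i_0}\rangle|^{p}$, apply \eqref{tree_graph_general}; in each tree diagram write $x_{i_0}$ as the telescoping sum of increments $x_{j_m}-x_{j_{m-1}}$ along the path $0=j_0,\ldots,j_\ell=i_0$ from the root to the leaf $i_0$, and use $(\sum_m a_m)^p\le\ell^{\max\{p-1,0\}}\sum_m a_m^p$ to place the entire weight on a single edge of the tree. A diagram with weight $|\langle x_{j_m}-x_{j_{m-1}},u_{i_0}\rangle|^p$ on one edge factorizes exactly, by summing out from the leaves inward and translation invariance, into $(\E_{\beta,r}|K|)^{2n-2}\,\E_{\beta,r}\sum_{x\in K}|\langle x,u_{i_0}\rangle|^p$; summing over the at most $n$ edges of the path and the $(2n-3)!!$ trees gives the stated constant. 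No positive-definiteness or convolution-power estimates enter, and the constant is $p$-dependent but not dimension-dependent.
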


\begin{proof}[Proof of \cref{lem:spatial_tree_graph}]
It follows from the AM-GM inequality and linearity of expectation that
\begin{multline}
\label{eq:spatial_AM_GM}
  \E_{\beta,r} \left[\sum_{x_1,\ldots,x_n\in K} \prod_{i=1}^n |\langle x_i,u_i\rangle|^{p_i} \right]\leq \E_{\beta,r} \left[\sum_{i=1}^n \frac{p_i}{p} |K|^{n-1}\sum_{x\in K} |\langle x,u_i\rangle|^{p} \right] 
  \\\leq 
  \max_{1\leq i \leq n} \E_{\beta,r} \left[|K|^{n-1} \sum_{x_i\in K}|\langle x_i,u_i\rangle|^{p} \right].
\end{multline}
Applying the tree-graph inequality \eqref{tree_graph_general}, we obtain that
\begin{equation}
\label{eq:tree_graph_spatial_explicit}
 \E_{\beta,r}\left[ \sum_{x_1,\ldots,x_n}  \prod_{i=1}^n|\langle x_i,u_i\rangle|^{p_i}\right]
 \leq
\max_{1\leq i_0 \leq n} \sum_{T\in \mathbb{T}_n} \sum_{x_{1},\ldots,x_{2n-1}} \prod_{\substack{i<j \\ i\sim j}} \P_{\beta,r}(x_i \leftrightarrow x_j) |\langle x_{i_0},u_{i_0}\rangle|^p,
\end{equation}
 where, as usual, the sum is taken over isomorphism classes of trees with leaves labelled $0,1,\ldots,n$ and unlabelled non-leaf vertices all of degree $3$. For each tree $T$ in this sum, let $0=j_0,\ldots j_\ell=i_0$ be the unique simple path from $0$ to $i_0$ in $T$. Applying the elementary inequality $(\sum_{i=1}^\ell a_i)^p \leq \ell^{\max\{p-1,0\}} \sum_{i=1}^\ell a_i^p$, which holds for any $p\geq 0$ and $a_1,\ldots a_\ell \geq 0$, yields that
 \[
   |\langle x_{i_0},u_{i_0}\rangle|^p \leq \ell^{\max\{p-1,0\}} \sum_{m=1}^\ell |\langle x_{j_m}-x_{j_{m-1}},u_{i_0}\rangle|^p.
 \]
Substituting this estimate into
\eqref{eq:tree_graph_spatial_explicit} we obtain that
\begin{align*}
 &\E_{\beta,r} \left[\sum_{x_1,\ldots,x_n\in K}  \prod_{i=1}^n|\langle x_i,u_i\rangle|^{p_i}\right]
 \\&\hspace{3.5cm}\leq
\max_{1\leq i_0 \leq n} \sum_{T\in\bbT_n} \sum_{x_1,\ldots,x_{2n-1}} \prod_{\substack{i<j \\ i\sim j}} \P_{\beta,r}(x_i \leftrightarrow x_j) \ell^{\max\{p-1,0\}} \sum_{m=1}^\ell |\langle x_{j_m}-x_{j_{m-1}},u_{i_0}\rangle|^p
\\
&\hspace{3.5cm}= 
\max_{1\leq i_0 \leq n} \sum_{T\in\bbT_n} \ell^{\max\{p,1\}} (\E_{\beta,r}|K|)^{2n-2}\E_{\beta,r}\sum_{x\in K} |\langle x,u_{i_0}\rangle|^p 
\\
&\hspace{3.5cm}\leq (2n-3)!! n^{\max\{p,1\}} \max_{1\leq i_0 \leq n}  (\E_{\beta,r}|K|)^{2n-2}\E_{\beta,r}\sum_{x\in K} |\langle x,u_{i_0}\rangle|^p 
\end{align*}
as claimed, where we used transitivity in the second line and used that $\ell \leq n$ for every index $i_0$ and tree $T$  in the final inequality.
\end{proof}

In the case $d=3\alpha$, we will also make use of the following four-point generalization of the Gladkov inequality \cite{gladkov2024percolation}, which is a special case\footnote{We hope the reader is not too aggrieved by our decision to defer the proof of the higher Gladkov inequality to \cref{II-subsec:higher_gladkov_inequalities_and_the_k_point_function}. These inequalities play a much more important role in the low-effective-dimensional regime than they do in the analysis of this paper, so that it is most natural to discuss them at length in the second paper. The proof of \cref{II-thm:higher_Gladkov}, which applies to percolation on arbitrary weighted graphs, is a straightforward generalization of the proof of \cite{gladkov2024percolation} and does not rely on the results of this paper in any way.} of \cref{II-thm:higher_Gladkov} (in fact the slightly stronger version of the theorem discussed in \cref{II-rmk:improved_higher_Gladkov}).

\begin{lemma}
\label{lem:four_point_Gladkov}
There exists a universal constant $C$ such that the inequality
\begin{multline*}
  \P_{\beta,r}(x,y,z,w \text{ \emph{all connected}})^2 \leq C \max\{ \P_{\beta,r} (x \leftrightarrow y \leftrightarrow z)\P_{\beta,r} (z \leftrightarrow w)
  \P_{\beta,r} (x \leftrightarrow y \leftrightarrow w),\\
   \P_{\beta,r} (x \leftrightarrow y \leftrightarrow z)\P_{\beta,r} (y \leftrightarrow w)
  \P_{\beta,r} (x \leftrightarrow z \leftrightarrow w)
  \}
\end{multline*}
holds for all $x,y,z,w\in \Z^d$ and $\beta,r\geq 0$.
\end{lemma}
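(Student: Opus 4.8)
The plan is to obtain \cref{lem:four_point_Gladkov} as the $k=4$ instance of the general $k$-point Gladkov inequality \cref{II-thm:higher_Gladkov} — in fact of its sharpened form \cref{II-rmk:improved_higher_Gladkov} — which is established in the second paper of the series. That theorem is proven there for Bernoulli percolation on an arbitrary weighted graph (in particular for the weighted graph with kernel $J_r$ underlying $\P_{\beta,r}$) by an extension of Gladkov's proof of the three-point inequality \cref{thm:Gladkov}, and it uses no result from the present paper, so invoking it here introduces no circularity. For completeness I would also include the following self-contained outline of the argument in the case $k=4$.

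First I would set up the standard two-copies coupling. Let $\omega_1,\omega_2$ be independent configurations of Bernoulli percolation on $G=(V,E,J_r)$ at parameter $\beta$, and let $A$ denote the event that $x,y,z,w$ all lie in a common cluster; then
\[
\P_{\beta,r}(x,y,z,w \text{ all connected})^2 = \P\bigl(A(\omega_1)\cap A(\omega_2)\bigr)
\]
by independence. On $A(\omega_i)$, use a fixed deterministic rule (e.g.\ a lexicographic minimisation) to select an open subtree $R_i\subseteq \omega_i$ whose leaves lie in $\{x,y,z,w\}$ and whose remaining vertices have degree $3$; every such tree realises one of the two four-leaf topologies — the ``$H$-tree'' with two branch vertices and the ``star-tree'' with one branch vertex of degree $4$ — together with their degenerations in which some of $x,y,z,w$ coincide with branch vertices. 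The outcome of the coupling is thus recorded by the pair $(R_1,R_2)$ of four-leaf trees on the common leaf set $\{x,y,z,w\}$ together with their (at most four) branch vertices.

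The combinatorial heart of the proof is a deterministic surgery lemma for pairs of four-leaf trees: from the labelled edge set $E(R_1)\cup E(R_2)$ one can extract, for one of the two product structures on the right-hand side of the claimed inequality, edge-disjoint witnesses realising each of its three pieces — for the first structure, disjoint connections $x\leftrightarrow y$ (routed through a branch vertex $h$), $h\leftrightarrow z$, and a further connection from the ``$x,y$-side'' to $w$ — arranged so that Reimer's inequality applied within each copy separately (as in the discussion preceding \cref{lem:BK_disjoint_clusters_covariance}) bounds the contribution of a fixed value of $h$ by a product of two- and three-point connection probabilities. Summing over the finitely many topological cases and over the branch vertices — collapsing each sum of the form $\sum_h \P(a\leftrightarrow h)\P(b\leftrightarrow h)\,\cdots$ by merging a pair of disjoint connections $a\leftrightarrow h$, $h\leftrightarrow b$ into a single connection $a\leftrightarrow b$ before applying BK, exactly as in the passage from the tree-graph inequality to \cref{thm:Gladkov} — then yields the stated bound with a universal constant $C$ absorbing the number of cases and the combinatorial multiplicities.

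The main obstacle is precisely this surgery step: one must verify that for \emph{every} pair of four-leaf trees and every degeneration, the union of their edge sets really does contain edge-disjoint witnesses conforming to one of the two prescribed shapes, and that the resulting bookkeeping (which connection is routed through which branch vertex, and in which copy) can be organised so that the branch-vertex sums close up into genuine three-point functions rather than into divergent open-triangle-type sums. For $k=3$ this is exactly Gladkov's argument; the new feature at $k=4$ is that the minimal connecting tree has two distinct topologies, which is the reason the right-hand side must appear as a maximum over the two corresponding product structures rather than as a single term.
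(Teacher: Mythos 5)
Your primary move—deducing the lemma as the $k=4$ case of \cref{II-thm:higher_Gladkov} in its sharpened form \cref{II-rmk:improved_higher_Gladkov}, and noting that this introduces no circularity—is exactly what the paper does; the proof is deferred to the second paper and not given here. Your supplementary two-copy/tree-surgery outline is a reasonable sketch of the flavour of that argument, but since you correctly flag its unverified surgery step as an obstacle rather than claiming it as a complete proof, nothing further is needed.
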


%
%
We now apply these inequalities to prove the first two cases of \cref{lem:gyration_derivative}.

\begin{proof}[Proof of \cref{lem:gyration_derivative}, Cases 1 and 2]
We continue to write $\E_r=\E_{\beta_c,r}$. Fix $u\in \R^d\setminus \{0\}$, the case $u=0$ of all relevant estimates holding vacuously. It suffices by \cref{lem:gyration_derivative2} to prove that the error terms
\[
\frac{r^2\E_r\left[|K|^2 |K\cap B_r|\right]}{r^{d+2}(\E_r|K|)^2} \qquad \text{ and } \qquad \frac{\E_r\left[|K| |K\cap B_r| \sum_{x\in K} \langle x,u\rangle^2\right]}{r^{d}\E_r|K| \max\{\E_r \sum_{x\in K} \langle x,u\rangle^2,r^2\E_r|K|\}}
\]
both converge to zero and moreover are logarithmically integrable when $d>3\alpha$. The relevant estimates for the first of these errors was already proven in the proof of \cref{prop:first_moment}. For the second, we break the analysis into cases:

\medskip

\noindent \textbf{Case 1: $d>3\alpha$.} In this case, we have by \cref{lem:spatial_tree_graph} that
\[
\E_r\left[|K| |K\cap B_r| \sum_{x\in K} \langle x,u\rangle^2\right] \leq \E_r\left[|K|^2\sum_{x\in K} \langle x,u\rangle^2\right] \preceq (\E_r|K|)^4 \E_r\sum_{x\in K} \langle x,u\rangle^2,
\]
so that
\[
\frac{\E_r\left[|K| |K\cap B_r| \sum_{x\in K} \langle x,u\rangle^2\right]}{r^{d}\E_r|K| \E_r \sum_{x\in K} \langle x,u\rangle^2} \preceq \frac{(\E_r|K|)^3}{r^d} \asymp r^{3\alpha-d}
\]
is logarithmically integrable as desired, where we used \cref{thm:hd_moments_main} in the final estimate.

\medskip

\noindent \textbf{Case 2: $d=3\alpha$ \& \eqref{Hydro}.}  In this case, we use Cauchy-Schwarz to bound 
\begin{equation}
\label{eq:critical_dim_gyration_error_Cauchy_Schwarz}
\E_r\left[|K| |K\cap B_r| \sum_{x\in K} \langle x,u\rangle^2\right]^2 \leq \E_r \left[|K|^2 \sum_{x\in K} \langle x,u\rangle^2 \right] \E_r \Bigl[|K\cap B_r|^2 \sum_{x\in K} \langle x,u\rangle^2\Bigr].
\end{equation}
For the first term, we can use \cref{lem:spatial_tree_graph} as above to bound 
\begin{equation}
  \E_r \left[|K|^2 \sum_{x\in K} \langle x,u\rangle^2 \right]  \preceq (\E_r |K|)^4 \E_r \sum_{x\in K} \langle x,u\rangle^2.
  \label{eq:critical_dim_gyration_error_loose}
\end{equation}
For the second, we can use the four-point Gladkov inequality (\cref{lem:four_point_Gladkov}) to bound
\begin{align}
  &\E_r \Bigl[|K\cap B_r|^2 \sum_{x\in K} \langle x,u\rangle^2\Bigr] 
  \label{eq:critical_dim_gyration_error_Gladkov}\\&\hspace{0.75cm}\preceq \sum_{y,z\in B_r} \sum_{x\in \Z^d} \langle x,u\rangle^2 \P_r(0\leftrightarrow y \leftrightarrow z)^{1/2}\P_r(0\leftrightarrow x)^{1/2}\P_r(y\leftrightarrow x)^{1/4}\P_r(z\leftrightarrow x)^{1/4}\P_r(y\leftrightarrow z)^{1/4}
  \nonumber\\
  &\hspace{1.5cm}+\sum_{y,z\in B_r} \sum_{x\in \Z^d} \langle x,u\rangle^2 \P_r(0\leftrightarrow y \leftrightarrow z)^{1/2}\P_r(z\leftrightarrow x)^{1/4}\P_r(y\leftrightarrow x)^{1/2}\P_r(z\leftrightarrow x)^{1/4}\P_r(0\leftrightarrow z)^{1/4},
  \nonumber
\end{align}
where we also applied the three-point Gladkov inequality (\cref{thm:Gladkov}) to bound the $\P_{r}(x\leftrightarrow y \leftrightarrow w)$ terms that appear.
Noting that $\langle x,u\rangle^2 \preceq \max\{\langle x-y,u\rangle^2 + r^2,\langle x-z,u\rangle^2 + r^2\}$, we can use H\"older's inequality to bound the sum over $x$ in the first term by
\begin{multline*}
\sum_{x\in \Z^d} \langle x,u\rangle^2 \P_r(0\leftrightarrow x)^{1/4}\P_r(y\leftrightarrow x)^{1/2}\P_r(z\leftrightarrow x)^{1/4}
\\\leq \left[\sum_{x\in \Z^d} \langle x,u\rangle^2 \P_r(0\leftrightarrow x) \right]^{1/4}\left[\sum_{x\in \Z^d} \langle x,u\rangle^2 \P_r(y\leftrightarrow x) \right]^{1/2}\left[\sum_{x\in \Z^d} \langle x,u\rangle^2 \P_r(z\leftrightarrow x) \right]^{1/4}
\\ 
\preceq \max\{\E_r \sum_{x\in K}\langle x,u\rangle^2, r^2 \E_r |K|\}
\end{multline*}
for every $y,z\in B_r$, 
with a similar bound holding for the sum over $x$ in the other term. Substituting these estimates into \eqref{eq:critical_dim_gyration_error_Gladkov} yields the bound
\begin{align}
&\E_r \Bigl[|K\cap B_r|^2 \sum_{x\in K} \langle x,u\rangle^2\Bigr] 
\label{eq:critical_dim_gyration_error_post_Gladkov}
\\&\hspace{1cm}\preceq \max\left\{r^2\E_r|K|,\E_r\sum_{x\in K} \langle x,u\rangle^2\right\}\sum_{y,z\in B_r} \P_r(0\leftrightarrow y \leftrightarrow z)^{1/2}\left[\P_r(y\leftrightarrow z)^{1/4}+\P_r(0\leftrightarrow z)^{1/4}\right].
\nonumber
\end{align}
To bound this expression, we use H\"older's inequality again to write
\begin{align}
&\sum_{y,z\in B_r} \P_r(0\leftrightarrow y \leftrightarrow z)^{1/2}\P_r(y\leftrightarrow z)^{1/4} 
\nonumber\\&\hspace{5cm}\leq \left[\sum_{y,z\in B_r} \P_r(0\leftrightarrow y \leftrightarrow z)\right]^{1/2}\left[\sum_{y,z\in B_r} \P_r(y \leftrightarrow z)\right]^{1/4}\left[\sum_{y,z\in B_r} 1\right]^{1/4}\nonumber\\
&\hspace{5cm}\leq (\E_r |K\cap B_r|^2)^{1/2} (|B_r|\E_r|K|)^{1/4} |B_r|^{1/2}\nonumber\\
&\hspace{5cm}\preceq M_r^{1/2} |B_r|^{3/4} (\E_r|K|)^{3/4} = o(r^{4\alpha}),
\label{eq:critical_dim_gyration_error_almost_done}
\end{align}
where we used the universal tightness theorem via \cref{cor:universal_tightness_moments} and the hydrodynamic condition in the last line. A similar bound holds on the other summand in \eqref{eq:critical_dim_gyration_error_post_Gladkov} by the same argument. Putting together \eqref{eq:critical_dim_gyration_error_Cauchy_Schwarz}, \eqref{eq:critical_dim_gyration_error_loose}, \eqref{eq:critical_dim_gyration_error_post_Gladkov}, and \eqref{eq:critical_dim_gyration_error_almost_done}, we obtain that
\[
  \E_r\left[|K| |K\cap B_r| \sum_{x\in K} \langle x,u\rangle^2\right] = o\left(r^{4\alpha} \max\left\{\E_r  \sum_{x\in K} \langle x,u\rangle^2,r^2\E_r|K|\right\}\right),
\]
which ensures that the relevant error goes to zero as $r\to \infty$ as desired.
\end{proof}

Before moving on to Cases 2 and 3, let us briefly note how \cref{prop:radius_of_gyration_d=3alpha>6} can be proven using the same method as Case 4 of \cref{prop:radius_of_gyration}.

\begin{proof}[Proof of \cref{prop:radius_of_gyration_d=3alpha>6}]
The proof of Case 4 of \cref{lem:gyration_derivative} extends without modification to the more general setting that $d=3\alpha$ and the hydrodynamic condition holds. When $d=3\alpha\geq 6$, the driving term $r^{2+\alpha}$ is regularly varying of index greater than or equal to the coefficient $2\alpha$ of the self-referential term, so that \cref{lem:ODE_with_possibly_negligible_driving_term} now yields that $\E_{\beta_c,r}\sum_{x\in K}\|x\|_2^2$ is regularly varying of index $2\alpha$ and satisfies $\lim_{r\to \infty} r^{-\alpha-2}\E_{\beta_c,r}\sum_{x\in K}\|x\|_2^2$. This implies the claim in conjunction with \cref{prop:first_moment}.
\end{proof}

\subsubsection{Cases 2 and 3}
It remains to prove the second and third cases of \cref{lem:gyration_derivative}, in which we work with an assumed upper bound on the two-point function. It will be convenient to prove the lemma for the derivative of $\E_{\beta_c,r}\sum_{x\in K}\|x\|_2^2$ before returning to treat $\E_{\beta_c,r}\sum_{x\in K}\langle x,u\rangle^2$. We begin by obtaining a lower bound of the correct order on the radius of gyration as an immediate consequence of the assumed two-point upper bound and our estimates on the first moment (\cref{prop:first_moment}).


\begin{lemma}
If Case 2 or 3 of \eqref{HD+} holds then
\label{lem:gyration_lower_bound}
\[\E_{\beta_c,r} \sum_{x\in K} \|x\|^{2p}_2 \succeq_p \begin{cases} r^{(p+1)\alpha} & \text{Case 2}
\\
r^{(p+1)\alpha} (\log r)^p & \text{Case 3}
\end{cases} \]
for every $r\geq 1$ and $p\geq 1$.
\end{lemma}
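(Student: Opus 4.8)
The plan is to deduce this lower bound from the \emph{assumed two-point upper bound} together with the first-moment lower bound $\E_{\beta_c,r}|K|\succeq r^\alpha$, which holds by \cref{cor:mean_lower_bound} (or by \cref{prop:first_moment}, whose hypotheses are met here since the triangle condition holds under \eqref{HD+} by \cref{lem:log_integrable_triangle}). The underlying mechanism is the tension between two facts: by the two-point upper bound, most of the mass of $K$ cannot lie too close to the origin, while the total expected mass of $K$ is of order $r^\alpha$; together these force the cluster to spread out to the radius-of-gyration scale and hence make its spatial moments large. Concretely, since $J_r\leq J$ pointwise, the standard monotone coupling gives $\P_{\beta_c,r}(0\leftrightarrow x)\leq \P_{\beta_c}(0\leftrightarrow x)$ for every $x$, so the assumed two-point bound holds verbatim for the cut-off measure. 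Summing over $x\in B_\rho$ and using the elementary estimates $\sum_{0<\|x\|\leq \rho}\|x\|^{2-d}\asymp \rho^2$ and $\sum_{2\leq\|x\|\leq \rho}(\log\|x\|)^{-1}\|x\|^{2-d}\asymp \rho^2/\log\rho$ then yields
\[
\E_{\beta_c,r}|K\cap B_\rho| \preceq \begin{cases} \rho^2 & \text{Case 2}\\ \rho^2/\log\rho & \text{Case 3}\end{cases}
\]
for every $r\geq 1$ and every $\rho\geq 2$ (recall $d=6$ and $\alpha=2$ in Case 3).

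Next I would choose the truncation scale $\rho=\rho(r)$ to be of order $r^{\alpha/2}$ in Case 2 and of order $r\sqrt{\log r}$ in Case 3, in each case with a small enough implicit constant that the displayed bound forces $\E_{\beta_c,r}|K\cap B_\rho|\leq \tfrac12\E_{\beta_c,r}|K|$ for all sufficiently large $r$: in Case 2 this uses that $\alpha>2$, so $\rho^2\asymp r^\alpha$, and in Case 3 that $\log\rho\sim\log r$, so $\rho^2/\log\rho\asymp r^2\asymp r^\alpha$. Then $\E_{\beta_c,r}|K\setminus B_\rho|\geq \tfrac12\E_{\beta_c,r}|K|\succeq r^\alpha$ for all large $r$. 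Since any $x\notin B_\rho$ satisfies $\|x\|_2\geq c_0\|x\|>c_0\rho$ for the constant $c_0>0$ comparing the Euclidean norm with the model norm $\|\cdot\|$, we obtain
\[
\E_{\beta_c,r}\sum_{x\in K}\|x\|_2^{2p} \geq \E_{\beta_c,r}\sum_{x\in K\setminus B_\rho}\|x\|_2^{2p} \geq (c_0\rho)^{2p}\,\E_{\beta_c,r}|K\setminus B_\rho| \succeq_p \rho^{2p}\,r^\alpha,
\]
which is $\succeq_p r^{(p+1)\alpha}$ in Case 2 and $\succeq_p r^{(p+1)\alpha}(\log r)^p$ in Case 3 (using $(p+1)\alpha=2p+2$ when $\alpha=2$). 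The statement for $r$ in any fixed bounded range is then immediate after adjusting the implicit constant.

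I do not expect a genuine obstacle here: the only things requiring care are the choice of the truncation scale $\rho$ and the verification of the two elementary sums above, the argument otherwise being a direct balancing of the two-point upper bound against the first-moment lower bound. The lower bounds obtained are of the correct order, and in particular they already pin down the radius of gyration to scale like $r^{\alpha/2}$ in Case 2 and $r\sqrt{\log r}$ in Case 3, which is the information that will feed into the proof of \cref{lem:gyration_derivative} in those cases.
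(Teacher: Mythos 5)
Your proof is correct and is essentially identical to the paper's: both use the assumed two-point upper bound (transferred to the cut-off measure by monotonicity) to bound $\E_{\beta_c,r}|K\cap B_\rho|$, choose the truncation radius $\rho\asymp r^{\alpha/2}$ (Case 2) or $\rho\asymp r\sqrt{\log r}$ (Case 3) so that at least half the expected mass $\succeq r^\alpha$ lies outside $B_\rho$, and then bound the spatial moment below by $\rho^{2p}$ times that mass. The only cosmetic difference is that your parenthetical ``this uses that $\alpha>2$'' in Case 2 is not actually needed (the choice $\rho=cr^{\alpha/2}$ works for any $\alpha$ there).
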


\begin{proof}[Proof of \cref{lem:gyration_lower_bound}]
We can use our assumed bound on the two-point function to estimate
\[
  \sum_{\|x\|\leq R} \P_{\beta_c,r}(0\leftrightarrow x) \leq \sum_{\|x\|\leq R} \P_{\beta_c,r}(0\leftrightarrow x) \preceq \begin{cases} R^{2} & \text{Case 2} \\ \frac{R^2}{\log R} & \text{Case 3}
  \end{cases}
\]
for every $r>0$ and $R \geq 1$. Using that $\E_{\beta_c,r}|K| \asymp r^\alpha$ (\cref{prop:first_moment}), it follows that there exists a positive constant $c$ such that if we define $R(r)=c r^{\alpha/2}$ (in Case 2) or $R(r)=c r \sqrt{\log r}$ (in Case 3) then
\begin{equation}
 \E_{\beta_c,r}|K \setminus B_{R}| \geq c\E_{\beta_c,r}|K|
\label{eq:gyration_lower_proof}
\end{equation}
for every $r\geq 1$ and hence that
\[
  \E_{\beta_c,r} \sum_{x\in K} \|x\|^{2p}_2 \geq R^{2p} \E_{\beta_c,r}|K \setminus B_{R}| \succeq R^{2p} r^\alpha.
\]
This is equivalent to the claim.
\end{proof}

\begin{lemma}[The spatially weighted triangle diagram]
\label{lem:spatial_triangle} If Cases 2 or 3 of \eqref{HD+} hold then
\begin{multline*}
  \sum_{x,y\in \Z^d} \|x\|_2^2 \P_{\beta_c,r}(0\leftrightarrow x) \P_{\beta_c,r}(x\leftrightarrow y) \P_{\beta_c,r}(y \leftrightarrow z),\\\sum_{x,y\in \Z^d} \|y-x\|_2^2 \P_{\beta_c,r}(0\leftrightarrow x) \P_{\beta_c,r}(x\leftrightarrow y) \P_{\beta_c,r}(y \leftrightarrow z)
   \preceq \begin{cases}
  1 & d > 8\\
  \log r & d=8\\
  r^{\alpha/2} & d=7 \\
  \frac{r^\alpha}{(\log r)^2} & d =6 
  \end{cases}
\end{multline*}
for every $r\geq 2$ and $z\in B_r$.
\end{lemma}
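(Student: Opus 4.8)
The plan is to estimate the two spatially-weighted triangle sums by inserting the assumed two-point upper bound and tracking how the extra factor $\|x\|_2^2$ (resp.\ $\|y-x\|_2^2$) is absorbed into the convolution. First I would observe that, by translation-invariance of the two-point function and the mass-transport principle, the second sum equals the first sum evaluated with the roles of $0$ and $x$ interchanged, so it suffices to bound $S(z):=\sum_{x,y}\|x\|_2^2\,\tau_r(x)\tau_r(x-y)\tau_r(y-z)$ where $\tau_r(x)=\P_{\beta_c,r}(0\leftrightarrow x)$; indeed one can simply write $\|y-x\|_2=\|( y-x)\|_2$ and relabel. In Case 2 we have $\tau_r(x)\preceq (1+\|x\|)^{-d+2}$ uniformly in $r$, and in Case 3 we have $\tau_r(x)\preceq (1+\|x\|)^{-d+2}(\log(2+\|x\|))^{-1}$, both following from the hypotheses of \eqref{HD+} together with the stochastic domination $\P_{\beta_c,r}\preceq \P_{\beta_c}$ implicit in the cut-off construction. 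The only subtlety is that for $d<8$ (Case 2) or for $d=6,7,8$ (Case 3) the relevant convolutions fail to converge at infinity, and we must use the \emph{cut-off}: since $\P_{\beta_c,r}$ is a subcritical measure, $\tau_r$ actually decays faster than any power once $\|x\|\gg r$, but more usefully $\sum_{\|x\|\le R}\tau_r(x) \le \E_{\beta_c,r}|K\cap B_R|\le \E_{\beta_c}|K\cap B_R|$, and the spatial sum truncated at scale $r$ behaves like the partial sum of a power series.

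The main computation is a decomposition of the $x$-sum by dyadic shells. For the inner part, fix $x$ and bound $\sum_y \tau_r(x-y)\tau_r(y-z)=(\tau_r*\tau_r)(x-z)$; using the standard convolution estimate \cite[Proposition 1.7]{MR1959796} (applied as in the proof of \cref{lem:log_integrable_triangle}) this is $\preceq (1+\|x-z\|)^{-d+4}$ in Case 2 and $\preceq (1+\|x-z\|)^{-d+4}(\log(2+\|x-z\|))^{-2}$ in Case 3 whenever $d>4$. Then
\[
S(z) \preceq \sum_{x} \|x\|_2^2\,\tau_r(x)\,(1+\|x-z\|)^{-d+4}\,(\text{log factor}),
\]
and since $z\in B_r$ we split into $\|x\|\le 2r$ and $\|x\|>2r$; on the second region one uses that $\tau_r$ has superpolynomial decay past scale $r$ (or, more robustly, a further application of the convolution bound together with the trivial bound $\tau_r\le\tau$ and $d>6$ to see that tail is $O(1)$), while on the first region $\|x\|_2^2\preceq r^2$ only when $\|x-z\|$ is comparable to $\|x\|$, so one instead keeps $\|x\|_2^2\preceq \|x-z\|_2^2 + \|z\|_2^2 \preceq \|x-z\|_2^2 + r^2$ and sums each piece against $\tau_r(x)(1+\|x-z\|)^{-d+4}$. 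Writing both factors in dyadic shells and summing, the term carrying $r^2$ contributes $r^2\sum_{x}\tau_r(x)(1+\|x-z\|)^{-d+4}\preceq r^2\cdot(\tau_r*(1+\|\cdot\|)^{-d+4})(z)$, which is $\preceq r^2(1+\|z\|)^{-d+6}(\text{log})\preceq r^{-d+8}(\text{log})$ using $\|z\|\le r$; and the term carrying $\|x-z\|_2^2$ contributes $\sum_x \tau_r(x)(1+\|x-z\|)^{-d+6}(\text{log})$, a convolution that converges for $d>6$ and otherwise is truncated at scale $r$ by the cut-off, giving the same $r^{-d+8}$-type bound (with the stated logarithmic correction at $d=8$, and an extra log cancellation in Case 3 that replaces $r^\alpha/(\log r)$ type growth by $r^\alpha/(\log r)^2$). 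Matching $\alpha=2$ at $d=6$ one gets $r^{\alpha}/(\log r)^2$; $\alpha=2$ at $d=7$ would give $r^{\alpha/2}=r$ from interpolating $d=6,8$ — here one is in Case 2 so no logs — and $d\ge 9$ gives a convergent sum, hence $O(1)$; the boundary $d=8$ gives the borderline $\log r$.

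The hard part will be the bookkeeping at the \emph{critical} dimensions $d=6,7,8$, where the relevant convolution just barely diverges and one cannot simply invoke \cite[Prop.\ 1.7]{MR1959796} globally; there one must genuinely exploit that $\tau_r$ is the two-point function of a \emph{subcritical} model, so that $\sum_{x}\tau_r(x)(1+\|x\|)^{a}$ is controlled, for $a\ge 0$ with $a<d-6$, by $\preceq r^{a-(d-6)}\cdot(\text{something}) $ — more precisely, one uses the cut-off bound $\tau_r*\tau_r*\tau_r(0,0)=T^3_{\beta_c,r}(0,0)\preceq (\E_{\beta_c,r}|K|)^0$-type estimates from \cref{lem:higher_polygon_diagrams} together with spatial weighting. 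An efficient route is to note that $\E_{\beta_c,r}\sum_{x}\|x\|^{2}_2\tau_r(x)\preceq r^{2+\alpha}$ (from the $d>6$ behaviour of the radius of gyration in Cases 2,3, or directly from the two-point bound) and feed this into the dyadic estimate rather than re-deriving tail control each time. Once these weighted single-site sums are in hand, the whole lemma reduces to one application of Hölder/Cauchy–Schwarz per shell and a geometric summation, with the logarithmic factors of Case 3 carried along verbatim. I would present the $d>8$ (Case 2) / $d\ge 9$ cases first as a warm-up where everything converges, then do the borderline cases $d\in\{6,7,8\}$ using the cut-off, and finally remark that the second sum follows from the first by the mass-transport symmetry noted at the outset.
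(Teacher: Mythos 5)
Your opening moves match the paper's: you collapse the $y$-sum into the two-fold convolution $\tau_r*\tau_r(x-z)\preceq \langle x-z\rangle^{-d+4}$ (with the extra $(\log)^{-2}$ in Case 3) via the estimate \eqref{eq:convolution_estimate_logs}, and you correctly observe that the second sum reduces to the first after the substitution $w=y-x$. But the remaining single sum $\sum_x \|x\|_2^2\,\tau_r(x)\,\langle x-z\rangle^{-d+4}$ is where your argument breaks. First, the mechanism you invoke to kill the far tail is wrong: $\tau_r$ does \emph{not} have superpolynomial decay past scale $r$ (subcriticality of $\P_{\beta_c,r}$ only gives exponential decay at the correlation length of $\P_{(1-\eps(r))\beta_c}$, which is not controlled at scale $r$), and the fallback via $\tau_r\leq \tau$ gives a far tail of order $\sum_{\|x\|>2r}\|x\|^{-2d+8}$, which diverges for $d\leq 8$ --- i.e.\ in exactly the cases where you need it. The correct mechanism, and the one the paper uses, is the susceptibility bound $\sum_x \tau_r(x)=\E_{\beta_c,r}|K|\asymp r^\alpha$ from \cref{prop:first_moment}: on the region $\langle x-z\rangle\geq R$ with $R\geq 2r$ one has $\langle x\rangle \asymp \langle x-z\rangle$, so the weight and the convolution factor together contribute at most $R^{-d+6}$, leaving a far-region contribution $R^{-d+6}\,r^\alpha$.

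Second, your near-region bookkeeping loses the lemma. Writing $\|x\|_2^2\preceq \|x-z\|_2^2+r^2$ and bounding the $r^2$-term by $r^2(\tau_r*\langle\cdot\rangle^{-d+4})(z)\preceq r^2\langle z\rangle^{-d+6}\preceq r^{-d+8}$ uses the inequality $\langle z\rangle^{-d+6}\preceq r^{-d+6}$ in the wrong direction: since $d>6$ the exponent is negative, so $\|z\|\leq r$ only gives the trivial bound $1$, and the term is $O(r^2)$ --- far larger than $\log r$ at $d=8$ (take $z=0$). One must instead keep the weight attached to the two-point function, $\|x\|_2^2\,\tau_r(x)\preceq \langle x\rangle^{-d+4}$, so that the near-region sum $\sum_{\langle x-z\rangle\leq R}\langle x\rangle^{-d+4}\langle x-z\rangle^{-d+4}$ is $O(1)$, $O(\log R)$, $O(R)$ for $d>8$, $d=8$, $d=7$. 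Third, the truncation scale is not $r$: balancing the near-region bound against $R^{-d+6}r^\alpha$ forces $R=r^{\alpha/2}$ for $d=7,8$ (and $R=r^{\alpha/2}(\log r)^{1/2}$ at $d=6$, which is exactly where $R^2(\log R)^{-3}$ gets traded for the final $(\log r)^{-2}$); truncating at $r$ gives $r^{\alpha-1}$ for $d=7$, which exceeds the claimed $r^{\alpha/2}$ once $\alpha>2$. Finally, the auxiliary input $\sum_x\|x\|_2^2\tau_r(x)\preceq r^{2+\alpha}$ you propose to feed into the dyadic estimate is both false in Case 2 (the correct order is $r^{2\alpha}\gg r^{2+\alpha}$ when $\alpha>2$) and circular, since the radius-of-gyration asymptotics in Cases 2 and 3 are proved \emph{using} this lemma.
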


\begin{proof}[Proof of \cref{lem:spatial_triangle}]
We will prove the claim for the sum involving $\|x\|_2^2$, the other claim being similar.
Fix $r\geq 2$ and $z\in B_r$ and write $\langle x \rangle=2+\lceil \|x\| \rceil$.
We have by \eqref{eq:convolution_estimate_logs} that
\[
  \sum_{y\in \Z^d} \P_{\beta_c}(0\leftrightarrow y)\P_{\beta_c}(y\leftrightarrow x) \preceq \begin{cases}
  \langle x\rangle^{-d+4} & \text{Case 2}\\
  \frac{\langle x\rangle^{-2}}{(\log \langle x\rangle)^2} & \text{Case 3},
  \end{cases}
\]
so that
\begin{multline*}
  \sum_{x,y\in \Z^d}\|x\|_2^{2}\P_{\beta_c,r}(0\leftrightarrow x)\P_{\beta_c,r}(x\leftrightarrow y)\P_{\beta_c,r}(y\leftrightarrow z) \\\preceq \begin{cases} \sum_x \P_{\beta_c,r}(0\leftrightarrow x) \langle x \rangle^2\langle x-z\rangle^{-d+4} & \text{Case 2}\\
  \sum_x \frac{\langle x \rangle^2 \langle x-z\rangle^{-2}}{(\log \langle x-z\rangle)^{2}}\P_{\beta_c,r}(0\leftrightarrow x) & \text{Case 3.}
  \end{cases}
\end{multline*}
In both cases we will split the sum into two pieces according to whether $\langle x-z\rangle$ is greater or smaller than a parameter $R\geq 2r$ and then optimize over $R$. In Case 2 we have that
\[
  \sum_{x\in \Z^d} \P(0\leftrightarrow x)\langle x \rangle^2 \langle x-z \rangle^{-d+4} \preceq \sum_{\langle x-z\rangle\leq R} \langle x\rangle^{-d+4}\langle x-z\rangle^{-d+4} + R^{-d+6} \E_{\beta_c,r}|K|,
\]
where we used that if $\langle x-z\rangle \geq R$ then $\langle x \rangle \asymp \langle x-z \rangle$. The sum
\[\sum_{\langle x-z\rangle\leq R} \langle x\rangle^{-d+4}\langle x-z\rangle^{-d+4} \preceq \begin{cases}
1 & d>8\\
\log R & d=8\\
R & d =7
\end{cases} 
\]
can be estimated using the same method as in the proof of \eqref{eq:convolution_estimate_logs}, and the claim follows by optimizing over $R$ in each case (taking $R=\infty$ for $d>8$ and $R= r^{\alpha/2}$ for $d=7,8$).
Similarly, in Case 3 we have that
\begin{multline*}
  \sum_{x\in \Z^d} \frac{\langle x \rangle^2 \langle x-z\rangle^{-2}}{(\log \langle x-z\rangle)^{2}}\P_{\beta_c,r}(0\leftrightarrow x) \preceq \sum_{\|x\|\leq R} \frac{\langle x\rangle^{-d+4} \langle x-z\rangle^{-2}}{(\log \langle x\rangle)(\log \langle x-z\rangle)^2} + (\log R)^{-2} \E_{\beta_c,r}|K|\\
\asymp 
R^2 (\log R)^{-3} + (\log R)^{-2} r^\alpha
\end{multline*}
and we can take $R=r^{\alpha/2} (\log r)^{1/2}$ to get the claimed estimate.
\end{proof}

We next prove the following estimate, which is very close to that of \cref{lem:gyration_derivative} except that the error is expressed in terms of $\E_{\beta_c,r}\sum_{x\in K}\|x\|_2^2$ rather than $\E_{\beta_c,r}\sum_{x\in K}\langle x,u \rangle^2$.

\begin{lemma}
\label{lem:gyration_derivative3}
If Case 2 or 3 of \eqref{HD+} holds then for each $u\in \R^d$ we have that
\begin{multline*}
 \frac{d}{dr} \E_{\beta_c,r} \left[\sum_{x\in K} \langle x,u\rangle^2 \right]
=  \frac{2 \alpha}{r}  \E_{\beta_c,r} \left[\sum_{x\in K} \langle x,u\rangle^2 \right] + \frac{\alpha^2}{\beta_c} r \int_{B} \langle y,u\rangle^2 \dif y 
\\\pm O\left(\frac{\|u\|^2_2}{r(\log r)^2} \E_{\beta_c,r}\sum_{x\in K}\|x\|_2^2\right)
\end{multline*}
as $r\to \infty$.
\end{lemma}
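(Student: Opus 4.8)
The plan is to return to the identity from the proof of \cref{lem:gyration_derivative2}: Russo's formula together with the mass-transport principle write $\beta_c^{-1}|J'(r)|^{-1}\frac{d}{dr}\E_{\beta_c,r}[\sum_{x\in K}\langle x,u\rangle^2]$ as a sum of four expectations, two of which carry the leading behaviour and two of which are themselves error terms that nearly cancel. Writing $\tau_r(x):=\P_{\beta_c,r}(0\leftrightarrow x)$, the leading part, after peeling off the nonnegative covariance defects via \cref{lem:BK_disjoint_clusters_covariance}, equals $(\E_{\beta_c,r}|K|)^2\sum_{y\in B_r}\langle y,u\rangle^2+2|B_r|\E_{\beta_c,r}|K|\,\E_{\beta_c,r}[\sum_{x\in K}\langle x,u\rangle^2]$; multiplying by $\beta_c|J'(r)|$ and using $\E_{\beta_c,r}|K|\sim\frac{\alpha}{\beta_c}r^\alpha$ (\cref{prop:first_moment}, whose hypotheses hold because the triangle condition holds in Cases~2 and 3), the normalization $|J'(r)||B_r|\sim r^{-\alpha-1}$, and the Riemann-sum asymptotics $\sum_{y\in B_r}\langle y,u\rangle^2\sim r^{d+2}\int_B\langle y,u\rangle^2\dif y$, this produces exactly the leading terms $\frac{2\alpha}{r}\E_{\beta_c,r}[\sum_{x\in K}\langle x,u\rangle^2]+\frac{\alpha^2}{\beta_c}r^{2+\alpha-1}\int_B\langle y,u\rangle^2\dif y$ appearing in \cref{lem:gyration_derivative}. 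All the work is thus in bounding the error terms.

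The reason the crude error bounds of \cref{lem:gyration_derivative2} (which feature $\E_{\beta_c,r}[|K|^2|K\cap B_r|]$ and $\E_{\beta_c,r}[|K|\,|K\cap B_r|\sum_{x\in K}\langle x,u\rangle^2]$) are too weak here is that they rely on bounding full four-point functions. Instead I would, in every error term, first subtract the relevant symmetric product — which vanishes because $\sum_x\langle x,u\rangle\tau_r(x)=0$ — and then bound the decorrelation $\tau_r(x)\tau_r(z-y)-\P_{\beta_c,r}(0\leftrightarrow x\nleftrightarrow y\leftrightarrow z)$ using the triangle-type estimate of \cref{lem:disjoint_connections_triangle}. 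Splitting each spatial weight locally, $\langle x,u\rangle^2\le 2\langle x-a,u\rangle^2+2\langle a,u\rangle^2$ (where $a$ is the internal vertex to which $x$ is attached in the resulting diagram) and $\langle y,u\rangle^2\preceq\|u\|_2^2 r^2$ since $y\in B_r$, and summing the free endpoints against $\E_{\beta_c,r}|K|$, every error collapses into one of three shapes: (A) $(\E_{\beta_c,r}|K|)^2|B_r|$ times a spatially weighted open triangle $\sum_{a,b}\|\cdot\|_2^2\,\tau_r(a)\tau_r(a-b)\tau_r(b-y)$ with $y\in B_r$, bounded by \cref{lem:spatial_triangle} after using the triangle inequality and a path reversal to move the weight onto the first edge; (B) $\|u\|_2^2\,\E_{\beta_c,r}|K|\,\E_{\beta_c,r}[\sum_{x\in K}\|x\|_2^2]\,\sum_{y\in B_r}\nabla_r(0,y)$; and (C) $\|u\|_2^2 r^2\,(\E_{\beta_c,r}|K|)^2\,\sum_{y\in B_r}\nabla_r(0,y)$, where $\nabla_r(0,y)$ denotes the cut-off open triangle, which is dominated by $\nabla_{\beta_c}(0,y)$. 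Here I would invoke the pointwise triangle bounds $\nabla_{\beta_c}(0,y)\preceq\|y\|^{-d+6}$ in Case~2 and $\nabla_{\beta_c}(0,y)\preceq(\log\|y\|)^{-2}$ in Case~3 from the proof of \cref{lem:log_integrable_triangle}, which give $\sum_{y\in B_r}\nabla_r(0,y)\preceq r^6$ and $\preceq r^6/(\log r)^2$ respectively.

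It then remains to verify that each of (A), (B), (C), after multiplication by $\beta_c|J'(r)|\asymp r^{-d-\alpha-1}$, is $O\big(\tfrac{\|u\|_2^2}{r(\log r)^2}\E_{\beta_c,r}\sum_{x\in K}\|x\|_2^2\big)$. Using $\E_{\beta_c,r}|K|\asymp r^\alpha$, the lower bounds $\E_{\beta_c,r}\sum_{x\in K}\|x\|_2^2\succeq r^{2\alpha}$ (Case~2) and $\succeq r^{2\alpha}\log r$ (Case~3) from \cref{lem:gyration_lower_bound}, and the case-by-case value of the bound in \cref{lem:spatial_triangle} ($1$ for $d>8$, $\log r$ for $d=8$, $r^{\alpha/2}$ for $d=7$, $r^\alpha/(\log r)^2$ for $d=6$), each of the three shapes reduces to an elementary inequality between powers of $r$ and $\log r$; throughout Case~2 ($d\ge 7$, $\alpha\ge 2$) there is polynomial room, whereas in Case~3 ($d=6$, $\alpha=2$) the estimates are tight and the matching $(\log r)^{-2}$ factors appearing in \cref{lem:spatial_triangle}, in $\sum_{y\in B_r}\nabla_r(0,y)$, and in the target error are precisely what make it close. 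Once this form of the estimate is established, summing over $u$ in an orthonormal basis yields the asymptotic ODE for $\E_{\beta_c,r}\sum_{x\in K}\|x\|_2^2$ with a multiplicative error of order $(\log r)^{-2}$ on the self-referential term — hence logarithmically integrable — so that \cref{lem:ODE_with_possibly_negligible_driving_term} (if $\alpha>2$) or \cref{lem:ODE_with_critical_driving_term} (if $\alpha=2$) applies and its conclusion may be substituted back to complete the proof of \cref{lem:gyration_derivative}, and hence of \cref{prop:radius_of_gyration}, in Cases~2 and 3. I expect the main obstacle to be keeping this decorrelation bookkeeping honest — verifying that, after subtracting the symmetric products, only the benign shapes (A)--(C) survive (in particular no term of the form $\E_{\beta_c,r}[|K|^2\sum_{y\in K\cap B_r}\langle y,u\rangle^2]$, which is what appears and is fatal if one instead bounds the full four-point function) — and then pushing the $d=6$ boundary case through, where essentially nothing can be wasted and the logarithmically improved two-point estimate of Case~3 of \eqref{HD+} must be used at full strength.
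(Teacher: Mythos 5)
Your proposal is correct and follows essentially the same route as the paper: you start from the four-term identity \eqref{eq:x12_derivative}, replace the crude four-point covariance bounds of \cref{lem:BK_disjoint_clusters_covariance} by the triangle-based decorrelation estimate \cref{lem:disjoint_connections_triangle}, exploit the symmetry cancellation $\sum_x\langle x,u\rangle\tau_r(x)=0$ for the cross terms, split the spatial weights along the resulting diagrams, and close the estimates with \cref{lem:spatial_triangle}, \cref{lem:gyration_lower_bound}, and the pointwise triangle bounds \eqref{eq:triangle_case_2}--\eqref{eq:triangle_case_3}. The error shapes (A)--(C) you isolate match the paper's, and your bookkeeping in the tight case $d=6$, $\alpha=2$ is consistent with the paper's $r^{d+3\alpha}(\log r)^{-2}$ and $[\sum_{y\in B_r}\nabla(0,y)]\,r^\alpha\,\E_{\beta_c,r}\sum_{x\in K}\|x\|_2^2$ bounds.
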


The error estimate appearing here has not been optimized; the important thing is that $(\log r)^{-2}$ is logarithmically integrable.

\begin{proof}[Proof of \cref{lem:gyration_derivative3}]
It suffices to consider the case that $u$ is a unit vector; fix one such $u$. As in the proof of \cref{lem:gyration_derivative2}, we prove that the first two terms on the right hand side of \eqref{eq:x12_derivative} approximately factor and that the last two terms are negligible, taking care to ensure that all errors are logarithmically integrable. For the first term, we can use \cref{lem:disjoint_connections_triangle} and \cref{lem:spatial_triangle} to bound
\begin{align*}
&\left|(\E_{r}|K|)^2 \sum_{y\in B_r} \langle y,u\rangle^2 -
  \E_{r} \left[|K| \sum_{y\in B_r} \mathbbm{1}(y\notin K) |K_y| \langle y,u\rangle^2\right]\right| \\&\hspace{4cm}\leq \sum_{x,z,a,b\in \Z^d}\sum_{y\in B_r} \P_r(0\leftrightarrow a)\P_r(a\leftrightarrow x)\P_r(a\leftrightarrow b)\P_r(b\leftrightarrow y) \P_r(b\leftrightarrow z) \| y\|^2_2 
  \\ &\hspace{4cm}\preceq \frac{r^{d+3\alpha}}{(\log r)^2},
\end{align*}
where before applying \cref{lem:spatial_triangle} in the last line we bounded $\|y\|_2^2 \preceq \|a\|_2^2+\|b-a\|_2^2+\|y-b\|_2^2$. For the second term, we have similarly that
\begin{align*}&\left| |B_r| (\E_{r}|K|)(\E_r\sum_{x\in K}\langle x,u\rangle^2)  -
  \E_{r} \left[\sum_{x\in K}\langle x,u\rangle^2 \sum_{y\in B_r} \mathbbm{1}(y\notin K) |K_y| \right] \right|\\
  & \hspace{4cm}\leq \sum_{x,z,a,b\in \Z^d}\sum_{y\in B_r} \P_r(0\leftrightarrow a)\P_r(a\leftrightarrow x)\P_r(a\leftrightarrow b)\P_r(b\leftrightarrow y) \P_r(b\leftrightarrow z) \| x\|^2_2 
  \\ & \hspace{4cm} \preceq \frac{r^{d+3\alpha}}{(\log r)^2} + \left[\sum_{y\in B_r} \nabla(0,y)\right] r^\alpha  \E\sum_{x\in K}\|x\|_2^2 \preceq \frac{r^{d+\alpha}}{(\log r)^2} \E\sum_{x\in K}\|x\|_2^2.
\end{align*}
where again we used \cref{lem:spatial_triangle,lem:gyration_lower_bound} and the triangle bounds of \eqref{eq:triangle_case_2} and \eqref{eq:triangle_case_3} in the last line. For the cross-terms, we can do the same algebraic manipulations as in the proof of Cases 1 and 4 and use \cref{lem:disjoint_connections_triangle} to obtain that
\begin{align*}
&\left|\E_r\sum_{x\in K}  \sum_{y\in B_r} \mathbbm{1}(y\notin K) \sum_{z\in K_y} \langle x,u\rangle\langle z-y,u\rangle\right|\\&\hspace{2.7cm}\preceq
\sum_{y\in B_r} \sum_{x,z\in \Z^d} \P_r(0\leftrightarrow a)\P_r(a\leftrightarrow x)\P_r(a\leftrightarrow b)\P_r(b\leftrightarrow y) \P_r(b\leftrightarrow z) (\|x\|_2^2+\|z-y\|_2^2)
\end{align*}
and
\begin{align*}
&\left|\E_r\sum_{x\in K} \langle x,u\rangle \sum_{y\in B_r} \langle y,u\rangle \mathbbm{1}(y\notin K) |K_y|\right|
\\&\hspace{2.7cm}\preceq
\sum_{y\in B_r} \sum_{x,z\in \Z^d} \P_r(0\leftrightarrow a)\P_r(a\leftrightarrow x)\P_r(a\leftrightarrow b)\P_r(b\leftrightarrow y) \P_r(b\leftrightarrow z) (\|x\|_2^2+\|y\|_2^2).
\end{align*}
We can then get bounds of the desired order for both expressions using \cref{lem:spatial_triangle,lem:gyration_lower_bound} and the triangle bounds of \eqref{eq:triangle_case_2} and \eqref{eq:triangle_case_3} via the same argument used above.
\end{proof}

\begin{proof}[Proof of \cref{lem:gyration_derivative}, Cases 2 and 3]
Summing the estimate \eqref{lem:gyration_derivative3} over an orthonormal basis yields that
\begin{equation*}
 \frac{d}{dr} \E_{\beta_c,r} \left[\sum_{x\in K} \|x\|_2^2 \right]
=  \frac{2 \alpha}{r} \left(1\pm O\left(\frac{1}{(\log r)^2}\right)\right) \E_{\beta_c,r} \left[\sum_{x\in K} \|x\|_2^2 \right] + \frac{1}{r}\frac{\alpha^2}{\beta_c} r^{2+\alpha} \int_{B} \| y\|_2^2 \dif y ,
\end{equation*}
where we used \cref{lem:gyration_lower_bound} to absorb the error into the first term. Since $(\log r)^{-2}$ is logarithmically integrable, it follows by the same analysis as in the proof of \cref{prop:radius_of_gyration} given \cref{lem:gyration_derivative} that 
\begin{equation}
\label{eq:gyration_cases2_and_3}
  \E_{\beta_c,r}\sum_{x\in K} \|x\|_2^2 \sim \text{const.} \begin{cases} r^{2\alpha} & \text{Case 2}\\
  r^{4} \log r & \text{Case 3}.
  \end{cases}
\end{equation}
To conclude the proof, it suffices to prove that
\begin{equation}
  \E_{\beta_c,r}\sum_{x\in K} \langle x,u \rangle^2 \succeq \E_{\beta_c,r}\sum_{x\in K} \|x\|_2^2
\label{eq:gyration_isotropy}
\end{equation}
for every $r\geq 1$ and unit vector $u\in \R^d$. Let $R=R(r)=r^\alpha$ in Case 2 and $R=R(r)=r \sqrt{\log r}$ in Case 3. It follows from \eqref{eq:gyration_lower_proof} and \eqref{eq:gyration_cases2_and_3} that there exists a constant $\lambda$ such that
\[
  \E_{\beta_c,r} |\{x\in K : \lambda^{-1}R \leq \|x\|\leq \lambda R \}| \geq \frac{1}{2}  \E_{\beta_c,r} |K| \succeq r^\alpha
\]
for every $r\geq 1$. Fixing this constant $\lambda$, we have by our assumed two-point function bounds that
\begin{align}
  \E_{\beta_c,r} |\{x\in K : \lambda^{-1}R \leq \|x\|\leq \lambda R, \langle x,u\rangle \leq \eps R \}| &\preceq \eps R^d \begin{cases} R^{-d+2} & \text{Case 2}\\
  (\log R)^{-1} R^{-d+2} & \text{Case 3}
  \end{cases}\nonumber\\
  & \preceq \eps r^\alpha
  \label{eq:gyration_isotropy2.5}
\end{align}
for every $\eps>0$ and $r\geq 1$, which is easily seen to imply \eqref{eq:gyration_isotropy} in conjunction with \cref{prop:first_moment} and \eqref{eq:gyration_cases2_and_3}. We note for future application that \eqref{eq:gyration_isotropy2.5} also implies the more general bound
\begin{equation}
  \E_{\beta_c,r}\sum_{x\in K} \langle x,u \rangle^{p} \succeq_p \begin{cases}
  r^{(1+p/2)\alpha} & \text{Case 2}\\
  r^{2+p} (\log r)^p & \text{Case 3}
  \end{cases} 
\label{eq:gyration_isotropy2}
\end{equation}
for every $r,p\geq 1$.
\end{proof}



\subsection{The full displacement distribution}
\label{subsec:the_full_displacement_distribution}

In this section we extend the analysis of the previous section to compute the full limiting distribution of a uniform random point of the cluster $K$ under the size-biased measure $\hat\E_{\beta_c,r}$ as $r\to \infty$. The following proposition characterises this limiting distribution in terms of its moments; we will give a more probabilistic description of the relevant distributions afterward the statement.

\begin{prop}
\label{prop:displacement_moments}
Suppose that at least one of the four hypotheses of \eqref{HD+} holds.
\begin{enumerate}
  \item If $\alpha \geq 2$ 
then 
\[
  \E_{\beta_c,r}\sum_{x\in K}\langle x,u\rangle^{2p} \sim \frac{(2p)!}{2^p} \left(\frac{\E_{\beta_c,r}\sum_{x\in K}\langle x,u\rangle^{2}}{\E_{\beta_c,r}|K|}\right)^p \E_{\beta_c,r}|K| 
\]
as $r\to \infty$ for each $p\geq 0$ and $u\in \R^d$.
\item If $\alpha<2$ then for each $u\in \R^d\setminus \{0\}$ 
there exists 
 a sequence of positive coefficients $A_{2p}(u)$ such that
\[
  \E_{\beta_c,r}\sum_{x\in K}\langle x,u \rangle^{2p} \sim A_{2p}(u) r^{2p} \E_{\beta_c,r}|K|
\]
as $r\to \infty$ for each $p\geq 0$. Moreover, these coefficients satisfy
\[
  \sum_{p=0}^\infty \frac{A_{2p}(u)}{(2p)!} =  \left(1-\sum_{p=1}^\infty \frac{\alpha \int_B \langle u,y\rangle^{2p} \dif y}{(2p)!(2p-\alpha)}\right)^{-1}\]
  for all $u$ in an open neighbourhood of the origin in which all relevant sums converge.
\end{enumerate}
\end{prop}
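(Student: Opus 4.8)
The plan is to establish \cref{prop:displacement_moments} by setting up and analyzing a system of asymptotic ODEs for the spatially-weighted moments $\E_{\beta_c,r}\sum_{x\in K}\langle x,u\rangle^{2p}$, exactly in parallel with the treatment of the unweighted moments in \cref{sec:analysis_of_moments} and of the second-order spatial moments in \cref{subsec:the_radius_of_gyration}. First I would differentiate using Russo's formula and the mass-transport principle (as in the proofs of \cref{lem:moment_derivative} and \cref{lem:gyration_derivative2}) to obtain, writing $z-x=(z-y)+(y-0)+(0-x)$ and expanding the binomial $\langle z-x,u\rangle^{2p}$, an identity of the schematic form
\[
\frac{1}{\beta_c|J'(r)|}\frac{d}{dr}\E_{\beta_c,r}\sum_{x\in K}\langle x,u\rangle^{2p}
= \sum_{\text{terms}} \E_{\beta_c,r}\Bigl[\,|K|^{a}\!\!\sum_{y\in B_r}\!\mathbbm 1(y\notin K)|K_y|^{b}\langle y,u\rangle^{c}\!\!\prod\langle\cdot,u\rangle\Bigr],
\]
then apply the two-cluster decorrelation estimate \cref{lem:BK_disjoint_clusters_covariance} to factor each term into a product of a spatial moment of $K$ with a spatial moment localized at $y\in B_r$. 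The ``diagonal'' terms that survive to leading order are those that factor as $|B_r|^{-1}\E_{\beta_c,r}[|K|\sum_{x\in K}\langle x,u\rangle^{2p}]$ times $\E_{\beta_c,r}|K|$ (giving the self-referential coefficient), those that factor through lower weighted moments of $K$ paired with $\sum_{y\in B_r}\langle y,u\rangle^{2j}\sim \int_B\langle y,u\rangle^{2j}\,\mathrm dy\,r^{d+2j}$ (giving the driving terms), and the odd cross-terms, which one shows cancel up to negligible error exactly as in \cref{lem:gyration_derivative2}, using \cref{lem:disjoint_connections} to bound each surviving cross-term by $\E_{\beta_c,r}[|K||K\cap B_r|\sum_{x\in K}\langle x,u\rangle^{2p}]$-type quantities.

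The resulting asymptotic ODE will have the form $f_{2p}'(r)\sim r^{-1}\bigl(a_p f_{2p}(r)+h_{2p}(r)\bigr)$ where $f_{2p}(r)=\E_{\beta_c,r}\sum_{x\in K}\langle x,u\rangle^{2p}$, the self-referential coefficient is $a_p=(2p+1)\alpha$ (coming from the $|K|^{2p+1}$-type diagonal term after dividing by $\E_{\beta_c,r}|K|\sim\frac{\alpha}{\beta_c}r^\alpha$ and using $|J'(r)||B_r|\sim r^{-\alpha-1}$), and the driving term $h_{2p}$ is a sum of products of lower moments $f_{2j}(r)$ with powers of $r$ together with, in the $\alpha\ge 2$ regime, the dominant contribution $\binom{2p}{p}$-type combinatorial factor built from $f_2(r)$. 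For $\alpha\ge 2$: one shows by induction on $p$ that $f_2(r)$ is regularly varying of index $2+\alpha$ (index $2\alpha$ if $\alpha>2$; with a log if $\alpha=2$) from \cref{prop:radius_of_gyration}, the driving term built from $f_2$ dominates because its index exceeds $(2p+1)\alpha$, and \cref{lem:ODE_with_driving_term} pins down $f_{2p}\sim\frac{(2p)!}{2^p}(f_2/\E|K|)^p\E|K|$; the combinatorial constant $(2p)!/2^p$ should emerge from the recursion via an exponential-generating-function identity ($\sum (2p)!/(2^p(2p)!)\,x^{2p}=(1-x^2/2)^{-1}$ corresponds to Gaussian moments), analogous to \cref{lem:double_fun}. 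For $\alpha<2$: here the driving term has index $2p+\alpha$ which, compared to the self-referential index $(2p+1)\alpha=2p\alpha+\alpha$, is larger iff $2p>2p\alpha$, i.e.\ always (since $\alpha<1$ would be needed\dots) — more carefully, one compares $2p+\alpha$ against $(2p+1)\alpha$, i.e.\ $2p$ against $2p\alpha$, so the driving term dominates exactly when $\alpha<1$; for $1\le\alpha<2$ the driving term at level $p$ and the self-referential term have indices $2p+\alpha$ vs.\ $(2p+1)\alpha$, and one must track which dominates, but in all cases $f_{2p}\asymp r^{2p+\alpha}$ so writing $f_{2p}(r)=A_{2p}(u)r^{2p}\E_{\beta_c,r}|K|$, solving the recursion for the constants $A_{2p}(u)$ via \cref{lem:ODE_with_driving_term}, and then resumming via the generating-function manipulation (the shape of $h_{2p}$ as a convolution of the $A_{2j}$ with the moment sequence $\int_B\langle u,y\rangle^{2j}\mathrm dy$) yields the stated closed form $\sum_p A_{2p}(u)x^{2p}/(2p)!=(1-\sum_{p\ge1}\frac{\alpha\int_B\langle u,y\rangle^{2p}\mathrm dy}{(2p)!(2p-\alpha)})^{-1}$, valid in a neighbourhood of the origin where everything converges.

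The main obstacle I anticipate is controlling the cross-terms and the higher-order error terms uniformly across the four cases of \eqref{HD+}, especially in the critical case $d=3\alpha<6$ (Case 4), where — as in \cref{lem:gyration_derivative} — the errors are only logarithmically small and require the four-point Gladkov inequality (\cref{lem:four_point_Gladkov}) together with the universal tightness estimates (\cref{cor:universal_tightness_moments}) and the spatially-weighted tree-graph inequality (\cref{lem:spatial_tree_graph}), and one must verify that the relevant error quantity $\E_{\beta_c,r}[|K||K\cap B_r|\sum_{x\in K}\langle x,u\rangle^{2p}]$ is genuinely $o$ of the leading terms after applying Cauchy–Schwarz, Hölder, and the hydrodynamic condition. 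Establishing the base cases $p=0,1$ is immediate from \cref{prop:first_moment} and \cref{prop:radius_of_gyration}; the induction on $p$ is driven entirely by \cref{lem:ODE_with_driving_term} (with \cref{lem:ODE_with_driving_term_Gronwall} supplying the matching upper bound when the asymptotic ODE is only known as an inequality), so no new analytic input beyond \cref{sec:analysis_of_moments} is needed — the work is combinatorial bookkeeping of the driving terms and careful error estimation. In Cases 2 and 3 one additionally uses the pointwise two-point bound and the spatially-weighted triangle estimate \cref{lem:spatial_triangle} (and its higher-weight analogues, which one proves by the same convolution method) to handle the error terms, together with the isotropy estimate \eqref{eq:gyration_isotropy2} to reduce $\langle x,u\rangle$-weighted errors to $\|x\|_2$-weighted ones.
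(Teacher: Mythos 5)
Your overall strategy is the paper's: differentiate via Russo's formula and the mass-transport principle, expand the trinomial $z-x=(z-y)+y+(-x)$, decorrelate the two clusters with \cref{lem:BK_disjoint_clusters_covariance}, kill the odd cross-terms as in \cref{lem:gyration_derivative2}, control the errors case-by-case (tree-graph for $d>3\alpha$, Gladkov plus universal tightness plus \eqref{Hydro} for $d=3\alpha<6$, the two-point bound and \cref{lem:spatial_triangle}-type convolution estimates in Cases 2 and 3), and then run an induction on $p$ through \cref{lem:ODE_with_driving_term}. However, there is a concrete error that breaks the argument: the self-referential coefficient in the asymptotic ODE for $f_{2p}(r)=\E_{\beta_c,r}\sum_{x\in K}\langle x,u\rangle^{2p}$ is $2\alpha$, not $(2p+1)\alpha$. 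The quantity $f_{2p}$ is a sum over a \emph{single} weighted point of $K$, not over $p$ points, so after the trinomial expansion only the two extreme terms $(a,b,c)=(p,0,0)$ and $(0,p,0)$ produce a copy of $f_{2p}$ itself, each contributing $\beta_c|J'(r)||B_r|\,\E_{\beta_c,r}|K|\cdot f_{2p}(r)\sim \alpha r^{-1}f_{2p}(r)$; the coefficient $(p+1)\alpha$ you are importing belongs to the unweighted moment $\E_{\beta_c,r}|K|^p$ of \cref{prop:higher_moments}, whose combinatorics are different.

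This is not a cosmetic slip. With your coefficient $(2p+1)\alpha$ the dominance comparison goes the wrong way for $\alpha\ge 2$: the driving term built from $f_2$ has index $(p+1)\alpha<(2p+1)\alpha$, so \cref{lem:ODE_with_driving_term} would not apply and \cref{lem:ODE_with_possibly_negligible_driving_term} would instead declare the driving term negligible, leaving the constant undetermined and the conclusion false. (You assert the driving index ``exceeds $(2p+1)\alpha$'', which contradicts your own setup.) For $\alpha<2$ your comparison of $2p+\alpha$ against $(2p+1)\alpha$ leaves you stuck for $1\le\alpha<2$; with the correct coefficient the comparison is $2p+\alpha$ versus $2\alpha$, and the driving term dominates for every $p\ge 1$ since $2p\ge 2>\alpha$, with no case distinction. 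Finally, the normalizing factor from \cref{lem:ODE_with_driving_term} is $\frac{\alpha}{(2p+\alpha)-2\alpha}=\frac{\alpha}{2p-\alpha}$, which is exactly the source of the $(2p-\alpha)$ denominators in the stated generating-function identity (and of the $\frac{1}{p-1}$ in the recursion $A_{2p}=\frac{1}{p-1}\sum_{a=1}^{p-1}\binom{2p}{2a}A_{2a}A_{2p-2a}$ that the Laplace moments $(2p)!/2^p$ satisfy when $\alpha\ge2$); your coefficient would yield $\frac{\alpha}{2p-2p\alpha}$, which is negative for $\alpha>1$ and cannot reproduce the claimed formula. Once the coefficient is corrected to $2\alpha$, the rest of your outline matches the paper's proof.
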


Let us now interpret these asymptotic formulae in terms of their consequences for the scaling limit.
In the case $\alpha\geq 2$, the prefactors $p!/2^{p/2} \mathbbm{1}(p$ even$)$ are equal to the moments of a \emph{Laplace distribution} of parameter $\sqrt{2}$, that is, the product of an exponential random variable of rate $\sqrt{2}$ with an independent uniform random element of $\{+1,-1\}$. The Laplace distribution also arises as a Gaussian random variable with exponential random variance, or equivalently as the product of a standard Gaussian with the square root of an independent exponential random variable. Indeed, if $N$ is a standard Gaussian and $E$ is an independent exponential random variable of rate $1$ then
\[
  \E [(E^{1/2}N)^p] = \E E^{p/2} \E N^{p}  = \begin{cases} (p/2)! (p-1)!! &p \text{ even}\\
  0&p \text{ odd}\end{cases} = \begin{cases} 2^{-p/2} p! &p \text{ even}\\
  0&p \text{ odd}\end{cases},
\]
where the second equality follows from the identity $(2k-1)!!=(2k)!/2^k k!$, so that $E^{1/2}N$ is distributed as a Laplace random variable of parameter $\sqrt{2}$. It follows similarly that if $N$ is a centered $n$-dimensional Gaussian with covariance matrix $\Sigma$ and $E$ is an independent exponential random variable of rate $1$ then
\[
  \E[\langle E^{1/2} N, u \rangle^{p}] = \langle u, \Sigma u \rangle^{p/2} \frac{p!}{2^{p/2}} \mathbbm{1}(p \text{ even})
\]
for every $u\in \R^d$ and integer $p\geq 0$.
As such, it follows from \cref{prop:displacement_moments} that if at least one of the hypotheses of \eqref{HD+} holds, $\alpha\geq 2$, 
and we write $X$ for a uniform random point of the cluster $K$ then
the law of $\xi_2(r)^{-1} X$ under the size-biased measure $\hat \E_{\beta_c,r}$ converges in distribution to the law of the product $E^{1/2} N$, where $N$ is a centered $d$-dimensional Gaussian with covariance matrix $\Sigma$ and $E$ is an independent exponential random variable of rate $1$. 
This law is sometimes known as a \textbf{multivariate Laplace distribution}, and has density expressible in terms of the modified Bessel function of the second kind \cite{kotz2001laplace}.
To generalize correctly to the case $\alpha<2$ (and get the correct interpretation in terms of the limiting superprocess), we should think of this multivariate Laplace distribution as the distribution of a Brownian motion in $\R^d$ with covariance matrix $\Sigma$ started at $0$ and stopped at an independent $\operatorname{Exp}(1)$-distributed random time. (The fact that we evaluate our Brownian motions at a rate $1$ exponential random time is related to a distributional identity for continuum random trees of chi-squared random mass stated in \cref{thm:chi-squared_CRT_distances}, below.)

\medskip

We now explain the analogous interpretation of \cref{prop:displacement_moments} in the case $\alpha<2$. (Throughout this discussion the reader should bare in mind our running convention \eqref{eq:normalization_conventions} that the unit ball of the norm $\|\cdot\|$ has unit measure.) We begin with some relevant background on L\'evy processes, referring the reader to e.g.\ \cite{ken1999levy} for a more detailed introduction. 
Recall that the distribution of a L\'evy process $(L_t)_{t\geq 0}$ on $\R^d$ is characterized via a pair $(a,\Sigma,\Pi)$ consisting of a constants $b\in \R^d$, a $d$-dimensional covariance matrix $\Sigma$,  and a \textbf{L\'evy measure} $\Pi$ on $\R^d$ via the characteristic function equation
\[
  \E [ e^{i\langle L_1,u\rangle} ] = \exp\left[i\langle u,a\rangle - \frac{1}{2}\langle u,\Sigma u \rangle + \int \bigl(e^{i\langle u,x\rangle} -1 -i \langle u,x\mathbbm{1}(\|x\| \leq 1) \rangle \bigr) \dif \Pi(x)\right],
\]
where the L\'evy measure $\Pi$ must satisfy $\Pi(\{0\})=0$ and 
\[
  \int (\|x\|_2^2 \wedge 1) \dif \Pi(x) < \infty.
\]
(In particular, $\Pi$ must be locally finite on $\R^d\setminus \{0\}$.)
This is known as the \textbf{L\'evy-Khintchine representation} of the L\'evy process $L$.
When the L\'evy measure $\Pi$ is symmetric under the map $x\mapsto -x$ and $a$ and $\Sigma$ are both zero we refer to $L$ as a \textbf{symmetric L\'evy jump process}.
Note that the term $x\mathbbm{1}(\|x\| \leq 1)$ can be replaced by any other compactly supported function that is equal to the identity in a neighbourhood of the origin after possibly changing the constant $a$ (this change of $a$ is not necessary for symmetric L\'evy measures and symmetric choices of function); for our purposes it will be convenient to define it using the same norm used to define our long-range percolation model.
If $\int_{\|x\|\geq 1} e^{\lambda \|x\|_2} \dif \Pi(x)<\infty$ for some $\lambda>0$ then we obtain a similar formula for the moment generating function via analytic continuation, so that
\[
  \E [ e^{\langle L_1,u\rangle} ] = \exp\left[\langle u,b\rangle+ \frac{1}{2}\langle u, \Sigma u \rangle + \int \bigl(e^{\langle u,x\rangle} -1 - \langle u,x\mathbbm{1}(\|x\| \leq 1) \rangle\bigr)  \dif \Pi(x)\right]
\]
for all $u$ in some open neighbourhood of the origin. 
%
Note that the above formulae also determine the distribution of $L_t$ for every $t>0$ via the relationships $\E[e^{i\langle L_t,u\rangle}]=\E[e^{i\langle L_1,u\rangle}]^t$ and $\E[e^{\langle L_t,u\rangle}]=\E[e^{\langle L_1,u\rangle}]^t$.

\medskip

The relevant L\'evy measures for our problem, which we denote by $\Pi_1$ and $\Pi_\infty$, are absolutely continuous with respect to Lebesgue measure with densities given by
\begin{equation}
\label{eq:Levy_measure}
  \frac{d\Pi_1(x)}{dx} = \alpha \mathbbm{1}(\|x\|\leq 1)\int_{\|x\|}^1 s^{-d-\alpha-1}\dif s \qquad \text{ and } \qquad \frac{d\Pi_\infty(x)}{dx} = \alpha \int_{\|x\|}^\infty s^{-d-\alpha-1}\dif s.
\end{equation}
The measures $\Pi_1$ and $\Pi_\infty$ will be used to describe the scaling limits of our model with and without cut-off, respectively.
These measure are indeed L\'evy measures when $0<\alpha<2$, and $\Pi_\infty$ describes the law of a symmetric $\alpha$-stable L\'evy jump process: If $(L_t^\infty)_{t\geq 0}$ is the L\'evy process associated to $\Pi_\infty$ then we can compute that $\lambda L^\infty_{\lambda^{-\alpha} t}$ has the same characteristic function, and hence the same distribution, as $L_t^\infty$ for each $t,\lambda>0$. Similarly, 
if $(L_t^1)_{t\geq0}$ is the symmetric L\'evy jump process with L\'evy measure $\Pi_1$ then we can compute that $(L^\lambda_t)_{t\geq 0} = (\lambda L_{\lambda^{-\alpha} t}^1)_{t\geq 0}$ has L\'evy measure $\Pi_\lambda$ with density
\[
 \frac{d\Pi_{\lambda}}{dx}(x) = \lambda^{d-\alpha} \frac{d\Pi_{1}}{dx}(\lambda^{-1} x)  = \alpha \mathbbm{1}(\|x\|\leq \lambda) \int_{\|x\|}^{\lambda} s^{-d-\alpha-1} \dif s,
\]
which converges in both $L^\infty$ and local $L^1$ to the density of $\Pi_\infty$ as $\lambda \to \infty$.

\medskip

 For $\Pi_1$, the associated symmetric L\'evy jump process has moment generating function defined for all $u\in \R^d$ by
\[
  \E[e^{\langle L_t,u\rangle}] = \exp\left[t\int_{\R^d} \bigl(e^{\langle u,x\rangle} -1 - \langle u,x \rangle\bigr)  \dif \Pi_1(x)\right] = 
  \exp\left[t\int_{\R^d} (\cosh(\langle u,x\rangle) -1) \dif \Pi_1(x)\right],
\]
where the second equality follows by symmetry of $\Pi_1$. As such, if $T$ is an independent rate $1$ exponential random variable then
\begin{multline}
  \E[e^{\langle L_T,u\rangle}] = \int_0^\infty \exp\left[- t+t\int (\cosh(\langle u,x\rangle) -1)  \dif \Pi_1(x)\right] \dif t
  \\
  = 
\left(1-\int_{\R^d} (\cosh(\langle u,x\rangle) -1)  \dif \Pi_1(x)\right)^{-1}
\end{multline}
for $u$ in an open neighbourhood of the origin. Taking the Taylor expansion of the hyperbolic cosine and using Fubini to exchange the order of summation and integration leads to the equality 
\[
  \int_{\R^d} \bigl(e^{\langle u,x\rangle} -1 - \langle u,x \rangle\bigr)  \dif \Pi(x) = \sum_{p=1}^\infty \frac{\alpha \int_B \langle u,y\rangle^{2p} \dif y}{(2p)!(2p-\alpha)},
\]
so that
\[
\E[e^{\langle L_T,u\rangle}]
   =  \left(1-\sum_{p=1}^\infty \frac{\alpha \int_B \langle u,y\rangle^{2p} \dif y}{(2p)!(2p-\alpha)}\right)^{-1}.\]
   Thus, \cref{prop:displacement_moments} shows in this case that if 
   $X$ denotes a uniform random point of the cluster $K$, then
the law of $r^{-1} X$ under the size-biased measure $\hat \E_r$ converges in distribution to the law of $L_T^1$ where $(L^1_t)_{t\geq 0}$ is a L\'evy jump process with L\'evy measure $\Pi_1$ and $T$ is an independent exponential random variable of rate $1$. (Again, the fact that we evaluate our L\'evy process at a rate $1$ exponential random time is related to \cref{thm:chi-squared_CRT_distances}.)

\begin{remark}
The scaling factor $\alpha$ appearing in \eqref{eq:Levy_measure} is needed for the measures we are about to define to coincide with those appearing in \cref{prop:displacement_moments} and, as far as we can tell, does not arise in an obvious way by taking the continuum limit of our kernel $J$. This particular normalization leads to the identity 
\begin{equation}
\Pi_\infty(\R^d)-\Pi_1(\R^d)= \int_{\R^d} \left(\alpha \int_{\|x\|}^\infty s^{-d-\alpha-1}\dif s - \alpha \mathbbm{1}(\|x\|\leq 1)\int_{\|x\|}^1 s^{-d-\alpha-1}\dif s\right) \dif x =1,
\end{equation}
so that the number of additional jumps that appear when passing from this truncated L\'evy process to the associated $\alpha$-stable L\'evy process has mean $1$. These measures also satisfy
\begin{equation}
\label{eq:Pi_derivative_pointwise}
\frac{\partial}{\partial\lambda} \frac{d\Pi_\lambda}{d x} (x) = \alpha \mathbbm{1}(\|x\| \leq \lambda) \lambda^{-d-\alpha-1} 
\end{equation}
in the sense of weak derivatives and
\begin{equation}
\label{eq:Pi_derivative_total}
  \frac{d}{d\lambda} \Pi_\lambda(\R^d) = \int \frac{d}{d\lambda} \Pi_\lambda(x) \dif x = \alpha \lambda^{-\alpha-1}
\end{equation}
for every $\lambda>0$.
\end{remark}

Let us now set some notational conventions regarding these limiting distributions and the associated scaling factors.

\begin{defn}[The scaling factor $\sigma$ and the limiting displacement law]
\label{def:limiting_displacement_law}
Write $\sigma(r)=r$ if $\alpha<2$ and $\sigma(r)=\xi_2(r)$ if $\alpha\geq 2$, where $\xi_2(r)$ denotes the radius of gyration.
We define the \textbf{limiting displacement law} $\nu_\mathrm{disp}$ to be either the multivariate Laplace distribution with covariance matrix $\Sigma$ (i.e., the law of the Brownian motion with covariance matrix $\Sigma$ stopped at an independent Exp$(1)$-distributed random time) if $\alpha\geq 2$ or the law of the symmetric L\'evy jump process with L\'evy measure $\Pi_1$ (defined in \eqref{eq:Levy_measure}) stopped at an independent Exp$(1)$-distributed random time if $\alpha<2$, so that if $X$ denotes a uniform random point of the cluster $K$ then the law of $\frac{1}{\sigma(r)}X$ under $\hat \P_{\beta_c,r}$ converges to $\nu_\mathrm{disp}$ as $r\to \infty$ under the hypotheses of \cref{prop:radius_of_gyration}. We will also write $\kappa$ for the density of $\nu_\mathrm{disp}$, whose existence is ensured by the following lemma.
\end{defn}

\begin{lemma}
\label{lem:cts_density}
The measure $\nu_\mathrm{disp}$ is absolutely continuous with respect to Lebesgue measure.
\end{lemma}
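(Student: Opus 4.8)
The plan is to reduce the statement to the absolute continuity of the fixed-time marginals of the underlying spatial motion and then average over the stopping time. Write $\rho$ for the $\mathrm{Exp}(1)$ law on $(0,\infty)$, and for $t>0$ let $\mu_t$ be the law at time $t$ of the relevant process: Brownian motion with covariance matrix $\Sigma$ when $\alpha\geq 2$, and the symmetric L\'evy jump process with L\'evy measure $\Pi_1$ from \eqref{eq:Levy_measure} when $\alpha<2$. By construction $\nu_\mathrm{disp}=\int_0^\infty \mu_t\,\dif\rho(t)$, so once I know that each $\mu_t$ is absolutely continuous with respect to Lebesgue measure it follows that $\nu_\mathrm{disp}(A)=\int_0^\infty\mu_t(A)\,\dif\rho(t)=0$ for every Lebesgue-null set $A$, which is the claim. (One also gets by Fubini that $\nu_\mathrm{disp}$ has density $\kappa(x)=\int_0^\infty e^{-t}p_t(x)\,\dif t\in L^1(\R^d)$, where $p_t$ is the density of $\mu_t$.) In the case $\alpha\geq 2$ there is nothing to do: $\mu_t=\mathcal N(0,t\Sigma)$, which is non-degenerate because $\Sigma$ is positive definite by \cref{prop:radius_of_gyration}, and so has a smooth strictly positive density.

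For the case $\alpha<2$ I would establish the absolute continuity of $\mu_t$ via Fourier inversion, using that the characteristic function of $\mu_t$ is integrable for each $t>0$. Since $\Pi_1$ is symmetric, the characteristic exponent of the L\'evy jump process is real and equals $\psi(u)=-\int_{\R^d}(1-\cos\langle u,x\rangle)\,\dif\Pi_1(x)\leq 0$. From \eqref{eq:Levy_measure} one has $\frac{\dif\Pi_1}{\dif x}(x)=\frac{\alpha}{d+\alpha}(\|x\|^{-d-\alpha}-1)$ for $\|x\|\leq 1$, so there exist $c_0,\eps_0>0$ with $\frac{\dif\Pi_1}{\dif x}(x)\geq c_0\|x\|^{-d-\alpha}$ for $\|x\|\leq\eps_0$. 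Combining this with the standard facts that $\int_{\R^d}(1-\cos\langle u,x\rangle)\|x\|^{-d-\alpha}\,\dif x\asymp\|u\|_2^\alpha$ (homogeneity of degree $\alpha$ under $u\mapsto\lambda u$ together with positivity and continuity on the unit sphere, using $0<\alpha<2$ for finiteness) and $\int_{\|x\|>\eps_0}(1-\cos\langle u,x\rangle)\|x\|^{-d-\alpha}\,\dif x\leq 2\int_{\|x\|>\eps_0}\|x\|^{-d-\alpha}\,\dif x<\infty$, I would conclude that there is $c>0$ with $\psi(u)\leq-c\|u\|_2^\alpha$ for all sufficiently large $\|u\|_2$. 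Hence $|\widehat{\mu_t}(u)|=e^{t\psi(u)}\leq e^{-ct\|u\|_2^\alpha}$ for large $\|u\|_2$ (and $\leq 1$ always), which is integrable over $\R^d$ for every $t>0$ since $\alpha>0$; Fourier inversion then shows that $\mu_t$ has a bounded continuous density.

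The only step with any real content is this last absolute-continuity statement for the truncated stable L\'evy marginals, and even that is essentially classical and could instead be quoted from \cite{ken1999levy}; everything else is the routine mixture argument above. I therefore do not expect a genuine obstacle: the lemma is a soft consequence of the non-degeneracy of the spatial motion in every direction, which is supplied by the positive-definiteness of $\Sigma$ from \cref{prop:radius_of_gyration} when $\alpha\geq 2$ and by the near-origin lower bound on $\Pi_1$ when $\alpha<2$.
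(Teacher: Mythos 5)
Your proposal is correct, and its overall structure — write $\nu_\mathrm{disp}$ as the $\mathrm{Exp}(1)$-mixture $\int_0^\infty \mu_t\,\dif\rho(t)$ of the fixed-time marginals and reduce to absolute continuity of each $\mu_t$ — is exactly the paper's. The only divergence is in how absolute continuity of $\mu_t$ is established for $\alpha<2$: the paper simply quotes \cite[Theorem 27.7]{ken1999levy}, which gives absolute continuity of the marginals of any L\'evy process whose L\'evy measure is absolutely continuous with infinite total mass (both hypotheses hold for $\Pi_1$, since its density is $\asymp\|x\|^{-d-\alpha}$ near the origin), whereas you prove it directly by showing the characteristic exponent satisfies $\psi(u)\leq -c\|u\|_2^\alpha$ for large $\|u\|_2$ and invoking Fourier inversion. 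Your computation is sound — the near-origin lower bound on $\dif\Pi_1/\dif x$, the degree-$\alpha$ homogeneity of $\int(1-\cos\langle u,x\rangle)\|x\|^{-d-\alpha}\dif x$ (finite precisely because $\alpha<2$), and the boundedness of the tail contribution are all as you say — and it buys slightly more (a bounded continuous density for each $\mu_t$, with the quantitative decay $e^{-ct\|u\|_2^\alpha}$), at the cost of a page of elementary estimates that the citation makes unnecessary. Either route is acceptable; there is no gap.
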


\begin{proof}[Proof of \cref{lem:cts_density}]
When $\alpha\geq 2$ this is an immediate consequence of the explicit description of $\nu_\mathrm{disp}$ as a Laplace distribution.
When $\alpha<2$, the fact that the law of the Levy process $L_t$ associated to the measure $\Pi$ evaluated at a fixed time $t$ is absolutely continuous with respect to Lebesgue follows immediately from the fact that the finite measure $(\|x\|_2\wedge 1) \Pi$ is absolutely continuous with respect to Lebesgue measure \cite[Theorem 27.7]{ken1999levy}, and the claim follows since a convex combination of absolutely continuous measures is absolutely continuous.
\end{proof}


We now turn to the proof of \cref{prop:displacement_moments}. As usual, we begin by establishing a relevant asymptotic ODE.

\begin{lemma}
\label{lem:displacement_distribution_ODE}
Suppose that at least one of the four hypotheses of \eqref{HD+} holds. Then
\begin{multline*}
  \frac{d}{dr} \E_{\beta_c,r} \sum_{x\in K}\langle x,u\rangle^{2p} \sim \frac{2 \alpha}{r} \E_{\beta_c,r} \sum_{x\in K}\langle x,u\rangle^{2p} 
  \\+ \beta_c r^{-\alpha-1}\sum_{\substack{a+b+c=p \\ (a,b,c)\neq (p,0,0),(0,p,0)}} \binom{2p}{2a,2b,2c} \E_{\beta_c,r} \left[\sum_{x\in K}\langle x,u\rangle^{2a}\right]\E_r \left[\sum_{x\in K}\langle x,u\rangle^{2b}\right]  r^{2c} \int_B \langle y,u\rangle^{2c} \dif y
\end{multline*}
as $r\to \infty$ for each $u\in \R^d$ and each integer $p\geq 1$.
\end{lemma}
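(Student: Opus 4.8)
The plan is to prove this asymptotic ODE together with \cref{prop:displacement_moments} by induction on $p$, following the template of \cref{lem:gyration_derivative} (the case $p=1$) and the higher-moment analysis of \cref{sec:analysis_of_moments}. Writing $\E_r=\E_{\beta_c,r}$, the base cases $p=0,1$ are \cref{prop:first_moment} and \cref{prop:radius_of_gyration}, and when treating level $p$ I may assume the first-order asymptotics of \cref{prop:displacement_moments} (in particular regular variation of $r\mapsto\E_r\sum_{x\in K}\langle x,u\rangle^{2a}$) are known for all $a<p$. First I would differentiate via Russo's formula exactly as in the derivation of \eqref{eq:x12_derivative} in the proof of \cref{lem:gyration_derivative2}, using the mass-transport principle \eqref{eq:MTP_general} to move the root of the origin's cluster to $x$, obtaining \[\tfrac{d}{dr}\E_r\textstyle\sum_{x\in K}\langle x,u\rangle^{2p}=\beta_c|J'(r)|\,\E_r\Big[\sum_{x\in K}\sum_{y\in B_r}\mathbbm{1}(y\notin K)\sum_{z\in K_y}\langle z-x,u\rangle^{2p}\Big].\] Then I would expand $\langle z-x,u\rangle^{2p}=(\langle z-y,u\rangle+\langle y,u\rangle-\langle x,u\rangle)^{2p}$ by the multinomial theorem into $\sum_{i+j+k=2p}\binom{2p}{i,j,k}(-1)^k\langle z-y,u\rangle^i\langle y,u\rangle^j\langle x,u\rangle^k$; for each fixed $y\notin K$ the factors $\sum_{x\in K}\langle x,u\rangle^k$ and $\sum_{z\in K_y}\langle z-y,u\rangle^i$ are functions of the disjoint clusters $K_0$ and $K_y$, and splitting each into its increasing non-negative positive and negative parts and applying \cref{lem:BK_disjoint_clusters_covariance} to the four resulting products shows that $\E_r[\mathbbm{1}(y\notin K)(\sum_{x\in K}\langle x,u\rangle^k)(\sum_{z\in K_y}\langle z-y,u\rangle^i)]$ equals $\E_r[\sum_{x\in K}\langle x,u\rangle^k]\,\E_r[\sum_{z\in K}\langle z,u\rangle^i]$ up to an error bounded in absolute value by $\E_r[\mathbbm{1}(y\in K)(\sum_{x\in K}|\langle x,u\rangle|^k)(\sum_{z\in K}|\langle z-y,u\rangle|^i)]$.

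Because the model is invariant under $x\mapsto-x$ (as $J(0,x)=J(0,-x)$), the moment $\E_r[\sum_{x\in K}\langle x,u\rangle^m]$ vanishes for odd $m$, so the only surviving main terms come from $i=2a$, $j=2c$, $k=2b$ with $a+b+c=p$. Collecting these, and using \cref{lem:ball_regularity} together with the normalization \eqref{eq:normalization_conventions} in the forms $\sum_{y\in B_r}\langle y,u\rangle^{2c}\sim r^{d+2c}\int_B\langle y,u\rangle^{2c}\dif y$ (a Riemann-sum estimate) and $\beta_c|J'(r)||B_r|\sim\beta_c r^{-\alpha-1}$, the index $(a,b,c)$ with $a,b<p$ (equivalently $(a,b,c)\neq(p,0,0),(0,p,0)$) contributes precisely the summand displayed in the lemma, while the two indices $(p,0,0)$ and $(0,p,0)$ each contribute $\beta_c|J'(r)||B_r|\,\E_r|K|\,\E_r[\sum_{x\in K}\langle x,u\rangle^{2p}]\sim\alpha r^{-1}\E_r[\sum_{x\in K}\langle x,u\rangle^{2p}]$ by \cref{prop:first_moment}; their sum is the self-referential term $\frac{2\alpha}{r}\E_r\sum_{x\in K}\langle x,u\rangle^{2p}$. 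This is the asserted right-hand side, so it remains to show the error terms are negligible.

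The errors come from (i) the covariance corrections above and (ii) the terms of the multinomial expansion with at least one of $i,j,k$ odd, where the main term vanishes and the entire term is dominated by a correction of the same shape. Using $|\langle y,u\rangle|^j\preceq_u r^j$ for $y\in B_r$ and $|\langle z-y,u\rangle|^i\preceq_u|\langle z,u\rangle|^i+r^i$, and collapsing the $y$-sum as $\sum_{y\in B_r}\E_r[\mathbbm{1}(y\in K)\cdots]=\E_r[\sum_{y\in K\cap B_r}\cdots]$, every error reduces to a four-point spatially weighted moment $\sum_{y\in B_r}\sum_{x,z}\P_r(0,x,y,z\text{ all connected})\,|\langle y,u\rangle|^j|\langle z-y,u\rangle|^i|\langle x,u\rangle|^k$ of exactly the type estimated in the proofs of \cref{prop:higher_moments} and \cref{lem:gyration_derivative}, and I would bound it case-by-case in \eqref{HD+} as there: in Case 1 ($d>3\alpha$) via the spatially weighted tree-graph inequality \cref{lem:spatial_tree_graph} together with \cref{thm:hd_moments_main} and the inductive moment estimates, with a gain of order $r^{3\alpha-d}$; in Case 4 ($d=3\alpha$) via the four-point Gladkov inequality \cref{lem:four_point_Gladkov} (and \cref{thm:Gladkov}) to peel off a three-point factor, distributing the spatial weights across the legs by repeated H\"older exactly as in \eqref{eq:critical_dim_gyration_error_post_Gladkov}--\eqref{eq:critical_dim_gyration_error_almost_done}, bounding the remaining $\sum_{y,z\in B_r}\P_r(0\leftrightarrow y\leftrightarrow z)$ by $\E_r|K\cap B_r|^2\preceq r^\alpha M_r$ (via \cref{cor:universal_tightness_moments} and \cref{thm:two_point_spatial_average_upper}) and invoking the hydrodynamic condition; in Cases 2 and 3 as in the proof of \cref{lem:gyration_derivative3}, bounding the corrections through the spatially weighted triangle diagram \cref{lem:spatial_triangle} (via \cref{lem:disjoint_connections_triangle} and \cref{lem:disjoint_connections}) together with the assumed two-point bound and \cref{lem:gyration_lower_bound}. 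In each case this makes the error $o$ of the driving terms when $\alpha<2$ and $o$ of the self-referential term when $\alpha\geq2$; in the latter case one first needs an a priori upper bound on $\E_r\sum_{x\in K}\langle x,u\rangle^{2p}$ of the correct order (namely $\preceq_p\xi_2(r)^{2p}\E_r|K|$), obtained by feeding the crude tree-graph bounds on the same errors into the Gr\"onwall-type \cref{lem:ODE_with_driving_term_Gronwall} applied to the inequality form of the identity above.

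The main obstacle, as for \cref{lem:gyration_derivative} itself, is the critical case $d=3\alpha$: one is manipulating signed quantities whose cross terms nearly cancel, and the cancellation must be preserved after Cauchy--Schwarz and the Gladkov inequalities, with the spatial weights $\langle x,u\rangle^{2p}$ distributed across the legs of the relevant diagrams by a careful chain of H\"older applications — heavier bookkeeping than in the $p=1$ case, but no genuinely new ideas. A secondary point is ensuring the double induction closes: one must establish the a priori polynomial-order upper bound at level $p$ (via \cref{lem:ODE_with_driving_term_Gronwall}) before the asymptotic ODE can be justified, and then feed the resulting asymptotic ODE into the ODE lemmas of \cref{sec:superprocesses} to obtain the moment asymptotics of \cref{prop:displacement_moments}, which are in turn used at level $p+1$.
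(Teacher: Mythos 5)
Your proposal is correct and follows essentially the same route as the paper's proof: Russo's formula plus mass transport, trinomial expansion of $\langle z-x,u\rangle^{2p}$, symmetry to kill the odd main terms, the BK/Harris covariance bounds of \cref{lem:BK_disjoint_clusters_covariance} and \cref{lem:disjoint_connections} to factor the even terms, and then the same case split for the errors (spatial tree-graph in Case 1, four-point Gladkov plus universal tightness plus \eqref{Hydro} in Case 4, spatially weighted triangle diagrams from the assumed two-point bounds in Cases 2 and 3). The only caveat worth flagging is that \cref{lem:spatial_triangle} as stated covers only the quadratic weight, so in Cases 2 and 3 the weighted-triangle estimate must be redone for $\|x\|_2^{2p}$ (the paper does this via the splitting in \eqref{eq:spatial_triangle_convolution} and a H\"older comparison of consecutive spatial moments), and one needs the isotropy/lower bounds \eqref{eq:gyration_isotropy2}--\eqref{eq:Lp_gyration_lower} to convert errors in terms of $\|x\|_2^{2p}$ back to $\langle x,u\rangle^{2p}$; these are routine extensions of what you describe.
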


As in previous sections, Cases 2 and 3 require a slightly different treatment and are deferred until the end of the section.

\begin{proof}[Proof of \cref{lem:displacement_distribution_ODE}, Cases 1 and 4]
This proof is very similar to that of the analogous asymptotic ODE in the proof of \cref{prop:radius_of_gyration}, so we will be brief. We write $\E_r=\E_{\beta_c,r}$ as usual. The case $u=0$ is vacuously true so we may suppose that $u\neq 0$.
We begin by writing down the derivative formula
\begin{multline}\frac{1}{\beta_c |J'(r)|}\frac{d}{dr}\E_r\sum_{x\in K}\langle x,u\rangle^{2p}= 2 \E_r\left[\sum_{y\in B_r}\mathbbm{1}(y\notin K) |K_y| \sum_{x\in K}\langle x,u\rangle^{2p} \right]
 \\
+ \sum_{\substack{a+b+c=p \\ (a,b,c) \neq (p,0,0),(0,p,0)}} \binom{2p}{2a,2b,2c}
\E_r\left[\sum_{y\in B_r} \langle y,u\rangle^{2c}\mathbbm{1}(y\notin K) \sum_{x\in K}\langle x,u\rangle^{2a}\sum_{z\in K_y}\langle z-y,u\rangle^{2b} \right]
\\
+ \sum_{\substack{a+b+c=2p\\
\text{not all even}}}
 (-1)^a \binom{2p}{a,b,c}
\E_r\left[\sum_{y\in B_r} \langle y,u\rangle^{c}\mathbbm{1}(y\notin K) \sum_{x\in K}\langle x,u\rangle^{a}\sum_{z\in K_y}\langle z-y,u\rangle^{b} \right],
\label{eq:Z2p1_derivative_exact}
 \end{multline}
which follows from Russo's formula by using the mass-transport principle, writing $z-x=(z-y)+y+(-x)$ and expanding out each resulting trinomial exactly as in the proof of \cref{lem:gyration_derivative}.
For the terms involving odd exponents, we have as in the proof of \cref{lem:gyration_derivative} that
\begin{align*}
&\E_r\left[\sum_{y\in B_r} \langle y,u\rangle^{c}\mathbbm{1}(y\notin K) \sum_{x\in K}\langle x,u\rangle^{a}\sum_{z\in K_y}\langle z-y,u\rangle^{b} \right]
\\&\hspace{2.8cm}= \sum_{y\in B_r} \sum_{x,z\in \Z^d} \langle x,u\rangle^a \langle z-y,u\rangle^b \langle y,u\rangle^c\, \P_r(0\leftrightarrow x \nleftrightarrow y \leftrightarrow z)
\\&\hspace{2.8cm}=
\sum_{y\in B_r} \sum_{x,z\in \Z^d} \langle x,u\rangle^a \langle z-y,u\rangle^b \langle y,u\rangle^c [\P_r(0\leftrightarrow x \nleftrightarrow y \leftrightarrow z) - \P_r(0\leftrightarrow x)\P_r(y \leftrightarrow z)]
\end{align*}
and hence by \cref{lem:disjoint_connections} that
\begin{align*}
&\left|\E_r\left[\sum_{y\in B_r} \langle y,u\rangle^{c}\mathbbm{1}(y\notin K) \sum_{x\in K}\langle x,u\rangle^{a}\sum_{z\in K_y}\langle z-y,u\rangle^{b} \right]\right|
\\
&\hspace{6cm}\leq 
\sum_{y\in B_r} \sum_{x,z\in \Z^d} |\langle x,u\rangle|^a |\langle z-y,u\rangle|^b |\langle y,u\rangle|^c\, \P_r(0\leftrightarrow x \leftrightarrow y \leftrightarrow z)
\\
&\hspace{6cm}\preceq_p \E_r\left[ |K\cap B_r| |K| \sum_{x\in K} \langle x,u\rangle^{2p} \right]
+
r^{2p}\E_r[ |K\cap B_r| |K|^2].
\end{align*}
On the other hand, we have by a familiar application of \cref{lem:BK_disjoint_clusters_covariance} that 
\begin{multline*}
\left|\E_r\left[\sum_{y\in B_r}\mathbbm{1}(y\notin K) |K_y| \sum_{x\in K}\langle x,u\rangle^{2p} \right] - |B_r| \E_r|K| \E_r \left[\sum_{x\in K}\langle x,u\rangle^{2p}\right]\right| 
\\\leq 
\E_r\left[|K\cap B_r| |K| \sum_{x\in K}\langle x,u\rangle^{2p} \right]
\end{multline*}
and
\begin{align*}
&\Biggl|\E_r\left[\sum_{y\in B_r} \langle y,u\rangle^{2c}\mathbbm{1}(y\notin K) \sum_{x\in K}\langle x,u\rangle^{2a}\sum_{z\in K_y}\langle z-y,u\rangle^{2b} \right] 
\\&\hspace{7.5cm}-
\sum_{y\in B_r} \langle y,u\rangle^{2c}\E_r \left[\sum_{x\in K}\langle x,u\rangle^{2a}\right]
\E_r \left[\sum_{x\in K}\langle x,u\rangle^{2b}\right]\Biggr|
\\&\hspace{3cm}\leq
\E_r\left[\sum_{y\in B_r \cap K} \langle y,u\rangle^{2c} \sum_{x\in K}\langle x,u\rangle^{2a}\sum_{z\in K_y}\langle z-y,u\rangle^{2b} \right]
\\
&\hspace{3cm}\preceq_p 
r^{2c} 
\E_r\left[|K\cap B_r| |K| \sum_{x\in K}\langle x,u\rangle^{2a+2b} \right]
+
r^{2c+2b} 
\E_r\left[|K\cap B_r| |K| \sum_{x\in K}\langle x,u\rangle^{2a} \right],
\end{align*}
where we used the AM-GM inequality and symmetry considerations to obtain the simplified bound in the last line. Thus, to obtain the claimed asymptotic ODE from \eqref{eq:Z2p1_derivative_exact}, it suffices to prove that
\[
  \E_r\left[|K\cap B_r| |K| \sum_{x\in K}\langle x,u\rangle^{2p} \right] =o\left(|B_r| \E_r|K| \max\left\{\E_r\sum_{x\in K}\langle x,u\rangle^{2p}, r^{2p} \E_r|K|\right\}\right)
\]
as $r\to \infty$ for every $p\geq 0$. When $d>3\alpha$ this can be deduced directly from \cref{lem:spatial_tree_graph} as in the proof of \cref{prop:radius_of_gyration}. The case $d=3\alpha$ can be deduced using \cref{lem:spatial_tree_graph} and the four-point Gladkov inequality (\cref{lem:four_point_Gladkov}) via exactly the same calculation performed in the proof of \cref{prop:radius_of_gyration}.
\end{proof}

\begin{proof}[Proof of \cref{prop:displacement_moments} given \cref{lem:displacement_distribution_ODE}]
We continue to write $\E_r=\E_{\beta_c,r}$. Fix $u\in \R^d \setminus \{0\}$, the case $u=0$ holding vacuously. We will first prove that either
\[
  \E_r \sum_{x\in K} \langle x,u\rangle^{2p} \sim A_{2p} \left(\frac{\E_r\sum_{x\in K} \langle x,u \rangle^2}{\E_r|K|}\right)^p \E_r|K|
\]
or 
\[
  \E_r \sum_{x\in K} \langle x,u\rangle^{2p} \sim A_{2p}(u)  r^{2p}\E_r|K|
\]
for some positive constants $A_{2p}$ and $A_{2p}(u)$ as appropriate according to whether $\alpha \geq 2$ or $\alpha<2$, and then give interpretations of these constants by analyzing the recurrence relations they satisfy. These asymptotic relationships have already been established for $p=0,1$ in \cref{prop:first_moment,prop:radius_of_gyration}, so we may prove the claim by induction on $p$ starting with $p=2$.

\medskip

Let $p\geq 2$ and suppose that the claim has been proven for all strictly smaller values of $p$. If $\alpha\geq 2$ then it follows from the induction hypothesis and \cref{prop:radius_of_gyration} that all terms involving $r^{2c}\int_B \langle y,u \rangle^{2c}\dif y$ for $c>0$ are negligible compared with the $c=0$ terms, so that the asymptotic ODE simplifies to
\begin{multline}
  \frac{d}{dr} \E_r \sum_{x\in K}\langle x,u\rangle^{2p} \sim \frac{2 \alpha}{r} \E_r \sum_{x\in K}\langle x,u\rangle^{2p} 
  \\+ \beta_c r^{-\alpha-1}\sum_{a=1}^{p-1} \binom{2p}{2a} \E_r \left[\sum_{x\in K}\langle x,u\rangle^{2a}\right]\E_r \left[\sum_{x\in K}\langle x,u\rangle^{2p-2a}\right].
\end{multline}
Using the induction hypothesis together with the asymptotic relationship $\beta_c r^{-\alpha} \sim \alpha (\E_r|K|)^{-1}$ yields that
\begin{multline}
  \frac{d}{dr} \E_r \sum_{x\in K}\langle x,u\rangle^{2p} \sim \frac{2 \alpha}{r} \E_r \sum_{x\in K}\langle x,u\rangle^{2p} 
  \\+ \frac{\alpha}{r}\sum_{a=1}^{p-1} \binom{2p}{2a} A_{2a}A_{2p-2a} \left(\frac{\E_r\sum_{x\in K} \langle x,u \rangle^2}{\E_r|K|}\right)^p \E_r|K|,
\end{multline}
and since $\left(\E_r\sum_{x\in K} \langle x,u \rangle^2/\E_r|K|\right)^p \E_r|K|$ is regularly varying of index $p\alpha+\alpha$ (by \cref{prop:first_moment,prop:radius_of_gyration}) we deduce from \cref{lem:ODE_with_driving_term} that
\[
  \E_r \sum_{x\in K}\langle x,u\rangle^{2p} \sim A_{2p} \left(\frac{\E_r\sum_{x\in K} \langle x,u \rangle^2}{\E_r|K|}\right)^p \E_r|K| \quad \text{ with } \quad A_{2p}:=\frac{1}{p-1} \sum_{a=1}^{p-1} \binom{2p}{2a} A_{2a}A_{2b},
\]
completing the induction step in this case. If on the other hand $\alpha<2$ then it follows from \cref{lem:displacement_distribution_ODE} and the induction hypothesis that
  \begin{multline*}
  \frac{d}{dr} \E_r \sum_{x\in K}\langle x,u\rangle^{2p} \sim \frac{2 \alpha}{r} \E_r \sum_{x\in K}\langle x,u\rangle^{2p} 
  \\+ \frac{\alpha}{r}\left(\sum_{\substack{a+b+c=p \\ (a,b,c)\neq (p,0,0),(0,p,0)}} \binom{2p}{2a,2b,2c} A_{2a}(u)A_{2b}(u) \int_B \langle y,u\rangle^{2c} \dif y \right) r^{2p} \E_r|K|
\end{multline*}
and hence by \cref{lem:ODE_with_driving_term} that $\E_r\sum_{x\in K}\langle x,u\rangle^{2p} \sim A_{2p}(u)r^{2p}\E_r|K|$ with
\[
  A_{2p}(u):= \frac{\alpha}{2p-\alpha}\sum_{\substack{a+b+c=p \\ (a,b,c)\neq (p,0,0),(0,p,0)}} \binom{2p}{2a,2b,2c} A_{2a}(u)A_{2b}(u) \int_B \langle y,u\rangle^{2c} \dif y,
\]
completing the induction step in this case. 

\medskip

It remains to solve these recurrence relations (which in the case $\alpha<2$ means giving a simple expression for the exponential generating function, rather than a formula for the constants \emph{per se}). In the case $\alpha \geq 2$, we have by definition that $A_0=A_2=1$, with the recurrence relation uniquely determining the remaining coefficients. As such, it suffices to check that
\[
 \frac{1}{p-1} \sum_{a=1}^{p-1} \binom{2p}{2a} \frac{(2a)!}{2^a}\frac{(2p-2a)!}{2^{p-a}} = \frac{1}{p-1} \sum_{a=1}^{p-1} \frac{(2p)!}{(2a)!(2p-2a)!} \frac{(2a)!}{2^a}\frac{(2p-2a)!}{2^{p-a}} = \frac{(2p)!}{2^p}
\]
is indeed a solution to the recurrence relation. In the case $\alpha<2$, we rewrite the recurrence relation as
\begin{multline*}
  A_{2p}(u)= \frac{\alpha}{2p-\alpha}\sum_{\substack{a+b+c=p \\ a,b>0}} \binom{2p}{2a,2b,2c} A_{2a}(u)A_{2b}(u) \int_B \langle y,u\rangle^{2c} \dif y
  \\+ 
  \frac{2\alpha}{2p-\alpha}\sum_{a=1}^{p-1} \binom{2p}{2a} A_{2a}(u) \int_B \langle y,u\rangle^{2p-2a} \dif y
  + \frac{\alpha}{2p-\alpha} \int_B \langle y,u\rangle^{2p}\dif y
\end{multline*}
and then as
\begin{multline*}
  (2p+\alpha)A_{2p}(u)= \alpha \sum_{\substack{a+b+c=p \\ a,b>0}} \binom{2p}{2a,2b,2c} A_{2a}(u)A_{2b}(u) \int_B \langle y,u\rangle^{2c} \dif y
  \\+ 
  2\alpha \sum_{a=1}^{p} \binom{2p}{2a} A_{2a}(u) \int_B \langle y,u\rangle^{2p-2a} \dif y + \alpha \int_B \langle y,u\rangle^{2p}\dif y
\end{multline*}
for $p>0$.
Letting $\mathcal{A}(t) = \sum_{p=1}^\infty \frac{t^{2p}A_{2p}(u)}{(2p)!}$ and $\mathcal{B}(t)=\sum_{p=0}^\infty \frac{t^{2p}}{(2p)!}\int_B \langle y,u \rangle^{2p} \dif y$ be the formal exponential generating functions associated to the two sequences, we can sum this recurrence relation over $p\geq 1$ to obtain the formal equality of exponential generating functions
\begin{equation}
\label{eq:A_B_ODE}
t\mathcal{A}' + \alpha \mathcal{A}
  = \alpha \mathcal{A}^2\mathcal{B}  + 
  2\alpha \mathcal{A}\mathcal{B} + \alpha (\mathcal{B}-1)= (\mathcal{A}+1)^2 \alpha \mathcal{B} -\alpha.
\end{equation}
On the other hand, if we define
\[
  \mathcal{C} = \sum_{p=1}^\infty \frac{ \alpha t^{2p}}{(2p-\alpha)(2p)!}\int_B \langle y,u \rangle^{2p} \dif y
\]
then $\mathcal{B}$ and $\mathcal{C}$ are related via
\[
  \alpha (\mathcal{B}-1) = t\mathcal{C}' - \alpha \mathcal{C},
\]
so that the above equation can be rewritten
\[
t\mathcal{A}' + \alpha \mathcal{A} +\alpha
  = (1+\mathcal{A})^2  (t \mathcal{C}'-\alpha \mathcal{C} + \alpha).
\]
It can be verified by calculus that if two differentiable functions $f$ and $g$ do not take the value $1$ and satisfy $f=g/(1-g)$ (which is equivalent to $g=f/(1-f)$ and to $1+f=(1-g)^{-1}$) on some open interval then they satisfy the differential equation
\[
tf' + \alpha f +\alpha
  = (1+f)^2  (t g'-\alpha g + \alpha)
\]
on the same interval, for any value of $\alpha$.
It follows that the real-analytic function defined on an open neighbourhood of $0$ by
\[
  1 + \mathcal{A}(t) = (1-\mathcal{C}(t))^{-1}
\]
is a solution to the ODE \eqref{eq:A_B_ODE} and hence must equal the exponential generating function of our sequence $A_{2p}(u)$ (since the sequence is uniquely determined by the recurrence and the coefficients of the Taylor expansion of $\mathcal{A}$ must satisfy the same recurrence in order to solve the ODE).
\end{proof}

\begin{remark}
In \cref{prop:recurrence_from_derivative} we give an alternative, arguably less elementary, explanation for why the moments of the superprocess (and hence of the exponentially stopped Brownian motion or L\'evy process) satisfy the recurrence relations derived above. This alternative derivation shows that these recurrence relations are the infinitesimal encoding of the scale-invariance of the limit object: In the Brownian case, they follow from the distributional identity $B_T \eqd \lambda B_{\lambda^2 T}$ by noting that the $\lambda$-derivatives of the moments of $\lambda B_{\lambda^2 T}$ must vanish at $\lambda=1$ and expanding this identity out using the product rule.
\end{remark}

\begin{proof}[Proof of \cref{lem:displacement_distribution_ODE}, Cases 3 and 4] 
We will be brief since the proof is similar to (but slightly simpler than) the proof of the relevant cases of \cref{lem:gyration_derivative}. As in that proof, it suffices to prove that
\begin{multline}
  \frac{d}{dr} \E_{\beta_c,r} \sum_{x\in K}\langle x,u\rangle^{2p} = \frac{2 \alpha}{r} \E_{\beta_c,r} \sum_{x\in K}\langle x,u\rangle^{2p} 
  \\+ \beta_c r^{-\alpha-1}\sum_{\substack{a+b+c=p \\ (a,b,c)\neq (p,0,0),(0,p,0)}} \binom{2p}{2a,2b,2c} \E_{\beta_c,r} \left[\sum_{x\in K}\langle x,u\rangle^{2a}\right]\E_r \left[\sum_{x\in K}\langle x,u\rangle^{2b}\right]  r^{2c} \int_B \langle y,u\rangle^{2c} \dif y
  \\ \pm o\left(r^{-1} \E_{\beta_c,r} \sum_{x\in K}\|x\|^{2p}_2\right)
  \label{eq:displacement_p_moment_derivative_cases_2_and_3}
\end{multline}
as $r\to \infty$ for each fixed $u\in \R^d \setminus \{0\}$ and integer $p\geq 1$. 
Indeed, once this is proven we can sum over an orthonormal basis and inductively apply \cref{lem:ODE_with_driving_term} (using \cref{prop:radius_of_gyration} as the base case) to obtain that
that
\[
  \E_{\beta_c,r} \sum_{x\in K}\langle x,u\rangle^{2p} \sim \begin{cases} 
  C_p r^{(p+1)\alpha} & \text{Case 2}\\
  C_p r^{2(p+1)} (\log r)^p & \text{Case 3}
  \end{cases}
\]
for some positive constants $C_p$.
Together with \eqref{eq:gyration_isotropy2} this implies that $\E_{\beta_c,r} \sum_{x\in K}\langle x,u\rangle^{2p} \asymp \E_{\beta_c,r} \sum_{x\in K}\| x\|_2^{2p}$ for every unit vector $u\in \R^d$, which implies the claim in conjunction with \eqref{eq:displacement_p_moment_derivative_cases_2_and_3}.

We now turn to the proof of \cref{eq:displacement_p_moment_derivative_cases_2_and_3}. This estimate follows by exactly the same calculation as in \cref{lem:gyration_derivative3} except that the error terms are now bounded by constant multiples of
\begin{equation}
\label{eq:cases3and4_error_5}
  r^{\alpha-1} \frac{1}{r^d}\sum_{z\in B_r} \sum_{x,y\in \Z^d} (\|x\|_2^{2p}+\|y-x\|_2^{2p}) \P_{\beta_c,r}(0\leftrightarrow x)\P_{\beta_c,r}(x\leftrightarrow y)\P_{\beta_c,r}(y\leftrightarrow z) 
\end{equation}
and 
\begin{equation}
\label{eq:cases3and4_error_6}
  r^{-1} \left[ \frac{1}{r^d}\sum_{y\in B_r}\nabla_{\beta_c}(0,y)\right] \E_{\beta_c,r}\sum_{x\in K}\|x\|_2^{2p}.
\end{equation}
(In the proof of \cref{lem:gyration_derivative3} the errors were of the same form with $p=1$; they look different here because we have now included the $|J'(r)|\asymp r^{-d-\alpha-1}$ factor.)
We need to show that both of these quantities are negligible compared to $r^{-1} \E_{\beta_c,r}\sum_{x\in K}\|x\|_2^{2p}$ (but are no longer required to check logarithmic integrability). 
We also already know by \cref{prop:radius_of_gyration} and H\"older's inequality that
\begin{equation}
\label{eq:Lp_gyration_lower}
  \E_{\beta_c,r} \sum_{x\in K}\|x\|^{2p}_2 \geq \left(\frac{\E_{\beta_c,r} \sum_{x\in K}\|x\|^{2}_2}{\E_{\beta_c,r}|K|}\right)^p \E_{\beta_c,r}|K| \succeq \begin{cases} r^{(p+1)\alpha} & \text{Case 2}\\
  r^{(p+1)\alpha}(\log r)^p & \text{Case 3},
  \end{cases}
\end{equation}
which will be useful for showing that the error \eqref{eq:cases3and4_error_5} is indeed negligible. 
For \eqref{eq:cases3and4_error_6}, the claimed negligibility follows immediately from e.g.\ \cref{lem:log_integrable_triangle}. For \eqref{eq:cases3and4_error_5}, writing $T_r(x,y)=\P_{\beta_c,r}(x\leftrightarrow y)$, it suffices by commutativity of convolution on $\Z^d$ to prove that
\begin{equation}
\label{eq:spatial_triangle_convolution}
 r^{\alpha-d} \sum_{z\in B_r}\sum_{x\in \Z^d} T_r(0,x)\|x\|_2^{2p} T^2_r(x,z) = o\left( \E_{\beta_c,r}\sum_{x\in K}\|x\|_2^{2p} \right),
\end{equation}
where $T^2_r$ denotes the square of $T_r$ as a matrix.
%
To do this, we can separately consider the contributions to this sum from the sets $B_{2r}$ and $B_{2r}^c=\Z^d\setminus B_{2r}$.
For the first set of points, we have that
\[
  r^{\alpha-d}\sum_{z\in B_r}\sum_{x\in B_{2r}} T_r(0,x)\|x\|_2^{2p}T^2_r(x,z)\preceq 
  r^{\alpha-d+2p} \sum_{z\in B_r} \nabla_{\beta_c}(0,z) = o\left(r^{\alpha+2p}\right)=o\left(\E_{\beta_c,r}\sum_{x\in K}\|x\|^{2p}_2 \right)
\]
by \cref{lem:log_integrable_triangle} and \eqref{eq:Lp_gyration_lower} as required. For the second set of points we have that $\|x-z\|\geq \|x\|/2$  for every $x\in B_{2r}^c$ and $z\in B_r$. Thus, using the estimate on $T_r(x,y)=\tau*\tau(x-y)$ from \cref{lem:log_integrable_triangle}, we obtain that
%
%
%
\begin{equation*}
  \sum_{x\in B_{2r}^c} T_r(0,x)\|x\|_2^{2p} T^2_r(x,z)
   \preceq 
 \begin{cases} \sum_{x\in \Z^d} \langle x\rangle^{2p-d+4} T_r(0,x) & \text{Case 2}\\
 \sum_{x\in \Z^d} \frac{\langle x\rangle^{2p-2}}{(\log \langle x \rangle)^2} T_r(0,x) &\text{Case 3}
 \end{cases},
\end{equation*}
for every $z\in B_r$ and hence that
\[
r^{\alpha-d}\sum_{z\in B_r}\sum_{x\in B_{2r}} T(0,x)\|x\|_2^{2p}T(x,z)^2
%
=o\left(r^{\alpha}\sum_{x\in \Z^d} \langle x\rangle^{2p-2} T_r(0,x)\right)\]
in both Case 2 and Case 3. Now, we also have by H\"older's inequality that
\[
  \frac{\sum_{x\in \Z^d} \langle x\rangle^{2p} T_r(0,x)}{\sum_{x\in \Z^d} \langle x\rangle^{2p-2} T_r(0,x)} \geq \frac{\sum_{x\in \Z^d} \langle x\rangle^{2} T_r(0,x)}{\sum_{x\in \Z^d}  T_r(0,x)} \asymp \begin{cases} r^\alpha & \text{Case 2}\\
  r^2 \log r & \text{Case 3}
  \end{cases}
\]
for every $p\geq 1$, which ensures that
\[
r^{\alpha-d}\sum_{z\in B_r}\sum_{x\in B_{2r}} T_r(0,x)\|x\|_2^{2p}T_r^2(x,z) = o\left( \E_{\beta_c,r}\sum_{x\in K}\|x\|_2^{2p}\right)
\]
as required.
\end{proof}

\begin{remark}
Let us note now for future application that an identical argument (using the upper bound on the three-fold convolution instead of the two-fold convolution of the two-point function from the proof of \cref{lem:log_integrable_triangle}) yields the analogous bound
\begin{multline}
r^{-d}\sum_{z\in B_r}\sum_{x\in \Z^d} T_r(0,x)\|x\|_2^{2p}T_r^3(x,z) \preceq r^{2p-d}\sum_{z\in B_r} T_r^{4}(0,z) + o\left(
  \E_{\beta_c,r}\sum_{x\in K}\|x\|_2^{2p}
\right)
\\= o\left( \E_{\beta_c,r}\sum_{x\in K}\|x\|_2^{2p}\right).
\label{eq:square_diagram_spatial_error}
\end{multline}
Similar estimates for larger diagrams will require a slightly different treatment (since the $k$-fold convolution of the critical two-point function may have infinite entries for $k\geq 4$) and will be returned to in the proof of \cref{lem:higher_spatial_moments_ODE}.
\end{remark}

\subsection{The full scaling limit of the size-biased model with cut-off}
\label{subsec:the_full_scaling_limit_of_the_size_biased_model_with_cut_off}

In this section we prove the following theorem and explain how it leads to a full scaling limit of clusters (considered as random measures) under the size-biased, cut-off measure $\hat \E_{\beta_c,r}$. We will then explain how this leads to scaling limit results for the measure $\E_{\beta_c}$ without cut-off in \cref{subsec:scaling_limits_without_cut_off}.
The statement of the theorem will use the kernel $\kappa:\R^d \to [0,\infty)$ and the scaling factor $\sigma(r)$ as defined in \cref{def:limiting_displacement_law}. We define a \textbf{polynomial} on $\R^d$ to be an element of the algebra generated by the linear functionals $\{x\mapsto \langle x,u\rangle:u\in \R^d\}$ and the constant $1$ (equivalently, a polynomial on $\R^d$ is a polynomial in the coordinates).

\begin{thm}
\label{thm:scaling_limit_diagrams}
Suppose at least one of the hypotheses of \eqref{HD+} holds.
 For each $n\geq 1$ and polynomials $P_1,\ldots,P_n :\R^d\to \R$, the asymptotic formula
\begin{multline*}
  \E_{\beta_c,r} \left[ \sum_{x_1,\ldots,x_n \in K} \prod_{i=1}^n P_i\left(\frac{x_i}{\sigma(r)}\right)\right] \\= 
\left(\sum_{T\in \mathbb{T}_n} \idotsint \prod_{\substack{i<j \\ i\sim j}} \kappa(x_i-x_j) \prod_{i=1}^n P_i(x_i)\dif x_1 \cdots \dif x_{2n-1} \pm o(1)\right) (\hat \E_{\beta_c,r}|K|)^{n-1} \E_{\beta_c,r}|K|
\end{multline*}
holds as $r\to \infty$,
where 
 the sum is taken over isomorphism classes of trees with leaves labelled $0,1,\ldots,n$ and unlabelled non-leaf vertices all of degree $3$. 
\end{thm}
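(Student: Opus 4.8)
\textbf{Proof strategy for \cref{thm:scaling_limit_diagrams}.}
The plan is a double induction: an outer induction on $n$ and, for each fixed $n$, an inner induction on the total degree $\sum_i\deg P_i$, in the spirit of the proofs of \cref{prop:radius_of_gyration} and \cref{prop:displacement_moments}. Since both sides of the claimed identity are linear in each $P_i$ separately, we may assume each $P_i$ is a monomial, and then by polarization that $P_i(x)=\langle x,u_i\rangle^{k_i}$ for some $u_i\in\R^d$, $k_i\geq 0$; throughout write $\Phi_r(P_1,\dots,P_n):=\E_{\beta_c,r}\bigl[\sum_{x_1,\dots,x_n\in K}\prod_{i=1}^n P_i(x_i)\bigr]$. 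The base case $n=1$ is exactly \cref{prop:displacement_moments}: $\mathbb{T}_1$ contains only the single edge $0\edge 1$, so the right-hand side is $\int\kappa(x_1)P_1(x_1)\dif x_1=\E_{\nu_\mathrm{disp}}[P_1]$ (using that $\nu_\mathrm{disp}$ is a probability measure), and \cref{prop:displacement_moments} together with the identification of $\sigma(r)$ in \cref{def:limiting_displacement_law} and \cref{prop:radius_of_gyration} yields $\Phi_r(P_1(\cdot/\sigma(r)))\sim\E_{\nu_\mathrm{disp}}[P_1]\,\E_{\beta_c,r}|K|$. When all $P_i$ are constant the claim reduces to \cref{prop:higher_moments} (with $|\mathbb{T}_n|=(2n-3)!!$), which, together with \cref{prop:first_moment,prop:second_moment}, supplies the remaining low-complexity base cases.

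For the inductive step I would first derive an asymptotic ODE for $\Phi_r(P_1,\dots,P_n)$ by the familiar route: Russo's formula plus the mass-transport principle, exactly as in \cref{lem:moment_derivative}, \cref{lem:gyration_derivative2} and \cref{lem:displacement_distribution_ODE}. Differentiating in $r$ inserts a long pivotal edge attaching a second cluster $K_y$ with $y\in B_r$; the $n$ marked points are distributed between $K$ and $K_y$ (a nonempty subset going to $K_y$), and one decomposes each displacement via $z-x=(z-y)+y+(0-x)$ and expands the resulting trinomials. The covariance inequalities of \cref{lem:BK_disjoint_clusters_covariance} make the joint expectation over $K$ and $K_y$ factor to leading order, with $K_y$ (after the mass-transport shift) distributed like the origin's cluster and $y$ essentially uniform on $B_r$. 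This produces an ODE of the form
\[
\frac{d}{dr}\,\Phi_r(P_1,\dots,P_n)=\frac{c_n\,\alpha}{r}\,\Phi_r(P_1,\dots,P_n)+\frac{1}{r}\,H_r(P_1,\dots,P_n)\;\pm\;(\text{errors}),
\]
where $c_n$ is a positive integer read off combinatorially exactly as the factor $2$ in \cref{lem:displacement_distribution_ODE} and the factor $(p+1)$ in the proof of \cref{prop:higher_moments}, and where the driving term $H_r$ is an explicit finite sum of products of $\Phi_r$'s of strictly smaller complexity (fewer marked points, or the same number with strictly smaller total degree) times factors $\int_B\langle y,u\rangle^k\dif y$. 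By the induction hypothesis together with \cref{prop:first_moment,prop:second_moment,prop:higher_moments,prop:radius_of_gyration,prop:displacement_moments}, $H_r$ is regularly varying of index strictly larger than $c_n\alpha$ in all cases except the single borderline case $n=2$ with all $P_i$ constant (which is \cref{prop:second_moment}, and is the source of the possible logarithmic corrections at $d=3\alpha$) and, when $\alpha\geq 2$, the case $n=1$, $P_1=\langle\cdot,u\rangle^2$ (which is \cref{prop:radius_of_gyration}); hence \cref{lem:ODE_with_driving_term}, with \cref{lem:ODE_with_driving_term_Gronwall} supplying a matching a priori upper bound as in the proof of \cref{prop:higher_moments}, determines $\Phi_r$ to first order. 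The limiting constants so obtained satisfy a recursion in $n$ and degree whose solution is the tree-diagram integral $\sum_{T\in\mathbb{T}_n}\int\prod_{i\sim j}\kappa(x_i-x_j)\prod_i P_i(x_i)$: one can verify this either by a generating-function argument generalizing \cref{lem:double_fun}, or, more conceptually, by invoking \cref{prop:recurrence_from_derivative}, which says that the moments of the canonical measure $\mathbb{N}$ given by \eqref{eq:canonical_measure_diagrams} satisfy precisely this recursion as an infinitesimal encoding of the scale-invariance of the superprocess.

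The main work, and the main obstacle, is the error analysis. The interaction errors --- the differences between the joint expectations over $K$ and $K_y$ and their products, together with the cross-terms generated by the trinomial expansion --- are bounded, via \cref{lem:BK_disjoint_clusters_covariance} (and \cref{lem:disjoint_connections,lem:disjoint_connections_triangle}), by expectations restricted to the event $\{0\leftrightarrow y\}$, and these are then controlled by the spatially-weighted tree-graph inequalities (\cref{lem:spatial_tree_graph}) in Cases 1 and 4, and by the spatially-weighted triangle-diagram bounds (\cref{lem:spatial_triangle}) together with the assumed two-point estimates in Cases 2 and 3. In the critical case $d=3\alpha$ the tree-graph bound is no longer of optimal order, so, exactly as in the proofs of \cref{prop:radius_of_gyration} and \cref{lem:displacement_distribution_ODE}, one supplements it with the four-point Gladkov inequality (\cref{lem:four_point_Gladkov}), the universal-tightness moment bound (\cref{cor:universal_tightness_moments}), the spatially-averaged two-point estimate (\cref{thm:two_point_spatial_average_upper}), and the hydrodynamic condition to gain the required $o(1)$ savings against the leading order $\sigma(r)^{\sum k_i}(\hat\E_{\beta_c,r}|K|)^{n-1}\E_{\beta_c,r}|K|$. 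The difficulty relative to the single-site case is one of bookkeeping: the error terms are indexed by the choice of subset of marked points sent to $K_y$, by the combinatorial shape into which the trinomials expand, and (when $d=3\alpha$) by the shape of the dominant Gladkov diagram, so the crux is to organise these finitely many families and check each is negligible, uniformly over the ranges of summation. Granting this, the ODE analysis closes the induction, and rescaling the marked points by $\sigma(r)$ gives the stated formula; the formula then identifies all mixed moments of the empirical measure $\sum_{x\in K}\delta_{x/\sigma(r)}$, from which the full random-measure scaling limit follows by the method of moments using the moment bound of \cref{lem:canonical_measure_moment_upper_bound} and Carleman's criterion, as indicated in \cref{subsec:scaling_limits_without_cut_off}.
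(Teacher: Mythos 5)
Your strategy is essentially the paper's: a double induction on $(n,\text{total degree})$, an asymptotic ODE derived from Russo's formula, the mass-transport principle and a trinomial expansion over ``splittings'' of the monomials, error control via \cref{lem:BK_disjoint_clusters_covariance} plus the spatial tree-graph/Gladkov/universal-tightness machinery (Cases 1 and 4) or the polygon-diagram and convolution estimates (Cases 2 and 3), and identification of the limiting constants by matching the resulting recursion against the one satisfied by the tree-diagram integrals via \cref{prop:recurrence_from_derivative}. The base cases and the role of the borderline pairs $(n,p)=(2,0)$ and $(1,2)$ are also as in the paper.

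There is, however, one step that would fail as written: you propose to close the ODE analysis with \cref{lem:ODE_with_driving_term} and \cref{lem:ODE_with_driving_term_Gronwall}, but both of those lemmas are stated for \emph{positive} functions (and the conclusion $f\sim h/(b-a)$ presupposes a nonzero limit constant), whereas the mixed spatial moments $\E_{\beta_c,r}[\sum_{x_1,\dots,x_n\in K}\prod_i P_i(x_i)]$ are genuinely signed for general monomials and their limiting constants can vanish --- this is precisely why the theorem is stated with ``$\pm o(1)$'' rather than ``$\sim$''. Polarization does not rescue you here: even after reducing to $P_i=\langle\cdot,u_i\rangle^{k_i}$, mixed odd exponents with even total degree give signed, possibly sign-changing quantities, so neither the asymptotic-equivalence hypothesis of \cref{lem:ODE_with_driving_term} nor the Gr\"onwall bootstrap applies. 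The paper resolves this with a dedicated signed version of the ODE lemma (\cref{lem:signed_ODE_analysis}): one adds a large multiple of $\int_{r_0}^r h(s)s^{-1}\dif s$ to force positivity and then applies the positive-function lemma, but this requires the a priori two-sided bound $|f(r)|=O(h(r))$ as an input. That bound is \emph{not} obtained from the ODE; it is supplied separately by \cref{lem:mixed_moments_order_estimates}, which controls $\E_{\beta_c,r}[\sum\prod_i|P_i(x_i)|]$ via AM--GM and Cauchy--Schwarz in terms of the already-established single-direction moments of \cref{prop:higher_moments,prop:displacement_moments}. Your outline is missing both this a priori absolute-moment estimate and the signed ODE lemma it feeds; with those two ingredients added, the rest of your argument goes through as in the paper.
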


Here we do not write $\sim$ but instead add a $\pm o(1)$ term to the constant prefactor to account for the possibility that this constant is zero, in which case we do not claim the left hand side is identically zero for large $r$. 
Note that if $P_i\equiv 1$ for every $1\leq i \leq n$ then the each of the integrals on the right hand side is $1$ and, since $|\mathbb{T}_n|=(2n-3)!!$, we recover the asymptotic formula \[\E_{\beta_c,r}|K|^n \sim (2n-3)!!(\hat\E_{\beta_c,r}|K|)^{n-1}\E_{\beta_c,r}|K| \] 
established in \cref{prop:higher_moments}. 

\medskip

\cref{thm:scaling_limit_diagrams} is consistent with asymptotic expressions for $k$-point functions under the cut-off measure $\P_{\beta_c,r}$ of the form
\begin{align*}
  \P_{\beta_c,r}(x\leftrightarrow y\leftrightarrow z) &\approx 
  \begin{array}{l}
\includegraphics{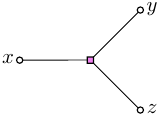}
  \end{array} = V_r \sum_{w\in \Z^d} \P_{\beta_c,r}(x\leftrightarrow w)\P_{\beta_c,r}(w\leftrightarrow y)\P_{\beta_c,r}(w\leftrightarrow z),
\\
   \P_{\beta_c,r}(x\leftrightarrow y\leftrightarrow z\leftrightarrow w) &\approx 
  \begin{array}{l}
\includegraphics{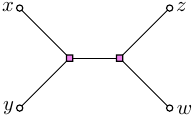}
  \end{array}
  +
    \begin{array}{l}
\includegraphics{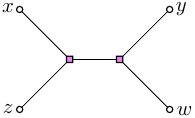}
  \end{array}
  +
    \begin{array}{l}
\includegraphics{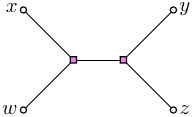}
  \end{array},
\end{align*}
and so on, where each line represents a copy of the two-point function
\begin{equation}
  \P_{\beta_c,r}(x\leftrightarrow y) \approx r^{-d}\E_{\beta_c,r}|K| \kappa\Bigl(\frac{x-y}{r}\Bigr) \approx \frac{\alpha}{\beta_c}r^{-d+\alpha} \kappa\Bigl(\frac{x-y}{r}\Bigr),
\label{eq:two_point_kappa_heuristic}
\end{equation}
internal vertices are summed over, and the violet box at each internal vertex indicates the inclusion of a copy of the \textbf{vertex factor}
\begin{equation}
\label{eq:vertex_factor_def}
  V_r := \frac{\E_{\beta_c,r}|K|^2}{(\E_{\beta_c,r}|K|)^3}. 
\end{equation}
  The main caveat here is that \cref{thm:scaling_limit_diagrams} establishes these asymptotic estimates only in the sense of moments, rather than pointwise, and indeed these estimates might not be accurate for points with distance much smaller than $r$. (We conjecture that these estimates do hold pointwise in the sense of first-order asymptotics when all distances between points are of order $r$.)
  The vertex factor $V_r$ is asymptotic to $\frac{\beta_c^2}{\alpha^2}A_r$ where $A_r$ is the quantity from \cref{thm:hd_moments_main,thm:critical_dim_moments_main_slowly_varying} and accounts for the interactions between the different parts of the cluster that prevent the tree-graph upper bound
$\P_r(x\leftrightarrow y\leftrightarrow z) \leq \sum_{w\in \Z^d} \P_r(x\leftrightarrow w)\P_r(w\leftrightarrow y)\P_r(w\leftrightarrow z)$. In high effective dimensions it follows from \cref{thm:hd_moments_main} that the vertex factor converges to a constant, meaning that the interaction between different parts of the cluster occurs primarily on microscopic scales (i.e., on scales of the same order as the lattice spacing). Meanwhile, when $d=3\alpha<6$ we expect that the vertex factor is divergently small (as we prove in \cref{III-sec:logarithmic_corrections_at_the_critical_dimension}) and that the interactions between different parts of the cluster are spread out over a large range of mesoscopic scales.

\medskip

Before proving \cref{thm:scaling_limit_diagrams}, we first establish its consequences regarding superprocess scaling limits for the size-biased cut-off measure $\hat \P_{\beta_c,r}$.

\begin{corollary}[Scaling limit of the size-biased model with cut-off]
\label{cor:scaling_limit_cut_off}
Suppose at least one of the hypotheses of \eqref{HD+} holds, and for each $r>0$ write $\hat \P_{\beta_c,r}$ for probabilities associated to the size-biased measure $\hat \E_{\beta_c,r}$. Then
\[
  \hat \P_{\beta_c,r}\left( \frac{1}{\hat \E_{\beta_c,r}|K|} \sum_{x\in K} \delta_{x/\sigma(r)} \in \cdot 
  \right) \to \mathbb{K}
\]
weakly as $r\to \infty$, 
where $\mathbb{K}$ is the law of the integrated superprocess whose underlying genealogical tree is a continuum random tree of total mass distributed as the square of a standard Gaussian and whose spatial motion is either Brownian motion with covariance matrix $\Sigma$ defined in \cref{prop:radius_of_gyration} or the L\'evy process with L\'evy measure $\Pi_1$ defined in \eqref{eq:Levy_measure} according to whether $\alpha\geq 2$ or $\alpha<2$.
\end{corollary}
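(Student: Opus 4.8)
The plan is to prove \cref{cor:scaling_limit_cut_off} by the method of moments, using \cref{thm:scaling_limit_diagrams} as the main input together with a size-biasing trick that converts the factor $|K|$ in the density of $\hat\P_{\beta_c,r}$ into an extra summation point. Write $\mu_r$ for the random measure $\frac{1}{\hat\E_{\beta_c,r}|K|}\sum_{x\in K}\delta_{x/\sigma(r)}$ under $\hat\P_{\beta_c,r}$, where $\sigma(r)$ is as in \cref{def:limiting_displacement_law}. Since $\hat\E_{\beta_c,r}$ biases $\E_{\beta_c,r}$ by $|K|/\E_{\beta_c,r}|K|$, for any polynomials $P_1,\ldots,P_n:\R^d\to\R$ we have
\[
\hat\E_{\beta_c,r}\left[\prod_{i=1}^n \mu_r(P_i)\right] = \frac{1}{(\hat\E_{\beta_c,r}|K|)^n\,\E_{\beta_c,r}|K|}\,\E_{\beta_c,r}\left[\sum_{x_1,\ldots,x_{n+1}\in K}\prod_{i=1}^n P_i\!\left(\frac{x_i}{\sigma(r)}\right)\right],
\]
the index $x_{n+1}$ coming from the extra factor $|K|$. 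Applying \cref{thm:scaling_limit_diagrams} with $n+1$ points, $P_{n+1}\equiv 1$, and using $\hat\E_{\beta_c,r}|K|=\E_{\beta_c,r}|K|^2/\E_{\beta_c,r}|K|$, the right-hand side converges as $r\to\infty$ to $\sum_{T\in\mathbb{T}_{n+1}}\idotsint \prod_{\substack{i<j\\ i\sim j}}\kappa(x_i-x_j)\prod_{i=1}^n P_i(x_i)\dif x_1\cdots \dif x_{2n+1}$ with $x_0=0$, where $\kappa$ is the density of $\nu_{\mathrm{disp}}$. Taking all $P_i\equiv 1$ gives $\hat\E_{\beta_c,r}\bigl[\mu_r(\R^d)^n\bigr]\to (2n-1)!!$ (consistently with \cref{prop:higher_moments}), so the total masses are tight with limiting moments those of the square of a standard Gaussian, while taking $P_i(x)=\langle x,u\rangle^{2p}$ and \cref{prop:displacement_moments} bounds $\hat\E_{\beta_c,r}\bigl[\mu_r(\{\|x\|>M\})\bigr]$ uniformly, giving tightness of the random measures $\mu_r$ themselves.

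The second step is to identify the limiting moment functional $\prod_i\varphi_i\mapsto \sum_{T\in\mathbb{T}_{n+1}}\idotsint \prod_{i<j,\,i\sim j}\kappa(x_i-x_j)\prod_{i=1}^n\varphi_i(x_i)\dif x_1\cdots\dif x_{2n+1}$ with the $n$-point moment measures of $\mathbb{K}$. By \eqref{eq:canonical_measure_diagrams}, the moment measures of the canonical measure $\N$ of the integrated superprocess excursion are given by exactly these tree diagrams with the Green's function $G$ of the spatial motion in place of $\kappa$ (and an explicit power of $2$). The kernel $\kappa$ is the density of the spatial motion — Brownian motion with covariance $\Sigma$ if $\alpha\geq 2$, the L\'evy process with L\'evy measure $\Pi_1$ if $\alpha<2$ — evaluated at an independent $\mathrm{Exp}(1)$ time, which is precisely the $1$-resolvent of that motion, i.e. its Green's function with unit killing rate. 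The appearance of $\mathbb{T}_{n+1}$ rather than $\mathbb{T}_n$ (one extra leaf, carrying the trivial weight $\varphi_{n+1}\equiv 1$) encodes the size-biasing of $\N$ by total mass, and the exponential cutoff turning the $\mathrm{chi}$-squared total mass into an effective killing of the spatial motion is exactly the distributional identity for continuum random trees of chi-squared total mass recorded in \cref{thm:chi-squared_CRT_distances}. Combining these two facts with the appropriate rescaling of the mass (chosen so that $\hat\E_{\beta_c,r}[\mu_r(\R^d)]=1$), one obtains that the displayed limiting functional is $\mathbb{K}\bigl[\prod_{i=1}^n\mu(\varphi_i)\bigr]$.

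Finally, I would upgrade moment convergence to weak convergence of random measures. Since $d>\min\{4,2\alpha\}$ in all cases of \eqref{HD+}, the moment growth bound of \cref{lem:canonical_measure_moment_upper_bound} and Carleman's criterion show that $\mathbb{K}$ is uniquely determined by its moment measures and that each $\mu(\varphi)$ under $\mathbb{K}$ is moment-determinate. Fixing $R<\infty$ and using Stone--Weierstrass to approximate compactly supported continuous test functions by polynomials on $B_R$, with the uniform moment bounds from \cref{thm:scaling_limit_diagrams} controlling the resulting errors and the tightness of $\mu_r(\R^d)$ controlling the mass outside $B_R$, the convergence of polynomial moments established above yields convergence of the finite-dimensional distributions $(\mu_r(\varphi_1),\ldots,\mu_r(\varphi_k))\Rightarrow(\mu(\varphi_1),\ldots,\mu(\varphi_k))$ under $\mathbb{K}$ for all compactly supported continuous $\varphi_1,\ldots,\varphi_k$; together with tightness this gives $\mu_r\Rightarrow\mathbb{K}$ weakly. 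I expect the main obstacle to be the identification step: matching the $\kappa$-weighted tree diagrams indexed by $\mathbb{T}_{n+1}$ against the canonical-measure formula \eqref{eq:canonical_measure_diagrams} and the CRT description, and in particular tracking all normalization constants (the powers of $2$, the mass rescaling, and the killing rates of the spatial motion and the branching mechanism) so that the limit is precisely $\mathbb{K}$; the measure-theoretic passage from polynomial moments to weak convergence of random measures is standard but also requires some care because the test functions in \cref{thm:scaling_limit_diagrams} are polynomials rather than compactly supported.
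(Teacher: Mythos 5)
Your proposal is correct and follows essentially the same route as the paper: apply \cref{thm:scaling_limit_diagrams} to get the limiting moment formula, use Carleman/moment-determinacy to upgrade to weak convergence, and identify the limit with $\mathbb{K}$ via \cref{thm:chi-squared_CRT_distances} together with the fact that $\kappa$ is the density of the spatial motion stopped at an independent $\mathrm{Exp}(1)$ time. The only (cosmetic) difference is that you absorb the size-biasing into an extra summation point, matching the $n$-point moments of $\hat\P_{\beta_c,r}$ against $\mathbb{T}_{n+1}$-diagrams with a unit-weight leaf, whereas the paper divides by $|K|$ and matches the $\mu(\R^d)^{-1}$-weighted moments against $\mathbb{T}_n$-diagrams; the two formulations are equivalent.
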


\begin{remark}
We use the scaling convention in which the continuum random tree is encoded by $2e$ where $e$ is a Brownian excursion conditioned to have duration $1$. We warn the reader that at least two other scaling conventions appear in the literature.
\end{remark}

We will deduce \cref{cor:scaling_limit_cut_off} from \cref{thm:scaling_limit_diagrams} together with the following property of the continuum random tree.

\begin{theorem}
\label{thm:chi-squared_CRT_distances}
Let $(T,X_0)$ be the rooted continuum random tree whose mass $\mu$ is distributed as the square of a standard Gaussian and let $X_1,\ldots,X_n$ be uniform random points in $T$. The distribution of the array of distances $(d(X_i,X_j))_{0\leq i, j \leq n}$ under the measure obtained from biasing the law of $(T,X_0)$ by $\mu^{n-1}$ is equal to that obtained by picking a uniform random element of $\mathbb{T}_n$ and setting the edges of the tree to have independent \emph{Exp}$(1)$-distributed random lengths.
\end{theorem}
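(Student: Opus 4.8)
The plan is to reduce the statement to the classical description of the finite-dimensional marginals of the \emph{standard} continuum random tree $(\mathcal T_1,\rho)$ of total mass one, and then to remove the randomness of the mass $\mu$ by evaluating a single explicit one-dimensional integral.

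First recall the relevant fact about $\mathcal T_1$: in the scaling convention in which $\mathcal T_1$ is coded by $2\mathbf e$ for $\mathbf e$ a normalised Brownian excursion, if $\rho=X_0$ is the root and $X_1,\dots,X_n$ are i.i.d.\ points sampled from the normalised mass measure, then the combinatorial type of the subtree $R(X_0,\dots,X_n)$ spanned by these points is almost surely an element of $\mathbb T_n$ (leaves labelled $0,\dots,n$, all branch points of degree three), this type is uniform on $\mathbb T_n$, and conditionally on it the vector $(\ell_e)_e$ of the $2n-1$ edge lengths has joint density $s\,e^{-s^2/2}$ on $(0,\infty)^{2n-1}$, where $s=\sum_e\ell_e$; summed over the $(2n-3)!!$ types this is a probability measure. (For $n=1$ this is the Rayleigh density $\ell e^{-\ell^2/2}$ of the root-to-uniform-point distance; in general the constant $1$ is forced by $\#\mathbb T_n=(2n-3)!!$ and $\int_0^\infty s^{2n-1}e^{-s^2/2}\,ds=2^{n-1}(n-1)!$, and the statement is due to Aldous and Le Gall \cite{aldous1991continuum,aldous1993continuum,le1999spatial}.) Conditionally on $\mu=m$, the tree $T$ is the CRT of mass $m$, coded by a Brownian excursion of duration $m$; by Brownian scaling it is the image of $\mathcal T_1$ under multiplying the metric by $\sqrt m$ and the mass measure by $m$, so a point sampled from the normalised mass measure of $T$ corresponds to one sampled from that of $\mathcal T_1$, and the array $(d(X_i,X_j))_{0\le i,j\le n}$ in $T$ equals $\sqrt m$ times the corresponding array in $\mathcal T_1$. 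Hence, conditionally on $\mu=m$, the combinatorial type of $R(X_0,\dots,X_n)$ is uniform on $\mathbb T_n$ and, given it, the edge lengths have joint density $m^{-n} s\,e^{-s^2/(2m)}$ on $(0,\infty)^{2n-1}$ (with $s=\sum_e\ell_e$ the total length in $T$), which again integrates, per type, to $1/(2n-3)!!$.

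Finally, biasing the $\chi^2_1$ law of $\mu$ by $\mu^{n-1}$ (legitimate since all moments of $\chi^2_1$ are finite) gives the density $p^*(m)=\big(2^{n-1/2}\Gamma(n-\tfrac12)\big)^{-1}m^{n-3/2}e^{-m/2}$ on $(0,\infty)$, and since $\mu^{n-1}$ is a function of $\mu$ alone this biasing leaves the conditional law of $(T,X_0,\dots,X_n)$ given $\mu$ unchanged. Integrating the conditional edge-length density against $p^*$ and applying the classical identity
\[
\int_0^\infty m^{-3/2}e^{-a/m-bm}\,dm=\sqrt{\pi/a}\;e^{-2\sqrt{ab}}\qquad(a,b>0),
\]
with $a=s^2/2$ and $b=1/2$, one finds that under the biased measure the edge lengths of $R(X_0,\dots,X_n)$, conditionally on the (still uniform) combinatorial type, have joint density proportional to $e^{-s}=\prod_e e^{-\ell_e}$, with proportionality constant $\tfrac{\sqrt{2\pi}}{2^{n-1/2}\Gamma(n-1/2)}=\tfrac{\sqrt\pi}{2^{n-1}\Gamma(n-1/2)}=1/(2n-3)!!$, using $\Gamma(n-\tfrac12)=2^{-(n-1)}(2n-3)!!\,\sqrt\pi$. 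Thus the array $(d(X_i,X_j))_{0\le i,j\le n}$ is obtained by choosing a uniform type in $\mathbb T_n$ and giving its edges independent $\mathrm{Exp}(1)$ lengths, as claimed.

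The substantive steps are entirely standard: the finite-dimensional marginal description of the CRT, Brownian scaling of the CRT of mass $m$, and one inverse-Gaussian-type integral. The only point requiring care is the bookkeeping of normalising constants — the powers of two, factors of $\sqrt\pi$, and double factorials — through the scaling and the integration over $\mu$; I expect this, rather than any conceptual difficulty, to be where a misstep is most likely, so it is worth also recording cleanly why the per-type edge-length law is exactly $\prod_e e^{-\ell_e}$ (so that the answer is genuinely i.i.d.\ $\mathrm{Exp}(1)$ and not merely exchangeable) and why the biasing acts only on the marginal of $\mu$.
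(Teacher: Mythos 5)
Your proof is correct and follows essentially the same route as the paper's: reduce to Aldous's description of the finite-dimensional marginals of the unit-mass CRT, apply Brownian scaling to pass to mass $m$, note that the $\mu^{n-1}$-biasing only affects the marginal of $\mu$, and evaluate a single one-dimensional integral (you quote the inverse-Gaussian identity directly, while the paper evaluates the same integral via the Bessel function $K_{1/2}(z)=\sqrt{\pi/2z}\,e^{-z}$). Your bookkeeping of the constants checks out, and working with the joint edge-length density $s e^{-s^2/2}$ rather than the total-length density plus uniform subdivision is only a cosmetic variation.
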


Equivalently, if $(T,X_0)$ is the rooted continuum random tree whose mass $\mu$ is distributed as a chi-squared distribution with $2n-1$ degrees of freedom and the points $X_1,\ldots,X_n$ are chosen independently at random from the mass measure then the distances $(d(X_i,X_j))_{0\leq i, j \leq n}$ have distribution equal to that obtained by picking a uniform random element of $\mathbb{T}_n$ and setting the edges of the tree to have independent Exp$(1)$-distributed random lengths. This property is implicit in \cite{addario2018voronoi}, where it is mentioned that there \emph{exists} a mass distribution making this distributional identity hold but this distribution is not identified as a chi-squared distribution.

\begin{remark}
The corresponding statement for the canonical measure is that the edge lengths are ``distributed'' according to Lebesgue measure \cite[Chapter III, Theorem 4]{le1999spatial}. This leads to the diagrammatic formula \eqref{eq:canonical_measure_diagrams} in the same way that \cref{thm:chi-squared_CRT_distances} leads to the diagrammatic formula \eqref{eq:bbK_diagram}.
\end{remark}

\begin{proof}[Proof of \cref{thm:chi-squared_CRT_distances}]
As proven by Aldous \cite{aldous1993continuum} and stated clearly in \cite[Eq. (2.1)]{addario2018voronoi}, if we sample a continuum random tree of unit mass and then let $X_1,\ldots,X_n$ be points in the tree chosen independently at random from the mass measure, the distribution of the array of distances $(d(X_i,X_j))_{0\leq i, j \leq n}$ is equal to that obtained by picking a uniform element of $\mathbb{T}_n$, taking the total length of all edges in this tree to have distribution
\[
\frac{2}{2^{(m+1)/2}\Gamma(\frac{m+1}{2})}x^m e^{-\frac{1}{2}x^2} 
\]
where $m=2n-1$ is the number of edges in the tree, then taking a uniform random subdivision of this total length to determine the edges of the tree. (The formula given in \cite{addario2018voronoi} differs from the one given here by various factors of $2^{1/2}$ due to our different scaling convention for the CRT; see \cite[Eq.\ (34)]{aldous1993continuum} and \cite[Page 51]{le1999spatial} for an equivalent density formula established under the same scaling conventions as ours.)
 We need to prove that if we instead take the tree to have mass distributed as a chi-squared distribution with $2n-1$ degrees of freedom (which, as discussed above, is equivalent to biasing our chi-squared distribution with one degree of freedom by the total mass to the power $n-1$) then the total length instead follows a Gamma$(m)$ distribution; it is then a classical fact that a uniform subdivision of a Gamma$(m)$ is equivalent to a sequence of $m$ independent exponential random variables. 
If the mass of the tree is rescaled by a factor $\lambda$ then the law of all distances are rescaled by a factor $\lambda^{1/2}$, so that for a continuum random tree of mass $\lambda$ the relevant density is
\[
 \frac{2}{(2\lambda)^{(m+1)/2} \Gamma(\frac{m+1}{2})}x^m e^{-\frac{1}{2}\lambda^{-1} x^2}.
\]
Thus, to prove the theorem it suffices to verify that
\[
 \int_0^\infty \frac{\lambda^{(m-1)/2}}{\sqrt{2\pi \lambda}}  e^{-\lambda/2} \cdot \frac{2}{(2\lambda)^{(m+1)/2} \Gamma(\frac{m+1}{2})}x^m e^{-\frac{x^2}{2\lambda}} \dif \lambda \propto  
  x^{m-1} e^{-x}
\]
or equivalently that
\[
   \int_0^\infty \lambda^{-3/2} e^{-\frac{\lambda}{2}-\frac{x^2}{2\lambda}} \dif \lambda \propto  
  x^{-1} e^{-x}.
\]
The integral on the left can be recognized as one of the many integral representations of the modified Bessel function of the second kind \cite[\S 7.12, eq.\ (23)]{bateman1953higher}:
\[
  K_{\nu}(z)=\frac{1}{2} z^\nu \int_0^\infty e^{-\frac{1}{2}(s+\frac{z^2}{s})} s^{-\nu-1} \dif s,
\]
which holds for $z$ of positive real part, and the claim follows since 
\[
  K_{1/2}(z) = \sqrt{\frac{\pi}{2z}}e^{-z}
\]
as proven in \cite[\S 7.2.6, eq.\ (42)]{bateman1953higher}.
\end{proof}

We are now ready to deduce \cref{cor:scaling_limit_cut_off} from \cref{thm:scaling_limit_diagrams}.

\begin{proof}[Proof of \cref{cor:scaling_limit_cut_off}]
It follows immediately from \cref{thm:scaling_limit_diagrams} that
\begin{multline*}
  \lim_{r\to \infty} \hat \E_{\beta_c,r} \left[ \frac{1}{(\hat \E_{\beta_c,r}|K|)^{n-1}} \frac{1}{|K|}\sum_{x_1,\ldots,x_n \in K} \prod_{i=1}^n P_i\left(\frac{x_i}{\sigma(r)}\right)\right] \\= 
\sum_{T\in \mathbb{T}_n} \idotsint \prod_{\substack{i<j\\i\sim j}} \kappa(x_i-x_j) \prod_{i=1}^n P_i(x_i)\dif x_1 \cdots \dif x_{2n-1} 
\end{multline*}
for each $n\geq 1$ and collection of polynomials $P_1,\ldots,P_n:\R^d\to \R$. 
Since the limiting (chi-squared) distribution of the volume of the cluster has  moments satisfying the Carleman condition (which is weaker than having an exponential moment), it follows by standard properties of moment measures \cite{zessin1983method} that the weak limit of the law of the random measures $(\hat \E_{\beta_c,r}|K|)^{-1} \sum_{x\in K} \delta_{x/\sigma(r)}$ under the measures $\hat \E_{\beta_c,r}$ is determined by this asymptotic formula. It remains only to prove that this limit coincides with the integrated superprocess given in the statement, i.e., that
\begin{equation}
\label{eq:bbK_diagram}
  \bbK\left[ \frac{1}{\mu(\R^d)} \prod_{i=1}^n \int P_i(x) \dif \mu(x) \right] =
\sum_{T\in \bbT_n} \idotsint \prod_{\substack{i<j\\i\sim j}} \kappa(x_i-x_j) \prod_{i=1}^n P_i(x_i) \dif x_1 \cdots \dif x_{2n-1}
\end{equation}
for every $n\geq 1$ and collection of polynomials $P_1,\ldots,P_n:\R^d\to \R$. This in turn follows from \cref{thm:chi-squared_CRT_distances} and the fact that $\nu_\mathrm{disp}$ is the law of the appropriate L\'evy process stopped at an Exp$(1)$ random time (since the superprocess is defined as the L\'evy process indexed by the continuum random tree).
\end{proof}

We now turn to the proof of \cref{thm:scaling_limit_diagrams}. 
We will make use of a generalization of our ODE lemmas \cref{lem:ODE_with_driving_term} to signed functions.


\begin{lemma}[ODE analysis for signed functions]
\label{lem:signed_ODE_analysis}
Let $a,b>0$, let $h:(0,\infty)\to (0,\infty)$ be a measurable, regularly varying function of index $b$, and suppose that 
$f:(0,\infty)\to \R$ is a (not necessarily positive) differentiable function such that 
\begin{equation}
\label{eq:signed_ODE_f}
f'(r) = \frac{(a \pm o(1)) f(r) + (A \pm o(1)) h(r)}{r}
\end{equation}
as $r\to \infty$ for some constant $A\in \R$. If $b>a$ and $|f(r)|=O(h(r))$ as $r\to \infty$ then
\[
  f(r) = \frac{A \pm o(1)}{b-a}h(r)
\]
as $r\to \infty$.
\end{lemma}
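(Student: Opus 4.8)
The plan is to treat \eqref{eq:signed_ODE_f} as an inhomogeneous first-order linear ODE, solve it explicitly by an integrating factor exactly as in the proof of \cref{lem:ODE_with_driving_term}, and then read off the asymptotics using \cref{lem:regular_variation_integration}; the role of the extra hypothesis $|f(r)|=O(h(r))$ is precisely to neutralise the uncertain sign of the term $o(1)\cdot f(r)$. First I would observe that, since the $o(1)$ multiplying $f$ contributes a quantity of size $o(1)\cdot O(h(r))=o(h(r))$ and the $o(1)$ multiplying $h$ contributes $o(h(r))$, the hypothesis \eqref{eq:signed_ODE_f} may be rewritten as
\[
f'(r)=\frac{af(r)+Ah(r)+E(r)}{r},\qquad E(r)=o(h(r)),
\]
valid for all $r$ beyond some $r_0$, which we also take large enough that $h$ is positive and locally bounded (hence locally integrable) on $[r_0,\infty)$ and that $|f|\le Ch$ there. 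Multiplying by the integrating factor $r^{-a}$ gives $(r^{-a}f(r))'=r^{-a-1}(Ah(r)+E(r))$, and integrating from $r_0$ to $r$ yields
\[
f(r)=r^a r_0^{-a}f(r_0)+A\,r^a\!\int_{r_0}^r s^{-a-1}h(s)\,\dif s+r^a\!\int_{r_0}^r s^{-a-1}E(s)\,\dif s
\]
for every $r\ge r_0$.

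It then remains to estimate the three terms. The function $s\mapsto s^{-a-1}h(s)$ is regularly varying of index $b-a-1>-1$, so \cref{lem:regular_variation_integration} gives $\int_{r_0}^r s^{-a-1}h(s)\,\dif s\sim(b-a)^{-1}r^{-a}h(r)$ and hence $A r^a\int_{r_0}^r s^{-a-1}h(s)\,\dif s\sim\frac{A}{b-a}h(r)$. The initial-condition term is $O(r^a)$, which is $o(h(r))$ since $h$ is regularly varying of index $b>a$. For the error term, given $\eps>0$ I would pick $r_1\ge r_0$ with $|E(s)|\le\eps h(s)$ for $s\ge r_1$, so that $r^a\int_{r_1}^r s^{-a-1}|E(s)|\,\dif s\le\eps r^a\int_{r_0}^r s^{-a-1}h(s)\,\dif s\le\frac{2\eps}{b-a}h(r)$ for all large $r$, while $r^a\int_{r_0}^{r_1}s^{-a-1}|E(s)|\,\dif s=O(r^a)=o(h(r))$; since $\eps>0$ was arbitrary the error term is $o(h(r))$. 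Summing the three estimates gives $f(r)=\frac{A}{b-a}h(r)+o(h(r))$, as claimed.

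I do not anticipate a genuine obstacle: this is a mild variant of \cref{lem:ODE_with_driving_term}, and the only conceptually new point is that the a priori bound $|f|=O(h)$ is exactly what is needed for the reduction in the first step (without it a signed solution of the asymptotic ODE need not have a definite leading-order behaviour even when $b>a$). The one routine wrinkle worth flagging is that $E$ need not itself be regularly varying, so the $o(h)$ bound for its integral is obtained by the $\eps$-splitting above rather than a direct appeal to \cref{lem:regular_variation_integration}; as in the earlier ODE lemmas I would not dwell on integrability of $f'$, assuming $f$ is regular enough for the fundamental theorem of calculus to apply on $[r_0,r]$.
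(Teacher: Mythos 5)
Your proof is correct, but it takes a different route from the paper's. The paper handles the sign problem by shifting the \emph{unknown function}: it sets $f_\lambda(r)=f(r)+\lambda\int_{r_0}^r s^{-1}h(s)\,\dif s$, uses the a priori bound $|f|=O(h)$ to choose $\lambda$ large enough that $f_\lambda$ is eventually positive (and $A+\tfrac{b-a}{b}\lambda>0$), checks that $f_\lambda$ satisfies the positive-function asymptotic ODE, applies \cref{lem:ODE_with_driving_term} to $f_\lambda$, and subtracts off the $\lambda$-term at the end. You instead shift the \emph{equation}: the bound $|f|=O(h)$ lets you absorb the $o(1)\cdot f$ term into a perturbation $E=o(h)$ of the driving term, after which the ODE has exact coefficient $a$ and can be solved by the integrating factor $r^{-a}$ and estimated term by term (with the $\eps$-splitting standing in for \cref{lem:regular_variation_integration} on the non-regularly-varying error $E$). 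Both arguments use the hypothesis $|f|=O(h)$ in an essential way and both ultimately rest on the same integral asymptotic $\int_{r_0}^r s^{-a-1}h(s)\,\dif s\sim (b-a)^{-1}r^{-a}h(r)$; the paper's version is a slicker reduction to the already-proved positive case, while yours is self-contained and makes the role of the a priori bound more transparent. Your closing caveat about the fundamental theorem of calculus is at the same level of implicit regularity assumed in the paper's own ODE lemmas, so it is not a gap.
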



\begin{proof}[Proof of \cref{lem:signed_ODE_analysis}]
Let $\lambda > 0$ be a constant and consider the function defined for $r\geq r_0$ by
\[
 f_\lambda(r)= f(r)+\lambda \int_{r_0}^r \frac{h(s)}{s} \dif s = f(r)+(1\pm o(1))\frac{\lambda  }{b} h(r)
\]
where $r_0$ is sufficiently large that $h$ is locally integrable on $[r_0,\infty)$. The assumption that $f(r)=O(h(r))$ as $r\to \infty$ ensures that we can take $\lambda$ sufficiently large that
$A+\frac{b-a}{b}\lambda>0$ and
 $f_\lambda(r)$ is non-negative for all sufficiently large $r$. Since we also have that
\[
  f_\lambda'(r) = f'(r) + \lambda h(r) = \frac{a\pm o(1)}{r} f_\lambda(r) + \frac{1}{r}\left(A+\frac{b-a}{b} \lambda \pm o(1)\right)h,
\]
it follows from \cref{lem:ODE_with_driving_term} that
\[
  f_\lambda(r) \sim \frac{1}{b-a}\left(A+\frac{b-a}{b} \lambda \right)  h(r) = \frac{(A \pm o(1)) }{b-a}h(r) + \frac{\lambda}{b}  h(r)
\]
as $r\to \infty$, which is equivalent to the claim.
\end{proof}

To apply this lemma, we will need appropriate order estimates on moments, which will be deduced from \cref{prop:higher_moments,prop:displacement_moments}.
 We define a \textbf{monomial} on $\R^d$ to be a product function of the form $x\mapsto \prod_{i=1}^k \langle x,u_i\rangle$ with $u_1,\ldots,u_k$ unit vectors in $\R^d$ (including the case of the empty product $x\mapsto 1$). We refer to $k\geq 0$ as the \textbf{degree} of the monomial.

\begin{lemma}
\label{lem:mixed_moments_order_estimates}
 Suppose that at least one of the four hypotheses of \eqref{HD+} holds. There exists a constant $C$ such that if $P_1,\ldots,P_n$ are monomials of total degree $p$ then
\[
\E_{\beta_c,r} \left[ \sum_{x_1,\ldots,x_n \in K} \prod_{i=1}^n |P_i(x_i)|\right] \leq (1+o(1))C^{n+p} n! p! \left( \sigma(r)^{p} (\hat \E_{\beta_c,r}|K|)^{n-1} \E_{\beta_c,r}|K|\right)
\]
as $r\to \infty$.
\end{lemma}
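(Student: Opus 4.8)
The plan is to bypass the spatially-weighted tree-graph inequality \cref{lem:spatial_tree_graph} — which, being based on the tree-graph inequality, over-counts by a factor $\bigl((\E_{\beta_c,r}|K|)^3/\E_{\beta_c,r}|K|^2\bigr)^{n-1}$ that diverges in the critical-dimensional Case~4 — and instead to reduce, via two applications of Cauchy--Schwarz, to the sharp pure-moment asymptotics of \cref{prop:higher_moments} and \cref{prop:displacement_moments}. First I would concentrate all the spatial weight onto a single point: writing each monomial as $P_i(x)=\prod_{l=1}^{p_i}\langle x,u_{i,l}\rangle$ with $u_{i,l}$ unit vectors, one has $|P_i(x)|\le\|x\|_2^{p_i}$, and if $p=\sum_ip_i>0$ the weighted AM--GM inequality gives $\prod_i\|x_i\|_2^{p_i}\le\sum_i\tfrac{p_i}{p}\|x_i\|_2^{p}$, so summing each $x_j$ independently over $K$,
\[
\E_{\beta_c,r}\!\Bigl[\sum_{x_1,\ldots,x_n\in K}\prod_i|P_i(x_i)|\Bigr]\le\E_{\beta_c,r}\!\bigl[|K|^{n-1}W_p\bigr],\qquad W_q:=\sum_{x\in K}\|x\|_2^{q}.
\]
(The case $p=0$ is immediate from $\E_{\beta_c,r}|K|^n\sim(2n-3)!!(\hat\E_{\beta_c,r}|K|)^{n-1}\E_{\beta_c,r}|K|$, \cref{prop:higher_moments}, together with $(2n-3)!!\le C^nn!$.) So it suffices to bound $\E_{\beta_c,r}[|K|^{n-1}W_p]$ by $(1+o(1))C^{n+p}n!\,p!\,\sigma(r)^p(\hat\E_{\beta_c,r}|K|)^{n-1}\E_{\beta_c,r}|K|$.

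For this I would split off the cluster-size factor by Cauchy--Schwarz. Since $\P_{\beta_c,r}$ is a finite-range subcritical measure all the relevant expectations are finite, and writing $|K|^{n-1}W_p=|K|^{(2n-1)/2}\cdot|K|^{-1/2}W_p$ and then using the elementary bound $W_p^2\le|K|\,W_{2p}$ (itself Cauchy--Schwarz applied to the finite sum defining $W_p$), we obtain
\[
\E_{\beta_c,r}[|K|^{n-1}W_p]\le\bigl(\E_{\beta_c,r}|K|^{2n-1}\bigr)^{1/2}\bigl(\E_{\beta_c,r}[|K|^{-1}W_p^2]\bigr)^{1/2}\le\bigl(\E_{\beta_c,r}|K|^{2n-1}\bigr)^{1/2}\bigl(\E_{\beta_c,r}W_{2p}\bigr)^{1/2}.
\]
The content of this particular split is that $(\E_{\beta_c,r}|K|^{2n-1})^{1/2}$ carries exactly the power $(\hat\E_{\beta_c,r}|K|)^{n-1}$ of the ``vertex factor'' occurring in the target estimate, while the companion factor has collapsed to a \emph{single-point} displacement moment of \emph{even} order $2p$ with no residual cluster-size dependence; both are governed by the sharp asymptotics already available.

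Then I would plug in the pure asymptotics. By \cref{prop:higher_moments}, $\E_{\beta_c,r}|K|^{2n-1}\sim(4n-5)!!(\hat\E_{\beta_c,r}|K|)^{2n-2}\E_{\beta_c,r}|K|$. Using $\|x\|_2^{2p}\le d^{p-1}\sum_{l=1}^d\langle x,e_l\rangle^{2p}$ and \cref{prop:displacement_moments}, I would show $\E_{\beta_c,r}W_{2p}\le(1+o(1))D_0^{\,p}(2p)!\,\sigma(r)^{2p}\E_{\beta_c,r}|K|$ for a constant $D_0=D_0(d,\alpha,J)$: when $\alpha\ge2$ this follows from part~(1) of that proposition together with $\E_{\beta_c,r}[\sum_{x\in K}\langle x,e_l\rangle^2]/\E_{\beta_c,r}|K|\le(1+o(1))\xi_2(r)^2=(1+o(1))\sigma(r)^2$ (valid because $\langle e_l,\Sigma e_l\rangle\le\operatorname{tr}\Sigma=1$ when $\alpha>2$, and because $\int_B\langle y,e_l\rangle^2\dif y\le\int_B\|y\|_2^2\dif y$ when $\alpha=2$, by \cref{prop:radius_of_gyration}); when $\alpha<2$ it follows from part~(2) once one checks that $A_{2p}(e_l)\le C^{2p}(2p)!$ uniformly in $l$, which is a Cauchy estimate for the generating function $1+\sum_pA_{2p}(e_l)t^{2p}/(2p)!=(1-\mathcal C_l(t))^{-1}$ on the disc $|t|\le t_0$ on which $|\mathcal C_l(t)|\le\tfrac12$ — here $t_0>0$ can be taken independent of $l$ because each $\mathcal C_l$ is dominated by the single entire function $\sum_p\alpha\rho^{2p}t^{2p}/\bigl((2p)!(2p-\alpha)\bigr)$ with $\rho:=\sup_{y\in B}\|y\|_2$. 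Multiplying the two square roots and bounding $\sqrt{(4n-5)!!}\le C^nn!$ and $\sqrt{D_0^{\,p}(2p)!}\le C^pp!$ via Stirling's formula then yields $\E_{\beta_c,r}[|K|^{n-1}W_p]\le(1+o(1))C^{n+p}n!\,p!\,\sigma(r)^p(\hat\E_{\beta_c,r}|K|)^{n-1}\E_{\beta_c,r}|K|$, which together with the first step proves the lemma.

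The step that I expect to demand the most care is the Cauchy--Schwarz reduction and the idea behind it: at the critical dimension one cannot afford the losses of the tree-graph inequality (equivalently, of \cref{lem:spatial_tree_graph}), so the whole argument must be routed through the genuinely sharp statements \cref{prop:higher_moments} and \cref{prop:displacement_moments}, and the split $|K|^{n-1}W_p=|K|^{(2n-1)/2}\cdot(|K|^{-1/2}W_p)$ is engineered precisely so that it produces $(\E_{\beta_c,r}|K|^{2n-1})^{1/2}$ — carrying the correct power of the vertex factor — paired with a clean one-point moment. The remaining bookkeeping is to verify that the combinatorial prefactors, together with the uniform coefficient bound in the $\alpha<2$ case, are of the asserted size $C^{n+p}n!\,p!$.
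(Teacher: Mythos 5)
Your proof is correct and follows essentially the same route as the paper's: reduce via AM--GM to a single spatially weighted point, apply Cauchy--Schwarz to split off $(\E_{\beta_c,r}|K|^{2n-1})^{1/2}$ from a single-point displacement moment of order $2p$, and then invoke \cref{prop:higher_moments} and \cref{prop:displacement_moments}. The only differences are cosmetic (the paper keeps the individual directions $u_{i,j}$ and takes a maximum rather than passing to $\|x\|_2$, and phrases the Cauchy--Schwarz step under the size-biased measure $\hat\E_{\beta_c,r}$), and you spell out the uniform Cauchy-estimate bound on the coefficients $A_{2p}(u)$ in the case $\alpha<2$, which the paper dispatches in a parenthetical.
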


\begin{proof}[Proof of \cref{lem:mixed_moments_order_estimates}]
 We may suppose that $p>0$, the case $p=0$ having already been treated in \cref{prop:higher_moments}. Write $P_i(x)=\prod_{j=1}^{k_i} \langle x,u_{i,j}\rangle$ for each $1\leq i \leq n$, where the $u_{i,j}$ are all unit vectors.  We have as in \eqref{eq:spatial_AM_GM} that
\begin{multline*}
  \E_{\beta_c,r} \left[ \sum_{x_1,\ldots,x_n \in K} \prod_{i=1}^n |P_i(x_i)|\right] = \E_{\beta_c,r} \left[\sum_{x_1,\ldots,x_n\in K} \prod_{i=1}^n \prod_{j=1}^{k_i} |\langle x_i,u_{i,j}\rangle|^{p_i} \right]
  \\\leq 
  \max_{\substack{1\leq i\leq n  \\1\leq j \leq k_i}} \E_{\beta_c,r} \left[|K|^{n-1} \sum_{x\in K}|\langle x,u_{i,j}\rangle|^{p} \right] 
  = \max_{\substack{1\leq i\leq n \\ 1\leq j \leq k_i}} \E_{\beta_c,r}|K|  \hat\E_{\beta_c,r} \left[|K|^{n-1} |\langle X,u_{i,j}\rangle|^{p} \right],
  \end{multline*}
  where $X$ is a uniform random element of $K$. Applying Cauchy-Schwarz, we obtain that
  \[
   \E_{\beta_c,r} \left[ \sum_{x_1,\ldots,x_n \in K} \prod_{i=1}^n |P_i(x_i)|\right]
  \leq 
  \max_{\substack{1\leq i\leq n \\ 1\leq j \leq k_i}} \E_{\beta_c,r}|K| \sqrt{\hat\E_{\beta_c,r} |K|^{2n-2} \hat \E_{\beta_c,r}|\langle X,u_{i,j}\rangle|^{2p}}.
  \]
  It follows from \cref{prop:higher_moments,prop:displacement_moments} that there exists a constant $C$ such that
  \[
    \hat\E_{\beta_c,r} |K|^{2n-2} \leq (1+o(1))(4n-5)!! (\hat \E_{\beta_c,r}|K|)^{2n-2} \; \text{ and } \; \hat \E_{\beta_c,r}|\langle X,u_i\rangle|^{2p} \leq (1+o(1))C^p (2p)! \sigma(r)^{2p}
  \]
 (in the case $\alpha<2$ this follows from the fact that the exponential generating function in \cref{prop:displacement_moments} converges for all $u$ in some neighbourhood of the origin) and the claim follows since $\sqrt{(4n-5)!! (2p)!}$ is bounded by $\tilde C^{n+p} n!p!$ for some finite constant $\tilde C$. 
\end{proof}

Given a sequence of monomials $P_1,\ldots,P_n$, we define a \textbf{splitting} of $(P_1,\ldots,P_n)$ to be a tuple $S=(I,(P_{i,1},P_{i,2},P_{i,3})_{i\in I})$ consisting of a non-empty subset $I$ of $\{1,\ldots,n\}$ together with a collection of linear monomials $(P_{i,1},P_{i,2},P_{i,3})_{i\in I}$ such that $P_{i,1}P_{i,2}P_{i,3}=P_i$ for each $i\in I$. For each such splitting we write $N(S)$ to be the number of distinct ways of factoring each $P_i$ in terms of the three monomials $P_{i,1},P_{i,2},$ and $P_{i,3}$. We write $\mathscr{S}(P_1,\ldots,P_n)$ for the set of splittings of the sequence $P_1,\ldots,P_n$. 

\begin{lemma}
\label{lem:higher_spatial_moments_ODE_unsimplified}
Let $n\geq 1$, and let $P_1,\ldots,P_n$ be monomials on $\R^d$. Then
\begin{multline*}
\frac{d}{dr}\E_{\beta_c,r} \left[ \sum_{x_1,\ldots,x_n \in K} \prod_{i=1}^n P_i(x_i)\right] 
=\beta_c |J'(r)| \sum_{S\in \mathscr{S}} N(S) 
\\\cdot\E_{\beta_c,r} \left[
  \sum_{x_1,\ldots,x_n \in K} \sum_{x\in K} \sum_{y\in B_r(x)} \mathbbm{1}(0\nleftrightarrow y) \prod_{i \notin I} P_i(x_i)  \prod_{i \in I} P_{i,1}(x) P_{i,2}(x_i-y) P_{i,3}(y-x) \right]
\end{multline*}
as $r\to \infty$.
\end{lemma}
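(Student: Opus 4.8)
The plan is to differentiate the spatial moment directly using Russo's formula and then apply the mass-transport principle, following exactly the template of Lemma~\ref{lem:moment_derivative} and Lemma~\ref{lem:displacement_distribution_ODE}. First I would write, by Russo's formula applied to the finite-range model with cut-off (justified by sharpness of the phase transition as in Lemma~\ref{lem:moment_derivative}),
\[
\frac{d}{dr}\E_{\beta_c,r}\Bigl[\sum_{x_1,\ldots,x_n\in K}\prod_{i=1}^n P_i(x_i)\Bigr]
=\beta_c|J'(r)|\,\E_{\beta_c,r}\Bigl[\sum_{z\in K}\sum_{y\in B_r(z)}\mathbbm 1(y\notin K)\,D(z,y)\Bigr],
\]
where $D(z,y)$ denotes the difference between the value of $\sum_{x_1,\ldots,x_n}\prod P_i(x_i)$ in the configuration $\omega\cup\{z,y\}$ and its value in $\omega$ itself, with $z\notin y$. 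When the edge $\{z,y\}$ is opened, the cluster of $0$ becomes $K\cup K_y$ (where $K_y$ is the cluster of $y$ before the edge is added, disjoint from $K$), so $D(z,y)=\sum_{x_1,\ldots,x_n\in K\cup K_y}\prod P_i(x_i)-\sum_{x_1,\ldots,x_n\in K}\prod P_i(x_i)$; this is the sum over all tuples $(x_1,\ldots,x_n)$ in which at least one $x_i$ lies in $K_y$.

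Second, I would expand this difference by choosing, for each $i$, whether $x_i\in K$ or $x_i\in K_y$; the subset $I$ of indices assigned to $K_y$ ranges over all non-empty subsets of $\{1,\ldots,n\}$ (the empty set is precisely the term that cancels). For $i\in I$ write $x_i\in K_y$ and decompose $x_i=z+(x_i-y)+(y-z)$; since $P_i$ is a monomial $P_i(x_i)=\prod_j\langle x_i,u_{i,j}\rangle$, multiplying out each linear factor as a sum of three terms $\langle z,u_{i,j}\rangle+\langle x_i-y,u_{i,j}\rangle+\langle y-z,u_{i,j}\rangle$ and collecting gives a sum over all ways of writing $P_i=P_{i,1}P_{i,2}P_{i,3}$ with $P_{i,1},P_{i,2},P_{i,3}$ products of the chosen linear factors evaluated at $z$, $x_i-y$, $y-z$ respectively; $N(S)$ counts these choices. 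This produces exactly the sum over splittings $S=(I,(P_{i,1},P_{i,2},P_{i,3})_{i\in I})$ appearing in the statement, with $\prod_{i\notin I}P_i(x_i)$ for the indices remaining in $K$ and the product $\prod_{i\in I}P_{i,1}(z)P_{i,2}(x_i-y)P_{i,3}(y-z)$ for the indices in $K_y$, where the $x_i$ for $i\in I$ now range over $K_y$.

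Finally I would apply the mass-transport principle \eqref{eq:MTP_general} to exchange the roles of the origin $0$ and the vertex $z$: the function being summed is translation-invariant once we shift so that $z$ plays the role of the base point, so $\E_{\beta_c,r}[\sum_{z\in K}(\cdots)]=\E_{\beta_c,r}[\,|K|\text{-weighted sum with }z=0\,]$, which after relabelling yields the displayed formula with $x\in K$ replaced by the base point and the outer sum $\sum_{x\in K}$ restored by translation-invariance --- matching the precise form in the statement (where the roles of the summation variables have been arranged so that $x$ ranges over $K$, $y\in B_r(x)$, and $\mathbbm 1(0\nleftrightarrow y)$ records disjointness). I expect the only delicate point to be bookkeeping: making sure the combinatorial factor $N(S)$ is correctly identified and that the mass-transport step is applied to a genuinely translation-invariant integrand (which requires incorporating the base point of $K$ into the tuple of transported coordinates, exactly as in the proofs of Lemmas~\ref{lem:moment_derivative} and \ref{lem:displacement_distribution_ODE}); there is no analytic obstacle, as this is an exact identity rather than an asymptotic one.
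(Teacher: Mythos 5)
Your proposal is correct and is essentially the paper's own (one-sentence) proof: Russo's formula, the decomposition $x_i=x+(x_i-y)+(y-x)$ for the points of the newly attached cluster $K_y$, and expansion of the resulting trinomials into splittings counted with multiplicity $N(S)$, with the empty set $I=\emptyset$ giving the cancelling term. The only remark is that your final mass-transport step is superfluous: the stated formula, with $y\in B_r(x)$ and $\mathbbm{1}(0\nleftrightarrow y)$, is already exactly what Russo's formula plus the trinomial expansion produces without any re-rooting (mass transport enters only later, in the proof of \cref{lem:higher_spatial_moments_ODE}, when factorizing the expectation), and as you correctly note the variables $x_i$ with $i\in I$ range over $K_y$ rather than $K$.
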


\begin{proof}[Proof of \cref{lem:higher_spatial_moments_ODE_unsimplified}]
This follows by applying Russo's formula, writing each point $x_i$ in the second cluster as $(x_i-y)+(y-x)+x$ and expanding out each resulting trinomial in a similar manner to the proofs of \cref{lem:gyration_derivative,lem:displacement_distribution_ODE}.
\end{proof}

We next show that the derivative formula of \eqref{lem:higher_spatial_moments_ODE_unsimplified} admits an asymptotic simplifcation under each of the hypotheses of \eqref{HD+}.

\begin{lemma}
\label{lem:higher_spatial_moments_ODE}
Suppose at least one of the hypotheses of \eqref{HD+} holds, let $n\geq 1$, and let $P_1,\ldots,P_n$ be monomials on $\R^d$ of total degree $p$. Then
\begin{multline*}
\frac{d}{dr}\E_{\beta_c,r} \left[ \sum_{x_1,\ldots,x_n \in K} \prod_{i=1}^n P_i(x_i)\right] 
=\beta_c r^{-\alpha-1} \sum_{S\in \mathscr{S}} N(S) 
\\\cdot\E_{\beta_c,r} \left[
 \sum_{x\in K} \sum_{\substack{x_i \in K \\ i \notin I}} \prod_{i \notin I} P_i(x_i)  \prod_{j \in I} P_{j,1}(x) \right] \E_{\beta_c,r}\left[
 \sum_{\substack{x_i \in K \\ i \in I}} \prod_{i\in I} P_{i,2}(x_i) \right] \int_B   \prod_{i \in I} P_{i,3}(y) \dif y
 \\\pm o\left(r^{-1}\sigma(r)^p (\hat \E_{\beta_c,r}|K|)^{n-1}\E_{\beta_c,r}|K|\right)
\end{multline*}
as $r\to \infty$.
\end{lemma}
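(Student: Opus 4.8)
\textbf{Proof plan for \cref{lem:higher_spatial_moments_ODE}.}
The plan is to start from the exact derivative formula of \cref{lem:higher_spatial_moments_ODE_unsimplified} and show that, for each splitting $S=(I,(P_{i,1},P_{i,2},P_{i,3})_{i\in I})\in\mathscr{S}$, the corresponding summand
\[
\E_{\beta_c,r}\!\left[\sum_{x_1,\dots,x_n\in K}\sum_{x\in K}\sum_{y\in B_r(x)}\mathbbm{1}(0\nleftrightarrow y)\prod_{i\notin I}P_i(x_i)\prod_{i\in I}P_{i,1}(x)P_{i,2}(x_i-y)P_{i,3}(y-x)\right]
\]
factors asymptotically. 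First I would translate $y\in B_r(x)$ to $y-x\in B_r$ and $\mathbbm{1}(0\nleftrightarrow y)$ to $\mathbbm{1}(y\notin K)$ via the mass-transport principle (exactly as in the proofs of \cref{lem:gyration_derivative,lem:displacement_distribution_ODE}), relabelling so that the sum is over the cluster $K$ of the origin and the cluster $K_y$ of a point $y\in B_r$ disjoint from $K$, with the $x_i$ for $i\in I$ lying in $K_y$ and the $x_i$ for $i\notin I$, together with the ``hub'' vertex $x$, lying in $K$. This puts the summand in the shape $\E_{\beta_c,r}[F(K)\,\mathbbm{1}(y\notin K)\,G(K_y)\langle\text{weight}\rangle]$ where $F$ and $G$ are non-negative (after passing to absolute values, which only weakens the identity to an inequality with a controlled error) increasing functionals, and the spatial weight $\prod_{i\in I}P_{i,3}(y-x)$ depends on the displacement between the two clusters. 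Using \cref{lem:BK_disjoint_clusters_covariance} in the form $0\le \E[F(K)G(K_y)\mathbbm{1}(y\notin K)]-\E F(K)\,\E G(K_y)\le \E[F(K)G(K_y)\mathbbm{1}(y\leftrightarrow K)]$ applied for each fixed $y$, and then summing the displacement weight $\prod_{i\in I}P_{i,3}(y-x)$ over $y\in B_r$ — which is $\sigma(r)^{\deg}\int_B\prod_{i\in I}P_{i,3}(w)\,\mathrm{d}w+o(\cdot)$ when $d$ is large enough that the boundary effects are negligible, cf.\ \cref{lem:ball_regularity} — yields the claimed product of three factors, modulo error terms.

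The core of the argument is then to show the error terms are $o(r^{-1}\sigma(r)^p(\hat\E_{\beta_c,r}|K|)^{n-1}\E_{\beta_c,r}|K|)$ after multiplication by $\beta_c|J'(r)|\asymp r^{-d-\alpha-1}$, i.e.\ $o(r^{d}\sigma(r)^p(\hat\E_{\beta_c,r}|K|)^{n-1}\E_{\beta_c,r}|K|)$ before. These errors are of two types. First, the covariance error $\E[F(K)G(K_y)\mathbbm{1}(y\leftrightarrow K)]$ summed over $y\in B_r$, which after expanding the functionals as sums over cluster points becomes a spatially-weighted $(n+2)$-point quantity with the extra constraint that some point of $K$ lies in $B_r$; here the relevant tools are the spatially-weighted tree-graph inequality \cref{lem:spatial_tree_graph}, the generalized tree-graph inequality \cref{lem:generalized_tree_graph}, the universal tightness moment bound \cref{cor:universal_tightness_moments}, and (in Case 4) the four-point Gladkov inequality \cref{lem:four_point_Gladkov} — exactly the combination already used in the proofs of \cref{prop:radius_of_gyration,prop:displacement_moments}. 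Second, when some $P_{i,3}$ has odd degree there are ``cross terms'' that do not split cleanly and must be shown to be genuinely negligible rather than merely bounded; these are handled by the centering trick (subtracting the vanishing product-over-uncorrelated-clusters term, using symmetry of the kernel to kill the leading contribution) together with \cref{lem:disjoint_connections} to control the remainder by a fully-connected $(n+2)$-point function, again bounded by the spatially-weighted tree-graph/Gladkov estimates. In Cases 2 and 3 (two-point-function hypotheses) the relevant bound instead comes from the spatially-weighted triangle estimates \cref{lem:spatial_triangle} and the convolution bounds \eqref{eq:convolution_estimate_logs}, \eqref{eq:square_diagram_spatial_error}, combined with the spatial lower bounds \eqref{eq:gyration_isotropy2} and \eqref{eq:Lp_gyration_lower}, precisely as in \cref{lem:gyration_derivative3} and the Cases 2,3 part of \cref{lem:displacement_distribution_ODE}.

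\textbf{Main obstacle.} The hard part is the bookkeeping: controlling all error terms uniformly in $n$ and the degrees $(k_i)$ while keeping the combinatorial factors $N(S)$ and the $n!\,p!$-type growth under control, and verifying that in each of the four cases of \eqref{HD+} every error is genuinely $o(1)$ relative to the main term $r^{d}\sigma(r)^p(\hat\E_{\beta_c,r}|K|)^{n-1}\E_{\beta_c,r}|K|\asymp r^{d}\,\sigma(r)^p\,r^{(2n-3)\alpha}\E_{\beta_c,r}|K|$ (using \cref{prop:higher_moments} and \cref{prop:radius_of_gyration} to identify orders). In Case 1 ($d>3\alpha$) this is a clean power-counting check; in Case 4 ($d=3\alpha$) it is delicate because the self-referential term and the driving term have comparable order and one must squeeze out a polylogarithmic gain from the hydrodynamic condition via \cref{cor:universal_tightness_moments} and \cref{lem:four_point_Gladkov}, exactly mirroring the computation in \eqref{eq:critical_dim_gyration_error_Cauchy_Schwarz}--\eqref{eq:critical_dim_gyration_error_almost_done}. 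No genuinely new idea beyond those already deployed for the $p=1$ and single-$u$ cases is needed; the lemma is the ``multilinear'' upgrade of \cref{lem:displacement_distribution_ODE}, and the proof is a matter of assembling those ingredients carefully.
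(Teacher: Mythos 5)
Your plan is correct and matches the paper's proof in all essentials: the paper also starts from \cref{lem:higher_spatial_moments_ODE_unsimplified}, factors each splitting using \cref{lem:BK_disjoint_clusters_covariance}, bounds the resulting error by the absolute-valued fully-connected $(n+2)$-point quantity, and then controls that quantity case by case (order estimates from \cref{lem:mixed_moments_order_estimates} in Case 1; polygon diagrams, \cref{lem:higher_polygon_diagrams}, and the spatially weighted convolution bounds \eqref{eq:spatial_triangle_convolution}, \eqref{eq:square_diagram_spatial_error} plus Cauchy--Schwarz in Cases 2 and 3). The one place you diverge is Case 4: the paper does not reuse the four-point Gladkov computation here (which, as stated in \cref{lem:four_point_Gladkov}, only covers $n\leq 2$ and would require the higher Gladkov inequality of the second paper for general $n$), but instead uses the mass-transport principle and Cauchy--Schwarz to factor out $|K\cap B_r|$, bounds $\E_{\beta_c,r}|K\cap B_r|^2\preceq M_r\E_{\beta_c,r}|K|$ via universal tightness and the hydrodynamic condition, and interpolates between this and the crude Case-1 bound --- a slightly cheaper route that you should adopt (or else upgrade to the higher Gladkov inequality) to cover all $n$.
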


\begin{proof}[Proof of \cref{lem:higher_spatial_moments_ODE}, Cases 1 and 4]
We will keep the proof as a brief sketch since the details are very similar to those of the relevant cases of \cref{lem:gyration_derivative,lem:displacement_distribution_ODE}. (but are simpler since we already have estimates of the correct order on various quantities).
We continue to write $\E_r=\E_{\beta_c,r}$. All implicit constants appearing in this proof may depend on $p$.
In light of \cref{lem:higher_spatial_moments_ODE_unsimplified}, 
it suffices to prove under the hypotheses of Cases 1 and 2 of \cref{prop:radius_of_gyration} that
\begin{multline}
  \Biggl|\E_{r} \Biggl[
  \sum_{\substack{x_i \in K \\ i \notin I}}  \sum_{x\in K} \sum_{y\in B_r(x)} \mathbbm{1}(0\nleftrightarrow y) \sum_{\substack{x_i \in K_y \\ i \in I}}  \prod_{i \notin I} P_i(x_i)  \prod_{i \in I} P_{i,1}(x) P_{i,2}(x_i-y) P_{i,3}(y-x) \Biggr]
\\
-
 |B_r| \E_r \left[
 \sum_{x\in K} \sum_{\substack{x_i \in K \\ i \notin I}} \prod_{i \notin I} P_i(x_i)  \prod_{j \in I} P_j(x) \right] \E_r\left[
 \sum_{\substack{x_i \in K \\ i \notin I}} P_{i,3}(x_i) \right] \int_B \prod_{i\in I} P_{i,2}(y) \dif y\Biggr|
\\= o\left(r^{d+\alpha}\sigma(r)^p (\hat \E_r|K|)^{n-1}\E_r|K|\right)
\label{eq:splitting_factorization}
\end{multline}
as $r\to \infty$ for each fixed non-empty set $I\subset \{1,2,\ldots n\}$ and assignment of non-negative integers $(a_j,b_j,c_j)$ summing to $p_j$ for each $j\in I$. This follows by using \cref{lem:BK_disjoint_clusters_covariance} to bound the left hand side of this estimate by 
\[
\E_{r} \left[\sum_{x_1,\ldots,x_n \in K} \sum_{x\in K} \sum_{y\in B_r(x)} \mathbbm{1}(0\leftrightarrow y) \prod_{i \notin I} |P_i(x_i)|  \prod_{i \in I} |P_{i,1}(x)| |P_{i,2}(x_i-y)| |P_{i,3}(y-x)| \right].
\]
By ignoring the restriction that $y\in B_r(x)$, it follows straightforwardly from \cref{lem:mixed_moments_order_estimates} that this quantity admits a bound of the form
\begin{equation}
\label{eq:higher_spatial_moments_error1}
  O\left(\sigma(r)^p (\hat \E_r|K|)^{n+1}\E_r|K| \right),
\end{equation}
which is sufficient in the case $d>3\alpha$. When $d=3\alpha$ and the hydrodynamic condition holds, one can proceed by applying the mass-transport principle to exchange the roles of $x$ and $0$ and then applying the Cauchy-Schwarz inequality to factor out the term $\sum_{y\in B_r}\mathbbm{1}(0\leftrightarrow y)=|K\cap B_r|$. This yields a bound of the form
\begin{equation}
\label{eq:higher_spatial_moments_error2}
  O\left(\sqrt{\sigma(r)^{2p} (\hat \E_r|K|)^{2n+1}\E_r|K|}\cdot \sqrt{M_r \E_r|K|} \right) = o\left(r^{\alpha} \sigma(r)^p (\hat \E_r|K|)^{n+1/2}\E_r|K|\right).
\end{equation}
Taking the minimum of the two estimates \eqref{eq:higher_spatial_moments_error1} and \eqref{eq:higher_spatial_moments_error2} yields a bound of the desired form; \eqref{eq:higher_spatial_moments_error1} is sufficient when $\hat \E_r|K|$ is much smaller than $r^{2\alpha}$ (e.g., by a factor of $(r^{-(d+\alpha)/2}M_r)^{1/4}$) while \eqref{eq:higher_spatial_moments_error2} handles the remaining case. 
\end{proof}

\begin{proof}[Proof of \cref{lem:higher_spatial_moments_ODE}, Cases 2 and 3]
We continue to write $\E_r=\E_{\beta_c,r}$ and write $T_r(x,y)=\P_{\beta_c,r}(x\leftrightarrow y)$. It suffices to prove that the estimate \eqref{eq:splitting_factorization} continues to hold in this setting. It follows by an argument similar to that used in \cref{lem:disjoint_connections_triangle_second} that the relevant error term can be bounded by a constant multiple of
\begin{multline*}
\sum_{\ell=3}^{n+2} \left[\sum_{y\in B_r} T_r^\ell(0,y)\right] (\E_{r} |K|)^{2n+2-\ell} (\E_r \sum_{x\in K} \|x\|_2^{p}) 
\\+ \sum_{\ell=3}^{n+2} \left[\sum_{\ell_1+\ell_2+1=\ell}\sum_{y\in B_r} \sum_{a,b\in \Z^d}T_r^{\ell_1}(0,a) \|a-b\|_2^{p} T_r(a,b) T_r^{\ell_2}(b,y) \right] (\E_{r} |K|)^{2n+3-\ell}
\end{multline*}
where, as in several other proofs above, we have bounded all diagrammatic sums in terms of diagrammatic sums in which the spatial weight $\|\cdot\|_2^p$ appears on exactly one edge of the diagram: The first term accounts for diagrams in which this spatial weight appears on an edge that does not lie on the path between $0$ and $y$, while the second term accounts for diagrams in which the spatial weight appears on this path.
Applying \cref{lem:higher_polygon_diagrams} together with \cref{prop:higher_moments,prop:displacement_moments} yields that the first term satisfies
\begin{multline*}
  \sum_{\ell=3}^{n+1} \left[\sum_{y\in B_r} T_r^\ell(0,y)\right] (\E_{r} |K|)^{2n+2-\ell} \E_r \sum_{x\in K} \|x\|_2^{p} = o\left(r^d(\E_{r} |K|)^{2n-1} \E_r \sum_{x\in K} \|x\|_2^{p}\right)
  \\
  = o\left(r^{d+\alpha}\sigma(r)^p (\hat \E_r|K|)^{n-1} \E_r|K|\right),
\end{multline*}
which is of the required order after multiplication by $|J'(r)|\sim r^{-d-\alpha-1}$. For the second, it suffices to prove that
\[
r^{-d} \sum_{y\in B_r} \sum_{a,b\in \Z^d} T_r^{\ell_1}(0,a) \|a-b\|_2^{p} T_r(a,b) T_r^{\ell_2}(b,y) = o\left( (\E_{r}|K|)^{\ell-4} \E_{\beta_c,r}\sum_{x\in K}\|x\|_2^p\right)
\]
for every pair of integers $\ell_1,\ell_2\geq 0$ with $\ell_1+\ell_2+1=\ell\geq 3$.
By commutativity of convolution on $\Z^d$, the left hand side does not depend on the choice of $\ell_1,\ell_2$ for a given value of $\ell=\ell_1+\ell_2+1$.
The cases $\ell=3,4$ have already been treated in \eqref{eq:spatial_triangle_convolution} and \eqref{eq:square_diagram_spatial_error}, so that it suffices to consider the case $\ell \geq 5$. In this case, we can take $\ell_1,\ell_2\geq 2$ and  use Cauchy-Schwarz to obtain that
\begin{align*}
&\sum_{a,b\in \Z^d} T_r^{\ell_1}(0,a) \|a-b\|_2^{p} T_r(a,b) T_r^{\ell_2}(b,y) 
\\
&\hspace{2.5cm}\leq \left[\sum_{a,b\in \Z^d} T_r^{\ell_1}(0,a)^2 \|a-b\|_2^{p} T_r(a,b) \right]^{1/2}\left[\sum_{a,b\in \Z^d} T_r^{\ell_2}(y,b)^2 \|b-a\|_2^{p} T_r(b,a) \right]^{1/2}
\\
&\hspace{2.5cm}= \left[T_r^{2\ell_1}(0,0)T_r^{2\ell_2}(0,0)\right]^{1/2} \E_{r}\sum_{x\in K}\|x\|_2^p = o\left((\E_r|K|)^{(2\ell_1-3+2\ell_2-3)/2} \E_{\beta_c,r}\sum_{x\in K}\|x\|_2^p \right)
\\
&\hspace{2.5cm}= o\left((\E_r|K|)^{\ell-4} \E_{\beta_c,r}\sum_{x\in K}\|x\|_2^p \right)
\end{align*}
as claimed, where we applied \cref{lem:higher_polygon_diagrams} in the penultimate estimate. 
%
\end{proof}

We next prove the following proposition, which establishes \emph{in the scaling limit} the recurrence relation that will be derived from \cref{lem:higher_spatial_moments_ODE}. The proof shows that this recurrence relation encodes the scale-invariance of the model infinitesimally around the scale factor $\lambda=1$.

\begin{prop}[Recurrence relations in the scaling limit]
\label{prop:recurrence_from_derivative}
Suppose at least one of the hypotheses of \eqref{HD+} holds, let $\kappa$ be as in \cref{def:limiting_displacement_law}, and let $T$ be a tree with vertex set $\{0,\ldots,k\}$. If $\alpha\geq 2$ then 
\begin{multline}
 \label{eq:recurrence_from_derivative_SR}
(p+2k)  \idotsint \prod_{\substack{i<j \\ i\sim j}} \kappa(x_i-x_j)  \prod_{i=1}^k P_i(x_i)\dif x_1 \cdots \dif x_{k}
\\=
2  \sum_{\substack{i_0<j_0 \\ i_0\sim j_0}}
\idotsint [\kappa*\kappa](x_{i_0}-x_{j_0})\!\!\!\!\!\!\!\! \prod_{\substack{i<j \\ i\sim j \\ (i,j)\neq (i_0,j_0)}} \!\!\!\!\!\!\!\!\kappa(x_i-x_j)  \prod_{i=1}^k P_i(x_i)\dif x_1 \cdots \dif x_{k}
 \end{multline}
for every sequence of monomials $P_1,\ldots,P_k$ of total degree $p$  
while if $\alpha<2$ then
 \begin{multline}
 \label{eq:recurrence_from_derivative_LR}
(p+\alpha k)  \idotsint \prod_{\substack{i<j \\ i\sim j}} \kappa(x_i-x_j)  \prod_{i=1}^n P_i(x_i)\dif x_1 \cdots \dif x_{k}
\\=
\alpha  \sum_{\substack{i_0<j_0 \\ i_0\sim j_0}}
\idotsint [\kappa*\mathbbm{1}_B*\kappa](x_{i_0}-x_{j_0}) \!\!\!\!\!\!\!\!\prod_{\substack{i<j \\ i\sim j \\ (i,j)\neq (i_0,j_0)}} \!\!\!\!\!\!\!\!\kappa(x_i-x_j)  \prod_{i=1}^k P_i(x_i)\dif x_1 \cdots \dif x_{k}.
 \end{multline}
 for every sequence of monomials $P_1,\ldots,P_k$ of total degree $p$.
\end{prop}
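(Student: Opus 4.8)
The statement is an infinitesimal consequence of the scale invariance of the limiting superprocess, so the plan is to differentiate the scaling identity for $\kappa$ at scale parameter $\lambda=1$. Write $\kappa_\lambda$ for the density of $\sigma(\lambda r)^{-1}$ times the relevant exponentially-stopped spatial motion, i.e.\ $\kappa_\lambda(x) = \lambda^d \kappa(\lambda x)$ in the L\'evy case $\alpha<2$ (where $\sigma(r)=r$) and the analogous rescaling in the Brownian case $\alpha\ge 2$ (where $\sigma(r)=\xi_2(r)$ is regularly varying of index $\alpha/2$, so that morally $\sigma(\lambda r)/\sigma(r)\to \lambda^{\alpha/2}$; to stay rigorous I would instead argue directly with the self-similarity of the L\'evy/Brownian process). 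Define
\[
  F(\lambda) := \idotsint \prod_{\substack{i<j\\i\sim j}} \kappa_\lambda(x_i-x_j) \prod_{i=1}^k P_i(x_i) \dif x_1\cdots \dif x_k .
\]
By the change of variables $x_i \mapsto \lambda^{-1} x_i$ (in the $\alpha<2$ case; mutatis mutandis with exponent $\alpha/2$ in the $\alpha\ge 2$ case) one has $F(\lambda) = \lambda^{-(p+\alpha k)} F(1)$ for all $\lambda>0$: each of the $k$ internal integrations $\dif x_i$ contributes $\lambda^{-d}$, each of the $k$ edge kernels contributes $\lambda^{d}$ via $\kappa_\lambda(x)=\lambda^d\kappa(\lambda x)$ while the argument rescaling costs nothing, and the monomials of total degree $p$ contribute $\lambda^{-p}$. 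Hence $F'(1) = -(p+\alpha k) F(1)$, which after a sign flip is the left-hand side of \eqref{eq:recurrence_from_derivative_LR} (and similarly \eqref{eq:recurrence_from_derivative_SR} with $\alpha$ replaced by $2$).

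On the other hand I would compute $F'(1)$ by differentiating under the integral sign using the product rule over the $k$ edges:
\[
  F'(1) = \sum_{\substack{i_0<j_0\\ i_0\sim j_0}} \idotsint \Bigl(\tfrac{d}{d\lambda}\kappa_\lambda(x_{i_0}-x_{j_0})\Bigr)\Big|_{\lambda=1} \prod_{\substack{i<j\\ i\sim j\\ (i,j)\ne (i_0,j_0)}} \kappa(x_i-x_j)\prod_{i=1}^k P_i(x_i)\,\dif x_1\cdots\dif x_k .
\]
So the whole identity reduces to the single ``local'' statement
\[
  \Bigl(\tfrac{d}{d\lambda}\kappa_\lambda\Bigr)\Big|_{\lambda=1} = -\,\tfrac{\alpha}{?}\,\bigl(\text{multiple of } \kappa - \kappa*\mathbbm{1}_B*\kappa\bigr)
\]
in the appropriate distributional sense. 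In the L\'evy case this is exactly the Kolmogorov-type forward equation for the exponentially stopped process: the density $\kappa$ of $L_T$ (with $L$ the $\Pi_1$-L\'evy process, $T\sim\mathrm{Exp}(1)$) satisfies $\kappa - \mathcal L^*\kappa = \delta_0$, where $\mathcal L^*$ is the generator of $L$ with symbol $\int(\cosh\langle u,x\rangle - 1)\dif\Pi_1(x)$, and the scaling $\tfrac{d}{d\lambda}\Pi_\lambda = \alpha\lambda^{-\alpha-1}\mathbbm{1}_B$ from \eqref{eq:Pi_derivative_pointwise} converts the $\lambda$-derivative of the heat-kernel semigroup into the convolution kernel $\kappa*\mathbbm{1}_B*\kappa$. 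I would make this precise by taking Laplace/Fourier transforms: let $\widehat\kappa(u) = (1 - \Psi(u))^{-1}$ with $\Psi(u) = \int(\cosh\langle u,x\rangle -1)\dif\Pi_1(x)$ (valid near $u=0$, which suffices since the $P_i$ are polynomials and we only need derivatives at $u=0$ — this is where $\alpha<2$ and the convergence of the generating function near the origin, established after \cref{prop:displacement_moments}, is used). Rescaling by $\lambda$ replaces $\Psi$ by $\Psi_\lambda$ with $\tfrac{d}{d\lambda}\Psi_\lambda|_{\lambda=1} = \int \langle u,x\rangle\,\partial_\lambda\bigl(\tfrac{d\Pi_\lambda}{dx}\bigr)|_{\lambda=1}\dif x$ up to the monomial-rescaling bookkeeping, and $\tfrac{d}{d\lambda}\widehat\kappa_\lambda|_{\lambda=1}$ then factors as $\widehat\kappa\cdot(\text{derivative of }\Psi)\cdot\widehat\kappa$; matching coefficients of an arbitrary polynomial $\prod P_i$ against $\widehat{\mathbbm 1_B}$ yields \eqref{eq:recurrence_from_derivative_LR}. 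The $\alpha\ge 2$ case is identical but cleaner: $\kappa$ is the multivariate Laplace density, $\widehat\kappa(u) = (1-\tfrac12\langle u,\Sigma u\rangle)^{-1}$, the semigroup is the Gaussian heat semigroup, $\tfrac{d}{d\lambda}$ of the Brownian covariance is $2\Sigma$ (up to the $\lambda^{\alpha/2}$ time-change, which I would handle by re-deriving $F(\lambda)=\lambda^{-(p+2k)}F(1)$ directly from $B_T\eqd \lambda B_{\lambda^2 T}$ rather than from an ad hoc rescaling of $\sigma$), and $\kappa*\mathbbm 1_B*\kappa$ is replaced by $\kappa*\kappa$ because the ``driving term'' in \cref{lem:displacement_distribution_ODE} no longer carries the $\int_B\langle y,u\rangle^{2c}$ weight when $c>0$ is subdominant — consistent with \eqref{eq:recurrence_from_derivative_SR}.

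The main obstacle is the rigorous justification of differentiating under the integral sign and of the distributional identities for $\kappa$: $\kappa$ is only known to be absolutely continuous (\cref{lem:cts_density}), not smooth, and in the L\'evy case $\kappa$ may be unbounded near $0$ and only has finitely many finite moments when $d$ is large relative to $\alpha$. To sidestep regularity issues I would not work with $\kappa$ pointwise at all but rather test both sides against the polynomials $\prod_i P_i$, i.e.\ prove the identity at the level of the \emph{moment measures}, which are finite and explicitly computable (indeed the moments of $L_T^1$ are given by the convergent exponential generating function from \cref{prop:displacement_moments}, and the moments of $\xi_2(r)^{-1}X$ by the double-factorial formulas). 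Then $F(\lambda)$, $\tfrac{d}{d\lambda}\kappa_\lambda$, and the convolutions $\kappa*\kappa$, $\kappa*\mathbbm 1_B*\kappa$ appearing in \eqref{eq:recurrence_from_derivative_SR}--\eqref{eq:recurrence_from_derivative_LR} integrated against polynomials all become finite sums of products of the moment quantities, and the identity reduces to a purely algebraic recurrence among those moments — precisely the recurrence that was already derived and solved in the proof of \cref{prop:displacement_moments} (the equations $(2p+\alpha)A_{2p}(u) = \cdots$ and their Brownian counterpart $\tfrac{1}{p-1}\sum_a \binom{2p}{2a}\tfrac{(2a)!}{2^a}\tfrac{(2p-2a)!}{2^{p-a}} = \tfrac{(2p)!}{2^p}$). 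So the cleanest route is: (i) expand both sides of \eqref{eq:recurrence_from_derivative_SR}/\eqref{eq:recurrence_from_derivative_LR} as polynomials in the moment measures of $\nu_\mathrm{disp}$ using the CRT distance description (\cref{thm:chi-squared_CRT_distances}, which expresses each tree integral as a sum over splittings weighted by moments of $\kappa$), (ii) observe that the resulting identity is exactly the $\lambda$-derivative at $\lambda=1$ of the scaling identity $\nu_\mathrm{disp}$-moments satisfy under $X\mapsto\lambda^{-1}X$ (for $\alpha<2$) or $X\mapsto\lambda^{-\alpha/2}X$ combined with $B_T\eqd\lambda B_{\lambda^2 T}$ (for $\alpha\ge2$), and (iii) invoke the already-verified recurrence from \cref{prop:displacement_moments}. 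This keeps everything inside the finite-moment world and avoids any delicate analysis of $\kappa$ itself.
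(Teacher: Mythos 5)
Your final ``cleanest route'' is correct, but it is genuinely different from the paper's argument, which works directly at the level of kernels: the paper writes $\kappa(x)=\lambda^{d-(\alpha\wedge 2)}\int_0^\infty e^{-\lambda^{-(\alpha\wedge2)}s}p_s^\lambda(0,\lambda x)\dif s$ using the family of L\'evy processes with measures $\Pi_\lambda$, substitutes this into the tree integral, and differentiates at $\lambda=1$, computing $\frac{d}{d\lambda}\int_0^\infty e^{-\lambda^{-(\alpha\wedge2)}s}p_s^\lambda(0,x)\dif s\big|_{\lambda=1}=2\kappa*\kappa$ or $\alpha\kappa*\mathbbm{1}_B*\kappa$ via a compound-Poisson coupling of $L^\lambda$ and $L^1$. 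Your route instead interprets the tree integral as $\E\prod_iP_i\bigl(\sum_{e\in\pi(0,i)}Y_e\bigr)$ with i.i.d.\ increments $Y_e\sim\nu_\mathrm{disp}$, expands multilinearly over edges to reduce the $k$-edge identity to the single-edge identity $(q+(\alpha\wedge 2))\,\E P(Y)=(\alpha\wedge2)\,\E P(Y+U+Y')$ (with $U$ uniform on $B$ present only when $\alpha<2$), and recognises that single-edge identity as exactly the moment recurrence already derived and solved in the proof of \cref{prop:displacement_moments}. This is a legitimate and arguably more elementary proof: it stays entirely within the finite-moment world, needs no regularity of $\kappa$, and the reduction is a two-line Leibniz-type computation, since summing $(\deg Q_{e_0}+\alpha)\prod_e\E Q_e(Y_e)$ over $e_0$ gives $(p+\alpha k)\prod_e\E Q_e(Y_e)$. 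What it buys is brevity and robustness; what the paper's route buys is a self-contained derivation (it proves rather than quotes the single-edge relation, via the coupling computation that is reused elsewhere) and a cleaner conceptual statement that the recurrence \emph{is} infinitesimal scale-invariance. Two small corrections: your displayed scaling identity $F(\lambda)=\lambda^{-(p+\alpha k)}F(1)$ is false for $\kappa_\lambda(x)=\lambda^d\kappa(\lambda x)$ --- a pure spatial dilation is undone by the change of variables and yields only $\lambda^{-p}F(1)$; the missing $\lambda^{-\alpha k}$ comes precisely from simultaneously rescaling the L\'evy measure ($\Pi_1\to\Pi_\lambda$) and the exponential clock, which is why the paper works with the $\lambda$-dependent family of processes rather than with a rescaled $\kappa$. (You partially acknowledge this, and your moment-level route does not rely on it.) Also, the multilinear expansion over edge increments is not what \cref{thm:chi-squared_CRT_distances} provides; that theorem concerns the CRT and is not needed here.
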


\begin{remark}
\label{remark:convolution_recurrence}
Applying \cref{prop:recurrence_from_derivative} in the special case that $T$ is the line graph with vertex set $\{0,\ldots,n\}$ and the sequence of monomials if of the form $(1,1,\ldots,1,P)$, we obtain that if
 $\alpha\geq 2$ then
  \begin{align}
 \label{eq:recurrence_from_derivative_SR3}
 (\deg(P)+2 n) \int_{\R^d} \kappa^{*n}(y)P(y) \dif y 
  =2 n \int_{\R^d} \kappa^{*(n+1)}(y) P(y) \dif y
\end{align}
 for each $n\geq 1$ and each monomial $P$, while if $\alpha<2$ then
 \begin{align}
 \label{eq:recurrence_from_derivative_LR3}
 &(\deg(P)+\alpha n) \int_{\R^d} \kappa^{*n}(y)P(y) \dif y 
=\alpha n \int_{\R^d} [\kappa^{*(n+1)} * \mathbbm{1}_B](y) P(y) \dif y
\end{align}
for each $n\geq 1$ and each monomial $P$.
The identity \eqref{eq:recurrence_from_derivative_LR3} will later be useful in the proof of \cref{III-thm:correction_moments}.  The $n=1$ case of these identities  can be used to give an alternative proof that the moments $\int \langle x,u\rangle^{2p}\kappa(x)\dif x$ satisfy the recurrence relation in the proof of \cref{prop:displacement_moments}.
\end{remark}

\begin{proof}[Proof of \cref{prop:recurrence_from_derivative}]
Fix a tree $T$ with vertex set $\{0,\ldots,k\}$ and a sequence of monomials $P_1,\ldots,P_k$ of total degree $p$.
For each $\lambda>0$, let $(L_t^\lambda)_{t\geq 0}$ be either Brownian motion with covariance matrix $\Sigma$ (if $\alpha \geq 2$) or the symmetric L\'evy jump process with L\'evy measure $\Pi_\lambda$ (if $\alpha<2$), and write $p^\lambda_t(\cdot,\cdot)$ for the associated transition density. (In particular, $L_t^\lambda$ and $p^\lambda_t$ do not depend on the choice of $\lambda>0$ in the case $\alpha \geq 2$.) As discussed in \cref{subsec:the_full_displacement_distribution}, for each $\lambda>0$ the rescaled process $(\lambda L^1_{\lambda^{-\alpha \wedge 2}t})_{t\geq 0}$ has the same distribution as $(L_t^\lambda)_{t\geq 0}$, and since $\lambda L_t^1$ has density $\lambda^d p_t^\lambda (0,\lambda^{-1}x)$ it follows that
\[
\lambda^{-d} p^1_{\lambda^{-(\alpha\wedge 2)}t}(0,\lambda^{-1}x) = p^\lambda_{t}(0,x) 
\]
and hence that
\[
p^1_t(0,x) = \lambda^d p^\lambda_{\lambda^{\alpha\wedge 2} t}(0,\lambda x)
\]
for every $t,\lambda>0$ and $x\in \R^d$. Integrating over $t$, it follows that the kernel $\kappa$ can be expressed as
\begin{multline*}
  \kappa(x) = \int_0^\infty e^{-t}\lambda^{-d} p^1_t(0,\lambda^{-1}x)\dif t =
  \int_0^\infty e^{-t} \lambda^d p^\lambda_{\lambda^{\alpha\wedge 2} t}(0,\lambda x)  \dif t 
  \\=
  \lambda^{d-(\alpha\wedge 2)} \int_0^\infty e^{-\lambda^{-(\alpha\wedge2)}s}  p^\lambda_{s}(0,\lambda x)  \dif s 
\end{multline*}
for every $x\in \R^d$ and $\lambda>0$.
As such, we have also that
\begin{multline*}
 \idotsint \prod_{\substack{i<j \\ i\sim j}} \kappa(x_i-x_j) \prod_{i=1}^k P_i(x_i)\dif x_1 \cdots \dif x_{k}
\\=
 \idotsint \prod_{\substack{i<j \\ i\sim j}} \left[\lambda^{d-(\alpha\wedge 2)} \int_0^\infty e^{-\lambda^{-(\alpha\wedge 2)}s}  p^\lambda_{s}(0,\lambda (x_i-x_j))  \dif s \right]  \prod_{i=1}^k P_i(x_i)\dif x_1 \cdots \dif x_{k}
\end{multline*}
for every $\lambda>0$. Applying the change of variables $(x_1,\ldots,x_{k})=\lambda^{-1}(z_1,\ldots,z_{k})$ and using that $\prod_{i=1}^n P_i(x_i)$ is homogeneous of degree $p$, we obtain that
\begin{multline*}
 \idotsint \prod_{\substack{i<j \\ i\sim j}} \kappa(x_i-x_j) \prod_{i=1}^k P_i(x_i)\dif x_1 \cdots \dif x_{k}
\\=
\lambda^{-p} \idotsint \prod_{\substack{i<j \\ i\sim j}} \left[\lambda^{-(\alpha\wedge 2)} \int_0^\infty e^{-\lambda^{-(\alpha\wedge 2)}s}  p^\lambda_{s}(0,x_i-x_j)  \dif s \right]  \prod_{i=1}^k P_i(x_i)\dif x_1 \cdots \dif x_{k}
\\
=\lambda^{-k(\alpha\wedge 2)-p} \idotsint \prod_{\substack{i<j \\ i\sim j}} \left[\int_0^\infty e^{-\lambda^{-(\alpha\wedge 2)}s}  p^\lambda_{s}(0,x_i-x_j)  \dif s \right]  \prod_{i=1}^k P_i(x_i)\dif x_1 \cdots \dif x_{k}
\end{multline*}
for every $\lambda>0$. Since the left hand side does not depend on $\lambda$, it follows that
\begin{equation}
\label{eq:scaling_to_recurrence}
\frac{d}{d\lambda} \lambda^{-p-k(\alpha\wedge 2)} \idotsint \prod_{\substack{i<j \\ i\sim j}} \left[\int_0^\infty e^{-\lambda^{-(\alpha\wedge 2)}s}  p^\lambda_{s}(0,x_i-x_j)  \dif s \right]  \prod_{i=1}^k P_i(x_i)\dif x_1 \cdots \dif x_{k}
=0\end{equation}
for every $\lambda>0$.

\medskip

We now argue that the fact that this derivative vanishes at $\lambda=1$ exactly encodes the two recurrence relations \eqref{eq:recurrence_from_derivative_SR} and \eqref{eq:recurrence_from_derivative_LR}. 
To proceed, we claim that
\begin{equation}
\label{eq:lambda_derivative_kappa}
  \frac{d}{d\lambda}\int_0^\infty e^{-\lambda^{-(\alpha\wedge 2)}s}  p^\lambda_{s}(0,x)  \dif s \Biggr|_{\lambda=1} = \begin{cases} 2 \kappa*\kappa(x) & \alpha \geq 2 \\
 \alpha \kappa*\mathbbm{1}_B*\kappa(x) & \alpha <2,
  \end{cases}
\end{equation}
where we recall that $B$ denotes the unit ball for the norm $\|\cdot\|$.
If $\alpha \geq 2$ then $p^\lambda_{s}(0,x)$ does not depend on $\lambda$ and we have simply that
\begin{multline*}
   \frac{d}{d\lambda}\int_0^\infty e^{-\lambda^{-2}s}  p^\lambda_{s}(0,x)   \dif s \Biggr|_{\lambda=1}  = 2 \int_0^\infty s e^{-s}  p_{s}(0,x) \dif s 
   \\=  2 \int_0^\infty \int_0^s \int e^{-t}  p_{t}(0,z)e^{-(s-t)}  p_{s-t}(0,x-z) \dif z \dif t \dif s = 2 \kappa*\kappa(x)
\end{multline*}
as claimed. When $\alpha<2$, we get the same $\alpha \kappa*\kappa$ term from differentiating $e^{-\lambda^{\alpha \wedge 2} s}$, but also get a non-zero contribution from differentiating $p^\lambda_{s}(0,x)$. To compute this contribution, we recall that, when $\lambda > 1$, the two L\'evy processes $L^\lambda=(L^\lambda_t)_{t\geq 0}$ and $L^1=(L^1_t)_{t\geq 0}$ can be coupled so that their difference is a compound Poisson process with intensity measure $\Pi_\lambda-\Pi_1$ independent of $L^1$. 
Using the derivative formulae \eqref{eq:Pi_derivative_pointwise} and \eqref{eq:Pi_derivative_total}, this leads easily to the equality
\begin{align*}
  \frac{\partial}{\partial\lambda} p^\lambda_t(0,x) \Biggr|_{\lambda=1} &
  = \alpha \int_0^t [p_s^1*\mathbbm{1}_B * p_{t-s}^1](0,x) \dif s
  - \alpha t  p^1_t(0,x)
  =\alpha \int_0^t [p_s^1*(\mathbbm{1}_B-1) * p_{t-s}^1](0,x) \dif s
\end{align*}
holding in the sense of weak derivatives, so that integrating over time yields that
\begin{align*}
  \int_0^\infty e^{-\lambda^{-\alpha} s}\frac{\partial}{\partial \lambda} p^\lambda_s(0,x) \dif s \Biggr|_{\lambda=1} &
  = \alpha \kappa * \mathbbm{1}_B * \kappa - \alpha \kappa*\kappa.
\end{align*}
Adding this to the $\alpha \kappa*\kappa$ term that comes from differentiating $e^{-\lambda^{-\alpha} s}$ yields the claimed equality \eqref{eq:lambda_derivative_kappa} in this case.

\medskip

 Applying the product rule to the derivative in \eqref{eq:scaling_to_recurrence} at $\lambda=1$ and applying \eqref{eq:lambda_derivative_kappa} as appropriate yields the two claimed identities \eqref{eq:recurrence_from_derivative_SR} and \eqref{eq:recurrence_from_derivative_LR}.
\end{proof}

\begin{proof}[Proof of \cref{thm:scaling_limit_diagrams}]
We first prove that for each sequence of monomials $P_1,\ldots,P_n$ there exists a constant $A(P_1,\ldots,P_n)$ such that 
\begin{equation*}
  \E_{\beta_c,r} \left[ \sum_{x_1,\ldots,x_n \in K} \prod_{i=1}^n P_i(x_i)\right] \\= 
\left(A(P_1,\ldots,P_n) \pm o(1)\right) \sigma(r)^p(\hat \E_{\beta_c,r}|K|)^{n-1} \E_{\beta_c,r}|K|
\end{equation*}
as $r\to \infty$, where $p$ is the total degree of the monomials $P_1,\ldots,P_n$. (Note that the left hand side is identically zero for $p$ odd, but this does not play an important role in the proof.)
 The proof of this claim will also establish a recurrence relation satisfied by these constants, which we will then argue identifies these constants with the diagrammatic integrals from the statement of the theorem.
We prove the claim by a double induction on $n$ and $p$, in which we prove the claim first for $n=1$ and with $p$ increasing from $0$ to $\infty$, then with $n=2$ and $p$ increasing from $0$ to $\infty$, and so on. The boundary cases $n=1$ and $p=0$ follow from \cref{prop:displacement_moments} and \cref{prop:higher_moments} and the definition of $\nu_\mathrm{disp}$ and $\kappa$. Let $n\geq 2$ and $p\geq 1$ and suppose that the claim has been proven for all smaller pairs $(n,p)$ in the lexicographical order.
 It follows from \cref{lem:higher_spatial_moments_ODE} and the induction hypothesis that
\begin{align}
  \frac{d}{dr}\E_{\beta_c,r} \left[ \sum_{x_1,\ldots,x_n \in K} \prod_{i=1}^n P_i(x_i)\right] &= (n+1)\alpha r^{-1} \E_{\beta_c,r} \left[ \sum_{x_1,\ldots,x_n \in K} \prod_{i=1}^n P_i(x_i)\right] 
  \label{eq:tilde_A_recurrence_ODE}
  \\
  &\hspace{3cm}+
  \alpha r^{-1} \sum_{S\in \mathscr{S}'} \tilde A(S) \sigma(r)^{p-q(S)} r^{q(S)} (\hat\E_r|K|)^{n-1} \E_r|K|
  \nonumber\\
  &\hspace{3cm}\pm o\left(r^{-1} \sigma(r)^p (\hat\E_r|K|)^{n-1} \E_r|K|  \right)
  \nonumber
\end{align}
where $q(S)$ denotes the degree of the monomial $\prod_{i \in I} P_{i,3}$, the constant $\tilde A(S)$ is defined by 
\[
\tilde A(S):= N(S) A\Biggl((P_i)_{i\notin I}, \prod_{j\in I} P_{j,1}\Biggr) A\Biggl((P_{i,2})_{i\in I}\Biggr) \int_B   \prod_{i \in I} P_{i,3}(y) \dif y, \]
and where $\mathscr{S}'\subseteq \mathscr{S}$ denotes the set of splittings whose complement $\mathscr{S}\setminus \mathscr{S}'$ contains the splitting in which $I=\{1,\ldots,n\}$ and $P_{2,i}=P_i$ for every $1\leq i \leq n$ and the splittings in which $I=\{j\}$ is a singleton and $P_{j,1}=P_j$. (The set $\mathscr{S}\setminus \mathscr{S}'$ has $n+1$ elements and accounts for the coefficient of the first term on the right hand side of \eqref{eq:tilde_A_recurrence_ODE}. Note that when $n=1$ the combinatorics are slightly different, but this does not concern us in this proof.) When $\alpha<2$ the other terms on the right hand side of \eqref{eq:tilde_A_recurrence_ODE} 
all have the same order (when their associated constant $\tilde A(S)$ does not vanish) and are regularly varying of index $p+(2n-1)\alpha$, while if $\alpha\geq 2$ then the terms with $q(S)>0$ are of lower order than the terms with $q(S)=0$, which have index of regular variation $\alpha p/2 + (2n-1)\alpha$.
Thus, it follows from \cref{lem:signed_ODE_analysis} (which applies in our setting by \cref{lem:mixed_moments_order_estimates}) that
\[
  \E_{\beta_c,r} \left[ \sum_{x_1,\ldots,x_n \in K} \prod_{i=1}^n P_i(x_i)\right] = (A(P_1,\ldots,P_n) \pm o(1)) \sigma(r)^p (\hat \E_r |K|)^{n-1} \E_r|K|
\]
as claimed, where 
\begin{multline}
\label{eq:complicated_recurrence_small_alpha}
  A(P_1,\ldots,P_n) \\= \frac{\alpha}{p+(n-2)\alpha} \sum_{S \in \mathscr{S}'} N(S) A\Biggl((P_i)_{i\notin I}, \prod_{j\in I} P_{j,1}\Biggr) A\Biggl((P_{i,2})_{i\in I}\Biggr) \int_B   \prod_{i \in I} P_{i,3}(y) \dif y
\end{multline}
when $\alpha<2$ and
\begin{equation}
\label{eq:complicated_recurrence_large_alpha}
  A(P_1,\ldots,P_n) = \frac{1}{p/2+(n-2)} \sum_{S \in \mathscr{S}''} N(S) A\left((P_i)_{i\notin I}, \prod_{j\in I} P_{j,1}\right) A\Biggl((P_{i,2})_{i\in I}\Biggr)
\end{equation}
when $\alpha \geq 2$, where $\mathscr{S}''$ denotes the set of splittings in $\mathscr{S}'$ for which $P_{i,3}=1$ for every $i\in I$.

\medskip

It remains to prove that the recurrence relations \eqref{eq:complicated_recurrence_small_alpha} and \eqref{eq:complicated_recurrence_large_alpha} determine the same asymptotics of moments claimed in the theorem. It follows from \cref{prop:recurrence_from_derivative} (applied with the monomial $1$ on each internal vertex of the tree and $P_1,\ldots,P_n$ on the leaves) that if $\alpha\geq 2$ then
 \begin{multline}
 \label{eq:recurrence_from_derivative_SR}
(p+2(2n-1)) \sum_{T\in \mathbb{T}_n} \idotsint \prod_{\substack{i<j \\ i\sim j}} \kappa(x_i-x_j)  \prod_{i=1}^n P_i(x_i)\dif x_1 \cdots \dif x_{2n-1}
\\=
2 \sum_{T\in \mathbb{T}_n} \sum_{\substack{i_0<j_0 \\ i_0\sim j_0}}
\idotsint [\kappa*\kappa](x_{i_0}-x_{j_0})\!\!\!\!\!\!\!\! \prod_{\substack{i<j \\ i\sim j \\ (i,j)\neq (i_0,j_0)}} \!\!\!\!\!\!\!\!\kappa(x_i-x_j)  \prod_{i=1}^n P_i(x_i)\dif x_1 \cdots \dif x_{2n-1}
 \end{multline}
while if $\alpha<2$ then
 \begin{multline}
 \label{eq:recurrence_from_derivative_LR}
(p+\alpha(2n-1)) \sum_{T\in \mathbb{T}_n} \idotsint \prod_{\substack{i<j \\ i\sim j}} \kappa(x_i-x_j)  \prod_{i=1}^n P_i(x_i)\dif x_1 \cdots \dif x_{2n-1}
\\=
\alpha \sum_{T\in \mathbb{T}_n} \sum_{\substack{i_0<j_0 \\ i_0\sim j_0}}
\idotsint [\kappa*\mathbbm{1}_B*\kappa](x_{i_0}-x_{j_0}) \!\!\!\!\!\!\!\!\prod_{\substack{i<j \\ i\sim j \\ (i,j)\neq (i_0,j_0)}} \!\!\!\!\!\!\!\!\kappa(x_i-x_j)  \prod_{i=1}^n P_i(x_i)\dif x_1 \cdots \dif x_{2n-1}.
 \end{multline}
 If we split the tree $T$ into two pieces by cutting the edge $(i_0,j_0)$ in half and expand out each $P_i(x_i)$ with $i$ in the half of the tree not containing $0$ in an exactly analogous manner to how we derived the recurrences \eqref{eq:complicated_recurrence_small_alpha} and \eqref{eq:complicated_recurrence_large_alpha} above, 
 we obtain that the quantities 
 \[
   \tilde A(P_1,\ldots,P_n) := \sum_{T\in \mathbb{T}_n} \idotsint \prod_{\substack{i<j \\ i\sim j}} \kappa(x_i-x_j) \prod_{i=1}^n P_i(x_i)\dif x_1 \cdots \dif x_{2n-1}
 \]
 satisfy the same recurrence relations \eqref{eq:complicated_recurrence_small_alpha} and \eqref{eq:complicated_recurrence_large_alpha} as the quantities $A(P_1,\ldots,P_n)$. Since the two quantities also coincide for the boundary terms where either $n=1$ or $p=0$ by \cref{prop:higher_moments,prop:displacement_moments}, they are the same for every sequence $P_1,\ldots,P_n$ as claimed. \qedhere 
\end{proof}

\subsection{Scaling limits without cut-off}
\label{subsec:scaling_limits_without_cut_off}


In this section deduce our main theorems on scaling limits, \cref{thm:superprocess_main,thm:superprocess_main_regularly_varying}, which concern the critical model without cut-off converging under rescaling to the canonical measure of an integrated superprocess (which is not a probability measure), from \cref{thm:scaling_limit_diagrams,cor:scaling_limit_cut_off}, which concern the scaling limits of the size-biased, cut-off measures $\hat\E_{\beta_c,r}$.
The proof will also make use of our results on the volume tail established in \cref{sec:volume_tail,sec:analysis_of_moments}, and indeed parts of the proof can be thought of as more elaborate versions of the arguments of \cref{sec:volume_tail}.

\medskip

Throughout this section, we will work under the implicit assumption that at least one of the hypotheses of \eqref{HD+} holds.
We write $p_t^\infty(\cdot,\cdot)$ and
$G(x) = \int_0^\infty p_t^\infty(0,x) \dif t$
for the transition kernel and Greens function associated to either the Brownian motion with covariance matrix $\Sigma$ (if $\alpha\geq 2$) or the symmetric $\alpha$-stable L\'evy process with L\'evy measure $\Pi_\infty$ defined in \eqref{eq:Levy_measure} if $\alpha<2$.
We also let $\sigma(r)=r$ if $\alpha<2$ and $\sigma(r)=\xi_2(r)$ if $\alpha\geq 2$, and define the scaling functions
\[
  \eta(R) = 4\cdot\frac{\E_{\beta_c,\sigma^{-1}(R)}|K|}{\hat \E_{\beta_c,\sigma^{-1}(R)}|K|}
   \qquad \text{ and } \qquad \zeta(R) = \frac{1}{4}\hat \E_{\beta_c,\sigma^{-1}(R)}|K|,
\]
which are regularly varying of index $-\alpha$ and $2\alpha$ respectively by \cref{thm:hd_moments_main,thm:critical_dim_moments_main_slowly_varying}.

\medskip

We will deduce \cref{thm:superprocess_main,thm:superprocess_main_regularly_varying} from the following two lemmas: the first provides tightness while the second yields the canonical measure of the integrated superprocess as a double limit of the measures $\E_{\beta_c,\sigma^{-1}(s R)}$. We write 
\[\mu_R:=\frac{1}{\zeta(R)}\sum_{x\in K} \delta_{x/R}\]
for the appropriately normalized measure associated to the cluster on scale $R$. 
 To lighten notation we also write $\mu[\varphi]=\int \varphi(x)\dif \mu(x)$.

\begin{lemma}[Tightness]
\label{lem:tightness_scaling_limits}
For every $\eps,\delta>0$ there exists $s_0,M<\infty$  such that
\begin{multline*}
  \frac{1}{\eta(R)}\P_{\beta_c,\sigma^{-1}(s R)}\Bigl(\mu_R(\{x \in \R^d : \|x\|\geq s_0\}) \geq \eps 
  \text{ \emph{or} } \mu_R(\R^d) \geq M \Bigr) 
  \\\leq
  \frac{1}{\eta(R)}\P_{\beta_c}\Bigl(\mu_R(\{x \in \R^d : \|x\|\geq s_0\}) \geq \eps 
  \text{ \emph{or} } \mu_R(\R^d) \geq M \Bigr)  
\leq 
  \delta 
\end{multline*}
for all $R\geq R_0$ and $s \geq s_0$.
\end{lemma}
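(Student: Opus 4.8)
The plan is to prove the tightness estimate by separately controlling two bad events: that the total mass $\mu_R(\R^d)$ is too large, and that too much mass escapes to radius $\geq s_0$. Both bounds will be obtained by combining second-moment information with the first-moment asymptotics already established, using that $\P_{\beta_c,\sigma^{-1}(sR)}$ is stochastically dominated by $\P_{\beta_c}$ (which gives the first inequality in the statement for free) and that $\P_{\beta_c}$ can be analyzed through the cut-off measures via the same ghost-field comparison used in the proof of \cref{prop:first_order_volume_tail}.

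For the total-mass term, observe that $\mu_R(\R^d)=|K|/\zeta(R)$ and $\zeta(R)\asymp \hat\E_{\beta_c,\sigma^{-1}(sR)}|K|$, so by Markov's inequality and the tail estimate \eqref{eq:volume_tail_without_inverses} from \cref{prop:first_order_volume_tail} we have $\P_{\beta_c}(|K|\geq M\zeta(R)) \preceq M^{-1} \P_{\beta_c}(|K|\geq \zeta(R)) \asymp M^{-1}\eta(R)$; here I am using that $\eta(R)\asymp \E_{\beta_c,\sigma^{-1}(R)}|K|/\hat\E_{\beta_c,\sigma^{-1}(R)}|K| \asymp r^{\alpha}/\hat\E_{\beta_c,r}|K| \asymp \zeta(R)^{-1} r^\alpha$, together with regular variation to absorb the mismatch between scale $R$ and scale $sR$ into the implicit constants (uniformly for $s$ in a bounded range, but for the lemma we only need $s\geq s_0$ with $s_0$ allowed to depend on $\delta$ — actually $s$ ranges over $[s_0,\infty)$, so I will need the uniformity statement in \cref{lem:regular_variation_integration}-style uniform convergence of regularly varying functions on compacts, or more simply note $\eta$ and $\zeta$ are regularly varying so $\eta(sR)/\eta(R)$ and $\zeta(sR)/\zeta(R)$ are bounded above and below for $s$ bounded, and handle large $s$ by monotonicity). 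Dividing by $\eta(R)$ gives a bound of order $M^{-1}$, which is $\leq \delta/2$ once $M$ is large.

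For the escape-of-mass term, I would bound
\[
\P_{\beta_c}\bigl(\mu_R(\{\|x\|\geq s_0\})\geq \eps\bigr) \leq \frac{1}{\eps\zeta(R)} \E_{\beta_c}\Bigl[\,|K\cap \{\|x\|\geq s_0 R\}|\;\mathbbm 1(|K|\leq M\zeta(R))\Bigr] + \P_{\beta_c}(|K|\geq M\zeta(R)),
\]
where the second term was already handled. For the first term, the point is that on the truncation event $|K|\leq M\zeta(R)$ we can compare with a cut-off measure: running the same argument as in \eqref{eq:volume_tail_union_bound} with the observable $|K\cap\{\|x\|\geq s_0 R\}|$ in place of $e^{-h|K|}$, connections to far-away points in the non-cut-off model but not the cut-off model $\P_{\beta_c,\sigma^{-1}(sR)}$ must pass through a flipped edge, contributing a correction of relative size $O(r^{-\alpha}\hat\E_{\beta_c,r}|K|\cdot \eta(R))$ which is $o(1)$, so up to lower-order terms it suffices to bound $\E_{\beta_c,\sigma^{-1}(sR)}|K\cap\{\|x\|\geq s_0 R\}|$. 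Now \cref{prop:radius_of_gyration} and its $L^p$ refinements (the spatially-weighted moment asymptotics, e.g. via $\E_{\beta_c,r}\sum_{x\in K}\|x\|_2^2 \asymp \sigma(r)^2 \E_{\beta_c,r}|K|$ together with \cref{prop:displacement_moments} to control higher moments and hence the tail of a uniform point of $K$) give, by Markov applied to a uniform point $X$ of $K$ under $\hat\P_{\beta_c,r}$, that $\hat\P_{\beta_c,r}(\|X\|\geq s_0 \sigma(r))\leq C(s_0)$ with $C(s_0)\to 0$ as $s_0\to\infty$; multiplying by $\E_{\beta_c,r}|K|$ and dividing by $\zeta(R)\eta(R)\asymp r^\alpha \asymp \E_{\beta_c,r}|K|$ yields a contribution of order $\eps^{-1}C(s_0)$, which is $\leq \delta/2$ once $s_0$ is large. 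Choosing $M$ and then $s_0$ in this order, and $R_0$ large enough that all the "$o(1)$" and regular-variation error terms are under control, completes the proof.

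The main obstacle I anticipate is the uniformity in $s$: the cut-off scale in the statement is $\sigma^{-1}(sR)$ with $s$ ranging over all of $[s_0,\infty)$, so the ghost-field comparison error $r^{-\alpha}\hat\E_{\beta_c,r}|K|\,\eta(R)$ and the ratios $\eta(sR)/\eta(R)$, $\zeta(sR)/\zeta(R)$ must be controlled uniformly as $s\to\infty$. For the ratios this follows from regular variation plus monotonicity (both $\zeta$ and, after noting $\E_{\beta_c,r}|K|$ is increasing, the relevant quantities are monotone, so large $s$ only helps); for the comparison error one checks that it is in fact decreasing in $s$ for $s$ large since the number of flipped edges decreases. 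I would isolate this as a short preliminary lemma. The rest is a routine assembly of first- and second-moment estimates already available in the paper.
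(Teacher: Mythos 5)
Your decomposition (first inequality trivial by stochastic domination; volume tail for the total-mass event; a separate estimate for the escape of mass) matches the paper's, and the total-mass part is essentially fine — except that the decay in $M$ comes from regular variation of the tail ($\asymp M^{-1/2}$), not from Markov's inequality, which is unavailable since $\E_{\beta_c}|K|=\infty$. The genuine gap is in the escape-of-mass term. You reduce to bounding $\E_{\beta_c,\sigma^{-1}(sR)}\bigl[|K\cap\{\|x\|\geq s_0R\}|\bigr]$ and claim a contribution of order $\eps^{-1}C(s_0)\eta(R)$ uniformly in $s\geq s_0$. This cannot work: $|K\setminus B_{s_0R}|$ is an increasing observable, so its expectation under $\P_{\beta_c,\sigma^{-1}(sR)}$ increases with $s$ to $\E_{\beta_c}|K\setminus B_{s_0R}|$, which is infinite (since $\E_{\beta_c}|K|=\infty$ while $\E_{\beta_c}|K\cap B_{s_0R}|\leq|B_{s_0R}|<\infty$). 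Quantitatively, at cut-off scale $r'=\sigma^{-1}(sR)$ the displacement bound gives $\hat\P_{\beta_c,r'}(\|X\|\geq s_0R)\preceq \xi_2(r')^2/(s_0R)^2\asymp (s/s_0)^2$, which is vacuous for $s\gg s_0$, and $\E_{\beta_c,r'}|K|\asymp (r')^\alpha$ itself diverges in $s$; so your "large $s$ only helps by monotonicity" resolution of the uniformity issue is exactly backwards for this observable. Nor does the truncation $\mathbbm{1}(|K|\leq M\zeta(R))$ obviously rescue the first moment under $\P_{\beta_c}$: controlling $\sum_x\P_{\beta_c}(0\leftrightarrow x,\ |K|\leq M\zeta(R))$ over distant $x$ is not something the available estimates give you directly.

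The repair is to decouple the comparison scale from $s$, which is what the paper does. Couple $\P_{\beta_c}$ with the cut-off measure at a \emph{fixed} scale $\sigma^{-1}(s_1R)$, with $s_1=s_1(\eps,\delta)$ chosen so that the sprinkling estimate \eqref{eq:final_sprinkle} controls the excess mass \emph{in probability}, $\P(|K_2\setminus K_1|\geq \tfrac{\eps}{2}\zeta(R))\preceq\delta\eta(R)$ — note this bounds a probability, not an expectation, precisely because the expectation of the excess is again infinite. Then bound the far-away mass of the cut-off cluster by Chebyshev applied to $\sum_{x\in K}\|x\|_2^2$, whose expectation at the fixed scale is $\asymp\xi_2(\sigma^{-1}(s_1R))^2\,\E_{\beta_c,\sigma^{-1}(s_1R)}|K|$ by \cref{prop:radius_of_gyration}; dividing by $(s_0R)^2\cdot\tfrac{\eps}{2}\zeta(R)$ yields a bound $\preceq_{\eps,s_1}\eta(R)/s_0^2$, which is $\leq\delta\eta(R)$ once $s_0$ is large. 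Your outline becomes the paper's proof once the $s$-dependent cut-off in the comparison is replaced by this fixed one and the truncated first moment is replaced by the in-probability sprinkle plus the spatial second-moment Chebyshev bound.
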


\begin{lemma}[Double limits]
\label{lem:scaling_limit_double_limit}
 We have that
\begin{equation*}
 \lim_{s\to \infty}\lim_{R\to \infty}\frac{1}{\eta(R)} \E_{\beta_c,\sigma^{-1}(s R)} \left[g(\mu_R(\R^d))(\mu[\varphi])^n 
 \right] = \N \Bigl[g(\mu(\R^d)) (\mu[\varphi])^n \Bigr]
\end{equation*}
for each compactly supported continuous function $g:[0,\infty)\to [0,1]$ whose support does not contain~$0$, each bounded, non-negative, continuous function $\varphi:\R^d\to [0,\infty)$, and each $n\geq 0$.
\end{lemma}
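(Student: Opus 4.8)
The plan is to prove Lemma~\ref{lem:scaling_limit_double_limit} by first reducing the mixed moment $\E_{\beta_c,\sigma^{-1}(sR)}[g(\mu_R(\R^d))(\mu[\varphi])^n]$ to a purely polynomial moment (i.e.\ removing the $g$ factor via a regularization/Tauberian argument analogous to \cref{prop:first_order_volume_tail}), then evaluating the polynomial moment using \cref{thm:scaling_limit_diagrams}, and finally taking the $R\to\infty$ and $s\to\infty$ limits in that order and matching the result against the diagrammatic formula \eqref{eq:canonical_measure_diagrams} for $\N$. The point of taking the double limit is that for fixed cut-off scale $r=\sigma^{-1}(sR)$ the cluster ``lives on scale $\sigma(r)\asymp sR$'', much larger than $R$; so after rescaling by $R$ the cluster looks like the size-biased cluster from \cref{cor:scaling_limit_cut_off} but viewed at a scale where its typical mass is of order $s^{2\alpha}\zeta(R)^{-1}\cdot\zeta(R)= $ large, and the canonical measure $\N$ emerges in the limit $s\to\infty$ as the ``$\sigma$-finite tail'' of this size-biased law. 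Concretely, since $\hat\P_{\beta_c,r}$ describes the size-biased cluster, we have $\P_{\beta_c,r}(\,\cdot\,)=\E_{\beta_c,r}[|K|^{-1}\mathbbm 1(\cdot)]\,\cdot\,(\hat\E_{\beta_c,r}|K|)/(\E_{\beta_c,r}|K|)^{-1}$-type identities, and \cref{thm:scaling_limit_diagrams} gives the asymptotics of $\E_{\beta_c,r}[(\mu[\varphi])^n\prod P_i]$-type quantities directly.

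The key steps, in order: (1) Using \cref{thm:scaling_limit_diagrams} with $\sigma(r)=sR(1+o(1))$ (via regular variation of $\sigma$), show that for each fixed $n\geq 1$,
\[
\frac{1}{\eta(R)}\E_{\beta_c,\sigma^{-1}(sR)}\bigl[(\mu_R[\varphi])^n\bigr] \;\to\; s^{(2n-2)\alpha - 0}\cdot c_n(s)\bigl(\text{diagrammatic integral}\bigr)
\]
as $R\to\infty$, after carefully tracking how the normalizations $\zeta(R)$, $\hat\E_{\beta_c,r}|K|\asymp\sigma(r)^{2\alpha}$, $\E_{\beta_c,r}|K|\asymp\sigma(r)^\alpha$, and $\eta(R)=4\E_{\beta_c,\sigma^{-1}(R)}|K|/\hat\E_{\beta_c,\sigma^{-1}(R)}|K|$ combine; the $R$-scaled kernel $\kappa(\cdot/s)$ appearing in \cref{thm:scaling_limit_diagrams} converges (after the $s$-rescaling) to the Green's function $G$ because $\kappa$ is the Green's function of the spatial motion stopped at an Exp$(1)$ time and the exponential clock becomes irrelevant in the $s\to\infty$ limit --- this is precisely the statement that $s^{\alpha\cdot(\text{const})}\kappa(sx)\to\text{const}\cdot G(x)$, which one checks from the explicit scaling relations for the transition densities $p_t^\lambda$ derived in \cref{subsec:the_full_displacement_distribution}. (2) Handle the $g$ factor: write $g(\mu_R(\R^d))(\mu_R[\varphi])^n = \lim_{\text{polys}} (\text{polynomial in }\mu_R(\R^d), \mu_R[\varphi])$ in an appropriate uniform sense, or more cleanly approximate $g$ by functions of the form $x\mapsto x^k e^{-\lambda x}$ and use that $\mu_R(\R^d)=|K|/\zeta(R)$ has, in the relevant regime, a chi-squared-type size-biased profile (established in the proof of \eqref{eq:hd_volume_tail_main} and \eqref{eq:critical_dim_volume_tail_main}); tightness of the mass (part of \cref{lem:tightness_scaling_limits}) plus the moment bound of \cref{lem:canonical_measure_moment_upper_bound} / \cref{lem:mixed_moments_order_estimates} lets one pass the polynomial approximation through the limit. (3) Identify the resulting double limit with $\N[g(\mu(\R^d))(\mu[\varphi])^n]$ by matching moments: expanding $g(\mu(\R^d))(\mu[\varphi])^n$ into a superposition of $(\mu[\varphi_1])\cdots(\mu[\varphi_m])$ (constant $\varphi$'s for the $\mu(\R^d)$ factors) and invoking \eqref{eq:canonical_measure_diagrams}, which by hypothesis $d>\min\{4,2\alpha\}$ (so Carleman applies, cf.\ \cref{lem:canonical_measure_moment_upper_bound}) characterizes $\N$.

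The main obstacle I expect is step (2): removing the cutoff $g$ and justifying the interchange of the polynomial-approximation limit with the $R\to\infty$ and $s\to\infty$ limits. Unlike the pure moment case, here one is integrating against the $\sigma$-finite measure $\N$, so uniform integrability has to be supplied by genuine tail control --- precisely the content of \cref{lem:tightness_scaling_limits} --- and one must be careful that the $s\to\infty$ limit is taken \emph{after} $R\to\infty$, since for finite $R$ the measure $\frac{1}{\eta(R)}\P_{\beta_c,\sigma^{-1}(sR)}$ is a finite (probability-times-constant) measure while $\N$ is infinite; the regularization built into $g$ (support bounded away from $0$) is what makes every quantity finite along the way, and the proof must show the limit does not depend on the choice of $g$ among functions with the same behaviour near $\infty$. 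A secondary technical point is controlling the contribution of ``short'' distances: for fixed $r$, the cluster has microscopic structure on scales $\ll R$, and one needs that the mass $\mu_R$ places negligible weight there after rescaling --- this follows from the displacement moment estimates of \cref{prop:displacement_moments} and \cref{lem:mixed_moments_order_estimates}, but the bookkeeping interacts with the $s$-dependence and must be done uniformly in $s$ large.
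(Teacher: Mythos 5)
Your overall architecture matches the paper's: compute the polynomial moments $\frac{1}{\eta(R)}\E_{\beta_c,\sigma^{-1}(sR)}[\prod_i\mu_R[\varphi_i]]$ via \cref{thm:scaling_limit_diagrams} and \cref{cor:scaling_limit_cut_off}, identify the $s\to\infty$ limit of the rescaled kernel with the Green's function $G$ (this is \cref{lem:Greens_function_limit}), use \cref{lem:tightness_scaling_limits} to pass from compactly supported test functions to the total mass, and use Carleman's criterion together with \cref{lem:canonical_measure_moment_upper_bound} to identify the limit with $\N$. Those parts are right and are exactly \cref{lem:scaling_limit_diagrams_no_cutoff} plus the final approximation $\psi_k\uparrow 1$.

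The gap is in your step (2), which is precisely where the difficulty sits and which you leave at the level of a gesture. Neither of the devices you name works as stated. You cannot ``expand $g(\mu(\R^d))(\mu[\varphi])^n$ into a superposition of $(\mu[\varphi_1])\cdots(\mu[\varphi_m])$'': $g$ is not a polynomial, and any polynomial approximation on $[0,\infty)$ necessarily involves the constant term, which is not integrable against the $\sigma$-finite measure $\N$ (all the infinite mass of $\N$ sits near $\mu(\R^d)=0$, exactly where $g$ vanishes). The alternative of approximating $g$ by combinations of $x\mapsto x^ke^{-\lambda x}$ runs into the fact that the available moment bounds (\cref{lem:mixed_moments_order_estimates}, \cref{lem:canonical_measure_moment_upper_bound}) are of order $C^pp!$, so termwise expansion of $e^{-\lambda x}$ only converges for $\lambda$ smaller than $1/C$; you would then need an analytic continuation plus a Tauberian step, none of which you supply, and the interchange of this approximation with the double limit in $R$ and $s$ is exactly the point at issue. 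The paper's resolution (attributed to Perkins) is different and is the key idea you are missing: for $n\geq 1$, divide by the positive moment $\E_{\beta_c,\sigma^{-1}(sR)}[(\mu_R[\varphi])^n]$ so that all quantities become \emph{probability} measures; joint moment convergence plus Carleman then upgrades to genuine weak convergence of the law of $\mu_R[\psi]$ under the $(\mu_R[\varphi])^n$-biased measure, against which any bounded continuous $f$ can be integrated. The $n=0$ case is then reduced to the $n=1$ case by writing $g(x)=x\cdot F(x)$ with $F(x)=g(x)/x$, which is bounded and continuous precisely because the support of $g$ avoids the origin --- this is where that hypothesis is actually used, and your proposal never exploits it beyond noting that it ``makes every quantity finite.'' Without this (or an equally concrete substitute), the passage from moments to general $g$ is not established.
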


(The proof of this lemma also shows that each of the internal $R\to\infty$ limits is well-defined when $s>0$ is fixed.)

\medskip

We will also require one further lemma, which shows in light of Carleman's condition (see e.g.\ \cite[Lemma II.5.9]{perkins2002part}) that the canonical measure is determined by the moments $\N[(\mu[\phi])^n]$ for compactly supported non-negative $\phi:\R^d\to \R$.
(This lemma does not really require the full strength of our running assumptions in this subsection, and in fact only requires that $d>2(\alpha \wedge 2)$, where $2(\alpha \wedge 2)$ is the critical dimension for recurrence of the critical branching random walk conditioned to survive forever.)

\begin{lemma}[Moment estimates]
\label{lem:canonical_measure_moment_upper_bound}
For each compactly-supported, continuous function $\varphi:\R^d\to \R$  there exists a finite constant $C(\varphi)$ such that 
\[
  \N\left[ |\mu[\varphi]|^p \right] \leq C(\varphi)^p p!
\]
for every integer $p\geq 1$.
\end{lemma}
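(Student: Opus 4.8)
\textbf{Proof proposal for \cref{lem:canonical_measure_moment_upper_bound}.}

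The plan is to bound the moments $\N[|\mu[\varphi]|^p]$ using the explicit diagrammatic formula \eqref{eq:canonical_measure_diagrams}. Replacing $\varphi$ by $|\varphi|$ (which only increases the quantity in question and preserves compact support and continuity), we may assume $\varphi\geq 0$. Then \eqref{eq:canonical_measure_diagrams} gives
\[
\N\left[(\mu[\varphi])^p\right] = 2^{2p-2}\sum_{T\in \mathbb{T}_p} \idotsint \prod_{\substack{i<j\\ i\sim j}} G(x_i-x_j)\prod_{i=1}^p \varphi(x_i)\, \dif x_1 \cdots \dif x_{2p-1},
\]
with $x_0=0$. Since $|\mathbb{T}_p|=(2p-3)!!\leq 2^p p!$, it suffices to show that each individual tree diagram is bounded by $C(\varphi)^p$ for a constant depending only on $\varphi$, $d$, $\alpha$, and $\Sigma$ (via the Green's function $G$), after which absorbing the $2^{2p-2}$ and $(2p-3)!!$ factors into $C(\varphi)^p p!$ completes the proof.

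To bound a single diagram, I would integrate out the internal vertices $x_{p+1},\ldots,x_{2p-1}$ (the degree-$3$ unlabelled vertices) one at a time, working from the leaves inward, and then integrate out the labelled leaves $x_1,\ldots,x_p$ against the $\varphi$ factors. The key analytic input is a convolution bound on $G$: since $G$ is the Green's function of a symmetric $\alpha\wedge 2$-stable process (Brownian motion when $\alpha\geq 2$), in dimension $d>\alpha\wedge 2$ one has $G(x)\asymp \|x\|^{-d+(\alpha\wedge 2)}$ away from the origin with an integrable singularity at the origin, and more importantly $\int_{\|y\|\leq 1} G(y)\dif y<\infty$ together with the convolution-type estimate $\int G(x-y)G(y-z)\dif y \preceq (1+\|x-z\|)^{-d+2(\alpha\wedge 2)}$ when $d>2(\alpha\wedge 2)$ (via \cite[Proposition 1.7]{MR1959796} or the analogous statement for stable kernels). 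The condition $d>2(\alpha\wedge 2)$ holds under \eqref{HD+}: when $\alpha<2$ we have $d>3\alpha>2\alpha$, and when $\alpha\geq 2$ we have $d\geq 6>4$. When we eliminate a degree-$3$ internal vertex $x_k$ all of whose three neighbours except one have already been integrated out (i.e. a "leaf" of the remaining tree after pruning), we perform $\int G(a-x_k)G(b-x_k)G(c-x_k)\dif x_k$; bounding one of the three factors by $\sup G = G(0)<\infty$ (finite since $d>2$ is not needed here — rather, one uses a short-distance cutoff: $G$ is locally integrable and one may bound $\int G(a-x_k)G(b-x_k)\dif x_k$ directly) reduces this to a product of the earlier convolution estimates. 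Iterating, each such step costs a bounded constant and possibly upgrades a $G$-edge to a higher-power kernel $\|\cdot\|^{-d+k(\alpha\wedge 2)}$; since the tree has $p$ leaves, after at most $p-2$ such reductions one is left with a bounded number of remaining integrations against the $\varphi_i$, each contributing at most $\|\varphi\|_\infty \cdot |\mathrm{supp}\,\varphi| \cdot (\text{sup of remaining kernel on the compact support})$. The total bound is then of the form $C^p$ with $C=C(\varphi,d,\alpha,\Sigma)$.

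The main obstacle is organizing the iterated integration cleanly enough to make the inductive constant uniform in the tree topology and in $p$: one must check that every step either uses the basic bound $\int_{\mathrm{cpt}} G \preceq 1$ or the convolution estimate $G*G\preceq \|\cdot\|^{-d+2(\alpha\wedge 2)}$ (and its higher iterates, which stay integrable on compacts as long as the exponent $-d+k(\alpha\wedge 2)$ stays $>-d$, i.e. always, and locally integrable as long as $k(\alpha\wedge 2)<d$ or one simply truncates), and that one never needs a global $L^1$ bound on a kernel whose exponent has become too small — but since every leaf vertex carries a compactly supported $\varphi_i$ and those are the only vertices whose integration ranges over all of $\R^d$ without a decaying kernel to tame it, one should always pair an all-of-$\R^d$ integration with at least one copy of $G$ (or an iterated $G$) that decays fast enough; keeping track of this bookkeeping via an induction on the number of vertices of the tree is the technical heart of the argument. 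A clean way to present it: prove by induction on $|V(T)|$ that for a tree $T$ with one distinguished vertex ("root"), $\int \prod_{i\sim j} G(x_i-x_j) \prod_{\text{leaves}} \varphi(x_i)\,\prod_{v\neq\text{root}}\dif x_v \leq C^{|V(T)|}(1+\|x_{\mathrm{root}}\|)^{-d+2(\alpha\wedge 2)}$ or simply $\leq C^{|V(T)|}$ once the root is also integrated against its own $\varphi$, pruning a leaf at each step.
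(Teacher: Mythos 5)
Your overall strategy --- pull the $p!$ out of the diagram count $(2p-3)!!$ and show that each individual tree diagram is at most $C^{p}$ --- is exactly the structure the paper has in mind; the paper itself does not carry out the details but defers to \cite{angel2021tail,asselah2024intersection}, noting precisely that the factorial comes from counting diagrams while each diagram contributes at most exponentially in its number of edges. So you are attempting to supply details the paper omits, and the combinatorial half of your argument is fine.

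The gap is in the analytic bookkeeping, and it bites exactly in the cases the paper cares about. Write $a=\alpha\wedge 2$. First, the reduction ``upgrade a $G$-edge to a higher-power kernel $\|\cdot\|^{-d+ka}$'' is not available: the iterated kernel $G^{*k}$ is finite only when $d>ka$, and under \eqref{HD+} one has $d=3a$ in the cases $d=3\alpha<6$ and $(d,\alpha)=(6,2)$, so already $G^{*3}\equiv+\infty$ (the defining integral diverges logarithmically at infinity), while a tree with $p$ leaves would force you to contemplate $G^{*k}$ with $k$ of order $p$. (Relatedly, $\sup G=G(0)$ is infinite, though you partially walk that back.) Second, your proposed inductive statement --- a pruned subtree contributes a weight $\preceq C^{|V(T)|}(1+\|x_{\mathrm{root}}\|)^{-d+2a}$ --- does not close: at an internal vertex you must multiply the weights of its two children and then integrate the product against one more copy of $G$, and the product of two weights decaying like $(1+\|x\|)^{-(d-2a)}$ is integrable only when $d>4a$, which fails for $d=6,7$ (where $a=2$) and for $d=3\alpha$. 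The correct invariant is that a pruned subtree whose leaves carry $\mathbbm{1}_\Lambda$ contributes a weight bounded by $C^{|V|}\min\{1,(1+\operatorname{dist}(x,\Lambda))^{-(d-a)}\}$, i.e.\ the \emph{same} decay as a single copy of $G$, never worse. This closes: the product of two such weights decays like $(1+\|x\|)^{-2(d-a)}$, which is integrable precisely when $d>2a$ (the only place the hypothesis enters), and convolving a bounded integrable function with $G$ returns a bounded function with $G$'s decay. With that corrected invariant your pruning induction goes through and yields the $C^{p}$ bound per diagram, hence the lemma.
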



\medskip

Before proving these lemmas, let us first explain how they imply \cref{thm:superprocess_main,thm:superprocess_main_regularly_varying}.

\begin{proof}[Proof of \cref{thm:superprocess_main,thm:superprocess_main_regularly_varying}]
To deduce the theorem from \cref{lem:tightness_scaling_limits,lem:scaling_limit_double_limit}, it suffices to prove that if we write $\tilde \P_{R,s}$ for the law of the standard monotone coupling of $\P_{\beta_c,\sigma^{-1}(s R)}$ and $\P_{\beta_c}$ and write $K_1$ and $K_2$ for the two clusters then for each $\eps>0$ there exists $s_0<\infty$ such that
\begin{equation}
\label{eq:final_sprinkle}
  \limsup_{R\to\infty} \frac{1}{\eta(R)}\tilde \P_{R,s}\Bigl(|K_2\setminus K_1| \geq \eps \zeta(R)\Bigr) \leq \eps
\end{equation}
for every $s\geq s_0$. Indeed, together with \cref{lem:scaling_limit_double_limit} this shows (by a variation on the bounded convergence theorem) that
\begin{equation*}
\lim_{R\to \infty}\frac{1}{\eta(R)} \E_{\beta_c} \left[g(\mu_R(\R^d)) (\mu[\varphi])^n
 \right] = \N \Bigl[g(\mu(\R^d)) (\mu[\varphi])^n \Bigr]
\end{equation*}
for 
each compactly supported continuous function $g:[0,\infty)\to [0,1]$ whose support does not contain zero, each bounded, non-negative, continuous function $\varphi:\R^d\to [0,\infty)$, and each $n\geq 0$.
 This implies by standard properties of moment measures (see e.g.\ \cite[Lemma II.5.9]{perkins2002part}) that
\begin{equation*}
\frac{1}{\eta(R)} \E_{\beta_c} \left[g(\mu_R(\R^d)) \mathbbm{1}(\mu_R\in \;\cdot\;)
 \right] \to \N \Bigl[g(\mu(\R^d)) \mathbbm{1}(\mu_R\in \;\cdot\;) \Bigr]
\end{equation*}
weakly as $R\to\infty$ for each compactly supported continuous function $g:[0,\infty)\to [0,1]$ whose support does not contain zero. This in turn is easily seen to imply the claim as written in the theorem in light of the tightness provided by \cref{lem:tightness_scaling_limits} (which lets us replace the assumption that $g$ is compactly supported with the assumption that it is bounded) and the fact that $\N$ gives mass zero to the set $\{\mu:\mu(\R^d)=\lambda\}$ for each $\lambda>0$ (which lets us replace the continuous function $g$ with the indicator $\mathbbm{1}( \;\cdot\; \geq \lambda )$ as in the portmanteau theorem \cite[Theorem 2.1]{billingsley2013convergence}).

\medskip

We now prove \eqref{eq:final_sprinkle}. It suffices to prove that
\[
  \tilde \P_{R}((K_2\setminus K_1) \cap \cG \neq \emptyset ) =o(\eta(R))
\]
as $R\to \infty$, where $\cG$ denotes an independent ghost field of intensity $\zeta(R)^{-1}$. We have by Markov's inequality that
\[
   \tilde \P_{R}((K_2\setminus K_1) \cap \cG \neq \emptyset \mid K_1) \preceq \sigma^{-1}(s_RR)^{-\alpha} |K_1|\P_{\beta_c}(0\leftrightarrow \cG),
\]
where the $\sigma(s_RR)^{-\alpha}|K_1|$ term bounds the expected number of edges in the boundary of $K_1$ that become open in $K_2$ and $\P_{\beta_c}(0\leftrightarrow \cG)$ bounds the probability that the other endpoint of one of these edges is connected to $\cG$ in $K_2$ off $K_1$. Since $\zeta(R)=\frac{1}{4}\hat\E_{\beta_c,\sigma^{-1}(R)}|K|$, we can apply the estimate \eqref{eq:volume_tail_without_inverses} (which is a simple interpretation of the tail estimates of \cref{thm:hd_moments_main,thm:critical_dim_moments_main_slowly_varying} removing the references to inverse functions)
to obtain that
\[
 \tilde \P_{R}((K_2\setminus K_1) \cap \cG \neq \emptyset \mid K_1) \preceq 
 \frac{\sigma^{-1}(R)^\alpha}{\sigma^{-1}(s_R R)^{\alpha}}
  \zeta(R)^{-1}
 |K_1|.
\]
Taking expectations over $K_1$, it follows that
\begin{multline}
 \tilde \P_{R}((K_2\setminus K_1) \cap \cG \neq \emptyset  ) \preceq \E_{\beta_c}\left[\min\left\{
 \frac{\sigma^{-1}(R)^\alpha}{\sigma^{-1}(s_R R)^{\alpha}}
  \zeta(R)^{-1}
 |K|,1\right\}\right] \\= o\left(\E_{\beta_c}\left[\min\left\{
  \zeta(R)^{-1}
 |K|,1\right\}\right]\right)
 \label{eq:K_2_not_K_1_ghost}
\end{multline}
as $R\to \infty$. Using that the the volume tail $\P_{\beta_c}(|K|\geq n)$ is regularly varying of index $-1/2 > -1$ we obtain that
\begin{align*}
\E_{\beta_c}\left[\min\left\{
  \zeta(R)^{-1}
 |K|,1\right\}\right]=\zeta(R)^{-1}\E_{\beta_c}\left[\min\left\{
 |K|,\zeta(R)\right\}\right] \asymp \P_{\beta_c}(|K|\geq \zeta(R)) \asymp \eta(R),
\end{align*}
%
%
%
where the final estimate follows from
 \eqref{eq:volume_tail_without_inverses}, and substituting this estimate into \eqref{eq:K_2_not_K_1_ghost} yields that
\[ \tilde \P_{R}((K_2\setminus K_1) \cap \cG \neq \emptyset  ) =o(\eta(R))\]
as $R\to \infty$ as claimed.
    The claimed asymptotics of $\eta(R)$ and $\zeta(R)$ under the hypotheses of \cref{thm:superprocess_main} follow from \cref{thm:hd_moments_main}.
\end{proof}


It remains to prove \cref{lem:tightness_scaling_limits,lem:scaling_limit_double_limit,lem:canonical_measure_moment_upper_bound}. We begin with \cref{lem:tightness_scaling_limits}, which is an easy consequence of our results on the volume distribution and the radius of gyration.

\begin{proof}[Proof of \cref{lem:tightness_scaling_limits}]
Fix $\eps,\delta>0$. The first inequality is trivial so we focus on the second.
We have by the same argument used to prove \eqref{eq:final_sprinkle} that there exist constants $s_0$ and $R_0$ such that if $(K_1,K_2)$ are the clusters of the origin in the standard monotone coupling of $\P_{\beta_c,\sigma^{-1}(s_0 R)}$ and $\P_{\beta_c}$ then
\begin{equation}
\label{eq:tightness_spatial_1}
  \P(|K_2\setminus K_1| \geq \delta \zeta(R)) \leq \delta \eta(R)
\end{equation}
for every $s\geq s_0$ and $R\geq R_0$. 
On the other hand, we also have that there exists a positive constant $c>0$ (arising from the equivalence of the norms $\|\cdot\|$ and $\|\cdot\|_2$) such that
\begin{align}
  \P_{\beta_c,\sigma^{-1}(s_0 R)}\Bigl(|K\setminus B_{s R}| \geq \eps \zeta(R)\Bigr) 
  &\leq  \P_{\beta_c,\sigma^{-1}(s_0 R)}\left( \sum_{x\in K} \|x\|_2^2 \geq c s^2 \eps r^2 \zeta(r)\right) 
  \nonumber\\
  &\preceq \frac{\E_{\beta_c,\sigma^{-1}(s_0 R)}\sum_{x\in K}\|x\|_2^2}{s^2 \eps R^2 \zeta(r)} =  \frac{4 \xi_2(\sigma^{-1}(s_0 R))^2 \E_{\beta_c,\sigma^{-1}(s_0 R)}|K|}{s^2 \eps R^2 \hat \E_{\beta_c,\sigma^{-1}(R)}|K| }
  \nonumber\\
  &\asymp_\eps 
   \frac{\xi_2(\sigma^{-1}(R))^2 \E_{\beta_c,\sigma^{-1}(R)}|K|}{s^2 \eps r^2 \hat \E_{\beta_c,\sigma^{-1}(R)}|K| } \asymp_\eps \frac{\eta(R)}{s^2},
   \label{eq:tightness_spatial_2}
\end{align}
where we used Markov's inequality in the second line, regular variation of $\zeta_2$, $\sigma^{-1}$, and $\E_{\beta_c,r}|K|$ in the first estimate on the third line, and the relation $\xi_2(\sigma^{-1}(R)) \asymp R$ in the final estimate on the third line. The claim follows easily from \eqref{eq:tightness_spatial_1} and \eqref{eq:tightness_spatial_2}
 together with \eqref{eq:volume_tail_without_inverses}, which implies that $\P_{\beta_c}(|K|\geq M \zeta(R)) \leq \delta \eta(R)$ when $M$ is large.
\end{proof}

We next briefly outline how \cref{lem:canonical_measure_moment_upper_bound} follows from arguments in the literature.

\begin{proof}[Proof of \cref{lem:canonical_measure_moment_upper_bound}]
It suffices to prove that
 for each compact set $\Lambda \subseteq \R^d$ there exists a finite constant $C(\Lambda)$ such that 
\[
  \N\left[ \mu(\Lambda)^p \right] \leq C(\Lambda)^p p!
\]
for every integer $p\geq 1$. Similar estimates have been proven for branching random walk with $d>2(\alpha \wedge 2)$ in \cite{angel2021tail,asselah2024intersection}, with the latter work \cite{asselah2024intersection} even identifying rather precisely the dependency of the constant $C(\Lambda)$ on the set $\Lambda$. These proofs easily extend to superprocesses and we omit the details. (Indeed, the analysis of \cite[Section 5]{angel2021tail} is based on an inductive analysis of diagrams of the same form as \eqref{eq:canonical_measure_diagrams} but with sums and lattice Green's functions instead of integrals and continuum Green's functions; passing from the discrete to the continuous versions of these calculations does not lead to any significant difficulties. The factorial term comes from counting the number of diagrams, while each diagram makes a contribution at most exponentially large in its number of edges.)
\end{proof}

We now begin to work towards the proof of \cref{lem:scaling_limit_double_limit}. We begin by relating the two kernels $G$ and $\kappa$ (where $\kappa$ was defined in \cref{def:limiting_displacement_law}):

\begin{lemma}
\label{lem:Greens_function_limit}
 The two kernels $\kappa$ and $G$ are related via
\[\lim_{\lambda\to \infty}\lambda^{\min\{2,\alpha\}-d} \kappa\left(\lambda^{-1}x\right) = G(x),\]
where the convergence holds in local $L^1$.
\end{lemma}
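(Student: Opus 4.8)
\textbf{Proof plan for \cref{lem:Greens_function_limit}.}

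The plan is to exploit the explicit description of $\kappa$ as the density of $\nu_\mathrm{disp}$, namely $\kappa = \int_0^\infty e^{-t} p^1_t(0,\cdot)\dif t$, where $p^1_t$ is the transition density of Brownian motion with covariance $\Sigma$ (if $\alpha\geq 2$) or of the symmetric L\'evy jump process with L\'evy measure $\Pi_1$ (if $\alpha<2$), and the scaling identity already recorded in the proof of \cref{prop:recurrence_from_derivative}: $\kappa(x) = \lambda^{d-(\alpha\wedge 2)} \int_0^\infty e^{-\lambda^{-(\alpha\wedge 2)}s} p^\lambda_s(0,\lambda x)\dif s$ for every $\lambda>0$, where $p^\lambda_t$ is the transition density of the $\lambda$-scaled process with L\'evy measure $\Pi_\lambda$ (and $p^\lambda_t=p^1_t$ in the Brownian case). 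Substituting $x\mapsto \lambda^{-1}x$ gives
\[
\lambda^{(\alpha\wedge 2)-d}\kappa(\lambda^{-1}x) = \int_0^\infty e^{-\lambda^{-(\alpha\wedge 2)}s}\, p^\lambda_s(0,x)\dif s.
\]
So the claim reduces to showing that the right-hand side converges in local $L^1$ to $\int_0^\infty p^\infty_s(0,x)\dif s = G(x)$ as $\lambda\to\infty$. The prefactor $e^{-\lambda^{-(\alpha\wedge 2)}s}\to 1$ pointwise and monotonically in $\lambda$ for each fixed $s$, so the only substantive input is that $p^\lambda_s(0,\cdot)\to p^\infty_s(0,\cdot)$ in a suitably uniform sense.

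First I would establish the convergence of the finite-time transition densities: $\Pi_\lambda$ converges to $\Pi_\infty$ in total variation on the complement of any ball around $0$ and also in the sense that $\int (\|y\|_2^2\wedge 1)\dif\Pi_\lambda \to \int(\|y\|_2^2\wedge 1)\dif\Pi_\infty$ (indeed the densities converge in $L^\infty$ and local $L^1$, as noted just after \eqref{eq:Levy_measure}), hence $L^\lambda_s \Rightarrow L^\infty_s$ for each fixed $s$; combined with the fact that these laws are absolutely continuous with bounded densities that are equicontinuous — this follows from the smoothing estimate $\|p^\lambda_s(0,\cdot)\|_\infty \preceq s^{-d/(\alpha\wedge 2)}$ uniformly in $\lambda\geq 1$ together with Fourier inversion, since $|\widehat{p^\lambda_s}(u)|=\exp[-s\,\psi_\lambda(u)]$ with $\psi_\lambda(u)\to\psi_\infty(u)$ uniformly on compacts and $\psi_\lambda(u)\succeq \|u\|^{\alpha\wedge 2}$ for $\|u\|\geq 1$ uniformly in $\lambda$ — one upgrades weak convergence to local uniform (hence local $L^1$) convergence of densities for each fixed $s>0$. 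In the Brownian case this step is trivial since $p^\lambda_s=p^\infty_s$ identically.

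Then I would integrate over $s$ using dominated convergence in local $L^1$. On any compact $\Lambda\subseteq\R^d$, split the $s$-integral at $s=1$ and $s=T$. For small $s$ the contribution of both $\lambda^{(\alpha\wedge 2)-d}\kappa(\lambda^{-1}\cdot)$ and $G$ on $\Lambda$, integrated over $x\in\Lambda$, is dominated by $\int_0^1 \int_\Lambda p^\lambda_s(0,x)\dif x\dif s \leq \int_0^1 1\,\dif s=1$ — no issue — and one shows the $[0,1]$ piece of the difference is small by the $L^\infty$ bound $\|p^\lambda_s(0,\cdot)\|_\infty\preceq s^{-d/(\alpha\wedge 2)}$ only when $d<\alpha\wedge 2$, which fails here, so instead I control the $[0,1]$ piece directly: $\int_\Lambda |p^\lambda_s(0,x)-p^\infty_s(0,x)|\dif x \leq \|L^\lambda_s-L^\infty_s\|_\mathrm{TV}\to 0$ for each fixed $s\in(0,1]$, and this is bounded by $2$, so bounded convergence in $s$ handles $[\delta,1]$; on $(0,\delta]$ we use that both densities integrate to $\leq 1$ over all of $\R^d$, giving a contribution $\leq 2\delta$. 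For the tail $s>T$, the uniform lower bound $\psi_\lambda(u)\succeq\|u\|^{\alpha\wedge 2}\wedge 1 \succeq \|u\|^2$ near $0$ (valid since all these processes have finite second moments with comparable variances for $\lambda$ large, or for $\alpha<2$ one has the exact scaling bound) gives $\int_\Lambda p^\lambda_s(0,x)\dif x \preceq_\Lambda s^{-d/2}$, which is integrable at $\infty$ since $d>2\geq 2(\alpha\wedge 2)$ in all our cases — wait, I should instead note $d>\alpha\wedge 2$ suffices here using the genuine stable/Gaussian scaling of the limit and the uniform-in-$\lambda$ Gaussian tail of $p^\lambda_s$ for large $s$; in any case the tail is uniformly integrable, so we pass to the limit there as well. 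Assembling the three ranges gives $\int_\Lambda |\lambda^{(\alpha\wedge 2)-d}\kappa(\lambda^{-1}x)-G(x)|\dif x\to 0$.

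\textbf{Main obstacle.} The delicate point is the small-time regime: since $d\geq 3$ (indeed $d>\min\{4,2\alpha\}$ throughout), the densities $p^\lambda_s(0,\cdot)$ blow up as $s\downarrow 0$, so neither a crude $L^\infty$ bound nor a naive dominated-convergence argument works for the full $s$-integral. The resolution is that what we integrate over $x\in\Lambda$ is a \emph{probability density}, so its integral over any set is at most $1$ uniformly in $s$ and $\lambda$; the real work is therefore just to get the total-variation convergence $\|L^\lambda_s-L^\infty_s\|_\mathrm{TV}\to 0$ for each fixed $s>0$ (which in the L\'evy case follows from absolute continuity of $(\|y\|_2\wedge 1)\Pi_\lambda$ with respect to Lebesgue together with $\Pi_\lambda\to\Pi_\infty$, via the coupling already used in the proof of \cref{prop:recurrence_from_derivative}) and a uniform-in-$\lambda$ large-$s$ bound to justify interchanging limit and $s$-integral at infinity. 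Everything else is routine.
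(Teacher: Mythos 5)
Your overall route is the paper's: the scaling identity $\lambda^{(\alpha\wedge 2)-d}\kappa(\lambda^{-1}x)=\int_0^\infty e^{-\lambda^{-(\alpha\wedge2)}s}p_s^\lambda(0,x)\dif s$, plus the compound-Poisson coupling of $L^\lambda$ and $L^\infty$ to get convergence of the finite-time densities. The gap is in your treatment of the large-$s$ tail. You assert that $\int_\Lambda p_s^\lambda(0,x)\dif x\preceq_\Lambda s^{-d/2}$ is "integrable at $\infty$ since $d>2$", then hedge that "$d>\alpha\wedge2$ suffices". Neither works: under \eqref{HD+} with $\alpha<2$ one can have $d=1$ (any $\alpha\leq 1/3$) or $d=2$ (any $\alpha\leq 2/3$), and there $s^{-d/2}$ is not integrable at infinity. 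Worse, the problem is not just with the bound: for fixed $\lambda$ the truncated process $L^\lambda$ has finite variance, so for $s\gg\lambda^\alpha$ one has $\int_\Lambda p_s^\lambda(0,x)\dif x\asymp_\Lambda (s\lambda^{2-\alpha})^{-d/2}$ by the local CLT, and hence $\int_0^\infty\int_\Lambda p_s^\lambda(0,x)\dif x\dif s=\infty$ when $d\leq 2$ — the truncated process is in the Gaussian domain and its Green's function is infinite in low dimensions, even though $G$ itself (the $\alpha$-stable Green's function) is locally integrable whenever $d>\alpha$. So "the tail is uniformly integrable" is false as stated if the exponential factor is dropped, and your written justification does not use that factor in the dangerous regime $s\gtrsim\lambda^\alpha$.

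The repair is exactly the inequality the paper extracts from the coupling and which your argument omits:
\[
p_t^\infty(0,x)\;\geq\;e^{-\lambda^{-\alpha}t}\,p_t^\lambda(0,x)\qquad\text{for all }t,\lambda>0,\ x\in\R^d,
\]
which holds because on the probability-$e^{-\lambda^{-\alpha}t}$ event that the independent compound Poisson correction makes no jump by time $t$, the two processes agree. With this, the integrand $e^{-\lambda^{-\alpha}t}p_t^\lambda(0,x)$ is dominated pointwise in $(t,x)$ by $p_t^\infty(0,x)$, whose time-integral is $G$, locally integrable since $d>\alpha$; dominated convergence then handles small times, intermediate times and the tail simultaneously, with no case analysis and no dimension restriction. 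Alternatively you could keep your three-regime splitting but must carry the factor $e^{-\lambda^{-\alpha}s}$ through the regime $s\geq\lambda^\alpha$ (where it contributes $O(\lambda^{\alpha-d})$ after the substitution $s=\lambda^\alpha u$); as written, your tail estimate is incorrect for $d\in\{1,2\}$.
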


\begin{proof}[Proof of \cref{lem:Greens_function_limit}]
First suppose that $\alpha \geq 2$. Letting $p_t$ denote the transition kernel for Brownian motion with covariance matrix $\Sigma$, we can write
\[
  \kappa(\lambda^{-1} x) = \int_0^\infty e^{-t}p_t(0, \lambda^{-1}x)\dif t. 
\] 
Using the Brownian scaling property $p_t(0,\lambda^{-1}x)=\lambda^d p_{\lambda^2 t}(0,x)$, we obtain that
\[
  \kappa(\lambda^{-1} x) = \lambda^d \int_0^\infty e^{-t}p_{\lambda^2t}(0,x) \dif t = \lambda^{d-2} \int_0^\infty e^{-\lambda^{-2}t}p_{t}(0,x) \dif t.
\] 
The claim follows easily since $d\geq 6 \geq 3$ so that $\int_D \int_{T}^\infty p_t(0,x)\dif t \dif x\to 0$ as $T\to \infty$ for each compact set $D \subseteq \R^d$. Now suppose $\alpha<2$. In this case, we can write
\[
  \kappa(\lambda^{-1}x) = \int_0^\infty e^{-t} p_t^1(0,\lambda^{-1}x) \dif t
\]
where $p_t^1$ is the transition kernel of the symmetric L\'evy jump process $(L_t^1)_{t\geq 0}$ with L\'evy measure $\Pi_1$. Recognizing $\lambda^{-d} p_t^1(0,\lambda^{-1}x)$ as the density of $\lambda L_t^1$, which is equal to the density of the symmetric L\'evy jump process $(L_{s}^{\lambda})_{s\geq 0}$ associated to the measure $\Pi_{\lambda}$ evaluated at time $s=\lambda^\alpha t$, we have that
\begin{equation}
  \kappa(\lambda^{-1}x) =\lambda^d \int_0^\infty e^{-t} p_{\lambda^\alpha t}^{\lambda^{-\alpha}}(0,x) \dif t = \lambda^{d-\alpha} \int_0^\infty e^{-\lambda^{-\alpha}t} p_{t}^{\lambda}(0,x) \dif t
  \label{eq:kappa_lambda_exact}
\end{equation}
where $p_{\lambda^\alpha t}^{\lambda}$ denotes the transition kernel of this L\'evy process at time $\lambda^\alpha t$. Since the two L\'evy measures $\Pi_\infty$ and $\Pi_{\lambda}$ satisfy $\Pi_\infty \geq \Pi_{\lambda}$ and the $L^1$ difference between the two measures is finite, the two associated symmetric L\'evy jump processes can be coupled so that they differ by a compound Poisson process that makes jumps at rate 
\begin{equation*}
\Pi_\infty(\R^d)-\Pi_{\lambda}(\R^d) = \int_{\R^d} \left(\alpha \int_{\|x\|}^\infty s^{-d-\alpha-1}\dif s - \alpha \mathbbm{1}(\|x\|\leq \lambda)\int_{\|x\|}^{\lambda} s^{-d-\alpha-1}\dif s\right) \dif x 
= \lambda^{-\alpha}.
\end{equation*}
 On the event that this compound Poisson process makes no jumps up to time $t$, which occurs with probability $e^{-\lambda^{-\alpha}t}$ independently of the L\'evy process $(L^\lambda_s)_{s\geq 0}$, the two L\'evy processes $(L^\lambda_s)_{s\geq 0}$ and $(L^\infty_s)_{s\geq 0}$ coincide up to time $t$. This implies that $p_t^\lambda(0,\cdot)$ converges in $L^1$ to $p_t^\infty(0,\cdot)$ as $\lambda\to \infty$ uniformly for compact sets of $t$ and
 \[
   p^\infty_t(0,x) \geq e^{-\lambda^{-\alpha} t}p^\lambda_t(0,x)
 \]
 for every $\lambda,t>0$ and $x\in \R^d$. These two facts together allow us to apply the dominated convergence theorem to deduce the claim from \eqref{eq:kappa_lambda_exact}.
\end{proof}

%
%
%

\begin{lemma}
\label{lem:scaling_limit_diagrams_no_cutoff}
We have
\begin{equation*}
\lim_{s \to \infty} \lim_{R\to \infty} \frac{1}{\eta(R)}\E_{\beta_c,\sigma^{-1}(s R)} \left[ \prod_{i=1}^n\mu_R[\varphi_i]\right] 
= \N\left[ \prod_{i=1}^n \mu[\varphi_i] \right]
\end{equation*}
for each $n\geq 1$ and sequence of compactly supported continuous functions $\varphi_1,\ldots,\varphi_n :\R^d \to \R$.
\end{lemma}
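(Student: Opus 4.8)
\textbf{Proof proposal for \cref{lem:scaling_limit_diagrams_no_cutoff}.}
The plan is to relate the moment $\E_{\beta_c,\sigma^{-1}(sR)}[\prod_i \mu_R[\varphi_i]]$ to the diagrammatic formula of \cref{thm:scaling_limit_diagrams} applied at cut-off scale $r = \sigma^{-1}(sR)$, and then take the iterated limit $R\to\infty$ followed by $s\to\infty$, matching against the canonical-measure formula \eqref{eq:canonical_measure_diagrams}. First I would unpack the definitions: writing $r=\sigma^{-1}(sR)$, we have $\mu_R[\varphi_i] = \zeta(R)^{-1}\sum_{x\in K}\varphi_i(x/R)$, and the functions $P_i(x) := \varphi_i(\sigma(r)x/R)$ are bounded continuous (though not polynomials). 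Since \cref{thm:scaling_limit_diagrams} as stated is for polynomials, the first technical step is to extend it to bounded continuous compactly supported test functions: this follows by a standard approximation argument using the moment bound of \cref{lem:mixed_moments_order_estimates} (which controls $\E_{\beta_c,r}[\sum_{x_1,\dots,x_n\in K}\prod|P_i(x_i)|]$ for monomials, hence for polynomials by linearity, hence via Stone--Weierstrass and a tightness/truncation estimate for all bounded continuous compactly supported $P_i$, using that the spatial weights $\|x\|^p$ have moments of the right order). Combined with the normalization $\zeta(R) = \tfrac14 \hat\E_{\beta_c,\sigma^{-1}(R)}|K|$ and $\eta(R) = 4\E_{\beta_c,\sigma^{-1}(R)}|K|/\hat\E_{\beta_c,\sigma^{-1}(R)}|K|$, together with the regular variation of these quantities (from \cref{thm:hd_moments_main,thm:critical_dim_moments_main_slowly_varying}) which lets us replace $\hat\E_{\beta_c,r}|K|$ by a constant multiple of $\hat\E_{\beta_c,\sigma^{-1}(R)}|K| = 4\zeta(R)$ up to a factor depending only on $s$, this gives
\[
\lim_{R\to\infty}\frac{1}{\eta(R)}\E_{\beta_c,\sigma^{-1}(sR)}\left[\prod_{i=1}^n \mu_R[\varphi_i]\right]
= c(s)\sum_{T\in\mathbb{T}_n}\idotsint \prod_{\substack{i<j\\ i\sim j}}\kappa_s(x_i-x_j)\prod_{i=1}^n\varphi_i(x_i)\,dx_1\cdots dx_{2n-1}
\]
for an appropriate rescaled kernel $\kappa_s(x) = (R/\sigma(r))^d \kappa((R/\sigma(r))x)$ and explicit constant $c(s)$, where one tracks the change of variables $x_i \mapsto (\sigma(r)/R) x_i$ in each integral and the powers of $\zeta(R), \sigma(r)/R$ that accumulate.

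The second main step is to take $s\to\infty$ inside the (now fixed, over the $s$-variable) expression on the right. Here \cref{lem:Greens_function_limit} is the key input: since $\sigma(r)/R = \sigma(\sigma^{-1}(sR))/R$, and $\sigma(\sigma^{-1}(t)) = t$ by definition of the inverse, we have $\sigma(r)/R = s$, so $\kappa_s(x) = s^d\kappa(sx)$ and \cref{lem:Greens_function_limit} gives $s^{\min\{2,\alpha\}-d}\kappa_s(\cdot) \to G$ in local $L^1$ as $s\to\infty$; the scalar prefactors $c(s)$ should conspire (after bookkeeping with the $2^{2n-2}$ factor in \eqref{eq:canonical_measure_diagrams} and the $\tfrac14$'s built into $\zeta$ and $\eta$) to exactly cancel the $s^{(\min\{2,\alpha\}-d)\cdot(2n-2)}$ scaling coming from the $2n-2$ edges of each tree. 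The convergence of the integrals then follows from local $L^1$ convergence of $\kappa_s$ together with the moment bounds of \cref{lem:canonical_measure_moment_upper_bound} (applied to the limiting integrals) and a dominated-convergence/uniform-integrability argument; since each $\varphi_i$ is compactly supported, the $x_i$ at the leaves are confined to a compact set, but the internal vertices range over all of $\R^d$, so one needs the integrability of iterated convolutions $\kappa_s * \cdots * \kappa_s$ uniformly in $s$, which follows from the same kind of estimate as \cref{lem:canonical_measure_moment_upper_bound}. This identifies the $s\to\infty$ limit as $2^{2n-2}\sum_{T\in\mathbb{T}_n}\idotsint\prod G(x_i-x_j)\prod\varphi_i(x_i) = \N[\prod_i\mu[\varphi_i]]$ via the moment formula \eqref{eq:canonical_measure_diagrams}.

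I expect the main obstacle to be the uniform-in-$s$ control of the diagrammatic integrals with internal vertices ranging over all of $\R^d$: one must ensure that no mass ``escapes to infinity'' in the internal integration variables as $s\to\infty$, i.e., that the local $L^1$ convergence of $\kappa_s\to$ (const)$\cdot G$ upgrades to convergence of the full multi-dimensional integrals. The cleanest route is probably to bound $\kappa_s$ pointwise above by a constant multiple of $G$ uniformly in $s$ (which is exactly the kind of domination statement that falls out of the coupling argument in the proof of \cref{lem:Greens_function_limit}: $p^\infty_t(0,x) \geq e^{-\lambda^{-\alpha}t}p^\lambda_t(0,x)$ in the L\'evy case, and the analogous Brownian estimate), integrate to get $\kappa_s(x) \leq C\,s^{d-\min\{2,\alpha\}}G(x)$ for all $s$, and then apply dominated convergence to the rescaled integrals using that $\idotsint\prod_{i\sim j}G(x_i-x_j)\prod_i\varphi_i(x_i) < \infty$ by \cref{lem:canonical_measure_moment_upper_bound}. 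A secondary bookkeeping nuisance is getting all the constant factors ($2^{2n-2}$, the $\tfrac14$'s, the $\alpha/\beta_c$'s, the factor from $\sigma(r)/R = s$ versus the radius-of-gyration normalization when $\alpha\geq 2$) to match exactly rather than merely up to a constant; this is where the precise normalization conventions \eqref{eq:normalization_conventions} and the definitions of $\eta,\zeta$ in terms of $\hat\E_{\beta_c,r}|K|$ (rather than $\xi_2$) earn their keep, and it should be checked by first doing the $n=1$ case, where both sides reduce to $\N(\mu(\R^d)\geq\lambda)\propto\lambda^{-1/2}$ matched against the volume-tail asymptotics \eqref{eq:volume_tail_without_inverses}, to calibrate the constants.
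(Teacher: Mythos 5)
Your proposal is correct and follows essentially the same route as the paper: rewrite the moment in terms of the size-biased cut-off measure at scale $\sigma^{-1}(sR)$ with test functions $\varphi_i(s\,\cdot)$, apply \cref{thm:scaling_limit_diagrams}/\cref{cor:scaling_limit_cut_off} to take $R\to\infty$, then change variables and invoke \cref{lem:Greens_function_limit} for the $s\to\infty$ limit against \eqref{eq:canonical_measure_diagrams}; your pointwise domination $s^{\min\{2,\alpha\}-d}\kappa(x/s)\leq G(x)$ extracted from the coupling in the proof of \cref{lem:Greens_function_limit} is exactly the right way to justify the dominated convergence step that the paper leaves implicit. Only minor bookkeeping slips: the trees in $\mathbb{T}_n$ have $2n-1$ edges (not $2n-2$), and your rescaled kernel should read $s^{\min\{2,\alpha\}-d}\kappa(x/s)$ rather than $s^{d}\kappa(sx)$ — neither affects the argument.
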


(As before, the proof of this lemma also shows that each of the internal $R\to \infty$ limits is well-defined when $s>0$ is fixed.)

\begin{proof}[Proof of \cref{lem:scaling_limit_diagrams_no_cutoff}]
It suffices to consider the case that $\varphi_1,\ldots,\varphi_n:\R^d\to [0,\infty)$ are  non-negative. 
We can write 
\begin{align*}\E_{\beta_c,\sigma^{-1}(s R)} \left[ \prod_{i=1}^n\mu_R[\varphi_i]\right] &= 2^{2n}\E_{\beta_c,\sigma^{-1}(s R)} \left[ \frac{1}{(\hat \E_{\beta_c,\sigma^{-1}(R)}|K|)^n}\sum_{x_1,\ldots,x_n \in K} \prod_{i=1}^n \varphi_i\left(\frac{x_i}{R}\right)\right]
  \\&=2^{2n}
 \frac{\E_{\beta_c,\sigma^{-1}(s R)}|K| (\hat \E_{\beta_c,\sigma^{-1}(s R)}|K|)^{n-1}}{(\hat \E_{\beta_c,\sigma^{-1}(R)}|K|)^n} \\&\hspace{1.5cm}\cdot \hat \E_{\beta_c,\sigma^{-1}(s R)} \left[ \frac{1}{(\hat \E_{\beta_c,\sigma^{-1}(s R)}|K|)^{n-1}}\frac{1}{|K|}\sum_{x_1,\ldots,x_n \in K} \prod_{i=1}^n \varphi_i\left(s \cdot \frac{x_i}{s R}\right)\right]\end{align*}
and apply \cref{thm:scaling_limit_diagrams}, \cref{cor:scaling_limit_cut_off}, and the dominated convergence theorem to deduce that
\begin{align*}
  &2^{-2n}\E_{\beta_c,\sigma^{-1}(s R)} \left[ \prod_{i=1}^n\mu_R[\varphi_i]\right] 
  \\
  &\hspace{1.7cm}\sim
  s^{(2n-1)\min\{2,\alpha\}}\frac{\E_{\beta_c,\sigma^{-1}(R)}|K|}{\hat \E_{\beta_c,\sigma^{-1}(R)}|K|} \left(\sum_{T\in\bbT_n} \idotsint \prod_{\substack{i<j \\ i\sim j}} \kappa(x_i-x_j) \prod_{i=1}^n \varphi_i(s x_i)\dif x_1 \cdots \dif x_{2n-1}\right)
  \\
  &\hspace{1.7cm}=
  \frac{\E_{\beta_c,\sigma^{-1}(R)}|K|}{\hat \E_{\beta_c,\sigma^{-1}(R)}|K|} \left(\sum_{T\in\bbT_n} \idotsint \prod_{\substack{i<j \\ i\sim j}} s^{\min\{2,\alpha\}-d} \kappa\left(\frac{x_i-x_j}{s}\right) \prod_{i=1}^n \varphi_i(x_i)\dif x_1 \cdots \dif x_{2n-1}\right),
\end{align*}
as $R\to \infty$ for each fixed $s>0$, where we used that $\sigma^{-1}(R)$, $\E_{\beta_c,r}|K|$, and $\hat \E_{\beta_c,r}|K|$ are regularly varying of index $\min\{2/\alpha,1\}$, $\alpha$, and $2\alpha$ respectively in the penultimate line 
and used the change of variables $x_i\mapsto s x_i$ (which gives a factor $s^{-d}$ for each of the $2n-1$ variables of integration) in the final line. Applying \cref{lem:Greens_function_limit} and introducing the additional  factor of $4$ in the definition of $\eta(R)$ yields the claim in conjunction with \eqref{eq:canonical_measure_diagrams}.
\end{proof}

The deduction of \cref{lem:scaling_limit_double_limit} from \cref{lem:scaling_limit_diagrams_no_cutoff} that follows in the remainder of this section was suggested to us by Ed Perkins: The key idea is to rewrite things in such a way that we can reason in terms of probability measures and moment problems, even though $\bbN$ is not a probability measure and $\bbN( \mu(\R^d) \mid \mu(\R^d) \geq 1) = \infty$.


\begin{lemma}
\label{lem:scaling_limit_diagrams_no_cutoff2}
We have that
\begin{equation*}
\lim_{s \to \infty} \lim_{R\to \infty} \frac{1}{\eta(R)}\E_{\beta_c,\sigma^{-1}(s R)} \left[ f(\mu_R[\psi]) \left(\mu_R[\varphi]\right)^n \right] 
= \N\left[f(\mu_R[\psi])(\mu[\varphi])^n\right]
\end{equation*}
for each pair of compactly-supported, continuous, non-negative functions $\varphi,\psi$, integer $n\geq 0$, and bounded continuous function $f:[0,\infty)\to [0,\infty)$ whose support does not contain zero.
\end{lemma}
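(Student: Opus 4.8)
\textbf{Proof proposal for \cref{lem:scaling_limit_diagrams_no_cutoff2}.}

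The plan is to reduce the statement, which involves a bounded continuous function $f$ of $\mu_R[\psi]$ together with a polynomial $(\mu_R[\varphi])^n$, to the pure polynomial moment statement of \cref{lem:scaling_limit_diagrams_no_cutoff} via a Weierstrass-type approximation argument, carefully controlling the error using the moment bounds already in hand. First I would enlarge the test function: replace $f(\mu_R[\psi])$ by $f(\mu_R[\psi])$ where, since $\varphi$ and $\psi$ are compactly supported and non-negative, $\psi$ can be taken (after adding a constant bump and rescaling $f$ accordingly — note $f$'s support is bounded away from $0$ so this is harmless) to dominate a fixed continuous function that is $\equiv 1$ on the supports of $\varphi$ and $\psi$, so that $\mu_R[\psi]$ controls both $\mu_R[\varphi]$ and $\mu_R(\Lambda)$ for the relevant compact set $\Lambda$. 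The key point is then that, under either $\frac{1}{\eta(R)}\E_{\beta_c,\sigma^{-1}(sR)}[\;\cdot\;]$ or $\N[\;\cdot\;]$, the quantity $\mu_R[\psi]$ has uniformly (in $R$, and for $R$ large) sub-factorial moments: for the cut-off measures this follows from \cref{thm:scaling_limit_diagrams} together with the normalizations defining $\eta,\zeta$ (the $n$-th moment of $\mu_R[\psi]$ against $\frac{1}{\eta(R)}\E_{\beta_c,\sigma^{-1}(sR)}$ converges to the diagrammatic integral, which is $\le C(\psi)^n n!$ by the same argument as in \cref{lem:mixed_moments_order_estimates} / \cref{lem:canonical_measure_moment_upper_bound}), and for $\N$ it is exactly \cref{lem:canonical_measure_moment_upper_bound}. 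This gives a uniform exponential tail bound on the law of $\mu_R[\psi]$ under these (sub-probability, after the $f$-weighting) measures, hence uniform integrability of all polynomials in $\mu_R[\psi]$.

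Next I would run the approximation. Fix $\eps>0$. On the support of $f$, which is a compact subset of $(0,\infty)$, and more generally on $[0,M]$ for $M$ large, approximate $f$ uniformly by a polynomial $P_\eps$ with $\sup_{[0,M]}|f-P_\eps|<\eps$; outside $[0,M]$ we bound $|f-P_\eps|$ crudely by $|f|+|P_\eps| \le C(1+t^{\deg P_\eps})$ and use the uniform exponential tail of $\mu_R[\psi]$ (under the weighted measures, recalling the $(\mu_R[\varphi])^n \le \psi$-power bound) to make the contribution of $\{\mu_R[\psi]>M\}$ smaller than $\eps$ uniformly in $R$ and $s$, for $M=M(\eps)$ large. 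Then
\[
\Bigl|\,\tfrac{1}{\eta(R)}\E_{\beta_c,\sigma^{-1}(sR)}\!\bigl[f(\mu_R[\psi])(\mu_R[\varphi])^n\bigr] - \tfrac{1}{\eta(R)}\E_{\beta_c,\sigma^{-1}(sR)}\!\bigl[P_\eps(\mu_R[\psi])(\mu_R[\varphi])^n\bigr]\,\Bigr| \le C'\eps
\]
uniformly for $R,s$ large, and the same inequality holds with $\N$ in place of $\frac{1}{\eta(R)}\E_{\beta_c,\sigma^{-1}(sR)}$. Since $P_\eps(\mu_R[\psi])(\mu_R[\varphi])^n$ is a finite linear combination of products of the form $\mu_R[\psi]^{k}(\mu_R[\varphi])^n$, each of which equals a finite linear combination of terms $\prod_i \mu_R[\phi_i]$ with $\phi_i$ among $\psi$ and $\varphi$ (expanding the powers), \cref{lem:scaling_limit_diagrams_no_cutoff} gives $\lim_{s\to\infty}\lim_{R\to\infty}\frac{1}{\eta(R)}\E_{\beta_c,\sigma^{-1}(sR)}[P_\eps(\mu_R[\psi])(\mu_R[\varphi])^n] = \N[P_\eps(\mu[\psi])(\mu[\varphi])^n]$. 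Combining the three displays and letting $\eps\to 0$ yields the claim.

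The main obstacle is the uniform-in-$R$ (and in $s$) control of the large-mass tail: \cref{lem:scaling_limit_diagrams_no_cutoff} only gives convergence of each fixed moment, and to upgrade to convergence against a bounded-but-not-polynomial $f$ we genuinely need the moment \emph{bounds} to be uniform, and with the right factorial growth so that the exponential tail estimate is available. This requires rerunning the diagram-counting bound of \cref{thm:scaling_limit_diagrams} / \cref{lem:mixed_moments_order_estimates} to see that the constant $A(\psi,\dots,\psi)$ appearing in the $n$-th moment asymptotics grows at most like $C(\psi)^n n!$ — which it does, since $|\mathbb{T}_n| = (2n-3)!!$ and each diagram contributes at most $C(\psi)^{2n-1}$ by boundedness and compact support of $\psi$ together with $\int\kappa \le 1$ appropriately normalized — and to check that dividing by $\eta(R)$ (rather than by the quantity $\zeta(R)^{n-1}\E_{\beta_c,\sigma^{-1}(R)}|K|$ that naturally appears) does not spoil this, which follows from $\eta(R)\asymp \zeta(R)^{-2}\cdot\zeta(R)\asymp \E_{\beta_c,\sigma^{-1}(R)}|K|/\hat\E_{\beta_c,\sigma^{-1}(R)}|K|$ and regular variation. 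Once this uniform factorial moment bound is secured on both sides, the rest is the routine polynomial approximation sketched above.
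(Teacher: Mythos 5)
Your reduction to \cref{lem:scaling_limit_diagrams_no_cutoff} via polynomial approximation is workable for $n\geq 1$: there the weighting by $(\mu_R[\varphi])^n$ makes $\frac{1}{\eta(R)}\E_{\beta_c,\sigma^{-1}(sR)}[(\mu_R[\varphi])^n\,\cdot\,]$ a measure of uniformly bounded total mass, the factorial moment bounds you invoke do hold uniformly in $s$ and large $R$ (one clean way to see the uniformity in $s$ is that $\kappa_s(x):=s^{(\alpha\wedge 2)-d}\kappa(s^{-1}x)\leq G(x)$ pointwise, by the formulas in the proof of \cref{lem:Greens_function_limit}, so the $\kappa_s$-diagrams are dominated by the $G$-diagrams of \cref{lem:canonical_measure_moment_upper_bound}), and the truncation argument goes through after untangling a circularity you have glossed over: your $M$ must be chosen to beat the tail of $|f-P_\eps|$, but $P_\eps$ and its degree depend on $M$. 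This is the standard method-of-moments circularity and is fixable (e.g.\ by passing to the normalized probability measures and quoting moment-determinacy), but note that the paper avoids it entirely by dividing by $\E_{\beta_c,\sigma^{-1}(sR)}[(\mu_R[\varphi])^n]$ to get genuine probability measures and then applying Carleman plus weak convergence; your route and the paper's are close cousins here.

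The genuine gap is the case $n=0$, and it is precisely the case for which the hypothesis that the support of $f$ avoids $0$ exists. The normalized measure $\frac{1}{\eta(R)}\P_{\beta_c,\sigma^{-1}(sR)}$ has total mass $\eta(R)^{-1}\to\infty$, and the limit object $\N$ is an infinite measure with all of its infinite mass concentrated near $\mu=0$ (recall $\N(\mu(\R^d)\geq\lambda)\propto\lambda^{-1/2}$). Consequently: (i) the constant term of $P_\eps$ contributes $P_\eps(0)/\eta(R)\to\infty$, and \cref{lem:scaling_limit_diagrams_no_cutoff} does not cover the zeroth moment (it requires $n\geq 1$, and the corresponding limit is infinite); and (ii) even if you force $P_\eps(0)=0$, the approximation error on the truncated region is only bounded by $\eps\cdot\frac{1}{\eta(R)}\P_{\beta_c,\sigma^{-1}(sR)}(\mu_R[\psi]\leq M)=O(\eps/\eta(R))$, which diverges. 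A uniform-on-compacts approximation of $f$ is therefore useless against a measure of diverging total mass; you must exploit that $f$ vanishes in a neighbourhood of $0$ to produce a weight that is integrable against $\N$. The paper does this by writing $f(x)=x\cdot F(x)$ with $F(x)=f(x)/x$ bounded and continuous (this is where the support hypothesis is used), which converts the $n=0$ statement into the $n=1$ statement with test function $\psi$ in place of $\varphi$; some such device is unavoidable, and your proposal as written does not supply it.
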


(Once again, the proof of this lemma also shows that each internal $R\to\infty$ limit is well-defined when $s>0$ is fixed.)


\begin{proof}[Proof of \cref{lem:scaling_limit_diagrams_no_cutoff2}]
Fix $\varphi$ and $\psi$. We may assume that neither $\varphi$ or $\psi$ is identically zero, the claim holding vacuously otherwise.
It follows immediately from \cref{lem:scaling_limit_diagrams_no_cutoff} that
\begin{equation*}
\lim_{s \to \infty} \lim_{R\to \infty} \frac{1}{\eta(R)}\E_{\beta_c,\sigma^{-1}(s R)} \left[ 
(\mu_R[\psi])^k(\mu_R[\varphi])^n \right] 
= \N\left[(\mu[\psi])^k(\mu[\varphi])^n\right]
\end{equation*}
for every pair of non-negative integers $n,k$ with $n+k>0$.
Indeed, this is just \cref{lem:scaling_limit_diagrams_no_cutoff} applied to the functions $\varphi_i$ with $\varphi_i=\varphi$ for $i=1,\ldots,n$ and $\varphi_i=\psi$ for $i=n+1,\ldots,n+k$.  
If $n\geq 1$ then this can be formulated as a statement about convergence of moments of appropriately defined probability measures:
\begin{equation*}
\lim_{s \to \infty} \lim_{R\to \infty} \frac{\E_{\beta_c,\sigma^{-1}(s R)} \left[ 
(\mu_R[\psi])^k (\mu_R[\varphi])^n \right]}{\E_{\beta_c,\sigma^{-1}(s R)} \left[ 
 (\mu_R[\varphi])^n \right]}
\\= \frac{\N\left[(\mu[\psi])^k(\mu[\varphi])^n\right]}{\bbN\left[(\mu[\varphi])^n\right]}.
\end{equation*}
By a standard argument, it follows from this, \cref{lem:canonical_measure_moment_upper_bound}, and Carleman's criterion that
\begin{equation}
\lim_{s \to \infty} \lim_{R\to \infty} \frac{\E_{\beta_c,\sigma^{-1}(s R)} \left[ 
\mathbbm{1}(\mu_R[\psi] \in  \cdot\,) (\mu_R[\varphi])^n \right]}{\E_{\beta_c,\sigma^{-1}(s R)} \left[ 
 (\mu_R[\varphi])^n \right]}
\\= \frac{\N\left[\mathbbm{1}(\mu[\psi] \in \cdot\,)
(\mu[\varphi])^n\right]}{\bbN\left[(\mu[\varphi])^n\right]},
\label{eq:weak_convergence_psi_phi}
\end{equation}
holds as a weak limit of probability measures, 
which implies the $n\geq 1$ case of the claim
 by a further application of \cref{lem:scaling_limit_diagrams_no_cutoff} (to identify the asymptotics of the denominator). (In particular, the internal $R\to \infty$ limit of these measures also exists for each fixed $s>0$ by the same argument, and is described by a measure having a similar diagrammatic formula for moments as $\N$ and $\bbK$ but with the kernel $\kappa_s(x)=s^{(\alpha \wedge 2)-d}\kappa(s^{-1}x)$ instead of $G$ or $\kappa$. The fact that this measure is also determined by these moment formulas when $s>0$ follows again from Carleman's criterion, with the analogue of \cref{lem:canonical_measure_moment_upper_bound} now being trivial since the kernel $\kappa_s$ is in $L^1$.)
It remains to prove the case $n=0$. Applying the weak convergence statement \eqref{eq:weak_convergence_psi_phi} with $n=1$ and $\varphi=\psi$, and considering the function $F(x)=\frac{1}{x}f(x)$ applied to $\mu_R[\psi]$ with $f$ an arbitrary bounded continuous function whose support does not contain zero, it follows that
\begin{equation*}
\lim_{s \to \infty} \lim_{R\to \infty} \frac{\E_{\beta_c,\sigma^{-1}(s R)} \left[f(\mu_R[\psi])\right]}{\E_{\beta_c,\sigma^{-1}(s R)} \left[ 
 \mu_R[\psi] \right]}
\\= \frac{\N\left[f(\mu[\psi])
\right]}{\bbN\left[\mu[\psi]\right]}.
\end{equation*}
The $n=0$ case of the claim follows by another application of \cref{lem:scaling_limit_diagrams_no_cutoff} (again to identify the asymptotics of the denominator) since $f$ was arbitrary (similar considerations apply regarding the internal $R\to \infty$ limit as discussed above).
\end{proof}



\begin{proof}[Proof of \cref{lem:scaling_limit_double_limit}]
Fix $\varphi$ and let $\psi_k$ be an increasing sequence of compactly-supported, continuous, non-negative functions with $\sup_k \psi_k(x) \equiv 1$ such that $\psi_k \equiv 1$ on the ball of radius $k$ around the origin for each $k\geq 0$. It follows from \cref{lem:tightness_scaling_limits} that for each $\eps>0$ there exists $k_0$ such that
\[
\sup_{R,s\geq 1} \frac{1}{\eta(R)}\P_{\beta_c,\sigma^{-1}(sR)}\Bigl(\mu_R(\R^d)-\mu_R[\psi_k] \geq \eps\Bigr) \leq \eps
\]
for all $k\geq k_0$, 
and the claim follows easily from this and \cref{lem:scaling_limit_diagrams_no_cutoff2} by a simple variation on the bounded convergence theorem.
\end{proof}

\subsection*{Acknowledgements}
This work was supported by NSF grant DMS-1928930 and a Packard Fellowship for Science and Engineering. I thank Roland Bauerschmidt, Nicolas Curien, Jean-Fran\c{c}ois Le Gall, Remco van der Hofstad, Ed Perkins, and Gordon Slade for helpful comments and conversations, and also thank Ed Perkins for sharing his ideas on the weak convergence issues treated in \cref{subsec:scaling_limits_without_cut_off}.

\addcontentsline{toc}{section}{References}

 \setstretch{1}
 \footnotesize{
  \bibliographystyle{abbrv}
  \bibliography{unimodularthesis.bib}

\begin{thebibliography}{100}

\bibitem{abdesselam2013rigorous}
A.~Abdesselam, A.~Chandra, and G.~Guadagni.
\newblock Rigorous quantum field theory functional integrals over the p-adics
  {I}: anomalous dimensions.
\newblock {\em arXiv preprint arXiv:1302.5971}, 2013.

\bibitem{addario2018voronoi}
L.~Addario-Berry, O.~Angel, G.~Chapuy, {\'E}.~Fusy, and C.~Goldschmidt.
\newblock Voronoi tessellations in the {CRT} and continuum random maps of
  finite excess.
\newblock In {\em Proceedings of the Twenty-Ninth Annual ACM-SIAM Symposium on
  Discrete Algorithms}, pages 933--946. SIAM, 2018.

\bibitem{MR1431856}
M.~Aizenman.
\newblock On the number of incipient spanning clusters.
\newblock {\em Nuclear Phys. B}, 485(3):551--582, 1997.

\bibitem{MR762034}
M.~Aizenman and C.~M. Newman.
\newblock Tree graph inequalities and critical behavior in percolation models.
\newblock {\em J. Statist. Phys.}, 36(1-2):107--143, 1984.

\bibitem{MR868738}
M.~Aizenman and C.~M. Newman.
\newblock Discontinuity of the percolation density in one-dimensional {$1/|x-
  y|^2$} percolation models.
\newblock {\em Comm. Math. Phys.}, 107(4):611--647, 1986.

\bibitem{CRT1}
D.~Aldous.
\newblock The continuum random tree. {I}.
\newblock {\em Ann. Probab.}, 19(1):1--28, 1991.

\bibitem{aldous1991continuum}
D.~Aldous.
\newblock The continuum random tree. {II}. an overview.
\newblock {\em Stochastic analysis (Durham, 1990)}, 167:23--70, 1991.

\bibitem{aldous1993continuum}
D.~Aldous.
\newblock The continuum random tree {III}.
\newblock {\em The Annals of Probability}, pages 248--289, 1993.

\bibitem{angel2021tail}
O.~Angel, T.~Hutchcroft, and A.~J{\'a}rai.
\newblock On the tail of the branching random walk local time.
\newblock {\em Probability Theory and Related Fields}, 180(1):467--494, 2021.

\bibitem{arratia2015van}
R.~Arratia, S.~Garibaldi, and A.~W. Hales.
\newblock The van den berg--kesten--reimer operator and inequality for infinite
  spaces.
\newblock {\em arXiv preprint arXiv:1508.05337}, 2015.

\bibitem{asselah2024intersection}
A.~Asselah and B.~Schapira.
\newblock On the intersection of critical percolation clusters and other
  tree-like random graphs.
\newblock {\em arXiv preprint arXiv:2411.19145}, 2024.

\bibitem{MR1127713}
D.~J. Barsky and M.~Aizenman.
\newblock Percolation critical exponents under the triangle condition.
\newblock {\em Ann. Probab.}, 19(4):1520--1536, 1991.

\bibitem{bateman1953higher}
H.~Bateman and A.~Erd{\'e}lyi.
\newblock Higher transcendental functions, volume ii.
\newblock {\em Bateman Manuscript Project) Mc Graw-Hill Book Company}, 410,
  1953.

\bibitem{bauerschmidt2015critical}
R.~Bauerschmidt, D.~C. Brydges, and G.~Slade.
\newblock Critical two-point function of the 4-dimensional weakly self-avoiding
  walk.
\newblock {\em Communications in Mathematical Physics}, 338(1):169--193, 2015.

\bibitem{MR3339164}
R.~Bauerschmidt, D.~C. Brydges, and G.~Slade.
\newblock Logarithmic correction for the susceptibility of the 4-dimensional
  weakly self-avoiding walk: a renormalisation group analysis.
\newblock {\em Comm. Math. Phys.}, 337(2):817--877, 2015.

\bibitem{MR3969983}
R.~Bauerschmidt, D.~C. Brydges, and G.~Slade.
\newblock {\em Introduction to a renormalisation group method}, volume 2242 of
  {\em Lecture Notes in Mathematics}.
\newblock Springer, Singapore, 2019.

\bibitem{bauerschmidt2017finite}
R.~Bauerschmidt, G.~Slade, A.~Tomberg, and B.~C. Wallace.
\newblock Finite-order correlation length for four-dimensional weakly
  self-avoiding walk and $\varphi^4$ spins.
\newblock {\em Annales Henri Poincar{\'e}}, 18(2):375--402, 2017.

\bibitem{bauerschmidt2017four}
R.~Bauerschmidt, G.~Slade, and B.~C. Wallace.
\newblock Four-dimensional weakly self-avoiding walk with contact
  self-attraction.
\newblock {\em Journal of Statistical Physics}, 167:317--350, 2017.

\bibitem{baumler2023distances}
J.~B{\"a}umler.
\newblock Distances in $1/\|x-y\|^{2d}$ percolation models for all dimensions.
\newblock {\em Communications in Mathematical Physics}, 404(3):1495--1570,
  2023.

\bibitem{baumler2022isoperimetric}
J.~B{\"a}umler and N.~Berger.
\newblock Isoperimetric lower bounds for critical exponents for long-range
  percolation.
\newblock {\em Annales de l'Institut Henri Poincare (B) Probabilites et
  statistiques}, 60(1):721--730, 2024.

\bibitem{behan2017scaling}
C.~Behan, L.~Rastelli, S.~Rychkov, and B.~Zan.
\newblock A scaling theory for the long-range to short-range crossover and an
  infrared duality.
\newblock {\em Journal of Physics A: Mathematical and Theoretical},
  50(35):354002, 2017.

\bibitem{MR1896880}
N.~Berger.
\newblock Transience, recurrence and critical behavior for long-range
  percolation.
\newblock {\em Comm. Math. Phys.}, 226(3):531--558, 2002.

\bibitem{billingsley2013convergence}
P.~Billingsley.
\newblock {\em Convergence of probability measures}.
\newblock John Wiley \& Sons, 2013.

\bibitem{bingham1989regular}
N.~H. Bingham, C.~M. Goldie, and J.~L. Teugels.
\newblock {\em Regular variation}, volume~27.
\newblock Cambridge university press, 1989.

\bibitem{biskup2021arithmetic}
M.~Biskup and A.~Krieger.
\newblock Arithmetic oscillations of the chemical distance in long-range
  percolation on $\mathbb{Z}^d$.
\newblock {\em The Annals of Applied Probability}, 34(3):2986--3017, 2024.

\bibitem{HutBR_superprocesses}
A.~Blanc-Renaudie and T.~Hutchcroft.
\newblock super-{B}rownian limits and the $k$-point function for
  high-dimensional percolation.
\newblock In final preparation.

\bibitem{bleher2010critical}
P.~Bleher.
\newblock Critical phenomena in the {D}yson hierarchical model and
  renormalization group.
\newblock {\em arXiv preprint arXiv:1010.5855}, 2010.

\bibitem{borgs1999mean}
C.~Borgs, J.~Chayes, R.~van~der Hofstad, and G.~Slade.
\newblock Mean-field lattice trees.
\newblock {\em Annals of Combinatorics}, 3:205--221, 1999.

\bibitem{bramson2001super}
M.~Bramson, J.~T. Cox, and J.-F. Le~Gall.
\newblock Super-{B}rownian limits of voter model clusters.
\newblock {\em The Annals of Probability}, 29(3):1001--1032, 2001.

\bibitem{brezin2014crossover}
E.~Brezin, G.~Parisi, and F.~Ricci-Tersenghi.
\newblock The crossover region between long-range and short-range interactions
  for the critical exponents.
\newblock {\em Journal of Statistical Physics}, 157:855--868, 2014.

\bibitem{MR782962}
D.~Brydges and T.~Spencer.
\newblock Self-avoiding walk in {$5$} or more dimensions.
\newblock {\em Comm. Math. Phys.}, 97(1-2):125--148, 1985.

\bibitem{cabezas2025random}
M.~Cabezas, A.~Fribergh, M.~Holmes, and E.~Perkins.
\newblock Random skeletons in high-dimensional lattice trees.
\newblock {\em arXiv preprint arXiv:2503.19230}, 2025.

\bibitem{callan2009combinatorial}
D.~Callan.
\newblock A combinatorial survey of identities for the double factorial.
\newblock 2009.
\newblock Unpublished. Available at
  \url{http://https://arxiv.org/abs/0906.1317}.

\bibitem{camia2024conformal}
F.~Camia.
\newblock Conformal covariance of connection probabilities and fields in {2D}
  critical percolation.
\newblock {\em Communications on Pure and Applied Mathematics},
  77(3):2138--2176, 2024.

\bibitem{MR3306002}
L.-C. Chen and A.~Sakai.
\newblock Critical two-point functions for long-range statistical-mechanical
  models in high dimensions.
\newblock {\em Ann. Probab.}, 43(2):639--681, 2015.

\bibitem{MR4032873}
L.-C. Chen and A.~Sakai.
\newblock Critical two-point function for long-range models with power-law
  couplings: the marginal case for {$d\ge d_{\rm c}$}.
\newblock {\em Comm. Math. Phys.}, 372(2):543--572, 2019.

\bibitem{cox1999rescaled}
T.~Cox, R.~Durrett, and E.~A. Perkins.
\newblock Rescaled particle systems converging to super-brownian motion.
\newblock {\em Perplexing Problems in Probability: Festschrift in Honor of
  Harry Kesten}, pages 269--284, 1999.

\bibitem{de1981critical}
O.~de~Alcantara~Bonfirm, J.~Kirkham, and A.~McKane.
\newblock Critical exponents for the percolation problem and the {Y}ang-{L}ee
  edge singularity.
\newblock {\em Journal of Physics A: Mathematical and General}, 14(9):2391,
  1981.

\bibitem{demasi2006mathematical}
A.~DeMasi and E.~Presutti.
\newblock {\em Mathematical methods for hydrodynamic limits}.
\newblock Springer, 2006.

\bibitem{ding2023uniqueness}
J.~Ding, Z.~Fan, and L.-J. Huang.
\newblock Uniqueness of the critical long-range percolation metrics.
\newblock {\em arXiv preprint arXiv:2308.00621}, 2023.

\bibitem{duminil2020long}
H.~Duminil-Copin, C.~Garban, and V.~Tassion.
\newblock Long-range models in 1d revisited.
\newblock 60(1):232--241, 2024.

\bibitem{duminil2020rotational}
H.~Duminil-Copin, K.~K. Kozlowski, D.~Krachun, I.~Manolescu, and M.~Oulamara.
\newblock Rotational invariance in critical planar lattice models.
\newblock {\em arXiv preprint arXiv:2012.11672}, 2020.

\bibitem{duminil2024alternative}
H.~Duminil-Copin and R.~Panis.
\newblock An alternative approach for the mean-field behaviour of spread-out
  {B}ernoulli percolation in dimensions $ d> 6$.
\newblock {\em arXiv preprint arXiv:2410.03647}, 2024.

\bibitem{dynkin1994introduction}
E.~B. Dynkin.
\newblock {\em An introduction to branching measure-valued processes}.
\newblock Number~6. American Mathematical Soc., 1994.

\bibitem{MR436850}
F.~J. Dyson.
\newblock Existence of a phase-transition in a one-dimensional {I}sing
  ferromagnet.
\newblock {\em Comm. Math. Phys.}, 12(2):91--107, 1969.

\bibitem{essam1978percolation}
I.~Essam, D.~Gaunt, and A.~Guttmann.
\newblock Percolation theory at the critical dimension.
\newblock {\em Journal of Physics A: Mathematical and General}, 11(10):1983,
  1978.

\bibitem{fellerII}
W.~Feller.
\newblock {\em An introduction to probability theory and its applications.
  {V}ol. {II}.}
\newblock Second edition. John Wiley \& Sons Inc., New York, 1971.

\bibitem{fisher1972critical}
M.~E. Fisher, S.-k. Ma, and B.~Nickel.
\newblock Critical exponents for long-range interactions.
\newblock {\em Physical Review Letters}, 29(14):917, 1972.

\bibitem{fitzner2015nearest}
R.~Fitzner and R.~van~der Hofstad.
\newblock Mean-field behavior for nearest-neighbor percolation in {$d>10$}.
\newblock {\em Electron. J. Probab.}, 22:Paper No. 43, 65, 2017.

\bibitem{MR709462}
K.~Gawedzki and A.~Kupiainen.
\newblock Non-{G}aussian fixed points of the block spin transformation.
  {H}ierarchical model approximation.
\newblock {\em Comm. Math. Phys.}, 89(2):191--220, 1983.

\bibitem{gawedzki1985massless}
K.~Gawedzki and A.~Kupiainen.
\newblock Massless lattice $\varphi_4^4$ theory: rigorous control of a
  renormalizable asymptotically free model.
\newblock {\em Communications in mathematical physics}, 99:197--252, 1985.

\bibitem{gladkov2024percolation}
N.~Gladkov.
\newblock Percolation inequalities and decision trees.
\newblock {\em arXiv preprint arXiv:2408.08457}, 2024.

\bibitem{gori2017one}
G.~Gori, M.~Michelangeli, N.~Defenu, and A.~Trombettoni.
\newblock One-dimensional long-range percolation: a numerical study.
\newblock {\em Physical Review E}, 96(1):012108, 2017.

\bibitem{MR2924154}
H.~Gould and J.~Quaintance.
\newblock Double fun with double factorials.
\newblock {\em Math. Mag.}, 85(3):177--192, 2012.

\bibitem{grimmett2010percolation}
G.~Grimmett.
\newblock {\em Percolation}, volume 321 of {\em Grundlehren der Mathematischen
  Wissenschaften [Fundamental Principles of Mathematical Sciences]}.
\newblock Springer-Verlag, Berlin, second edition, 1999.

\bibitem{hara1987rigorousa}
T.~Hara.
\newblock A rigorous control of logarithmic corrections in four-dimensional
  $\phi^4$ spin systems: {I}. {T}rajectory of effective hamiltonians.
\newblock {\em Journal of statistical physics}, 47:57--98, 1987.

\bibitem{MR1882398}
T.~Hara, T.~Hattori, and H.~Watanabe.
\newblock Triviality of hierarchical {I}sing model in four dimensions.
\newblock {\em Comm. Math. Phys.}, 220(1):13--40, 2001.

\bibitem{MR1043524}
T.~Hara and G.~Slade.
\newblock Mean-field critical behaviour for percolation in high dimensions.
\newblock {\em Comm. Math. Phys.}, 128(2):333--391, 1990.

\bibitem{hara1998incipient}
T.~Hara and G.~Slade.
\newblock The incipient infinite cluster in high-dimensional percolation.
\newblock {\em Electronic Research Announcements of the American Mathematical
  Society}, 4(8):48--55, 1998.

\bibitem{MR1773141}
T.~Hara and G.~Slade.
\newblock The scaling limit of the incipient infinite cluster in
  high-dimensional percolation. {I}. {C}ritical exponents.
\newblock {\em J. Statist. Phys.}, 99(5-6):1075--1168, 2000.

\bibitem{hara2000scaling}
T.~Hara and G.~Slade.
\newblock The scaling limit of the incipient infinite cluster in
  high-dimensional percolation. {II}. integrated super-{B}rownian excursion.
\newblock {\em Journal of Mathematical Physics}, 41(3):1244--1293, 2000.

\bibitem{hara1987rigorousb}
T.~Hara and H.~Tasaki.
\newblock A rigorous control of logarithmic corrections in four-dimensional
  $\phi^4$ spin systems: {II}. {C}ritical behavior of susceptibility and
  correlation length.
\newblock {\em Journal of statistical physics}, 47:99--121, 1987.

\bibitem{MR1959796}
T.~Hara, R.~van~der Hofstad, and G.~Slade.
\newblock Critical two-point functions and the lace expansion for spread-out
  high-dimensional percolation and related models.
\newblock {\em Ann. Probab.}, 31(1):349--408, 2003.

\bibitem{heydenreich2015progress}
M.~Heydenreich and R.~van~der Hofstad.
\newblock {\em Progress in high-dimensional percolation and random graphs}.
\newblock CRM Short Courses. Springer, Cham; Centre de Recherches
  Math\'{e}matiques, Montreal, QC, 2017.

\bibitem{heydenreich2014high}
M.~Heydenreich, R.~van~der Hofstad, and T.~Hulshof.
\newblock High-dimensional incipient infinite clusters revisited.
\newblock {\em Journal of Statistical Physics}, 155:966--1025, 2014.

\bibitem{MR3206998}
M.~Heydenreich, R.~van~der Hofstad, and T.~Hulshof.
\newblock Random walk on the high-dimensional {IIC}.
\newblock {\em Comm. Math. Phys.}, 329(1):57--115, 2014.

\bibitem{MR2430773}
M.~Heydenreich, R.~van~der Hofstad, and A.~Sakai.
\newblock Mean-field behavior for long- and finite range {I}sing model,
  percolation and self-avoiding walk.
\newblock {\em J. Stat. Phys.}, 132(6):1001--1049, 2008.

\bibitem{holmes2008convergence}
M.~Holmes.
\newblock Convergence of lattice trees to super-brownian motion above the
  critical dimension.
\newblock 2008.

\bibitem{holmes2020range}
M.~Holmes and E.~Perkins.
\newblock On the range of lattice models in high dimensions.
\newblock {\em Probability Theory and Related Fields}, 176(3):941--1009, 2020.

\bibitem{MR3418547}
T.~Hulshof.
\newblock The one-arm exponent for mean-field long-range percolation.
\newblock {\em Electron. J. Probab.}, 20:no. 115, 26, 2015.

\bibitem{LRPpaper2}
T.~Hutchcroft.
\newblock Critical long-range percolation {II}: Low effective dimension.

\bibitem{LRPpaper3}
T.~Hutchcroft.
\newblock Critical long-range percolation {III}: The upper critical dimension.

\bibitem{hutchcroft2024pointwise}
T.~Hutchcroft.
\newblock Pointwise two-point function estimates and a non-pertubative proof of
  mean-field critical behaviour for long-range percolation.
\newblock {\em Probability Theory and Related Fields}.
\newblock To appear.

\bibitem{hutchcroft2020power}
T.~Hutchcroft.
\newblock Power-law bounds for critical long-range percolation below the
  upper-critical dimension.
\newblock {\em Probab. Theory Related Fields}, 181(1-3):533--570, 2021.

\bibitem{HutchcroftTriangle}
T.~Hutchcroft.
\newblock On the derivation of mean-field percolation critical exponents from
  the triangle condition.
\newblock {\em Journal of Statistical Physics}, 189(1):6, 2022.

\bibitem{hutchcroft2022sharp}
T.~Hutchcroft.
\newblock Sharp hierarchical upper bounds on the critical two-point function
  for long-range percolation on zd.
\newblock {\em Journal of Mathematical Physics}, 63(11), 2022.

\bibitem{hutchcrofthierarchical}
T.~Hutchcroft.
\newblock The critical two-point function for long-range percolation on the
  hierarchical lattice.
\newblock {\em The Annals of Applied Probability}, 34(1B):986--1002, 2024.

\bibitem{hutchcroft2022critical}
T.~Hutchcroft.
\newblock Critical cluster volumes in hierarchical percolation.
\newblock {\em Proceedings of the London Mathematical Society}, 130(1):e70023,
  2025.

\bibitem{iscoe1988supports}
I.~Iscoe.
\newblock On the supports of measure-valued critical branching brownian motion.
\newblock {\em The Annals of Probability}, pages 200--221, 1988.

\bibitem{ken1999levy}
S.~Ken-Iti.
\newblock {\em L{\'e}vy processes and infinitely divisible distributions},
  volume~68.
\newblock Cambridge university press, 1999.

\bibitem{MR879034}
H.~Kesten.
\newblock Scaling relations for {$2$}{D}-percolation.
\newblock {\em Comm. Math. Phys.}, 109(1):109--156, 1987.

\bibitem{kotz2001laplace}
S.~Kotz, T.~Kozubowski, and K.~Podg{\'o}rski.
\newblock {\em The {L}aplace distribution and generalizations: a revisit with
  applications to communications, economics, engineering, and finance}.
\newblock Number 183. Springer Science \& Business Media, 2001.

\bibitem{MR2779397}
G.~Kozma.
\newblock The triangle and the open triangle.
\newblock {\em Ann. Inst. Henri Poincar\'e Probab. Stat.}, 47(1):75--79, 2011.

\bibitem{MR2551766}
G.~Kozma and A.~Nachmias.
\newblock The {A}lexander-{O}rbach conjecture holds in high dimensions.
\newblock {\em Invent. Math.}, 178(3):635--654, 2009.

\bibitem{MR2748397}
G.~Kozma and A.~Nachmias.
\newblock Arm exponents in high dimensional percolation.
\newblock {\em J. Amer. Math. Soc.}, 24(2):375--409, 2011.

\bibitem{lawler2002one}
G.~Lawler, O.~Schramm, W.~Werner, et~al.
\newblock One-arm exponent for critical 2d percolation.
\newblock {\em Electronic Journal of Probability}, 7, 2002.

\bibitem{le1999spatial}
J.-F. Le~Gall.
\newblock {\em Spatial branching processes, random snakes and partial
  differential equations}.
\newblock Springer Science \& Business Media, 1999.

\bibitem{liu2025high}
Y.~Liu.
\newblock High-dimensional long-range statistical mechanical models have random
  walk correlation functions.
\newblock {\em arXiv preprint arXiv:2502.12104}, 2025.

\bibitem{MR3723429}
M.~Lohmann, G.~Slade, and B.~C. Wallace.
\newblock Critical two-point function for long-range {$O(n)$} models below the
  upper critical dimension.
\newblock {\em J. Stat. Phys.}, 169(6):1132--1161, 2017.

\bibitem{newman1986one}
C.~M. Newman and L.~S. Schulman.
\newblock One dimensional $1/|j- i|^s$ percolation models: The existence of a
  transition for $s \leq 2$.
\newblock {\em Communications in Mathematical Physics}, 104(4):547--571, 1986.

\bibitem{pauli1949invariant}
W.~Pauli and F.~Villars.
\newblock On the invariant regularization in relativistic quantum theory.
\newblock {\em Reviews of Modern Physics}, 21(3):434, 1949.

\bibitem{perkins1990polar}
E.~Perkins.
\newblock Polar sets and multiple points for super-{B}rownian motion.
\newblock {\em The Annals of Probability}, pages 453--491, 1990.

\bibitem{perkins2002part}
E.~Perkins.
\newblock Dawson-{W}atanabe superprocesses and measure-valued diffusions.
\newblock {\em Lectures on probability theory and statistics}, pages 125--329,
  2002.

\bibitem{MR1751301}
D.~Reimer.
\newblock Proof of the van den {B}erg-{K}esten conjecture.
\newblock {\em Combin. Probab. Comput.}, 9(1):27--32, 2000.

\bibitem{ruiz1998logarithmic}
J.~J. Ruiz-Lorenzo.
\newblock Logarithmic corrections for spin glasses, percolation and
  {L}ee-{Y}ang singularities in six dimensions.
\newblock {\em Journal of Physics A: Mathematical and General}, 31(44):8773,
  1998.

\bibitem{sak1973recursion}
J.~Sak.
\newblock Recursion relations and fixed points for ferromagnets with long-range
  interactions.
\newblock {\em Physical Review B}, 8(1):281, 1973.

\bibitem{schulman1983long}
L.~S. Schulman.
\newblock Long range percolation in one dimension.
\newblock {\em Journal of Physics A: Mathematical and General}, 16(17):L639,
  1983.

\bibitem{slade2002scaling}
G.~Slade.
\newblock Scaling limits and super-{B}rownian motion.
\newblock {\em Notices AMS}, 49(9):1056--1067, 2002.

\bibitem{MR2239599}
G.~Slade.
\newblock {\em The lace expansion and its applications}, volume 1879 of {\em
  Lecture Notes in Mathematics}.
\newblock Springer-Verlag, Berlin, 2006.
\newblock Lectures from the 34th Summer School on Probability Theory held in
  Saint-Flour, July 6--24, 2004, Edited and with a foreword by Jean Picard.

\bibitem{MR3772040}
G.~Slade.
\newblock Critical exponents for long-range {$O(n)$} models below the upper
  critical dimension.
\newblock {\em Comm. Math. Phys.}, 358(1):343--436, 2018.

\bibitem{smirnov2001critical2}
S.~Smirnov.
\newblock Critical percolation in the plane: conformal invariance, {C}ardy's
  formula, scaling limits.
\newblock {\em Comptes Rendus de l'Acad{\'e}mie des Sciences-Series
  I-Mathematics}, 333(3):239--244, 2001.

\bibitem{smirnov2001critical}
S.~Smirnov and W.~Werner.
\newblock Critical exponents for two-dimensional percolation.
\newblock {\em Math. Res. Lett.}, 8(5-6):729--744, 2001.

\bibitem{suzuki1972wilson}
M.~Suzuki, Y.~Yamazaki, and G.~Igarashi.
\newblock Wilson-type expansions of critical exponents for long-range
  interactions.
\newblock {\em Physics Letters A}, 42(4):313--314, 1972.

\bibitem{swan2021superprobability}
A.~Swan.
\newblock {\em Superprobability on graphs}.
\newblock PhD thesis, 2021.

\bibitem{MR877608}
J.~van~den Berg and U.~Fiebig.
\newblock On a combinatorial conjecture concerning disjoint occurrences of
  events.
\newblock {\em Ann. Probab.}, 15(1):354--374, 1987.

\bibitem{van2013survival}
R.~van~der Hofstad and M.~Holmes.
\newblock The survival probability and r-point functions in high dimensions.
\newblock {\em Annals of mathematics}, pages 665--685, 2013.

\bibitem{van2003convergence}
R.~Van~der Hofstad and G.~Slade.
\newblock Convergence of critical oriented percolation to super-{B}rownian
  motion above $4+ 1$ dimensions.
\newblock 39(3):413--485, 2003.

\bibitem{walter2012differential}
W.~Walter.
\newblock {\em Differential and integral inequalities}, volume~55.
\newblock Springer Science \& Business Media, 2012.

\bibitem{watanabe1968limit}
S.~Watanabe.
\newblock A limit theorem of branching processes and continuous state branching
  processes.
\newblock {\em Journal of Mathematics of Kyoto University}, 8(1):141--167,
  1968.

\bibitem{MR2172781}
H.~S. Wilf.
\newblock {\em generatingfunctionology}.
\newblock A K Peters, Ltd., Wellesley, MA, third edition, 2006.

\bibitem{wilson1975renormalization}
K.~G. Wilson.
\newblock The renormalization group: Critical phenomena and the kondo problem.
\newblock {\em Reviews of modern physics}, 47(4):773, 1975.

\bibitem{wilson1972critical}
K.~G. Wilson and M.~E. Fisher.
\newblock Critical exponents in 3.99 dimensions.
\newblock {\em Physical Review Letters}, 28(4):240, 1972.

\bibitem{zessin1983method}
H.~Zessin.
\newblock The method of moments for random measures.
\newblock {\em Zeitschrift f{\"u}r Wahrscheinlichkeitstheorie und verwandte
  Gebiete}, 62(3):395--409, 1983.

\end{thebibliography}
  }

\end{document}